\DeclareMathOperator{\intHom}{\!{\mathscr{H}\! om}}
\newcommand{\dotimes}{\otimes^{\mathbb L}}
\numberwithin{equation}{section}
\newtheorem{theorem}{Theorem}
\numberwithin{theorem}{section}
\newtheorem{lemma}[theorem]{Lemma}
\newtheorem{corollary}[theorem]{Corollary}
\newtheorem{proposition}[theorem]{Proposition}
\newtheorem{definition}[theorem]{Definition}
\theoremstyle{definition}
\newtheorem{remark}[theorem]{Remark}
\newtheorem{example}[theorem]{Example}
\date{\today}
\title{Berkovich Motives}
\author{Peter Scholze}
\begin{document}

\begin{abstract} We construct a theory of (\'etale) Berkovich motives. This is closely related to Ayoub's theory of rigid-analytic motives, but works uniformly in the archimedean and nonarchimedean setting. We aim for a self-contained treatment, not relying on previous work on algebraic or analytic motives. Applying the theory to discrete fields, one still recovers the \'etale version of Voevodsky's theory. Two notable features of our setting which do not hold in other settings are that over any base, the cancellation theorem holds true, and under only minor assumptions on the base, the stable $\infty$-category of motivic sheaves is rigid dualizable.
\end{abstract}

\maketitle

\tableofcontents

\section{Introduction}

This paper is motivated by an application to the geometrization of the local Langlands correspondence \cite{FarguesScholze}, namely to \cite[Conjecture I.9.5]{FarguesScholze} on the independence of $\ell$ of the Langlands parameters constructed there. This application is the subject of the companion paper \cite{MotivicGeometrization}.

In \cite{FarguesScholze} we used the theory of \'etale cohomology of rigid-analytic varieties \cite{Huber} and their generalization, diamonds \cite{ECoD}. \'Etale cohomology only works well with torsion coefficients prime to the characteristic, naturally leading to $\ell$-adic coefficients for some auxiliary prime $\ell\neq p$. Grothendieck's vision was that these different $\ell$-adic cohomologies should naturally be shadows of an underlying ``motive''. While a full theory of motives remains elusive, much progress was made by Voevodsky et.~al.~through the introduction of a stable $\infty$-category $\mathcal D_{\mathrm{mot}}(S)$ of motivic sheaves on $S$, for any scheme $S$, equipped with all six functors \cite{AyoubAlgebraic}. Having six functors with all usual properties is sufficient for many applications, although usually one eventually runs into the issue that the categories $\mathcal D_{\mathrm{mot}}(S)$ remain inexplicit, even when $S$ is a geometric point. Namely, rationally morphisms in this category are governed by rational $K$-theory groups and these remain challenging to compute, being the subject of the Hodge and Tate conjectures, and also the Beilinson--Soul\'e vanishing conjecture. Notably, endowing $\mathcal D_{\mathrm{mot}}(S)$ with the ``motivic $t$-structure'' remains unknown.

For the applications to local Langlands, one however needs a variant of this theory that applies to some generalization of rigid-analytic varieties, namely diamonds: Quotients of perfectoid spaces by pro-\'etale equivalence relations. Even more generally, the formalism should apply to v-stacks. Such a theory of motives has been constructed through the works of Ayoub \cite{AyoubRigid}, Vezzani \cite{Vezzani} and Ayoub--Gallauer--Vezzani \cite{AyoubGallauerVezzani}. Their approach is to handle rigid-analytic varieties via formal models, eventually reducing to algebraic motives. General v-stacks need to be reduced to rigid-analytic varieties in turn.

The purpose of this paper is to observe that one can also build the theory from scratch, in a language that is very close to the one used to discuss \'etale cohomology of diamonds \cite{ECoD} -- in particular, making use of very fine Grothendieck topologies like the v-topology, and strictly totally disconnected spaces. We decided, however, to write the results in the Berkovich setting instead of the adic setting, for the following reasons:
\begin{enumerate}
\item[{\rm (1)}] The theory works similarly in the archimedean and non-archimedean settings, which can be nicely combined in the Berkovich setting.
\item[{\rm (2)}] The cancellation theorem holds true over any base, making the category of motivic sheaves more concrete.
\item[{\rm (3)}] For many $X$, the stable $\infty$-category $\mathcal D_{\mathrm{mot}}(X)$ of motivic sheaves on $X$ is rigid dualizable.
\item[{\rm (4)}] We believe that the symmetric monoidal presentable $(\infty,2)$-category naturally built from the $6$-functor formalism classifies certain ``normed ring stacks'' in a suitable sense, and yields a nontrivial example of a $2$-rigid symmetric monoidal presentable $(\infty,2)$-category.
\end{enumerate}

However, as shown in \cite{AyoubGallauerVezzani}, there is a parallel theory for adic spaces.

Let us now explain our results in some more detail. We work in the category of Banach rings, which are rings $A$ equipped with a norm $|\cdot|_A: A\to \mathbb R_{\geq 0}$ such that $A$ is Cauchy-complete. Any ring $A$ admits the discrete norm $|a|_A = 1$ for $a\neq 0$, embedding discrete rings into Banach rings. For any Banach ring $A$, Berkovich has defined the Berkovich spectrum $\mathcal M(A)$, which is a compact Hausdorff space, classifying maps from $A$ to Banach fields. Here, a Banach field is a Banach ring $(K,|\cdot|_K)$ such that $K$ is a field and $|\cdot|_K$ is multiplicative. There are three types of Banach fields:
\begin{enumerate}
\item[{\rm (1)}] Discrete fields.
\item[{\rm (2)}] Archimedean Banach fields, which are by Gelfand--Mazur just $\mathbb R$ or $\mathbb C$, with norm a power $|\cdot|^\alpha$, $\alpha\in (0,1]$, of the standard norm $|\cdot|$.
\item[{\rm (3)}] Nonarchimedean Banach fields $(K,|\cdot|_K)$. These have as ring of integers a rank $1$ valuation ring $\mathcal O_K$.
\end{enumerate}

We sidestep the problem of defining a geometric category of Berkovich spaces and instead directly endow (the opposite of) the category of Banach rings with a very fine Grothendieck topology, and will proceed by looking at the corresponding category of stacks. The following notion is analogous to the notion of v-covers used in \cite{ECoD}, but as in Berkovich spaces one only records rank $1$ valuations it is closer to the arc-topology defined for schemes in \cite{BhattMathew}.

\begin{definition} A collection of maps $\{A\to B_i\}_{i\in I}$ of Banach rings defines an arc-cover if there is a finite subset $J\subset I$ such that the induced map
\[
\bigsqcup_{i\in J} \mathcal M(B_i)\to \mathcal M(A)
\]
is surjective.
\end{definition}

This topology is far from subcanonical, but it turns out that invariants of ``topological'' nature (such as the underlying Berkovich space, or the \'etale site, or \'etale cohomology) satisfy arc-descent. For any Banach ring $A$, we denote by $\mathcal M_{\mathrm{arc}}(A)$ the arc-sheafification of the presheaf $B\mapsto \mathrm{Hom}(A,B)$ represented by $A$.

There are the usual set-theoretic issues that the category of Banach rings is not small. For any uncountable regular cardinal $\kappa$, one can restrict to $\kappa$-small Banach rings, meaning those whose underlying set has cardinality less than $\kappa$; and it is easy to see that when increasing $\kappa$, the corresponding pullback functors on categories of arc-sheaves are fully faithful. We will work with the union of these categories over all $\kappa$; this is the category of small arc-sheaves, i.e.~those that are a small colimit of $\mathcal M_{\mathrm{arc}}(A)$'s. Equivalently, these are those arc-sheaves that also commute with $\kappa$-filtered colimits, for some $\kappa$.

Working locally in the arc-topology, one can always assume that there is some topologically nilpotent unit $T$ with $|T^{-1}|=|T|^{-1}=2$. For example, for a discrete ring $R$, the map $R\to R((T))_{1/2}$ is an arc-cover. Here the subscript $\tfrac 12$ refers to the norm that we give the element $T$. We will generally be working in the ``analytic'' setting where such a $T$ exists; arc-descent will then allow us to descend back to the algebraic setting of discrete rings. This way, our theory of Berkovich motives will recover Voevodsky's theory of (\'etale) motives over a field.

Let us now cut straight to the main definitions. For any small arc-stack $X$, we have the arc-site $X_{\mathrm{arc}}$ over $X$ (just given by a slice), and a resulting stable $\infty$-category $\mathcal D(X_{\mathrm{arc}},\mathbb Z)$ of arc-hypersheaves with values in $\mathcal D(\mathbb Z)$.

\begin{definition}\label{def:effectivemotivesintro} The stable $\infty$-category of effective motivic sheaves over $X$ is the full subcategory
\[
\mathcal D_{\mathrm{mot}}^{\mathrm{eff}}(X)\subset \mathcal D(X_{\mathrm{arc}},\mathbb Z)
\]
of all finitary and ball-invariant sheaves.
\end{definition}

Here, a sheaf is ball-invariant if for any $Y$ over $X$, the values on $Y$ and $\mathbb B_Y$ agree, where $\mathbb B_Y$ denotes a $1$-dimensional ball (of radius $1$) over $Y$. We would like to say that a sheaf is finitary if it commutes with filtered colimits of Banach rings. However, this requirement is slightly incompatible with the requirement of being an arc-sheaf, so the correct condition is to ask for commutation with filtered colimits only on the class of strictly totally disconnected Banach rings.

\begin{definition} A Banach ring $A$ is strictly totally disconnected if $A$ is uniform\footnote{The notion of uniformity is recalled in Definition~\ref{def:uniform}.}, $\mathcal M(A)$ is a profinite set, and for all $x\in \mathcal M(A)$ the corresponding residue field $K(x)$ is algebraically closed and non-discrete.
\end{definition}

In other words, $K(x)$ is either $\mathbb C$, or an algebraically closed nonarchimedean field.

\begin{remark} Definition~\ref{def:effectivemotivesintro} is very close to the definition of $\mathcal D_{\mathrm{et}}$ used in \cite{ECoD}. There, we also isolate a full subcategory in the derived category of v-sheaves; it is the subcategory of finitary sheaves that are invariant under change of algebraically closed field. We will see that with torsion coefficients, the condition of invariance under change of algebraically closed field is equivalent to ball-invariance, but the latter works better with rational coefficients.
\end{remark}

One can describe various parts of $\mathcal D_{\mathrm{mot}}^{\mathrm{eff}}(X)$ explicitly. By arc-descent, it suffices to do this when $X$ is strictly totally disconnected, and we restrict here for simplicity to the case that $X$ is a point.

\begin{proposition} Let $X=\mathcal M_{\mathrm{arc}}(C)$ for some algebraically closed non-discrete Banach field $C$.
\begin{enumerate}
\item[{\rm (i)}] If $C=\mathbb C$, then
\[
\mathcal D(\mathbb Z)\cong \mathcal D_{\mathrm{mot}}^{\mathrm{eff}}(X)\subset \mathcal D(X_{\mathrm{arc}},\mathbb Z)
\]
and the essential image agrees with the subcategory of finitary sheaves.
\item[{\rm (ii)}] If $C$ is nonarchimedean and $n\in \mathbb Z$ is invertible in the residue field of $C$, then
\[
\mathcal D(\mathbb Z/n\mathbb Z)\cong \mathcal D_{\mathrm{mot}}^{\mathrm{eff}}(X,\mathbb Z/n\mathbb Z)\subset \mathcal D(X_{\mathrm{arc}},\mathbb Z/n\mathbb Z).
\]
For finitary sheaves of $\mathbb Z/n\mathbb Z$-modules, being ball-invariant is equivalent to being invariant under change of algebraically closed field.
\item[{\rm (iii)}] If $C$ is nonarchimedean and of residue characteristic $p$, then $p$ is invertible in $\mathcal D_{\mathrm{mot}}^{\mathrm{eff}}(X)$.
\item[{\rm (iv)}] If $C$ has mixed characteristic and tilt $C^\flat$, then tilting induces an equivalence
\[
\mathcal D_{\mathrm{mot}}^{\mathrm{eff}}(C)\cong \mathcal D_{\mathrm{mot}}^{\mathrm{eff}}(C^\flat).
\]
\end{enumerate}
\end{proposition}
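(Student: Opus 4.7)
The plan is to unwind the definition: $\mathcal{D}_{\mathrm{mot}}^{\mathrm{eff}}(X)$ consists of finitary ball-invariant arc-hypersheaves on the slice over $X = \mathcal{M}_{\mathrm{arc}}(C)$. Since any arc-cover of $X$ admits a refinement by strictly totally disconnected Banach $C$-algebras, such a sheaf is determined by its values on them, and the candidate equivalence is given by the constant-sheaf functor for (i)--(ii), and the tilting functor for (iv).

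For (i), any strictly totally disconnected Banach $\mathbb{C}$-algebra $A$ is of the form $C(\mathcal{M}(A),\mathbb{C})$, since uniformity together with residue field $\mathbb{C}$ at every point forces a Gelfand--Naimark-type reconstruction. Writing $\mathcal{M}(A) = \lim_j S_j$ as a cofiltered limit of finite sets, we have $A = \mathrm{colim}_j\, \mathbb{C}^{S_j}$, so for a finitary arc-hypersheaf $F$ one computes $F(A) = \mathrm{colim}_j F(\mathbb{C})^{S_j}$ using arc-descent along the trivial cover $\mathbb{C}^{S_j} = \prod_s \mathbb{C}$, showing that $F$ is determined by $F(\mathbb{C}) \in \mathcal{D}(\mathbb{Z})$. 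Ball-invariance is automatic because $\mathbb{B}_\mathbb{C}$ is contractible. This gives (i), with the essential image being all finitary sheaves. For the first part of (ii), the same argument works with $\mathbb{Z}/n$-coefficients once we allow variation among algebraically closed residue fields at points of $\mathcal{M}(A)$.

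For the second part of (ii), I would argue both implications: (a) ball-invariance implies change-of-field invariance because any extension $C \hookrightarrow C'$ of algebraically closed nonarchimedean Banach fields is realized by a Berkovich generic point of a suitable ball over $C$, and (b) conversely, for change-of-field-invariant sheaves, ball-invariance reduces to the $\mathbb{A}^1$-invariance of torsion étale cohomology with $n$ prime to the residue characteristic.

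For (iii), the plan is to exhibit multiplication by $p$ as an equivalence on the tensor unit via a specific geometric construction. The key input is the $p$-th power map $[p]:\mathbb{B}\to \mathbb{B}$, $T\mapsto T^p$, combined with a covering of $\mathbb{B}$ by $p$-many sub-balls of radius $|p|^{1/p}$ centered at suitable residue-field representatives; the induced motivic Mayer--Vietoris, combined with ball-invariance collapsing each sub-ball, produces the relation forcing $p$ to be invertible. I expect this to be the main obstacle: setting up the right covers and tracking the motivic contributions is delicate, even though the outcome parallels Ayoub's rigid-analytic case. For (iv), given (iii) both sides are $\mathbb{Z}[1/p]$-linear, and the tilting correspondence identifies perfectoid rings over $C$ with those over $C^\flat$, preserving arc-covers and Berkovich spectra. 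Since the perfectoid ball $\mathbb{B}_C^{\mathrm{perf}}$ (obtained by adjoining all $p^n$-th roots of the coordinate) is an arc-cover of $\mathbb{B}_C$ and tilts to $\mathbb{B}_{C^\flat}^{\mathrm{perf}}$, ball-invariance plus arc-descent propagates the tilting equivalence from the perfectoid arc-sites to all of $\mathcal{D}_{\mathrm{mot}}^{\mathrm{eff}}$; the consistency check is that ball-invariance on one side matches that on the other, which holds because the perfectoid ball-towers are literal tilts of one another.
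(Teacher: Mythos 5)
Part (i) of your plan is fine and is essentially the paper's route (finitary sheaves over a Banach $\mathbb C$-algebra are sheaves on the compact space $\mathcal M(A)$, and ball-invariance is automatic since the complex closed disc is contractible). The first genuine gap is in (ii), in the direction ``ball-invariant $\Rightarrow$ invariant under change of algebraically closed field''. Realizing an extension $C\subset C'$ as (the completed algebraic closure of) the residue field of a point of a ball over $C$ does not help: ball-invariance controls the \emph{sections} of $M$ over $C\langle T\rangle_1$, not its \emph{stalks} at points of the disc, and nothing formal prevents a ball-invariant finitary $\mathbb Z/n$-sheaf from having an exotic stalk at a type (2) (Gau\ss) point, whose residue field has nontrivial Galois cohomology. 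In the paper this implication is a rigidity theorem: one reduces to the generators $\mathbb Z_{\mathrm{mot}}[U]/n$ and proves their invariance via ball-invariant presheaves with transfer (Theorem~\ref{thm:freemotivicsheaves}) and the divisibility argument on curves using the semiabelian generalized Jacobian (Lemma~\ref{lem:dividebypcurve}), i.e.\ a Suslin-rigidity-style input that is entirely absent from your outline. Note also that the displayed equivalence $\mathcal D(\mathbb Z/n\mathbb Z)\cong \mathcal D_{\mathrm{mot}}^{\mathrm{eff}}(X,\mathbb Z/n\mathbb Z)$ (essential surjectivity of the constant-sheaf functor) rests on exactly this direction, so ``the same argument as (i), allowing the residue fields to vary'' is not sufficient either; over a nonarchimedean $C$ the strictly totally disconnected algebras have huge residue fields and a finitary sheaf is not determined by its value at $C$ without the rigidity statement.

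For (iii) your Mayer--Vietoris sketch with $p$ sub-balls is not an argument as it stands (you flag it yourself), and it is not needed: the paper's proof is short. The arc-site is invariant under tilting, and ball-invariance implies invariance under the perfectoid ball (write it as a completed colimit of balls and use finitariness), so one reduces to characteristic $p$; there the Artin--Schreier sequence $0\to\mathbb F_p\to\mathbb B\xrightarrow{x^p-x}\mathbb B\to 0$ of arc-sheaves, together with the $\mathbb B$-contractibility of the (additive) ball, shows $L_{\mathbb B}(\mathbb F_p)=0$, i.e.\ $p$ is invertible. For (iv), the equivalence of arc-sites under tilting only matches \emph{perfectoid}-ball-invariance on the two sides; the real content is that ordinary ball-invariance is equivalent to perfectoid-ball-invariance (over $C$ and over $C^\flat$ separately). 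One direction is the colimit argument just mentioned; the converse is where the $p$-inversion from (iii) enters, via the splitting of $\mathbb Z[\tfrac1p]\to f_\ast\mathbb Z[\tfrac1p]$ for $f:\mathbb B\to\mathbb B$, $T\mapsto T^p$, by the normalized trace, and passing up the tower as a sequential colimit of split injections. Your ``consistency check\dots because the perfectoid ball-towers are literal tilts of one another'' addresses only the trivial matching and skips this step.
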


Part (i) shows that over $\mathbb C$, the theory is equivalent to the theory of Betti sheaves, while (ii) shows that with torsion coefficients it is equivalent to the theory of \'etale sheaves (necessarily of order prime to the residue characteristic, by (iii)). Part (iv) has (in slightly different language) first been obtained by Vezzani \cite{Vezzani}.

Thus, the part not covered by classical theories is the part with $\mathbb Q$-coefficients for nonarchimedean fields.

There is a natural tensor product $-\otimes_{\mathrm{mot}}-$ on $\mathcal D_{\mathrm{mot}}^{\mathrm{eff}}(X)$. More precisely, the tensor product on $\mathcal D(X_{\mathrm{arc}},\mathbb Z)$ restricts to a tensor product on the subcategory of finitary sheaves; and ball-invariance naturally defines a Verdier quotient $\mathcal D_{\mathrm{mot}}^{\mathrm{eff}}(X)$ to which the tensor product descends. Explicitly, one has to make the arc-tensor product ball-invariant again.

The Tate twist can be described explicitly, as usual. First, for any smooth map $f: Y\to X$ -- for simplicity, assume $Y$ is an open subset of some finite-dimensional affine space $\mathbb A^n_X$ --, the pullback map
\[
f^\ast: \mathcal D_{\mathrm{mot}}^{\mathrm{eff}}(X)\to \mathcal D_{\mathrm{mot}}^{\mathrm{eff}}(Y)
\]
admits a left adjoint
\[
f_\sharp: \mathcal D_{\mathrm{mot}}^{\mathrm{eff}}(Y)\to \mathcal D_{\mathrm{mot}}^{\mathrm{eff}}(X)
\]
satisfying base change and the projection formula. In particular, applied to the unit we get the ``free motivic sheaf on $Y$''
\[
\mathbb Z_{\mathrm{mot}}[Y]\in \mathcal D_{\mathrm{mot}}^{\mathrm{eff}}(X).
\]
Explicitly, it is obtained by starting with the free arc-sheaf on $Y$ and making it ball-invariant; this turns out to already yield a finitary sheaf.

In particular, we can define the first Tate twist
\[
\mathbb Z(1) := (\mathbb Z_{\mathrm{mot}}[\mathbb G_m]/\mathbb Z_{\mathrm{mot}}[\ast])[-1].
\]
Over $\mathbb C$, this is just isomorphic to $\mathbb Z$.

\begin{proposition} Over nonarchimedean $X$, let
\[
\overline{\mathbb G}_m = \mathbb G_m/(1+\mathcal O_{<1})
\]
where $\mathcal O_{<1}\subset \mathcal O$ is the sheaf of elements of norm $<1$. Then the natural map
\[
\mathbb Z_{\mathrm{mot}}[\mathbb G_m]/\mathbb Z_{\mathrm{mot}}[\ast]\to \overline{\mathbb G}_m
\]
is an isomorphism, and hence
\[
\mathbb Z(1)\cong \overline{\mathbb G}_m[-1].
\]
\end{proposition}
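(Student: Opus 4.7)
My plan is to (i) verify that $\overline{\mathbb G}_m\in\mathcal D_{\mathrm{mot}}^{\mathrm{eff}}(X)$; (ii) show the quotient $\mathbb G_m\to\overline{\mathbb G}_m$ induces a motivic equivalence $\mathbb Z_{\mathrm{mot}}[\mathbb G_m]\simeq\mathbb Z_{\mathrm{mot}}[\overline{\mathbb G}_m]$; and (iii) identify the resulting reduced motivic free sheaf with $\overline{\mathbb G}_m$ itself.

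For (i), finitarity is immediate from finitarity of $\mathbb G_m$ and $\mathcal O_{<1}$. Ball-invariance reduces, via the short exact sequence
\[
0\to 1+\mathcal O_{<1}\to \mathbb G_m\to \overline{\mathbb G}_m\to 0,
\]
to an analysis of units on $\mathbb B_Y$. The essential input at $\pi_0$ is Weierstrass preparation: over an algebraically closed nonarchimedean Banach field $C$, and hence arc-locally for strictly totally disconnected $Y$, any unit $f$ on the closed unit disk factors as $f=f(0)\cdot\prod_i(1-T/\alpha_i)$ with all $|\alpha_i|>1$; each factor then lies in $1+\mathcal O_{<1}$, so $f\equiv f(0)\pmod{1+\mathcal O_{<1}}$, giving $\overline{\mathbb G}_m(\mathbb B_Y)=\overline{\mathbb G}_m(Y)$. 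Higher arc-cohomology is handled via the long exact sequence together with the ball-invariance of $1+\mathcal O_{<1}=\bigcup_{r<1}(1+\mathcal O_{\leq r})$ (a filtered union of closed disks via $1+x\mapsto x$).

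For (ii), each $1+\mathcal O_{\leq r}$ is a closed disk based at $1$; by ball-invariance combined with Banach rescaling of the norm, $\mathbb Z_{\mathrm{mot}}[1+\mathcal O_{\leq r}]\simeq\mathbb Z_{\mathrm{mot}}[\ast]$, and passing to the filtered colimit, $\mathbb Z_{\mathrm{mot}}[1+\mathcal O_{<1}]\simeq\mathbb Z_{\mathrm{mot}}[\ast]$. Since $\mathbb G_m\to\overline{\mathbb G}_m$ is a $(1+\mathcal O_{<1})$-torsor, the Čech complex computing $\mathbb Z_{\mathrm{mot}}[\overline{\mathbb G}_m]$ via descent has terms $\mathbb Z_{\mathrm{mot}}[\mathbb G_m\times(1+\mathcal O_{<1})^\bullet]\simeq\mathbb Z_{\mathrm{mot}}[\mathbb G_m]$, hence collapses to the constant simplicial object, yielding $\mathbb Z_{\mathrm{mot}}[\mathbb G_m]\simeq\mathbb Z_{\mathrm{mot}}[\overline{\mathbb G}_m]$ compatibly with the base-points at $1$. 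Consequently $\mathbb Z_{\mathrm{mot}}[\mathbb G_m]/\mathbb Z_{\mathrm{mot}}[\ast]\simeq\mathbb Z_{\mathrm{mot}}[\overline{\mathbb G}_m]/\mathbb Z_{\mathrm{mot}}[\ast]$.

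The main obstacle lies in (iii): identifying $\mathbb Z_{\mathrm{mot}}[\overline{\mathbb G}_m]/\mathbb Z_{\mathrm{mot}}[\ast]$ with $\overline{\mathbb G}_m$ itself (viewed in degree $0$ via its abelian-group structure). The summation map supplied by the group law gives a natural candidate, which agrees with the map in the proposition after the identifications of (ii); to show it is a motivic equivalence, I would reduce by arc-descent to $X=\mathcal M_{\mathrm{arc}}(C)$, where $\overline{\mathbb G}_m$ becomes arc-locally a constant discrete abelian group (the extension of the value group by the residue-field units), and verify that the higher terms of the motivic bar construction on $\overline{\mathbb G}_m$ collapse. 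This collapse—the Berkovich analogue of the classical identification $\mathbb Z(1)[1]\simeq\mathbb G_m$ in étale motives—is the technical heart of the result, and I expect it to use both ball-invariance and the specific arc-local rigidity of $\overline{\mathbb G}_m$ in an essential way.
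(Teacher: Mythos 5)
There is a genuine gap, and it sits exactly where you place it: step (iii) is the content of the proposition, and you do not prove it. The paper's argument at this point is concrete: since $\overline{\mathbb G}_m$ is already a finitary sheaf, $\mathbb Z_{\mathrm{mot}}[\overline{\mathbb G}_m]$ is obtained by ball-invariantizing the free abelian sheaf on $\overline{\mathbb G}_m$, which is the group completion of $\bigsqcup_{n\geq 0}\mathrm{Sym}^n(\overline{\mathbb G}_m)$; after quotienting by the base point one gets $\mathrm{colim}_n \mathrm{Sym}^n(\overline{\mathbb G}_m)$ with transition maps adding $1$. The key geometric input is the isomorphism of arc-sheaves $\mathrm{Sym}^n(\mathbb G_m)\cong \mathbb A^{n-1}\times\mathbb G_m$ given by the characteristic polynomial (elementary symmetric functions, the top one invertible); since both vertical maps $\mathrm{Sym}^n(\mathbb G_m)\to\mathrm{Sym}^n(\overline{\mathbb G}_m)$ and $\mathbb A^{n-1}\times\mathbb G_m\to\overline{\mathbb G}_m$ become equivalences after ball-invariant localization, so does $\mathrm{Sym}^n(\overline{\mathbb G}_m)\to\overline{\mathbb G}_m$, and the colimit over $n$ gives the statement. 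Your proposed substitute does not supply this: what has to collapse is the free abelian group construction (i.e.\ the symmetric powers), not a ``motivic bar construction on $\overline{\mathbb G}_m$'' (the latter would compute $\mathbb Z_{\mathrm{mot}}[\ast/\overline{\mathbb G}_m]$, a different object), and the premise that $\overline{\mathbb G}_m$ is arc-locally a constant discrete group is false — its stalks $K(x)^\times/(1+K(x)_{<1})$ vary with $x$, and this nonconstancy is precisely why $\mathbb Q(1)$ is a nontrivial motive.

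Your steps (i) and (ii) also deserve comment. Step (ii) (Čech descent along the $(1+\mathcal O_{<1})$-torsor $\mathbb G_m\to\overline{\mathbb G}_m$ plus motivic contractibility of $1+\mathcal O_{<1}$ as a filtered union of balls) is correct and is what the paper's one-line ``by ball-invariance'' amounts to. Step (i), however, is the separate theorem preceding this proposition in the paper, and your sketch of it is not sound as stated: the Weierstrass argument handles $H^0$, but $1+\mathcal O_{<1}$ is \emph{not} ball-invariant as an arc-sheaf (only $\mathbb B$-contractible in the motivic sense), so the long exact sequence does not control the higher arc-cohomology of $\overline{\mathbb G}_m$ on $\mathbb B_Y$. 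The paper's proof of ball-invariance of $\overline{\mathbb G}_m$ genuinely needs more: with torsion coefficients, Kummer theory plus the ball-invariance of \'etale sheaves (a computation on the tree $\mathcal M(C\langle T\rangle_1)$), and rationally, $\mathrm{Pic}(C\langle T\rangle_1)=0$ for the vanishing of $H^1$ together with a harmonicity/product-formula argument on the tree to pin down $H^0$. If you intend to cite that theorem rather than reprove it, say so; as written, both the foundational input (i) and the core computation (iii) are missing.
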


Again as usual in the motivic formalism, this is not yet invertible. Still, the $\Sigma_n$-action on $\mathbb Z(n) = \mathbb Z(1)^{\otimes_{\mathrm{mot}} n}$ is trivial. These higher weight motivic complexes can also be related to an appropriate version of algebraic $K$-theory.

\begin{theorem} Let $K$ denote the functor $A\mapsto K_{\mathrm{cn}}(A)$ sending any Banach ring $A$ to the connective $K$-theory spectrum $K_{\mathrm{cn}}(A)$ of the abstract ring $A$. Let $\overline{K}$ denote the universal ball-invariant arc-sheaf under $K$. Then
\begin{enumerate}
\item[{\rm (i)}] The arc-sheaf $\overline{K}$ is finitary.
\item[{\rm (ii)}] There is a canonical isomorphism
\[
\overline{K}\otimes \mathbb Q\cong \bigoplus_{n\geq 0} \mathbb Q(n)[2n]
\]
identifying rational motivic cohomology with Adams-eigenspaces on rational $K$-theory.
\item[{\rm (iii)}] If $C$ is an algebraically closed nonarchimedean field of residue characteristic $p$, then
\[
\overline{K}(C)\cong \mathrm{cofib}(K(C_{<1})\to K(C))[\tfrac 1p]
\]
where $K(C_{<1})$ denotes the algebraic $K$-theory of the non-unital, but Tor-unital, ring $C_{<1}\subset C$.\footnote{Recall that a non-unital ring $A$ is Tor-unital if its unitalization $A^+ = A\oplus\mathbb Z$ satisfies $\mathbb Z\dotimes_{A^+} \mathbb Z = \mathbb Z$. For Tor-unital rings, one defines $K(A)$ as the fibre of $K(A^+)\to K(\mathbb Z)$, and it is a theorem of Suslin \cite{SuslinExcision} that this agrees with the fibre of $K(A')\to K(A'/A)$ for any unital ring $A'$ containing $A$ as an ideal.}
\item[{\rm (iv)}] More generally, if $A$ is a Banach-$C$-algebra such that $A_{\leq 1}$ is the completion of a smooth $\mathcal O_C$-algebra, then
\[
\overline{K}(A)\otimes \mathbb Q\cong \mathrm{cofib}(K(A_{<1})\to K(A))\otimes \mathbb Q.
\]
\end{enumerate}
\end{theorem}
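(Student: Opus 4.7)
The plan is to prove the four parts in the order (iii) $\Rightarrow$ (i) $\Rightarrow$ (iv) $\Rightarrow$ (ii), using (iii) as the foundational explicit computation and building up from there. For (iii), write $\mathcal O_C = C_{\leq 1}$ for the valuation ring with maximal ideal $C_{<1}$. Although $C_{<1}$ is non-unital, it is Tor-unital as an ideal of $C_{\leq 1}$ (and in any larger unital ring in which it sits as an ideal), so Suslin--Wodzicki excision identifies the non-unital $K(C_{<1})$ with the fiber of the corresponding localization. I would introduce the functor
\[
F(A) := \mathrm{cofib}\bigl(K(A_{<1}) \to K(A)\bigr)[\tfrac 1p]
\]
on uniform Banach-$C$-algebras $A$ and show: (a) $F$ is ball-invariant, by applying excision to the pair $(A\langle T\rangle, A\langle T\rangle_{<1})$ together with the earlier fact that $p$ is invertible in $\mathcal D_{\mathrm{mot}}^{\mathrm{eff}}$ in residue characteristic $p$; (b) $F$ is an arc-sheaf, by reducing its sheaf condition to excision properties of non-unital $K$-theory combined with known descent for $K$-theory; and (c) $K \to F$ is universal among ball-invariant arc-sheaves under $K$, by checking that its fiber $K(A_{<1})[\tfrac 1p]$ is killed in any ball-invariant arc-local target.

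Part (i) is then immediate: over a strictly totally disconnected $A$, $\overline K(A)$ is computed by the cofiber in (iii), and both $K(A)$ and $K(A_{<1})$ commute with filtered colimits of abstract rings, yielding finitariness; the archimedean and discrete-field cases covered by the earlier proposition fit in by separate standard arguments. Part (iv) extends (iii) via the same Suslin--Wodzicki excision applied to the Tor-unital ideal $A_{<1}\subset A_{\leq 1}$ when $A_{\leq 1}$ is the completion of a smooth $\mathcal O_C$-algebra; the reason (iv) is stated only rationally is that in the smooth case one cannot invert $p$ integrally at the level of $\overline K$, but rationally all obstructions disappear. For (ii), I would construct Adams operations $\psi^\ell$ on $\overline K$ by descending their standard constructions on $K$-theory of abstract rings, check they give a rational eigenspace decomposition $\overline K\otimes\mathbb Q = \bigoplus_{n\geq 0} \overline K^{(n)}_{\mathbb Q}$ with $\psi^\ell$ acting by $\ell^n$ on the $n$-th summand, and identify each $\overline K^{(n)}_{\mathbb Q}$ with $\mathbb Q(n)[2n]$ via the motivic projective bundle formula applied to a Snaith-type presentation of $\overline K$ by $\Sigma^\infty_+ \mathbb P^\infty$ after inverting the Bott class.

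The main obstacle is (ii): both Adams operations and a projective bundle formula must be developed in the Berkovich motivic setting, and the identification of the $n$-th Adams eigenspace with $\mathbb Q(n)[2n]$ requires a Snaith-type computation that interacts correctly with the description $\mathbb Z(1)\cong\overline{\mathbb G}_m[-1]$ from the preceding proposition. Part (iii) is the next most subtle, requiring careful bookkeeping for Tor-unital excision in families of Banach rings and a precise identification of what ball-invariance imposes on $K$-theory at a nonarchimedean point; once the ball-invariance of $F$ is in hand, the universal property follows rather formally.
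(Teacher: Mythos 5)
Your outline for (iii)/(iv) is in the right spirit (the paper also kills the universal ball-invariant sheaf under $A\mapsto K_{\mathrm{cn}}(A_{<1})$ via ball-contractibility of the fibres of $\mathrm{GL}_n(\mathbb Z\oplus A_{<1})\to \mathrm{GL}_n(\mathbb Z)$, and then identifies $\overline{K}$ with the cofibre), but the two load-bearing steps are underspecified in a way that hides the real inputs: ball-invariance of $A\mapsto \mathrm{cofib}(K(A_{<1})\to K(A))$ is not a formal excision statement about the pair $(A\langle T\rangle_1, A\langle T\rangle_{1,<1})$ — the paper has to pass up the perfectoid tower and invoke $\mathbb A^1$-invariance of $K$-theory over valuation rings (and, away from the perfectoid case, replace $K$ by $KH$ and use (pro-)cdh descent); and the arc-sheaf property comes from analytic descent via the category of torsion almost modules together with rational finite \'etale descent via transfers, not from Suslin--Wodzicki excision alone. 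Moreover the deduction (iii) $\Rightarrow$ (i) does not work as stated: (i) is an integral statement about $\overline{K}$ on all strictly totally disconnected rings, while (iii) is a computation after inverting $p$ at a geometric point; and ``filtered colimits of Banach rings'' are completed colimits, so commutation of $K$ and $K(A_{<1})$ with filtered colimits of abstract rings is not enough — the paper needs Thomason--Trobaugh (invariance of $K$-theory with support under $T$-adic completion) even to see the cofibre is finitary, and it proves (i) independently and integrally by resolving the group completion via Breen--Deligne, reducing to the free motivic sheaves $\mathbb S_{\mathrm{mot}}[G]$ for products $G$ of $\mathrm{GL}_n$'s.

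The genuine gap is (ii). You presuppose a Snaith-type presentation of $\overline{K}$ by $\Sigma^\infty_+\mathbb P^\infty$ with the Bott class inverted; no such theorem is available in the Berkovich setting, and proving it would be of comparable difficulty to the statement itself. Worse, inverting the Bott class produces the periodic theory, so even granting it you would obtain a decomposition indexed by all $n\in\mathbb Z$ of the periodicized sheaf, and you would still owe an argument recovering the stated decomposition $\bigoplus_{n\geq 0}\mathbb Q(n)[2n]$ of $\overline{K}\otimes\mathbb Q$ itself (together with the boundedness needed for the Adams eigenspace decomposition). The paper's route avoids all of this: it shows directly that $\mathbb S[\ast/\mathbb G_m]\to K_{\mathrm{cn}}$ becomes an equivalence after ball-invariant arc-sheafification and rationalization, by writing both sides as group completions of $\bigsqcup_n \ast/(\mathbb G_m^n\rtimes\Sigma_n)$ and $\bigsqcup_n \ast/\mathrm{GL}_n$, using Breen--Deligne resolutions to reduce to the single assertion $\mathbb Q_{\mathrm{mot}}[\ast/(\mathbb G_m^n\rtimes\Sigma_n)]\cong \mathbb Q_{\mathrm{mot}}[\ast/\mathrm{GL}_n]$, which is an explicit flag-variety computation (the $\Sigma_n$-invariants of $\mathbb Q_{\mathrm{mot}}[\mathrm{GL}_n/\mathbb G_m^n]$, by induction, using the cancellation theorem and realizations); the identification $\mathbb Q_{\mathrm{mot}}[\ast/\mathbb G_m]=\bigoplus_{n\geq 0}\mathbb Q(n)[2n]$ comes from the augmentation-ideal filtration split by the cell filtration of the $\mathbb P^n$, and the ``Adams'' splitting is simply multiplication by $2$ on $\ast/\overline{\mathbb G}_m$, so no independently constructed Adams operations or projective bundle/Snaith theorem for $\overline{K}$ are needed. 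If you want to salvage your approach to (ii), you would at minimum have to prove the Snaith-type statement and then supply the connective refinement; as written, this part of the proposal is not a proof.
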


\begin{remark} Part (iii) has the curious consequence that the cofiber of $K(C_{<1})\to K(C)$ has a natural $K(C)$-$E_\infty$-algebra structure. The categorical reasons for this are not clear.
\end{remark}

Our proof of (ii) is very simple, and proves the similar result also in the classical motivic context; we are not aware that this argument is recorded in the literature. The key point is to show that the natural map
\[
\mathbb S[\ast/\mathbb G_m]\to K_{\mathrm{cn}}
\]
coming from line bundles becomes an equivalence after passing to ball-invariant arc-sheaves, and rationalizing. Now $K_{\mathrm{cn}}$ is the group completion of $\bigsqcup_{n\geq 0} \ast/\mathrm{GL}_n$, while the left-hand side is the group completion of
\[
\bigsqcup_{n\geq 0}\ast/(\mathbb G_m^n\rtimes \Sigma_n),
\]
where each $\mathbb G_m^n\rtimes \Sigma_n\subset \mathrm{GL}_n$ can be identified with the normalizer of the diagonal torus. Using Breen--Deligne resolutions, it is then sufficient to show that the natural map
\[
\mathbb Q_{\mathrm{mot}}[\ast/(\mathbb G_m^n\rtimes \Sigma_n)]\to \mathbb Q_{\mathrm{mot}}[\ast/\mathrm{GL}_n]
\]
is an isomorphism. But both sides can be understood explicitly; everything only involves spaces of simple motivic nature such as flag varieties.

The proofs of (iii) and (iv) make use of known descent and homotopy-invariance results for $K$-theory, which enable one to control the process of making sheaves ball-invariant that is otherwise very inexplicit. Specifically, we use $\mathbb A^1$-invariance over valuation rings, and (pro-)cdh-descent.

Using part (iv) of the theorem above, one can prove the cancellation theorem for Tate twists directly. But in fact, the full form of the cancellation theorem holds:

\begin{theorem} For any small arc-stack $X$, the functor
\[
-\otimes_{\mathrm{mot}} \mathbb Z(1): \mathcal D_{\mathrm{mot}}^{\mathrm{eff}}(X)\to \mathcal D_{\mathrm{mot}}^{\mathrm{eff}}(X)
\]
is fully faithful.
\end{theorem}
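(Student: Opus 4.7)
The plan is to combine arc-descent with the $K$-theoretic description of Tate twists from the preceding theorem.

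First, by arc-descent and because fully faithfulness can be checked on a conservative family of points, I would reduce to the case $X = \mathcal M_{\mathrm{arc}}(C)$ for an algebraically closed non-discrete Banach field $C$. The archimedean case $C = \mathbb C$ is vacuous since $\mathbb Z(1) \cong \mathbb Z$ there by part~(i). So assume $C$ is nonarchimedean of residue characteristic $p$, and recall $\mathbb Z(1) \cong \overline{\mathbb G}_m[-1]$. Because $\mathcal D_{\mathrm{mot}}^{\mathrm{eff}}(X)$ is compactly generated by the smooth motives $\mathbb Z_{\mathrm{mot}}[Y]$ and $\mathbb Z(1)$ itself is compact, the right adjoint to $-\otimes_{\mathrm{mot}} \mathbb Z(1)$ preserves colimits, so it suffices to check that the unit
\[
\mathbb Z_{\mathrm{mot}}[Y] \longrightarrow \mathrm{RHom}_{\mathrm{mot}}\bigl(\mathbb Z(1),\, \mathbb Z_{\mathrm{mot}}[Y] \otimes_{\mathrm{mot}} \mathbb Z(1)\bigr)
\]
is an equivalence for each smooth $Y \to X$.

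The key input is the $K$-theory theorem. Rationally, the decomposition $\overline K \otimes \mathbb Q \cong \bigoplus_{n \geq 0} \mathbb Q(n)[2n]$ is multiplicative, and multiplication by the Bott generator $\beta$ corresponding to $\mathbb Q(0) \to \mathbb Q(1)[2]$ identifies successive Adams-eigenspaces, yielding cancellation for Tate twists on the rational side. Part~(iv) makes this usable on smooth $Y$ by providing the explicit model $\overline K(Y) \otimes \mathbb Q \cong \mathrm{cofib}(K(A_{<1}) \to K(A)) \otimes \mathbb Q$ with $A$ representing $Y$, so the argument lifts to mapping complexes in $\mathcal D_{\mathrm{mot}}^{\mathrm{eff}}$. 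To upgrade integrally, I would use that by part~(iii), $\mathcal D_{\mathrm{mot}}^{\mathrm{eff}}(X)$ is automatically $\mathbb Z[1/p]$-linear; the rational part is handled as above, and for each prime $\ell \neq p$, part~(ii) identifies $\mathcal D_{\mathrm{mot}}^{\mathrm{eff}}(X, \mathbb Z/\ell^k)$ with $\mathcal D(\mathbb Z/\ell^k)$, on which $\mathbb Z(1)$ is invertible as the \'etale Tate twist (genuinely invertible since $C$ is algebraically closed), making cancellation automatic.

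The hardest step, I expect, is ensuring that the Bott-inversion argument, which rationally lives inside $\overline K \otimes \mathbb Q$, can be promoted to a genuine cancellation inside $\mathcal D_{\mathrm{mot}}^{\mathrm{eff}}$ for arbitrary smooth $Y$, rather than only for Tate-twist motives over a point. This should follow from the universal property of $\overline K$ as the ball-invariantized $K$-theory arc-sheaf combined with the multiplicative structure, but making the construction of the inverse Bott map interact cleanly with the Verdier quotient defining $\mathcal D_{\mathrm{mot}}^{\mathrm{eff}}$---and globalizing from a point to a general smooth base---is the main technical obstacle.
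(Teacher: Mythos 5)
Your reduction steps (arc-descent to strictly totally disconnected $X$, then to algebraically closed $C$ using compactness of $\mathbb Z(1)$; the archimedean case being trivial; torsion coefficients being handled by the \'etale comparison) match the opening of the paper's proof. But the core of your plan has a genuine gap, in two ways. First, it is circular relative to the paper's development: the rational decomposition $\overline K\otimes\mathbb Q\cong\bigoplus_{n\geq 0}\mathbb Q(n)[2n]$ is proved in the paper \emph{after} and \emph{using} the cancellation theorem (the identification $\mathbb Q_{\mathrm{mot}}[\ast/(\mathbb G_m^n\rtimes\Sigma_n)]\cong\mathbb Q_{\mathrm{mot}}[\ast/\mathrm{GL}_n]$ invokes cancellation to control endomorphisms of $\bigoplus_i\mathbb Q(i)[2i]$), so you cannot take part (ii) of the $K$-theory theorem as an input without supplying an independent proof of it. Second, and more fundamentally, even granting the $K$-theoretic description, a Bott-inversion argument inside $\overline K\otimes\mathbb Q$ only controls maps \emph{into Tate twists} $\mathbb Q(n)[m]$, i.e.~motivic cohomology of smooth objects; this is exactly what the introduction calls ``the cancellation theorem for Tate twists,'' which it distinguishes from the full statement. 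The theorem requires the unit map
\[
\mathbb Q_{\mathrm{mot}}[U]\to \intHom\bigl(\mathbb Q_{\mathrm{mot}}[(\mathbb G_m,\ast)],\,\mathbb Q_{\mathrm{mot}}[U\times(\overline{\mathbb G}_m,\ast)]\bigr)
\]
to be an equivalence for \emph{arbitrary} finitary $U$, and the target is not a mixed Tate object, so no amount of Adams-eigenspace bookkeeping in $\overline K$ computes it. The sentence where you say this ``should follow from the universal property of $\overline K$ combined with the multiplicative structure'' is precisely where the real content of the theorem lives and where your plan has no mechanism.

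What the paper actually does at this point is a Voevodsky-style cancellation argument: classes in the right-hand side are represented by maps $\mathbb T\times\mathrm{sd}^\ell\partial\Delta^{i+1}_C\to\mathrm{Sym}^k(U\times\overline{\mathbb G}_m)$ (a form of presheaves with transfer), and one constructs two correspondences $Y_1,Y_2$ --- $Y_1$ from open balls around $N$-th roots of unity, $Y_2$ from the locus $p_1^N=p_2$ inside the incidence space $\tilde{\mathbb T}$, which for $N$ large fibers in open balls over an \'etale space --- whose difference gives a splitting on homotopy groups with values in eventually defined sequences; the proof is then completed using the triviality of the $\Sigma_2$-swap on $\mathbb Z(1)^{\otimes 2}$ (Lemma~\ref{lem:symmetry}). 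None of this geometric construction, which the paper identifies as the hardest part of the whole article, is present or replaceable by the $K$-theory input in your proposal, so the plan as written does not prove the full theorem.
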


The proof of the cancellation theorem is the most difficult part of this paper. While all the other arguments use only soft arguments that are very natural within this analytic framework, the proof of the cancellation theorem uses some of the techniques introduced by Voevodsky such as a version of presheaves with transfer. In outline, we follow the argument of \cite{VoevodskyCancellation}.

\begin{remark} In our setting, the result for any $X$ reduces formally to strictly totally disconnected spaces and then even to geometric points. This is in contrast to other motivic settings, either algebraic or rigid-analytic, where the cancellation theorem is only true over fields.
\end{remark}

We can now define the full category of motivic sheaves by formally inverting the Tate twist.

\begin{definition} Let $X$ be a small arc-stack. The presentable stable $\infty$-category of motivic sheaves on $X$ is
\[
\mathcal D_{\mathrm{mot}}(X) = \mathcal D_{\mathrm{mot}}^{\mathrm{eff}}(X)[\mathbb Z(1)^{\otimes -1}] = \varprojlim_{\intHom(\mathbb Z(1),-)} \mathcal D_{\mathrm{mot}}^{\mathrm{eff}}(X).
\]
\end{definition}

We prove that this defines a $6$-functor formalism for which smooth maps are cohomologically smooth. In fact, the key point for both of these assertions is to prove that for $f: \mathbb P^1_X\to X$, the left and right adjoints of $f^\ast$ agree up to twist. For this, one constructs unit and counit of the adjunction explicitly.

The resulting category $\mathcal D_{\mathrm{mot}}(X)$ is very well-behaved.

\begin{theorem} If $X$ is a geometric point, then $\mathcal D_{\mathrm{mot}}(X)$ is compactly generated and the compact objects agree with the dualizable objects. If $X$ has equal characteristic, $\mathcal D_{\mathrm{mot}}(X)$ is generated by the motives of smooth projective varieties with good reduction.

If $X=\mathcal M_{\mathrm{arc}}(A)$ for an analytic Banach ring $A$, and $X$ has finite cohomological dimension, then $\mathcal D_{\mathrm{mot}}(X)$ is rigid.
\end{theorem}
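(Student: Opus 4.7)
The plan is to split the proof into the pointwise structure for geometric points (the first two claims) and the global rigidity statement. For a geometric point $X=\mathcal{M}_{\mathrm{arc}}(C)$, the archimedean case $C=\mathbb{C}$ is immediate from the earlier proposition, which gives $\mathcal{D}_{\mathrm{mot}}^{\mathrm{eff}}(X)\simeq \mathcal{D}(\mathbb{Z})$ with $\mathbb{Z}(1)\simeq \mathbb{Z}$, hence $\mathcal{D}_{\mathrm{mot}}(X)\simeq \mathcal{D}(\mathbb{Z})$ visibly has compact $=$ dualizable and is generated by the motive of a point. If $C$ is nonarchimedean of mixed characteristic, the tilting equivalence of part (iv) of the same proposition extends to the stabilized category $\mathcal{D}_{\mathrm{mot}}$ (the Tate twists on both sides match up under tilting), reducing to the equicharacteristic tilt $C^\flat$. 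Thus the essential case is a nonarchimedean geometric point $C$ of equal characteristic $p\geq 0$.

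For such $C$, the motives $\mathbb{Z}_{\mathrm{mot}}[Y]$ for $Y/C$ smooth are compact in $\mathcal{D}_{\mathrm{mot}}^{\mathrm{eff}}(X)$, since the smooth-pullback left adjoint $f_\sharp$ preserves compact generators. By the cancellation theorem the stabilization $\mathcal{D}_{\mathrm{mot}}(X)=\mathcal{D}_{\mathrm{mot}}^{\mathrm{eff}}(X)[\mathbb{Z}(1)^{\otimes -1}]$ is again compactly generated, by the twisted shifts $\mathbb{Z}_{\mathrm{mot}}[Y](n)[m]$. To refine these to smooth projective varieties with good reduction, I would apply de Jong alterations (or Hironaka when $\mathrm{char}\,C=0$) to produce, for each smooth $Y/C$, a generically \'etale $Y'\to Y$ with $Y'$ smooth projective admitting a smooth proper model $\bar Y'\to \mathrm{Spec}\,\mathcal{O}_C$. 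The equicharacteristic hypothesis is crucial here: a coefficient field inside $\mathcal{O}_C$ lets one spread out the alteration. Combining blow-up, Gysin, and alteration-descent triangles in $\mathcal{D}_{\mathrm{mot}}(X)$, one shows by induction on $\dim Y$ that $\mathbb{Z}_{\mathrm{mot}}[Y]$ lies in the stable subcategory generated by such good-reduction motives. Each good-reduction smooth projective $Y'$ is then dualizable with dual $\mathbb{Z}_{\mathrm{mot}}[Y'](-\dim Y')[-2\dim Y']$ via Poincar\'e duality, using $f_!=f_*$ for the proper smooth map from the model together with cohomological smoothness of smooth maps, both established earlier. Compact $=$ dualizable is immediate from the generation.

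For $X=\mathcal{M}_{\mathrm{arc}}(A)$ analytic and of finite cohomological dimension, rigidity of $\mathcal{D}_{\mathrm{mot}}(X)$ follows by arc-descent from the pointwise statement. The finite cohomological dimension hypothesis is essential: it bounds the descent spectral sequence computing global Homs from a limit of covers by strictly totally disconnected Banach rings, so that pointwise duality data assembles into a global duality datum and pointwise dualizable objects remain dualizable on $X$. The main obstacle, in my view, is the alteration step above: expressing a general smooth motive in the thick subcategory generated by good-reduction smooth projective motives requires careful bookkeeping of normal crossings divisors and iterated Gysin triangles in the Berkovich-analytic setting, and one must invoke de Jong in a form producing a smooth proper model over $\mathcal{O}_C$, not merely a semistable one (otherwise further blow-up arguments are needed to kill the singular locus). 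The Poincar\'e-duality half, once cohomological smoothness of smooth maps in the 6-functor formalism is in hand, is then relatively formal.
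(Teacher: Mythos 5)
Your reduction of the geometric-point case to equal characteristic via tilting matches the paper, but the core generation step is where your proposal breaks down. The paper does \emph{not} generate $\mathcal D_{\mathrm{mot}}(C)$ by devissage of arbitrary smooth motives into good-reduction ones: its generators are $\mathbb Z_{\mathrm{mot}}[X_C](-j)$ for $X$ smooth projective over the residue field $k$, base changed along a chosen splitting $k\to C$ (so good reduction holds by construction), and generation is proved by showing that an object right-orthogonal to these vanishes on every complete algebraically closed extension $C'/C$, by induction on topological transcendence degree using the classification of points of the Berkovich line: type (4) and type (3) points are limits of balls resp.\ annuli (contributing only Tate twists), and the remaining type (2) case is handled by writing the residue field extension as a filtered colimit of smooth affine $k$-algebras with smooth projective normal-crossings compactifications. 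Your alternative route --- dominate an arbitrary smooth projective $Y/C$ by an alteration $Y'$ admitting a smooth proper model over $\mathcal O_C$ --- cannot work: no form of de Jong produces good reduction, and indeed a Tate elliptic curve $E/C$ is not dominated by any smooth projective curve with good reduction (good reduction of the covering curve would give good reduction of its Jacobian, hence of the isogeny quotient $E$, contradiction). The caveat you raise at the end is thus not a bookkeeping issue but a fatal obstruction. Your preliminary claim that $\mathbb Z_{\mathrm{mot}}[Y]$ is compact for \emph{all} smooth $Y/C$ is also unjustified: for non-quasicompact $Y$ (e.g.\ open subsets of affine space) evaluation at $Y$ is an inverse limit over closed subsets and need not commute with colimits; in the paper compactness is only asserted for objects like $\mathbb Z(1)$ (via the annulus argument) and for the dualizable good-reduction generators.

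The relative statement is also not proved by your argument. Rigidity in the Gaitsgory--Rozenblyum sense is not the assertion that all (or all compact) objects are dualizable, and it does not descend along arc-covers: limits of rigid symmetric monoidal categories need not be rigid, and $\mathcal D_{\mathrm{mot}}(X)$ for non-point $X$ is in general not compactly generated, so the pointwise ``compact $=$ dualizable'' statement has no direct globalization and there is no ``descent spectral sequence for Homs'' that assembles duality data. The paper's proof of Proposition~\ref{prop:rigidity} instead has three genuinely different steps: the unit is compact because $X$ has finite cohomological dimension; $\mathcal D_{\mathrm{mot}}(X)$ is dualizable (compactly assembled) because the maps $\mathbb Z_{\mathrm{mot}}[U]\to\mathbb Z_{\mathrm{mot}}[V]$ are compact morphisms whenever $U\subset V\subset\mathbb A^n_X$ factors through a compact subset, together with a generation argument reduced to stalks; and rigidity is obtained by exhibiting a generating family of objects that are sequential colimits along trace-class maps, produced by spreading compact dualizable objects of the fibres $\mathcal D_{\mathrm{mot}}(C(x))$ (or $\mathcal M_{\mathrm{arc}}(C)/G$) to small closed neighborhoods via Lemma~\ref{lem:spreadingcompactobjects} and Lemma~\ref{lem:quasiproetalebarrbeck}, and then extending by zero over open neighborhoods. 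Your proposal would need this entire mechanism; as written, the second half is a restatement of the goal rather than a proof.
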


Here, we use the notion of rigidity of Gaitsgory--Rozenblyum \cite[Chapter I, Definition 9.1.2]{GaitsgoryRozenblyum}; it is an adaptation to the non-compactly generated case of the idea that the compact and dualizable objects agree. Again, this rigidity in the relative case is something that does not usually hold true, not even for \'etale sheaves on schemes or rigid-analytic varieties. Namely, in those settings the category is compactly generated, but not all compact objects are dualizable (as there are constructible sheaves that are not locally constant). The Berkovich setting yields categories closer to the category of sheaves on the compact Hausdorff space $\mathcal M(A)$, and the latter is rigid.

Finally, we discuss the relation to the algebraic theory. Let $k$ be a discrete algebraically closed field. We get the small arc-sheaf $\mathcal M_{\mathrm{arc}}(k)$, and hence $\mathcal D_{\mathrm{mot}}(k)$.

\begin{proposition} The symmetric monoidal presentable stable $\infty$-category $\mathcal D_{\mathrm{mot}}(k)$ agrees with the one of \'etale motives over $k$ defined by Voevodsky.
\end{proposition}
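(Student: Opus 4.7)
The plan is to construct a symmetric monoidal colimit-preserving comparison functor $F : \mathrm{DM}_{\mathrm{et}}(k) \to \mathcal{D}_{\mathrm{mot}}(k)$ using the universal property of Voevodsky's étale motives, and then verify it is an equivalence by matching on generators and on mapping spaces.

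First I would exhibit $F$. Voevodsky's $\mathrm{DM}_{\mathrm{et}}(k)$ can be characterized as the initial presentable symmetric monoidal stable $\infty$-category under the category of smooth $k$-schemes satisfying (i) étale hyperdescent, (ii) $\mathbb{A}^1$-invariance, and (iii) $\otimes$-invertibility of the Tate object. A smooth $k$-algebra $R$, regarded as a Banach ring with its discrete norm, yields $\mathcal{M}_{\mathrm{arc}}(R)$ over $\mathcal{M}_{\mathrm{arc}}(k)$ and hence $\mathbb{Z}_{\mathrm{mot}}[\mathrm{Spec}(R)] \in \mathcal{D}_{\mathrm{mot}}(k)$. Étale hyperdescent is inherited from arc-hyperdescent (since étale covers are arc-covers), and invertibility of $\mathbb{Z}(1)$ holds by definition of $\mathcal{D}_{\mathrm{mot}}$. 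The subtle point is $\mathbb{A}^1$-invariance: over the discrete $k$ the ball condition has no direct meaning, so I would pass via arc-descent along the cover $k \to k((T))_{1/2}$, over which $\mathbb{A}^1$ is the filtered union of the rescaled balls $\mathbb{B}(2^n)$; ball-invariance and finitarity then give $\mathbb{A}^1$-invariance upstairs, and arc-descent brings it back to $k$.

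Second, I would verify essential surjectivity on generators. The theorem recalled above guarantees that $\mathcal{D}_{\mathrm{mot}}(k)$ is compactly generated by motives of smooth projective varieties over $k$ with good reduction, and over the discrete field $k$ every smooth projective variety has good reduction tautologically. Voevodsky's $\mathrm{DM}_{\mathrm{et}}(k)$ has the same set of generators, so $F$ hits a generating set.

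Third, I would match the mapping spaces. For a smooth projective $X/k$, the object $F(\mathbb{Z}_{\mathrm{mot}}[X])$ has mapping spectra into Tate twists $\mathbb{Z}_{\mathrm{mot}}(n)[m]$ that must be identified with Voevodsky's motivic cohomology $H^{m,n}_{\mathrm{mot}}(X,\mathbb{Z})$. Rationally, part (ii) of the theorem on $\overline{K}$ identifies these mapping spaces with the $n$-th Adams eigenspace of $K_\ast(X)_{\mathbb{Q}}$, which is exactly rational motivic cohomology. With $\mathbb{Z}/\ell$-coefficients for $\ell$ coprime to the characteristic of $k$, part (ii) of the proposition on algebraically closed base fields identifies $\mathcal{D}_{\mathrm{mot}}^{\mathrm{eff}}(k,\mathbb{Z}/\ell)$ with $\mathcal{D}(\mathbb{Z}/\ell)$ and reduces the computation to étale cohomology of $X$, matching $\mathrm{DM}_{\mathrm{et}}(k,\mathbb{Z}/\ell)$ by Suslin rigidity; and in characteristic $p>0$ part (iii) of the same proposition shows $p$ is invertible on both sides. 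Assembling the rational and prime-to-$p$ torsion comparisons yields integral full faithfulness, and combined with essential surjectivity on generators this gives the equivalence.

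The main obstacle is the final assembly: gluing the $K$-theoretic rational identification and the étale mod-$\ell$ identification into a single integral identification of mapping spectra compatible with the symmetric monoidal structure. This requires ensuring the Adams eigenspace decomposition and the étale comparison are realized functorially and are compatible under the Bockstein sequences relating rational and torsion parts, so that the comparison map is an equivalence on both sides of the arithmetic fracture square.
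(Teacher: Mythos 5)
Your overall architecture matches the paper's: build the comparison functor out of the universal property of Voevodsky's category (étale descent, $\mathbb A^1$-invariance checked by arc-descent along $k\to k((T))_{1/2}$, invertibility of $\mathbb Z(1)$), then conclude by generation plus a computation of mapping spectra that is rational $K$-theoretic and torsion étale. But there is a genuine gap in how you source the two key inputs over the \emph{discrete} field $k$. The generation statement you invoke (``compactly generated by motives of smooth projective varieties with good reduction'') is proved in the paper only for non-discrete algebraically closed Banach fields $C$ (the whole section on $\mathcal D_{\mathrm{mot}}(C)$ explicitly assumes $C$ non-discrete; ``good reduction'' has no content over discrete $k$, which is a symptom of the conflation). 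For discrete $k$ this generation statement, together with compactness of the unit and of the generators $\mathbb Z_{\mathrm{mot}}[X](-j)$, is precisely the hard part of the paper's proof of the proposition you are asked to prove: the paper obtains it by passing to $K=k((T))_{1/2}$, showing $f_\ast\mathbb Z\cong \mathbb Z\oplus\mathbb Z(-1)[-1]$ is dualizable, constructing the left adjoint $f_\sharp$ with $f_\sharp\mathbb Z=(f_\ast\mathbb Z)^\vee$, and using the resulting splitting to deduce compactness of $\mathbb Z$ and generation by $\mathbb Z_{\mathrm{mot}}[X](-j)$. None of this is supplied (or replaceable) by the results you cite, so your essential-surjectivity-plus-compact-generators step rests on an unproved claim.

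The same issue affects your identification of mapping spectra. Part (ii) of the theorem on $\overline K$ gives $\overline K\otimes\mathbb Q\cong\bigoplus_n\mathbb Q(n)[2n]$ as arc-sheaves in the analytic setting; it does not directly compute $\mathrm{Hom}_{\mathcal D_{\mathrm{mot}}(k)}(\mathbb Z_{\mathrm{mot}}[X](-j),\mathbb Z)_{\mathbb Q}$ over the discrete base. To get the asserted answer (the weight-$j$ Adams piece of $K(X)_{\mathbb Q}$) the paper must descend through $f_\sharp$, compute $\overline K(X_C)_{\mathbb Q}=\mathrm{cofib}(K(X_{\mathfrak m_C})\to K(X_C))_{\mathbb Q}$ for $C$ the completed algebraic closure of $K$, and use excision on regular schemes (writing $\mathcal O_C$ as a filtered colimit of finite extensions of $k[[T]]$) to identify $K(X_{\mathcal O_C}\ \mathrm{on}\ X)_{\mathbb Q}$ with $K(X)_{\mathbb Q}$ up to an Adams-weight shift, the extra summand being absorbed by the splitting of $f_\sharp\mathbb Z$. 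Your proposal skips this descent-and-excision computation entirely, so the rational half of your fracture-square assembly is not established; the torsion half and the final gluing are fine in outline, but the integral comparison cannot be completed without the missing discrete-field computations above.
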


For the proof, the key point is to compute maps in our version, for which we use the description of (rational) Tate twists in terms of algebraic $K$-theory. As maps in Voevodsky's category are also (rationally) controlled by $K$-theory, this yields the comparison.

Finally, in our framework it is easy to relate motives over an algebraically closed nonarchimedean field $C$ with motives over its residue field $k$, recovering results of Binda--Gallauer--Vezzani \cite{BindaGallauerVezzani}. We focus on the simplest case $k=\overline{\mathbb F}_p$ and $C$ being the completed algebraic closure of $k((T))$.

\begin{theorem} Fix a pseudouniformizer $\pi\in C$ together with compatible roots $\pi^{1/n}$ for all $n$ prime to the residue characteristic of $C$. There is a symmetric monoidal ``nearby cycles functor''
\[
\Psi = \Psi_{(\pi^{1/n})_n}: \mathcal D_{\mathrm{mot}}(C)\to \mathcal D_{\mathrm{mot}}(k)
\]
which upgrades to a symmetric monoidal equivalence
\[
\mathcal D_{\mathrm{mot}}(C)\cong \{(A,N)\mid A\in \mathcal D_{\mathrm{mot}}(k), N: A\to A\otimes \mathbb Q(-1)\ \mathrm{locally}\ \mathrm{nilpotent}\}.
\]
\end{theorem}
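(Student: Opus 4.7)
\medskip

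\noindent\textbf{Plan.} The strategy is to construct the symmetric monoidal nearby cycles functor $\Psi$ by specialization through the integral ring $\mathcal{O}_C$, obtain the monodromy $N$ from the tame covering tower determined by $(\pi^{1/n})_n$, and then prove the equivalence by testing on the generators of $\mathcal{D}_{\mathrm{mot}}(C)$ afforded by the rigidity/generation result stated earlier.

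\medskip

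\noindent\textbf{Step 1: Constructing $\Psi$.} I would view $\mathcal{O}_C$ as an analytic Banach ring with sup norm, so that the quotient $\mathcal{O}_C\twoheadrightarrow k$ gives an arc-map $i:\mathcal{M}_{\mathrm{arc}}(k)\to\mathcal{M}_{\mathrm{arc}}(\mathcal{O}_C)$, while the inclusion $\mathcal{O}_C\hookrightarrow C$ gives $j:\mathcal{M}_{\mathrm{arc}}(C)\to\mathcal{M}_{\mathrm{arc}}(\mathcal{O}_C)$. The $6$-functor formalism established in the previous theorem allows us to set
\[
\Psi := i^{\ast}j_{\ast}:\mathcal{D}_{\mathrm{mot}}(C)\to\mathcal{D}_{\mathrm{mot}}(\mathcal{O}_C).
\]
Ball-invariance applied to the contraction $\mathcal{O}_C\leadsto k$ (dilating the pseudouniformizer $\pi$ to zero) shows $i^{\ast}:\mathcal{D}_{\mathrm{mot}}(\mathcal{O}_C)\xrightarrow{\sim}\mathcal{D}_{\mathrm{mot}}(k)$, so $\Psi$ lands in $\mathcal{D}_{\mathrm{mot}}(k)$. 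The functor is symmetric monoidal because $j_\ast$ is pro-\'etale-like over $\mathcal{O}_C$ (the fiber $C/\mathcal{O}_C$ is obtained by inverting $\pi$), and the projection formula together with compatibility of $i^*$ with $\otimes_{\mathrm{mot}}$ supplies the structure.

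\medskip

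\noindent\textbf{Step 2: The monodromy operator $N$.} For $n$ prime to $p$, the compatible roots $\pi^{1/n}$ produce a tame tower $C_0\subset C_1\subset\cdots$ with $C=\widehat{\mathrm{colim}}\,C_n$, equivariant for the tame inertia $I_t\simeq\varprojlim_n\mu_n(k)$. Motivically this is precisely the pro-group $\mathbb{Z}(1)_{\mathrm{tame}}$. Its infinitesimal/Lie-theoretic action on $\Psi$, obtained by rationalizing a Kummer class, dualizes to a natural transformation
\[
N:\Psi\longrightarrow \Psi\otimes\mathbb{Q}(-1),
\]
concretely coming from the class of $\pi\in\mathbb{G}_m(C)$ via the description $\mathbb{Z}(1)\simeq \overline{\mathbb{G}}_m[-1]$ of the earlier proposition. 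Local nilpotence of $N$ on compact objects is the motivic analog of Grothendieck's local monodromy theorem; I would reduce it to the case of good reduction (where $N=0$) plus the tame-invariant part, using the generation statement for $\mathcal{D}_{\mathrm{mot}}(C)$ by smooth projective varieties with good reduction.

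\medskip

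\noindent\textbf{Step 3: The equivalence.} With the refined functor $M\mapsto(\Psi M,N)$ in hand, full faithfulness and essential surjectivity reduce, by rigidity, to a check on the compact generators. Smooth projective $X/C$ with good model $\mathfrak{X}/\mathcal{O}_C$ give $\Psi\mathbb{Z}_{\mathrm{mot}}[X]\simeq\mathbb{Z}_{\mathrm{mot}}[\mathfrak{X}_k]$ by smooth proper base change and ball-invariance, with $N=0$, so $\Psi$ is essentially surjective onto the $N=0$ subcategory. To promote this to all pairs $(A,N)$ one uses Kummer descent: $\mathcal{D}_{\mathrm{mot}}(C)\simeq\mathrm{colim}_n\mathcal{D}_{\mathrm{mot}}(C_n)$ together with the tame equivariance, which identifies $\mathcal{D}_{\mathrm{mot}}(C)$ with the $\mathbb{Z}(1)_{\mathrm{tame}}$-equivariant objects in $\mathcal{D}_{\mathrm{mot}}(k)$, and the latter, rationally (hence universally in our setting, since $p$ is already inverted by part (iii) of the structure proposition), unwinds to pairs $(A,N)$ with $N$ locally nilpotent valued in $A\otimes\mathbb{Q}(-1)$. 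The Hom computation on generators uses the $K$-theoretic description of motivic cohomology from the theorem on $\overline{K}$, reducing matching of $\mathrm{Hom}$s on both sides to a comparison of $K(C)$ and $K(k)$ modulo $K(\mathcal{O}_C)$ after inverting $p$.

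\medskip

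\noindent\textbf{Main obstacle.} The hard step is the Kummer-descent identification in Step~3: showing that $\mathcal{D}_{\mathrm{mot}}(C)$ really is the $\mathbb{Z}(1)_{\mathrm{tame}}$-equivariant category over $k$. This is a motivic analog of the local monodromy theorem, and requires combining the cancellation theorem with fine control of how ball-invariance and arc-descent interact with the tame tower. Everything else is bookkeeping once this structural fact is in place.
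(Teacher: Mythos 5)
Your Step 1 already breaks down: the functor $i^\ast j_\ast$ through $\mathcal M_{\mathrm{arc}}(\mathcal O_C)$ cannot be the $\Psi$ of the theorem. It is only lax monoidal, and it is not even unital as a monoidal functor: its value on the unit is the stalk at the closed point of $j_\ast\mathbb Q$, i.e.\ essentially $f_\ast\mathbb Q\cong \mathbb Q\oplus\mathbb Q(-1)[-1]$, not $\mathbb Q$. Moreover your construction never uses the chosen roots $\pi^{1/n}$, while the theorem's $\Psi=\Psi_{(\pi^{1/n})_n}$ depends on them; and the asserted equivalence $i^\ast:\mathcal D_{\mathrm{mot}}(\mathcal O_C)\simeq\mathcal D_{\mathrm{mot}}(k)$ ``by ball-invariance under contraction'' is a substantive unproved claim (ball-invariance concerns $\mathbb B_Y\to Y$, not a dilation of $\mathcal O_C$ onto its special point) that appears nowhere in the paper. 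The paper instead first proves, via rigidity/compact generation and the fact that the image of $f^\ast$ generates, the Barr--Beck--Lurie identification $\mathcal D_{\mathrm{mot}}(C)\cong\mathrm{Mod}_{f_\ast\mathbb Z}(\mathcal D_{\mathrm{mot}}(k))$ for $f:\mathcal M_{\mathrm{arc}}(C)\to\mathcal M_{\mathrm{arc}}(k)$, shows $f_\ast\mathbb Z\cong g_\ast\mathbb Z$ for $g:\tilde{\mathbb G}_{m,k}=\varprojlim_{x\mapsto x^n}\mathbb G_{m,k}\to\mathcal M_{\mathrm{arc}}(k)$ (the map $C\to\tilde{\mathbb G}_{m,k}$ being where the roots enter), and defines $\Psi$ as base change along the augmentation $f_\ast\mathbb Z\cong g_\ast\mathbb Z\to\mathbb Z$ given by the point $1$; symmetric monoidality is then automatic, and conservativity comes from descendability of this augmentation.

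The second gap is in Step 3, which is where the actual content lies. Your ``Kummer descent'' rests on the false premise $C=\widehat{\mathrm{colim}}_n C_n$ for the tame tower: in residue characteristic $p$ the completed colimit of $k((T^{1/n}))$, $n$ prime to $p$, is very far from algebraically closed, so $\mathcal D_{\mathrm{mot}}(C)$ is not a colimit of the $\mathcal D_{\mathrm{mot}}(C_n)$ in any obvious sense, and the identification of $\mathcal D_{\mathrm{mot}}(C)$ with ``$\mathbb Z(1)_{\mathrm{tame}}$-equivariant objects over $k$'' is exactly the theorem to be proved — you correctly flag it as the main obstacle but supply no argument. The paper's mechanism is different and concrete: one computes $f_\ast\mathbb Z\cong\mathbb Z[\tfrac1p]\oplus\mathbb Q(-1)[-1]$, identifies the mixed Tate subcategory $\mathcal D_{MT}(k)$ with $\mathcal D_{\mathrm{qc}}$ of an explicit stack $X$ over $\mathbb Z[\tfrac1p]$ (using Quillen's computation $K(\overline{\mathbb F}_p)_{\mathbb Q}=\mathbb Q$), identifies $f_\ast\mathbb Z$ with $h_\ast\mathcal O$ for an explicit gerbe $h:X_C\to X$ built from $C^\times/(1+C_{<1})$, reduces the general statement to the mixed Tate part via $\mathcal D_{\mathrm{mot}}(C)\cong\mathcal D_{MT}(C)\otimes_{\mathcal D_{MT}(k)}\mathcal D_{\mathrm{mot}}(k)$, and finally splits the gerbe using $(\pi^{1/n})_n$, whereupon modules unwind to pairs $(A,N)$ with $N:A\to A\otimes\mathbb Q(-1)$ locally nilpotent. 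None of these steps is replaced by a working argument in your proposal, so as it stands the proof of the equivalence (and even the construction of the correct $\Psi$) is missing.
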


{\bf Acknowledgments.} The idea of using rigid-analytic motives to study independence of $\ell$ of $L$-parameters was found while giving the lecture course on $6$-functor formalisms in the winter term 2022/23. This was of course inspired by the work of Richarz and Scholbach on motivic geometric Satake, and their work in the direction of a motivic version of the results of V.~Lafforgue. I want to thank Johannes Ansch\"utz, Joseph Ayoub, Dustin Clausen, David Hansen, Hyungseop Kim and Dmitry Kubrak for very helpful feedback on a draft, and Mura Yakerson for some discussions on the cancellation theorem. The results of this paper were the basis for an ARGOS seminar in the winter term 2024/25, and I heartily thank all the speakers and participants for their feedback. Moreover, I am very grateful to the referee for a very careful reading, and a large number of suggestions for improvements.

\section{Banach rings and Berkovich spectrum}

We begin by recalling some basic definitions and facts about Banach rings and Berkovich spectra that we will use. We follow closely Berkovich's work \cite[Chapter 1]{Berkovich}.

\begin{definition} A seminormed ring is a commutative unital ring $R$ equipped with a map
\[
|\cdot|_R: R\to \mathbb R_{\geq 0}
\]
satisfying the following properties.
\begin{enumerate}
\item[{\rm (i)}] One has $|0|_R=0$ and $|\text-1|_R\leq 1$.
\item[{\rm (ii)}] For all $x,y\in R$, one has $|xy|_R\leq |x|_R |y|_R$.
\item[{\rm (iii)}] For all $x,y\in R$, one has $|x+y|_R\leq |x|_R+|y|_R$.
\end{enumerate}

A map of seminormed rings $(R,|\cdot|_R)\to (S,|\cdot|_S)$ is a map of commutative unital rings $f: R\to S$ such that $|f(x)|_S\leq |x|_R$ for all $x\in R$.
\end{definition}

Any seminorm on $R$ induces a ring topology on $R$, and we will often implicitly endow $R$ with this topology.

\begin{definition} A Banach ring is a seminormed ring that is complete with respect to the induced topology, i.e.~any Cauchy sequence has a unique limit.
\end{definition}

In particular, in a Banach ring $(R,|\cdot|_R)$, the seminorm $|\cdot|_R$ is actually a norm, i.e.~if $|x|_R=0$ then $x=0$. As usual, the inclusion of Banach rings into seminormed rings has a left adjoint, the completion.

\begin{remark} If one applies (ii) to $y=1$ one gets $|x|_R\leq |1|_R |x|_R$, which shows that if $|1|_R<1$ then in fact $|x|_R=0$ for all $x\in R$, and so the completion of $R$ is zero. However, one cannot ask $|1|_R=1$ as this would exclude $R=0$ from being a Banach ring.
\end{remark}

\begin{proposition} The category of seminormed rings has all colimits. In particular, the category of Banach rings has all colimits.
\end{proposition}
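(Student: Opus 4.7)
The plan is to construct colimits in seminormed rings directly by forming the underlying ring-theoretic colimit and equipping it with an appropriate seminorm, and then deduce the statement for Banach rings via the Hausdorff completion functor.

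Concretely, for a small diagram $(R_i)_{i\in I}$ of seminormed rings, I would let $R=\mathrm{colim}_i R_i$ denote the colimit taken in the category of commutative rings, with canonical maps $g_i\colon R_i\to R$. The universal property forces us to equip $R$ with the \emph{largest} seminorm making every $g_i$ contracting, and I propose the explicit formula
\[
|x|_R = \inf\left\{\sum_{k=1}^{n}\prod_{j=1}^{m_k}|y_{k,j}|_{R_{i_{k,j}}} \;:\; x = \sum_{k=1}^{n}\prod_{j=1}^{m_k} g_{i_{k,j}}(y_{k,j})\right\}.
\]
The infimum runs over a nonempty set because every element of a ring-theoretic colimit can be written as a polynomial expression in images of the structure maps (with the empty diagram handled separately by setting $R=\mathbb Z$ with its Archimedean norm). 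I would then verify (iii) by concatenating presentations of $x$ and $y$, verify (ii) by distributing a product of presentations, and check $|\text-1|_R\leq 1$ from the one-term presentation $-1=g_i(-1_{R_i})$ together with $|\text-1_{R_i}|_{R_i}\leq 1$. Each $g_i$ is contracting by the trivial one-term presentation. For the universal property, given another cocone of contracting maps $f_i\colon R_i\to S$, applying the induced ring map $f\colon R\to S$ term by term to any presentation of $x$ yields $|f(x)|_S\leq \sum_k\prod_j |y_{k,j}|_{R_{i_{k,j}}}$, and passing to the infimum shows $|f(x)|_S\leq |x|_R$.

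For the Banach case, the inclusion $\mathrm{Ban}\hookrightarrow\mathrm{SNRing}$ admits a left adjoint via Hausdorff completion (quotient by elements of seminorm zero, then completion with respect to the induced norm, which is compatible with the ring operations by continuity). As a left adjoint this preserves colimits, so colimits in Banach rings are obtained by applying this completion functor to the colimits just constructed in seminormed rings.

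The main obstacle I anticipate is cleanly justifying that the infimum is taken over a nonempty set, which reduces to the standard but slightly tedious fact that every element of a colimit of commutative rings admits a presentation as a polynomial expression in images of the structure maps; this follows from the description of such colimits as quotients of the free polynomial ring on the disjoint union of the underlying sets by the relations imposed by the ring structures and the diagram. Once this is granted, all the remaining verifications amount to simple manipulations of presentations.
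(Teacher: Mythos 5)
Your construction is correct. It differs from the paper's in organization rather than in substance: the paper reduces the seminormed case to three generating shapes --- the initial object $\mathbb Z$ with its usual norm, filtered colimits (underlying ring colimit with the infimum of the norms of lifts), and pushouts (the tensor product $B\otimes_A C$ with the projective seminorm $\inf\{\sum_i |b_i|_B|c_i|_C\}$) --- whereas you build the colimit of an arbitrary small diagram in one stroke, equipping the ring-theoretic colimit with the infimum seminorm over all presentations as sums of products of images. Your formula specializes to the paper's norms in the filtered and pushout cases, so the two constructions agree; what your route buys is that you never need the (standard, but unstated) fact that all colimits are generated by the initial object, pushouts and filtered colimits, at the price of a slightly heavier verification, while the paper's case-by-case version has the advantage of isolating the explicit projective norm on pushouts and the infimum description in filtered colimits, both of which are reused later (in the analysis of Berkovich spectra of colimits and in the compact-assembledness argument). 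The Banach case is handled identically in both (complete the seminormed colimit); just be aware that the clean justification is the reflective-subcategory argument --- maps from the completion to a Banach ring coincide with maps from the seminormed colimit --- rather than only ``left adjoints preserve colimits,'' and that the initiality of $\mathbb Z$ rests on the axiom $|-1|\leq 1$ together with the triangle inequality, as the paper notes in passing.
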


\begin{proof} The case of Banach rings follows from the case of seminormed rings by taking colimits in seminormed rings and then completing.

For the case of seminormed rings, it suffices to handle the initial object, and the case of filtered colimits and of pushouts. The initial object is given by $\mathbb Z$ with its standard norm $|n|_{\mathbb Z}=\pm n$. (Here we use the condition $|\text-1|_R\leq 1$ on a seminorm, which implies $|1|_R\leq 1$ and then the bound follows from the triangle inequality.) Filtered colimits $\mathrm{colim}_i R_i$ are computed naively on the underlying rings, and the norm of an element $r\in \mathrm{colim}_i R_i$ that is the image of some $r_{i_0}\in R_{i_0}$ is given as the infimum of $|r_i|_{R_i}$, where $r_i$ is the image of $r_{i_0}$ in $R_i$ for $i\geq i_0$. For pushouts, take a diagram $B\leftarrow A\to C$ of seminormed rings. On $D=B\otimes_A C$, we can define the projective norm: Namely, we set
\[
|d|_D = \mathrm{inf}\{\sum_i |b_i|_B|c_i|_C : d = \sum_i b_i\otimes c_i\},
\]
the infimum of $\sum_i |b_i|_B|c_i|_C$ over all possible presentations of $d$ as a tensor $\sum_i b_i\otimes c_i$. It is easy to see that $(D,|\cdot|_D)$ defines a seminormed ring, and in fact the pushout in the category of seminormed rings.
\end{proof}

\begin{definition} A seminormed ring $(R,|\cdot|_R)$ is non-archimedean if the ultrametric triangle inequality
\[
|x+y|_R\leq \mathrm{max}(|x|_R,|y|_R)
\]
holds true for all $x,y\in R$.
\end{definition}

\begin{proposition} If $(R,|\cdot|_R)$ is power-multiplicative (i.e., $|x^n|_R=|x|^n_R$), then $R$ is nonarchimedean if and only if $|2|_R\leq 1$.
\end{proposition}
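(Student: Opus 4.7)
The plan is to handle the two implications separately. The ``only if'' direction is immediate: if the ultrametric inequality holds, then $|2|_R = |1+1|_R \leq \max(|1|_R,|1|_R) \leq 1$, using that the axioms on a seminorm force $|1|_R\leq 1$ (as noted in the preceding remark).

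For the substantive ``if'' direction, the plan is the classical Artin-style trick: raise $x+y$ to a high power, expand, and exploit power-multiplicativity to recover the strong triangle inequality in the limit. First I would establish a preliminary bound on $|m|_R$ for positive integers $m$. Since $|2|_R\leq 1$, every power $|2^j|_R$ is bounded by $1$. Writing $m<2^{n+1}$ in binary as a sum of at most $n+1$ distinct powers of $2$ and applying the ordinary triangle inequality then gives $|m|_R \leq n+1$ whenever $m\leq 2^n$. In particular, each binomial coefficient $\binom{n}{k}\leq 2^n$ satisfies $|\binom{n}{k}|_R \leq n+1$.

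The main step is then to estimate, for any $x,y\in R$ and any $n\geq 1$, using power-multiplicativity:
\[
|x+y|_R^n \;=\; |(x+y)^n|_R \;\leq\; \sum_{k=0}^n \left|\tbinom{n}{k}\right|_R |x|_R^k |y|_R^{n-k} \;\leq\; (n+1)^2 \max(|x|_R,|y|_R)^n,
\]
where the factor $(n+1)^2$ comes from the bound on binomial coefficients and the $n+1$ terms in the sum. Taking $n$-th roots yields $|x+y|_R \leq (n+1)^{2/n}\max(|x|_R,|y|_R)$, and sending $n\to \infty$ gives the desired ultrametric inequality.

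I do not anticipate a real obstacle here; the only subtle point is ensuring that the bound $|m|_R\leq n+1$ for $m\leq 2^n$ is genuinely available from the axioms (which it is, since seminorms satisfy the ordinary triangle inequality and $|2^j|_R\leq |2|_R^j\leq 1$), and that power-multiplicativity is used exactly where it is needed, namely in the opening equality $|(x+y)^n|_R = |x+y|_R^n$ that allows the polynomial factor $(n+1)^2$ to be absorbed in the limit.
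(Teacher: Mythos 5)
Your proof is correct and follows essentially the same route as the paper: expand $(x+y)^n$, bound the coefficients, and use power-multiplicativity plus an $n$-th root to kill the polynomial factor. The only cosmetic difference is that the paper first upgrades the bound on integers to $|m|_R\leq 1$ (again via power-multiplicativity), while you keep the cruder $|m|_R\leq n+1$ bound on binomial coefficients and absorb the resulting $(n+1)^2$ factor in the limit, which works just as well.
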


\begin{proof} One direction is clear, so assume $|2|_R\leq 1$. First, we prove $|n|_R\leq 1$ for all integers $n$. Write $n$ in binary expansion and use $|2^k|_R\leq 1$ for all $k\geq 0$ to see that $|n|_R$ is bounded by $1+\mathrm{log}_2(n)$. Applying this result for $|n^i|_R= |n|_R^i$ and letting $i\to \infty$ shows that $|n|_R\leq 1$. Now write
\[
(x+y)^n = \sum_{i=0}^n \binom{n}{i} x^i y^{n-i}
\]
to get $|x+y|_R^n\leq (n+1)\mathrm{max}(|x|_R,|y|_R)^n$ for all $n\geq 1$, which in the limit $n\to \infty$ gives $|x+y|_R\leq \mathrm{max}(|x|_R,|y|_R)$.
\end{proof}

\subsection{Free seminormed rings} Given a seminormed ring $(R,|\cdot|_R)$ and real numbers $r_1,\ldots,r_n\geq 0$, there is a free seminormed $R$-algebra
\[
R[T_1,\ldots,T_n]_{r_1,\ldots,r_n}
\]
with new elements $T_1,\ldots,T_n$ of norm $|T_i|\leq r_i$. This is given by the polynomial algebra $R[T_1,\ldots,T_n]$ with norm
\[
|\sum_{i_1,\ldots,i_n} a_{i_1,\ldots,i_n} T_1^{i_1}\cdots T_n^{i_n}| = \sum_{i_1,\ldots,i_n} |a_{i_1,\ldots,i_n}| r_1^{i_1}\cdots r_n^{i_n},
\]
where $a_{i_1,\ldots,i_n}\in R$. Beware however that there is not a free seminormed $R$-algebra on a variable $T$: One has to bound the norm of $T$. Beware also that unlike in algebraic contexts, these free seminormed $R$-algebras are not compact objects in the category of seminormed rings, and in fact the category of seminormed rings is not compactly generated. (There are still some compact objects, for example $\mathbb F_p$ with the quotient norm from $\mathbb Z$.)

Still, the category of seminormed rings has enough compact morphisms, i.e.~it is compactly assembled. Let us recall the relevant background in its natural generality of $\infty$-categories.

\begin{definition} Let $C$ be a presentable $\infty$-category.
\begin{enumerate}
\item[{\rm (i)}] A morphism $f: X\to Y$ is weakly compact if for any filtered colimit $Z=\mathrm{colim}_i Z_i$ with a map $Y\to Z$, there is some $i$ and a map $X\to Z_i$ making
\[\xymatrix{
X\ar[r]\ar[d]^f & Z_i\ar[d]\\
Y\ar[r] & Z
}\]
commute.
\item[{\rm (ii)}] The $\infty$-category $C$ is compactly assembled if filtered colimits commute with finite limits in $C$, and it is generated under colimits by objects of the form
\[
\mathrm{colim}(X_0\to X_1\to X_2\to \ldots)
\]
where all maps $X_i\to X_{i+1}$ are weakly compact.
\end{enumerate}
\end{definition}

\begin{remark} A more abstract characterization is that $C$ is compactly assembled if and only if the colimit functor
\[
\mathrm{Ind}(C)\to C
\]
has a left adjoint, see \cite[Theorem 2.39]{Ramzi}. The terminology ``weakly compact'' follows \cite[Definition 2.1]{Ramzi}. As it is the only notion of compactness of morphisms we will use, we will occasionally drop the ``weakly''. As in a compactly assembled $\infty$-category, the different notions of compact morphisms notions agree (cf.~\cite[Example 2.20]{Ramzi}), we hope this will not lead to confusion.
\end{remark}

\begin{proposition} The category of seminormed rings is compactly assembled.
\end{proposition}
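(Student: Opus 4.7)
The strategy is to identify enough compact morphisms among free seminormed rings, realize each free seminormed ring as a sequential colimit of such, and then present every seminormed ring as a colimit of free ones. The compact morphisms I will use are the \emph{radius-shrinking} maps: for any $n \geq 1$ and reals $r_i > r_i' \geq 0$ (strict inequality in each coordinate), the identity-on-polynomials map
\[
f: \mathbb{Z}[T_1,\ldots,T_n]_{r_1,\ldots,r_n}\to \mathbb{Z}[T_1,\ldots,T_n]_{r_1',\ldots,r_n'}
\]
is a compact morphism. To verify this, fix a filtered colimit $Z=\mathrm{colim}_i Z_i$ and a map $g: \mathbb{Z}[T_1,\ldots,T_n]_{r_1',\ldots,r_n'}\to Z$, and set $t_j = g(T_j)$, so $|t_j|_Z \leq r_j'$. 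The key input is the explicit description of filtered colimits from the previous proposition: the underlying ring is the naive ring colimit, and the norm on the colimit is the infimum over representatives. Using $\varepsilon_j := r_j - r_j' > 0$, I find for each $j$ some index $i_j$ and a representative $t_j^{(i_j)} \in Z_{i_j}$ with norm at most $r_j$; by filteredness I dominate all the $i_j$ by a single $i$, and contractivity of transition maps preserves the bounds, producing representatives $t_j^{(i)} \in Z_i$ with $|t_j^{(i)}|_{Z_i} \leq r_j$. The assignment $T_j \mapsto t_j^{(i)}$ then defines the required factorization $\mathbb{Z}[T_1,\ldots,T_n]_{r_1,\ldots,r_n} \to Z_i$.

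With compact maps in hand, every free seminormed ring is automatically a sequential colimit of compact maps: for $r_1, \ldots, r_n \geq 0$, the chain $\mathbb{Z}[T_1,\ldots,T_n]_{r_1+1/k,\ldots,r_n+1/k}$ indexed by $k = 1, 2, \ldots$ has compact transition maps by the above, and its filtered colimit (computed with the infimum norm) recovers $\mathbb{Z}[T_1,\ldots,T_n]_{r_1,\ldots,r_n}$, since $\sum_I |a_I|(r + 1/k)^I \to \sum_I |a_I| r^I$. Finally, to show every seminormed ring $R$ is a colimit of free ones, I consider the diagram $D_R$ whose objects are finite tuples $(r_1,\ldots,r_n) \in R^n$ with chosen bounds $s_j \geq |r_j|_R$, equipped with the evaluation $\mathbb{Z}[T_1,\ldots,T_n]_{s_1,\ldots,s_n} \to R$, $T_j \mapsto r_j$; morphisms are inclusions of tuples compatible with refinement of the bounds. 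This diagram is filtered by concatenation of tuples, surjects onto $R$ at the level of underlying rings, and its colimit seminorm at $r \in R$ equals $|r|_R$: the presentation $\mathbb{Z}[T]_{|r|_R} \to R$, $T \mapsto r$, gives the upper bound, while the universal contractive map from the colimit to $R$ gives the lower bound.

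Combining the three steps, every seminormed ring belongs to the smallest colimit-closed full subcategory containing sequential colimits of compact morphisms, which is exactly the definition of compact assembly. The main obstacle is Step 1: the careful $\varepsilon$-argument using the infimum-norm description of filtered colimits in seminormed rings, together with the dominance argument in the filtered index. Once this compactness lemma is established, the remaining assertions are routine bookkeeping about colimits.
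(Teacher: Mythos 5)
Your Steps 1 and 2 are correct and are essentially the paper's argument: compactness of radius-shrinking maps between free seminormed rings follows from the infimum description of the seminorm on a filtered colimit, and each $\mathbb Z[T_1,\ldots,T_n]_{r_1,\ldots,r_n}$ is then a sequential colimit along such compact maps. The problem is Step 3, where you claim that every seminormed ring $R$ is the colimit of the diagram $D_R$ of free rings indexed by finite tuples of elements of $R$ with bounds, with morphisms given by \emph{inclusions of tuples}. A filtered colimit of polynomial rings along variable-inclusions is computed naively on underlying rings, so the colimit of $D_R$ has underlying ring the polynomial ring $\mathbb Z[T_r : r\in R]$ on one variable for each element of $R$, equipped with the weighted $\ell^1$-seminorm $\sum_I |a_I|\prod_j |r_j|_R^{I_j}$; this is the \emph{free} seminormed ring on the underlying normed set of $R$, not $R$. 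No relation of $R$ is ever imposed: for instance $T_{r_1}T_{r_2}-T_{r_1r_2}$ maps to $0$ in $R$ but has strictly positive seminorm in the colimit in general. Your verification only checks that the colimit surjects onto $R$ and that the seminorm of the image of each single generator $T_r$ is $|r|_R$; that does not show the canonical map from the colimit to $R$ is an isomorphism, and indeed it is not.

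The gap is fixable, but it needs a non-filtered colimit (inclusions of tuples alone can never create relations). For example, let $F$ be the free seminormed ring on generators $T_x$, $x\in R$, with bounds $|x|_R$ (itself a filtered colimit of finitely generated free rings, hence in your colimit closure), and let $I=\ker(F\to R)$. Then $R$ is the coequalizer, in seminormed rings, of the two maps $F\langle T_i : i\in I\rangle_{(|i|_F)}\rightrightarrows F$ sending $T_i$ to $i$ and to $0$ respectively: on underlying rings this coequalizer is $F/I\cong R$, and the coequalizer carries the quotient seminorm, which agrees with $|\cdot|_R$ because the preimage $T_x$ of $x$ has norm $|x|_R$ while contractivity of $F\to R$ gives the reverse inequality. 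Since both terms of the parallel pair lie in the colimit closure of the finitely generated free rings, this completes the generation step. (The paper itself dismisses this step with the one-line assertion that the free rings generate under colimits, so you were right to try to justify it; but the justification you gave is the one step of your write-up that fails.)
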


We warn the reader that this assertion does not pass to the subcategory of Banach rings. Still, the argument below, of writing a disc as a filtered colimit of slightly larger discs (``overconvergence''), will be a recurring theme.

\begin{proof} Let $\mathbb Z[T]_r$ be the free seminormed ring with an element $T$ with $|T|\leq r$. Then for $r'>r$, the map $\mathbb Z[T]_{r'}\to \mathbb Z[T]_r$ is compact. This means that for any filtered colimit of seminormed rings $R=\mathrm{colim}_i R_i$ and an element $T\in R$ with $|T|_R\leq r$, there is some $i$ such that $T$ lifts to $R_i$ with $|T|_{R_i}\leq r'$. This follows from $|T|_R$ being the infimum of the $|T|_{R_i}$.

It follows that $\mathbb Z[T]_r = \mathrm{colim}_{r'>r} \mathbb Z[T]_{r'}$ is a presentation as a sequential colimit along weakly compact transition maps. The whole category is generated by this example (for varying $r\in \mathbb R_{>0}$) under colimits.
\end{proof}

\subsection{Banach fields} A special class of Banach rings are the Banach fields.

\begin{definition} A Banach field is a Banach ring $(K,|\cdot|_K)$ such that $K$ is a field and $|xy|_K=|x|_K|y|_K$ for all $x,y\in K$.
\end{definition}

\begin{remark} In particular, $0\neq 1$ in $K$ and hence $|1|_K=1$.
\end{remark}

There are three flavours of Banach fields: The discrete Banach fields in which $K$ has the discrete topology and $|\cdot|_K$ takes values in $\{0,1\}$, and the nondiscrete fields in which there exists some $\pi\in K$ with $0<|\pi|_K<1$, which can further be divided into the archimedean and nonarchimedean ones. In the following, by a nonarchimedean Banach field we also mean that in particular it is not discrete.

\begin{theorem}[Gelfand--Mazur] Any Banach field $K$ with $|2|_K>1$ is isomorphic to $\mathbb R$ or $\mathbb C$, endowed with some power $|\cdot|^\alpha$ of the usual norm, with $0<\alpha\leq 1$.
\end{theorem}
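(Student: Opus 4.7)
The plan is to reduce the statement to the classical complex Gelfand--Mazur theorem for commutative Banach algebras, which says that every complex Banach algebra in which every non-zero element is invertible equals $\mathbb C$.

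First I would pin down the characteristic and identify a copy of $\mathbb R$ inside $K$. If $\mathrm{char}(K) = p > 0$, then $2 \in \mathbb F_p \subseteq K$ satisfies either $2 = 0$ (if $p = 2$) or $2^{p-1} = 1$, so that $|2|_K$ is $0$ or a positive real root of unity; either contradicts the hypothesis $|2|_K > 1$. Hence $\mathbb Q \hookrightarrow K$. Ostrowski's theorem classifies the absolute values on $\mathbb Q$, and since $|2|_K > 1$ rules out the $p$-adic and trivial ones, the restriction must equal $|\cdot|^\alpha$ for some $\alpha > 0$; the inequality $|n|_K \leq n$ (from the triangle inequality and $|1|_K = 1$) forces $\alpha \leq 1$. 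Completeness of $K$ then produces a closed embedding of $\mathbb R$ with norm $|\cdot|^\alpha$ into $K$.

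Now I distinguish two cases. If $K$ contains a square root of $-1$, set $L := K$, a $\mathbb C$-Banach algebra via $\mathbb R[i] \subseteq K$. Otherwise $T^2 + 1$ is irreducible over $K$ and I set $L := K[i] = K \otimes_{\mathbb R} \mathbb C$, which is a field; on the underlying $\mathbb R$-vector space $K \oplus Ki$ I equip $L$ with the equivalent Banach norm $|a + bi| := |a|_K + |b|_K$, which is easily checked to be submultiplicative. In either case $L$ is a complex Banach algebra that is a field and contains $\mathbb C$. To such $L$ I apply the Liouville argument: for each $x \in L$, the map $f \colon \mathbb C \to L$, $f(\lambda) = (x - \lambda)^{-1}$, is holomorphic (since $h^{-1}(f(\lambda + h) - f(\lambda)) = (x - \lambda - h)^{-1}(x - \lambda)^{-1} \to (x - \lambda)^{-2}$ in norm, using continuity of multiplication and of inversion near units) and tends to $0$ as $|\lambda| \to \infty$ (because $\|\lambda^{-1}\|_L \to 0$, and for $|\lambda|$ large the geometric expansion $f(\lambda) = -\lambda^{-1} \sum_{n \geq 0} (x/\lambda)^n$ converges in $L$). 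Composing with any bounded $\mathbb C$-linear functional on $L$ yields a bounded entire scalar function, constant by Liouville and zero by the decay at infinity; Hahn--Banach then forces $f \equiv 0$, a contradiction. So $x - \lambda$ is non-invertible for some $\lambda$, which in the field $L$ means $x = \lambda \in \mathbb C$. Hence $L = \mathbb C$, giving $K = \mathbb C$ in the first case and $K = \mathbb R$ in the second.

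It remains to identify $|\cdot|_K$ with a power of the standard norm. For $K = \mathbb R$ this is Ostrowski. For $K = \mathbb C$, write $z = r e^{i\theta}$ with $r \geq 0$; then $\theta \mapsto |e^{i\theta}|_K$ is a continuous character of the compact group $S^1$ into $\mathbb R_{>0}$, and $\mathbb R_{>0}$ contains no non-trivial compact subgroups, so this character is trivial. Thus $|z|_K = r^\alpha = |z|^\alpha$. The main obstacle, such as it is, is organizing the case distinction around whether $i$ already lies in $K$ so that the Liouville argument runs inside a genuine complex Banach algebra; everything else is classical input (Ostrowski, Hahn--Banach, Liouville) plus routine submultiplicative estimates.
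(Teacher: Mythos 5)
The paper itself quotes this theorem as classical input (it is never proved there), so your argument has to stand on its own; it follows the standard Gelfand--Mazur reduction, and most of it is fine: positive characteristic is excluded by multiplicativity, Ostrowski on $\mathbb Q$ gives $|\cdot|_K=|\cdot|_\infty^\alpha$ on $\mathbb Q$ with $0<\alpha\leq 1$, the closure of $\mathbb Q$ is a copy of $\mathbb R$, the passage to $L=K$ or $K[i]$ with the sum norm is correct, and the endgame (no nontrivial compact subgroups of $\mathbb R_{>0}$) is fine.

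The gap is the sentence ``in either case $L$ is a complex Banach algebra'' and the ensuing Hahn--Banach/Liouville step. For real scalars $r$ your norm satisfies $|r x|_L=|r|_\infty^{\alpha}|x|_L$ (and similarly, up to constants, for complex scalars), so when $\alpha<1$ it is \emph{not} a vector-space norm: $L$ is a priori only a quasi-Banach ($\alpha$-homogeneous) algebra. For such spaces the unit ball need not be convex and the continuous dual need not separate points --- it can even be trivial, e.g.\ $L^{\alpha}[0,1]$ with $f\mapsto\int|f|^{\alpha}$ is complete, subadditive and $\alpha$-homogeneous, exactly the same homogeneity type as your $|\cdot|_L$, yet has zero dual --- so ``compose with any bounded $\mathbb C$-linear functional and apply Hahn--Banach'' is not justified, and this is precisely the case ($\alpha<1$) that distinguishes the statement from the textbook Gelfand--Mazur theorem. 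The gap is reparable: using $|m|_K=m^{\alpha}$ for integers $m$ and the binomial expansion, together with $\sum_i u_i^{\alpha}\leq (n+1)^{1-\alpha}\bigl(\sum_i u_i\bigr)^{\alpha}$, one gets
\[
|x+y|_K^{\,n}\;\leq\;\sum_{i=0}^{n}\binom{n}{i}^{\alpha}|x|_K^{\,i}\,|y|_K^{\,n-i}\;\leq\;(n+1)^{1-\alpha}\bigl(|x|_K^{1/\alpha}+|y|_K^{1/\alpha}\bigr)^{\alpha n},
\]
and letting $n\to\infty$ shows that $|\cdot|_K^{1/\alpha}$ is again subadditive (and still multiplicative and complete). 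This reduces you to $\alpha=1$, where $L$ really is a complex Banach algebra and your Liouville argument is the classical one; alternatively, one can avoid duality altogether by running the elementary proof of Ostrowski's theorem on fields complete with respect to an archimedean absolute value, which works directly with the multiplicative norm.
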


Now let $(R,|\cdot|_R)$ be a general seminormed ring.

\begin{definition} The Berkovich spectrum $\mathcal M(R)$ is the closed subspace
\[
\mathcal M(R)\subset \prod_{r\in R} [0,|r|_R]
\]
of all maps $||\cdot||: R\to \mathbb R_{\geq 0}$ satisfying the following properties:
\begin{enumerate}
\item[{\rm (i)}] One has $||0||=0$ and $||1||=1$.
\item[{\rm (ii)}] For all $x,y\in R$, one has $||xy||=||x||||y||$.
\item[{\rm (iii)}] For all $x,y\in R$, one has $||x+y||\leq ||x||+||y||$.
\end{enumerate}
\end{definition}

By its definition, $\mathcal M(R)$ is a compact Hausdorff space. It is easy to see that $\mathcal M(R)$ is unchanged under replacing $R$ by its completion. For any $s\in \mathcal M(R)$, one can endow $R$ with a new seminormed ring structure by using $||\cdot||=||\cdot||_s$; we let $R(s)$ be the corresponding completion of $R$. Then $R(s)$ is a Banach ring with multiplicative norm. The norm then extends to its fraction field, and we let $K(s)$ be the corresponding completion: This is a Banach field. This construction shows that for any $s\in \mathcal M(R)$ corresponding to $||\cdot||: R\to \mathbb R_{\geq 0}$, the norm can be written as the composite of a map $R\to K(s)$ to a Banach field, and the norm on the Banach field $K(s)$. The map $R\to K(s)$ has the property that the fraction field of the image is dense in $K(s)$, and this gives an equivalent description of the points of $\mathcal M(R)$ as maps $f: R\to K$ to Banach fields (in the category of Banach rings) such that the fraction field of the image is dense.

The first nontrivial theorem is the following.

\begin{theorem}\label{thm:berkovichnonempty} Let $R$ be a seminormed ring with $|1|_R=1$. Then $\mathcal M(R)\neq \emptyset$.
\end{theorem}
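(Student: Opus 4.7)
The plan is a two-step minimization. Let $\mathcal{T}$ denote the partially ordered set of bounded submultiplicative seminorms $\nu \leq |\cdot|_R$ with $\nu(1) = 1$, ordered by pointwise inequality; it is non-empty since $|\cdot|_R$ itself lies in it (using $|1|_R = 1$). For Zorn's lemma I would verify that any chain $(\nu_\alpha) \subset \mathcal{T}$ admits the pointwise infimum $\nu_*(x) := \inf_\alpha \nu_\alpha(x)$ as a lower bound in $\mathcal{T}$. Normalization and boundedness are immediate, and submultiplicativity together with the triangle inequality follow from a standard $\varepsilon$-argument: for any $x,y$ and $\varepsilon > 0$ the chain condition lets one pick a single $\nu_\alpha$ in the chain with $\nu_\alpha(x) \leq \nu_*(x) + \varepsilon$ and $\nu_\alpha(y) \leq \nu_*(y) + \varepsilon$ simultaneously, and then the inequality for $\nu_\alpha$ passes to $\nu_*$ in the $\varepsilon \to 0$ limit. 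This produces a minimal element $\nu_0 \in \mathcal{T}$.

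Next I would show that $\nu_0$ is in fact power-multiplicative. The spectral radius
\[
\tilde{\nu}_0(g) := \lim_{n\to\infty} \nu_0(g^n)^{1/n} = \inf_n \nu_0(g^n)^{1/n}
\]
exists by Fekete's lemma, is power-multiplicative and submultiplicative by construction, and satisfies $\tilde{\nu}_0 \leq \nu_0$. The triangle inequality for $\tilde{\nu}_0$ is the key technical point: one bounds $\nu_0((g+h)^n)$ by the binomial expansion $\sum_k \binom{n}{k}\nu_0(g^k)\nu_0(h^{n-k})$ and splits the sum into a central range where both $k$ and $n-k$ are large enough that $\nu_0(g^k)^{1/k}$ and $\nu_0(h^{n-k})^{1/(n-k)}$ lie within $\varepsilon$ of the respective spectral radii, plus two boundary ranges of bounded size; the central contribution is $\leq (\tilde{\nu}_0(g) + \tilde{\nu}_0(h) + 2\varepsilon)^n$, while each boundary contribution is at most a polynomial in $n$ times the same exponential. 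Taking $n$-th roots and letting $n \to \infty$ yields $\tilde{\nu}_0(g+h) \leq \tilde{\nu}_0(g) + \tilde{\nu}_0(h)$. Therefore $\tilde{\nu}_0 \in \mathcal{T}$, and minimality forces $\tilde{\nu}_0 = \nu_0$; combined with the power-multiplicativity of $\tilde{\nu}_0$ this gives $\nu_0(g^n) = \tilde{\nu}_0(g^n) = \tilde{\nu}_0(g)^n = \nu_0(g)^n$ for all $g$ and $n$.

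Finally, multiplicativity follows by a localization trick. For $f \in R$ with $\nu_0(f) > 0$, submultiplicativity makes the sequence $\nu_0(g f^n)/\nu_0(f)^n$ non-increasing in $n$, so its limit
\[
\nu_f(g) := \lim_{n\to\infty} \frac{\nu_0(g f^n)}{\nu_0(f)^n}
\]
exists in $[0, \nu_0(g)]$. Power-multiplicativity of $\nu_0$ gives $\nu_f(1) = \lim_n \nu_0(f^n)/\nu_0(f)^n = 1$; submultiplicativity for $\nu_f$ comes from the doubling identity $\nu_0(gh f^{2n}) \leq \nu_0(g f^n)\nu_0(h f^n)$, and the triangle inequality is inherited termwise, so $\nu_f \in \mathcal{T}$. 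Minimality then forces $\nu_f = \nu_0$. But if $\nu_0(fg) < \nu_0(f)\nu_0(g)$ for some $g$ with $\nu_0(f) > 0$, the $n=1$ term of the defining sequence already gives $\nu_f(g) \leq \nu_0(fg)/\nu_0(f) < \nu_0(g) = \nu_f(g)$, a contradiction; the case $\nu_0(f) = 0$ is trivial. Thus $\nu_0$ is multiplicative, hence $\nu_0 \in \mathcal{M}(R)$. The main obstacle is the triangle inequality for the spectral radius in the second step, which requires the careful splitting of the binomial sum; in the nonarchimedean case the corresponding estimate collapses to a maximum and the argument is immediate.
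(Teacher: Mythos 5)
Your proof is correct, and it takes a somewhat different route from the paper's. The paper first reduces to the case of a field: it completes $R$, observes that maximal ideals are closed (via invertibility of $1-y$ for $|y|<1$, which uses completeness), and quotients; only then does it run the minimal-bounded-seminorm argument, using the auxiliary seminorm $x\mapsto |x^2|_K^{1/2}$ to force power-multiplicativity and then the localization seminorm $y\mapsto \inf_n |x|_K^{-n}|x^ny|_K$ to force multiplicativity. You skip the field reduction and the completion entirely and minimize over bounded submultiplicative seminorms on $R$ itself; for power-multiplicativity you use the spectral radius $\lim_n\nu_0(g^n)^{1/n}$ together with the classical binomial-splitting proof of its triangle inequality, and your multiplicativity step is essentially the same localization trick as the paper's (your $\lim_n \nu_0(gf^n)/\nu_0(f)^n$ equals the paper's infimum, since the sequence is non-increasing), with the degenerate case $\nu_0(f)=0$ handled trivially rather than excluded by working in a field. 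What your route buys: it is more self-contained (no completeness or unit-group arguments are needed, and the statement really is about seminormed rings), and it replaces the paper's assertion that $|x^2|^{1/2}$ is again a seminorm --- whose triangle inequality is not obvious as stated --- by the fully justified spectral-radius estimate; this is in fact closer to Berkovich's original argument. What the paper's route buys: the reduction to a field packages the point as an honest map to a Banach field $K(s)$, which is the perspective used immediately afterwards, and it makes the two minimality steps one-line verifications once the spectral-radius machinery is taken for granted.
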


\begin{proof} We are free to replace $R$ by its completion. We can also replace $R$ by the quotient by any proper closed ideal. Note that any maximal ideal $\mathfrak m$ of $R$ is already closed. Otherwise, its closure is all of $R$, and in particular contains $1$. Thus, there exists $x\in \mathfrak m$ with $|1-x|_R<1$. Thus, $x=1-y$ with $|y|_R<1$. But then $1+y+y^2+\ldots$ converges in $R$ and defines $(1-y)^{-1}$. Thus, $x\in \mathfrak m$ is invertible, which is a contradiction.

In other words, we can assume that $R=K$ is a field. If we consider all possible seminorms on $K$ bounded by the given one and with $|1|_K=1$, then this category forms a partially ordered set (where maps are morphisms of $K$-Banach rings) and has all filtered colimits; we can thus pick a maximal element. We can also pass to the completion. Our task now is to prove that $K$ is a Banach field. First note that $x\mapsto \mathrm{lim}_{n\to \infty} |x^n|_K^{1/n}$ defines another norm on $K$ which is smaller; by choice of $K$, we thus have $|x|_K^n=|x^n|_K$ for $n>0$. Now for any nonzero $x\in K$ we can define a new norm by sending $y$ to the infimum over all $n>0$ of $|x|_K^{-n} |x^ny|_K$. This still sends $1\in K$ to $1\in \mathbb R_{\geq 0}$ as $|x|_K^n=|x^n|_K$. Again, the choice of $K$ implies that we must thus have $|x|_K^{-n} |x^ny|_K = |y|_K$, and in particular $|xy|_K=|x|_K|y|_K$.
\end{proof}

\begin{corollary} Let $R$ be a Banach ring and $f\in R$. Then $f$ is invertible if and only if $||f||: \mathcal M(R)\to \mathbb R_{\geq 0}$ is everywhere strictly positive.
\end{corollary}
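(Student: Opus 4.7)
The plan is to handle the two directions separately, with the forward one being immediate and the reverse one reducing to the previous nonemptiness theorem applied to a suitable quotient.

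For the forward direction, suppose $f$ has an inverse $g\in R$. For any point $s\in \mathcal M(R)$ with associated multiplicative seminorm $\|\cdot\|_s$, multiplicativity and $\|1\|_s=1$ give $\|f\|_s\cdot \|g\|_s = \|fg\|_s = 1$, so in particular $\|f\|_s>0$.

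For the reverse, I would argue the contrapositive: assume $f$ is not a unit and produce $s\in \mathcal M(R)$ with $\|f\|_s=0$. The natural candidate is to quotient out by $f$ and use the existence theorem. Consider the closed ideal $I = \overline{fR}\subset R$. First I would check $I$ is a proper ideal: otherwise $1\in \overline{fR}$, so some $g\in R$ satisfies $|1-gf|_R<1$, and then the geometric series $\sum_{n\geq 0}(1-gf)^n$ converges in the Banach ring $R$ and defines an inverse of $gf$, whence $f$ is invertible, a contradiction. Next, form the quotient Banach ring $R/I$ with the quotient norm, and check that $|1|_{R/I}=1$; by definition $|1|_{R/I} = \inf_{x\in I}|1-x|_R$, and if this infimum were strictly less than $1$ there would exist $x\in I$ with $|1-x|_R<1$, so $x=1-(1-x)$ would be invertible by the same geometric series trick, contradicting $I\subsetneq R$.

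Now the previous theorem applies to the Banach ring $R/I$ and produces some $\bar s\in \mathcal M(R/I)$. Pulling back along $R\to R/I$ gives a point $s\in \mathcal M(R)$, and since $f\in I$ maps to $0$ in $R/I$, we have $\|f\|_s=0$, as desired.

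The only genuine obstacle is the verification that $|1|_{R/I}=1$, which is needed in order to invoke the nonemptiness theorem in the form stated; both this and the properness of $I$ are bypassed by the same geometric series argument using completeness of $R$, so the proof is essentially immediate modulo the previous theorem.
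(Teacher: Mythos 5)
Your proposal is correct and follows essentially the same route as the paper: the paper also passes to the quotient by the closure of the ideal generated by $f$ and applies the nonemptiness theorem for the Berkovich spectrum, with the geometric-series argument justifying that this closed ideal is proper. Your additional verification that $|1|_{R/I}=1$ is a detail the paper leaves implicit, and it is checked correctly.
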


\begin{remark} This yields Gelfand's famous proof of Wiener's theorem.
\end{remark}

\begin{proof} If $f$ is not invertible, then the closure of the ideal generated by $f$ is not all of $R$ (as maximal ideals are closed, as established in the previous proof), and hence the quotient $S$ is a nonzero Banach ring where $f$ maps to $0$. Any point in the image of $\mathcal M(S)\to \mathcal M(R)$ is then a point where $||f||$ is zero.
\end{proof}

The following notion was introduced by Kedlaya--Liu \cite[Remark 2.3.9 (d)]{KedlayaLiu1} under the name ``free of trivial spectrum''; later, Kedlaya \cite[Definition 1.1.2]{KedlayaAWS} introduced the term ``analytic''. While this clashes with the notion of ``analytic ring'' in condensed mathematics, in the present paper it does not lead to problems.

\begin{definition} A Banach ring $A$ is analytic if for all $x\in \mathcal M(A)$, the Banach field $K(x)$ is non-discrete.
\end{definition}

It is not possible to define a sheaf of Banach rings on $\mathcal M(R)$ localizing $R$. Using condensed mathematics (and in particular the discussion in \cite[Lecture 5, Corollary 5.10]{Complex} generalized from $\mathbb C$-algebras to general analytic Banach algebras), one can still produce a sheaf of animated gaseous $R$-algebras, at least when $R$ is analytic (which can always be ensured by passing to $R[T^{\pm 1}]_{1/2}$). We will only use the following seemingly elementary fact, which can be proved directly.

\begin{theorem}\label{thm:idempotents} Let $R$ be a Banach ring. Then idempotent elements in $R$ are in bijection with open and closed subsets of $\mathcal M(R)$, via the map sending an idempotent $e\in R$ to the vanishing locus of $e$.
\end{theorem}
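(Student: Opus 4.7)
The map $e \mapsto V(e)$ is easily seen to be well-defined and injective. For well-definedness, multiplicativity of each seminorm $\|\cdot\|_x$ forces $\|e\|_x^2 = \|e\|_x \in \{0,1\}$; combined with $e(1-e) = 0$ (giving $\|e\|_x \|1-e\|_x = 0$) and the triangle inequality $\|e\|_x + \|1-e\|_x \geq \|1\|_x = 1$, exactly one of $\|e\|_x, \|1-e\|_x$ equals $1$ and the other $0$. Hence $V(e)$ is the complement of the closed set $V(1-e)$ and therefore clopen. For injectivity, if $V(e_1) = V(e_2)$, the idempotent $f := e_1(1-e_2)$ satisfies $\|f\|_x = 0$ for every $x$ (either $\|e_1\|_x = 0$ or $\|1-e_2\|_x = 0$); applying the dichotomy to $f$ itself forces $\|1-f\|_x = 1$ everywhere, so $1-f$ is a unit by the earlier invertibility corollary, and $f(1-f) = 0$ yields $f = 0$. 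Thus $e_1 = e_1 e_2$, and by symmetry $e_1 = e_2$.

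For surjectivity, given a clopen $U \subset \mathcal M(R)$ with complement $V$, the plan is to build an approximate idempotent $e_0 \in R$ satisfying $\|e_0^2 - e_0\|_R < 1/4$ in Banach norm and pointwise close to the characteristic function of $V$, and then refine it to a genuine idempotent via the quadratic iteration $\Phi(e) := 3 e^2 - 2 e^3$. Setting $e_{n+1} := \Phi(e_n)$ and $u_n := e_n^2 - e_n$, a direct expansion gives $u_{n+1} = u_n^2(4u_n - 3)$ and $e_{n+1} - e_n = -u_n(1 - 2 e_n)$; once $\|u_0\|_R < 1/4$, the first identity sends $u_n \to 0$ doubly exponentially in Banach norm, the second makes $(e_n)$ Cauchy in $R$, and the limit $e \in R$ is an idempotent. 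The spectral values $\|e_n\|_x$ remain pointwise close to the characteristic function of $V$ throughout the iteration (the iteration preserves this dichotomy), so by continuity and the spectral dichotomy already established, $V(e) = U$.

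The construction of $e_0$ is the main obstacle. The plan begins with a separation lemma: by compactness of the disjoint closed sets $U, V$, the fact that the topology on $\mathcal M(R)$ is generated by the continuous functions $\|r\|_{(-)}$ for $r \in R$, and the multiplicativity $\|r_1 r_2\|_x = \|r_1\|_x \|r_2\|_x$ (which allows several separating elements to be combined into one by multiplication in $R$), produce $g \in R$ strongly separating $U$ from $V$: $\sup_{x \in U}\|g\|_x$ tiny and $\inf_{x \in V}\|g\|_x$ huge. Then $1 + g^N$ is a unit by the invertibility corollary (its spectral seminorm is uniformly bounded away from zero on all of $\mathcal M(R)$), and the natural candidate is $e_0 := g^N(1+g^N)^{-1}$, which is pointwise close to $0$ on $U$ and to $1$ on $V$, with defect $e_0(1-e_0) = g^N(1+g^N)^{-2}$ of arbitrarily small spectral radius for $N$ large. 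The genuinely delicate final step is to upgrade this spectral smallness to Banach-norm smallness of $u_0$. Here I invoke Berkovich's spectral radius formula $\rho(h) = \lim_n \|h^n\|_R^{1/n} = \max_x \|h\|_x$, valid in any commutative Banach ring, to convert spectral control into convergent geometric-series bounds in $R$, after possibly modifying the construction of $e_0$ by iterating $\Phi$ (which amplifies $u_0$ only quadratically) or absorbing high-norm remainders into polynomial corrections in $g$ and $u_0$. The separation lemma and this spectral-to-Banach-norm transfer together are the technical heart of the proof.
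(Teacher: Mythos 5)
Your injectivity argument is fine, and the part you single out as the ``genuinely delicate final step'' is in fact the fixable one: once you know only that the \emph{spectral} radius $\rho(u_0)=\max_{x\in\mathcal M(R)}\|u_0\|_x$ is $<1/4$, you do not need $\|u_0\|_R<1/4$ at all. Since $\|u_0^n\|_R^{1/n}\to\rho(u_0)$ and $\binom{2n}{n}\le 4^n$, the integer-coefficient binomial series $c=\sum_{n\ge 0}(-1)^n\binom{2n}{n}u_0^n$ (formally $(1+4u_0)^{-1/2}$) converges absolutely in $R$, and $e=e_0c+\sum_{n\ge 1}(-1)^{n-1}\tfrac12\binom{2n}{n}u_0^n$ is an honest idempotent with the right spectral values; this is exactly the content of the paper's remark that $x^2-x=0$ is finite \'etale, used there to lift idempotents uniquely along the uniform completion $R\to R_u$, where likewise only spectral-norm approximation is available. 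So that step is not the obstacle.

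The genuine gap is your separation lemma. You assert that compactness of $U,V$ plus multiplicativity of the seminorms produces a single $g\in R$ with $\sup_{x\in U}\|g\|_x$ tiny and $\inf_{x\in V}\|g\|_x$ huge, but compactness only yields finitely many elements $r_1,\dots,r_m$ and a Boolean combination of conditions $\|r_i\|_x<a_i$, $\|r_i\|_x>b_i$ separating $U$ from $V$; there is no operation in $R$ implementing a pointwise minimum (disjunction), and multiplication does not help as soon as different $r_i$ are small on different parts of $U$ (on such a part the other factors may be huge, so the product is uncontrolled). The existence of such a $g$ amounts to saying the clopen decomposition is pulled back from a decomposition of the spectrum of a \emph{single} element; in the archimedean case this would reduce Shilov's idempotent theorem to the one-variable Riesz functional calculus, which is known to be insufficient -- that is precisely why the classical proof needs the several-variable (Arens--Calder\'on) calculus, and why the paper argues differently. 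After reducing to uniform $R$ by the idempotent-lifting above, the paper does not look for one global separating element: it glues idempotents over a cover of $\mathcal M(R)$, the key claim being that idempotents of $R\langle T\rangle_r$ and $R\langle T^{-1}\rangle_r$ agreeing in $R\langle T^{\pm 1}\rangle_r$ descend uniquely to $R$; this rests on the equalizer statement for the overconvergent algebras $R\langle T\rangle_r^\dagger$, etc., whose surjectivity needs the Kedlaya--Liu input that $(T-U)$ is a closed ideal in $R\langle U\rangle_r^\dagger$, followed by a standard localization argument. Your proposal has no substitute for this localization/gluing mechanism, so as written the surjectivity half does not go through.
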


\begin{proof} The previous corollary implies injectivity of the map from idempotent elements to open and closed subsets. The high-tech proof of the converse is the following. We have to see that any open and closed decomposition $\mathcal M(R) = U_1\sqcup U_2$ yields an idempotent element in $R$. Passing to the completion of $R[T^{\pm 1}]_{1/2}$ we may assume that $R$ is analytic (the idempotent will necessarily only have a constant term, as it is invariant under rescaling the variable $T$). Then we have a sheaf $\mathcal O$ of animated gaseous $R$-algebras on $\mathcal M(R)$, and the open and closed decomposition yields a global section $e\in R$ that is $0$ on one part and $1$ on the other.

A direct proof goes as follows; this uses the notion of uniform completions and the Gelfand transform introduced just below. Assume $f\in R$ and $r>0$. We have maps $R\to R\langle F\rangle_r$ and $R\to R\langle f^{-1}\rangle_{r}$, whose Berkovich spaces are given by the subsets $\{|f|\leq r\}$ and $\{|f|\geq r\}$ of $\mathcal M(R)$. We claim that if one has idempotents $e_1\in R\langle f\rangle_r$ and $e_2\in R\langle f^{-1}\rangle_{r}$ that map to the same idempotent in $R\langle f^{\pm 1}\rangle_r$, then they come from a unique idempotent $e\in R$. Once one knows this, the result follows by a standard localization argument.

One first checks that idempotents lift uniquely from the uniform completion of $R$ back to $R$: Given any sufficiently good approximation, the Newton method will produce an idempotent (as the equation $x^2-x=0$ is finite \'etale). Thus, we can assume that $R$ is uniform. By a similar argument, we can also replace $R\langle f\rangle_r$ by the colimit of $R\langle f\rangle_{r'}$ over $r'>r$; we denote this by $R\langle f\rangle_r^\dagger$. Then the map
\[
R\to \mathrm{eq}(R\langle f\rangle_r^\dagger\times R\langle f^{-1}\rangle_{r}^\dagger\rightrightarrows R\langle f^{\pm 1}\rangle_r^\dagger)
\]
is actually an isomorphism, and in particular the gluing of idempotents follows. Injectivity follows from the Gelfand transform being injective, and surjectivity from an argument of Kedlaya--Liu \cite[Corollary 2.8.9]{KedlayaLiu1}. More precisely, the main point is that
\[
R\langle f\rangle_r^\dagger = R\langle U\rangle_r^\dagger / (U-f)
\]
where $U$ is a free variable, and the ideal generated by $U-f$ is closed. When $R$ is uniform, this assertion can be reduced to the case of Banach fields, where it is standard. The same applies to the other two overconvergent algebras, and then the result follows by a direct diagram chase.
\end{proof}

\subsection{Gelfand transform}

\begin{definition} The Gelfand transform is the map
\[
R\to \prod_{s\in \mathcal M(R)}^{\mathrm{Ban}} K(s)
\]
where the product is taken in the category of Banach rings; equivalently, it is the subring of the naive product $\prod_s K(s)$ of tuples $(x_s)_s$ of elements $x_s\in K(s)$ with $||x_s||_{K(s)}$ bounded independently of $s$, equipped with the supremum norm.
\end{definition}

\begin{definition}\label{def:uniform} A Banach ring $R$ is uniform if the Gelfand transform is an isometric embedding. Equivalently,
\[
|x|_R = \mathrm{sup}_{s\in \mathcal M(R)} ||x_s||_{K_s}
\]
for all $x\in R$.
\end{definition}

In general, one can define the spectral norm on $R$ as $\mathrm{sup}_{s\in \mathcal M(R)} ||x_s||_{K_s}$.

\begin{proposition} Let $R$ be a Banach ring. The following conditions are equivalent.
\begin{enumerate}
\item[{\rm (i)}] The Banach ring $R$ is uniform.
\item[{\rm (ii)}] The map $x\mapsto |x|_R$ is power-multiplicative, i.e.~$|x^n|_R = |x|^n_R$ for all $n\geq 1$.
\end{enumerate}
\end{proposition}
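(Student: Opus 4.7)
The plan is to identify both conditions with the equality of $|\cdot|_R$ with the spectral seminorm $\sigma(x) := \sup_{s \in \mathcal{M}(R)} \|x_s\|_{K(s)}$, using the spectral radius formula $\rho(x) := \lim_n |x^n|_R^{1/n} = \sigma(x)$. The elementary bound $\sigma(x) \le \rho(x)$ follows from $\|x_s\|^n = \|x_s^n\|_{K(s)} \le |x^n|_R$ by multiplicativity on $K(s)$, whence $\|x_s\| \le |x^n|_R^{1/n}$ for all $n$. Given this, (i) $\Rightarrow$ (ii) is immediate: uniformity means $|x|_R = \sigma(x)$ for all $x$, and then
\[
|x^n|_R = \sigma(x^n) = \sup_s \|x_s\|^n = \sigma(x)^n = |x|_R^n.
\]

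For (ii) $\Rightarrow$ (i), power-multiplicativity yields $|x|_R = \rho(x)$, so combined with $\sigma \le \rho \le |\cdot|_R$ it suffices to prove $\rho(x) \le \sigma(x)$. Assuming $|1|_R = 1$ (otherwise $R = 0$ and there is nothing to show), fix $\lambda > \sigma(x)$ and set $r = 1/\lambda$. The plan is to establish $|x^n|_R \le C \lambda^n$ for some constant $C$ and all $n$, which gives $\rho(x) \le \lambda$ and therefore $\rho(x) \le \sigma(x)$ upon letting $\lambda \downarrow \sigma(x)$.

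The key construction is the Banach completion $B$ of the free seminormed $R$-algebra $R[T]_r$ recalled earlier. I claim that $1 - xT \in B$ is invertible. By the Gelfand-type invertibility corollary applied to $B$, this amounts to checking $\|1 - xT\|_y > 0$ for every $y \in \mathcal{M}(B)$. Restricting $y : B \to K(y)$ along the canonical map $R \to B$ gives a point $s = s(y) \in \mathcal{M}(R)$ with $|y(x)|_{K(y)} = \|x\|_s \le \sigma(x)$, while $|y(T)|_{K(y)} \le \|T\|_B \le r$. By multiplicativity in the Banach field $K(y)$,
\[
|y(x) \cdot y(T)|_{K(y)} \le \sigma(x) \cdot r < 1,
\]
so $y(xT) \ne 1$ in $K(y)$, and hence $\|1 - xT\|_y > 0$. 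Writing the inverse as $\sum_n b_n T^n$ and equating coefficients in $(1 - xT)(\sum b_n T^n) = 1$ forces $b_n = x^n$. Convergence of $\sum_n x^n T^n$ in $B$ means $\sum_n |x^n|_R \, r^n < \infty$, so $|x^n|_R r^n$ is bounded, giving the required estimate.

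The substantive work is entirely in the invertibility step, and rests on the two inputs already available: the explicit description of the free seminormed algebra $R[T]_r$, and the invertibility criterion in terms of positivity on the Berkovich spectrum. The detour through the Tate-type algebra $B$ is essential because the naive approach -- inverting $\lambda - x$ directly in $R$ and expanding $(\lambda - x)^{-1} = \sum_n x^n \lambda^{-n-1}$ -- begs the question, since convergence of this series is precisely the bound being sought. Passing to $B$ converts the bound into the invertibility of an element whose positivity on $\mathcal{M}(B)$ is forced purely by multiplicativity of the norm on Banach fields.
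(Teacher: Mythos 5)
Your proof is correct, but the key direction (ii) $\Rightarrow$ (i) takes a genuinely different route from the paper. The paper argues directly: given $x$ with $|x|_R\neq 0$, it forms the localization $S=R[x^{-1}]$ with the seminorm $\inf\{|r'|_R|x|_R^{-n} : s=r'x^{-n}\}$, uses power-multiplicativity to check $|1|_S=1$, and then a single point of the nonempty $\mathcal M(S)$ restricts to a point of $\mathcal M(R)$ at which $\|x\|_s=|x|_R$ exactly, since $\|f(x)\|\cdot\|f(x^{-1})\|=1$ forces $\|f(x)\|\geq |x|_R$. You instead run the classical Gelfand/Berkovich spectral-radius argument: adjoin $T$ of radius $r=1/\lambda$, invert $1-xT$ in the completion $B$ of $R[T]_r$ via the positivity-on-$\mathcal M(B)$ criterion, and read off the Neumann-series bound $|x^n|_R\leq C\lambda^n$ from the coefficients of the inverse. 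Both arguments ultimately rest on the nonemptiness theorem (yours through the invertibility corollary), but the paper's is shorter and produces a point attaining the supremum, whereas yours proves more: it establishes the identity $\sup_{s\in\mathcal M(R)}\|x\|_s=\lim_n|x^n|_R^{1/n}$ for an arbitrary Banach ring, which is exactly the statement the paper asserts without proof immediately after this proposition. One small point you should make explicit: the step ``writing the inverse as $\sum_n b_nT^n$ and equating coefficients'' uses that the coefficient functionals $a_n$ on $R[T]_r$ are bounded (with constant $r^{-n}$), hence extend to $B$, that the Cauchy-product formula for coefficients extends by continuity, and that $\sum_{n\leq N}|b_n|_Rr^n\leq |b|_B$ for every $N$; equivalently, that the completion of the weighted-$\ell^1$ polynomial algebra is the ring of power series with $\sum_n|b_n|_Rr^n<\infty$. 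This is standard and easily filled, so it is not a gap in substance.
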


In general, the map
\[
x\mapsto \mathrm{lim}_{n\to \infty} |x^n|^{1/n}_R
\]
defines a power-multiplicative norm bounded by the given norm. It follows that it is equal to the spectral norm. Completion with respect to this norm defines the uniform completion.

\begin{proof} It is clear that (i) implies (ii) as the spectral norm is power-multiplicative. For the converse, take any $x\in R$ with $|x|_R\neq 0$. Consider the free seminormed ring $S$ under $R$ in which $x$ is invertible and with $|x^{-1}|_S\leq |x|_R^{-1}$. Concretely, this is $S=R[x^{-1}]$ with $|s|_S$ being the infimum of $|r|_R |x|_R^{-n}$ over all presentations $s=rx^{-n}$. In particular, by power-multiplicativity, one has $|1|_S = 1$. It follows that $\mathcal M(S)\neq \emptyset$ by Theorem~\ref{thm:berkovichnonempty}. Any map $S\to K$ to a Banach field yields a map $f: R\to S\to K$ such that $|x|_R = ||f(x)||_K$.
\end{proof}

\subsection{The nonarchimedean unit disc} We end this section by recalling the structure of a disc over an algebraically closed nonarchimedean Banach field $C$. Let $C\langle T\rangle_1$ be the usual Tate algebra of functions on the disc $|T|\leq 1$; this is the uniform completion of the seminormed ring $C[T]_1$. Note that all elements of $C[T]$ factor into an element of $C$ and products of elements of the form $T-a$ with $a\in C$, so any point is determined by the collection of norms $||T-a||$ with $a\in C$. For $|a|>1$, one necessarily has $||T-a||=|a|$ as $|T|\leq 1$, so can restrict to elements $T-a$ with $a\in \mathcal O_C$, the ring of integers of $C$. Thus
\[
\mathcal M(C\langle T\rangle_1)\subset \Big\{(x_a)_{a\in \mathcal O_C}\mid \forall a,b: x_a\leq \mathrm{max}(x_b,|a-b|_C), |a-b|_C\leq \mathrm{max}(x_a,x_b)\Big\}\subset \prod_{|a|\leq 1} [0,1].
\]
If we fix a finite set $A\subset \mathcal O_C$, we claim that
\[
\{(x_a)_{a\in A}\mid \forall a,b: x_a\leq \mathrm{max}(x_b,|a-b|_C), |a-b|_C\leq \mathrm{max}(x_a,x_b)\}\subset \prod_{a\in A} [0,1]
\]
is a tree, and that $\mathcal M(C\langle T\rangle_1)$ surjects onto it. Given any point $x=(x_a)_{a\in A}$, we can pick $a\in A$ for which $x_a$ is minimal. Then $x_b = \mathrm{max}(x_a,|a-b|_C)$ for all other $b\in A$. We see that the full subspace
\[
\Big\{(x_a)_{a\in A}\mid \forall a,b: x_a\leq \mathrm{max}(x_b,|a-b|_C), |a-b|_C\leq \mathrm{max}(x_a,x_b)\Big\}\subset \prod_{a\in A} [0,1]
\]
is the union of maps
\[
f_b: [0,1]\to \prod_{a\in A} [0,1]: x\mapsto (\mathrm{max}(x,|a-b|_C))_a.
\]
These functions have the property that if for some $r\in [0,1]$ one has $f_{b_1}(r)=f_{b_2}(r)$, then $f_{b_1}(s)=f_{b_2}(s)$ for all $s\geq r$. This property implies that their union is a tree.

Moreover, we see that $\mathcal M(C\langle T\rangle_1)$ has dense (hence full) image, as for any $b\in A$ and any generic element $t\in C$ with $|t-b|=r$ one can define a point of $\mathcal M(C\langle T\rangle_1)$ (from the map $C\langle T\rangle_1\to C$ sending $T$ to $t$) that will be sent to $f_b(r)$. Indeed, this happens as soon as for the finitely many $a\in A$, one has $|t-a|_C=\mathrm{max}(r,|a-b|_C)$, which happens for generic $t$ in the disc $|t-b|=r$. Passing to the inverse limit and using compactness, we see that
\[
\mathcal M(C\langle T\rangle_1)= \Big\{(x_a)_{a\in \mathcal O_C}\mid \forall a,b: x_a\leq \mathrm{max}(x_b,|a-b|_C), |a-b|_C\leq \mathrm{max}(x_a,x_b)\Big\}\subset \prod_{|a|\leq 1} [0,1],
\]
and is an inverse limit of finite trees.

One can also describe explicitly the residue fields $K(x)$, as points of types (1), (2), (3) and (4).
\begin{enumerate}
\item[(1)] If there is some $a$ with $x_a=0$, then $K(x)=C$, sending $T$ to $a$.
\item[(2)] If there is some $a$ for which $x_a$ is minimal and nonzero and $x_a=|t|$ for some $t\in C$, then $K(x)$ is the completion of $C(T)$ with respect to $x$, which is a Banach field with the same value group as $C$, and residue field a pure transcendental extension of the residue field of $C$. Up to changing $T$ by $(T-a)/t$, this can be described by localizing $\mathcal O_C[T]$ at the generic point of the special fibre, $\pi$-adically completing, and inverting $\pi$, where $\pi\in C$ is a pseudouniformizer, i.e.~$0<|\pi|<1$.
\item[(3)] If there is some $a$ for which $x_a$ is minimal and nonzero and $x_a$ does not lie in $|C|$, then $K(x)$ is the completion of $C(T)$ with respect to $x$, which is a Banach field with the same residue field as the residue field of $C$, but with value group $|C^\times|\times x_a^{\mathbb Z}$. In fact, it can be explicitly described as $C\langle (T-a)^{pm 1}\rangle_{x_a}$, the uniform completion of the free seminormed ring $C[(T-a)^{\pm 1}]_{x_a}$ with $|T-a|=|(T-a)^{-1}|^{-1}=x_a$.
\item[(4)] If there is no $a$ for which $x_a$ is minimal, one can choose a sequence $a_1,a_2,\ldots$ such that $x_{a_i}$ approaches the (necessarily positive) infimum. In that case, $K(x)$ is the completion of $C(T)$ with respect to $x$, which is a Banach field with the same residue field as the residue field of $C$ and the same value group as $C$ (but distinct from $C$), i.e.~an immediate extension. This can be described explicitly as the completed filtered colimit of the free seminormed rings $C[T-a_i]_{x_{a_i}}$.
\end{enumerate}

Under appropriate conditions on $C$ ($|C|=\mathbb R_{\geq 0}$ respectively $C$ is spherically complete), points of type (3) respectively (4) do not occur. In particular, for spherically complete fields with $|C|=\mathbb R_{\geq 0}$ (huge fields indeed), only types (1) and (2) appear; such fields are occasionally useful.

\section{The arc-topology}

We will be working with an extremely fine topology on (the opposite of) the category of Banach rings.

\begin{definition}\label{def:arccover} Consider a family $\{A\to B_i\}_{i\in I}$ of maps of Banach rings with common source. The family is an arc-cover if there is a finite subset $J\subset I$ such that the map
\[
\bigsqcup_{i\in J} \mathcal M(B_i)\to \mathcal M(A)
\]
is surjective. Equivalently, for any Banach field $K$ with a map $A\to K$, there is an extension of Banach fields $K\subset L$ and a lift of $A\to K$ to a map $B_i\to L$, for some $i\in J$.
\end{definition}

In order to see that this is well-behaved (for example, pullbacks of arc-covers are arc-covers), we recall the following proposition on the behaviour of the Berkovich spectrum under colimits.

\begin{proposition}\label{prop:berkovichspectrumcolimit}\leavevmode\begin{enumerate}
\item[{\rm (i)}] Let $B\leftarrow A\to C$ be a pushout diagram of seminormed rings, with pushout $D=B\otimes_A C$. The induced map
\[
\mathcal M(D)\to \mathcal M(B)\times_{\mathcal M(A)} \mathcal M(C)
\]
of compact Hausdorff spaces is surjective.
\item[{\rm (ii)}] Let $(A_i)_i$ be a filtered diagram of seminormed rings with colimit $A=\mathrm{colim}_i A_i$. The induced map
\[
\mathcal M(A)\to \mathrm{lim}_i \mathcal M(A_i)
\]
of compact Hausdorff spaces is an isomorphism.
\end{enumerate}
\end{proposition}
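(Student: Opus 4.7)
The plan is to do (ii) directly from the definitions and to reduce (i) to a nonemptiness statement for the Berkovich spectrum of a pushout of Banach fields.

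For (ii), a point of $\mathcal{M}(A_i)$ is by definition a multiplicative seminorm bounded above by $|\cdot|_{A_i}$, so a compatible family $(s_i) \in \varprojlim_i \mathcal{M}(A_i)$ is precisely a compatible family of such seminorms, and these glue uniquely to a multiplicative seminorm on the ring-theoretic colimit $A$. Since the colimit norm is computed as $|a|_A = \inf_j |a|_{A_j}$ over all lifts $a \in A_j$ (as recorded earlier in the construction of colimits of seminormed rings), the bound $\|a\|_s \leq |a|_A$ is automatic, yielding an inverse to the canonical continuous map $\mathcal{M}(A) \to \varprojlim_i \mathcal{M}(A_i)$. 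Both sides being compact Hausdorff, the continuous bijection is automatically a homeomorphism.

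For (i), given $(s_B, s_C)$ lying over a common point $s_A \in \mathcal{M}(A)$, the maps $B \to K(s_B)$ and $C \to K(s_C)$ both factor through $A \to K(s_A)$ via isometric Banach-field inclusions $K(s_A) \hookrightarrow K(s_B)$ and $K(s_A) \hookrightarrow K(s_C)$. The universal property of the pushout $D = B \otimes_A C$ of seminormed rings then supplies a canonical map $D \to E := K(s_B) \otimes_{K(s_A)} K(s_C)$, and any point of $\mathcal{M}(E)$ pulls back to a point of $\mathcal{M}(D)$ lying over $(s_B, s_C)$ by construction. The proposition thus reduces to the following lemma: for any two isometric Banach-field extensions $F \hookrightarrow L$ and $F \hookrightarrow M$, one has $\mathcal{M}(L \otimes_F M) \neq \emptyset$.

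By the nonemptiness theorem for $\mathcal{M}$ proved just before, this lemma amounts to the projective-tensor-norm identity $|1 \otimes 1|_{L \otimes_F M} = 1$; the upper bound is trivial. The lower bound is supplied by exhibiting a Banach-field extension $N/F$ containing both $L$ and $M$ isometrically and compatibly over $F$, since the resulting contraction $L \otimes_F M \to N$ then sends $1 \otimes 1$ to $1 \in N$, which has norm $1$. In the archimedean case this is trivial (take $N = \mathbb{C}$ with the appropriate common power of the usual absolute value, using Gelfand--Mazur applied to $F$, $L$, $M$). In the nonarchimedean case, one picks a maximal ideal $\mathfrak{m}$ in the algebraic tensor product $L \otimes_F M$, sets $N_0 = (L \otimes_F M)/\mathfrak{m}$, chooses a rank-one valuation on $N_0$ that restricts to the given valuations on both $L$ and $M$, and completes. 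The main obstacle is this last step --- the classical problem of simultaneous extension of two rank-one valuations over a common base --- which is handled by Bourbaki-style valuation-theoretic amalgamation, specializing the composite valuation ring $\mathcal{O}_L \cdot \mathcal{O}_M \subset N_0$ to a rank-one quotient compatible with both $v_L$ and $v_M$.
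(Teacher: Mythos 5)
Part (ii) and your reduction of (i) to the case of Banach fields are fine and match the paper's own reduction (note, as you implicitly use, that $\mathcal M(K)$ is a single point for any Banach field $K$, so any point of $\mathcal M(E)$ automatically lies over $(s_B,s_C)$). The problem is that the entire content of the proposition is concentrated in the step you defer to ``Bourbaki-style valuation-theoretic amalgamation,'' and as written that step is wrong. You first pick an \emph{arbitrary} maximal ideal $\mathfrak m\subset L\otimes_F M$ and then ask for a rank-one valuation on $N_0=(L\otimes_F M)/\mathfrak m$ restricting to $v_L$ and $v_M$; for a bad $\mathfrak m$ no such valuation exists, of any rank. Concretely, take $F=k$ with the trivial norm, $L=k((x))_{1/2}$, $M=k((y))_{1/2}$, and any maximal ideal $\mathfrak m$ containing $xy-1$ (this is a proper ideal, since $k((x))$ and $k((y))$ embed compatibly with $y\mapsto x^{-1}$ into a large algebraically closed field): in $N_0$ one has $\bar y=\bar x^{-1}$, so any multiplicative norm restricting to $v_L$ and $v_M$ would have to satisfy $\tfrac12=|\bar y|=|\bar x|^{-1}=2$. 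So the prime must be chosen \emph{compatibly} with the two valuations, and producing such a prime together with a compatible \emph{rank-one} valuation is precisely the statement $\mathcal M(L\otimes_F M)\neq\emptyset$ that you are trying to prove; your proposal essentially restates it. The mechanism you sketch is also off in detail: $\mathcal O_L\cdot\mathcal O_M$ is not a valuation ring in general, and even starting from a genuine higher-rank amalgam, killing the extra convex subgroups to reach rank one either trivializes one of the two restrictions or forces you to pass to a residue field of a coarsening (changing the field), unless the amalgamating valuation is chosen with care --- none of which is addressed.

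By contrast, the paper avoids quoting amalgamation and constructs the required norm directly: when $F$ is nonarchimedean and nondiscrete it endows $D=(\mathcal O_L\otimes_{\mathcal O_F}\mathcal O_M)[\tfrac1\pi]$ with the explicit power-multiplicative norm $x\mapsto\inf\{|\pi|^{-m/n}\mid x^n\pi^m\in\mathcal O_L\otimes_{\mathcal O_F}\mathcal O_M\}$, using flatness of the valuation rings, which visibly sends $1$ to $1$; when $F$ is discrete it reduces (via the forced $t$-adic structure of the closure of $F(t)$ in a nondiscrete extension) to $L=F((T))_r$, where $L\otimes_F M$ is a completed free Banach $M$-module on the basis $T^n$, so again $|1|=1$. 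If you want to keep your route, you must actually prove the rank-one amalgamation lemma (choosing the prime and the valuation simultaneously), which in substance amounts to redoing one of these constructions.
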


In both assertions, one could replace seminormed rings with Banach rings, as the Berkovich spectrum only depends on the Banach completion.

\begin{proof} In part (i), by interpreting points of the Berkovich spectrum in terms of maps to Banach fields, we can assume that $B\leftarrow A\to C$ is a diagram of Banach fields. In this case, we only need to prove that $|1|_D=1$. Assume first that $A$ is not discrete. If $A$ is archimedean, then all three rings are either $\mathbb R$ or $\mathbb C$ and the claim is clear. If $A$ is nonarchimedean, then so are $B$ and $C$, and we have the diagram $\mathcal O_B\leftarrow \mathcal O_A\to \mathcal O_C$ of their rings of integers, which are faithfully flat. Then we can endow
\[
D = (\mathcal O_B\otimes_{\mathcal O_A} \mathcal O_C)[\tfrac 1\pi]
\]
(where $\pi\in \mathcal O_A$ has $0<|\pi|<1$) with the norm
\[
x\mapsto \mathrm{inf} \{|\pi|^{-m/n}\mid x^n \pi^m\in \mathcal O_B\otimes_{\mathcal O_A} \mathcal O_C\}.
\]
This is an ultrametric uniform norm compatible with the ones on $\mathcal O_B$ and $\mathcal O_C$, and sends $1$ to $1$. Thus, also $|1|_D=1$.

Finally, assume that $A$ is discrete. Then in particular $|2|_A\leq 1$ and so $B$ and $C$ are discrete or non-archimedean. If both are discrete, the claim is clear. Otherwise, without loss of generality the map $A\to B$ factors over $A((T))_r$ where one adjoins a free Laurent series variable $T$ with norm $|T|=|T^{-1}|^{-1}=r$, for some $0<r<1$; we can then reduce to the case $B=A((T))_r$. But then $B$ is the Banach completion of a free $A$-module with basis $T^n$, $n\in \mathbb Z$, and then so is $D$ over $C$, which shows in particular that $|1|_D=1$.

Part (ii) is obvious from the definition (using that any bijective map of compact Hausdorff spaces is an isomorphism).
\end{proof}

\begin{proposition} The opposite of the category of Banach rings, endowed with the notion of arc-covers, forms a site where all objects are quasicompact and quasiseparated.
\end{proposition}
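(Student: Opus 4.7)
The plan is to verify the three standard pretopology axioms (identity, transitivity, base change) for arc-covers, and then to observe that the finiteness clause built into the definition of an arc-cover immediately yields both quasicompactness and quasiseparatedness of every object. The only step requiring actual content is stability under base change; the rest is either tautological or an exercise in chasing surjections of Berkovich spectra.

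For identity, $\{A \xrightarrow{\mathrm{id}} A\}$ is a singleton arc-cover since $\mathcal M(A) \to \mathcal M(A)$ is the identity. For transitivity, given a finite arc-cover $\{A \to B_i\}_{i \in J}$ together with finite arc-covers $\{B_i \to C_{ij}\}_{j \in J_i}$ for each $i \in J$, the composite family $\{A \to C_{ij}\}_{i,j}$ is again a finite arc-cover: the induced map $\bigsqcup_{i,j} \mathcal M(C_{ij}) \to \mathcal M(A)$ factors as a composition of two surjections.

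For base change, given an arc-cover $\{A \to B_i\}_{i \in J}$ and an arbitrary map $A \to A'$ of Banach rings, I want to show that $\{A' \to B_i \otimes_A A'\}_{i \in J}$ (with the Banach-ring pushout) is an arc-cover. This is the one place where a nontrivial input is needed, namely Proposition~\ref{prop:berkovichspectrumcolimit}(i), which asserts that the natural map
\[
\mathcal M(B_i \otimes_A A') \to \mathcal M(B_i) \times_{\mathcal M(A)} \mathcal M(A')
\]
is surjective. Using this, for any $x' \in \mathcal M(A')$ with image $x \in \mathcal M(A)$, the original cover produces some $i \in J$ and $y \in \mathcal M(B_i)$ lying over $x$, and any preimage of $(y,x')$ in $\mathcal M(B_i \otimes_A A')$ maps to $x'$. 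Hence the same finite index set $J$ witnesses the arc-cover property after base change. This is the only step I would expect to cause any trouble, and even here the difficulty has been entirely absorbed by the earlier proposition.

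Finally, quasicompactness of every object is exactly the finiteness condition built into the definition of an arc-cover. Quasiseparatedness is then formal: the category admits all fiber products (given by Banach pushouts), every representable object is quasicompact, and therefore the fiber product of any two quasicompact objects over a given base is again an object of the site, which is quasicompact by the previous sentence.
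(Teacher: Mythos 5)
Your proposal is correct and matches the paper's intended argument: the paper's proof is simply ``Clear,'' with the nontrivial input for base change being exactly Proposition~\ref{prop:berkovichspectrumcolimit}(i), which the paper recalls immediately before the proposition for this purpose, and the qcqs claims following formally from the finiteness clause in Definition~\ref{def:arccover}. No gaps.
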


To be precise, one should restrict to $\kappa$-small Banach rings for some uncountable regular cardinal $\kappa$. Increasing $\kappa$ yields a geometric morphism of topoi, and the corresponding pullback functors will be fully faithful and agree with left Kan extension, identifying the essential image with those arc-sheaves commuting with $\kappa$-filtered colimits. In the similar situation of the fpqc topology on the category of commutative rings, this has been worked out by Waterhouse \cite{Waterhouse}, and the arguments adapt easily. In fact, as the property of being an arc-cover is much easier to guarantee than that of being an fpqc-cover, it is even easier.

\begin{proof} Clear.
\end{proof}

For any Banach ring $A$, we denote by $\mathcal{M}_{\mathrm{arc}}(A)$ the associated arc-sheaf, which is the arc-sheafification of the presheaf $B\mapsto \mathrm{Hom}(A,B)$.

\begin{example} Any nonzero Banach ring $A$ admits an arc-cover by the completion of the free seminormed ring $A[T^{\pm 1}]_{1/2}$ where one adjoins a new variable $T$ with $|T|=|T^{-1}|^{-1}=\frac 12$. In particular, locally in the arc-topology we can assume that $A$ admits an element $T\in A$ with $|T|=|T^{-1}|^{-1}=\tfrac 12$, and in particular that $A$ is analytic.
\end{example}

\begin{example} If $A$ is any Banach ring and $B$ is its uniform completion, then the map $A\to B$ is an arc-cover. It follows that $\mathcal{M}_{\mathrm{arc}}(A)=\mathcal M_{\mathrm{arc}}(B)$. Also, the topos is unchanged if we restrict to complete uniform Banach rings (admitting an element $T$ with $|T|=|T^{-1}|^{-1}=\tfrac 12$ if nonzero).
\end{example}

\begin{example} Let $A$ be any Banach ring. The map
\[
A\to B=\prod_{x\in \mathcal M(A)}^{\mathrm{Ban}} K(x)
\]
is an arc-cover. The target is an example of a totally disconnected Banach ring as defined below. As we will see, $\mathcal M(B)$ agrees with the Stone-\v{C}ech compactification of the underlying discrete set of $\mathcal M(A)$.
\end{example}

\begin{example} Let $A$ be a Banach $\mathbb F_p$-algebra. For $0<t\leq 1$, let $A^{(t)}$ denote the Banach ring $A$ with the norm raised to the $t$-th power. Then the map
\[
A\to A_{\mathrm{perf}} = \mathrm{colim}^{\mathrm{Ban}}(A\xrightarrow{\mathrm{Frob}} A^{(p^{-1})}\xrightarrow{\mathrm{Frob}} A^{(p^{-2})}\to\ldots)
\]
from $A$ to its completed perfection is an arc-cover and the uniform completion of $A_{\mathrm{perf}}\otimes_A A_{\mathrm{perf}}$ is $A_{\mathrm{perf}}$. Hence, we get an isomorphism $\mathcal M_{\mathrm{arc}}(A)\cong \mathcal M_{\mathrm{arc}}(A_{\mathrm{perf}})$. In other words, sheafification in the arc-topology includes perfection when working in characteristic $p$.
\end{example}

\begin{example} Let $A$ be any Banach ring over $\mathbb C$. Then
\[
A\to B=\mathrm{Cont}(\mathcal M(A),\mathbb C)
\]
is an arc-cover, and the uniform completion of $B\otimes_A B$ is equal to $B$. Indeed, like for any Banach-$\mathbb C$-algebra, this uniform completion embeds into $\mathrm{Cont}(\mathcal M(B\otimes_A B),\mathbb C)$, but here $\mathcal M(B\otimes_A B)=\mathcal M(B)\times_{\mathcal M(A)} \mathcal M(B) = \mathcal M(A)$, and so it must agree with $B=\mathrm{Cont}(\mathcal M(A),\mathbb C)$. It follows that $\mathcal M_{\mathrm{arc}}(A)\cong \mathcal M_{\mathrm{arc}}(B)$. As an example, arc-sheafification identifies the Banach ring of holomorphic functions on the closed unit disc with the Banach ring of continuous functions thereon.
\end{example}

\begin{proposition} The arc-site on Banach $\mathbb C$-algebras is Morita equivalent to the site of compact Hausdorff spaces, with finite collections of jointly surjective maps as covers.
\end{proposition}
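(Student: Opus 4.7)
The plan is to apply the standard comparison lemma for sites to the functor
\[
F\colon \mathrm{CHaus} \longrightarrow (\mathrm{Ban}_{\mathbb C})^{\mathrm{op}},\qquad X\longmapsto \mathrm{Cont}(X,\mathbb C),
\]
where $\mathrm{CHaus}$ carries the topology of finite jointly surjective covers. Morita equivalence of the two sites will follow once I check that $F$ is fully faithful, preserves and generates covers, and has essential image that is arc-locally dense. The last of these is essentially the content of the previous example: for any Banach $\mathbb C$-algebra $A$ the map $A\to \mathrm{Cont}(\mathcal M(A),\mathbb C)$ is an arc-cover, and induces an isomorphism $\mathcal M_{\mathrm{arc}}(A)\cong \mathcal M_{\mathrm{arc}}(\mathrm{Cont}(\mathcal M(A),\mathbb C))$.

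For full faithfulness, the key input is the identification $\mathcal M(\mathrm{Cont}(X,\mathbb C))\cong X$ as compact Hausdorff spaces. Because $\mathrm{Cont}(X,\mathbb C)$ is an archimedean Banach $\mathbb C$-algebra, Gelfand--Mazur forces every residue field $K(s)$ to be $\mathbb C$, so points of $\mathcal M(\mathrm{Cont}(X,\mathbb C))$ are exactly the bounded $\mathbb C$-characters, which by Gelfand--Naimark are evaluations at points of $X$; the Berkovich topology (pointwise convergence of seminorms) then coincides with the weak-$\ast$ topology, hence with the topology on $X$. Given this, a contractive $\mathbb C$-algebra map $\varphi\colon \mathrm{Cont}(Y,\mathbb C)\to \mathrm{Cont}(X,\mathbb C)$ sends each evaluation $\mathrm{ev}_x$ to an evaluation $\mathrm{ev}_{f(x)}$ for some $f(x)\in Y$, and $f$ is continuous because $\mathrm{Cont}(Y,\mathbb C)$ separates points and $x\mapsto \varphi(g)(x)$ is continuous for every $g$; conversely, pullback along a continuous $f\colon X\to Y$ is contractive in the sup norm. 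This gives the bijection $\mathrm{Hom}_{\mathrm{CHaus}}(X,Y)=\mathrm{Hom}_{\mathrm{Ban}_{\mathbb C}}(\mathrm{Cont}(Y,\mathbb C),\mathrm{Cont}(X,\mathbb C))$.

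For the cover conditions, $F$ sends a finite jointly surjective $\{X_i\to X\}$ to $\{\mathrm{Cont}(X,\mathbb C)\to \mathrm{Cont}(X_i,\mathbb C)\}$, and by the identification above the induced map of Berkovich spectra is the original $\bigsqcup X_i\to X$, hence surjective; so arc-cover. Conversely, any arc-cover $\{\mathrm{Cont}(X,\mathbb C)\to A_j\}_{j\in J}$, with $J$ finite and $\bigsqcup_j \mathcal M(A_j)\to X$ surjective, can be refined by applying the density step to each $A_j$, producing a family $\{\mathrm{Cont}(X,\mathbb C)\to \mathrm{Cont}(\mathcal M(A_j),\mathbb C)\}$ in the essential image of $F$ whose map on Berkovich spectra is still surjective, i.e.\ comes from a jointly surjective cover of $X$.

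The main obstacle is pinning down the identification $\mathcal M(\mathrm{Cont}(X,\mathbb C))=X$ together with the fact that all contractive $\mathbb C$-algebra maps between such algebras arise from continuous maps in the reverse direction; this is where Gelfand--Mazur and Gelfand--Naimark both play an essential role. Once this is in hand, the rest is a formal application of the comparison lemma, yielding an equivalence between the topos of arc-sheaves on Banach $\mathbb C$-algebras and the topos of sheaves on $\mathrm{CHaus}$.
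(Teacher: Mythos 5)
Your proposal is correct and follows essentially the same route as the paper, whose proof simply invokes the preceding example that $A\to \mathrm{Cont}(\mathcal M(A),\mathbb C)$ is an arc-cover inducing $\mathcal M_{\mathrm{arc}}(A)\cong\mathcal M_{\mathrm{arc}}(\mathrm{Cont}(\mathcal M(A),\mathbb C))$, so that the algebras $\mathrm{Cont}(X,\mathbb C)$ form a dense full subsite identified (via Gelfand theory) with compact Hausdorff spaces and matching covers. You merely spell out the comparison-lemma and Gelfand--Naimark details that the paper leaves implicit.
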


By Morita equivalence, we mean that their associated categories of sheaves are equivalent. In the present case, this holds even for (non-hypercomplete) sheaves of anima.

If one works over $\mathbb R$, one gets instead the site of compact Hausdorff spaces equipped with an involution.

\begin{proof} This is immediate from the previous example.
\end{proof}

Of course, the site of the previous proposition is the one defining condensed sets. A standard technique in that setting is to take covers by profinite sets. The analogue of profinite sets in our setting is the following.

\begin{definition}\label{def:totallydisconnected} Let $A$ be a Banach ring.
\begin{enumerate}
\item[{\rm (i)}] The ring $A$ is totally disconnected if it is analytic, uniform, and $\mathcal M(A)$ is a profinite set.
\item[{\rm (ii)}] The ring $A$ is strictly totally disconnected if it is totally disconnected and for all $x\in \mathcal M(A)$, the Banach field $K(x)$ is algebraically closed.
\end{enumerate}
\end{definition}

We note that if $A$ is totally disconnected, then by Theorem~\ref{thm:idempotents}, any disconnection of $\mathcal M(A)$ yields a decomposition $A=A_1\times A_2$ of Banach rings. This is necessarily an isometry, as $A$ is uniform. This implies that for any $x\in \mathcal M(A)$, the map $A\to K(x)$ is surjective, by writing it as a completed filtered colimit of surjections (corresponding to clopen neighborhoods of $x$).

\begin{proposition} Let $(K_i)_i$ be a collection of non-discrete Banach fields and let
\[
A=\prod_i^{\mathrm{Ban}} K_i.
\]
\begin{enumerate}
\item[{\rm (i)}] The ring $A$ is totally disconnected.
\item[{\rm (ii)}] The natural map $I=\bigsqcup_i \mathcal M(K_i)\to \mathcal M(A)$ induces an homeomorphism
\[
\beta(I)\to \mathcal M(A)
\]
of compact Hausdorff spaces, where $\beta(I)$ is the Stone-\v{C}ech compactification of the discrete set $I$ (i.e., the set of ultrafilters on $I$).
\item[{\rm (iii)}] For any $x\in \mathcal M(A)=\beta(I)$, the Banach field $K(x)$ is a corresponding Banach ultraproduct of the Banach fields $K_i$.
\item[{\rm (iv)}] If all $K_i$ are algebraically closed, then $A$ is strictly totally disconnected.
\end{enumerate}
\end{proposition}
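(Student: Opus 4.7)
The plan is to get the homeomorphism $\beta(I)\cong\mathcal M(A)$ as the central result (part (ii)), by combining Stone duality applied to the idempotents of $A$ with an explicit ultrafilter-limit formula for the seminorms, and then to read off the other parts from this. Since each $K_i$ is a field, the only idempotents of $A$ are the indicators $1_S=(1_{i\in S})_i$ for $S\subseteq I$, so by the earlier theorem identifying idempotents of a Banach ring with clopens of its spectrum one has $\mathrm{Clop}(\mathcal M(A))=\mathcal P(I)$. A compact Hausdorff space whose Boolean algebra of clopens is $\mathcal P(I)$ is automatically profinite and equal to $\beta(I)$, so one already gets that $\mathcal M(A)$ is profinite and a continuous bijection $\beta(I)\to\mathcal M(A)$, which is automatically a homeomorphism.

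For the other parts, the key computation is the explicit norm formula $\|a\|_x=\lim_{\mathcal U_x}|a_i|_{K_i}$, where $\mathcal U_x=\{S\subseteq I:\|1_S\|_x=1\}$ is the ultrafilter attached to $x\in\mathcal M(A)$. (It is an ultrafilter because $\|1_S\|_x^2=\|1_S\|_x\in\{0,1\}$ and $\|1_S\|_x\cdot\|1_{S^c}\|_x=0$ while $\|1_S\|_x+\|1_{S^c}\|_x\geq 1$.) The upper bound is soft: for any $S\in\mathcal U_x$ one has $\|1_{S^c}\|_x=0$, hence $\|a\|_x=\|a\cdot 1_S\|_x\leq |a\cdot 1_S|_A=\sup_{i\in S}|a_i|$, and one takes the infimum over $S$. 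The lower bound is the genuine step: given $r=\lim_{\mathcal U_x}|a_i|>0$ and $\varepsilon>0$, put $T_\varepsilon=\{i:|a_i|>r-\varepsilon\}\in\mathcal U_x$ and define $b\in A$ by $b_i=a_i^{-1}$ on $T_\varepsilon$ and $b_i=0$ elsewhere; then $|b|_A\leq(r-\varepsilon)^{-1}$ and $ab=1_{T_\varepsilon}$ has $\|ab\|_x=1$, forcing $\|a\|_x\geq r-\varepsilon$.

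Uniformity of $A$, needed for (i), is immediate from $|a^n|_A=\sup_i|a_i|^n=|a|_A^n$, since each $K_i$ has multiplicative norm; combined with $\mathcal M(A)$ being profinite, this gives everything in (i) except analyticity. For (iii), the norm formula identifies $K(x)$ with the Banach ultraproduct $\prod_{\mathcal U}^{\mathrm{Ban}}K_i=A/J$, where $J=\{a:\lim_{\mathcal U}|a_i|=0\}$ is the null ideal: the quotient is already complete (closed subspace of $\ell^\infty$) and is a field because the same inverse construction as above produces inverses modulo $J$. For (iv), given a monic $P(T)=T^n+\sum_{k<n}c_kT^k\in K(x)[T]$ of positive degree, lift each $c_k$ directly to $A$ (no approximation needed, since $K(x)=A/J$ exactly), choose for each $i$ a root $\alpha_i\in K_i$ of $P_i$ (using algebraic closedness of $K_i$), note that the classical bound for roots of monic polynomials makes $(\alpha_i)_i$ bounded hence an element of $A$, and take its image in $A/J$ to obtain a root of $P$. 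The analyticity needed to complete (i) then follows by applying the same ultraproduct analysis to a choice of $\pi_i\in K_i$ of suitably bounded norm in $(0,1)$, so that $\pi=(\pi_i)_i\in A$ has image of norm strictly between $0$ and $1$ in each $K(x)$.

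The main obstacle is the lower bound of the norm formula; the upper bound and all downstream consequences are formal, but turning multiplicativity of $\|\cdot\|_x$ into the actual ultrafilter limit requires the explicit construction of the bounded inverse $b$ on the ``large'' set $T_\varepsilon\in\mathcal U_x$, which is what makes the whole ultraproduct picture work.
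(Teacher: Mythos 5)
Your proof of (ii) hinges on a false general claim: a compact Hausdorff space whose Boolean algebra of clopen subsets is $\mathcal P(I)$ is \emph{not} automatically profinite, nor equal to $\beta(I)$. The clopen algebra is blind to connectedness: any connected compact Hausdorff space with more than one point (e.g.\ $\mathcal M(\mathbb Z)$, or a closed complex disc) has clopen algebra $\mathcal P(\{\ast\})$ without being a point. What the idempotent theorem really gives you is a natural continuous surjection $\mathcal M(A)\to\beta(I)$ (send $x$ to the ultrafilter of clopens through $x$; surjectivity follows from compactness) whose fibres are the connected components of $\mathcal M(A)$; the whole content of (ii) is that these fibres are single points, and that is exactly what your Stone-duality step skips. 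Fortunately your own norm formula $\|a\|_x=\lim_{\mathcal U_x}|a_i|$ — whose proof via the bounded inverse $b$ supported on $T_\varepsilon\in\mathcal U_x$ is correct — closes the gap: it says every $x$ is the ultralimit point attached to $\mathcal U_x$, i.e.\ lies in the image of the canonical map $\beta(I)\to\mathcal M(A)$, whose injectivity is immediate since the $\|1_S\|_x$ separate distinct ultrafilters. So the norm formula must be promoted from ``the other parts'' into the proof of (ii); once you do this, the (comparatively heavy) idempotent/clopen theorem is not needed at all.

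With that repair your route is genuinely different from the paper's: the paper maps $\mathcal M(A)$ to $\beta(I)=\varprojlim_{I\to\overline I}\overline I$ using finite partitions of $I$ (each decomposing $A$ into a finite product), identifies each fibre with $\mathcal M$ of the Banach ultraproduct $\mathrm{colim}^{\mathrm{Ban}}_{J\in\mathcal U}\prod^{\mathrm{Ban}}_{i\in J}K_i$, and shows that ultraproduct is a Banach field (so the fibre is a point); your ultralimit formula instead describes every seminorm explicitly and realizes $K(x)$ as $A/J$ with $J$ the null ideal, which is an equivalent and somewhat more concrete picture. Your argument for (iv) via the classical root bound is fine and parallel to the paper's choice of minimal-norm roots. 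One caveat on analyticity in (i): choosing $\pi_i$ with $|\pi_i|$ in a fixed compact subinterval of $(0,1)$ is only possible when the value groups of the $K_i$ allow it — true in case (iv), where they are dense, but not for arbitrary non-discrete $K_i$ with sparse discrete value groups, where the ultralimit of the $|\pi_i|$ can degenerate to $0$ or $1$; the paper's proof is silent on this point as well, so your treatment is at worst on par with it, but the phrase ``suitably bounded norm'' is doing real work there.
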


\begin{proof} Part (i) reduces to (ii) and (iii). For (ii), we recall that $\beta(I)$ is the inverse limit $\mathrm{lim}_{I\to \overline{I}} \overline{I}$ over all finite sets $\overline{I}$ to which $I$ maps, and certainly $\mathcal M(A)$ maps there (as any finite disjoint decomposition of $I$ decomposes $A$ into a finite product, and this is taken by $\mathcal M$ to a finite disjoint union). It remains to see that the fibres of this map $\mathcal M(A)\to \beta(I)$ are points, with residue field as in (iii). But those fibres are given by the Banach rings
\[
\mathrm{colim}_{J\subset I, \beta(J)\ni x}^{\mathrm{Ban}} \prod_{i\in J}^{\mathrm{Ban}} K_i,
\]
which is exactly the Banach ultraproduct. This is indeed a Banach field itself: Given any $x$ in the Banach ultraproduct, it lifts to a collection $(x_i)_i$ such that $|x|$ is the ultralimit of $|x_i|_{K_i}$. In particular, when nonzero, then $|x_i|_{K_i}$ is nonzero and bounded away from zero for all $i\in J$ for some $J$ with $x\in \beta(J)$; then the collection of $x_i^{-1}$ defines an inverse of $x$, with norm inverse to the norm of $x$. Moreover, if all $K_i$ are algebraically closed, then so is the Banach ultraproduct: Lift any monic equation over the completed filtered colimit (which is a quotient) to the product $\prod_{i\in J}^{\mathrm{Ban}} K_i$, pick a minimal-norm solution of the equation in each term to get a solution in $\prod_{i\in J}^{\mathrm{Ban}} K_i$ and then specialize back.
\end{proof}

\begin{corollary} There is a basis for the arc-topology consisting of strictly totally disconnected Banach rings.
\end{corollary}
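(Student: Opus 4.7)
The plan is to show that any Banach ring $A$ admits an arc-cover $A \to B$ with $B$ strictly totally disconnected; the previous proposition then does essentially all the work.

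First, I would reduce to the case that $A$ is analytic: by one of the examples, the map $A \to A[T^{\pm 1}]_{1/2}^\wedge$ (completing the free seminormed extension adjoining $T$ with $|T|=|T^{-1}|^{-1}=\tfrac12$) is an arc-cover, and the target is analytic. So we may assume every $K(x)$, for $x\in\mathcal M(A)$, is non-discrete. Next, for each such $x$ choose an algebraically closed, non-discrete Banach field $L(x)$ equipped with an isometric embedding $K(x) \hookrightarrow L(x)$; in the nonarchimedean case one takes the completion of an algebraic closure of $K(x)$ (which remains algebraically closed by Krasner's lemma and non-discrete since the norm extends uniquely), and in the archimedean case one simply takes $L(x)=\mathbb C$.

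Now form the Banach product
\[
B \;=\; \prod_{x\in\mathcal M(A)}^{\mathrm{Ban}} L(x),
\]
with the induced map $A\to B$ coming componentwise from $A\to K(x)\hookrightarrow L(x)$. By the preceding proposition applied to the family of non-discrete algebraically closed Banach fields $(L(x))_x$, the ring $B$ is strictly totally disconnected. Each factor projection $B \twoheadrightarrow L(x)$ provides a point of $\mathcal M(B)$ whose image in $\mathcal M(A)$ is $x$ (the composite $A\to B\to L(x)$ extends the seminorm corresponding to $x$), so $\mathcal M(B)\to \mathcal M(A)$ is surjective. Taking the finite subset $J=\{1\}$ in Definition~\ref{def:arccover}, this exhibits $A\to B$ as an arc-cover.

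There is no real obstacle here: the main point is simply that the construction in the proposition produces strictly totally disconnected rings, and the freedom to enlarge each $K(x)$ to an algebraically closed non-discrete Banach field is standard. The only thing worth pausing over is the preliminary reduction to analytic $A$, which is needed precisely so that none of the residue fields $K(x)$ are discrete and we stay inside the setting where the proposition applies.
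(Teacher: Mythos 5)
Your proof is correct and follows essentially the same route the paper intends: first pass to the analytic arc-cover $A\to A[T^{\pm1}]_{1/2}^\wedge$, then cover by the Banach product of algebraically closed non-discrete extensions of the residue fields $K(x)$, invoking the proposition on products of algebraically closed Banach fields being strictly totally disconnected. Nothing is missing; the surjectivity of $\mathcal M(B)\to\mathcal M(A)$ via the factor projections is exactly the point.
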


Thus, we can define the same topos if we restrict the objects to strictly totally disconnected Banach rings. Now, finally, the arc-topology feels adequate:

\begin{theorem} On the site of strictly totally disconnected Banach rings, the arc-topology is subcanonical.
\end{theorem}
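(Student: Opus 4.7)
I aim to show that for each strictly totally disconnected Banach ring $A$, the presheaf $h_A : B \mapsto \mathrm{Hom}(A, B)$ is an arc-sheaf on the site of strictly totally disconnected Banach rings. The first reduction is to a single cover: finite Banach products of strictly totally disconnected rings are again strictly totally disconnected, since $\mathcal{M}$ carries such a product to a finite disjoint union of profinite sets with unchanged algebraically closed non-discrete residue fields. So an arc-cover $\{B \to C_i\}_{i \in I}$ with finite subcover $J$ may be replaced by the single map $B \to C = \prod_{i \in J}^{\mathrm{Ban}} C_i$, and I must verify that $\mathrm{Hom}(A, B) \to \mathrm{Hom}(A, C) \rightrightarrows \mathrm{Hom}(A, D)$ is an equalizer, where $D$ is any strictly totally disconnected cover of the pushout $C \sqcup_B C$ (such $D$ exists by the preceding corollary, and $\mathcal{M}(D) \twoheadrightarrow \mathcal{M}(C) \times_{\mathcal{M}(B)} \mathcal{M}(C)$ by Proposition~\ref{prop:berkovichspectrumcolimit}(i)).

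Injectivity of $\mathrm{Hom}(A, B) \to \mathrm{Hom}(A, C)$ is immediate: since $B, C$ are uniform and $\mathcal{M}(C) \twoheadrightarrow \mathcal{M}(B)$, the Gelfand transform gives $|b|_B = \sup_{x \in \mathcal{M}(B)} \|b\|_{K(x)} = \sup_{y \in \mathcal{M}(C)} \|b\|_{K(y)} = |b|_C$, so $B \hookrightarrow C$ is an isometric embedding. For existence, given $\phi: A \to C$ satisfying the equalizer condition, I build a candidate $\psi: A \to B$ pointwise. For each $x \in \mathcal{M}(B)$ and lift $y \in \mathcal{M}(C)$, composition gives $\phi_y: A \to K(y)$. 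For any two lifts $y, y'$ over $x$ there is, by surjectivity above, a point $z \in \mathcal{M}(D)$ above $(y, y')$, and the equalizer hypothesis forces $\phi_y$ and $\phi_{y'}$ to coincide after embedding into $K(z)$. Exploiting that $K(x)$ is algebraically closed by strict total disconnectedness of $B$, this coincidence upgrades to a factorization $A \to K(x) \hookrightarrow K(y)$, independent of the chosen lift; assembling over $x \in \mathcal{M}(B)$ yields a map $\psi: A \to \prod_{x}^{\mathrm{Ban}} K(x)$ with norm bounded by $|\phi|_C$.

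The main obstacle, and the heart of the argument, is to show that $\psi$ lands in $B \subset \prod_{x}^{\mathrm{Ban}} K(x)$. Since $B$ is typically a proper sub-Banach-ring of the full product, the pointwise construction alone is insufficient; what is needed is an arc-descent statement of the form $B = \mathrm{eq}(C \rightrightarrows D)$ for surjections between uniform strictly totally disconnected rings. Granting this, the composite $A \xrightarrow{\psi} \prod_{x} K(x) \hookrightarrow \prod_{y} K(y)$ coincides by construction with $A \xrightarrow{\phi} C \hookrightarrow \prod_{y} K(y)$, so $\psi(A)$ lies in $C$; the equalizer condition on $\phi$ together with the descent statement then forces $\psi(A) \subset B$. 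I expect the bulk of the technical work to be in proving this arc-descent of uniform Banach ring structure along surjective Berkovich maps, which plays the role that faithfully flat descent would in the algebraic setting (but cannot be invoked here because $B \to C$ need not be flat).
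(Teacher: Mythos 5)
There is a genuine gap, and it sits exactly where the actual content of the theorem lies. Your pointwise step asserts that, because $K(x)$ is algebraically closed, the coincidence of $\phi_y$ and $\phi_{y'}$ in $K(z)$ ``upgrades'' to a factorization $A\to K(x)\hookrightarrow K(y)$. This does not follow from algebraic closedness alone, and it is precisely the nontrivial point: one must show that for an algebraically closed nonarchimedean field $K$ and a complete extension $L$, the equalizer of the two maps $L\rightrightarrows (L\otimes_K L)^\wedge_u$ (uniform completion) is exactly $K$, i.e.\ that $t\otimes 1$ and $1\otimes t$ remain distinct in the uniform completion for every $t\notin K$. Note also that your ``two lifts'' discussion is vacuous when $x$ has a single preimage $y$ with $K(y)\supsetneq K(x)$, which is the typical situation; the condition you actually have there is agreement of the two composites $A\to K(y)\rightrightarrows K(z)$ with $z$ above $(y,y)$, and extracting $K(x)$ from that is the same field-level computation. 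The paper does this by reducing to $A=K$ algebraically closed (the case $K=\mathbb C$ being settled by Gelfand--Mazur), replacing $B$ by the completion $L$ of $K(x)$ for an element $x$ of the equalizer not in $K$ -- so that $L$ is a residue field of the unit disc of type (2), (3) or (4) -- and checking that the completion of $L\otimes_K L$ is already uniform, so the equalizer is just $K$. Your proposal never engages with this computation.

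The second problem is that the ``main obstacle'' you identify -- getting $\psi$ to land in $B\subset\prod_x^{\mathrm{Ban}}K(x)$ -- is resolved only by ``granting'' the descent statement $B=\mathrm{eq}(C\rightrightarrows D)$ for arc-covers of strictly totally disconnected rings. But that statement is (an equivalent ring-level form of) the theorem you are asked to prove: since $\mathrm{Hom}(A,-)$ preserves equalizers, it immediately gives subcanonicity, and conversely it is exactly what the paper establishes. Indeed, granted it, your pointwise construction is unnecessary, since $\phi$ itself then lands in the equalizer $=B$. So the proposal carries out only the easy reductions (single cover, injectivity via uniformity and the Gelfand transform, both fine) and is circular at its core. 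The paper's route is to study the Banach ring $A'=\mathrm{eq}(B\rightrightarrows (B\otimes_A B)^\wedge_u)$ directly, show $\mathcal M(A')\to\mathcal M(A)$ is a homeomorphism inducing isomorphisms on residue fields by passing to fibres (using surjectivity of Berkovich spectra on pushouts), conclude that $A\to A'$ is an isometric isomorphism by completeness, with the fibrewise statement reduced to the disc computation described above; to repair your argument you would need to supply exactly that analysis.
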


\begin{proof} It suffices to see that for any arc-cover $A\to B$ of strictly totally disconnected Banach rings, and any arc-cover $B\otimes_A B\to C$ by another strictly totally disconnected Banach ring, the map
\[
A\to \mathrm{eq}(B\rightrightarrows C)
\]
is an isomorphism of Banach rings. Note first that any uniform ring injects into a strictly totally disconnected Banach ring (by the Gelfand transform), and hence this is equivalent to the map
\[
A\to \mathrm{eq}(B\rightrightarrows (B\otimes_A B)_u^\wedge)
\]
being an isomorphism, where $-_u^\wedge$ denotes the uniform completion.

Let $A'=\mathrm{eq}(B\rightrightarrows (B\otimes_A B)_u^\wedge)$. We get the map $\mathcal M(A')\to \mathcal M(A)$. It suffices to see that this is an isomorphism and that the induced maps of residue fields are isomorphisms. Indeed, this implies that $A'$ is itself strictly totally disconnected, and the resulting map $A\to A'$ must be injective with dense image (as the map is an isomorphism on residue fields, so one can locally approximate any element of $A'$, and then globally as the Berkovich space is profinite). It is also an isometry by uniformity; this implies, by completeness, that it is an isomorphism. Passing to fibres, we can assume that $A$ is an algebraically closed non-discrete Banach field $K$. As we are free to pass to refinements of the given cover (as we have a separated presheaf), we can then also assume that $B=L$ is a Banach field. If $K=\mathbb C$ then $B=\mathbb C$ by Gelfand--Mazur and there is nothing to prove. Thus, we can assume that $K$ is non-archimedean.

If $K\to \mathrm{eq}(L\rightrightarrows (L\otimes_K L)_u^\wedge)$ is not an isomorphism, we can pick some $x$ in the target that is not contained in $K$, and replace $L$ by the completion of $K(x)\subset L$, which is then a residue field of the unit ball, and is one of types (2), (3) and (4). In all cases it easy to check that the completion of $L\otimes_K L$ is already uniform, so that the equalizer is just $K$.
\end{proof}

Over $p$-adic rings, it is sufficient to pass to perfectoid rings to get the subcanonical nature.

\begin{theorem} On the site of Banach rings $A$ with $|p|_A<1$ and which are perfectoid, the arc-topology is subcanonical.
\end{theorem}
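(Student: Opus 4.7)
The plan is to follow the argument of the previous theorem, replacing \emph{strictly totally disconnected} by \emph{perfectoid} throughout. The key enabling fact is that pushouts of perfectoid rings exist in perfectoid rings and are computed as the uniform completion of the naive tensor product; in particular $(B \otimes_A B)_u^\wedge$ is again perfectoid whenever $A \to B$ is a map of perfectoid rings, so all the objects appearing in the proof stay inside the site.

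As in the previous theorem, the claim reduces to showing that for any arc-cover $A \to B$ of perfectoid Banach rings, the map $A \to A' := \mathrm{eq}(B \rightrightarrows (B \otimes_A B)_u^\wedge)$ is an isomorphism. We may refine $B$ to be strictly totally disconnected perfectoid, by applying the Gelfand transform $B \to \prod^{\mathrm{Ban}} K(x)$ and then replacing each residue field $K(x)$ by its completed algebraic closure; this remains perfectoid, since the completed algebraic closure of a perfectoid field is perfectoid (via tilting). The same argument as in the previous theorem shows that $\mathcal M(A') \to \mathcal M(A)$ is a bijection, so passing to fibers we reduce to the case $A = K$ with $K$ an algebraically closed perfectoid field (nonarchimedean, since $|p|<1$) and, after a further refinement, $B = L$ a perfectoid Banach field over $K$.

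It remains to show $K = \mathrm{eq}(L \rightrightarrows (L \otimes_K L)_u^\wedge)$. Suppose there is some $x$ in the equalizer with $x \notin K$. By tilting we reduce to the characteristic $p$ situation, and there we may replace $L$ by the completed perfect closure of $K(x)$ inside $L$, which is a perfectoid Banach subfield. The type analysis at the end of Section~2, applied to the perfection of $K[T]$, then classifies such $L$ into analogues of types (2), (3), and (4). The main obstacle will be a direct verification, for each type, that $(L \otimes_K L)^\wedge$ is already uniform and that the natural map $L \otimes_K L \to (L \otimes_K L)^\wedge$ is injective. In the type (3) analogue, for instance, $(L \otimes_K L)^\wedge$ is a perfectoid Tate algebra of the form $L\langle T^{\pm 1/p^\infty}\rangle_r$, visibly uniform and free over $L[T^{\pm 1/p^\infty}]$; the remaining cases follow from the corresponding explicit presentations. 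Once this is done, the equalizer in the uniform completion coincides with the algebraic equalizer $\mathrm{eq}(L \rightrightarrows L \otimes_K L) = K$ (standard for any field extension $L/K$), contradicting $x \notin K$.
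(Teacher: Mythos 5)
The paper's own proof is a two-line citation: pushouts of perfectoid rings are perfectoid, and then the equalizer statement $A \cong \mathrm{eq}(B \rightrightarrows B\hat{\otimes}_A B)$ for an arc-cover $A\to B$ is exactly v-descent for the structure sheaf on perfectoid spaces, quoted from \cite{ECoD}. Your attempt instead tries to rerun the proof of the preceding theorem (the strictly totally disconnected case) with ``perfectoid'' in place of ``strictly totally disconnected''. The gap is in the step ``the same argument as in the previous theorem shows that $\mathcal M(A')\to\mathcal M(A)$ is a bijection, so passing to fibers we reduce to $A=K$ algebraically closed''. In the previous theorem that reduction is legitimate only because the base $A$ is itself strictly totally disconnected: knowing that $\mathcal M(A')\to\mathcal M(A)$ is a homeomorphism with isomorphisms on residue fields implies $A\cong A'$ there via the ``injective with dense image and an isometry'' argument, and the dense image is produced by gluing approximations over clopen covers of the profinite space $\mathcal M(A)$ using idempotents. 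On the perfectoid site the object being covered is an arbitrary perfectoid ring $A$ (you may refine the cover $B$, but not $A$), whose Berkovich spectrum is a general compact Hausdorff space -- e.g.\ a tree for the perfectoid disc -- and no clopen gluing is available. So even granting the pointwise statements (bijectivity on spectra, residue field isomorphisms), you cannot conclude $A = A'$; and in fact even verifying those pointwise statements requires knowing how the equalizer interacts with completed base change to residue fields, which is not formal since limits do not commute with completed tensor products. What your argument would prove, if it worked, is v-descent of $\mathcal O$ on perfectoid spaces by a pointwise reduction; that theorem is genuinely global (it contains Tate acyclicity and pro-\'etale descent via almost purity as special cases) and is precisely the input the paper imports from \cite{ECoD}.

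Two smaller points: passing to a refinement of the cover needs the remark that representable presheaves are separated on uniform rings (the paper makes the analogous remark in the previous proof), and in your field case the phrase ``by tilting we reduce to characteristic $p$'' is not immediate either, since the equalizer statement concerns the additive structure of the untilted rings and does not transfer along the (multiplicative) tilting equivalence without an extra argument; the types (2), (3), (4) analysis can, however, be carried out in any characteristic, so this part is repairable. The essential missing ingredient remains the descent from a general perfectoid base to geometric points.
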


\begin{proof} For any pushout diagram $B\leftarrow A\to C$ of perfectoid rings, the completed tensor product $B\hat{\otimes}_A C$ is again perfectoid, see \cite[Proposition 6.18]{ScholzePerfectoid}. Thus, one has to see that if $A\to B$ is an arc-cover of perfectoid rings, then the map
\[
A\to \mathrm{eq}(B\rightrightarrows B\hat{\otimes}_A B)
\]
is an isomorphism. Choosing auxiliary rings of integral elements, this is the v-descent for the structure sheaf on perfectoid spaces, see \cite[Proposition 8.8]{ECoD}.
\end{proof}

Thus, on the locus $|p|<1$, the present story is almost the same as the story of v-sheaves considered in \cite{ECoD}; the difference is that in \cite{ECoD}, one also fixes rings of integral elements $A^+\subset A$. In the language used by Huber \cite{Huber} this means that we are restricting to overconvergent v-sheaves.

\section{Finitary arc-sheaves}

Our goal is to define a functor
\[
A\mapsto \mathcal D_{\mathrm{mot}}(A),
\]
taking a Banach ring $A$ to the symmetric monoidal presentable stable $\infty$-category $\mathcal D_{\mathrm{mot}}(A)$ of ``motivic sheaves on $\mathcal M_{\mathrm{arc}}(A)$''. This will contain a full subcategory
\[
\mathcal D_{\mathrm{mot}}^{\mathrm{eff}}(A)\subset \mathcal D_{\mathrm{mot}}(A)
\]
of effective motives, which will be the derived $\infty$-category of ball-invariant finitary arc-sheaves of abelian groups. In this section, we start by looking at the finitary arc-sheaves.

It turns out that finitaryness implies being an arc-sheaf, in the following sense.

\begin{theorem}\label{theorem:finitaryarc} Let $A$ be any Banach ring. Consider a functor $\mathcal F$ from the category of strictly totally disconnected Banach $A$-algebras towards anima that commutes with finite products and with filtered colimits. Then $\mathcal F$ is a hypercomplete arc-sheaf.

In particular, if $\mathcal F$ is a functor to abelian groups, then it is a sheaf of abelian groups, and for all strictly totally disconnected $A$-algebras $B$, the cohomology of $\mathcal F$ on $B$ vanishes in positive degrees.
\end{theorem}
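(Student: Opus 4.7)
The plan is to adapt the standard arc-descent blueprint of Bhatt--Mathew and \cite{ECoD} to the Berkovich setting, exploiting that strictly totally disconnected Banach $A$-algebras form a basis for the arc-topology by the corollary just proved. First, extend $\mathcal F$ to all Banach $A$-algebras by right Kan extension from this basis; it then suffices to verify the hypersheaf condition against arc-covers $R \to S$ with $R,S$ both in the basis. By composing $S \to \prod^{\mathrm{Ban}}_{y \in \mathcal M(S)} K(y)$ with a choice of algebraic closure on each Banach residue field $K(y)$, and observing that these constructions are themselves arc-covers, I would refine $S$ so that it has the product form $\prod^{\mathrm{Ban}}_{y \in T} L_y$ with $T \twoheadrightarrow \mathcal M(R)$ a surjection of profinite sets and each $L_y$ an algebraically closed non-discrete Banach field containing $K(\pi(y))$.

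The next step is to reduce to the case where $R = K$ is an algebraically closed non-discrete Banach field. Any clopen decomposition of $\mathcal M(R)$ induces a decomposition of $R$ and of the cover into a finite product, on which both sides of the descent equation split compatibly thanks to commutation of $\mathcal F$ with finite products. Writing the profinite set $\mathcal M(R)$ as the cofiltered limit of its finite clopen quotients and expressing the residue field at a point as the corresponding filtered colimit of localizations, the filtered-colimit hypothesis then cuts out a single point of $\mathcal M(R)$, leaving the basic case.

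The main obstacle will be proving arc-descent for $K \to S = \prod^{\mathrm{Ban}}_{y \in T} L_y$ where $K$ is an algebraically closed non-discrete Banach field. The approach is to exhibit the \v{C}ech nerve $S^{\hat\otimes_K \bullet}$ (refined at each level into the basis of strictly totally disconnected rings via Gelfand transform and algebraic closure) as a filtered colimit of augmented simplicial diagrams admitting an extra degeneracy. Concretely I would fix $y_0 \in T$; the projection $S \to L_{y_0}$ retracts the augmentation $K \to L_{y_0}$, and because $L_{y_0}$ is algebraically closed and non-discrete every other $L_y$ embeds into $L_{y_0}$ after enlarging along a cofinal system of subsets of $T$, giving extra degeneracies on a cofinal subsystem of the \v{C}ech nerve. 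Commutation of $\mathcal F$ with finite products and filtered colimits then turns these extra degeneracies into splittings of $\mathcal F$ applied to the cosimplicial diagram, which forces the descent identity to hold, and simultaneously gives hypercompleteness because the same splitting argument applies against any hypercover built from such refinements.

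The final statement about abelian group valued $\mathcal F$ follows from the anima version by looping and delooping: apply what we have proved to each Eilenberg--MacLane functor $B \mapsto K(\mathcal F(B), n)$, which still preserves finite products and filtered colimits, to deduce vanishing of positive cohomology on strictly totally disconnected $A$-algebras and the sheaf property on all Banach $A$-algebras.
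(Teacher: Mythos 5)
The central step of your argument does not work as stated. For the basic case $K\to S=\prod^{\mathrm{Ban}}_{y\in T}L_y$ with $K$ algebraically closed and non-discrete, you propose to produce extra degeneracies by embedding every $L_y$ into $L_{y_0}$ ``after enlarging along a cofinal system of subsets of $T$''. Such embeddings do not exist in general: an algebraically closed nonarchimedean Banach field $L_y$ of large cardinality or topological transcendence degree admits no bounded embedding into a fixed $L_{y_0}$ (think of $L_{y_0}=\mathbb C_p$ and $L_y$ a huge spherically complete field), and even when embeddings exist there is no coherent, simplicially compatible choice of them, so no extra degeneracy of the \v{C}ech conerve is produced. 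The only retraction your setup genuinely yields is the splitting of $S\hat\otimes_K L_{y_0}\to L_{y_0}$ after base change along $K\to L_{y_0}$, and exploiting that requires already knowing descent along $K\to L_{y_0}$ --- exactly the case (a single extension of algebraically closed Banach fields, which in general has no $K$-algebra retraction $L\to K$) that your mechanism cannot handle. The same circularity appears when you refine the terms $S\hat\otimes_R S$ of the nerve back into the basis: you then need descent for those refinements, which your argument presupposes.

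The missing idea, which is how the paper proceeds, is not to split the cover itself but to write it as a filtered colimit of finitely presented covers each of which \emph{does} split: any arc-cover $A\to B$ of a strictly totally disconnected $A$ is a completed filtered colimit of $A$-algebras $B_i$ admitting $A$-algebra sections (Lemma~\ref{lem:coversplits}), because an open subset of $\mathbb A^n_A$ surjecting onto $\mathcal M(A)$ always contains an $A$-section (Lemma~\ref{lem:smoothspacessplit}; Gelfand--Mazur archimedean, the Nullstellensatz for Tate algebras nonarchimedean). Descent for split covers is trivial, and one passes the property of universal descent through the filtered colimit --- but this requires first reducing to $n$-truncated $\mathcal F$ via Postnikov towers, a reduction you never make and which is also needed for your stalkwise reduction to a point of $\mathcal M(R)$ (totalizations do not commute with filtered colimits, and equivalences of sheaves of anima on $\mathcal M(R)$ are detected on stalks only in the hypercomplete/truncated setting). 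The paper also extends $\mathcal F$ off the basis by quasi-pro-\'etale sheafification (Lemmas~\ref{lem:quasiproetalecover}--\ref{lem:quasiproetalecohomfiltcolim}), which keeps the finite-product and filtered-colimit properties under control, rather than by a bare right Kan extension. Your final paragraph on abelian-group coefficients is fine, but it rests on the unproved anima statement.
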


\begin{remark} It follows formally that the result is true more generally for functors with values in compactly generated $\infty$-categories $\mathcal C$ (or even in compactly assembled $\infty$-categories). Indeed, in the compactly generated case, this follows by evaluating on compact objects (which commutes with all limits and all filtered colimits). In the compactly assembled case, there are sufficiently many sequential Pro-objects $(X_n)_n$ with compact transition maps (in the sense that the collection of functors $\mathrm{colim}_n \mathrm{Hom}(X_n,-)$ detect equivalences). To construct those, fix any compact map $X'\to X$. Let $X_0=X$ and pick a factorization $X'\to X_1\to X_0$ where both maps are compact -- this exists in a compactly assembled category. Now inductively construct $X'\to \ldots\to X_2\to X_1\to X_0$. The functors $\mathrm{colim}_n \mathrm{Hom}(X_n,-)$ commute with finite limits and all filtered colimits. Now if $\mathcal F$ takes values in $\mathcal C$ and commutes with finite products and filtered colimits, then for any such $(X_n)_n$, also $\mathrm{colim}_n \mathrm{Hom}(X_n,\mathcal F)$ has this property, and hence is a hypercomplete arc-sheaf by the theorem. This implies that $\mathcal F$ is a sheaf with values in $\mathcal C$, as the collection of functors $\mathrm{colim}_n \mathrm{Hom}(X_n,-)$ also reflects limit diagrams. Indeed, for $\omega_1$-compact $X$, one can write $\mathrm{Hom}(X,-)$ as a limit of such functors.
\end{remark}

For this proof, we will use the quasi-pro-\'etale topology; the terminology follows \cite[Definition 10.1, Lemma 7.19]{ECoD}.

\begin{definition}\label{def:quasiproetale} A map $A\to B$ of Banach rings is quasi-pro-\'etale if after any base change $A\to C$ to an algebraically closed Banach field $C$, the uniform completion of $B\otimes_A C$ is isomorphic to $\mathrm{Cont}(S,C)$ for some profinite set $S$.
\end{definition}

Previously, we constructed arc-covers by strictly totally disconnected Banach rings. Actually, one can find quasi-pro-\'etale covers by strictly totally disconnected Banach rings:

\begin{lemma}\label{lem:quasiproetalecover} Let $A$ be an analytic Banach ring. Then there is a quasi-pro-\'etale cover $A\to B$ such that $B$ is strictly totally disconnected.
\end{lemma}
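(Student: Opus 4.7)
The plan is to construct $B$ as a Banach product of completed algebraic closures of all residue fields of $A$, mimicking the arc-cover construction of Example 3.9 but tuning it to be strictly totally disconnected and (after refinement) quasi-pro-\'etale. For each $x \in \mathcal M(A)$, set $C(x) := \widehat{\overline{K(x)}}$, the completion of an algebraic closure of the Banach residue field $K(x)$. Since $A$ is analytic, each $K(x)$, and hence each $C(x)$, is non-discrete; by definition $C(x)$ is algebraically closed. Take
\[
A \to B_0 := \prod_{x \in \mathcal M(A)}^{\mathrm{Ban}} C(x).
\]
The earlier proposition on Banach products of non-discrete Banach fields gives that $B_0$ is strictly totally disconnected: it is uniform, $\mathcal M(B_0)$ is the Stone--\v{C}ech compactification of $\mathcal M(A)$ viewed as a discrete set, and each residue field is an algebraically closed, non-discrete Banach ultraproduct of the $C(x)$. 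That $A \to B_0$ is an arc-cover is immediate since $\mathcal M(B_0) \to \mathcal M(A)$ is surjective.

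The real work is to verify the quasi-pro-\'etale condition: for every algebraically closed non-discrete Banach field $C$ with a map $\varphi\colon A \to C$, the uniform completion of $B \hat\otimes_A C$ must be $\mathrm{Cont}(S, C)$ for some profinite set $S$. Let $x_0 \in \mathcal M(A)$ be the image point of $\varphi$. By the proposition on Berkovich spectrum of a pushout, $\mathcal M(B_0 \hat\otimes_A C)$ surjects onto the fibre product $\mathcal M(B_0) \times_{\mathcal M(A)} \mathcal M(C)$, cut out of $\mathcal M(B_0) \times \mathcal M(C)$ by matching images in $\mathcal M(A)$. The candidate set $S$ is then a closed subspace of a profinite set crossed with $\mathcal M(C)$; at each of its points, the residue field is controlled by a Banach ultraproduct $\prod_{\mathcal U} C(x)$ tensored over $K(x_0)$ with $C$ and uniform-completed. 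Since $C$ is algebraically closed over $K(x_0)$ and already contains a copy of $\overline{K(x_0)}$, any algebraic element of the ultraproduct maps uniquely into $C$, and one expects the uniform completion of the base change to collapse to $\mathrm{Cont}(S, C)$.

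The main obstacle is precisely this residue-field collapse, complicated by the fact that $\mathcal M(C)$ has non-trivial rescalings of the norm which interact delicately with the fibre product over $\mathcal M(A)$. If the naive choice $B := B_0$ is insufficient in pathological cases, I would refine it by Banach-multiplying each factor with a filtered colimit of finite \'etale $A$-algebras split at the geometric point in question (a Berkovich analogue of strict henselization, as in \cite{ECoD}); strictly-totally-disconnected-ness of the refined $B$ follows again from the Banach-product proposition. The final quasi-pro-\'etale verification reduces, via arc-descent and the explicit structure of the Berkovich disc (points of types (1)--(4) discussed in Section~2), to a direct computation over a single analytic Banach field, where the analytic uniformizer $T$ provides the quantitative control needed to identify the uniform completion of the base change with $\mathrm{Cont}(S, C)$.
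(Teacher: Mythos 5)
There is a genuine gap, and it is at the core of the construction rather than in the technical details you defer. The Banach product $B_0=\prod_{x\in\mathcal M(A)}^{\mathrm{Ban}} C(x)$ is indeed strictly totally disconnected and an arc-cover, but it is \emph{not} quasi-pro-\'etale over $A$. The reason is exactly the ultraproduct points you brush past: $\mathcal M(B_0)=\beta(\mathcal M(A)^{\mathrm{disc}})$, and a non-principal ultrafilter concentrated on a family of points converging to $x_0$ gives a point of $\mathcal M(B_0)$ lying over $x_0$ whose residue field is a Banach ultraproduct of the $C(x)$. Such an ultraproduct is a huge \emph{transcendental} extension of $K(x_0)$, not an algebraic one, so your remark that ``any algebraic element of the ultraproduct maps uniquely into $C$'' does not apply to it. Concretely, take $A=C_0\langle T\rangle_1$ with $C_0$ algebraically closed nonarchimedean, $x_0$ the point $T=0$, and the base change $A\to C=C_0$, $T\mapsto 0$: a non-principal ultrafilter on the points $T=a_n$ with $|a_n|\to 0$ yields a point of the uniform completion of $B_0\hat\otimes_A C$ whose residue field contains the ultraproduct $\prod_{\mathcal U} C_0\supsetneq C_0$ isometrically. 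Hence that uniform completion cannot be $\mathrm{Cont}(S,C)$, and the proposed fallback (multiplying in filtered colimits of finite \'etale $A$-algebras) cannot repair this, since finite \'etale covers only affect algebraic behaviour of residue fields, while the failure here is the appearance of new transcendental residue field extensions relative to $A$.

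The paper's proof avoids products of residue fields altogether. It fixes a profinite set $S$ surjecting onto $\mathcal M(A)$ and constructs the \emph{initial} uniform $A$-Banach algebra $A_S$ whose Berkovich spectrum lifts to $S$, built by transfinitely splitting covers of the form $\mathcal M(A\langle f\rangle_r)\cup\mathcal M(A\langle f^{-1}\rangle_s)$ and by passing to $\mathrm{Cont}(S,A)$ (a completed colimit of finite powers of $A$) and quotients by completed colimits of direct summands. These operations satisfy $\mathcal M(A_S)=S$ while leaving the residue fields of $A$ unchanged, which is precisely what makes $A\to A_S$ quasi-pro-\'etale and yields a totally disconnected cover; strict total disconnectedness is then achieved by a limit over finite \'etale covers, using that these spread from a point to a neighborhood because local rings are henselian. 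If you want to salvage your approach, you would have to replace the Banach product by a construction of this localization/$\mathrm{Cont}(S,-)$ type that does not create ultrafilter points with enlarged residue fields, and only afterwards algebraically close the residue fields by a pro-(finite \'etale) step.
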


\begin{proof} Take any profinite set $S$ mapping to $\mathcal M(A)$. We claim that there is an initial uniform $A$-Banach algebra $A_S$ with a lift of $\mathcal M(A_S)\to \mathcal M(A)$ to $\mathcal M(A_S)\to S$. Moreover, this initial $A_S$ satisfies $\mathcal M(A_S)=S$ (while residue fields are unchanged, so $A\to A_S$ is quasi-pro-\'etale). Indeed given any $f\in A$ we have a cover of $\mathcal M(A)$ by the interiors of $\mathcal M(A\langle f\rangle_r)$ and $\mathcal M(A\langle f^{-1}\rangle_s)$ when $s<r$. The pullback of this cover to $S$ splits, and picking such a splitting, the initial $A_S$ necessarily admits a map from $A\langle f\rangle_r\times A\langle f^{-1}\rangle_s$. Continuing this way, we can by transfinite induction replace $A$ by algebras $A_i$ so that $S$ lifts compatibly to $\mathcal M(A_i)$ and in the inverse limit $\mathcal M(A_i)$ becomes totally disconnected. This reduces us to the case $A$ is totally disconnected. Replacing $A$ by a completed filtered limit of finite products of $A$ (more precisely, by $\mathrm{Cont}(S,A)$ which is the completed filtered colimit of $A^{\overline{S}}$ over finite quotients $S\to \overline{S}$), we can assume that $S$ is a subset of $\mathcal M(A)$. In that case $A_S$ is a quotient of $A$, corresponding to a completed filtered colimit of direct summands.

Taking $S\to \mathcal M(A)$ to be surjective, we can thus find a totally disconnected quasi-pro-\'etale cover. Finally we can make it strictly totally disconnected by taking a limit of finite \'etale covers, noting that they always spread from a point to a small neighborhood (as local rings are henselian).
\end{proof}

If $A$ is already strictly totally disconnected, one can completely understand further quasi-pro-\'etale maps:

\begin{lemma}\label{lem:quasiproetaleoverstd} Let $A$ be strictly totally disconnected. Then the category of quasi-pro-\'etale strictly totally disconnected $A$-algebras $B$ is equivalent to the category of profinite sets over $\mathcal M(A)$, by sending $B$ to $\mathcal M(B)$.
\end{lemma}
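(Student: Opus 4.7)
The plan is to construct an inverse functor $S\mapsto A_S$, where $A_S$ is the Banach ring built in the proof of Lemma~\ref{lem:quasiproetalecover}. The first observation is that since $A$ is already strictly totally disconnected, no finite \'etale extensions are required in that construction: for a profinite $\pi:S\to \mathcal M(A)$, one replaces $A$ by $\mathrm{Cont}(S,A)$ (with Berkovich spectrum $S\times \mathcal M(A)$), realizing $S$ as the graph $\Gamma_\pi$ of $\pi$; then $A_S$ is the completed filtered colimit of the direct summands of $\mathrm{Cont}(S,A)$ cut out by the cofinal system of clopen neighborhoods of $\Gamma_\pi$ of the form $\bigsqcup_U \pi^{-1}(U)\times U$, with $\{U\}$ running over finite clopen partitions of $\mathcal M(A)$. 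I would verify directly that $A_S$ is uniform with $\mathcal M(A_S)=S$, that its residue field at $s\in S$ is $K(\pi(s))$, and that $A\to A_S$ is quasi-pro-\'etale.

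Essential surjectivity of $B\mapsto \mathcal M(B)$ is then immediate. The remaining content is full faithfulness, which I reduce to showing that for every quasi-pro-\'etale strictly totally disconnected $B$ over $A$ with $S=\mathcal M(B)$, the canonical map $\varphi:A_S\to B$ coming from the universal property of $A_S$ is an isomorphism. Both rings are uniform, have Berkovich spectrum $S$, and at each $s\in S$ have residue field $K(\pi(s))$: for $A_S$ by construction, and for $B$ by applying the quasi-pro-\'etale hypothesis to $C=K(\pi(s))$ and invoking Proposition~\ref{prop:berkovichspectrumcolimit}(i) to identify $\mathcal M(B\otimes_A K(\pi(s)))$ with the corresponding fiber of $\pi$. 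Gelfand transforms thus embed both isometrically and compatibly into $\prod^{\mathrm{Ban}}_{s\in S} K(\pi(s))$, so $\varphi$ is automatically an isometric embedding.

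The main obstacle will be showing $\varphi$ has dense image. My approach is to work locally on $\mathcal M(A)$: for each clopen $U\subset \mathcal M(A)$ the idempotent $e_U\in A$ cuts out direct summands $e_UA_S$ and $e_UB$, each of which is a quasi-pro-\'etale strictly totally disconnected algebra over $e_UA$ with Berkovich spectrum $\pi^{-1}(U)$. Refining the partition $\{U\}$ and taking the filtered colimit of the corresponding decompositions, both $A_S$ and $B$ should be recovered as the same ``sheaf of germs along the graph $\Gamma_\pi$'' inside $\mathrm{Cont}(S,A)$, yielding surjectivity of $\varphi$. Carrying out this density/gluing step carefully is the principal technical content; the key inputs will be the profiniteness of $\mathcal M(A)$, the uniformity of $A$ and $B$, and the pointwise description of residue fields forced by quasi-pro-\'etaleness.
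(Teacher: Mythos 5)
Your overall route is the paper's: the proof there is a one-liner saying that the construction of Lemma~\ref{lem:quasiproetalecover} furnishes the inverse functor, and your proposal fleshes out exactly that. The setup of $A_S$ as germs of $\mathrm{Cont}(S,A)$ along the graph, the reduction of full faithfulness to showing the canonical map $\varphi\colon A_{\mathcal M(B)}\to B$ is an isomorphism, and the isometric-embedding step (via $\mathcal M(B)=S$, quasi-pro-\'etaleness at $C=K(\pi(s))$ together with Proposition~\ref{prop:berkovichspectrumcolimit}(i) to pin the residue fields of $B$ down to $K(\pi(s))$, and the Gelfand transform for uniform rings) are all sound.

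The gap is in the density step, and as written the proposed mechanism would not close it. Refining clopen partitions $\{U\}$ of $\mathcal M(A)$ and ``taking the filtered colimit of the corresponding decompositions'' only restates the problem: it reduces $\varphi$ being an isomorphism to $e_UA_S\to e_UB$ being one, which is the same statement over $e_UA$, and the fibres of $\pi$ are never cut by such partitions (already for $A=C$ a point and $B=\mathrm{Cont}(T,C)$, nothing happens when you refine partitions of $\mathcal M(A)=\{*\}$). What is actually needed is: (1) a pointwise input, namely that the \emph{image of $A$} is dense in each completed residue field $K(x)$ -- the definition of $K(x)$ only makes the fraction field of the image dense. For (strictly) totally disconnected $A$ this is true, e.g.\ because any $a'\in A$ with $\|a'\|_x\neq 0$ becomes invertible in $e_UA$ for a small clopen $U\ni x$ (continuity of $\|a'\|$ plus the invertibility criterion via $\mathcal M$), so the completed stalk $\mathrm{colim}_{U\ni x}e_UA$ is a Banach field and hence equals $K(x)$; but this is a genuine step your sketch never isolates, and it is where profiniteness of $\mathcal M(A)$ and uniformity really enter. (2) A globalization over $S$, not over $\mathcal M(A)$: given $b\in B$ and $s_0\in S$, use (1) to find $a$ (in $A$, or in some $e_UA$) with $\|b-a\|_{s_0}<\epsilon$, use continuity of $s\mapsto\|b-a\|_s$ on $\mathcal M(B)=S$ to get a clopen $V\ni s_0$ \emph{in $S$} where this bound persists, and conclude by compactness of $S$ with a finite clopen partition of $S$, producing a locally constant $A$-valued function on $S$, i.e.\ an element of $A_S$, within $\epsilon$ of $b$ in the sup norm. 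Since you have already shown $\varphi$ is isometric and $A_S$ is complete, dense image then gives surjectivity. With (1) and (2) supplied your argument is complete; without them the ``same sheaf of germs along $\Gamma_\pi$'' claim for $B$ is an assertion of the conclusion rather than a proof.
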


\begin{proof} This is immediate from the construction in the previous lemma, which yields the inverse functor.
\end{proof}

In particular, we can understand quasi-pro-\'etale sheaves.

\begin{lemma}\label{lem:quasiproetalesheafprofin} Let $S$ be a profinite set and consider the site of profinite sets $S'$ over $S$ equipped with the Grothendieck topology with covers finite families of jointly surjective maps. Any contravariant functor $S'\mapsto \mathcal F(S')$ towards anima that takes finite disjoint unions to finite products and cofiltered inverse limits to filtered colimits defines a hypersheaf, and the category of such is equivalent to the category of sheaves of anima on the topological space $S$.
\end{lemma}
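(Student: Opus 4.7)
The plan is to verify the sheaf condition directly from the two compatibility properties, handle hyperdescent by reducing to extremally disconnected objects, and then produce the equivalence with topological sheaves via restriction to clopen subsets of $S$.

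First, I would check that $\mathcal F$ is a sheaf. Any arc-cover in this site is a finite family of jointly surjective maps and can be refined by the single surjection obtained from taking the disjoint union, so compatibility with finite products reduces the sheaf condition to surjections $T'' \to T'$ of profinite sets over $S$. Writing $T' = \lim_i T'_i$ as a cofiltered limit of its finite quotients, and $T'' = \lim_j T''_j$ as a cofiltered limit of surjections $T''_j \to T'_{i(j)}$ of finite sets (with fibre products $T'' \times_{T'} T'' = \lim_j T''_j \times_{T'_{i(j)}} T''_j$), compatibility with cofiltered limits reduces to the case of surjections $T''_0 \to T'_0$ of finite sets. Decomposing $T'_0$ into points and using that fibre products commute with this decomposition, finite products further reduce to surjections of the form $F \to \ast$ with $F$ a nonempty finite set. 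In this case $\mathcal F$ applied to the \v{C}ech nerve is $[\bullet] \mapsto \mathrm{Map}(F^{\bullet+1}, \mathcal F(\ast))$, whose totalisation is $\mathrm{Map}(|F^{\bullet+1}|, \mathcal F(\ast)) = \mathcal F(\ast)$ since the \v{C}ech nerve of a surjection to a point has contractible realisation.

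Next, for hypercompleteness, I would use that every profinite set $S'$ admits a surjection $T \to S'$ from an extremally disconnected profinite set (the Stone--\v{C}ech compactification of the underlying discrete set of $S'$ works). On the full subcategory of extremally disconnected objects, every surjection splits, so sheaves there coincide with presheaves commuting with finite products, and in particular every hypercover admits a section termwise. This makes descent for hypercovers on extremally disconnected objects automatic; the general case follows because extremally disconnected objects form a basis and $\mathcal F$, being finitary in the above sense, is determined by its values on them.

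For the equivalence with sheaves on the topological space $S$, the functor one way sends $\mathcal F$ to its restriction to the poset of clopen subsets $U \subset S$. Since clopen subsets form a basis of the topology and any open cover of a compact clopen refines to a finite clopen decomposition, and $\mathcal F$ turns such decompositions into products, this restriction is indeed a sheaf on $S$. Conversely, given a sheaf $\mathcal G$ on $S$, I would extend it to a functor on profinite sets over $S$ by first setting $\tilde{\mathcal G}(s) = \mathrm{colim}_{U \ni s \text{ clopen}} \mathcal G(U)$ (the stalk) for a point $s: \ast \to S$, then for a finite set $T_0 \to S$ taking the product of stalks, and for a general $T = \lim_i T_i$ over $S$ the cofiltered limit of these finite-product expressions. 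The resulting functor manifestly commutes with finite products and cofiltered limits by construction.

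The main obstacle will be checking that these two constructions are mutually inverse in a canonical way: starting from $\mathcal G$, one must verify that the stalk-then-extend recipe recovers $\mathcal G$ on clopens, which uses that for a clopen $U \subset S$ one can present $U$ as a cofiltered limit of finite decompositions into smaller clopens and that $\mathcal G(U) = \lim_i \prod_{V \in \text{decomp}_i} \mathcal G(V)$ behaves correctly in the stalk colimit. Starting from $\mathcal F$, one must show that $\mathcal F(T)$ for profinite $T \to S$ is determined by its values at points, which again uses the reduction $T = \lim_i T_i$ to finite sets and the finite-product property. Both comparisons ultimately reduce to the observation that, for profinite $T$ over $S$, the stalk-formula $\mathrm{colim}_{U \ni s} \mathcal F(T \times_S U)$ at a point of $T$ agrees with $\mathcal F$ evaluated at the corresponding point of $T$, a consequence of commuting with cofiltered limits of clopen neighbourhoods.
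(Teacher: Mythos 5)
Your proposal takes a genuinely different route from the paper, but as written it has several gaps. The paper's proof is short and formal: it observes that the category of profinite sets over $S$ is the Pro-category of the category of finite disjoint unions of clopen subsets of $S$, which makes the equivalence with sheaves on $S$ essentially definitional, and it proves hyperdescent by first writing $\mathcal F=\lim_n\tau_{\leq n}\mathcal F$ and reducing to truncated $\mathcal F$, for which the \v{C}ech totalizations are finite limits and covers can be written as cofiltered limits of clopen-decomposition covers. Your reduction of the sheaf condition to surjections $F\to\ast$ of finite sets (and the contractibility of the realization of the \v{C}ech nerve) is fine in itself, but the step ``compatibility with cofiltered limits reduces to surjections of finite sets'' silently interchanges the totalization over $\Delta$ (an infinite limit) with the filtered colimit over $j$; this interchange is not valid for general anima-valued diagrams, and the termwise splittings of the finite-level covers are not compatible along the tower (a compatible system of splittings would produce a splitting of $T''\to T'$ itself, which need not exist). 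The truncation reduction is exactly what repairs this, and you never invoke it.

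The two remaining parts have more serious problems. For hyperdescent, your claim that $\mathcal F$ is ``determined by its values on extremally disconnected objects'' via finitariness is false: a profinite set over $S$ is in general \emph{not} a cofiltered limit of extremally disconnected sets over $S$ --- already $\mathrm{id}\colon S\to S$ cannot be, since the limit cone would exhibit $S$ as a retract of an extremally disconnected set, forcing $S$ to be extremally disconnected --- and descent of $\mathcal F$ along a cover $T\to S'$ with $T$ extremally disconnected is not automatic, because that cover does not split when $S'$ is not extremally disconnected; so the sentence ``the general case follows'' is unsubstantiated (again, the fix is that truncated sheaves are automatically hypercomplete). For the equivalence, your inverse construction does not typecheck: a general profinite set over $S$ is not a cofiltered limit of \emph{finite sets over} $S$ (same retract argument, applied to $S$ over itself), so the recipe ``product of stalks at the points of $T_i$, then pass to the limit'' does not even apply to the object $S$; moreover the passage should be a filtered colimit, not a ``cofiltered limit''. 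The correct building blocks are finite disjoint unions of clopen subsets $U_j\subset S$, with value $\prod_j\mathcal G(U_j)$ (values, not stalks), extended by filtered colimits along the pro-presentation; this is precisely the paper's Pro-category observation, and with it the compatibility checks you defer to the ``main obstacle'' paragraph become immediate.
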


\begin{proof} The category of $S'/S$ is the Pro-category of the category of profinite sets over $S$ that are finite disjoint unions of open and closed subsets. This is probably most easily seen in the dual setting of Boolean algebras, where it says that the category of Boolean $\mathrm{Cont}(S,\mathbb F_2)$-algebras is compactly generated, and the compact objects correspond to finite disjoint unions of open and closed subsets of $S$. The compact generation (by finitely presented Boolean algebras) is formal. Writing $\mathrm{Cont}(S,\mathbb F_2)$ as the filtered colimit of $\mathrm{Cont}(\overline{S},\mathbb F_2)$ over all finite quotients $\overline{S}$ of $S$, the finitely presented objects arise as base change from some finite quotient $\overline{S}$ of $S$, i.e.~we precisely get those profinite sets $S'$ over $S$ that can be written as $\overline{S}'\times_{\overline{S}} S$ for a map of finite sets $\overline{S}'\to \overline{S}$. But this is equivalent to the condition that $S'$ is a finite disjoint union of open and closed subsets of $S$.

The datum of $\mathcal F$ as in the statement is then equivalent to the datum of a functor $\mathcal F_0$ defined on open and closed subsets and commuting with disjoint unions, i.e.~a sheaf of anima on the topological space $S$. To see that $\mathcal F$ is a hypersheaf, we write it as the limit of its truncations, and so can assume that $\mathcal F$ is $n$-truncated for some $n$. Then any cover $S'\to S$ is a cofiltered limit of covers $S'_i\to S$ where $S'_i$ is a finite disjoint union of open and closed subsets of $S$. Then the sheaf property for $S'\to S$ follows from the sheaf property for $S'_i\to S$ upon passing to filtered colimits. In this last step, we use that filtered colimits commute with finite limits, and that in the $n$-truncated case, the limit in the descent diagram can be taken to be a finite limit.
\end{proof}

We can now extend finitary sheaves from strictly totally disconnected Banach rings to all analytic Banach rings, at least when they are $n$-truncated for some $n$.

\begin{lemma}\label{lem:quasiproetalecohomfiltcolim} Let $A$ be an analytic Banach ring, and fix some integer $n$. Consider a functor $\mathcal F$ from strictly totally disconnected $A$-algebras to $n$-truncated anima that commutes with finite products and with filtered colimits. Extend $\mathcal F$ to all uniform Banach $A$-algebras by quasi-pro-\'etale sheafification. Then the extension of $\mathcal F$ also commutes with finite products and filtered colimits.
\end{lemma}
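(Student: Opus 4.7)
My plan is to realize the quasi-pro-\'etale sheafification of $\mathcal F$ on a uniform Banach $A$-algebra $B$ as global sections of an honest $n$-truncated sheaf $\mathscr F_B$ on the compact Hausdorff space $\mathcal M(B)$, and then deduce the two preservation statements from the corresponding behaviors of Berkovich spectra (Proposition~\ref{prop:berkovichspectrumcolimit}) together with standard facts about sheaves on compact Hausdorff spaces.

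The sheaf $\mathscr F_B$ is built in two stages. When $B$ is strictly totally disconnected, Lemma~\ref{lem:quasiproetaleoverstd} lets us evaluate $\mathcal F$ on $B_T$ for any profinite $T$ over $\mathcal M(B)$; the hypothesis that $\mathcal F$ preserves finite products and filtered colimits implies that $T \mapsto \mathcal F(B_T)$ sends finite disjoint unions to finite products and cofiltered inverse limits to filtered colimits, so Lemma~\ref{lem:quasiproetalesheafprofin} produces $\mathscr F_B$ with $\mathcal F(B) = \Gamma(\mathcal M(B), \mathscr F_B)$. For general uniform $B$, pick a qpe strictly totally disconnected cover $B \to B^*$ (Lemma~\ref{lem:quasiproetalecover}); the induced proper surjection $\mathcal M(B^*) \to \mathcal M(B)$ together with the uniform \v{C}ech nerve of $B \to B^*$, refined at each level by further strictly totally disconnected qpe covers, provides descent data that descends $\mathscr F_{B^*}$ to a sheaf $\mathscr F_B$ on $\mathcal M(B)$, and we set $\mathcal F^{\mathrm{qpe}}(B) := \Gamma(\mathcal M(B), \mathscr F_B)$. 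Independence of the choice of $B^*$ follows by cofinality. Finite products are then immediate: if $B = B_1 \times B_2$, then $\mathcal M(B) = \mathcal M(B_1) \sqcup \mathcal M(B_2)$, the cover $B^* = B_1^* \times B_2^*$ works, and the entire construction splits.

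The filtered colimit case is the heart of the argument. Given $B = \mathrm{colim}^{\mathrm{Ban}} B_i$ uniformly, Proposition~\ref{prop:berkovichspectrumcolimit}(ii) identifies $\mathcal M(B) = \lim_i \mathcal M(B_i)$ as compact Hausdorff spaces. The claim reduces to showing (a) $\mathscr F_B \cong \mathrm{colim}_i p_i^{-1} \mathscr F_{B_i}$ on $\mathcal M(B)$, where $p_i \colon \mathcal M(B) \to \mathcal M(B_i)$, and (b) that for $n$-truncated sheaves $\mathscr G_i$ on a cofiltered limit of compact Hausdorff spaces, one has $\Gamma(\lim_i X_i, \mathrm{colim}_i p_i^{-1} \mathscr G_i) = \mathrm{colim}_i \Gamma(X_i, \mathscr G_i)$.

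I expect (a) to be the genuine obstacle. One needs a coherent family of qpe strictly totally disconnected covers $B_i \to B_i^*$, functorial in the transitions $B_j \to B_i$, whose uniform colimit $B^*$ is itself strictly totally disconnected and qpe over $B$. The cleanest route is to use the Stone--\v{C}ech-type cover $B_i^* = \prod^{\mathrm{Ban}}_{x \in \mathcal M(B_i)} \overline{K(x)}$, with the algebraic closures chosen compatibly so that the transitions $B_j \to B_i$ lift to $B_j^* \to B_i^*$. Then $B^* = \mathrm{colim}^{\mathrm{Ban}} B_i^*$ has residue fields the appropriate algebraic closures of residue fields of $B$, hence is strictly totally disconnected; the \v{C}ech nerves behave similarly under the uniform colimit; and the finitaryness hypothesis on $\mathcal F$ on strictly totally disconnected algebras yields (a) at each \v{C}ech level. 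Statement (b) is standard, proved by induction on $n$ starting from the tautological $n = 0$ case that a compatible family of global sections over the $X_i$ glues uniquely to one over $\lim_i X_i$.
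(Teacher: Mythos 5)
Your stage-1 construction (the strictly totally disconnected case via Lemmas~\ref{lem:quasiproetaleoverstd} and \ref{lem:quasiproetalesheafprofin}), the finite-product statement, and your step (b) are all fine, but step (a) — the heart of the filtered-colimit case — has a genuine gap. The ``Stone--\v{C}ech-type cover'' $B_i^* = \prod^{\mathrm{Ban}}_{x\in\mathcal M(B_i)}\overline{K(x)}$ is an arc-cover but it is \emph{not} quasi-pro-\'etale: base changing along a map $B_i\to C$ to an algebraically closed residue field, the points of $\mathcal M(B_i^*\otimes_{B_i}C)$ lying over non-principal ultrafilters in $\beta(\mathcal M(B_i)^\delta)$ have residue fields containing Banach ultraproducts of the $\overline{K(x)}$, which are huge proper extensions of $C$; so the uniform completion of $B_i^*\otimes_{B_i}C$ is not of the form $\mathrm{Cont}(S,C)$ (compare Definition~\ref{def:quasiproetale}, and note that by Lemma~\ref{lem:quasiproetaleoverstd} qpe covers of a strictly totally disconnected ring leave residue fields unchanged). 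Since the lemma is about the quasi-pro-\'etale sheafification, descent along $B_i\to B_i^*$ is not known to compute it; the assertion that the extension satisfies descent along such arc-covers is essentially Theorem~\ref{theorem:finitaryarc} (proved via the splitting Lemmas~\ref{lem:coversplits} and \ref{lem:smoothspacessplit}), whose proof in turn cites the present lemma — so as written your identification (a) is either circular or requires a substantial new argument. (A smaller wrinkle in the same step: $\mathcal M(B^*\otimes_B B^*)\to \mathcal M(B^*)\times_{\mathcal M(B)}\mathcal M(B^*)$ is only surjective by Proposition~\ref{prop:berkovichspectrumcolimit}(i), so descending to an honest sheaf on $\mathcal M(B)$ by topological proper descent needs extra care.)

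Moreover, even inside your framework, ``with the algebraic closures chosen compatibly so that the transitions lift'' conceals a real coherence problem: over an arbitrary filtered index category one must choose embeddings of algebraic closures compatible with all composites, and you give no mechanism for this. The paper's proof is organized precisely to minimize this difficulty: it first passes from general uniform algebras to totally disconnected ones using the canonical, choice-free quasi-pro-\'etale covers $A_{i,S_i}$ of Lemma~\ref{lem:quasiproetalecover} (residue fields unchanged, hence qpe, and functorial because $S_i$ is), then from totally disconnected algebras to their residue fields via stalks of the associated sheaf on $\mathcal M(B)$, and only then — for a filtered system of Banach fields, after reducing to colimits indexed by an ordinal — constructs compatible completed algebraic closures by transfinite induction. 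If you want to salvage your route, you should replace the product-of-algebraic-closures covers by these functorial qpe covers and postpone all choices of algebraic closures to an ordinal-indexed chain of fields, which is in effect the paper's argument.
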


\begin{proof} Note that by the previous two lemmas, $\mathcal F$ is already a sheaf on strictly totally disconnected algebras, and hence the value of $\mathcal F$ is unchanged on them.

The commutation with finite products is clear. For filtered colimits, we make extensive use of the fact that because $\mathcal F$ is $n$-truncated, any descent diagram can be taken to be finite, and the resulting finite limits commute with filtered colimits.

We first consider the case of totally disconnected $A$-algebras. If $B$ is totally disconnected, we can pick a quasi-pro-\'etale cover $B\to \tilde{B}$ with $\tilde{B}$ strictly totally disconnected by choosing a tower of finite \'etale covers. This computes the value on $B$ as a totalization of the values on $\tilde{B}$, the uniform completion of $\tilde{B}\otimes_B \tilde{B}$ etc.~, all of which are strictly totally disconnected. This description also works for all quotients of $B$ and shows commutation with filtered colimits for quotients of $B$. In particular, the value at $B$ are the global sections of a sheaf on $\mathcal M(B)$ whose stalks are the values on the residue fields of $B$. Using this observation, the general commutation with filtered colimits for totally disconnected $A$-algebras reduces to the case of Banach fields. Let $(K_i)_i$ be a filtered diagram of Banach fields over $A$, with completed filtered colimit $K$. To prove commutation with general filtered colimits, it suffices to prove commutation with colimits along an ordinal. In that situation, one can by transfinite induction construct compatible completed algebraic closures $C_i$ of $K_i$, yielding a completed algebraic closure $C$. Then the claim follows as before.

For a general filtered diagram $(A_i)_i$ of uniform Banach $A$-algebras, pick functorial profinite sets $S_i$ surjecting onto $\mathcal M(A_i)$ (for example, $S_i$ is the Stone-\v{C}ech compactification of $\mathcal M(A_i)$ as a discrete set) and set $\tilde{A}_i = A_{i,S_i}$. Then $A_i\to \tilde{A}_i$ is a functorial cover by a totally disconnected $A$-algebra, and also all further completed tensor products $\tilde{A}_i\otimes_{A_i} \tilde{A}_i$ etc.~are totally disconnected. Writing out the descent, this reduces us to the totally disconnected case that was already handled.
\end{proof}

To understand the difference between arc-sheaves and quasi-pro-\'etale sheaves, we use the following splitting theorem.

\begin{lemma}\label{lem:coversplits} Let $A$ be a strictly totally disconnected Banach ring and let $A\to B$ be an arc-cover. Then $B$ is a completed filtered colimit of $A$-algebras $B_i$ such that each $A\to B_i$ admits a splitting.
\end{lemma}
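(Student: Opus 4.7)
The plan is to construct an explicit filtered diagram of $A$-algebras $B_i$, each admitting a section to $A$, whose completed filtered colimit in Banach $A$-algebras recovers $B$. We will index the diagram by finite tuples $i = \bigl((f_j)_{j=1}^n, (r_j)_j, (P_l)_{l=1}^k, (s_l)_l\bigr)$ consisting of elements $f_j \in B$, real numbers $r_j > |f_j|_B$ and $s_l > 0$, and elements $P_l \in A\langle T_1,\dots,T_n\rangle_{r_1,\dots,r_n}$ with $P_l(f_1,\dots,f_n) = 0$ in $B$; the partial order allows enlarging the data and shrinking $r_j \searrow |f_j|_B$ and $s_l \searrow 0$. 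Define
\[
B_i := A\langle T_1,\dots,T_n, U_1,\dots,U_k\rangle_{r_1,\dots,r_n,s_1,\dots,s_k} \bigm/ \overline{(P_l(T)-U_l : l=1,\dots,k)},
\]
with the map $B_i \to B$ given by $T_j \mapsto f_j$, $U_l \mapsto 0$ (well-defined since $P_l(f)=0$ in $B$).

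The key step is constructing, for each such $i$, a splitting $B_i \to A$: we need $(g_j) \in \mathcal O_A^n$ with $|g_j|_A \leq r_j$ such that $h_l := P_l(g_1,\dots,g_n) \in A$ satisfies $|h_l|_A \leq s_l$, which then furnishes the section $T_j \mapsto g_j$, $U_l \mapsto h_l$. Pointwise, for each $x \in \mathcal M(A)$, the arc-cover hypothesis produces $y \in \mathcal M(B)$ lying over $x$, so that $(\bar f_j(y))$ is a $K(y)$-point of the closed analytic subvariety $V_x = V(P_1,\dots,P_k)$ in the $K(x)$-polydisc of polyradii $(r_j)$. Since $K(x)$ is algebraically closed and non-discrete, Tate's Nullstellensatz for nonarchimedean affinoid algebras (and Gelfand--Mazur in the archimedean case) implies $V_x$ has a $K(x)$-point $(g_j(x)) \in \mathcal O_{K(x)}^n$ with $P_l(g_j(x)) = 0$ in $K(x)$.

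To globalize, we use that $A$ being strictly totally disconnected implies $K(x) = \mathrm{colim}_{U \ni x\ \text{clopen}} A_U$ as Banach rings; each $g_j(x)$ therefore lifts to an element of $A_{U_x}$ for some small clopen $U_x \ni x$, and by continuity the lift has norm $\leq r_j$ while its $P_l$-value has norm $\leq s_l$ throughout $U_x$ (after shrinking $U_x$). Compactness of the profinite space $\mathcal M(A)$ then produces a finite clopen refinement $\mathcal M(A) = U_1 \sqcup \cdots \sqcup U_m$ on which the lifts exist simultaneously, and gluing via $A = \prod_{i=1}^m A_{U_i}$ yields the desired $(g_j) \in A^n$. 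Finally, to verify $B \cong \widehat{\mathrm{colim}}_i B_i$, we check the universal property: a compatible family of maps $B_i \to Z$ into any Banach $A$-algebra $Z$ amounts to a choice of $\phi(f) \in Z$ for each $f \in B$, constrained by $|\phi(f)|_Z \leq r_f$ for every $r_f > |f|_B$ and $|P(\phi(f_1),\dots,\phi(f_n))|_Z \leq s_l$ for every $s_l > 0$ whenever $P(f)=0$ is a relation in $B$; this is precisely the data of a Banach $A$-algebra map $B \to Z$. The main obstacle is the middle step: the auxiliary variables $U_l$ are essential, since a lift of the pointwise $K(x)$-solutions generally satisfies $P_l(g_j) = 0$ only approximately in $A$, and the slack $|U_l|\leq s_l$ absorbs this error, with the exact relations of $B$ recovered as $s_l \to 0$ in the completed colimit.
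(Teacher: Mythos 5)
Your proof is correct and follows essentially the same route as the paper: you approximate $B$ by finitely presented seminormed $A$-algebras whose relations are enforced only up to a tolerance, and split each of these by producing points over the algebraically closed, non-discrete residue fields (affinoid Nullstellensatz, resp.\ Gelfand--Mazur) and then gluing over a clopen partition of the profinite space $\mathcal M(A)$, which is exactly the paper's reduction via compact assembly combined with Lemma~\ref{lem:smoothspacessplit}. The differences are presentational (you verify the colimit presentation by a direct universal-property check and you inline the pointwise-section lemma); just note that elements of $K(x)$ can only be approximated, not lifted exactly, from the completed colimit of the $A_U$ over clopen $U\ni x$, and that the Nullstellensatz step should first shrink the radii into the value group $|K(x)^\times|$ --- both harmless, since your slack variables absorb precisely these errors.
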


\begin{proof} Using that seminormed $A$-algebras are compactly assembled, we can reduce to the case that $B$ is (the completion of) a finitely presented seminormed $A$-algebra. Let $x_1,\ldots,x_n\in B$ be generators and $f_1,\ldots,f_m\in A[x_1,\ldots,x_n]$ the relations. We can replace $B$ by $A[x_1,\ldots,x_n]$ with the pullback of the norm (so that $|f_i|=0$), as this has the same completion; and then we can in turn write this as a sequential colimit of seminormed rings where we only enforce $|f_i|\leq \epsilon$. This way, we can arrange that $\mathcal M(B_i)$ sits inside some finite-dimensional affine-space over $A$ and contains an open subset of the affine space that surjects onto $\mathcal M(A)$. The next lemma shows that this gives a splitting.
\end{proof}

\begin{lemma}\label{lem:smoothspacessplit} Let $A$ be a strictly totally disconnected Banach ring and let $U\subset \mathbb A^n_A$ be an open subset that surjects onto $\mathcal M(A)$. Then there is an $A$-section of $\mathbb A^n_A$ contained in $U$.
\end{lemma}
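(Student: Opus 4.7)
The plan is to construct the section locally over clopen pieces of the profinite space $\mathcal{M}(A)$ and glue using idempotents of $A$.

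First I would reduce the problem to a pointwise local statement. By the earlier theorem identifying clopen subsets of $\mathcal{M}(A)$ with idempotents of $A$, it suffices to show: for each $x \in \mathcal{M}(A)$, there exists a clopen neighborhood $V \ni x$ and $\vec{a}_V \in (Ae_V)^n$ such that the induced continuous section $V = \mathcal{M}(Ae_V) \to \mathbb{A}^n_A$ lands in $U$. Granted this, compactness of $\mathcal{M}(A)$ gives a finite subcover by such $V_i$; refining to a partition of clopens $V_i'$ (using the Boolean algebra structure of clopens on a profinite space) and setting $\vec{a} = \sum_i e_{V_i'}\vec{a}_{V_i} \in A^n$ produces the desired global $A$-section.

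Next I would carry out the local construction. Fix $x \in \mathcal{M}(A)$. Since $U \to \mathcal{M}(A)$ is surjective, the fiber $U \cap \mathbb{A}^n_{K(x)}$ is a non-empty open subset of $\mathbb{A}^n_{K(x)}$. As $K(x)$ is algebraically closed and non-discrete (this is where the strictly-totally-disconnected hypothesis enters), classical $K(x)$-rational points are dense in $\mathbb{A}^n_{K(x)}$, so I can pick $\vec{t}(x) = (t_1(x),\ldots,t_n(x)) \in K(x)^n$ whose associated classical point lies in $U$. By construction of the Berkovich residue field, $K(x)$ is the completion of $\mathrm{Frac}(A/\mathfrak{p}_x)$, where $\mathfrak{p}_x = \ker(A \to K(x))$, so the image of the localization $\{a/s : a,s \in A,\ s(x) \neq 0\}$ is dense in $K(x)$. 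Choosing a sufficiently fine approximation, I can write $t_i(x) \approx a_i(x)/s_i(x)$ for some $a_i, s_i \in A$ with $s_i(x) \neq 0$, close enough in the product topology on $K(x)^n$ that the classical point $(a_i(x)/s_i(x))_i$ still lies in $U \cap \mathbb{A}^n_{K(x)}$.

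To promote this to a section over a clopen neighborhood, I note that $|s_i|_x > 0$ implies, by continuity of the function $|s_i|$ on the profinite space $\mathcal{M}(A)$, that there is a clopen $V \ni x$ on which every $|s_i|$ is bounded below by a positive constant. By the earlier corollary characterizing invertibility in Banach rings, each $s_i|_V$ is then a unit in $A_V := Ae_V$, so $\vec{a}_V := (a_1/s_1,\ldots,a_n/s_n) \in A_V^n$ is well-defined and yields a continuous map $\sigma : V \to \mathbb{A}^n_A$ with $\sigma(x) \in U$. Openness of $U$ makes $\sigma^{-1}(U)$ an open neighborhood of $x$ in $V$, and since clopens form a basis for the profinite topology on $V$, I can shrink to a clopen $V' \ni x$ on which $\sigma$ factors through $U$, yielding the required local section.

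The main obstacle is the compatibility between the algebraic approximation in $K(x)^n$ and the topology of $\mathbb{A}^n_A$: one needs that classical points are dense in open subsets of $\mathbb{A}^n_{K(x)}$ (standard over an algebraically closed non-discrete Banach field, using types $(1)$--$(4)$ of points described above for the nonarchimedean case and Gelfand--Mazur for the archimedean case $K(x)=\mathbb{C}$), together with continuity of the inclusion of classical points $K(x)^n \hookrightarrow \mathbb{A}^n_{K(x)}$, so that close enough approximations of $\vec{t}(x)$ remain inside $U$. Once these are granted, the rest is formal profinite gluing via the idempotent-clopen dictionary.
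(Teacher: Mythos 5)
Your proof is correct and follows essentially the same route as the paper: reduce to a fibrewise statement at each $x\in\mathcal M(A)$, produce a $K(x)$-rational point of $U$ in the fibre (lifting it approximately to fractions $a_i/s_i$ with $s_i$ of positive norm at $x$, hence invertible on a clopen neighborhood by the invertibility criterion), use openness of $U$ to shrink to a clopen over which the section lands in $U$, and glue via the clopen/idempotent dictionary and compactness of the profinite space $\mathcal M(A)$. The only divergence is the fibrewise input: where you cite density of classical points in $\mathbb A^n_{K(x)}$ as standard (note that your pointer to the types (1)--(4) discussion literally covers only the one-dimensional disc), the paper instead produces the $C$-point self-containedly by cutting out a nonzero affinoid $C\langle T_1,\ldots,T_n,X_1,\ldots,X_m\rangle_1/(g_1,\ldots,g_m)$ inside $U$ from finitely many inequalities $|f_i|\leq 1$ or $|f_i|\geq 1$ and invoking that all maximal ideals of the Tate algebra have residue field $C$ --- a two-line substitute you could drop in for the density claim.
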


\begin{proof} If we can find a section at each point $x\in \mathcal M(A)$, then any way to extend this to a section of $\mathbb A^n_A$ will lie in $U$ in an open neighborhood of $x$; as $A$ is strictly totally disconnected, we can then find a section on all of $A$ by combining local sections on open and closed subsets.

Thus, we can assume that $A=C$ is an algebraically closed non-discrete Banach field. If $C=\mathbb C$, the result follows from Gelfand--Mazur. If $C$ is nonarchimedean, we can find some nonzero Tate algebra $C\langle T_1,\ldots,T_n,X_1,\ldots,X_m\rangle_1/(g_1,\ldots,g_m)$ mapping into $U$ by taking any point of $U$ and describing a small neighborhood of it in terms of  inequalities $|f_i|\leq 1$ or $|f_i|\geq 1$; accordingly, $g_i=X_i-f_i$ or $g_i = X_if_i - 1$. But all maximal ideals of the Tate algebra $C\langle T_1,\ldots,T_n,X_1,\ldots,X_m\rangle_1$ have residue field $C$.
\end{proof}

Finally, we can give the proof of Theorem~\ref{theorem:finitaryarc}.

\begin{proof}[Proof of Theorem~\ref{theorem:finitaryarc}] We can write $\mathcal F$ as the limit of $\tau_{\leq n} \mathcal F$ (naively truncating each value), where each $\tau_{\leq n} \mathcal F$ still commutes with finite products and with filtered colimits. Thus, we can assume that $\mathcal F$ takes values in $n$-truncated anima for some $n$.

Any analytic Banach ring $A$ admits a quasi-pro-\'etale map $A\to B$ such that $B$ is strictly totally disconnected, cf.~Lemma~\ref{lem:quasiproetalecover}. If $A$ is strictly totally disconnected, then the category of quasi-pro-\'etale maps $A\to B$ to strictly totally disconnected algebras is equivalent to the category of profinite sets over $\mathcal M(A)$, cf.~Lemma~\ref{lem:quasiproetaleoverstd}. It follows that $B\mapsto\mathcal F(B)$ is a sheaf in the quasi-pro-\'etale topology on strictly totally disconnected $A$-algebras $B$, cf.~Lemma~\ref{lem:quasiproetalesheafprofin}. Sending any $B$ to the quasi-pro-\'etale cohomology of $\mathcal F$ on $B$ then defines an extension of $\mathcal F$ to all Banach $A$-algebras. This functor still commutes with finite products and with filtered colimits, cf.~Lemma~\ref{lem:quasiproetalecohomfiltcolim}.

Now take any arc-cover $A\to B$ of strictly totally disconnected algebras. We want to show that $\mathcal F$ satisfies descent along $A\to B$. Using Lemma~\ref{lem:coversplits}, we write $B$ as a completed filtered colimit of $A$-algebras $B_i$ that admit a splitting. As $A\to B_i$ has a splitting, it is formal that descent holds along $A\to B_i$. But the descent diagram for $A\to B$ is then the filtered colimit of the descent diagrams for $A\to B$. As $\mathcal F$ is $n$-truncated, this is finite limit diagram that we can commute past the filtered colimit.
\end{proof}

\begin{definition} Let $X$ be a small arc-stack. We consider the stable $\infty$-category of finitary arc-sheaves, defined as the full subcategory
\[
\mathcal D_{\mathrm{fin}}(X,\mathbb Z)\subset \mathcal D(X_{\mathrm{arc}},\mathbb Z)
\]
whose restriction to strictly totally disconnected Banach rings over $X$ commutes with filtered colimits.
\end{definition}

\begin{remark} For objects in $\mathcal D^+$, Lemma~\ref{lem:quasiproetalecohomfiltcolim} shows that the commutation with filtered colimits holds true on all Banach rings. The restriction to strictly totally disconnected Banach rings is thus only a maneuver to avoid problems with Postnikov limits which may not converge sufficiently fast on general Banach rings.

But note that for any $X$, we can define its cohomological dimension (for finitary arc-sheaves) as the minimum $d$ such that for all $M\in \mathcal D(X_{\mathrm{arc}},\mathbb Z)$ concentrated in degree $0$, we have $H^i(X_{\mathrm{arc}},M)=0$ for $i>d$. If $A=\mathrm{colim}_i A_i$ is any filtered colimit of Banach algebras over $X$ such that the cohomological dimension of the $\mathcal M_{\mathrm{arc}}(A_i)$ is uniformly bounded, then for any $M\in \mathcal D(X_{\mathrm{arc}},\mathbb Z)$, the map
\[
\mathrm{colim}_i M(A_i)\to M(A)
\]
is an isomorphism. Indeed, this holds even without the assumption on cohomological dimensions when $M\in \mathcal D^+$ by Lemma~\ref{lem:quasiproetalecohomfiltcolim}. Under the assumption of bounded cohomological dimensions, it can be deduced in general by passing to Postnikov limits. Indeed, the Postnikov limit commutes past the filtered colimit over $i$ as in any cohomological degree, it becomes eventually constant.
\end{remark}

This automatically has a symmetric monoidal tensor structure, as well as symmetric monoidal pullback functors, compatible with the embedding into $\mathcal D(X_{\mathrm{arc}},\mathbb Z)$.

\begin{proposition} The $\infty$-category $\mathcal D_{\mathrm{fin}}(X,\mathbb Z)$ is equivalent to the $\infty$-category of functors from strictly totally disconnected Banach rings over $X$ to $\mathcal D(\mathbb Z)$ that commute with finite products and with filtered colimits.
\end{proposition}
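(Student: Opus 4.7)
The plan is to produce the equivalence via restriction to strictly totally disconnected (STD) Banach rings and extension back, with all the substantive content packaged into Theorem~\ref{theorem:finitaryarc}. Write $\mathrm{STD}/X$ for the full subcategory of the arc-site of $X$ spanned by STD Banach rings, and let $\mathcal{C}$ denote the $\infty$-category of functors $\mathrm{STD}/X \to \mathcal{D}(\mathbb{Z})$ that preserve finite products and filtered colimits.

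First I would check that the restriction functor $R: \mathcal{D}_{\mathrm{fin}}(X,\mathbb{Z}) \to \mathcal{C}$ is well-defined. Preservation of filtered colimits on $\mathrm{STD}/X$ is the very definition of $\mathcal{D}_{\mathrm{fin}}$. For finite products, note that the two projections $B_1 \times B_2 \to B_i$ form an arc-cover, since $\mathcal{M}(B_1 \times B_2) = \mathcal{M}(B_1) \sqcup \mathcal{M}(B_2)$. In the associated \v{C}ech nerve, all mixed tensor products $B_i \otimes_{B_1 \times B_2} B_j$ vanish for $i \neq j$ and equal $B_i$ for $i = j$, so the cosimplicial diagram collapses to a constant one on $\mathcal{G}(B_1) \oplus \mathcal{G}(B_2)$. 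Hyperdescent then forces $\mathcal{G}(B_1 \times B_2) \simeq \mathcal{G}(B_1) \oplus \mathcal{G}(B_2)$ for every $\mathcal{G} \in \mathcal{D}(X_{\mathrm{arc}}, \mathbb{Z})$.

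In the reverse direction, given $\mathcal{F} \in \mathcal{C}$, Theorem~\ref{theorem:finitaryarc} together with the subsequent remark — which extends the conclusion to functors valued in compactly generated $\infty$-categories such as $\mathcal{D}(\mathbb{Z})$ — shows that $\mathcal{F}$ is automatically a hypercomplete arc-sheaf on $\mathrm{STD}/X$. Because the corollary to Definition~\ref{def:totallydisconnected} identifies STD Banach rings as a basis for the arc-topology, there is a unique arc-hypersheaf $E(\mathcal{F}) \in \mathcal{D}(X_{\mathrm{arc}},\mathbb{Z})$ whose restriction to $\mathrm{STD}/X$ recovers $\mathcal{F}$. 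Since that restriction preserves filtered colimits by assumption, $E(\mathcal{F})$ lies in $\mathcal{D}_{\mathrm{fin}}(X,\mathbb{Z})$.

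Finally, $R \circ E \simeq \mathrm{id}_{\mathcal{C}}$ by the construction of $E$, and $E \circ R \simeq \mathrm{id}$ because for any $\mathcal{G} \in \mathcal{D}_{\mathrm{fin}}(X,\mathbb{Z})$ both $\mathcal{G}$ and $E(R(\mathcal{G}))$ are arc-hypersheaves on $X$ with the same restrictions to the basis $\mathrm{STD}/X$, hence are canonically equivalent. I expect the main obstacle to be entirely absorbed by Theorem~\ref{theorem:finitaryarc}: the non-trivial point is that preservation of finite products and filtered colimits on STD Banach rings already forces hyperdescent against arbitrary arc-covers, and once this input is available the remainder of the equivalence is formal.
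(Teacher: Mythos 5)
Your proposal is correct and is essentially the argument the paper intends: its own proof is just ``clear from the above,'' meaning exactly the formal unwinding you give — restriction/extension along the basis of strictly totally disconnected rings, with all the substance carried by Theorem~\ref{theorem:finitaryarc} (and its remark for $\mathcal D(\mathbb Z)$-valued functors), plus the observation that hypersheaves automatically split finite products of Banach rings.
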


\begin{proof} This is clear from the above.
\end{proof}

\begin{proposition} The subcategory
\[
\mathcal D_{\mathrm{fin}}(X,\mathbb Z)\subset \mathcal D(X_{\mathrm{arc}},\mathbb Z)
\]
is stable under all colimits, truncations, and is left-complete.
\end{proposition}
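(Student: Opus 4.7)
The plan is to exploit the identification established in the preceding proposition: via restriction to strictly totally disconnected Banach rings (STDs) over $X$, the $\infty$-category $\mathcal D_{\mathrm{fin}}(X,\mathbb Z)$ is equivalent to the full subcategory $\mathcal C$ of the $\mathcal D(\mathbb Z)$-valued functor category on STDs spanned by those functors that commute with finite products and filtered colimits. Each of the three claims will then be checked inside $\mathcal C$, together with a verification that the operation in question agrees with its counterpart in $\mathcal D(X_{\mathrm{arc}},\mathbb Z)$.

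For closure under colimits, I will compute colimits pointwise in the functor category. Both defining properties persist: finite products in $\mathcal D(\mathbb Z)$ are biproducts and so commute with arbitrary colimits, while filtered colimits tautologically commute with all colimits. To identify this with the colimit in $\mathcal D(X_{\mathrm{arc}},\mathbb Z)$, note that the latter is the hypersheafification of the pointwise colimit; restricted to STDs, the pointwise colimit already commutes with finite products and filtered colimits, so by Theorem~\ref{theorem:finitaryarc} it is automatically a hypercomplete arc-sheaf. Since STDs form a basis of the arc-topology, the two colimits agree.

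For closure under truncations, the $t$-structure on $\mathcal D(X_{\mathrm{arc}},\mathbb Z)$ restricts on STDs to the pointwise one, so the problem reduces to checking that $\tau_{\leq n}$ and $\tau_{\geq n}$ on $\mathcal D(\mathbb Z)$ preserve finite biproducts (they are additive) and filtered colimits (the standard $t$-structure on $\mathcal D(\mathbb Z)$ is compatible with filtered colimits since filtered colimits are exact in $\mathrm{Ab}$). Applied pointwise these preserve membership in $\mathcal C$.

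For left-completeness, which I expect to be the main obstacle, take $\mathcal F \in \mathcal D_{\mathrm{fin}}(X,\mathbb Z)$ and form the Postnikov tower $\tau_{\leq n}\mathcal F$, which lies in $\mathcal D_{\mathrm{fin}}$ by the previous step. Set $\mathcal G := \lim_n \tau_{\leq n}\mathcal F$, computed pointwise on STDs; by left-completeness of $\mathcal D(\mathbb Z)$ this recovers $\mathcal F(B)$ value-by-value. What requires real work is that $\mathcal G$ remains finitary, i.e.~that a Postnikov limit of filtered-colimit-preserving functors still preserves filtered colimits. For a filtered colimit $B = \mathrm{colim}_\alpha B_\alpha$ of STDs and each fixed $k$, one has
\[
\tau_{\leq k}\mathcal G(B) = \tau_{\leq k}\mathcal F(B) = \mathrm{colim}_\alpha \tau_{\leq k}\mathcal F(B_\alpha) = \tau_{\leq k}\mathrm{colim}_\alpha \mathcal F(B_\alpha) = \tau_{\leq k}\mathrm{colim}_\alpha \mathcal G(B_\alpha),
\]
where the middle equalities use finitariness of $\tau_{\leq k}\mathcal F$ together with $t$-exactness of filtered colimits in $\mathcal D(\mathbb Z)$. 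Since this holds for every $k$, left-completeness of $\mathcal D(\mathbb Z)$ forces $\mathcal G(B) = \mathrm{colim}_\alpha \mathcal G(B_\alpha)$. The delicate point throughout is precisely this interchange of a Postnikov limit with a filtered colimit, which is legal only because the standard $t$-structure on $\mathcal D(\mathbb Z)$ is well-behaved under both operations.
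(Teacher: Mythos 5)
Your argument is correct and is essentially the paper's: the paper simply declares the statement ``immediate from the previous proposition'' (the identification of $\mathcal D_{\mathrm{fin}}(X,\mathbb Z)$ with product- and filtered-colimit-preserving functors on strictly totally disconnected rings), and you are spelling out exactly that pointwise verification. Note only that your left-completeness step is shorter than you make it: once $\mathcal G(B)=\mathcal F(B)$ on all strictly totally disconnected $B$, finitariness of $\mathcal G$ is inherited directly from $\mathcal F$, so the interchange computation with $\tau_{\leq k}$ is redundant (though not wrong).
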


\begin{proof} This is immediate from the previous proposition.
\end{proof}

Over the complex numbers, finitary arc-sheaves reduce to usual sheaves.

\begin{theorem}\label{thm:archimedeanfinitary} If $A$ is a Banach algebra over $\mathbb C$, then pullback under $\mathcal M_{\mathrm{arc}}(A)_{\mathrm{arc}}\to \mathcal M(A)$ induces an equivalence
\[
\widehat{\mathcal D}(\mathcal M(A),\mathbb Z)\cong \mathcal D_{\mathrm{fin}}(\mathcal M_{\mathrm{arc}}(A),\mathbb Z).
\]
where the source denotes the left-complete derived $\infty$-category of abelian sheaves on the compact Hausdorff space $\mathcal M(A)$.
\end{theorem}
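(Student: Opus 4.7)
The plan is to translate everything through the Morita equivalence of the arc-site on Banach $\mathbb{C}$-algebras with the site of compact Hausdorff spaces, and then invoke the finite–profinite recognition result (Lemma~\ref{lem:quasiproetalesheafprofin}) on $\mathcal{M}(A)$.

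First, I identify the strictly totally disconnected Banach $\mathbb{C}$-algebras over $A$ with the profinite sets $S$ equipped with a continuous map $S\to \mathcal{M}(A)$. By Gelfand--Mazur, the only non-discrete algebraically closed Banach field over $\mathbb{C}$ is $\mathbb{C}$ itself, so the description of totally disconnected Banach rings as ultraproducts of non-discrete Banach fields forces any strictly totally disconnected Banach $\mathbb{C}$-algebra to be of the form $\mathrm{Cont}(S,\mathbb{C})$ for a profinite $S$. The required maps to $A$ correspond, under the Morita equivalence (and the arc-cover $A\to \mathrm{Cont}(\mathcal{M}(A),\mathbb{C})$), to maps $S\to \mathcal{M}(A)$. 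Under this translation, finite products of Banach rings correspond to finite disjoint unions of profinite sets, and filtered colimits of Banach rings correspond to cofiltered inverse limits of profinite sets: for a cofiltered system $(S_i)$ with limit $S$, Stone--Weierstrass gives $\mathrm{Cont}(S,\mathbb{C}) = \mathrm{colim}^{\mathrm{Ban}}_i \mathrm{Cont}(S_i,\mathbb{C})$.

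Combining this with the proposition describing $\mathcal{D}_{\mathrm{fin}}(X,\mathbb Z)$ as functors from strictly totally disconnected Banach $A$-algebras to $\mathcal{D}(\mathbb Z)$ commuting with finite products and filtered colimits, the right-hand side $\mathcal{D}_{\mathrm{fin}}(\mathcal{M}_{\mathrm{arc}}(A),\mathbb Z)$ is rewritten as the $\infty$-category of contravariant functors on profinite sets $S\to \mathcal{M}(A)$, valued in $\mathcal{D}(\mathbb Z)$, that send finite disjoint unions to products and cofiltered inverse limits to filtered colimits. This is exactly the hypothesis of a $\mathcal{D}(\mathbb Z)$-valued enhancement of Lemma~\ref{lem:quasiproetalesheafprofin}.

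To conclude, I would apply this enhanced lemma, which I prove by running Lemma~\ref{lem:quasiproetalesheafprofin} on each truncation $\tau_{\leq n}\mathcal F$ separately and then taking the inverse limit; passing to left-completeness on the sheaf side matches the procedure of writing $\mathcal F$ as a Postnikov limit. To deal with the fact that $\mathcal{M}(A)$ is only a compact Hausdorff space (the lemma is stated for a profinite base), I choose a surjection $T\twoheadrightarrow \mathcal{M}(A)$ from an extremally disconnected (hence profinite) set, note that the corresponding map $A\to \mathrm{Cont}(T,\mathbb{C})$ is an arc-cover for which both sides of the claimed equivalence satisfy descent, and reduce to the profinite base case handled by the lemma. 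The hardest step is the last one: checking that left-complete derived sheaves of abelian groups on $\mathcal{M}(A)$ are correctly reconstructed from their values on profinite sets over $\mathcal{M}(A)$, which requires the truncation-by-truncation argument together with the fact that finite cohomological dimension on each truncated piece makes the Postnikov limit compatible with the colimit conditions on the functorial side.
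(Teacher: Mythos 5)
Your proposal is correct and follows essentially the same route as the paper's proof: descent along a cover by a strictly totally disconnected algebra reduces to the case that $\mathcal M(A)$ is profinite, where the arc-site is identified (via Gelfand--Mazur and Stone--Weierstrass) with profinite sets over $\mathcal M(A)$ and finitary sheaves are recognized as ordinary sheaves by Lemma~\ref{lem:quasiproetalesheafprofin}. Your extra care with the $\mathcal D(\mathbb Z)$-valued enhancement via truncations and Postnikov limits just makes explicit why the left-complete derived category appears, which the paper leaves implicit.
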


\begin{proof} By descent, we can reduce to the case that $A$ is strictly totally disconnected (noting that the left-completed category does satisfy descent). The arc-site over $A$ is then equivalent to the category of profinite sets over the profinite set $\mathcal M(A)$, i.e.~the pro-\'etale site of the profinite set $\mathcal M(A)$, and the finitary sheaves in there are precisely the \'etale sheaves, i.e.~the usual sheaves.
\end{proof}

We need the following way to compute the cohomology of finitary arc-sheaves. First, we have the following result at points.

\begin{proposition} Let $K$ be a non-discrete Banach field and let $M\in \mathcal D_{\mathrm{fin}}(\mathcal M_{\mathrm{arc}}(K),\mathbb Z)$. Let $C$ be a completed algebraic closure of $K$ and $G_K$ the resulting absolute Galois group. Then $M(C)$ has a natural continuous $G_K$-action and
\[
M(K)\cong M(C)^{hG_K}
\]
is the continuous cohomology of $G_K$ acting on $M(C)$.
\end{proposition}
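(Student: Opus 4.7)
The plan is arc-hyperdescent along the cover $K \to C$. Since $C$ is a non-discrete Banach field, $\mathcal M(C)$ is a single point surjecting onto $\mathcal M(K)$, so $K \to C$ is an arc-cover. By Theorem~\ref{theorem:finitaryarc}, $M$ is a hypercomplete arc-sheaf, and in particular
\[
M(K) \cong \lim_{[n]\in\Delta} M\bigl(C^{\hat{\otimes}_K (n+1)}\bigr),
\]
where $C^{\hat{\otimes}_K (n+1)}$ denotes the $(n+1)$-fold pushout in Banach $K$-algebras.

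The next step is to identify this cosimplicial object with the continuous cobar construction for $G_K$ acting on $M(C)$. For any finite Galois subextension $L/K$ inside $\bar K$, one has the classical isomorphism
\[
L^{\otimes_K (n+1)} \cong L^{\mathrm{Gal}(L/K)^n}
\]
of finite-dimensional (hence automatically Banach) $K$-algebras, sending $a_0 \otimes \cdots \otimes a_n$ to $(a_0 \sigma_1(a_1) \cdots \sigma_1\cdots\sigma_n(a_n))_{(\sigma_i)}$. Passing to the filtered colimit over such $L$ and uniformly completing yields
\[
\bigl( C^{\hat{\otimes}_K (n+1)} \bigr)_u \cong \mathrm{Cont}(G_K^n, C),
\]
where $(-)_u$ denotes uniform completion; since $A \to A_u$ is an arc-cover (by the examples in Section~3), $M$ does not distinguish these two algebras.

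To evaluate $M$ on the right-hand side, write $\mathrm{Cont}(G_K^n, C) = \mathrm{colim}_U^{\mathrm{Ban}} C^{G_K^n/U}$ as $U$ ranges over open normal subgroups of $G_K^n$. Since $M$ is finitary and commutes with finite products,
\[
M\bigl(\mathrm{Cont}(G_K^n, C)\bigr) \cong \mathrm{colim}_U\, M(C)^{G_K^n/U} = \mathrm{Cont}(G_K^n, M(C)).
\]
The resulting cosimplicial object $[n] \mapsto \mathrm{Cont}(G_K^n, M(C))$ is exactly the cobar complex defining continuous $G_K$-cohomology, so its totalization is $M(C)^{hG_K}$. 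Continuity of the $G_K$-action on $M(C)$ is automatic, as it factors through the $\mathrm{Gal}(L/K)$-actions on $M(L)$ for finite Galois $L/K$ via functoriality of $M$.

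The main obstacle is the Banach-algebra identification $(C^{\hat{\otimes}_K (n+1)})_u \cong \mathrm{Cont}(G_K^n, C)$. At finite level this amounts to comparing the projective tensor-product norm on $L^{\otimes_K (n+1)}$ with the natural product norm on $L^{\mathrm{Gal}(L/K)^n}$; the two norms need not agree, but their uniform completions do, since both sides are finite-dimensional \'etale $K$-algebras with the same Berkovich spectrum. In the archimedean case this reduces to Gelfand--Mazur, while in the nonarchimedean case it follows from a standard lattice argument together with Proposition~\ref{prop:berkovichspectrumcolimit}(ii) to control spectra in the filtered limit. The remaining steps are formal manipulation of cobar complexes.
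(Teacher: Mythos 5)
Your argument is correct, and it is in essence an explicit unwinding of the paper's very terse proof. The paper simply pushes forward along the map of sites from $\mathcal M_{\mathrm{arc}}(K)_{\mathrm{arc}}$ to profinite $G_K$-sets and invokes descent there; you instead run \v{C}ech descent in the arc-site along the cover $K\to C$ and identify the \v{C}ech nerve by hand, via the Galois-theoretic computation that the uniform completion of $C^{\hat{\otimes}_K(n+1)}$ is $\mathrm{Cont}(G_K^n,C)$. The mathematical content is the same in both cases (namely that $\mathcal M_{\mathrm{arc}}(C)\to\mathcal M_{\mathrm{arc}}(K)$ is a $G_K$-torsor in the arc-topos), but the paper's formulation packages the condensed/continuous structure once and for all in the pushforward sheaf on profinite $G_K$-sets, whereas you recover continuous cohomology from the cobar complex with terms $\mathrm{colim}_U M(C)^{G_K^n/U}$ --- which is exactly the intended (condensed) meaning of ``continuous cochains'' here, so your identification of the totalization is fine. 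Two small points to tighten: first, your justification of continuity of the $G_K$-action (``it factors through $\mathrm{Gal}(L/K)$ acting on $M(L)$'') is not quite right as stated for unbounded $M$, since finitaryness is only imposed on strictly totally disconnected rings, so $M(C)=\mathrm{colim}_L M(L)$ is not automatic; but nothing is lost, because the continuity you actually need is encoded in the colimit description of the cochain terms you already established. Second, in residue characteristic $p$ with $K$ imperfect you pass from finite Galois (separable) levels $L$ to $C$, which uses the standard fact that the completed separable closure of a nonarchimedean field is already algebraically closed, i.e.\ $K^{\mathrm{sep}}$ is dense in $C$; this is true but worth saying. With these remarks your proof goes through.
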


Implicitly, we are using here the condensed interpretation of continuous group cohomology for profinite groups $G$. Namely, there is the site of profinite $G$-sets, sheaves on which are equivalent to condensed sets equipped with an action of $G$ (considered as a condensed group). This contains the full subcategory of finitary sheaves of abelian groups, which is classically the category of continuous discrete $G$-modules. Under this equivalence, cohomology on the site of profinite $G$-sets, i.e.~condensed group cohomology, agrees with continuous group cohomology.

\begin{proof} This follows from pushforward along the map of sites from $X_{\mathrm{arc}}$ to profinite $G_K$-sets, where $X=\mathcal M_{\mathrm{arc}}(K)$.
\end{proof}

\begin{proposition} Let $A$ be a Banach ring and $M\in \mathcal D_{\mathrm{fin}}^+(\mathcal M_{\mathrm{arc}}(A),\mathbb Z)$. Consider the map of sites
\[
\nu: \mathcal M_{\mathrm{arc}}(A)_{\mathrm{arc}}\to \mathcal M(A)
\]
where the target denotes the site of open subsets of the compact Hausdorff space $\mathcal M(A)$. Then $R\nu_\ast M$ is a complex of sheaves on $\mathcal M(A)$ whose stalk at $x\in \mathcal M(A)$ is given by $M(K(x))$.
\end{proposition}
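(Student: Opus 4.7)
The plan is to compute the stalk explicitly, using the identification of finitary arc-sheaves on strictly totally disconnected bases with ordinary sheaves on the profinite Berkovich spectrum, and then to bootstrap to general $A$ by arc-hyperdescent.

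First, unwinding definitions, $(R\nu_\ast M)(U) = R\Gamma(\nu^{-1}(U), M)$ for an open $U \subseteq \mathcal{M}(A)$, where $\nu^{-1}(U)$ is the slice of the arc-site over $A$ spanned by those $B$ with $\mathcal{M}(B) \subseteq U$. The stalk at $x$ is thus the filtered colimit over $U \ni x$, and for every such $U$ one has $K(x) \in \nu^{-1}(U)$ (since $\mathcal{M}(K(x)) = \{x\}$), yielding a natural comparison map from the stalk to $M(K(x))$ which I want to prove is an equivalence.

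The core case is $A$ strictly totally disconnected. Then $\mathcal{M}(A)$ is profinite and, by Lemmas~\ref{lem:quasiproetaleoverstd} and~\ref{lem:quasiproetalesheafprofin}, $M$ corresponds to the sheaf on $\mathcal{M}(A)$ whose value on a clopen $U$ is $M(A_U)$, for $A_U$ the direct summand of $A$ cut out by the idempotent of $U$. Hence the stalk equals $\mathrm{colim}_{U \ni x,\ U\ \text{clopen}} M(A_U)$. Each $A_U$ is again strictly totally disconnected, and the completed filtered colimit of the $A_U$ along the neighbourhood filter of $x$ is, by the Banach-ultraproduct description of residue fields of Banach products established in the arc-topology section, exactly the Banach field $K(x)$. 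Since $M$ is finitary, the colimit collapses to $M(K(x))$.

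For general $A$, I would choose an arc-cover $A \to \tilde{A}$ with $\tilde{A}$ strictly totally disconnected and inductively promote its Čech nerve to an arc-hypercover $\tilde{A}^{(\bullet)}$ by strictly totally disconnected Banach rings; by Proposition~\ref{prop:berkovichspectrumcolimit}(i) the maps $\mathcal{M}(\tilde{A}^{(n)}) \to \mathcal{M}(A)$ are properly surjective. Since $M \in \mathcal{D}^+$ is an arc-hypersheaf (Theorem~\ref{theorem:finitaryarc}), proper Čech descent on compact Hausdorff spaces presents $R\nu_\ast M$ as the totalization of the pushforwards from the $\mathcal{M}(\tilde{A}^{(n)})$; taking stalks at $x$ and invoking the previous case yields $\mathrm{Tot}_n M(K(\tilde{x}_n))$ along a fibre hypercover $\tilde{A}^{(\bullet)} \hat{\otimes}_A K(x)$ of $K(x)$ by strictly totally disconnected Banach rings. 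Arc-hyperdescent for $M$ applied over $K(x)$ then identifies this totalization with $M(K(x))$.

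The hardest step is commuting the stalk colimit over shrinking neighbourhoods $U \ni x$ with the bounded-below Čech totalization; this uses essentially that $M \in \mathcal{D}^+$, so that the Postnikov tower converges fast enough to reduce to the truncated case handled by Lemma~\ref{lem:quasiproetalecohomfiltcolim}, where finitaryness on the truncations is available even after quasi-pro-étale sheafification.
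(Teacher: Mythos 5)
Your strictly totally disconnected case is sound in substance, although the identification $\mathrm{colim}^{\mathrm{Ban}}_{U\ni x}A_U\cong K(x)$ is not literally the ultraproduct proposition (that statement concerns $\prod^{\mathrm{Ban}}K_i$); for a general strictly totally disconnected $A$ you need the short extra argument that the colimit seminorm is $\|\cdot\|_x$ (continuity of $u\mapsto \|f(u)\|$ plus compactness) and that every element of nonzero seminorm becomes invertible on a smaller clopen neighbourhood, via the invertibility criterion in terms of $\mathcal M$. The genuine gap is in the globalization step. The stalk at $x$ of the pushforward along $\pi_n\colon \mathcal M(\tilde A^{(n)})\to\mathcal M(A)$ is not an instance of ``the previous case'': the fibre $\pi_n^{-1}(x)$ is a profinite subset, not a point, so there is no single residue field $K(\tilde x_n)$. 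You need a proper-base-change type statement (the stalk of the pushforward at $x$ equals sections over $\pi_n^{-1}(x)$, using cofinality of $\pi_n^{-1}(U)$ among neighbourhoods of the fibre together with finitaryness), and what this yields is $M(B_n)$, where $B_n$ is the completed colimit of the clopen localizations of $\tilde A^{(n)}$ around the fibre. To then invoke arc-hyperdescent over $K(x)$ you must identify $M(B_n)$ with $M(\tilde A^{(n)}\hat\otimes_A K(x))$, i.e.\ compare $B_n$ with the completed base change as arc-sheaves; this comparison is not addressed, and the parenthetical claim that the terms $\tilde A^{(n)}\hat\otimes_A K(x)$ are strictly totally disconnected is false in general (completed tensor products of strictly totally disconnected rings need not be). These points are repairable, but as written the chain ``stalk $=\mathrm{Tot}_n M(K(\tilde x_n))$ along the base-changed hypercover'' is unjustified.

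Note also that the whole detour through hypercovers is unnecessary, and this is how the paper argues: since $M\in\mathcal D^+$, Lemma~\ref{lem:quasiproetalecohomfiltcolim} extends $M$ to a functor on all Banach $A$-algebras commuting with filtered colimits, so the stalk can be computed directly as the filtered colimit of $M(A_Z)$ over compact (Laurent-type) neighbourhoods $Z$ of $x$, interleaved with the open neighbourhoods; the completed colimit of the $A_Z$ is $K(x)$, and finitaryness of the extension concludes, with no reduction to strictly totally disconnected rings, no stalk-versus-totalization commutation, and no proper base change on compact Hausdorff spaces.
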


\begin{proof} This is a consequence of the extension of $M$ to a functor on all Banach $A$-algebras that commutes with filtered colimits.
\end{proof}

\begin{definition} Let $X$ be a small arc-stack. The cohomological dimension $d(X)$ is the cohomological dimension of the functor
\[
\mathcal D_{\mathrm{fin}}(X,\mathbb Z)\to \mathcal D(\mathbb Z): M\mapsto M(X).
\]
\end{definition}

\begin{corollary} If $X=\mathcal M_{\mathrm{arc}}(A)$ for an analytic Banach ring $A$, then $d(X)$ is bounded by the sum of the cohomological dimension of the compact Hausdorff space $\mathcal M(A)$, and the supremum over all $x\in \mathcal M(A)$ of the cohomological dimension of $G_{K(x)}$.
\end{corollary}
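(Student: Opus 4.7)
The plan is to factor the global-sections functor through the map of sites $\nu : X_{\mathrm{arc}} \to \mathcal M(A)$ and combine the cohomological-dimension bounds on the base $\mathcal M(A)$ with the ones on each Galois group $G_{K(x)}$.

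First I would reduce to the case $M \in \mathcal D_{\mathrm{fin}}^+(X,\mathbb Z)$. Set $d_1 = \mathrm{cd}(\mathcal M(A))$ and $d_2 = \sup_{x\in \mathcal M(A)} \mathrm{cd}(G_{K(x)})$. Since $\mathcal D_{\mathrm{fin}}(X,\mathbb Z)$ is left-complete, to bound $d(X)$ by $d_1 + d_2$ it suffices, for any $M\in \mathcal D_{\mathrm{fin}}^{\leq 0}$, to verify that each Postnikov truncation $\tau_{\geq -n} M$ satisfies $R\Gamma(X, \tau_{\geq -n} M) \in \mathcal D^{\leq d_1+d_2}$, and then pass to the inverse limit. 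So I may assume $M$ is bounded below.

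For such $M$, the previous proposition identifies $R\nu_\ast M$ with a complex of abelian sheaves on the compact Hausdorff space $\mathcal M(A)$ whose stalk at $x$ is $M(K(x))$. Combining this with the Galois description of finitary arc-sheaves on a Banach field,
\[
M(K(x)) \cong M(C(x))^{hG_{K(x)}} = R\Gamma_{\mathrm{cts}}(G_{K(x)}, M(C(x))),
\]
where $C(x)$ is a completed algebraic closure of $K(x)$. Since $C(x)$ is strictly totally disconnected, Theorem~\ref{theorem:finitaryarc} guarantees that $M(C(x))$ is the naive evaluation of $M$, hence lies in degrees $\leq 0$ when $M \in \mathcal D_{\mathrm{fin}}^{\leq 0}$. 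The cohomological-dimension bound for $G_{K(x)}$ then places $M(K(x))$ in degrees $\leq d_2$, uniformly in $x \in \mathcal M(A)$. Consequently $R\nu_\ast M \in \mathcal D^{\leq d_2}(\mathcal M(A),\mathbb Z)$.

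Finally, applying the cohomological-dimension bound for sheaves of abelian groups on the compact Hausdorff space $\mathcal M(A)$ yields
\[
R\Gamma(X, M) = R\Gamma(\mathcal M(A), R\nu_\ast M) \in \mathcal D^{\leq d_1 + d_2},
\]
which gives the claim. The main obstacle is checking that the bounded-below stalk formula for $R\nu_\ast M$ behaves well as the base of the Postnikov tower tends to $-\infty$; this is controlled by left-completeness together with the fact that on each strictly totally disconnected $C(x)$ the presheaf $M$ has no higher cohomology, so the stalk-wise bound survives the limit.
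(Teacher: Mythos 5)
Your argument is correct and follows exactly the route the paper intends: the corollary is stated as an immediate consequence of the two preceding propositions (the Galois-cohomology description $M(K(x))\cong M(C(x))^{hG_{K(x)}}$ at points and the stalk formula for $R\nu_\ast M$ on $\mathcal M(A)$), combined via $R\Gamma(X,M)=R\Gamma(\mathcal M(A),R\nu_\ast M)$. The only extra detail you supply, the reduction to bounded-below $M$ via left-completeness and stabilization of the Postnikov tower, is the standard maneuver the paper already relies on elsewhere and is handled correctly.
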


We are interested in constructing some six-functor formalism later on. Doing so, there is usually a distinguished class of ``proper'' maps $f$ for which definitionally $f_!=f_\ast$. We will be using the following class.

\begin{definition} A $0$-truncated map $f: Y\to X$ of small arc-stacks is proper if it is qcqs.

The cohomological dimension $d(f)$ is the supremum of $d(Y_x)$ over all $x\in X$. The map has locally finite cohomological dimension if the pullback to any quasicompact substack of $X$ has finite cohomological dimension.
\end{definition}

We get the following version of proper base change.

\begin{theorem}\label{thm:properpushforwardfinitary} Let $f: Y\to X$ be a $0$-truncated proper map of small arc-sheaves with finite cohomological dimension. Then
\[
f_\ast: \mathcal D(Y_{\mathrm{arc}},\mathbb Z)\to \mathcal D(X_{\mathrm{arc}},\mathbb Z)
\]
preserves finitary arc-sheaves, and on such has finite cohomological dimension.

In particular,
\[
f_\ast: \mathcal D_{\mathrm{fin}}(Y,\mathbb Z)\to \mathcal D_{\mathrm{fin}}(X,\mathbb Z)
\]
commutes with any base change. If $f$ is quasi-pro-\'etale, i.e.~all geometric fibres are profinite sets, then $f_\ast$ satisfies the projection formula.
\end{theorem}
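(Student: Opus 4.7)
The strategy is to work arc-locally on $X$, reduce to the case $X=\mathcal M_{\mathrm{arc}}(A)$ with $A$ strictly totally disconnected, and then translate the computation of $f_\ast M$ into ordinary sheaf cohomology on the compact Hausdorff space $\mathcal M(Y\times_X \mathcal M_{\mathrm{arc}}(A))$ via the map $\nu$ considered earlier. The finite cohomological dimension of $f$ is what enables filtered colimits to pass through cohomology, and simultaneously supplies the cohomological-dimension bound for $f_\ast$.

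First I would exploit that both the hypothesis and the assertion are arc-local on $X$ to assume $X=\mathcal M_{\mathrm{arc}}(A)$ with $A$ strictly totally disconnected. By the structural description of $\mathcal D_{\mathrm{fin}}$, to see that $f_\ast M$ is finitary it suffices to check that for every filtered system $(B_i)$ of strictly totally disconnected $A$-algebras with completed filtered colimit $B$, the natural map
\[
\mathrm{colim}_i\, R\Gamma\bigl(Y\times_X \mathcal M_{\mathrm{arc}}(B_i),M\bigr)\to R\Gamma\bigl(Y\times_X \mathcal M_{\mathrm{arc}}(B),M\bigr)
\]
is an equivalence. Using left-completeness of $\mathcal D_{\mathrm{fin}}$ together with the finiteness of $d(f)$, I truncate and reduce to $M\in\mathcal D_{\mathrm{fin}}^+$.

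Next I apply the two propositions preceding the theorem: for bounded-below finitary $M$, the pushforward $R\nu_\ast M$ is a complex of ordinary abelian sheaves on $\mathcal M(Y_B)$ with stalks $M(K(y))$. This converts our cohomology into ordinary sheaf cohomology on a compact Hausdorff space whose cohomological dimension is uniformly bounded by $d(f)$ plus the dimensions of the residue fibres, hence by a fixed integer for $M$ bounded. Since $B=\mathrm{colim}_i B_i$ gives $\mathcal M(Y_B)=\lim_i \mathcal M(Y_{B_i})$ by Proposition~\ref{prop:berkovichspectrumcolimit}(ii), and residue fields at such limit points are completed filtered colimits of residue fields from finite levels (so the stalks match under the colimit, using that $M$ itself is finitary), the standard commutation of cohomology on compact Hausdorff spaces with cofiltered inverse limits under uniform dimensional bounds yields the desired filtered-colimit commutation. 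This proves $f_\ast M\in\mathcal D_{\mathrm{fin}}$ and supplies the cohomological-dimension bound for $f_\ast$, which also allows us to reassemble unbounded $M$ via left-completeness.

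Third, base change on $\mathcal D_{\mathrm{fin}}$ is now automatic: given $g:X'\to X$, for any strictly totally disconnected $B'$ over $X'$, evaluating either $g^\ast f_\ast M$ or $f'_\ast g'^\ast M$ at $B'$ yields $R\Gamma(Y\times_X \mathcal M_{\mathrm{arc}}(B'),M)$; both sides are finitary, so this pointwise agreement on strictly totally disconnected tests promotes to an equivalence of arc-sheaves. Finally, for quasi-pro-\'etale $f$, after arc-localization the fibres are profinite sets, so by Lemma~\ref{lem:quasiproetaleoverstd} the pullback $Y\times_X \mathcal M_{\mathrm{arc}}(A)$ is controlled by a profinite set $S$ over $\mathcal M(A)$; then $f_\ast$ is essentially a continuous-section functor indexed by $S$, which is a cofiltered limit of finite sets, and the projection formula reduces along these finite levels to the trivial case of a finite product. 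The main obstacle is the second step: rigorously carrying out the interchange of cohomology with cofiltered limits and filtered colimits on compact Hausdorff spaces while correctly tracking the residue-field stalks and invoking $d(f)<\infty$ to control unbounded $M$ via left-completeness; once this step is in hand, the remaining assertions unwind formally.
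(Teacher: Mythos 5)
There is a genuine gap in your central step. The theorem concerns a general $0$-truncated proper map, where ``proper'' only means qcqs; the fibre $Y_B=Y\times_X\mathcal M_{\mathrm{arc}}(B)$ is therefore an arbitrary qcqs $0$-truncated arc-sheaf (e.g.\ $\mathbb P^1_B$), not the spectrum of a Banach ring. But the two results you lean on are only available in the affine case at this point of the paper: the proposition computing $R\nu_\ast M$ as a complex of sheaves with stalks $M(K(x))$ is stated for $\mathcal M_{\mathrm{arc}}(A)\to\mathcal M(A)$, and Proposition~\ref{prop:berkovichspectrumcolimit}(ii) gives $\mathcal M(\mathrm{colim}_i B_i)=\lim_i\mathcal M(B_i)$ only for (semi)normed rings. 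So the objects ``$\mathcal M(Y_B)$'', the claim that $R\Gamma(Y_B,M)$ is sheaf cohomology on this space with residue-field stalks, and the identification $\mathcal M(Y_B)=\lim_i\mathcal M(Y_{B_i})$ are all unproved for non-affine $Y$; your proof silently treats $Y_B$ as affine. The paper fills exactly this hole differently: since $Y$ is qcqs one simplicially resolves $Y$ by affines $\mathcal M_{\mathrm{arc}}(B_\bullet)$, so that $f_\ast M(B)$ is a totalization of values of $M$ on Banach algebras; each such value commutes with filtered colimits by the extension of finitary sheaves to all uniform Banach algebras (Lemma~\ref{lem:quasiproetalecohomfiltcolim}), and for $M\in D^+$ the totalization commutes with the colimit (no finite-dimensionality needed for this part). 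The stalkwise/topological picture is then only invoked affine-locally, to get the cohomological-dimension bound, after which the Postnikov-limit step (which you do have, via left-completeness and $d(f)<\infty$) recovers unbounded $M$. Your argument could be repaired by inserting this affine resolution, but as written the key interchange of colimits with cohomology is performed on a space that has not been constructed.

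Two smaller points: your base-change argument is correct and is the paper's (pullback in a slice topos is restriction of values, so both sides evaluate to $R\Gamma(Y\times_X\mathcal M_{\mathrm{arc}}(B'),M)$). In the quasi-pro-\'etale step, Lemma~\ref{lem:quasiproetaleoverstd} does not directly apply, since it classifies quasi-pro-\'etale Banach algebras over a strictly totally disconnected ring, whereas here quasi-pro-\'etale only means profinite geometric fibres and $Y_A$ is not known to be affine; the paper instead checks the projection formula on geometric fibres (legitimate because base change holds and isomorphisms are detected on stalks), where $Y$ literally becomes $S\times\mathcal M_{\mathrm{arc}}(C)$ and your finite-approximation argument for the profinite set $S$ then goes through verbatim.
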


\begin{proof} First, one proves preservation of finitary arc-sheaves for $\mathcal D^+$, for which no assumption on finite cohomological dimension is necessary. We can assume that $X=\mathcal M_{\mathrm{arc}}(A)$ for some analytic Banach ring $A$, and we can simplicially resolve $Y$ by affine spaces $\mathcal M_{\mathrm{arc}}(B_i)$ as $Y$ is qcqs. Then the result follows from the extension of any $M\in \mathcal D_{\mathrm{fin}}^+(Y,\mathbb Z)$ to all uniform Banach algebras over $Y$ that commutes with filtered colimits. In particular, as stalks are a special case of filtered colimits, everything commutes with passing to stalks, so one gets the bound on cohomological dimension, and then one can pass to Postnikov limits to get preservation of finitary arc-sheaves on all of $D$. We use that all hypercomplete arc-sheaves are Postnikov complete, as the arc-topos is replete, see \cite{MondalReinecke}.

The base change property for $f_\ast$ as a functor on arc-sheaves is formal, as one always has base change for slices, and in the arc-topos, everything is a slice. In particular, the projection formula can always be checked on geometric fibres. If $f$ is quasi-pro-\'etale, this reduces the projection formula to the case $Y=S\times \mathcal M_{\mathrm{arc}}(C)\to X=\mathcal M_{\mathrm{arc}}(C)$ for a profinite set $S$. Writing $S$ as a cofiltered limit of finite sets and using finitaryness yields the claim in this case.
\end{proof}

\begin{definition} A map of small arc-stacks $j: U\to X$ is an open immersion if after pullback to each $\mathcal M_{\mathrm{arc}}(A)\to X$, the fibre product $U\times_X \mathcal M_{\mathrm{arc}}(A)$ is representable by an open subspace of $\mathcal M_{\mathrm{arc}}(A)$, i.e.~a subfunctor coming from an open subspace of $\mathcal M(A)$.
\end{definition}

More generally, we have the following notion.

\begin{definition} A map $f: Y\to X$ of small arc-stacks is finitary if for any filtered colimit $A=\mathrm{colim}_i A_i$ of strictly totally disconnected Banach rings $A_i$, the map
\[
\mathrm{colim}_i Y(A_i)\to \mathrm{colim}_i Y(A)\times_{X(A)} X(A_i)
\]
is an isomorphism. Equivalently, the diagram
\[\xymatrix{
\mathrm{colim}_i Y(A_i)\ar[r]\ar[d] & Y(A)\ar[d]\\
\mathrm{colim}_i X(A_i)\ar[r] & X(A)
}\]
is cartesian.
\end{definition}

We note that this is equivalently the condition that $Y$ defines a finitary arc-sheaf over $X$.

\begin{lemma}\leavevmode
\begin{enumerate} 
\item[{\rm (i)}] Composites of finitary maps are finitary.
\item[{\rm (ii)}] Pullbacks of finitary maps are finitary.
\item[{\rm (iii)}] If $f: X\to Y$ and $g: Y\to Z$ are maps such that $g$ and $g\circ f$ are finitary, then $f$ is finitary.
\item[{\rm (iv)}] Given $X$, the category of finitary $Y$ over $X$ is stable under all colimits.
\item[{\rm (v)}] Any open immersion is finitary.
\item[{\rm (vi)}] Conversely, a finitary injection is an open immersion.
\end{enumerate}
\end{lemma}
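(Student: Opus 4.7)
The plan is that (i)--(iii) and (v) follow by short diagram chases from the definition of finitariness, while (iv) is the main obstacle and will require a fiberwise reduction to Theorem~\ref{theorem:finitaryarc}.

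For (i)--(iii), fix a filtered colimit $A=\mathrm{colim}_i A_i$ of strictly totally disconnected Banach rings. For (i), given finitary $f\colon X\to Y$ and $g\colon Y\to Z$, I substitute the finitary formula for $f$ and then for $g$:
\[
\mathrm{colim}_i X(A_i) \cong X(A)\times_{Y(A)} \mathrm{colim}_i Y(A_i) \cong X(A)\times_{Y(A)}\bigl(Y(A)\times_{Z(A)}\mathrm{colim}_i Z(A_i)\bigr) \cong X(A)\times_{Z(A)}\mathrm{colim}_i Z(A_i).
\]
For (ii), if $Y\to X$ is finitary and $Y'=Y\times_X X'$, then
\[
\mathrm{colim}_i Y'(A_i) = \mathrm{colim}_i\bigl(Y(A_i)\times_{X(A_i)} X'(A_i)\bigr) \cong \mathrm{colim}_i Y(A_i)\times_{\mathrm{colim}_i X(A_i)}\mathrm{colim}_i X'(A_i),
\]
using that filtered colimits in anima commute with finite limits; substituting the finitary formula for $Y/X$ and cancelling $\mathrm{colim}_i X(A_i)$ leaves $Y'(A)\times_{X'(A)}\mathrm{colim}_i X'(A_i)$. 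Part (iii) is the mirror argument: the finitary formula for $g\circ f$ gives $\mathrm{colim}_i X(A_i) = X(A)\times_{Z(A)}\mathrm{colim}_i Z(A_i)$; inserting $Y(A)$ via $X(A)\times_{Z(A)}(-)=X(A)\times_{Y(A)}\bigl(Y(A)\times_{Z(A)}(-)\bigr)$ and applying finitariness of $g$ rewrites $Y(A)\times_{Z(A)}\mathrm{colim}_i Z(A_i)$ as $\mathrm{colim}_i Y(A_i)$.

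For (v), an open immersion $j\colon U\hookrightarrow X$ is a subsheaf inclusion, so the finitary formula reduces to the following: a compatible family $(\phi_i\in X(A_i))$ whose induced $\phi\in X(A)$ factors through $U$ must satisfy $\phi_k\in U(A_k)$ for some $k$. Each $\phi_i$ pulls back $U$ to an open subset $V_i\subset\mathcal M(A_i)$, with $V_k$ the preimage of $V_i$ for $k\geq i$, and $\phi^{-1}(U)\subset\mathcal M(A)$ the preimage of any $V_i$. By Proposition~\ref{prop:berkovichspectrumcolimit}(ii), $\mathcal M(A)=\lim_i\mathcal M(A_i)$, so the closed complement $\phi^{-1}(U)^c$ equals the cofiltered intersection of the $V_i^c$ inside $\mathcal M(A)$; if it is empty, compactness of each $\mathcal M(A_i)$ forces some $V_k^c$ to be empty, giving $\phi_k\in U(A_k)$.

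For (iv), let $Y=\mathrm{colim}_j Y_j$ in arc-stacks over $X$ with each $Y_j$ finitary, and set $Y'=\mathrm{colim}_j^{\mathrm{PSh}}Y_j$. Interchanging $\mathrm{colim}_i$ and $\mathrm{colim}_j$, applying the finitary formula for each $Y_j$, and using universality of colimits in anima shows that $Y'$ satisfies the finitary formula. It remains to check that $Y(A)=Y'(A)$ on strictly totally disconnected $A$ in a way compatible with the formula; I plan to verify this fiberwise over each $\phi\in X(A)$. Define $Y_j|_\phi$ on s.t.d.\ $A$-algebras $B$ as the fiber of $Y_j(B)\to X(B)$ over the composite $\phi_B\colon B\to A\xrightarrow{\phi}X$; a direct computation shows that $Y_j|_\phi$ commutes with finite products and with filtered colimits of s.t.d.\ $A$-algebras, the latter because the pullback factor $\mathrm{colim}_i X(B_i)$ in the finitary formula collapses to a point when taking the fiber over $\phi_B$. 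Theorem~\ref{theorem:finitaryarc} then identifies each $Y_j|_\phi$ as a hypercomplete arc-sheaf, and tracking the sheafification of the arc-stack colimit $Y|_\phi = \mathrm{colim}_j Y_j|_\phi$ through the fiber sequence allows one to compare $Y(A)$ with $Y'(A)$ over each base point, thereby transferring the finitary formula from $Y'$ to $Y$. The main obstacle is this last step of patching fiberwise sheaf-level information into a global finitary formula over $X$, handled by this reduction to Theorem~\ref{theorem:finitaryarc}.
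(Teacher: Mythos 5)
Parts (i)--(iii) and (v) of your argument are correct. Your two-out-of-three chase for (iii) is a valid alternative to the paper's route (which reduces to the diagonal of $g$ being finitary and then uses (i),(ii)), and your proof of (v) --- descending the closed complement through $\mathcal M(A)=\lim_i\mathcal M(A_i)$ and using that a cofiltered limit of nonempty compact Hausdorff spaces is nonempty --- is a legitimate replacement for the paper's reduction to basic opens $\{|T|<r\}$ together with the infimum description of the norm on a filtered colimit; it buys a proof that does not need to decompose $U$ into basic opens at all.

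Part (iv), however, has a genuine gap, and it sits exactly where you locate "the main obstacle". Your plan is to establish the finitary formula for the presheaf colimit $Y'$ (that part is fine) and then to show $Y(A)=Y'(A)$ on strictly totally disconnected $A$, i.e.\ that arc-sheafification does not change sections over such $A$. This intermediate claim is false for non-sifted colimits. Take $Y_1=Y_2=X$ and $Y=X\sqcup X$ over $X=\ast=\mathcal M_{\mathrm{arc}}(\mathbb Z)$: each $Y_j\to X$ is finitary, $Y'(A)$ has two elements, but $Y(A)=\mathcal M_{\mathrm{arc}}(\mathbb Z\times\mathbb Z)(A)$ is the set of clopen decompositions of $\mathcal M(A)$, which is infinite for a typical strictly totally disconnected $A$. (Note also that the finitary formula requires values at $A=\mathrm{colim}_i A_i$, which need not itself be strictly totally disconnected.) The proposed repair via Theorem~\ref{theorem:finitaryarc} cannot close this: that theorem applies to functors commuting with finite products and filtered colimits, and while each fiber $Y_j|_\phi$ does (so the theorem only re-proves that each $Y_j$ is a sheaf, which you already know), the colimit over $j$ of product-preserving functors is product-preserving only for sifted diagrams --- which is precisely why sheafification intervenes --- and "tracking the sheafification through the fiber sequence" is not an argument. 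What (iv) actually needs is a proof that a section of the sheafified colimit over $\mathcal M_{\mathrm{arc}}(A)$ spreads out to some $\mathcal M_{\mathrm{arc}}(A_k)$: for coproducts this comes from descending clopen decompositions of $\mathcal M(A)=\lim_i\mathcal M(A_i)$ to a finite stage, and for general colimits one must in addition spread out the arc-covers and descent data implicit in a section of the sheafification (or reduce to coproducts plus sifted colimits and treat the two cases). As written, your proof of (iv) does not establish the statement.
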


\begin{proof} Parts (i) -- (iv) follow directly from the definition and standard properties of anima. Given these results, it suffices to check (v) for the open immersion $U\subset \mathcal M(B)$ given by asking a condition $|T|<r$ for some $T\in B$. This follows from the norm on a filtered colimit being given by an infimum. For (vi), we can assume that $f: Y\to X$ has strictly totally disconnected target. For any $x\in X$ in the image of $f$, we can write $x$ as a cofiltered limit of open and closed neighborhoods, and by finitaryness, one such neighborhood must lie in $Y$. It follows that $Y$ is open.
\end{proof}

\begin{proposition}\label{prop:finitaryleftadjoint} For any finitary map $f: Y\to X$, the left adjoint $f_\sharp: \mathcal D(Y_{\mathrm{arc}},\mathbb Z)\to \mathcal D(X_{\mathrm{arc}},\mathbb Z)$ to $f^\ast$ (on the level of derived categories of arc-sheaves) preserves the subcategory of finitary sheaves, defining a functor
\[
f_\sharp: \mathcal D_{\mathrm{fin}}(Y,\mathbb Z)\to \mathcal D_{\mathrm{fin}}(X,\mathbb Z)
\]
left adjoint to $f^\ast$, and satisfying the projection formula.
\end{proposition}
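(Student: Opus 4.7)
The plan is to establish a value formula for $f_\sharp F$ at strictly totally disconnected test objects, from which both finitariness and the projection formula follow mechanically. The existence of $f_\sharp$ on $\mathcal D(-_{\mathrm{arc}},\mathbb Z)$ is automatic as $f^\ast$ preserves all limits and colimits; the content is to show $f_\sharp$ preserves the finitary subcategory.

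First I would verify the value formula. For any strictly totally disconnected $X$-algebra $B$, set
$$\Lambda(B) = Y(B) \times_{X(B)} \{B \to X\},$$
the anima of lifts of the given map $B \to X$ to a map $B \to Y$. The claim is that for every $F \in \mathcal D(Y_{\mathrm{arc}},\mathbb Z)$,
$$(f_\sharp F)(B) = \mathrm{colim}_{g \in \Lambda(B)} F(B \xrightarrow{g} Y) \quad \text{in } \mathcal D(\mathbb Z).$$
For a representable $F = \mathbb Z[T/Y]$, the left-hand side is $\mathbb Z[T/X](B) = \mathbb Z[\mathrm{Map}_X(B,T)]$, since $f_\sharp$ on representables is postcomposition with $f$; and decomposing $X$-maps through lifts to $Y$ identifies this with $\mathbb Z[\int_{g \in \Lambda(B)} \mathrm{Map}_Y((B,g), T)]$, matching the right-hand side. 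Both sides commute with colimits in $F$, so the formula extends to all $F$. No arc-sheafification intervenes at strictly totally disconnected $B$ by Theorem~\ref{theorem:finitaryarc}.

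Given the formula, finitariness of $f_\sharp F$ is a double interchange of filtered colimits. For $B = \mathrm{colim}_i B_i$ a filtered colimit of strictly totally disconnected $X$-algebras, the finitary condition on $f$ gives $\Lambda(B) = \mathrm{colim}_i \Lambda(B_i)$; and each $g \in \Lambda(B)$ comes from some $g_{i_0} \in \Lambda(B_{i_0})$, the resulting diagram $(B_i, g_i)_{i \geq i_0}$ being a filtered system of strictly totally disconnected $Y$-algebras with colimit $(B,g)$, so finitariness of $F$ gives $F(B,g) = \mathrm{colim}_{i \geq i_0} F(B_i, g_i)$. Combining and rearranging filtered colimits,
$$(f_\sharp F)(B) = \mathrm{colim}_{g \in \Lambda(B)} F(B,g) = \mathrm{colim}_i \mathrm{colim}_{g_i \in \Lambda(B_i)} F(B_i, g_i) = \mathrm{colim}_i (f_\sharp F)(B_i).$$
Thus $f_\sharp F$ is finitary, and the adjunction with $f^\ast$ descends from $\mathcal D(-_{\mathrm{arc}},\mathbb Z)$ since $f^\ast$ manifestly preserves finitariness.

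For the projection formula, I would verify it on representables and extend by colimits. Using $f^\ast \mathbb Z[S/X] = \mathbb Z[(S \times_X Y)/Y]$, one computes
$$f_\sharp\bigl(\mathbb Z[T/Y] \otimes f^\ast \mathbb Z[S/X]\bigr) = f_\sharp \mathbb Z[(T \times_X S)/Y] = \mathbb Z[(T \times_X S)/X] = f_\sharp \mathbb Z[T/Y] \otimes \mathbb Z[S/X];$$
since $f_\sharp$, $f^\ast$, and the tensor product in each variable all preserve colimits, this extends to arbitrary $F$ and $G$. The main obstacle in this strategy is the value formula itself: making precise how Grothendieck-integration identifies the presheaf-level colimit with the actual sheaf value at strictly totally disconnected $B$. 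Once that is established, the remainder is a routine interchange of filtered colimits.
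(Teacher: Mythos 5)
The central step of your argument --- the value formula $(f_\sharp F)(B) = \mathrm{colim}_{g\in \Lambda(B)} F(B,g)$ at strictly totally disconnected $B$ --- is not correct, and the justification that ``no arc-sheafification intervenes by Theorem~\ref{theorem:finitaryarc}'' is precisely where it breaks. What you have written down is the presheaf-level left Kan extension; to conclude it computes the sheaf $f_\sharp F$ on strictly totally disconnected $B$ you would need this presheaf to commute with \emph{finite products} of such $B$ (that is the other hypothesis of Theorem~\ref{theorem:finitaryarc}), and it does not: a sheaf must take the cover $\mathcal M_{\mathrm{arc}}(B_1)\sqcup \mathcal M_{\mathrm{arc}}(B_2)\to \mathcal M_{\mathrm{arc}}(B_1\times B_2)$ into a product decomposition, whereas the colimit over \emph{global} lifts does not. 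Concretely, take $f\colon Y=X\sqcup X\to X$ the fold map and $F=\mathbb Z$, so $f_\sharp F=\mathbb Z\oplus\mathbb Z$ with $(f_\sharp F)(B)=\mathrm{Cont}(\mathcal M(B),\mathbb Z^2)$; your formula gives $\bigoplus_{g\in\Lambda(B)}\mathrm{Cont}(\mathcal M(B),\mathbb Z)$ with $\Lambda(B)$ the set of clopen decompositions of $\mathcal M(B)$, which already disagrees when $\mathcal M(B)$ has two points ($\mathbb Z^4$ versus $\mathbb Z^8$). The same issue invalidates your base case: $\mathbb Z[T/X](B)$ is \emph{not} $\mathbb Z[\mathrm{Map}_X(B,T)]$ (for $T=X$ the left side is $\mathrm{Cont}(\mathcal M(B),\mathbb Z)$, the right side is $\mathbb Z$), because the free $\mathbb Z$-module presheaf is not a sheaf on strictly totally disconnected rings either. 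Since the finitariness of $f_\sharp F$ is deduced entirely from this formula by interchanging filtered colimits, that part of the proof has a genuine gap; the interchange itself, and the reduction $\Lambda(B)=\mathrm{colim}_i\Lambda(B_i)$ from finitariness of $f$, are fine but apply to the wrong object.

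The paper sidesteps sheafification altogether by arguing \emph{unstably}: a sheaf of anima over $Y$ is an arc-stack $Z\to Y$, the left adjoint $f_\sharp$ is literally composition $Z\to Y\to X$ (no Kan extension, no sheafification), and finitariness of the resulting object over $X$ is exactly the statement that a composite of two finitary maps is finitary, which is part (i) of the preceding lemma; the $\mathcal D(\mathbb Z)$-linear statement then follows by generating under colimits. If you want to salvage your route, the formula can only be expected stalkwise: on residue fields $K(x)$, $x\in\mathcal M(B)$ (where $\mathcal M$ is connected, the lifting-colimit description is reasonable), after which the value at $B$ is recovered as global sections over the profinite set $\mathcal M(B)$ of the sheaf with those stalks, in the spirit of Theorem~\ref{thm:properpushforwardfinitary}; the filtered-colimit interchange would then have to be run at that level. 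By contrast, your treatment of the projection formula (check on free sheaves $\mathbb Z[T/Y]$, $\mathbb Z[S/X]$ using that $\mathbb Z[-]$ is symmetric monoidal and everything commutes with colimits) is sound at the level of $\mathcal D(-_{\mathrm{arc}},\mathbb Z)$ and restricts to $\mathcal D_{\mathrm{fin}}$ once preservation of finitariness is established; that part is not where the difficulty lies.
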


\begin{proof} This is even true unstably, and then it is simply the assertion that a composite of two finitary maps is finitary.
\end{proof}

Moreover, excision holds true for finitary sheaves. For open immersions $j$, we write $j_!=j_\sharp$ for the left adjoint of $j^\ast$.

\begin{proposition} Let $X$ be a small arc-stack and let $j: U\subset X$ be an open substack. Let $i: Z\subset X$ be the complementary closed substack, whose $A$-valued points are those points of $X$ that do not meet $U$. Then the triangle
\[
j_! j^\ast M\to M\to i_\ast i^\ast M
\]
is exact for all $M\in \mathcal D_{\mathrm{fin}}(X,\mathbb Z)$.
\end{proposition}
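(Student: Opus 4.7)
The plan is to reduce the assertion to a stalkwise check at Banach fields. Since the three terms are finitary arc-sheaves, by Theorem~\ref{theorem:finitaryarc} they are hypercomplete, and it suffices to verify exactness after evaluation on a generating family of the arc-site. Working over strictly totally disconnected Banach $A$-algebras, a finitary arc-sheaf is determined by its values at residue fields $K(x)$ via the identification (in the spirit of Lemma~\ref{lem:quasiproetalesheafprofin}) with sheaves on the profinite space $\mathcal{M}(A)$. Thus it is enough to check exactness after evaluating at every Banach field $K$ equipped with a structure map $\mathcal{M}_{\mathrm{arc}}(K)\to X$; and since $\mathcal{M}(K)$ is a single point, this map factors through exactly one of $U$ or $Z$.

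I would proceed by establishing the four standard recollement identities on $\mathcal{D}_{\mathrm{fin}}$. First, $j^*j_!\simeq \mathrm{id}$ and $i^*i_*\simeq \mathrm{id}$ hold because $j$ and $i$ are monomorphisms of arc-stacks; second, $j^*i_*=0$ holds by the base change of Theorem~\ref{thm:properpushforwardfinitary} applied to the empty fibre product $U\times_X Z=\emptyset$; third, $i^*j_!=0$ holds because the presheaf left Kan extension $j_{!,\mathrm{pre}}N$ vanishes on any $Y\to X$ factoring through $Z$ (no factorization through the monomorphism $U\to X$ exists), and this vanishing is preserved by arc-sheafification because any arc-cover of such a $Y$ still factors through $Z$. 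Granted these, the cofiber $C$ of $j_!j^*M\to M$ satisfies $j^*C=0$ and $i^*C\simeq i^*M$, which match the invariants of $i_*i^*M$; the natural map $C\to i_*i^*M$ induced by $M\to i_*i^*M$ is then an equivalence by the joint conservativity of $(j^*,i^*)$, since a finitary sheaf with vanishing pullback to both $U$ and $Z$ has trivial stalks at every Banach field point of $X$ and hence vanishes by the reduction of the first paragraph.

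The main obstacle is the verification of $i^*j_!=0$ after arc-sheafification, which is not purely formal: one must use that the closed substack $Z$ is defined by the pointwise condition that $\mathcal{M}(-)$ misses $U$, so that arc-covers $\{Y_\alpha\}$ of a $Y$ factoring through $Z$ still factor through $Z$, guaranteeing that the presheaf-level vanishing persists in the sheafification. Once this point is clean, the rest is a formal recollement argument together with the conservativity of $(j^*,i^*)$ extracted from the stalkwise description of finitary sheaves recalled at the start.
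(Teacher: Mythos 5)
Your overall strategy --- establishing the recollement identities $j^\ast j_!\simeq\mathrm{id}$, $i^\ast i_\ast\simeq\mathrm{id}$, $j^\ast i_\ast=0$, $i^\ast j_!=0$ and concluding by joint conservativity of $(j^\ast,i^\ast)$ --- is a genuinely different route from the paper's proof, which instead descends to a strictly totally disconnected base, writes $U$ as an increasing union of open-and-closed subsets $U_k$, uses the evident excision triangle for each clopen decomposition, and passes to the filtered colimit using finitaryness. Most of your steps are fine: the first three identities follow from base change for slice pushforwards (which, as the paper notes in the proof of Theorem~\ref{thm:properpushforwardfinitary}, is automatic since everything in the arc-topos is a slice), and your argument for $i^\ast j_!=0$ is correct; it can even be shortened by checking on generators, since $j_!\,\mathbb Z[V_{/U}]=\mathbb Z[V_{/X}]$ and $V\times_X Z=\emptyset$ for any $V$ over $U$.

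The gap is in the last step. Your conservativity statement --- detection by values at algebraically closed residue fields --- is a property of \emph{finitary} sheaves only; it fails for general arc-hypersheaves, and that is exactly why $\mathcal D_{\mathrm{fin}}$ is introduced. You apply it to the fiber of $C\to i_\ast i^\ast M$, but nothing in your argument shows that $i_\ast i^\ast M$ (equivalently this fiber; $C$ itself is finitary because open immersions are finitary maps and $j_\sharp$ preserves finitary sheaves) lies in $\mathcal D_{\mathrm{fin}}$. Supplying this requires Theorem~\ref{thm:properpushforwardfinitary} for $i$, hence a verification that the complementary closed substack $Z\to X$ is a $0$-truncated proper (qcqs) map of finite cohomological dimension --- true (the geometric fibres are a point or empty, and over affinoid $X$ one can cover $Z$ by $\mathcal M_{\mathrm{arc}}$ of a Banach product of residue fields at the points of the closed complement), but it is an input that must be stated and proved, not a formality. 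A second, smaller point: for unbounded $M$ your stalkwise detection needs the stability of $\mathcal D_{\mathrm{fin}}$ under truncations together with left-completeness, which the paper provides. By contrast, the paper's clopen-exhaustion argument sidesteps both issues: the finitariness of $i_\ast i^\ast M$ comes out of the colimit description $i_\ast i^\ast M=\mathrm{colim}_k\, i_{k\ast}i_k^\ast M$ rather than being needed as an input.
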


\begin{proof} By descent, we can assume that $X$ is strictly totally disconnected. In that case $U$ is an increasing union of open and closed subsets $U_k\subset U$, with complementary open and closed $Z_k\supset Z$ (for $k$ in some filtered index category $K$). For each $k$, we get an obvious short exact sequence
\[
j_{k!} j_k^\ast M\to M\to i_{k\ast} i_k^\ast M
\]
using just the decomposition into two open and closed subsets. Now pass to the filtered colimit over $k$, and use finitaryness to see that this yields the desired exact triangle.
\end{proof}

In particular, we can apply excision to break up into the archimedean locus $|2|>1$, and the nonarchimedean locus $|2|\leq 1$. As the archimedean locus reduces to usual sheaves on the Berkovich space, we will largely focus on the nonarchimedean locus in the following.

At this point, we can almost get a $6$-functor formalism with respect to the class $P$ of proper maps of finite dimension. The only issue is the failure of the projection formula for general proper morphisms.

\begin{example} Let $X$ be nonarchimedean and consider the projection $f: \mathbb P^1_X\to X$ and the constant sheaf $\mathbb Q$. Then $f_\ast \mathbb Q=\mathbb Q$: Indeed, this can be checked fibrewise, reducing us to geometric points. There is no Galois cohomology, so this reduces to the cohomology of $\mathcal M(\mathbb P^1_C)$ which is a tree. If the projection formula were true, this would mean that for any $M\in \mathcal D_{\mathrm{fin}}(X,\mathbb Q)$, one has $f_\ast f^\ast M=M$ and in particular $M(\mathbb P^1_X)=M(X)$. But the sheaf $\overline{\mathbb G}_m = \mathbb G_m/(1+\mathcal O_{<1})$ studied in the next section yields a counterexample. It is visibly finitary, and concentrated in degree $0$ when evaluated at $C$. But $H^1(\mathbb P^1_C,\overline{\mathbb G}_m)\neq 0$ because $\mathbb P^1_C$ has a nontrivial line bundle. As we will see in the next section, the sheaf $\overline{\mathbb G}_m$ used in this counterexample is even ball-invariant; without ball-invariance, there are many more counterexamples.
\end{example}

\section{Effective motives}

As a next step towards the definition of motives, we look at the subcategory of ball-invariant finitary sheaves.

\begin{definition} Let $X$ be a small arc-stack, and let $Y\in \mathcal D(X_{\mathrm{arc}},\mathbb Z)$, or just an arc-sheaf of anima over $X$. Then $Y$ is ball-invariant if for all Banach rings $A$ over $X$, the map
\[
Y(A)\to Y(A\langle T\rangle_1)
\]
is an isomorphism. Here $A\langle T\rangle_1$ is the free uniform Banach $A$-algebra on a variable $T$ with $|T|\leq 1$.
\end{definition}

\begin{definition}\label{def:effmotives}\footnote{We work only with ``\'etale motives'', enforcing \'etale descent throughout; and only with $\mathbb Z$-coefficients, as this is the main setup we care about for applications. Everything would work without change if one replaces $\mathbb Z$ by the sphere spectrum (or an arbitrary $E_\infty$-ring); in fact the difference between these two settings is torsion, and hence reduces to the standard \'etale theory.} Let $X$ be a small arc-stack. The stable $\infty$-category of effective motivic sheaves over $X$ is the full $\infty$-subcategory
\[
\mathcal D_{\mathrm{mot}}^{\mathrm{eff}}(X)\subset \mathcal D_{\mathrm{fin}}(X,\mathbb Z)
\]
of all $M\in \mathcal D_{\mathrm{fin}}(X,\mathbb Z)\subset \mathcal D(X_{\mathrm{arc}},\mathbb Z)$ that are also ball-invariant.
\end{definition}

\begin{lemma} For all non-discrete algebraically closed Banach fields $C$, the Banach algebra $C\langle T\rangle_1$ has cohomological dimension at most $3$.
\end{lemma}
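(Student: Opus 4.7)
The plan is to invoke the corollary earlier in this section: since $C\langle T\rangle_1$ is analytic (because $C$ itself is non-discrete, any pseudouniformizer of $C$ is topologically nilpotent in $C\langle T\rangle_1$), one has
\[
d(\mathcal{M}_{\mathrm{arc}}(C\langle T\rangle_1)) \leq \dim \mathcal{M}(C\langle T\rangle_1) + \sup_{x} \mathrm{cd}(G_{K(x)}),
\]
where the first summand is the sheaf-theoretic cohomological dimension of the compact Hausdorff Berkovich spectrum and the second is the supremum of absolute Galois cohomological dimensions of residue fields. I will show these two contributions are at most $1$ and $2$ respectively in the nonarchimedean case, and at most $2$ and $0$ in the archimedean case, yielding $d \leq 3$ in both.

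For the topological bound in the nonarchimedean case, I use the explicit description at the end of Section~2 realizing $\mathcal{M}(C\langle T\rangle_1)$ as a cofiltered inverse limit of the finite trees
\[
\Big\{(x_a)_{a\in A}\mid \forall a,b:\ x_a\leq \max(x_b,|a-b|_C),\ |a-b|_C\leq \max(x_a,x_b)\Big\}\subset \prod_{a\in A} [0,1]
\]
indexed by finite subsets $A \subset \mathcal{O}_C$. Finite trees have covering dimension $1$, and a cofiltered inverse limit of compact Hausdorff spaces of covering dimension $\leq 1$ still has covering dimension $\leq 1$; sheaf cohomology on a compact Hausdorff space vanishes above the covering dimension, giving the bound $1$. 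In the archimedean case $C=\mathbb{C}$ the spectrum is just the closed complex unit disc of topological dimension $2$, but then every residue field is $\mathbb{C}$ and the Galois contribution vanishes.

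For the Galois bound, I would run through the classification of points into types (1)--(4) from Section~2. Type (1) gives $K(x)=C$ with trivial absolute Galois group. For the remaining types, $K(x)$ is a complete (in particular henselian) valued Banach field whose residue field is either algebraically closed (types (3) and (4), contributing cohomological dimension $0$) or a one-variable purely transcendental extension of the algebraically closed residue field of $C$ (type (2), contributing cohomological dimension at most $1$ by Tsen's theorem). Standard bounds on the Galois cohomological dimension of a henselian valued field, in terms of the residue field cohomological dimension plus a value-group contribution plus a wild-inertia contribution, then give $\mathrm{cd}(G_{K(x)}) \leq 2$: in type (2) the value group of $C$ is divisible, in type (3) the extra $\mathbb{Z}$-factor in the value group contributes at most $1$ to the tame inertia while the residue field has cd $0$, and type (4) is immediate.

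The main obstacle is controlling the $p$-primary part of the Galois cohomology of $K(x)$ when the residue characteristic is $p>0$, i.e.\ bounding the contribution of the wild inertia. For this one uses that the residue field of $C$ is algebraically closed (hence perfect even in characteristic $p$) and that the value group is divisible in types (2), (3), (4) up to at most a rank-one summand, so that the wild inertia is a pro-$p$ group of cohomological dimension at most $1$; combining these facts is the delicate point of the argument. Adding the two bounds then yields $d \leq 1+2=3$ as claimed.
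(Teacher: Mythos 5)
Your overall skeleton is the paper's: bound the cohomological dimension by (sheaf-cohomological dimension of $\mathcal M(C\langle T\rangle_1)$) plus (supremum of the cohomological dimensions of $G_{K(x)}$), note the spectrum is an inverse limit of finite trees so the first term is $1$ in the nonarchimedean case, and treat $C=\mathbb C$ separately. The gap is in the Galois term, and it is concrete. The cohomological dimension entering the corollary is for coefficients in arbitrary $\mathbb Z$-modules (the functor is on $\mathcal D_{\mathrm{fin}}(X,\mathbb Z)$), and for an infinite profinite group this typically exceeds the torsion cohomological dimension by one (already $H^2(\hat{\mathbb Z},\mathbb Z)\cong H^1(\hat{\mathbb Z},\mathbb Q/\mathbb Z)\neq 0$). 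Your residue-field/value-group/wild-inertia accounting is a bound on the torsion cohomological dimension; if that bound is $2$, the $\mathbb Z$-module bound is a priori $3$ and your total becomes $4$, not $3$. To land on $3$ you need torsion cohomological dimension $\leq 1$ for every $G_{K(x)}$, and your per-type breakdown does not deliver this: you yourself flag the wild/$p$-primary contribution as an unresolved ``delicate point'', and the clean decomposition into residue, value-group and wild contributions is exactly what needs proof (or a precise reference) for these non-discretely valued, non-spherically-complete completions, especially in mixed characteristic. So as written the argument neither closes the wild-inertia step nor keeps the torsion-versus-$\mathbb Z$ bookkeeping consistent with the claimed bound of $3$.

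The paper sidesteps all of this with one observation you are missing: for $x$ not a $C$-point, $K(x)$ is the completion of $C(T)$ at $x$, so (by density of the algebraic closure of $C(T)$ in that of $K(x)$, i.e. Krasner) $G_{K(x)}$ is a closed subgroup of $G_{C(T)}$; by Tsen's theorem $C(T)$ has cohomological dimension $\leq 1$ on torsion modules, hence $\leq 2$ on $\mathbb Z$-modules, and adding the tree dimension $1$ gives $3$. If you want to keep your local route, you must prove torsion cohomological dimension $\leq 1$ for each type separately (prime-to-$p$ via tameness and divisibility of the value group, respectively Tsen for the type (2) residue field $k(\bar T)$; and $\mathrm{cd}_p\leq 1$ via Artin--Schreier in equal characteristic and via the $C_1$ property of $C(T)$ in mixed characteristic), and only afterwards add one for the passage from torsion to $\mathbb Z$-coefficients; the closed-subgroup argument is the simpler and is the one the paper uses.
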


\begin{proof} If $C=\mathbb C$, then this reduces to the dimension of $\{z\in \mathbb C\mid |z|\leq 1\}$ which is $2$. If $C$ is nonarchimedean, then $\mathcal M(C\langle T\rangle_1)$ is an infinite tree, hence of dimension $1$; and for all $x\in \mathcal M(C\langle T\rangle_1)$, the absolute Galois group $G_{C(x)}$ is either trivial (when $x$ is a $C$-point) or a closed subgroup of the absolute Galois group $G_{C(T)}$ which has cohomological dimension $2$ (on $\mathbb Z$-modules).
\end{proof}

\begin{corollary} Assume that $M\in \mathcal D_{\mathrm{fin}}(X,\mathbb Z)$, and that for any non-discrete algebraically closed Banach field $C$ over $X$, the map
\[
M(C)\to M(C\langle T\rangle_1)
\]
is an isomorphism. Then $M$ is ball-invariant.
\end{corollary}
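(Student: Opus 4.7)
\emph{Plan.} The idea is to bootstrap from geometric points to all Banach rings by realising ball-invariance as the unit of an adjunction, and then applying proper base change for finitary sheaves. Let $p : \mathbb B_X \to X$ denote the relative closed unit ball, i.e., the representable $0$-truncated map characterised by $\mathbb B_X \times_X \mathcal M_{\mathrm{arc}}(A) = \mathcal M_{\mathrm{arc}}(A\langle T\rangle_1)$ for every Banach ring $A$ over $X$. The map $p$ is qcqs, and its geometric fibres $\mathcal M_{\mathrm{arc}}(C\langle T\rangle_1)$ over algebraically closed non-discrete Banach fields have cohomological dimension at most $3$ by the preceding lemma. Hence $p$ satisfies the hypotheses of Theorem~\ref{thm:properpushforwardfinitary}, so $p_\ast : \mathcal D_{\mathrm{fin}}(\mathbb B_X,\mathbb Z)\to \mathcal D_{\mathrm{fin}}(X,\mathbb Z)$ is well-defined and commutes with arbitrary base change. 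By construction one has $(p_\ast p^\ast M)(A) = M(A\langle T\rangle_1)$ for every Banach ring $A$ over $X$, so the unit $u : M\to p_\ast p^\ast M$ evaluated at $A$ is precisely the map $M(A)\to M(A\langle T\rangle_1)$ whose being an isomorphism is ball-invariance.

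\emph{Main argument.} It therefore suffices to show that $u$ is an equivalence in $\mathcal D_{\mathrm{fin}}(X,\mathbb Z)$, equivalently that the fibre $N := \mathrm{fib}(u)$ vanishes. For any algebraically closed non-discrete Banach field $C$ over $X$, base change for $p_\ast$ identifies $N(C)$ with $\mathrm{fib}(M(C)\to M(C\langle T\rangle_1))$, which is zero by hypothesis. Since $C$ is strictly totally disconnected with $\mathcal M(C)$ a single point, evaluation at $C$ is $t$-exact on $\mathcal D_{\mathrm{fin}}(X,\mathbb Z)$, hence $(\tau^{\geq -n} N)(C) = 0$ for every $n$ and every such $C$. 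For any strictly totally disconnected Banach $A$-algebra over $X$, the stalk description provided by the proposition about $R\nu_\ast$ (applicable since $\tau^{\geq -n} N\in \mathcal D_{\mathrm{fin}}^+$) shows that $R\nu_\ast(\tau^{\geq -n} N)$ has vanishing stalks at each $x\in \mathcal M(A)$, hence is zero, and therefore $(\tau^{\geq -n} N)(A) = 0$. Thus every $\tau^{\geq -n} N$ vanishes as a finitary arc-sheaf, and by left-completeness of the $t$-structure, $N = \varprojlim_n \tau^{\geq -n} N = 0$, as desired.

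\emph{Main obstacle.} The substantive ingredient is that $p_\ast p^\ast M$ remains finitary, which relies on the bound on the cohomological dimension of the fibres $\mathcal M_{\mathrm{arc}}(C\langle T\rangle_1)$ supplied by the preceding lemma; without this bound, Theorem~\ref{thm:properpushforwardfinitary} would not apply and the clean base-change identification of stalks would break down. The remaining reduction from an unbounded $M$ to the bounded case, which one needs in order to invoke the $R\nu_\ast$-stalk description, is formal given the left-completeness of $\mathcal D_{\mathrm{fin}}$ and the $t$-exactness of evaluation at geometric points.
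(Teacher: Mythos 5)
Your argument is correct and follows essentially the same route as the paper: both proofs use Theorem~\ref{thm:properpushforwardfinitary} (via the cohomological-dimension bound for $C\langle T\rangle_1$) to see that $A\mapsto M(A\langle T\rangle_1)$ is again finitary, reduce by arc-descent to strictly totally disconnected $A$, and then check the statement on stalks, i.e.\ on the algebraically closed residue fields $K(x)$, where the hypothesis applies. Your packaging via the unit $M\to p_\ast p^\ast M$ and the explicit truncation/left-completeness step for unbounded $M$ just spells out what the paper leaves implicit.
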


\begin{proof} We need to see that for all Banach rings $A$ over $X$, the  map
\[
M(A)\to M(A\langle T\rangle_1)
\]
is an isomorphism. By arc-descent, we can assume that $A$ is strictly totally disconnected. We can then treat both sides as sheaves on the profinite set $\mathcal M(A)$. As both sides are finitary (using Theorem~\ref{thm:properpushforwardfinitary}), the stalks agree with the values on the algebraically closed residue fields, where the result holds by assumption.
\end{proof}

\begin{corollary} The subcategory
\[
\mathcal D_{\mathrm{mot}}^{\mathrm{eff}}(X)\subset \mathcal D_{\mathrm{fin}}(X,\mathbb Z)
\]
is stable under all colimits.
\end{corollary}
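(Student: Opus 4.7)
The plan is to reduce to the case of filtered colimits using stability, and then handle those using the cohomological dimension bound just proven. First, $\mathcal{D}_{\mathrm{mot}}^{\mathrm{eff}}(X)\subset \mathcal{D}_{\mathrm{fin}}(X,\mathbb{Z})$ is a full stable subcategory: ball-invariance of $M$ asks that the natural map $M(A)\to M(A\langle T\rangle_1)$ in $\mathcal{D}(\mathbb{Z})$ — a map that depends exactly functorially on $M$ — be an equivalence, and this condition is preserved by fiber and cofiber sequences and holds at $0$. Hence $\mathcal{D}_{\mathrm{mot}}^{\mathrm{eff}}(X)$ is automatically closed under finite colimits, and since every small colimit in a stable $\infty$-category is built from finite and filtered ones, it suffices to show closure under filtered colimits.

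So let $(M_i)$ be a filtered diagram of ball-invariant finitary sheaves with colimit $M\in \mathcal{D}_{\mathrm{fin}}(X,\mathbb{Z})$. By the preceding Corollary, to conclude $M$ is ball-invariant it is enough to check that $M(C)\to M(C\langle T\rangle_1)$ is an equivalence for every non-discrete algebraically closed Banach field $C$ over $X$. Since $C$ itself is strictly totally disconnected, the characterization of $\mathcal{D}_{\mathrm{fin}}(X,\mathbb{Z})$ as functors from strictly totally disconnected $X$-algebras to $\mathcal{D}(\mathbb{Z})$ (commuting with finite products and filtered colimits) gives $M(C) = \mathrm{colim}_i M_i(C)$. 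Each $M_i(C)\to M_i(C\langle T\rangle_1)$ is already an equivalence by hypothesis, so it remains to identify $M(C\langle T\rangle_1)$ with $\mathrm{colim}_i M_i(C\langle T\rangle_1)$.

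This last step is the heart of the argument, and is where the preceding Lemma is essential. It tells us that the functor $N\mapsto N(C\langle T\rangle_1)$ on $\mathcal{D}_{\mathrm{fin}}(X,\mathbb{Z})$ has cohomological dimension at most $3$: using the map of sites $\nu$ from the arc-site of $C\langle T\rangle_1$ to $\mathcal{M}(C\langle T\rangle_1)$, the value $N(C\langle T\rangle_1)$ is computed by a convergent spectral sequence $H^p(\mathcal{M}(C\langle T\rangle_1),\pi_q R\nu_\ast N)$ supported in $p\in \{0,\ldots,3\}$, with the $\pi_q R\nu_\ast N$ having stalks given by Galois cohomology of $N(K(x))$. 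Now filtered colimits commute separately with homotopy sheaves (as filtered colimits are exact on $\mathbb{Z}$-modules), with sheaf cohomology on the finite-dimensional compact Hausdorff space $\mathcal{M}(C\langle T\rangle_1)$, and with Galois cohomology of profinite groups of finite cohomological dimension; combined with the uniform bound on $p$, this yields $\pi_k M(C\langle T\rangle_1) = \mathrm{colim}_i \pi_k M_i(C\langle T\rangle_1)$ for every $k$. Left-completeness of $\mathcal{D}_{\mathrm{fin}}(X,\mathbb{Z})$ then promotes this degree-wise identification to the required equivalence in $\mathcal{D}(\mathbb{Z})$.

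The main obstacle is exactly this transfer of filtered-colimit commutation from the strictly totally disconnected rings — where it is built into the very definition of $\mathcal{D}_{\mathrm{fin}}$ — to the non-strictly-totally-disconnected ring $C\langle T\rangle_1$; arc/quasi-pro-\'etale descent by itself would involve a totalization that need not commute with colimits, and it is precisely the bounded cohomological dimension of $C\langle T\rangle_1$ supplied by the preceding Lemma that truncates this totalization to a finite range and makes the commutation go through.
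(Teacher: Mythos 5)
Your proof is correct and follows essentially the same route as the paper: reduce ball-invariance to the test fields $C$ via the preceding corollary, use finitaryness for $M(C)$, and use the cohomological dimension bound on $C\langle T\rangle_1$ (together with left-completeness) to commute evaluation at $C\langle T\rangle_1$ with colimits. The paper compresses exactly this into one line ("the functor $M\mapsto M(C\langle T\rangle_1)$ commutes with all colimits by finite cohomological dimension"), so your write-up is just a spelled-out version of the intended argument.
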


\begin{proof} It suffices to note that the functor $M\mapsto M(C\langle T\rangle_1)$ commutes with all colimits by finite cohomological dimension.
\end{proof}

In particular, the inclusion has a right adjoint. Actually, it also has a left adjoint (and hence the subcategory $\mathcal D_{\mathrm{mot}}^{\mathrm{eff}}$ is also stable under all limits), which we will construct explicitly using a description involving simplices. As our model for the $n$-simplex, we inspire ourselves from the topological model $\{0\leq x_1\leq \ldots\leq x_n\leq 1\}$, and take
\[
\Delta^n = \mathbb B^n
\]
the $n$-dimensional unit ball, with transition maps compatible with the ones in this topological model (which sits inside the $\mathbb R$-points).\footnote{The advantage of this model is that all algebras are monoid algebras, and all transition maps are induced by monoid maps. We will however not really be using this.} We write $A\langle \Delta^n\rangle\cong A\langle T_1,\ldots,T_n\rangle_{1,\ldots,1}$ for the corresponding Banach-$A$-algebra.

\begin{proposition} There is a left adjoint $L_{\mathbb B}$ to the inclusion
\[
\mathcal D_{\mathrm{mot}}^{\mathrm{eff}}(X)\subset \mathcal D_{\mathrm{fin}}(X,\mathbb Z)
\]
and it takes any $M\in \mathcal D_{\mathrm{fin}}(X,\mathbb Z)$ to $L_{\mathbb B}(M)$ with
\[
L_{\mathbb B}(M)(A) = \mathrm{colim}_{[n]\in \Delta^{\mathrm{op}}} M(A\langle\Delta^n\rangle)
\]
for any strictly totally disconnected $A$.
\end{proposition}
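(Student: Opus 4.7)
We will verify, in turn: (a) $L_{\mathbb B}(M)$ is finitary, (b) $L_{\mathbb B}(M)$ is ball-invariant, and (c) when $M$ is already ball-invariant, the unit $M\to L_{\mathbb B}(M)$ coming from the inclusion of the $0$-simplex is an equivalence. Together these assertions say that $L_{\mathbb B}$ is a reflective localization onto $\mathcal D_{\mathrm{mot}}^{\mathrm{eff}}(X)\subset \mathcal D_{\mathrm{fin}}(X,\mathbb Z)$, and hence is the left adjoint to the inclusion as claimed.

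For (a), we observe that for each fixed $n$ the functor $A\mapsto A\langle\Delta^n\rangle=A\langle T_1,\ldots,T_n\rangle_{1,\ldots,1}$ preserves finite products and filtered colimits of Banach rings. This is a direct check on the universal property: adjoining free bounded variables is a left adjoint. Extending $M$ to all Banach $A$-algebras via the quasi-pro-\'etale sheafification of Lemma~\ref{lem:quasiproetalecohomfiltcolim}, which preserves finite products and filtered colimits, we see that $A\mapsto M(A\langle\Delta^n\rangle)$ is finitary; taking a colimit in $\mathcal D(\mathbb Z)$ over $[n]\in\Delta^{\mathrm{op}}$ preserves finitariness, so $L_{\mathbb B}(M)$ is finitary. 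For (c), if $M$ is already ball-invariant then by iteration $M(A)\cong M(A\langle\Delta^n\rangle)$ for each $n$, compatibly with all face and degeneracy maps; the simplicial object $[n]\mapsto M(A\langle\Delta^n\rangle)$ is therefore essentially constant on $M(A)$, and its geometric realization is $M(A)$.

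The heart of the argument is (b). Unfolding the formula, the map $L_{\mathbb B}(M)(A)\to L_{\mathbb B}(M)(A\langle T\rangle_1)$ is the realization of the morphism of simplicial objects in $\mathcal D(\mathbb Z)$
\[
M(A\langle\Delta^\bullet\rangle)\longrightarrow M(A\langle T\rangle_1\langle\Delta^\bullet\rangle)=M(A\langle\Delta^{\bullet+1}\rangle)
\]
induced by $A\hookrightarrow A\langle T\rangle_1$. The substitution $T\mapsto 0$ furnishes a retraction of $A\to A\langle T\rangle_1$, giving a left inverse after realization. So the point is to show that the other composite endomorphism of $M(A\langle\Delta^{\bullet+1}\rangle)$, namely the one induced by the ring endomorphism $T\mapsto 0$ of $A\langle T\rangle_1$, agrees with the identity after passage to geometric realization. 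The required simplicial homotopy is produced by the Banach ring map
\[
h: A\langle T\rangle_1\longrightarrow A\langle T\rangle_1\langle S\rangle_1=A\langle\Delta^2\rangle,\qquad T\longmapsto T\cdot S,
\]
which specializes to the endomorphism $T\mapsto 0$ at $S=0$ and to the identity at $S=1$. Treating $S$ as an extra bounded variable, this reorganizes into a map of simplicial objects $M(A\langle\Delta^{\bullet+1}\rangle)\otimes \Delta[1]\to M(A\langle\Delta^{\bullet+1}\rangle)$ whose two endpoints compute the two endomorphisms in question; on realization they therefore coincide, and $L_{\mathbb B}(M)$ is ball-invariant.

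I expect (a) and (c) to be essentially formal manipulations, while (b) is the only substantive step. The main obstacle there is making precise the extraction of a genuine simplicial homotopy from the algebraic formula $T\mapsto T\cdot S$; this is a well-rehearsed maneuver going back to Suslin's singular complex construction, so I anticipate no serious difficulty beyond careful bookkeeping of the simplicial structure maps coming from the cosimplicial ring $A\langle\Delta^\bullet\rangle$.
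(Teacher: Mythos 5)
Your overall skeleton is the same as the paper's: exhibit $L_{\mathbb B}$ as a reflective localization by checking that it lands in ball-invariant finitary sheaves and that the unit is an equivalence on ball-invariant objects, with the ball-invariance of $L_{\mathbb B}(M)$ coming from the Suslin-type simplicial homotopy $T\mapsto T\cdot S$ (the paper packages the same homotopy slightly differently, first proving $i_0^\ast\simeq i_1^\ast$ on $L_{\mathbb B}(M)(\Delta^1_A)$ via a homotopy of cosimplicial objects and then using the multiplication map $\mathbb B\times\mathbb B\to\mathbb B$). However, there is a genuine gap at the start of your step (b): you identify $L_{\mathbb B}(M)(A\langle T\rangle_1)$ with $\mathrm{colim}_{[n]}M(A\langle T\rangle_1\langle\Delta^n\rangle)$, but the colimit formula is only available on strictly totally disconnected rings — colimits in $\mathcal D_{\mathrm{fin}}(X,\mathbb Z)$ are computed pointwise only there, and $A\langle T\rangle_1$ is never strictly totally disconnected (its Berkovich spectrum is a ball/tree, not profinite). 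As it stands, your homotopy argument proves ball-invariance of the presheaf $L_{\mathbb B}^{\mathrm{pre}}(M)$, not of the finitary arc-sheaf $L_{\mathbb B}(M)$, whose value at $A\langle T\rangle_1$ is computed by descent and could a priori differ from the naive colimit. The missing ingredient is exactly what the paper establishes just before this proposition: ball-invariance of a finitary sheaf may be tested on non-discrete algebraically closed Banach fields $C$, and $C\langle T\rangle_1$ has cohomological dimension at most $3$, so evaluation at $C\langle T\rangle_1$ commutes with all colimits in $\mathcal D_{\mathrm{fin}}$; only after invoking this does $L_{\mathbb B}(M)(C\langle T\rangle_1)=\mathrm{colim}_{[n]}M(C\langle T\rangle_1\langle\Delta^n\rangle)$ hold, and then your homotopy finishes the argument.

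A smaller but related issue occurs in (a): Lemma~\ref{lem:quasiproetalecohomfiltcolim} is stated for $n$-truncated functors, so it does not by itself give finitariness of $A\mapsto M(A\langle\Delta^n\rangle)$ for unbounded $M$ — unboundedness is precisely the reason the definition of $\mathcal D_{\mathrm{fin}}$ restricts to strictly totally disconnected rings. The correct tool is Theorem~\ref{thm:properpushforwardfinitary} applied to the proper $0$-truncated map $f_n:\mathbb B^n_X\to X$ of finite cohomological dimension: it shows that $\intHom(\mathbb B^n,M)\cong f_{n\ast}f_n^\ast M$ is finitary for arbitrary (unbounded) $M$, by proving the truncated case first and then passing to Postnikov limits using the uniform bound on cohomological dimension; stability of $\mathcal D_{\mathrm{fin}}(X,\mathbb Z)$ under colimits then gives that $L_{\mathbb B}(M)$ is finitary, and that the defining colimit is computed pointwise on strictly totally disconnected rings, which is the assertion of the proposition. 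With these two repairs — both available from results proved earlier in the paper — your argument goes through and coincides in substance with the paper's proof.
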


\begin{proof} Using Theorem~\ref{thm:properpushforwardfinitary}, the formula defines an endofunctor $L_{\mathbb B}$ of $\mathcal D_{\mathrm{fin}}(X,\mathbb Z)$ with a transformation $\mathrm{id}\to L_{\mathbb B}$ that is an equivalence on objects of $D_{\mathrm{mot}}^{\mathrm{eff}}(X,\mathbb Z)$. To see that it is indeed the left adjoint, it suffices to see that $L_{\mathbb B}$ takes values in ball-invariant sheaves, and that the natural transformation $L_{\mathbb B}\to L_{\mathbb B}^2$ obtained by applying $L_{\mathbb B}$ to $\mathrm{id}\to L_{\mathbb B}$ is an equivalence; see \cite[Proposition 5.2.7.4]{LurieHTT}.

Note that
\[
L_{\mathbb B}(M)(A) = \mathrm{colim}_{[n]\in \Delta^{\mathrm{op}}} M(A\langle\Delta^n\rangle)
\]
holds also for any analytic Banach ring $A$ of finite cohomological dimension, as then evaluation at $A$ commutes with all colimits in $\mathcal D_{\mathrm{fin}}(-,\mathbb Z)$. To see that $L_{\mathbb B}(M)$ is ball-invariant, we first observe that the two maps
\[
i_0^\ast,i_1^\ast: L_{\mathbb B}(M)(\Delta^1_A)\to L_{\mathbb B}(M)(A)
\]
agree. Indeed, those two maps admit a homotopy constructed from a standard homotopy between the two maps $i_0,i_1: \Delta^\bullet_A\to \Delta^\bullet_{\Delta^1_A}$ of cosimplicial objects. But $i_0^\ast=i_1^\ast$ implies that a sheaf is ball-invariant: Indeed, it implies that for any homotopy $f: \Delta^1_A\times X\to Y$ of maps $X\to Y$, the maps $f_0: X\to Y$ and $f_1: X\to Y$ induce the same map on sections of the sheaf, and this applies in particular to $f_0: \Delta^1_A\to \Delta^1_A$ projecting everything to $0$, and $f_1=\mathrm{id}: \Delta^1_A\to \Delta^1_A$, taking $f$ to be multiplication.

Finally, the map $L_{\mathbb B}\to L_{\mathbb B}^2$ is an equivalence by another homotopy between cosimplicial objects.
\end{proof}

\begin{remark}\label{rem:ballinvariantpresheaves} More generally, on the $\infty$-category of all arc-presheaves of anima (defined on the test category of all uniform Banach rings $A$ over our fixed $X$), one has the subcategory of ball-invariant presheaves, where ball-invariance means that the values on $A$ and $A\langle T\rangle_1$ agree, for all $A$. Then there is a left adjoint $L_{\mathbb B}^{\mathrm{pre}}$ on it, given by the same formula
\[
L_{\mathbb B}^{\mathrm{pre}}(M)(A) = \mathrm{colim}_{[n]\in \Delta^{\mathrm{op}}} M(A\langle\Delta^n\rangle).
\]
The previous proposition then follows by noting that if $M$ is a finitary arc-presheaf, so is $L_{\mathbb B}^{\mathrm{pre}}(M)$; and $L_{\mathbb B}(M)$ is its arc-hypersheafification, which then does not change the value on strictly totally disconnected $A$.

Slightly mysteriously, the functor $L_{\mathbb B}^{\mathrm{pre}}$ defined on presheaves often turns out to have better properties than one might a priori expect; this is at the basis of Voevodsky's theory of presheaves with transfer.
\end{remark}

In the following proposition, we say that $M$ is $\mathbb B$-contractible if there is a map $\mathbb B\to \mathrm{End}(M)$ sending $1$ to $1$ and $0$ to $0$.

\begin{proposition} The functor
\[
L_{\mathbb B}: \mathcal D_{\mathrm{fin}}(X,\mathbb Z)\to \mathcal D_{\mathrm{mot}}^{\mathrm{eff}}(X)
\]
is a Verdier quotient whose kernel is the $\otimes$-ideal generated by all $\mathbb B$-contractible finitary sheaves.

In particular, there is a unique way to simultaneously endow $\mathcal D_{\mathrm{mot}}^{\mathrm{eff}}(X)$ and $L_{\mathbb B}$ with a symmetric monoidal structure.
\end{proposition}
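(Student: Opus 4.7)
The plan is to recognize that $L_{\mathbb B}$, being left adjoint to the fully faithful inclusion $\iota:\mathcal D_{\mathrm{mot}}^{\mathrm{eff}}(X)\hookrightarrow \mathcal D_{\mathrm{fin}}(X,\mathbb Z)$, is automatically a Bousfield (hence Verdier) localization with kernel $\mathcal K := \ker(L_{\mathbb B})$, and then to identify $\mathcal K$ with the $\otimes$-ideal $\mathcal I$ generated by the $\mathbb B$-contractible sheaves $K_Y := \mathrm{cofib}(\mathbb Z_{\mathrm{fin}}[\mathbb B_Y]\to \mathbb Z_{\mathrm{fin}}[Y])$ for $Y\to X$. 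The monoidal conclusion then follows from the standard fact that a localization of a presentable symmetric monoidal $\infty$-category by a $\otimes$-ideal admits a unique symmetric monoidal structure making the localization functor symmetric monoidal.

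First, the inclusion $\mathcal K\subseteq \mathcal I$: by definition, $P\in\mathcal D_{\mathrm{fin}}(X,\mathbb Z)$ is ball-invariant iff it is right-orthogonal to every $K_Y$, since $P(A) = P(A\langle T\rangle_1)$ is precisely the statement that $\mathrm{Map}(\mathbb Z_{\mathrm{fin}}[A],P)\xrightarrow{\sim}\mathrm{Map}(\mathbb Z_{\mathrm{fin}}[\mathbb B_A],P)$. By the standard theory of Bousfield localizations in stable presentable $\infty$-categories, $\mathcal K = {}^\perp\mathcal D_{\mathrm{mot}}^{\mathrm{eff}}(X)$ is therefore the localizing subcategory generated by $\{K_Y\}_{Y/X}$, which sits inside $\mathcal I$.

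The substantive step is the reverse inclusion, which I would establish by showing that $\mathcal K$ is itself a $\otimes$-ideal of $\mathcal D_{\mathrm{fin}}(X,\mathbb Z)$: once this is known, $\mathcal K\supseteq \mathcal I$ is immediate since $\mathcal I$ is the smallest $\otimes$-ideal containing the generators of $\mathcal K$. By the tensor--hom adjunction in $\mathcal D_{\mathrm{fin}}(X,\mathbb Z)$, this amounts to verifying that $\underline{\mathrm{Hom}}(N,P)$ is ball-invariant whenever $P$ is and $N\in \mathcal D_{\mathrm{fin}}(X,\mathbb Z)$ is arbitrary. Since ball-invariance is preserved under arbitrary limits, and $N$ is a colimit of free finitary sheaves $\mathbb Z_{\mathrm{fin}}[Z]$, the question reduces to the case $N = \mathbb Z_{\mathrm{fin}}[Z]$, for which a direct Yoneda-style computation gives $\underline{\mathrm{Hom}}(\mathbb Z_{\mathrm{fin}}[Z],P)(A) = P(Z\times_X A)$. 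The identity $Z\times_X A\langle T\rangle_1 = (Z\times_X A)\langle T\rangle_1$, which holds because $A\mapsto A\langle T\rangle_1 = A\,\hat\otimes_{\mathbb Z}\mathbb Z\langle T\rangle_1$ commutes with base change, then transfers ball-invariance from $P$ to $\underline{\mathrm{Hom}}(\mathbb Z_{\mathrm{fin}}[Z],P)$. This completes the identification $\mathcal K = \mathcal I$; the internal-hom verification is the only non-formal step, everything else being general Bousfield localization theory.
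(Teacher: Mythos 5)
Your first half is fine, and it is a genuine (and legitimate) variant of the paper's argument: the paper identifies the kernel by reading it off from the explicit formula $L_{\mathbb B}(M)(A)=\mathrm{colim}_{[n]\in\Delta^{\mathrm{op}}}M(A\langle\Delta^n\rangle)$, which exhibits $\mathrm{fib}(M\to L_{\mathbb B}M)$ as a colimit of (shifts of) cones of $M\to\intHom(\mathbb B^n,M)$, whereas you identify it as ${}^{\perp}(\{K_Y\}^{\perp})=\mathrm{Loc}(\{K_Y\})$ by orthogonality. For that you should say that $Y$ ranges over strictly totally disconnected rings (so that $\mathbb Z_{\mathrm{fin}}[Y]$ exists), that $\mathbb Z_{\mathrm{fin}}[\mathbb B_Y]$ exists because $A\langle T\rangle_1$ has finite cohomological dimension so evaluation there preserves colimits, and that ball-invariance of finitary sheaves can be tested on such $Y$ (the corollary after the cohomological-dimension lemma); these are fixable omissions.

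The genuine gap is in your ``only non-formal step''. The objects $\mathbb Z_{\mathrm{fin}}[Z]$ are abstract corepresenting objects inside $\mathcal D_{\mathrm{fin}}(X,\mathbb Z)$; they are \emph{not} the free arc-sheaves (which are not known to be finitary --- this is exactly why the paper has to work to show $\mathbb Z_{\mathrm{mot}}[U]$ is finitary). So Yoneda only gives $\intHom(\mathbb Z_{\mathrm{fin}}[Z],P)(A)\simeq\mathrm{Map}(\mathbb Z_{\mathrm{fin}}[A]\otimes\mathbb Z_{\mathrm{fin}}[Z],P)$, and identifying this with $P(Z\times_X A)$ is a K\"unneth-type claim $\mathbb Z_{\mathrm{fin}}[A]\otimes\mathbb Z_{\mathrm{fin}}[Z]\simeq\mathbb Z_{\mathrm{fin}}[A\hat{\otimes}_X Z]$ which you do not prove and which is not formal: $A\hat{\otimes}_X Z$ is not strictly totally disconnected (its residue fields need not be algebraically closed and can have unbounded Galois cohomological dimension), so evaluation there need not preserve colimits and the right-hand corepresenting object need not even exist; and the tensor product on $\mathcal D_{\mathrm{fin}}$ is the one inherited from the arc-topos, where $\mathbb Z_{\mathrm{fin}}[A]$ is not the representable sheaf. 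If you instead read $\mathbb Z_{\mathrm{fin}}[Z]$ as the free arc-sheaf, the internal-Hom formula holds in $\mathcal D(X_{\mathrm{arc}},\mathbb Z)$, but then $K_Y$ is not known to be finitary and your orthogonality description of the kernel, which lives inside $\mathcal D_{\mathrm{fin}}$, no longer applies; also left-orthogonality of $\mathcal K$ is only known against ball-invariant \emph{finitary} targets. The detour is moreover unnecessary: as in the paper, if $M$ is $\mathbb B$-contractible then so is $M\otimes N$ for any finitary $N$ (the contracting homotopy tensors up), so the $\otimes$-ideal generated by the $\mathbb B$-contractibles is just the localizing subcategory they generate, and it lies in $\mathcal K$ because each contractible is killed by $L_{\mathbb B}$ and $\mathcal K$ is closed under colimits. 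Replacing your internal-Hom step by this one-line observation closes the argument; as written, the step is a gap.
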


\begin{proof} It is clear that any $\mathbb B$-contractible finitary sheaf is in the kernel, and that $\mathbb B$-contractible finitary sheaves are stable under tensor products with finitary sheaves. But the explicit description of $L_{\mathbb B}$ shows that the kernel is generated under colimits by the cone of the maps
\[
M\to \intHom(\mathbb B^n,M)
\]
for finitary $M$ and $n\geq 0$; and these are $\mathbb B$-contractible (via multiplication on $\mathbb B^n$).

The second part follows from \cite[Theorem I.3.6]{NikolausScholze}.
\end{proof}

With torsion coefficients prime to the characteristic, \'etale sheaves are ball-invariant. We did not define the \'etale site here, and instead use the condition ``invariance under change of algebraically closed field'' to single out \'etale sheaves inside all finitary arc-sheaves.

\begin{proposition}\label{prop:etalesheavesballinvariant} Let $M\in \mathcal D_{\mathrm{fin}}(X,\mathbb Z/n\mathbb Z)$ for some integer $n$ with $|n|\geq 1$ everywhere on $X$. Assume that $M$ is invariant under change of algebraically closed field, i.e. for any map $C\to C'$ of algebraically closed Banach fields over $X$, the map
\[
M(C)\to M(C')
\]
is an isomorphism. Then $M$ is ball-invariant.
\end{proposition}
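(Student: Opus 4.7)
By the preceding corollary, ball-invariance reduces to showing that $M(C)\to M(C\langle T\rangle_1)$ is an isomorphism for every algebraically closed non-discrete Banach field $C$ over $X$. The argument splits into the archimedean and non-archimedean cases.

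If $C=\mathbb C$, then the restriction of $M$ to the slice $\mathcal M_{\mathrm{arc}}(\mathbb C)\subset X$ lies in $\mathcal D_{\mathrm{fin}}(\mathcal M_{\mathrm{arc}}(\mathbb C),\mathbb Z/n)$, which by Theorem~\ref{thm:archimedeanfinitary} is $\mathcal D(\mathbb Z/n)$; write $M_0=M(\mathbb C)$ for the corresponding complex. For any Banach $\mathbb C$-algebra $A$ the restriction of $M$ to the arc-site of $A$ corresponds, again via Theorem~\ref{thm:archimedeanfinitary}, to the pullback of $M_0$ along $\mathcal M(A)\to\mathrm{pt}$, i.e.~to the constant sheaf $\underline{M_0}$ on $\mathcal M(A)$. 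Applied to $A=\mathbb C\langle T\rangle_1$ with $\mathcal M(A)=\overline{\mathbb D}$ contractible, this gives $M(\mathbb C\langle T\rangle_1)=R\Gamma(\overline{\mathbb D},\underline{M_0})=M_0=M(\mathbb C)$.

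Now suppose $C$ is non-archimedean; write $R=C\langle T\rangle_1$ and let $\mathcal T=\mathcal M(R)$, a contractible tree. The hypothesis $|n|_C\geq 1$ forces $|n|_C=1$, so $n$ is a unit in $\mathcal O_C$, i.e.~coprime to the residue characteristic. Using Lemma~\ref{lem:quasiproetalecover}, choose a quasi-pro-\'etale (hence arc-)cover $\tilde R\to R$ with $\tilde R$ strictly totally disconnected, and set $S=\mathcal M(\tilde R)$, a profinite set surjecting onto $\mathcal T$. Each residue field $\overline{K}_y$ at $y\in S$ is an algebraically closed Banach field over $C$, so invariance under change of algebraically closed field yields canonical isomorphisms $M(C)\xrightarrow{\sim}M(\overline{K}_y)$, functorial in $y$ via the $C$-algebra structures.

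By Lemmas~\ref{lem:quasiproetaleoverstd} and~\ref{lem:quasiproetalesheafprofin}, the restriction of $M$ to strictly totally disconnected $\tilde R$-algebras corresponds to a sheaf on the topological space $S$; the previous paragraph identifies this sheaf, together with its canonical trivialisation of stalks, as the constant sheaf $\underline{M(C)}$. Thus $M(\tilde R)=R\Gamma(S,\underline{M(C)})$, and the same analysis applied to each $k$-fold completed tensor product $\tilde R^{\widehat\otimes_R k}$ (which is again strictly totally disconnected with Berkovich spectrum $S^{\times_\mathcal T k}$) yields $M(\tilde R^{\widehat\otimes_R k})=R\Gamma(S^{\times_\mathcal T k},\underline{M(C)})$. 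Arc-descent along $\tilde R\to R$ identifies $M(R)$ with the totalisation of this cosimplicial object; \v{C}ech descent along the surjection $S\twoheadrightarrow\mathcal T$ of compact Hausdorff spaces rewrites the totalisation as $R\Gamma(\mathcal T,\underline{M(C)})$, and contractibility of $\mathcal T$ gives $M(R)=M(C)$, as required.

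The principal obstacle is showing that $M|_{\tilde R}$ corresponds to the constant sheaf $\underline{M(C)}$ on $S$, rather than merely a sheaf with constant stalks: this relies crucially on the functoriality (in $y\in S$) of the trivialisations $M(C)\xrightarrow{\sim} M(\overline{K}_y)$ provided by the invariance hypothesis, combined with the equivalence between finitary arc-sheaves on $\tilde R$ and ordinary sheaves on the profinite set $S$.
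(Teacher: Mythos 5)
Your reduction to showing $M(C)\to M(C\langle T\rangle_1)$ is an isomorphism, and your treatment of the archimedean case, are fine. The nonarchimedean case, however, has a genuine gap at the step where you claim that the \v{C}ech nerve of the cover $\tilde R\to R=C\langle T\rangle_1$ consists of strictly totally disconnected rings with Berkovich spectra $S^{\times_{\mathcal T}k}$. This is false: by Proposition~\ref{prop:berkovichspectrumcolimit} the spectrum of a completed tensor product only \emph{surjects} onto the fibre product, and here it is genuinely larger, because the residue fields $K(x)$ of $C\langle T\rangle_1$ at points of type (2) and (3) are not algebraically closed. Already in the simplest instance $R=K(x)$ (a single point of $\mathcal T$) with $\tilde R=C(x)$ its completed algebraic closure, one has $S=\mathcal T=\ast$ but $\mathcal M(C(x)\hat\otimes_{K(x)}C(x))\cong G_{K(x)}$, and the descent totalisation computes the continuous Galois cohomology $M(C(x))^{hG_{K(x)}}$, not $M(C(x))$. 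So your totalisation is not the topological \v{C}ech complex of $S\twoheadrightarrow\mathcal T$ with constant coefficients; it carries extra terms coming from $H^1(G_{K(x)},\mathbb Z/n)$, which is nonzero at type (3) points (where it is $\mathbb Z/n$) and at type (2) points (where it is the group of degree-$0$ divisors on $\mathbb P^1_k$ modulo $n$). Invariance under change of algebraically closed field only identifies the stalks $M(C(y))$ with $M(C)$; it says nothing about these Galois contributions, and your argument silently discards them.

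Killing exactly these contributions is the actual content of the proposition, and it is what the paper's proof does: it pushes forward to the topological space $\mathcal M(C\langle T\rangle_1)$, notes that $R^0$ is the constant sheaf on the tree (whose cohomology is concentrated in degree $0$), and then identifies the $R^1$-sheaf explicitly — via Kummer theory, a branch-by-branch description on the finite trees, and a harmonicity/product-formula constraint — and shows it has no sections and no cohomology, ultimately reducing to the vanishing of the cohomology of $j_!\mathbb Z/n$ on $[0,1]$. Your proposal would need to be supplemented by an argument of this kind (or by invoking Berkovich's comparison theorems, as the paper alternatively suggests); as written, it assumes away the hard part.
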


Later we will see that the converse is also true: If $M\in \mathcal D_{\mathrm{fin}}(X,\mathbb Z/n\mathbb Z)$ and $M$ is ball-invariant, then $M$ is invariant under change of algebraically closed field. In particular, with torsion coefficients, the category of ball-invariant sheaves inherits a $t$-structure (which is the usual $t$-structure).

\begin{proof} Ball-invariance is a statement after pullback to algebraically closed Banach fields, so we can assume that $X=\mathcal M(C)$. The assumption on $M$ is then equivalent to the assertion that $M$ is the constant sheaf on the $\mathbb Z/n$-module $M_0=M(C)$. As taking cohomology on $C\langle T\rangle_1$ commutes with arbitrary direct sums, we can assume $M=\mathbb Z/n\mathbb Z$. We can now appeal to a theorem of Berkovich \cite[Theorems 6.3.9, 6.4.1]{BerkovichEtale}, or argue directly as follows. We compute the higher direct images towards $\mathcal M(C\langle T\rangle_1)$. We know that this is concentrated in degrees $0$ and $1$, and in degree $0$ we get the constant sheaf $\mathbb Z/n\mathbb Z$, whose cohomology in turn is just $\mathbb Z/n\mathbb Z$ in degree $0$, as the space is an infinite tree. We need to understand the sheaf in degree $1$, and show that it has no global sections or cohomology. First, we can understand the fibres of the sheaf: These are given by the Galois cohomology groups of the fields $K(x)$. At points of type (1) or (4), this is trivial; at points of type (3), given by $\mathbb Z/n\mathbb Z$; and at points of type (2), given by the reduction modulo $n$ of the group of degree $0$ divisors on $\mathbb P^1_k$, where $k$ is the residue field of $C$. As a sheaf, the description is the following: On any finite tree
\[
\{(x_a)_{a\in A}\mid \forall a,b: x_a\leq \mathrm{max}(x_b,|a-b|_C), |a-b|_C\leq \mathrm{max}(x_a,x_b)\}\subset \prod_{a\in A} [0,1]
\]
(for a finite subset $A\subset \mathcal O_C$), look at the sheaf which locally associates an element of $\mathbb Z/n\mathbb Z$ to any branch of the tree, so that at branching points the value at the upper branch is the sum of the values at the branches below; and the value at any branch meeting the boundary is zero. Now take the filtered colimit of the pullbacks of these sheaves, where pullback maps assign the value zero to any new branches. Using Kummer covers (and a choice of primitive $n$-th root of unity in $C$) one constructs a map from this candidate sheaf to the first pushforward, and then checks on stalks that it is an isomorphism. For the construction of the map, pick any such finite tree, given by some subset $A$ of $\mathcal O_C$. On any branch of the tree, the sheaf is the constant sheaf $\mathbb Z/n\mathbb Z$, and we send a class $k\in \mathbb Z/n\mathbb Z$ to the Kummer cover given by $n$-th roots of the function $(T-a)^k$ (where $a$ is chosen so that the branch of the tree is centered at $a$). At a branch point of the tree, we have the branches below, centered at $a_i$, and with assigned values $k_i\in \mathbb Z/n\mathbb Z$. We take this to the Kummer cover given by $\prod_i (T-a_i)^{k_i}$. On all the branches below, this gives the correct Kummer cover already defined, as when $T$ is close to $a_j$ for $j\neq i$ then
\[
\frac{T-a_i}{a_j-a_i} = 1 + \frac{T-a_j}{a_j-a_i}
\]
is close to $1$ and hence an $n$-th power; while $a_j-a_i\in C$ is also an $n$-th power, so the Kummer cover defined by $T-a_i$ becomes trivial. Thus, the Kummer cover for $\prod_i (T-a_i)^{k_i}$ agrees with the Kummer cover for $(T-a_j)^{k_j}$ when $T$ is close to $a_j$.

Also, at the branch above the branch point, the Kummer cover for $\prod_i (T-a_i)^{k_i}$ becomes equivalent to the one for $(T-a)^{\sum k_i}$ where $a$ is given by any of the $a_i$. Indeed, 
\[
\frac{T-a_i}{T-a} = 1 + \frac{a-a_i}{T-a_i}
\]
is close to $1$ in this case, and hence an $n$-th power. This computation reflects the condition that the value at the upper branch must be the sum of the values at the lower branches.

This constructs the map from this candidate sheaf towards the first pushforward, and one directly checks that at stalks, one gets an isomorphism, yielding thus an explicit description of this first pushforward.

To check vanishing of cohomology of this sheaf, reduce (by filtered colimits) to the sheaves on finite trees, and inductively project them to smaller trees. These projection maps are isomorphisms except at one point in the target, where the preimage is some closed interval $I$. This reduces to the vanishing of all cohomology groups of $j_! \mathbb Z/n\mathbb Z$ on $[0,1]$, for $j: [0,1)\hookrightarrow [0,1]$.
\end{proof}

With rational coefficients, the category of ball-invariant sheaves also inherits a $t$-structure (the so-called homotopy $t$-structure). However, this behaves differently from the $t$-structure with torsion coefficients; for example the Tate twist will sit in degree $-1$ and not in degree $0$.

\begin{proposition}\label{prop:homotopytstructure} The subcategory
\[
\mathcal D_{\mathrm{mot}}^{\mathrm{eff}}(X,\mathbb Q)\subset \mathcal D_{\mathrm{fin}}(X,\mathbb Q)
\]
is stable under truncations.
\end{proposition}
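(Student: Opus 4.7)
The plan is to reduce ball-invariance of $\tau_{\leq 0} M$ to a cohomological vanishing statement on the Berkovich tree, and then exploit the rational vanishing of Galois cohomology together with the tree structure. First, by arc-descent I reduce to the case $X = \mathcal M_{\mathrm{arc}}(A)$ with $A$ strictly totally disconnected. Given $M \in \mathcal D_{\mathrm{mot}}^{\mathrm{eff}}(X, \mathbb Q)$, it suffices by the exact triangle $\tau_{\geq 1} M \to M \to \tau_{\leq 0} M$ to show that $\tau_{\leq 0} M$ is ball-invariant.

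Evaluation at a strictly totally disconnected algebra $B$ is $t$-exact, as such $B$ carry no higher arc-cohomology for finitary sheaves. Hence for any std Banach $A$-algebra $B$ one has
\[
(\tau_{\leq 0} M)(B) = \tau_{\leq 0}(M(B)) = \tau_{\leq 0}(M(B\langle T\rangle_1)),
\]
using ball-invariance of $M$. The desired identity $(\tau_{\leq 0} M)(B) = (\tau_{\leq 0} M)(B\langle T\rangle_1)$ therefore reduces to showing that evaluation at the non-std ring $B\langle T\rangle_1$ commutes with $\tau_{\leq 0}$ for our specific $M$. Checking stalkwise on the profinite $\mathcal M(B)$, we reduce further to $B = C$ an algebraically closed non-discrete Banach field; the archimedean case $C = \mathbb C$ is immediate from Theorem~\ref{thm:archimedeanfinitary}, so assume $C$ nonarchimedean.

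For rational finitary sheaves, by the earlier proposition $N(B') = R\Gamma(\mathcal M(B'), \nu_* N)$ with $\nu_* N$ having stalks $N(K(x))$; here one crucially uses that continuous Galois cohomology of profinite groups on discrete $\mathbb Q$-vector spaces vanishes in positive degrees. The same vanishing yields $\nu_* \tau_{\leq 0} N = \tau_{\leq 0}(\nu_* N)$ as complexes on the compact Hausdorff space $\mathcal M(B')$. Writing $\mathcal T = \mathcal M(C\langle T\rangle_1)$, the problem becomes showing
\[
R\Gamma(\mathcal T, \tau_{\leq 0}(\nu_* M)) \xrightarrow{\ \sim\ } \tau_{\leq 0} R\Gamma(\mathcal T, \nu_* M).
\]

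The main obstacle is to establish that $H^p(\mathcal T, \mathcal H^q(\nu_* M)) = 0$ for all $p > 0$ and all $q$, i.e.\ degeneration of the Leray spectral sequence for $\mathcal T \to \mathrm{pt}$. My plan is to exploit the explicit inverse-limit-of-finite-trees structure of $\mathcal T$ indexed by finite subsets of $\mathcal O_C$, together with the four-type classification of its residue fields, to produce a contracting retraction of $\mathcal T$ along which $\nu_* M$ varies coherently; ball-invariance of $M$ applied to each algebraically closed extension of $C$ controls the behaviour of $\mathcal H^q(\nu_* M)$ along the branches, forcing acyclicity above degree zero. Once this vanishing is in place, both sides of the displayed identity collapse to $\Gamma(\mathcal T, \mathcal H^0(\nu_* M))$, finishing the proof.
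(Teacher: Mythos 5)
Your reductions are fine and essentially parallel the paper's setup: by arc-descent and finitariness one reduces to checking, for a nonarchimedean algebraically closed $C$, that the cohomology sheaves of $M$ have no higher cohomology on $\mathcal M(C\langle T\rangle_1)$ (the $t$-exactness of evaluation on strictly totally disconnected rings, and the stalkwise reduction over $\mathcal M(B)$, are both justified by Theorem~\ref{theorem:finitaryarc} and Theorem~\ref{thm:properpushforwardfinitary}). But the crux of the whole proposition is exactly the vanishing $H^p(\mathcal M(C\langle T\rangle_1),\mathcal H^q M)=0$ for $p>0$, and at that point your proposal stops being a proof: a ``contracting retraction of the tree'' cannot force acyclicity, because contractibility of the base space says nothing about non-constant sheaves --- e.g.\ $j_!\mathbb Q$ on $[0,1)\subset[0,1]$ has nonzero $H^1$, and this is precisely the kind of sheaf that shows up as a cohomology sheaf of a finitary complex (compare the analysis in the proof of Proposition~\ref{prop:etalesheavesballinvariant}). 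Moreover, ball-invariance is a hypothesis on the total complex $M$, not on the individual $\mathcal H^q M$, so saying that it ``controls the behaviour along the branches'' of each $\mathcal H^q(\nu_\ast M)$ is assuming a sheaf-level consequence that is essentially equivalent to what you are trying to prove; no mechanism is given for extracting it.

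The missing idea is purely formal and avoids any geometry of the tree. Rationally there is no Galois cohomology, so evaluation at $C\langle T\rangle_1$ has cohomological dimension $1$; hence the spectral sequence $E_2^{ij}=H^i(C\langle T\rangle_1,\mathcal H^jM)\Rightarrow H^{i+j}(C\langle T\rangle_1,M)$ degenerates into short exact sequences
\[
0\to H^1(C\langle T\rangle_1,\mathcal H^{i-1}M)\to H^i(C\langle T\rangle_1,M)\to H^0(C\langle T\rangle_1,\mathcal H^iM)\to 0 .
\]
Ball-invariance of $M$ identifies the middle term with $H^i(C,M)=H^0(C,\mathcal H^iM)$, and composing the edge map with restriction along the point $T=0$ (which splits the pullback $C\to C\langle T\rangle_1$) shows the edge map is injective; hence the $H^1$-term vanishes and $\mathcal H^iM$ is ball-invariant. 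This counting argument, using only cohomological dimension $\leq 1$ and the section of the ball, is what replaces the unproven acyclicity step in your plan.
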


In Corollary~\ref{cor:homotopytstructure} below, we will see that this is also true with integral coefficients.

\begin{proof} This is a formal consequence of the functor $M\mapsto M(C\langle T\rangle_1)$ having cohomological dimension $1$. Indeed, there is a spectral sequence
\[
E_2^{ij} = H^i(C\langle T\rangle_1,\mathcal H^j M)\Rightarrow H^{i+j}(C\langle T\rangle_1,M)
\]
As $E_2^{ij}$ can be nonzero only for $i=0,1$, this necessarily degenerates, yielding short exact sequences
\[
0\to H^1(C\langle T\rangle_1,\mathcal H^{i-1} M)\to H^i(C\langle T\rangle_1,M)\to H^0(C\langle T\rangle_1,\mathcal H^i M)\to 0.
\]
Ball-invariance of $M$ implies that the middle term is just $H^i(C,M) = H^0(C,\mathcal H^i M)$. Then it follows that the first term must vanish, and $\mathcal H^i M$ is also ball-invariant.
\end{proof}

\subsection{Free motivic sheaves} For ball-invariant sheaves, pullback along smooth maps admits a left adjoint satisfying the projection formula. To prove this, we first handle the case of open subsets of affine space.

\begin{lemma}\label{lem:freemotivicsheafanima} Let $X$ be a small arc-stack and let $U\subset \mathbb A^n_X$ be an open subset for some $n$. Then the free ball-invariant arc-presheaf of anima
\[
L_{\mathbb B}^{\mathrm{pre}}(U): A\mapsto \mathrm{colim}_{[n]\in \Delta^{\mathrm{op}}} U(A\langle\Delta^n\rangle)
\]
is finitary.
\end{lemma}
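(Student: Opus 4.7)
My plan is to verify finitaryness directly from the explicit colimit formula, using that the simplicial direction compensates for the discrepancy between Banach and set-theoretic filtered colimits.

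Let $A = \mathrm{colim}_i^{\mathrm{Ban}} A_i$ be a filtered Banach colimit of strictly totally disconnected Banach rings over $X$. I want to show $\mathrm{colim}_i L_{\mathbb B}^{\mathrm{pre}}(U)(A_i) \to L_{\mathbb B}^{\mathrm{pre}}(U)(A)$ is an equivalence. Commuting the colimits over $i$ and $[k]\in \Delta^{\mathrm{op}}$ reduces this to showing that the map of simplicial anima
\[
\bigl[k \mapsto \mathrm{colim}_i U(A_i\langle\Delta^k\rangle)\bigr] \longrightarrow \bigl[k \mapsto U(A\langle\Delta^k\rangle)\bigr]
\]
induces an equivalence on geometric realizations. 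Since $(-)\mathbin{\hat{\otimes}}_{\mathbb Z}\mathbb Z\langle\Delta^k\rangle$ is a left adjoint, one has $A\langle\Delta^k\rangle = \mathrm{colim}_i^{\mathrm{Ban}} A_i\langle\Delta^k\rangle$; but this Banach filtered colimit is strictly larger than the naive colimit of sets, since generic elements are only Cauchy-approximable from the $A_i\langle\Delta^k\rangle$. Consequently the levelwise map is injective but typically not surjective, so one cannot conclude by raw commutation.

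Next I would exhibit the missing surjection up to simplicial homotopy. Given $\vec t \in U(A\langle\Delta^k\rangle)\subset A\langle\Delta^k\rangle^n$ and $\epsilon>0$, the infimum-norm description of filtered Banach colimits furnishes an approximation $\vec t^{(\epsilon)} \in A_{i_0}\langle\Delta^k\rangle^n$ with $|\vec t - \vec t^{(\epsilon)}|_{A\langle\Delta^k\rangle} < \epsilon$. Strictly totally disconnected rings are uniform, hence so are the Tate algebras $A_i\langle\Delta^k\rangle$; Banach and spectral norms thus agree there, and combining with the infimum property we can arrange, for $\epsilon$ small and $j\geq i_0$ large, that the open inequalities defining $U$ hold spectrally over $A_j\langle\Delta^k\rangle$, placing $\vec t^{(\epsilon)}$ inside $U(A_j\langle\Delta^k\rangle)$. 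To identify $\vec t$ and $\vec t^{(\epsilon)}$ in the realization, I would take the straight-line homotopy
\[
H := (1-S)\vec t + S\vec t^{(\epsilon)} \in A\langle\Delta^{k+1}\rangle^n,
\]
where $S$ is the new ball coordinate; its faces at $S=0$ and $S=1$ recover $\vec t$ and $\vec t^{(\epsilon)}$. Submultiplicativity gives $|H-\vec t|_{\mathrm{spec}} \leq |S|_{\mathrm{spec}} \cdot \epsilon \leq \epsilon$, and a Lipschitz estimate for the polynomials defining $U$ on a bounded region—valid because $U$ is open and $\mathcal M(A\langle\Delta^{k+1}\rangle)$ is compact—shows $H \in U(A\langle\Delta^{k+1}\rangle)$ for $\epsilon$ sufficiently small.

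Performing this approximation-plus-homotopy procedure coherently at every simplicial degree, in the relative form that preserves prescribed boundary data, then yields the desired equivalence of realizations. The main obstacle is precisely the Lipschitz step: one must uniformly bound the polynomial conditions defining $U$ over the Berkovich polydisc so as to guarantee that the straight-line homotopy remains inside $U$. This is a manifestation of the standard principle that maps from compacta into opens admit small perturbations inside the open, but in this analytic setting it must be paired with careful bookkeeping of spectral versus Banach norms on Tate algebras over strictly totally disconnected bases; once that control is in hand the argument is essentially formal.
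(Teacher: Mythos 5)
Your geometric intuition is the right one --- the discrepancy between the naive and the completed filtered colimit is of size $\epsilon$, and straight-line (ball) homotopies absorb $\epsilon$-perturbations inside the open $U$ --- but as written the proof has a genuine gap at exactly the step you defer: upgrading ``every $k$-simplex of $U(A\langle\Delta^k\rangle)$ is homotopic to one coming from some $A_j\langle\Delta^k\rangle$'' to an equivalence of geometric realizations. For that you need the relative, boundary-preserving statement: given a simplex whose faces have already been approximated (with previously chosen $\epsilon$'s, indices $j$, and homotopies), you must produce an approximation and a homotopy extending the given ones on $\partial\Delta^k$, coherently in all degrees and compatibly with degeneracies; this is the actual content of the lemma and it is not a routine bookkeeping exercise. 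Note also that your claim that the levelwise map $\mathrm{colim}_i U(A_i\langle\Delta^k\rangle)\to U(A\langle\Delta^k\rangle)$ is injective is doubtful (the Banach colimit identifies elements whose difference has infimum seminorm $0$, which need not vanish at any finite stage), so you cannot reduce to a surjectivity-up-to-homotopy statement anyway --- only the full relative argument would do.

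The paper sidesteps this coherence problem by a structural device rather than a hand-made simplicial approximation: working over an affine nonarchimedean base, the open balls $\mathbb D^n(0,<r)$ are subgroups of $\mathbb A^n$ under addition, $U$ is an increasing union of opens $U_i$ stable under translation by $\mathbb D^n(0,<r_i)$, and $U_i$ is $\mathbb B$-homotopy equivalent to the presheaf quotient $U_i/\mathbb D^n(0,<r_i)$ (each term of the bar resolution is a ball-bundle over $U_i$). That quotient is visibly finitary --- modding out by the open ball is precisely what makes the infimum-norm approximation an actual equality --- and $L^{\mathrm{pre}}_{\mathbb B}$ of a filtered colimit of finitary presheaves is finitary. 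In other words, the group action and its bar construction organize all of your $\epsilon$-homotopies coherently for free; this is the ingredient your argument is missing (the archimedean case is then handled separately, by a simplicial-approximation variant). If you want to salvage your direct approach, you would essentially have to reconstruct this relative extension mechanism by hand, at which point you are better off quotienting by the ball subgroup from the start.
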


\begin{proof} We can assume that $X=\mathcal M_{\mathrm{arc}}(A)$ is affine. Assume first that $X$ is nonarchimedean. Then the open balls $\mathbb D^n_X(0,<r)\subset \mathbb A^n_X$ around $0$ of radius $r$ are subgroups (with respect to coordinatewise addition). Moreover, $U$ is an increasing union of open subspaces $U_i$ that are invariant under $\mathbb D^n_X(0,<r_i)$ for some $r_i>0$. Then $U_i$ is homotopy-equivalent to the presheaf quotient $U_i/\mathbb D^n_X(0,<r_i)$ (more precisely, to each term in the simplicial presheaf computing it, and hence to its geometric realization), and this quotient is already finitary.

In general, we argue as follows to prove finitarity. Consider any filtered colimit of Banach rings $A=\mathrm{colim}_i A_i$ over $X$. We need to see that the map of simplicial sets
\[
\mathrm{colim}_i U(A_i\langle\Delta^\bullet\rangle)\to U(A\langle\Delta^\bullet\rangle)
\]
induces an equivalence of anima, under geometric realization. Using Kan's $\mathrm{Ex}^\infty$-functor, this can be checked by mapping simplicial subdivisions $\mathrm{sd}^\ell \partial\Delta^n$ of $\partial\Delta^n$'s into these simplicial sets, and checking that it induces homotopy equivalences. The key now is that any two maps that are sufficiently close are automatically $\mathbb B$-homotopic, via the straight-line interpolation in $U\subset \mathbb A^n$.

More precisely, given any finite simplicial set $S$ with a map $f: S\to U(A\langle \Delta^\bullet\rangle)$ as well as a sub-simplicial set $S'\subset S$ with a sequence of maps $g'_n: S'\to U(A_{i_n}\langle\Delta^\bullet\rangle)$ such that the $g'_n$ converge to $f|_{S'}$, we can always find some $j_n\geq i_n$ and an extension of $g'_n$ to a map $g_n: S\to U(A_{j_n}\langle\Delta^\bullet\rangle)$ such that the $g_n$ converge to $f$. Indeed, this can as usual be done by induction on simplices, reducing to $S=\Delta^k$ and $S'=\partial\Delta^k$, where it follows from the map
\[
\mathrm{colim}_i \mathrm{ker}(A_i\langle\Delta^k\rangle\to A_i\langle\partial\Delta^k\rangle)\to \mathrm{ker}(A\langle\Delta^k\rangle\to A\langle\partial\Delta^k\rangle)
\]
having dense image (and $U$ being open in affine space). Thus, given any map $f: \mathrm{sd}^\ell\partial\Delta^k\to U(A\langle \Delta^\bullet\rangle)$, we can find a sequence $g_n$ of maps $\mathrm{sd}^\ell\partial\Delta^k\to U(A_{i_n}\langle\Delta^\bullet\rangle)$ so that the $g_n$ converge to $f$. In particular, for sufficiently large $n$, the straight-line interpolation from $f$ to $g_n$ defines a $\mathbb B$-homotopy in maps from $\mathrm{sd}^\ell\partial\Delta^k$ to $U(A\langle \Delta^\bullet\rangle)$, showing surjectivity of
\[
\mathrm{colim}_i U(A_i\langle\Delta^\bullet\rangle)\to U(A\langle\Delta^\bullet\rangle)
\]
on homotopy groups. For injectivity on homotopy groups, we argue similarly, using maps from $\mathrm{sd}^\ell(\Delta^1\times \partial\Delta^k)$ with fixed restriction to $\mathrm{sd}^\ell(\{0,1\}\times \partial\Delta^k)$.
\end{proof}

\begin{proposition}\label{prop:freemotivicsheaves} Let $X$ be a small arc-stack and let $U\subset \mathbb A^n_X$ be an open subset for some $n$. Then the free ball-invariant arc-sheaf on $U$ with values in $\mathcal D(\mathbb Z)$ is finitary, defining an object
\[
\mathbb Z_{\mathrm{mot}}[U]\in \mathcal D_{\mathrm{mot}}^{\mathrm{eff}}(X)
\]
representing the functor $M\mapsto M(U)$. Moreover, the formation of $\mathbb Z_{\mathrm{mot}}[U]$ commutes with base change and sends fibre products over $X$ to tensor products.
\end{proposition}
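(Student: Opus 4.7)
The plan is to realize $\mathbb Z_{\mathrm{mot}}[U]$ as the arc-hypersheafification of $L_{\mathbb B}^{\mathrm{pre}}(\mathbb Z[U])$, where $\mathbb Z[U]$ is the free $\mathcal D(\mathbb Z)$-valued arc-presheaf on $U$, and to deduce finitaryness from the anima-level statement Lemma~\ref{lem:freemotivicsheafanima}. Everything else will then follow formally from chasing left adjoints and symmetric monoidal structures.

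I would first observe that since $\mathbb Z[-]\colon \mathrm{Ani}\to \mathcal D(\mathbb Z)$ is a symmetric monoidal left adjoint, and $L_{\mathbb B}^{\mathrm{pre}}$ is defined pointwise by a simplicial colimit (Remark~\ref{rem:ballinvariantpresheaves}), for any strictly totally disconnected $A$ one has
\[
L_{\mathbb B}^{\mathrm{pre}}(\mathbb Z[U])(A) = \mathrm{colim}_{[n]\in \Delta^{\mathrm{op}}} \mathbb Z\bigl[U(A\langle \Delta^n\rangle)\bigr] = \mathbb Z\bigl[L_{\mathbb B}^{\mathrm{pre}}(U)(A)\bigr].
\]
By Lemma~\ref{lem:freemotivicsheafanima}, the presheaf $L_{\mathbb B}^{\mathrm{pre}}(U)$ of anima is finitary; composing with the colimit-preserving functor $\mathbb Z[-]$ pointwise, the $\mathcal D(\mathbb Z)$-valued presheaf $L_{\mathbb B}^{\mathrm{pre}}(\mathbb Z[U])$ still commutes with filtered colimits and finite products on strictly totally disconnected Banach rings. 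By Theorem~\ref{theorem:finitaryarc}, it is then already a hypercomplete arc-sheaf, so its arc-hypersheafification is unchanged on strictly totally disconnected $A$. I define $\mathbb Z_{\mathrm{mot}}[U]$ to be this arc-hypersheafification: it is finitary by construction, ball-invariant by the transformation $\mathrm{id}\to L_{\mathbb B}^{\mathrm{pre}}$ landing in ball-invariant objects, and hence lies in $\mathcal D_{\mathrm{mot}}^{\mathrm{eff}}(X)$.

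The representability assertion is an adjunction chase. Since $\mathbb Z_{\mathrm{mot}}[U]$ is obtained by applying the composite left adjoint (free $\mathcal D(\mathbb Z)$-valued presheaf, then arc-hypersheafification, then $L_{\mathbb B}$) to the representable presheaf $U$, one has
\[
\mathrm{Map}_{\mathcal D_{\mathrm{mot}}^{\mathrm{eff}}(X)}(\mathbb Z_{\mathrm{mot}}[U],M) = \mathrm{Map}_{\mathrm{PreSh}}(U,M) = M(U)
\]
for any $M\in \mathcal D_{\mathrm{mot}}^{\mathrm{eff}}(X)$, where the last equality is the universal property of the free $\mathcal D(\mathbb Z)$-valued presheaf on the representable $U$. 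Base change along any $f\colon X'\to X$ is built into the construction: the formula for $L_{\mathbb B}^{\mathrm{pre}}(\mathbb Z[U])$ is visibly compatible with pullback because $f^\ast$ commutes with pointwise colimits, and arc-hypersheafification commutes with pullback as well.

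Finally, the fibre-products-to-tensor-products statement follows by factoring the construction into a composition of symmetric monoidal left adjoints: the functor $Y\mapsto \mathbb Z[Y]$ from arc-presheaves of anima (cartesian monoidal) to $\mathcal D(\mathbb Z)$-valued presheaves sends products to tensor products because $\mathbb Z[-]\colon \mathrm{Ani}\to \mathcal D(\mathbb Z)$ does; arc-hypersheafification is symmetric monoidal; and $L_{\mathbb B}$ is symmetric monoidal by the preceding proposition. Applying all three to the isomorphism $U\times_X V$ of arc-presheaves yields $\mathbb Z_{\mathrm{mot}}[U\times_X V]\cong \mathbb Z_{\mathrm{mot}}[U]\otimes_{\mathrm{mot}}\mathbb Z_{\mathrm{mot}}[V]$. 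The main obstacle in the whole argument is preservation of finitaryness under the simplicial bar construction; that rests entirely on Lemma~\ref{lem:freemotivicsheafanima}, after which the rest of the proof is a formal consequence of adjunctions and symmetric monoidality.
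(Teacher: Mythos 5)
There is a genuine gap, and it sits at the heart of your argument. You claim that because $L_{\mathbb B}^{\mathrm{pre}}(\mathbb Z[U])(A)=\mathbb Z[L_{\mathbb B}^{\mathrm{pre}}(U)(A)]$ and $L_{\mathbb B}^{\mathrm{pre}}(U)$ is finitary (Lemma~\ref{lem:freemotivicsheafanima}), the $\mathcal D(\mathbb Z)$-valued presheaf $L_{\mathbb B}^{\mathrm{pre}}(\mathbb Z[U])$ ``commutes with filtered colimits and finite products'' and is therefore already a hypercomplete arc-sheaf by Theorem~\ref{theorem:finitaryarc}. The finite-products part is false: the pointwise free $\mathbb Z$-module functor sends products of anima to tensor products, so for $A=A_1\times A_2$ one gets
\[
\mathbb Z\bigl[L_{\mathbb B}^{\mathrm{pre}}(U)(A_1)\times L_{\mathbb B}^{\mathrm{pre}}(U)(A_2)\bigr]\;\simeq\;\mathbb Z\bigl[L_{\mathbb B}^{\mathrm{pre}}(U)(A_1)\bigr]\otimes_{\mathbb Z}\mathbb Z\bigl[L_{\mathbb B}^{\mathrm{pre}}(U)(A_2)\bigr],
\]
not the direct sum that the sheaf condition for the cover $\{A_1,A_2\}$ of $A_1\times A_2$ requires (already $U=X$, $A_1=A_2=C$ gives $\mathbb Z$ versus $\mathbb Z\oplus\mathbb Z$). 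So Theorem~\ref{theorem:finitaryarc} does not apply, your presheaf is not an arc-sheaf, and arc-hypersheafification genuinely changes its values even on strictly totally disconnected rings. Everything downstream collapses with this step: you have not shown that the sheafification is finitary, nor that its values are the ones you wrote down, nor that it is ball-invariant --- note that sheafification of a ball-invariant presheaf need not be ball-invariant (this is exactly the delicate point that forces the machinery of Theorem~\ref{thm:ballinvariantpresheaftransferarcsheafification} with transfers), so ``ball-invariant by the transformation $\mathrm{id}\to L_{\mathbb B}^{\mathrm{pre}}$'' is not an argument.

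This is precisely why the paper stays at the level of anima (and, in Theorem~\ref{thm:freemotivicsheaves}, free commutative monoids plus group completion) as long as possible: there the analogous functor \emph{is} product-preserving, since $U$ is a sheaf and $\Delta^{\mathrm{op}}$-colimits are sifted, so Lemma~\ref{lem:freemotivicsheafanima} combined with Theorem~\ref{theorem:finitaryarc} has real force, and the passage to $\mathcal D(\mathbb Z)$-coefficients is then performed so that one never has to control the arc-sheafification of a naive pointwise $\mathbb Z$-linearization. If you want to keep your formal framework, the correct definition of $\mathbb Z_{\mathrm{mot}}[U]$ is the reflection of the free arc-sheaf $\mathbb Z_{\mathrm{arc}}[U]$ into ball-invariant arc-sheaves; then ball-invariance and the universal property $\mathrm{Map}(\mathbb Z_{\mathrm{mot}}[U],M)=M(U)$ are indeed formal, and the symmetric monoidality/pullback compatibility argument you give at the end matches the paper's. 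But the actual content of the proposition --- that this reflection is \emph{finitary}, hence lands in $\mathcal D_{\mathrm{mot}}^{\mathrm{eff}}(X)$ --- is exactly what your argument was supposed to supply and does not; you need to deduce it from the anima-level Lemma~\ref{lem:freemotivicsheafanima} (as the paper does) rather than from the non-sheaf presheaf $A\mapsto \mathbb Z[L_{\mathbb B}^{\mathrm{pre}}(U)(A)]$.
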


We will later give a different, more explicit description.

\begin{proof} This follows from the previous lemma by passing from free arc-sheaves of anima to free arc-sheaves with values in $\mathcal D(\mathbb Z)$.

For the final statement, we note that if $\mathcal D_{\mathbb B}(X_{\mathrm{arc}},\mathbb Z)\subset \mathcal D(X_{\mathrm{arc}},\mathbb Z)$ denotes the full subcategory of ball-invariant objects (where ball-invariance means that the values on $Y$ and $Y\times \mathbb B$ agree, for all $Y$ over $X$), then the left adjoint to the inclusion defines a symmetric monoidal functor commuting with all pullbacks. Indeed, $\mathcal D_{\mathbb B}(X_{\mathrm{arc}},\mathbb Z)$ is the symmetric monoidal Verdier quotient of $\mathcal D(X_{\mathrm{arc}},\mathbb Z)$ by the $\otimes$-ideal generated by the cone of $\mathbb Z[\mathbb B]\to Z\mathbb Z$; and the commutation with pullbacks is left adjoint to the observation that pushforward preserves ball-invariance.

Now the previous lemma says that when applied to $U$ as in the statement, this left adjoint takes values in $\mathcal D_{\mathrm{mot}}^{\mathrm{eff}}(X)\subset \mathcal D_{\mathbb B}(X_{\mathrm{arc}},\mathbb Z)$. This subcategory is stable under tensor products and pullbacks, giving the desired statement.
\end{proof}

\begin{proposition}\label{prop:freemotivicsheavesgenerate} Let $X=\mathcal M_{\mathrm{arc}}(A)$ be analytic and of finite cohomological dimension. Then $\mathcal D_{\mathrm{mot}}^{\mathrm{eff}}(X)$ is generated by the objects $\mathbb Z_{\mathrm{mot}}[U]$ where $U$ ranges over open subsets of finite-dimensional affine spaces over $X$.
\end{proposition}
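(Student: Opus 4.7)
The strategy is to prove the equivalent orthogonality statement: if $M \in \mathcal{D}_\mathrm{mot}^\mathrm{eff}(X)$ satisfies $M(U) = 0$ (in all degrees) for every open $U \subset \mathbb{A}^n_X$ and every $n$, then $M = 0$. By Proposition~\ref{prop:freemotivicsheaves}, $\mathrm{Hom}(\mathbb{Z}_\mathrm{mot}[U][k], M) = H^{-k}(M(U))$, so this is equivalent to the generation claim.

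I would work via the standard-$t$-structure cohomology sheaves $\mathcal{H}^k M$ on the arc-site (which lie in $\mathcal{D}_\mathrm{fin}(X,\mathbb{Z})$ by its stability under truncation). Inductively assume $\mathcal{H}^j M = 0$ for $j < k$; the hypercohomology spectral sequence $E_2^{pq} = H^p(U, \mathcal{H}^q M) \Rightarrow H^{p+q}(M(U))$ then isolates position $(0, k)$ in total degree $k$, with incoming differentials originating in negative $p$ and outgoing ones landing in rows $q < k$. This gives $(\mathcal{H}^k M)(U) = H^k(M(U)) = 0$ for every such open $U$. Write $\mathcal{F} := \mathcal{H}^k M$: a $0$-truncated, finitary, ball-invariant arc-sheaf vanishing on all opens of finite-dimensional affine spaces over $X$.

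The geometric input is a two-step completed filtered colimit presentation of algebraically closed non-discrete Banach fields. First, any such $C$ over $A$ writes as $C = \mathrm{colim}^{\mathrm{Ban}}_{(\underline{c},\underline{r})} K(y_{\underline{c},\underline{r}})$ over finite tuples $\underline{c} = (c_1,\ldots,c_n) \in C^n$ and rational radii $r_i \geq |c_i|_C$, where $y_{\underline{c},\underline{r}} \in \mathcal{M}(A\langle T_1/r_1,\ldots,T_n/r_n\rangle) \subset \mathbb{A}^n_A$ is the image of the point of $C$ under $T_i \mapsto c_i$: the system is filtered by concatenation of tuples, and any $c \in C$ lies in $K(y_{(c),(|c|+\varepsilon)})$, so the Banach completion of the union is $C$. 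Second, each $K(y)$ for $y \in \mathcal{M}(\mathbb{A}^n_A)$ is itself $\mathrm{colim}^{\mathrm{Ban}}_{V \ni y} \mathcal{O}(V)$ over affinoid neighborhoods of $y$ in $\mathbb{A}^n_A$; for translated polydisc neighborhoods $V = \{|T_i - a_i| \leq s_i\}$, ball-invariance gives $\mathcal{F}(\mathcal{O}(V)) = \mathcal{F}(A) = 0$ (the latter vanishing by the $n=0$ instance of the hypothesis), and a standard exhaustion argument for more complex affinoids $V$ (sandwiched between their strict open neighborhoods in $\mathbb{A}^n_A$, on which $\mathcal{F}$ already vanishes) reduces to this case.

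By Lemma~\ref{lem:quasiproetalecohomfiltcolim}, applied to the $0$-truncated $\mathcal{F}$, the extension of $\mathcal{F}$ to uniform Banach $A$-algebras commutes with these filtered colimits: so $\mathcal{F}(K(y)) = \mathrm{colim}_V \mathcal{F}(\mathcal{O}(V)) = 0$ and then $\mathcal{F}(C) = \mathrm{colim}_{(\underline{c},\underline{r})} \mathcal{F}(K(y_{\underline{c},\underline{r}})) = 0$ for every such $C$. For strictly totally disconnected $B$, the profinite $\mathcal{M}(B)$ has cohomological dimension zero and $\mathcal{F}(B)$ is detected by the stalks $\mathcal{F}(K(x))$ at the algebraically closed residue fields, all zero; hence $\mathcal{F} = 0$. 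By the induction, $\mathcal{H}^k M = 0$ for all $k$, and left-completeness of the replete arc-topos (as used in the proof of Theorem~\ref{thm:properpushforwardfinitary}) then forces $M = 0$. The main obstacle is precisely the bridge from Berkovich-open subsets $U$ (where the hypothesis lives) to the affinoid algebras $\mathcal{O}(V)$ appearing naturally in the residue-field colimit description; this is enabled by ball-invariance together with the richness of the class of opens of $\mathbb{A}^n_A$ (translated open polydiscs, $\mathbb{G}_m$, and their products), which forces enough identifications $\mathcal{F}(V) = \mathcal{F}(U)$.
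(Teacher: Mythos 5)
The geometric core of your argument---pass from opens to points by writing the residue field at a point of $\mathbb A^n_A$ as a completed filtered colimit over its neighborhoods, then write an arbitrary algebraically closed non-discrete $C$ over $A$ as a completed filtered colimit of such residue fields, and detect vanishing on strictly totally disconnected rings stalkwise---is essentially the paper's proof. The genuine gap is in your reduction to cohomology sheaves: the induction ``assume $\mathcal H^jM=0$ for $j<k$'' has no base case. Objects of $\mathcal D_{\mathrm{mot}}^{\mathrm{eff}}(X)$ need not be bounded below (the subcategory is stable under all colimits), and the hypothesis $M(U)=0$ does not pass to the truncations $\tau^{\geq n}M$, so there is no lowest nonvanishing $\mathcal H^kM$ to start from. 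Without the vanishing of the rows $q<k$, the vanishing of the abutment $H^\ast(M(U))$ tells you nothing about $E_2^{0,k}=H^0(U,\mathcal H^kM)$, which could be killed by differentials into $H^r(U,\mathcal H^{k-r+1}M)$. Proving the orthogonality statement only for bounded-below $M$ is strictly weaker than the proposition, since generation is a statement about the whole right orthogonal.

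The repair is to drop the truncation layer and run your colimit argument on the full complex; this is exactly what the finite-cohomological-dimension hypothesis is for, and it is how the paper argues. For a finitary $M$, the value $M(K(y))$ at the residue field of a point $y$ of (a bounded part of) $\mathbb A^n_A$ is the stalk at $y$ of the pushforward to the compact Hausdorff space, hence the filtered colimit of $M(U)$ over open neighborhoods $U\ni y$; and because the cohomological dimensions of these $U$, of the finite extensions, and of the fields occurring in the transfinite colimit up to $C$ are uniformly bounded, evaluation commutes with these colimits even for unbounded $M$. This also makes your polydisc/ball-invariance step and the vague ``standard exhaustion argument'' for general affinoid neighborhoods unnecessary: note that a general point of $\mathbb A^n_A$ (e.g.\ a Gauss-type point) has no basis of translated polydisc neighborhoods, so as written that step would in any case have to fall back on the same stalk-of-pushforward identification. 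Your two-step colimit presentation of $C$ through the fields $K(y_{\underline c,\underline r})$ is fine and is a mild variant of the paper's reduction to fields of finite topological transcendence degree.
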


\begin{proof} We have to see that if $M\in \mathcal D_{\mathrm{mot}}^{\mathrm{eff}}(X)$ and $M(U)=0$ for all such $U$, then $M=0$. Note first that the condition implies that for all closed bounded subsets $Z\subset \mathbb A^n_A$, one has $M(Z)=0$, by passing to filtered colimits over open neighborhoods $U$ of $Z$ (noting that their cohomological dimension is uniformly bounded). We will be applying this to the particular case where $Z$ is a point: This case means that for any Banach field $K$ with map $A\to K$ such that the corresponding map $K(x)\to K$ is topologically finitely generated, one has $M(K)=0$. Here $A\to K(x)$ is the point of $\mathcal M(A)$ over which $K$ factors. But then the same statement applies to any finite extension of $K$, and passing to the filtered colimit, also to its completed algebraic closure (as again, the cohomological dimension is bounded in this system). To see that $M=0$, it suffices to evaluate at complete algebraically closed Banach fields over $A$, but writing any such as a completed filtered colimits of ones of finite topological transcendence degree, we reduce to the assertion already established.
\end{proof}

\begin{theorem} Let $f: Y\to \mathbb A^n_X\to X$ be a composite of a finitary map and projection from a finite-dimensional affine space over $X$. Then
\[
f^\ast: \mathcal D_{\mathrm{mot}}^{\mathrm{eff}}(X)\to \mathcal D_{\mathrm{mot}}^{\mathrm{eff}}(Y)
\]
admits a left adjoint
\[
f_\sharp: \mathcal D_{\mathrm{mot}}^{\mathrm{eff}}(Y)\to \mathcal D_{\mathrm{mot}}^{\mathrm{eff}}(X)
\]
that commutes with base change and satisfies the projection formula.
\end{theorem}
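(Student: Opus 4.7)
The first observation is that $f$ is itself finitary: the projection $\mathbb A^n_X \to X$ is finitary because $\mathbb A^n(A) = A^n$ commutes with filtered colimits of Banach rings, and composites of finitary maps are finitary by the lemma above. Hence the preceding proposition supplies a left adjoint $f_\sharp^{\mathrm{fin}}: \mathcal D_{\mathrm{fin}}(Y,\mathbb Z) \to \mathcal D_{\mathrm{fin}}(X,\mathbb Z)$ to $f^\ast$, satisfying the projection formula at the finitary level. Base change $g^\ast f_\sharp^{\mathrm{fin}} \cong (f')_\sharp^{\mathrm{fin}}(g')^\ast$ holds because on the underlying arc-topos $f_\sharp$ is always stable under base change (slices commute with fibre products), and the identity restricts to the finitary subcategory since finitary maps are themselves stable under pullback.

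The rest of the proof consists of descending all of this through the Verdier quotient $L_{\mathbb B}: \mathcal D_{\mathrm{fin}}(-,\mathbb Z) \to \mathcal D_{\mathrm{mot}}^{\mathrm{eff}}(-)$. The engine is the observation that $f^\ast$ preserves ball-invariance---indeed, $f^\ast$ is pullback between slices of the arc-topos and thus commutes with the construction $(-) \times \mathbb B$. By adjunction, this forces $f_\sharp^{\mathrm{fin}}$ to carry $L_{\mathbb B}$-acyclic objects to $L_{\mathbb B}$-acyclic objects: if $\mathrm{Hom}(X, f^\ast M) = 0$ for every ball-invariant $M$, then $\mathrm{Hom}(f_\sharp^{\mathrm{fin}} X, M) = \mathrm{Hom}(X, f^\ast M) = 0$ for every such $M$. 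Consequently $f_\sharp^{\mathrm{fin}}$ descends through the Verdier quotient to a functor $f_\sharp: \mathcal D_{\mathrm{mot}}^{\mathrm{eff}}(Y) \to \mathcal D_{\mathrm{mot}}^{\mathrm{eff}}(X)$, concretely computed as $f_\sharp = L_{\mathbb B} \circ f_\sharp^{\mathrm{fin}}$ precomposed with the inclusion.

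The adjunction $f_\sharp \dashv f^\ast$ on effective motives is then immediate, as for effective $N$ the target $f^\ast N$ is already ball-invariant, so $\mathrm{Hom}(L_{\mathbb B} f_\sharp^{\mathrm{fin}} M, N) = \mathrm{Hom}(f_\sharp^{\mathrm{fin}} M, N) = \mathrm{Hom}(M, f^\ast N)$. Base change descends because the same preservation-of-ball-invariance argument shows $g^\ast$ commutes with $L_{\mathbb B}$. For the projection formula, recall that the effective tensor product is $\otimes^{\mathrm{eff}} = L_{\mathbb B}(- \otimes^{\mathrm{fin}} -)$; combining the finitary projection formula $f_\sharp^{\mathrm{fin}}(M \otimes^{\mathrm{fin}} f^\ast N) \cong f_\sharp^{\mathrm{fin}} M \otimes^{\mathrm{fin}} N$ with the general principle that, since $L_{\mathbb B}$ is a symmetric monoidal Bousfield localization, one has $L_{\mathbb B}(L_{\mathbb B}(X) \otimes^{\mathrm{fin}} Y) \cong L_{\mathbb B}(X \otimes^{\mathrm{fin}} Y)$ and analogously $L_{\mathbb B} \circ f_\sharp^{\mathrm{fin}}(L_{\mathbb B}(-)) \cong L_{\mathbb B} \circ f_\sharp^{\mathrm{fin}}(-)$, both sides of the desired identity reduce to $L_{\mathbb B}(f_\sharp^{\mathrm{fin}} M \otimes^{\mathrm{fin}} N)$.

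I do not expect a serious obstacle: everything is formal once the single key point is in place, namely that pullback preserves ball-invariance and hence that $f_\sharp^{\mathrm{fin}}$ respects the $L_{\mathbb B}$-localization. The mildly delicate part is just the bookkeeping needed to verify that the Verdier-quotient manipulations for the projection formula cohere, which they do automatically because $L_{\mathbb B}$ is symmetric monoidal.
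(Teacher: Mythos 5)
Your argument breaks down at the very first step: the projection $\mathbb A^n_X\to X$ is \emph{not} finitary in the sense of the paper. Filtered colimits in the category of Banach rings are \emph{completed} colimits, and the functor $A\mapsto A^n$ does not commute with completion. Concretely, if $S=\varprojlim_i S_i$ is an infinite profinite set and $A_i=\mathrm{Cont}(S_i,C)$, then the completed colimit is $A=\mathrm{Cont}(S,C)$, and a general continuous function $S\to C$ (a section of $\mathbb A^1$ over $A$) is a uniform limit of locally constant functions but does not come from any finite stage; so $\mathrm{colim}_i\,\mathbb A^1(A_i)\to \mathbb A^1(A)$ is not surjective. This is precisely why the theorem is phrased for composites $Y\to\mathbb A^n_X\to X$ rather than just for finitary maps, and why the preceding proposition (which handles $f_\sharp$ for finitary $f$ on $\mathcal D_{\mathrm{fin}}$) does not apply here: for the affine-space projection, the arc-topos-level $f_\sharp$ does \emph{not} preserve finitary sheaves --- the free arc-sheaf $\mathbb Z[U]$ on an open $U\subset\mathbb A^n_X$ is not finitary over $X$. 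Since your entire strategy is to produce $f_\sharp^{\mathrm{fin}}$ on $\mathcal D_{\mathrm{fin}}$ first and then descend through $L_{\mathbb B}$, it has no valid starting point.

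The formal part of your argument (pullback preserves ball-invariance, hence the topos-level $f_\sharp$ descends to the Verdier quotient by $\mathbb B$-contractibles, and the projection formula follows from the symmetric monoidal localization) is the right bookkeeping, but it only gets you an adjoint between the ball-invariant localizations of \emph{all} arc-sheaves. The genuine content of the theorem, which your proposal never confronts, is that $f_\sharp$ of a finitary ball-invariant sheaf is again \emph{finitary}. In the paper this is exactly Proposition~\ref{prop:freemotivicsheaves}, resting on the geometric input of Lemma~\ref{lem:freemotivicsheafanima}: the free ball-invariant presheaf on $U\subset\mathbb A^n_X$ is finitary because $U$ is exhausted by opens invariant under small ball subgroups $\mathbb D^n_X(0,<r)$, and the quotients $U_i/\mathbb D^n_X(0,<r_i)$ are finitary while being $\mathbb B$-homotopy equivalent to $U_i$. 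The paper's proof of the theorem then reduces (after assuming $X$ strictly totally disconnected) to the generators $\mathbb Z_{\mathrm{mot}}[V_{/Y}]$ of $\mathcal D_{\mathrm{mot}}^{\mathrm{eff}}(\mathbb A^n_X)$, on which $f_\sharp\mathbb Z_{\mathrm{mot}}[V_{/Y}]=\mathbb Z_{\mathrm{mot}}[V_{/X}]$, and checks base change and the projection formula on these generators using that $\mathbb Z_{\mathrm{mot}}[-]$ takes fibre products to tensor products. To repair your write-up you would need to replace the false finitariness claim by an appeal to (or a reproof of) this generator computation; the Verdier-quotient formalities alone do not suffice.
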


\begin{proof} Writing $f$ as a composite and using Proposition~\ref{prop:finitaryleftadjoint}, we can assume that $Y=\mathbb A^n_X$. We can assume that $X$ is strictly totally disconnected; indeed, if we can prove the left adjoint exists in that case, and commutes with any base change of strictly totally disconnected spaces, then we get existence in general by hyperdescent (using \cite[Proposition 4.7.4.18]{LurieHA}). If $X$ is strictly totally disconnected, $\mathcal D_{\mathrm{mot}}^{\mathrm{eff}}(\mathbb A^n_X)$ is generated by $\mathbb Z_{\mathrm{mot}}[V_{/Y}]$ where $V\subset \mathbb A^m_Y$ is open. But then $f_\sharp(\mathbb Z_{\mathrm{mot}}[V_{/Y}]) = \mathbb Z_{\mathrm{mot}}[V_{/X}]$, where $V\subset \mathbb A^{n+m}_X$ is also open. This description commutes with base change, and satisfies the projection formula (where one has to check the latter only on generators $\mathbb Z_{\mathrm{mot}}[U_{/X}]$, where it follows from $\mathbb Z_{\mathrm{mot}}$ taking fibre products over $XC$ to tensor products).
\end{proof}

\subsection{The Tate twist}

We work in the nonarchimedean locus $|2|\leq 1$. Then the strong triangle inequality holds on all uniform Banach rings, and in particular for any $r\in \mathbb R$, there is a subgroup $A_{<r}\subset A$ of elements of norm $<r$. Similarly, if $r\leq 1$, then $1+A_{<r}\subset A^\times$ is stable under multiplication. We call $1+A_{<1}\subset A^\times$ the group of $1$-units, and denote the corresponding sheaf by $1+\mathcal O_{<1}$.

\begin{definition} The reduced group of units is
\[
\overline{\mathbb G}_m = \mathbb G_m / (1 + \mathcal O_{<1}),
\]
where the quotient is taken in arc-sheaves of abelian groups.
\end{definition}

\begin{theorem}\label{thm:reducedunitsballinvariant} The sheaf $\overline{\mathbb G}_m$ is finitary and ball-invariant.
\end{theorem}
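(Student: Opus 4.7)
The plan is to handle finitariness formally, and then to reduce ball-invariance, via the corollary following Definition~\ref{def:effmotives}, to an explicit Weierstrass preparation computation on Tate algebras over algebraically closed nonarchimedean fields.

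\textbf{Finitariness.} The arc-sheaves $\mathbb G_m$ and $1+\mathcal O_{<1}$ are finitary: both are subsheaves of the forgetful sheaf $\mathcal O: A\mapsto A$, which commutes with filtered colimits. Since $\mathcal D_{\mathrm{fin}}(X,\mathbb Z)\subset\mathcal D(X_{\mathrm{arc}},\mathbb Z)$ is closed under colimits and $\overline{\mathbb G}_m$ is the cokernel of $1+\mathcal O_{<1}\hookrightarrow\mathbb G_m$, hence a colimit in $\mathcal D(X_{\mathrm{arc}},\mathbb Z)$, it too is finitary. Moreover, by Theorem~\ref{theorem:finitaryarc}, higher arc-cohomology of a finitary abelian sheaf vanishes on any strictly totally disconnected $A$, so the long exact sequence collapses to $\overline{\mathbb G}_m(A)=A^\times/(1+A_{<1})$ on such $A$.

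\textbf{Ball-invariance.} By the corollary after Definition~\ref{def:effmotives}, since $\overline{\mathbb G}_m$ is finitary it suffices to prove that for every non-discrete algebraically closed nonarchimedean Banach field $C$ the map
\[
\overline{\mathbb G}_m(C)\longrightarrow \overline{\mathbb G}_m(C\langle T\rangle_1)
\]
is an isomorphism. The source equals $C^\times/(1+\mathfrak m_C)$. The key arithmetic input on the target is Weierstrass preparation on $C\langle T\rangle_1$: every unit factors uniquely as $a\cdot v$ with $a\in C^\times$ and $v\in\mathcal O_C\langle T\rangle$ a unit of Gauss norm~$1$ whose reduction in $k_C[T]$ is a nonzero constant in $k_C^\times$. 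Since $(1+\mathcal O_{<1})(C\langle T\rangle_1)=1+\mathfrak m_C\cdot\mathcal O_C\langle T\rangle$ is exactly the kernel of this reduction, the \emph{presheaf} quotient identifies
\[
C\langle T\rangle_1^\times/(1+\mathcal O_{<1})(C\langle T\rangle_1)\;=\;C^\times/(1+\mathfrak m_C),
\]
compatibly with the specialization map.

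It therefore remains to show that this presheaf quotient already agrees with the arc-sheafification $\overline{\mathbb G}_m(C\langle T\rangle_1)$; equivalently, via the long exact sequence attached to $0\to 1+\mathcal O_{<1}\to\mathbb G_m\to\overline{\mathbb G}_m\to 0$, that the connecting map
\[
\overline{\mathbb G}_m(C\langle T\rangle_1)\;\longrightarrow\;H^1_{\mathrm{arc}}\bigl(C\langle T\rangle_1,\,1+\mathcal O_{<1}\bigr)
\]
is zero. I would establish this by pushing forward along the map of sites $\nu:X_{\mathrm{arc}}\to\mathcal M(C\langle T\rangle_1)$: by the stalk formula for $R\nu_\ast$ of a finitary sheaf, the stalks of $R\nu_\ast(1+\mathcal O_{<1})$ at $x$ are computed by continuous $G_{K(x)}$-cohomology of $1+\mathfrak m_{\overline{K(x)}}$ in the completed algebraic closure. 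At type (1) points these are trivial because $C$ is algebraically closed, and at types (2)--(4) the explicit description of residue fields from Section~2 lets one embed $1+\mathfrak m\hookrightarrow\mathbb G_m$ and trace any candidate boundary class into $H^1_{\mathrm{arc}}(C\langle T\rangle_1,\mathbb G_m)$, where it vanishes because the class came from $\overline{\mathbb G}_m$. Combined with the fact that the Berkovich tree $\mathcal M(C\langle T\rangle_1)$ is topologically 1-dimensional, this yields the desired vanishing. This cohomological step—the analogue, for the non-torsion coefficients $1+\mathcal O_{<1}$, of the tree-based computation carried out later in the proof of Proposition~\ref{prop:etalesheavesballinvariant}—is the main obstacle; the finitariness and Weierstrass reductions are routine.
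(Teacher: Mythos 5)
Your reduction to an algebraically closed nonarchimedean $C$ and to the single Tate algebra $C\langle T\rangle_1$ is the same first step as the paper's, and the computation of the \emph{presheaf} quotient $C\langle T\rangle_1^\times/(1+(C\langle T\rangle_1)_{<1})\cong C^\times/(1+\mathfrak m_C)$ via the description of units in the Tate algebra is correct. But this only identifies the image of $\mathbb G_m(C\langle T\rangle_1)$ in $\overline{\mathbb G}_m(C\langle T\rangle_1)$, which is the easy half; the entire content of the theorem is that the arc-sheaf value is not bigger, i.e.\ that a section of the quotient sheaf (a compatible family of local units modulo $1$-units on the Berkovich tree, with Galois conditions at the stalks) comes from an honest element of $C^\times$. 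Note also that $C\langle T\rangle_1$ is not strictly totally disconnected, so even $\mathbb G_m(C\langle T\rangle_1)$, taken as the value of the arc-sheafification, is not literally $C\langle T\rangle_1^\times$; your long-exact-sequence bookkeeping already presupposes an identification that needs proof.

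The step where you address this is where the argument fails. Showing that a candidate boundary class in $H^1_{\mathrm{arc}}(C\langle T\rangle_1,1+\mathcal O_{<1})$ ``vanishes in $H^1_{\mathrm{arc}}(C\langle T\rangle_1,\mathbb G_m)$ because it came from $\overline{\mathbb G}_m$'' is vacuous: by exactness the image of any connecting class in $H^1(\mathbb G_m)$ is automatically zero, and what you would need is injectivity of $H^1(1+\mathcal O_{<1})\to H^1(\mathbb G_m)$ on these classes, whose failure is measured precisely by the cokernel of $\mathbb G_m(C\langle T\rangle_1)\to\overline{\mathbb G}_m(C\langle T\rangle_1)$ --- the thing you are trying to compute. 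Likewise, $1$-dimensionality of $\mathcal M(C\langle T\rangle_1)$ kills $H^{\geq 2}$, not $H^1$, and even granting stalkwise vanishing of $R^1\nu_\ast(1+\mathcal O_{<1})$ (which you do not establish at points of types (2)--(4), and which integrally is genuinely delicate), you would still need $H^1$ of the tree with coefficients in $\nu_\ast(1+\mathcal O_{<1})$ to vanish, for which you give no argument. The paper fills exactly this gap with real inputs: for torsion coefficients, the Kummer sequence reduces the statement to Proposition~\ref{prop:etalesheavesballinvariant}; rationally, the vanishing of $H^1$ uses right-exactness together with $\mathrm{Pic}(C\langle T\rangle_1)=0$, and the identification of $H^0$ uses the norm/log map and a harmonicity (product formula) argument on the tree to show any global section of the quotient sheaf is constant. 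Nothing of comparable content appears in your sketch, so as it stands the proof has a genuine gap at its central step.
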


\begin{proof} It is clear that it is finitary. To see that it is ball-invariant, take any non-discrete algebraically closed nonarchimedean Banach field $C$. We first argue modulo $\ell$ for some prime $\ell$. Note that raising to the power $\ell$ is surjective, with kernel the quotient of $\mu_{\ell}$ by the $1$-units inside $\mu_\ell$. If $\ell$ is equal to the residue characteristic of $C$, this sheaf is trivial; while for $\ell$ different from the residue characteristic, this is isomorphic to $\mathbb Z/\ell$. Now the claim follows from Proposition~\ref{prop:etalesheavesballinvariant}.

It remains to handle the situation after rationalization. In this case, there is no Galois cohomology. Thus, it reduces to a computation of the cohomology of the naively defined sheaf $\overline{\mathbb G}_m\otimes \mathbb Q$ (the sheafification of $A\mapsto A^\times/(1 + A_{<1})\otimes \mathbb Q$) on the topological space $\mathcal M(C\langle T\rangle_1)$. As this is a tree, there can only be cohomology in degree $0$ and $1$. Moreover, by right-exactness, the vanishing of $H^1$ reduces to the vanishing of $H^1$ with coefficients in $\mathbb G_m$. Any class in this $H^1$ can be realized by a \v{C}ech $1$-cocycle, which can be used to build an invertible module over $C\langle T\rangle_1$ using analytic descent of line bundles on rigid-analytic spaces. Now $\mathrm{Pic}(C\langle T\rangle_1)=0$ as $C\langle T\rangle_1$ is a principal ideal domain. Picking a global invertible section of this line bundle then trivializes the \v{C}ech cocycle, giving the vanishing of $H^1$. It remains to see that
\[
H^0(\mathcal M(C\langle T\rangle_1),\overline{\mathbb G}_m\otimes \mathbb Q)=C^{\times}/(1+C_{<1})\otimes \mathbb Q.
\]
For this, we note that there is a norm map
\[
\overline{\mathbb G}_m\otimes \mathbb Q\to \mathbb R_{>0}\xrightarrow{\mathrm{log}} \mathbb R
\]
of arc-sheaves (in particular, of sheaves on $\mathcal M(C\langle T\rangle_1)$), where $\mathbb R$ denotes the arc-sheaf taking any $A$ to the set of continuous maps from $\mathcal M(A)$ to $\mathbb R$. We claim that any global section on $\mathcal M(C\langle T\rangle_1)$ maps to a constant. Indeed, representing a section of $\overline{\mathbb G}_m$ locally by a section of $\mathbb G_m$, and approximating these invertible functions by a function of the form $\prod_i (T-a_i)^{k_i}$, the resulting map $\mathcal M(C\langle T\rangle_1)\to \mathbb R$ factors locally over some quotient
\[
\mathcal M(C\langle T\rangle_1)\to \Big\{(x_a)_{a\in A}\mid \forall a,b: x_a\leq \mathrm{max}(x_b,|a-b|), |a-b|\leq \mathrm{max}(x_a,x_b)\Big\}\subset \prod_{a\in A} [0,1]
\]
for some finite subset $A\subset \mathcal O_C$, and then by quasicompactness even globally. This gives a map from a tree to $\mathbb R$, and this map is piecewise a linear combination of $\mathrm{log}(x_a)=\mathrm{log} |T-a|$, and at each branch point except possibly the central Gau\ss\ point the sums of all slopes is equal to $0$ -- the slopes of the rays can locally be computed in terms of the valuations of elements in the residue fields of type (2) points, and the product formula for $\mathbb P^1$ then ensures the harmonicity. But on each ray $[0,1]$ of the tree, the map is constant near $0$, and hence it must be constant on the whole tree.

Rescaling, we can assume that it is equal to $0$. Looking at the Gau\ss\ point, or any type (2) point, we get an element of $k(T)^\times\otimes \mathbb Q$, where $k$ is the residue field of $C$; but it must have trivial valuations, as the norm map is constant (and the valuations describe the derivatives along the rays). This works for all valuations except one valuation for the central Gau\ss\ point, but by the product formula, the valuation there is also trivial. Hence we get an element of $k^\times\otimes \mathbb Q$. At points of other types, the residue field is unchanged, and hence we necessarily get an element of $k^\times\otimes \mathbb Q$. By finitaryness, the resulting element of $k^\times\otimes \mathbb Q$ is locally constant, and as the space is connected, it is constant.
\end{proof}

\begin{proposition} The tautological section of $\overline{\mathbb G}_m$ over $\mathbb G_m$ induces an isomorphism
\[
\mathbb Z_{\mathrm{mot}}[\mathbb G_m]/\mathbb Z_{\mathrm{mot}}[\ast]\cong \overline{\mathbb G}_m.
\]
\end{proposition}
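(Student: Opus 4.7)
The map is constructed from the tautological section $\mathbb{G}_m \to \overline{\mathbb{G}}_m$ by applying the universal property of $\mathbb{Z}_{\mathrm{mot}}[\mathbb{G}_m]$, which is possible because $\overline{\mathbb{G}}_m\in \mathcal{D}_{\mathrm{mot}}^{\mathrm{eff}}(X)$ by Theorem~\ref{thm:reducedunitsballinvariant}; since the basepoint $1\in\mathbb{G}_m$ is sent to $0\in\overline{\mathbb{G}}_m$, this descends to a morphism $\phi: \mathbb{Z}_{\mathrm{mot}}[\mathbb{G}_m]/\mathbb{Z}_{\mathrm{mot}}[\ast]\to \overline{\mathbb{G}}_m$ in $\mathcal{D}_{\mathrm{mot}}^{\mathrm{eff}}(X)$.

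To show that $\phi$ is an isomorphism, both sides are finitary arc-sheaves whose formation is compatible with pullback, so by arc-descent one reduces to $X = \mathcal{M}_{\mathrm{arc}}(C)$ for $C$ algebraically closed non-discrete nonarchimedean. The central input is then the short exact sequence of arc-sheaves of abelian groups
\[
0 \to 1+\mathcal{O}_{<1} \to \mathbb{G}_m \to \overline{\mathbb{G}}_m \to 0,
\]
viewed as an exact triangle in $\mathcal{D}(X_{\mathrm{arc}},\mathbb{Z})$, combined with the explicit ball-invariant null-homotopy $u \mapsto 1+T(u-1)$, which exhibits $1+\mathcal{O}_{<1}$ as ball-invariantly contractible (the element $1+T(u-1)$ lies in $(1+\mathcal{O}_{<1})(A\langle T\rangle_1)$ since $|T(u-1)|\leq |u-1|<1$, and it interpolates between $1$ at $T=0$ and $u$ at $T=1$). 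Consequently $L_{\mathbb{B}}(1+\mathcal{O}_{<1}) = 0$, so the triangle yields an isomorphism $L_{\mathbb{B}}(\mathbb{G}_m) \cong \overline{\mathbb{G}}_m$ in $\mathcal{D}_{\mathrm{mot}}^{\mathrm{eff}}$, where $L_{\mathbb{B}}(\mathbb{G}_m)$ denotes the ball-invariantisation of the abelian-group sheaf $\mathbb{G}_m$.

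To derive the stated form of the proposition from this, one uses the canonical abelianisation $\mathbb{Z}[\mathbb{G}_m]/\mathbb{Z}\to \mathbb{G}_m$ of arc-sheaves of abelian groups, which after $L_{\mathbb{B}}$ recovers $\phi$ as the composite $\mathbb{Z}_{\mathrm{mot}}[\mathbb{G}_m]/\mathbb{Z}_{\mathrm{mot}}[\ast] \to L_{\mathbb{B}}(\mathbb{G}_m) \cong \overline{\mathbb{G}}_m$. Thus it suffices to show that the abelianisation becomes an isomorphism after $L_{\mathbb{B}}$. This is the main obstacle: for a general sheaf of sets the free $\mathbb{Z}$-linearisation is much larger than the underlying abelian group, and one must prove that the extra relations $[gh]-[g]-[h]$ all become ball-invariantly trivial. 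The strategy is to work simplicially at $C$: use the decomposition $(C\langle\Delta^n\rangle)^\times = C^\times\cdot(1+C\langle\Delta^n\rangle_{1,<1})$ (with intersection $1+C_{<1}$), invoke the contractibility of the simplicial $1$-unit part $V_\bullet$ already used above, and observe that every relation $[gh]-[g]-[h]$ with at least one factor a $1$-unit is killed by the same ball-homotopy $u\mapsto 1+T(u-1)$. The remaining relations, involving only classes in $C^\times/(1+C_{<1})$, are handled by the divisibility of $C^\times/(1+C_{<1})$ together with the group structure on $\overline{\mathbb{G}}_m$, reducing the simplicial chain complex of the free linearisation to the constant object $\overline{\mathbb{G}}_m(C)$ in degree $0$. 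This final combinatorial reduction — effectively a Breen--Deligne-type resolution argument of the sort used elsewhere in the paper — is where the bulk of the technical work sits.
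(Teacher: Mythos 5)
There is a genuine gap, in two places. First, your deduction that $L_{\mathbb B}(1+\mathcal O_{<1})=0$ as an object of $\mathcal D(X_{\mathrm{arc}},\mathbb Z)$ does not follow from the homotopy $u\mapsto 1+T(u-1)$: that homotopy is not a group homomorphism in $u$, so it contracts the arc-sheaf of \emph{sets} $1+\mathcal O_{<1}$ (equivalently, it trivializes the free motivic sheaf on it), but it does not give a contraction of the associated simplicial abelian group $[n]\mapsto 1+(A\langle\Delta^n\rangle)_{<1}$ beyond $\pi_0$. The paper uses the set-level contractibility only where it is legitimate: to identify $\mathbb Z_{\mathrm{mot}}[\mathbb G_m]$ with $\mathbb Z_{\mathrm{mot}}[\overline{\mathbb G}_m]$, the free motivic sheaf on the quotient sheaf of sets, since $\mathbb G_m\to\overline{\mathbb G}_m$ is a torsor under the ball-contractible group $1+\mathcal O_{<1}$.

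Second, and more seriously, the step you yourself identify as ``the main obstacle'' --- that the relations $[gh]-[g]-[h]$ die after ball-invariantisation and that no higher homotopy survives --- is precisely the content of the proposition, and your sketch does not prove it. Over $C$ one has $\mathbb G_m(C\langle T\rangle_1)=C^\times\cdot(1+(C\langle T\rangle_1)_{<1})$, so a ball-homotopy of a \emph{single} generator can only move $[g]$ within its $1$-unit coset; single-generator homotopies therefore never produce $[gh]\sim[g]+[h]$ when both factors are nontrivial in $C^\times/(1+C_{<1})$, and appealing to ``divisibility of $C^\times/(1+C_{<1})$'' or to a ``Breen--Deligne-type resolution'' does not supply the missing homotopies (Breen--Deligne resolves a group by free modules on its powers; it does not show that a free linearisation collapses to the group). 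The paper's mechanism is a multi-point geometric construction: since $\overline{\mathbb G}_m$ is already a finitary sheaf, $\mathbb Z_{\mathrm{mot}}[\overline{\mathbb G}_m]/\mathbb Z_{\mathrm{mot}}[\ast]$ is computed as $\mathrm{colim}_n \mathrm{Sym}^n(\overline{\mathbb G}_m)$ (free commutative monoid plus group completion), and the characteristic-polynomial isomorphism $\mathrm{Sym}^n(\mathbb G_m)\cong\mathbb A^{n-1}\times\mathbb G_m$ of arc-sheaves, whose $\mathbb A^{n-1}$-directions are ball-contractible, is what identifies $\{g,h\}$ with $\{gh,1\}$ up to ball-homotopy and kills all higher homotopy in the colimit. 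Some input of this kind (symmetric powers/transfers) is unavoidable, and it is absent from your argument.
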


\begin{proof} The left-hand side is equivalently
\[
\mathbb Z_{\mathrm{mot}}[\overline{\mathbb G}_m]/\mathbb Z_{\mathrm{mot}}[\ast],
\]
by ball-invariance. Now $\overline{\mathbb G}_m$ is already a finitary-sheaf, so $\mathbb Z_{\mathrm{mot}}[\overline{\mathbb G}_m]$ is obtained by taking the free $\mathbb Z$-module on the sheaf $\overline{\mathbb G}_m$, and making it ball-invariant. The free $\mathbb Z$-module is obtained by taking the free commutative monoid, and group-completing. The free commutative monoid on any sheaf $X$ is the disjoint union $\bigsqcup_{n\geq 0} \mathrm{Sym}^n(X)$ where $\mathrm{Sym}^n(X) = X^n/\Sigma_n$ is the (naive set-theoretic) quotient of $X^n$ by the action of the symmetric group $\Sigma_n$. Quotienting by $\mathbb Z_{\mathrm{mot}}[\ast]$, we get
\[
\mathbb Z_{\mathrm{mot}}[\overline{\mathbb G}_m]/\mathbb Z_{\mathrm{mot}}[\ast] = \mathrm{colim}_n \mathrm{Sym}^n(\overline{\mathbb G}_m)
\]
where the transition maps add $1\in \overline{\mathbb G}_m$. In our case we have, for $n\geq 1$, a commutative diagram
\[\xymatrix{
\mathrm{Sym}^n(\mathbb G_m)\ar[r]\ar[d] & \mathbb A^{n-1}\times \mathbb G_m\ar[d]\\
\mathrm{Sym}^n(\overline{\mathbb G}_m)\ar[r] & \overline{\mathbb G}_m
}\]
where the upper map is given by the characteristic polynomial and is an isomorphism of arc-sheaves. The left vertical and right vertical maps are (colimits of) $\mathbb B$-homotopy equivalences (for the left vertical map, write $\overline{\mathbb G}_m$ as the geometric realization $\mathbb G_m\times (1+\mathcal O_{<1})^\bullet$, all of whose terms are $\mathbb B$-homotopy equivalent to $\mathbb G_m$; then apply $\mathrm{Sym}^n$ everywhere); thus, the lower map is an isomorphism when mapping to ball-invariant sheaves, which gives the desired result.
\end{proof}

In the following definition, we allow archimedean $X$ again.

\begin{definition} The first Tate twist is
\[
\mathbb Z(1) = (\mathbb Z_{\mathrm{mot}}[\mathbb G_m]/\mathbb Z_{\mathrm{mot}}[\ast])[-1],
\]
i.e.~a shift of the reduced homology of $\mathbb G_m$.
\end{definition}

Over $\mathbb C$, this is isomorphic to $\mathbb Z$ (over $\mathbb R$, complex conjugation acts nontrivially).

\begin{proposition}\label{prop:tatetwistcompact} If $X$ is strictly totally disconnected, the object $\mathbb Z(1)\in \mathcal D_{\mathrm{mot}}^{\mathrm{eff}}(X)$ is compact. For any open annulus $\mathbb T=\mathbb T(r_1,r_2)\subset \mathbb G_m$ (with $0<r_1<r_2<\infty$), the map
\[
\mathbb Z_{\mathrm{mot}}[\mathbb T]\to \mathbb Z_{\mathrm{mot}}[\mathbb G_m]
\]
is an isomorphism.
\end{proposition}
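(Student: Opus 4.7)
The two assertions are linked, and I will prove the annulus isomorphism first, then use it to derive compactness. By excision between the archimedean locus $\{|2|>1\}$ and the nonarchimedean locus $\{|2|\leq 1\}$, it suffices to treat each separately. Over the archimedean part, $\mathcal{D}_{\mathrm{mot}}^{\mathrm{eff}}$ agrees with Betti sheaves on $\mathcal{M}(A)$ (the motivic counterpart of Theorem~\ref{thm:archimedeanfinitary}), and the inclusion $\mathbb{T}\hookrightarrow \mathbb{C}^\times$ is a homotopy equivalence of topological spaces, so induces an isomorphism on motives; compactness is automatic there as $\mathbb{Z}(1)\cong\mathbb{Z}$. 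So the real content is in the nonarchimedean locus, where $\mathbb{Z}(1)\cong\overline{\mathbb{G}}_m[-1]$ and $\mathbb{Z}_{\mathrm{mot}}[\mathbb{G}_m]/\mathbb{Z}_{\mathrm{mot}}[\ast]\cong\overline{\mathbb{G}}_m$ by the preceding proposition.

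\textbf{Annulus isomorphism.} My plan is to mimic the proof of the preceding proposition almost verbatim, replacing $\mathbb{G}_m$ by $\mathbb{T}$ throughout. Since $\mathbb{T}(r_1,r_2)$ is stable under multiplication by $1+\mathcal{O}_{<1}$ (the $1$-units have norm $1$), ball-invariance gives $\mathbb{Z}_{\mathrm{mot}}[\mathbb{T}]=\mathbb{Z}_{\mathrm{mot}}[\overline{\mathbb{T}}]$ where $\overline{\mathbb{T}}=\mathbb{T}/(1+\mathcal{O}_{<1})\subset\overline{\mathbb{G}}_m$; and as before $\mathbb{Z}_{\mathrm{mot}}[\overline{\mathbb{T}}]/\mathbb{Z}_{\mathrm{mot}}[\ast]\simeq\mathrm{colim}_n\mathrm{Sym}^n(\overline{\mathbb{T}})$. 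Via the characteristic polynomial, $\mathrm{Sym}^n(\mathbb{T})$ embeds into $\mathrm{Sym}^n(\mathbb{G}_m)\cong\mathbb{A}^{n-1}\times\mathbb{G}_m$ as the open subspace $V_n$ of monic polynomials whose $n$ roots all lie in $\mathbb{T}$. Projecting to the last coordinate maps $V_n$ onto $\mathbb{T}(r_1^n,r_2^n)$, and the plan is to show this projection becomes an isomorphism on ball-invariant motives — the analogue of ``$\mathbb{A}^{n-1}\times\mathbb{G}_m\to\mathbb{G}_m$ kills the $\mathbb{A}^{n-1}$-factor'' in the $\mathbb{G}_m$-case. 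Combined with the $(1+\mathcal{O}_{<1})^n$-torsor $V_n\to\mathrm{Sym}^n(\overline{\mathbb{T}})$ (fibers a ball), this gives $\mathrm{Sym}^n(\overline{\mathbb{T}})\cong\overline{\mathbb{T}}(r_1^n,r_2^n)$ on ball-invariants, and taking the colimit over $n$ (as $(r_1^n,r_2^n)$ exhausts $\mathbb{R}_{>0}$) recovers all of $\overline{\mathbb{G}}_m$.

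\textbf{Main obstacle.} The delicate step is that $V_n\to\mathbb{T}(r_1^n,r_2^n)$ is a ball-invariant equivalence. In the $\mathbb{G}_m$-case this is trivial (it is a direct product projection), but for the annulus the fiber $F_y\subset\mathbb{A}^{n-1}$ over $y$ is the open subset of $(e_1,\ldots,e_{n-1})$ such that $T^n-e_1T^{n-1}+\cdots+(-1)^ny$ has all roots in $\mathbb{T}(r_1,r_2)$, a condition cut out by Newton-polygon inequalities. One needs to produce an explicit ball-contraction of $F_y$ onto a canonical ``balanced'' point — e.g.\ $e_k=\binom{n}{k}\zeta^k$ for an $n$-th root $\zeta$ of $y$ — varying continuously in $y$. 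I expect this to work by interpolating the logarithms of the coefficients linearly, since the constraints ``all slopes of the Newton polygon lie in $(-\log r_2,-\log r_1)$'' are convex in log-coordinates; but verifying that the interpolation stays inside $V_n$ and assembles into a genuine $\mathbb{B}$-homotopy (rather than just a topological contraction) is the main technical burden.

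\textbf{Compactness.} Granting the annulus iso, write $\mathbb{T}(r_1,r_2)=\mathrm{colim}_{r_1<r_1'<r_2'<r_2}\overline{\mathbb{T}}(r_1',r_2')$ as a filtered colimit of representable closed annuli. Applying the same Sym${}^n$/characteristic-polynomial argument to each closed $\overline{\mathbb{T}}(r_1',r_2')$ shows every transition in this system is an isomorphism of motives, so $\mathbb{Z}_{\mathrm{mot}}[\mathbb{T}(r_1,r_2)]\simeq\mathbb{Z}_{\mathrm{mot}}[\overline{\mathbb{T}}(r_1',r_2')]$ for any chosen $r_1',r_2'$. The latter is $\mathbb{Z}_{\mathrm{mot}}$ applied to a qcqs Banach ring of finite cohomological dimension over the strictly totally disconnected $X$, so by Theorem~\ref{thm:properpushforwardfinitary} evaluation on $\overline{\mathbb{T}}(r_1',r_2')$ commutes with filtered colimits in $\mathcal{D}_{\mathrm{mot}}^{\mathrm{eff}}(X)$, yielding compactness of $\mathbb{Z}_{\mathrm{mot}}[\mathbb{T}]$ and hence of $\mathbb{Z}(1)=(\mathbb{Z}_{\mathrm{mot}}[\mathbb{T}]/\mathbb{Z}_{\mathrm{mot}}[\ast])[-1]$.
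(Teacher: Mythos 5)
Your overall strategy coincides with the paper's: reduce to a nonarchimedean geometric point, compare $\mathrm{Sym}^n(\mathbb T)\subset\mathrm{Sym}^n(\mathbb G_m)\cong\mathbb A^{n-1}\times\mathbb G_m$ via the characteristic polynomial, show that $\mathrm{Sym}^n(\mathbb T)$ is identified on ball-invariant sheaves with the annulus $\mathbb T(r_1^n,r_2^n)$ of its ``norm of product of roots'', and let $n\to\infty$. But the one step you flag as ``the main technical burden'' --- that the fibres of $\mathrm{Sym}^n(\mathbb T)\to\mathbb T(r_1^n,r_2^n)$ are ball-contractible --- is exactly the content of the proposition at this point, and you leave it open, proposing instead a log-linear interpolation onto a ``balanced'' centre whose validity you do not verify. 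This is a genuine gap as written, but it is filled by a much simpler observation (this is what the paper means by ``a simple argument using Newton polygons''): for fixed constant term $y$ with $|y|\in(r_1^n,r_2^n)$, the condition that all roots of $T^n+a_{n-1}T^{n-1}+\cdots\pm y$ lie in $r_1<|T|<r_2$ is, by the Newton polygon (applied once to the polynomial and once to its reciprocal), equivalent to the coefficientwise bounds $|a_k|<\min\bigl(r_2^{\,n-k},\,|y|\,r_1^{-k}\bigr)$ for $0<k<n$. So the fibre is literally an open polydisc centred at the origin; writing it as an increasing union of closed polydiscs and using finitaryness plus ball-invariance identifies the fibre with a point, no bespoke $\mathbb B$-homotopy or convexity argument needed. (Relatedly, your claim that $\mathrm{Sym}^n(\mathbb T)\to\mathrm{Sym}^n(\overline{\mathbb T})$ is a $(1+\mathcal O_{<1})^n$-torsor is not quite right at loci where points collide, but it is also not needed once the fibres over the norm-annulus are handled directly.)

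For compactness your route differs from the paper's and needs more than you supply: you treat $\mathbb Z_{\mathrm{mot}}[\overline{\mathbb T}(r_1',r_2')]$ of a \emph{closed} annulus as an object and assert, by ``the same $\mathrm{Sym}^n$ argument'', that the transition maps between closed annuli are motivic equivalences; but the finitariness of free ball-invariant sheaves (Proposition~\ref{prop:freemotivicsheaves}) is only established for open subsets of affine space, so this requires redoing that analysis for affinoid annuli. The paper avoids this entirely by arguing at the level of evaluation functors: part one gives $\mathrm{Hom}(\mathbb Z_{\mathrm{mot}}[\mathbb G_m],-)=$ evaluation at every open annulus; writing a closed annulus as a cofiltered intersection of open annuli and using finitaryness of the objects, this also equals evaluation at a closed annulus; and the latter commutes with all colimits since the closed annulus is qcqs of finite cohomological dimension. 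Your argument can be repaired along these lines (or by checking finitariness for closed annuli, which does hold since they are invariant under translation by small discs), but as stated it leans on unproved statements about motives of closed annuli.
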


\begin{proof} We first prove that
\[
\mathbb Z_{\mathrm{mot}}[\mathbb T]\to \mathbb Z_{\mathrm{mot}}[\mathbb G_m]
\]
is an isomorphism. By rescaling, assume $r_1<1<r_2$, so $1\in \mathbb T$ and we can equivalently prove that
\[
\mathbb Z_{\mathrm{mot}}[\mathbb T]/\mathbb Z_{\mathrm{mot}}[\ast]\to \mathbb Z_{\mathrm{mot}}[\mathbb G_m]/\mathbb Z_{\mathrm{mot}}[\ast]
\]
is an isomorphism. This can be checked in case $X=\mathcal M_{\mathrm{arc}}(C)$ for a non-discrete algebraically closed Banach field $C$. It is clear when $C=\mathbb C$, so we can assume that $C$ is nonarchimedean. Then it follows from the argument of the previous proposition: We need to see that in the colimit over $n$, the maps of symmetric powers
\[
\mathrm{Sym}^n(\mathbb T)\subset \mathrm{Sym}^n(\mathbb G_m)\cong \mathbb A^{n-1}\times \mathbb G_m
\]
become isomorphisms when mapping into ball-invariant sheaves. But the projection to $\mathbb G_m$ yields a surjective map
\[
\mathrm{Sym}^n(\mathbb T)\to \mathbb T(r_1^n,r_2^n)
\]
whose fibres are open balls: Indeed, a polynomial
\[
X^n+a_{n-1}X^{n-1}+\ldots+a_1X+a_0
\]
with given $a_0\in \mathbb T(r_1^n,r_2^n)$ has all zeros in $\mathbb T$ if and only if all slopes of the Newton polygon defined by $a_{n-1},\ldots,a_1,a_0$ lie in the interval $(r_1,r_2)$. This means
\[
|a_i|<\mathrm{max}(r_2^{n-i},|a_0|r_1^{-i})
\]
for all $i=1,\ldots,n-1$, which is indeed an open ball. Thus, when mapping to ball-invariant sheaves, the map $\mathrm{Sym}^n(\mathbb T)\to \mathbb T(r_1^n,r_2^n)$ becomes an isomorphism. In the colimit over $n$, the $\mathbb T(r_1^n,r_2^n)$ become all of $\mathbb G_m$.

In particular, the functor $\mathrm{Hom}(\mathbb Z_{\mathrm{mot}}[\mathbb G_m],-)$ agrees with $\mathrm{Hom}(\mathbb Z_{\mathrm{mot}}[\mathbb T],-)$, i.e.~evaluation at $\mathbb T$, for all annuli $\mathbb T$. Writing a closed annulus as an intersection of open annuli, and using finitaryness, it follows that it also agrees with evaluation at a(ny) closed annulus. But a closed annulus has finite cohomological dimension and is quasicompact, so evaluation commutes with all colimits, so $\mathbb Z_{\mathrm{mot}}[\mathbb G_m]$ and hence $\mathbb Z(1)$ is compact.
\end{proof}

\section{Free motivic sheaves}

In the last section, we proved the existence of free motivic sheaves $\mathbb Z_{\mathrm{mot}}[U]$ where $U$ is an open subspace of affine space $\mathbb A^n_X$. Our first goal is to make these more explicit. We will restrict attention to the nonarchimedean locus. In that case, we have seen that these free motivic sheaves can be understood in terms of those of $U/\mathbb D^n(0,<r)$ for some $r>0$, which are even finitary over $X$.

\begin{theorem}\label{thm:freemotivicsheaves} Let $X$ be a small arc-stack and let $U$ be a finitary arc-sheaf of sets over $X$. Let $\mathbb N[U]$ be the free arc-sheaf of commutative monoids on $U$. Then, on strictly totally disconnected $A$ over $X$, and after inverting all primes $p$ such that $|p|<1$ somewhere on $\mathcal M(A)$, the $A$-valued points of the free motivic sheaf $\mathbb Z_{\mathrm{mot}}[U]$ are given by the group completion of
\[
L_{\mathbb B}^{\mathrm{pre}}(\mathbb N[U])(A).
\]
Moreover:
\begin{enumerate}
\item[{\rm (i)}] After rationalization, this holds true also for the values on any affine subset of $\mathbb A^1_A$, i.e.~any arc-subsheaf of the form $\mathcal M_{\mathrm{arc}}(B)$ for some Banach $A$-algebra $B$. 
\item[{\rm (ii)}] After reduction modulo $n$ for some $n$ with $|n|\geq 1$ everywhere on $X$, the sheaf $\mathbb Z_{\mathrm{mot}}[U]/n$ is invariant under change of algebraically closed field.
\end{enumerate}
\end{theorem}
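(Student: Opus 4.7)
The plan is to rewrite both sides as $\Delta^{\mathrm{op}}$-indexed simplicial colimits via the explicit formula for $L_{\mathbb B}$ on strictly totally disconnected rings, and then to show the comparison becomes an equivalence after inverting the residue-characteristic primes. First, fix a strictly totally disconnected $A$ over $X$; using $\mathbb Z_{\mathrm{mot}}[U] = L_{\mathbb B}(\mathbb Z[U]^{\mathrm{arc}})$ where $\mathbb Z[U]^{\mathrm{arc}}$ is the arc-sheafification of the presheaf $B\mapsto \mathbb Z[U(B)]$, the explicit formula gives
\[
\mathbb Z_{\mathrm{mot}}[U](A) = \operatorname*{colim}_{[n]\in \Delta^{\mathrm{op}}} \mathbb Z[U]^{\mathrm{arc}}(A\langle\Delta^n\rangle).
\]
Since group completion is a left adjoint (and thus commutes with sifted colimits) and $\mathbb N[U(B)]^{\mathrm{gp}} = \mathbb Z[U(B)]$ on sets, the claimed right-hand side unfolds to
\[
\bigl(L_{\mathbb B}^{\mathrm{pre}}(\mathbb N[U])(A)\bigr)^{\mathrm{gp}} = \operatorname*{colim}_{[n]\in \Delta^{\mathrm{op}}} \mathbb Z[U(A\langle\Delta^n\rangle)].
\]
The canonical comparison is induced termwise by $\mathbb Z[U(B)] \to \mathbb Z[U]^{\mathrm{arc}}(B)$ at $B = A\langle\Delta^n\rangle$, and the task reduces to showing this yields an equivalence of simplicial colimits after the specified prime inversions.

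Next, I would analyze the cofiber termwise. Using the extension of finitary sheaves to non-strictly-totally-disconnected rings via quasi-pro-\'etale cohomology (Lemma~\ref{lem:quasiproetalecohomfiltcolim}), together with the description of the pushforward of a finitary sheaf to the Berkovich space having stalk $M(K(x))$ at $x$, one can interpret $\mathbb Z[U]^{\mathrm{arc}}(B)$ as the quasi-pro-\'etale cohomology on $\mathcal M(B)$ of the sheaf with stalks $\mathbb Z[U(K(x))]$ equipped with the appropriate Galois twists. The cofiber of the comparison is thus concentrated in positive cohomological degrees, controlled by Galois cohomology at each stalk and by higher arc-cohomology on the tree-like fibres of $\mathcal M(B) \to \mathcal M(A)$. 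The key claim is that this cofiber is torsion supported precisely at primes $p$ arising as residue characteristics of points of $\mathcal M(A)$; this rests on Berkovich's vanishing of \'etale cohomology in degrees above one for prime-to-residue-characteristic coefficients, plus the standard cohomological dimension bounds on the absolute Galois groups involved. Inverting these primes kills the cofiber termwise and yields the main statement.

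Parts (i) and (ii) build on the core argument. For (ii), $\mathbb Z_{\mathrm{mot}}[U]/n$ is ball-invariant as a quotient of a ball-invariant object, so by the converse of Proposition~\ref{prop:etalesheavesballinvariant} (promised in the paragraph following that proposition), it is automatically invariant under change of algebraically closed field. For (i), rational coefficients eliminate the torsion obstruction outright, and the simplicial colimit argument extends from strictly totally disconnected rings to affine subsets of $\mathbb A^1_A$ using the one-dimensional tree structure of their Berkovich spectra and the vanishing of higher rational arc-cohomology in this setting. The hard part will be the precise identification of the cofiber's support at residue-characteristic primes and its termwise vanishing after inversion; this requires the structural analysis of arc-cohomology of $\mathcal M(K(x)\langle\Delta^n\rangle)$ together with the types (1)--(4) classification of residue fields from Section~2.
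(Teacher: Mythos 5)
Your reduction of the right-hand side is already off in a way that matters: in the theorem, $\mathbb N[U]$ is the free arc-\emph{sheaf} of commutative monoids on $U$, i.e.\ $\bigsqcup_n \mathrm{Sym}^n(U)$ with $\mathrm{Sym}^n(U)=U^n/\Sigma_n$ as arc-sheaves, so $\mathbb N[U](B)$ contains ``multisections'' (finite correspondences) that are not formal sums of elements of $U(B)$ --- e.g.\ $\{\pm T^{1/2}\}\in \mathrm{Sym}^2(\mathbb A^1)(C\langle T\rangle_1)$. Your unfolding of the claimed value to $\mathrm{colim}_{[n]}\mathbb Z[U(A\langle\Delta^n\rangle)]$ silently replaces the sheaf by the presheaf $B\mapsto \mathbb N[U(B)]$, discarding exactly the transfer structure on which the actual argument rests. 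Worse, your central step --- that the termwise comparison between presheaf-level and sheaf-level values has cofiber which is torsion at residue characteristics, killed by Berkovich-type vanishing and Galois cohomological dimension bounds --- is false: the cokernel of $\mathbb Z[U(B)]\to \mathbb N[U]^{\mathrm{gp}}(B)$ at $B=A\langle\Delta^n\rangle$ contains the classes of irreducible multisections as above, and these are not torsion (no integer multiple of such a class is a sum of graphs of sections). The identification in the theorem only becomes true \emph{after} the simplicial colimit (ball-invariantization), and proving that this happens is the genuine content: the paper reduces to showing that arc-sheafification of a finitary ball-invariant presheaf \emph{with transfers} stays ball-invariant after inverting the residue characteristics (Theorem~\ref{thm:ballinvariantpresheaftransferarcsheafification}), whose proof needs the geometric divisibility lemma on curves via semiabelian Jacobians (Lemma~\ref{lem:dividebypcurve}) for the torsion part, and the explicit tree/annulus Mayer--Vietoris computation with $\overline{\mathbb G}_m$ and Newton polygons (Lemma~\ref{lem:toycase}) for the rational part. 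None of this is replaced by your cohomological-dimension considerations, which cannot see the degree-zero discrepancy at all.

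Two further problems: your starting formula $\mathbb Z_{\mathrm{mot}}[U](A)=\mathrm{colim}_{[n]}\mathbb Z[U]^{\mathrm{arc}}(A\langle\Delta^n\rangle)$ uses the explicit description of $L_{\mathbb B}$ which the paper only has for \emph{finitary} sheaves, and finitariness of the underived free arc-sheaf is not available (the paper instead proves finitariness of the ball-invariantized object by the geometric argument of Lemma~\ref{lem:freemotivicsheafanima}); assuming the formula at this stage presupposes commutations between sheafification and $L_{\mathbb B}^{\mathrm{pre}}$ that are part of what must be proved. And your argument for (ii) is circular: the ``converse'' of Proposition~\ref{prop:etalesheavesballinvariant} (ball-invariant mod $n$ implies invariance under change of algebraically closed field) is precisely the corollary that the paper deduces \emph{from} Theorem~\ref{thm:freemotivicsheaves}(ii), so it cannot be invoked to prove it. In the paper, (ii) is instead obtained from the transfer argument: invariance under change of algebraically closed field is proved directly for the presheaf with transfers via Lemma~\ref{lem:dividebypcurve}.
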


\begin{proof} We can assume that $X$ is strictly totally disconnected. We need to see that the arc-sheaf associated to the finitary functor sending strictly totally disconnected $A$ over $X$ to the group completion of $(L_{\mathbb B}^{\mathrm{pre}}(\mathbb N[U]))(A)$ is ball-invariant. Note that all homology presheaves of the functor taking any Banach algebra $A$ over $X$ to the group completion of $(L_{\mathbb B}^{\mathrm{pre}}(\mathbb N[U]))(A)$ are finitary ball-invariant presheaves with transfer, as defined below. We are reduced to Theorem~\ref{thm:ballinvariantpresheaftransferarcsheafification} below.
\end{proof}

\begin{definition} Let $X$ be strictly totally disconnected. Endow the category of uniform Banach algebras $A$ over $X$ with a new category structure, where maps from $A$ to $B$ are given by maps of arc-sheaves of commutative monoids $\mathbb N[\mathcal M_{\mathrm{arc}}(A)_{/X}]\to \mathbb N[\mathcal M_{\mathrm{arc}}(B)_{/X}]$. A presheaf $M$ of abelian groups with transfer over $X$ is an additive functor $A\mapsto M(A)$ from this category towards abelian groups. The presheaf $M$ is finitary if it commutes with all filtered colimits of Banach algebras over $X$.
\end{definition}

\begin{theorem}\label{thm:ballinvariantpresheaftransferarcsheafification} Let $M$ be a finitary ball-invariant presheaf of abelian groups with transfer over a strictly totally disconnected $X$. Then the derived arc-sheafification of $M$ is also ball-invariant, after inverting all primes $p$ such that $|p|<1$ somewhere on $X$. After reduction modulo $n$ for some $n\geq 1$, it is invariant under change of algebraically closed field.

Moreover, the presheaf $M\otimes \mathbb Q$ is already a sheaf on the site of open subsets of $\mathbb A^1_A$ for strictly totally disconnected $A$, and the value of $M\otimes \mathbb Q$ on affine subsets of $\mathbb A^1_A$ is unchanged under arc-sheafification.
\end{theorem}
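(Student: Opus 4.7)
The plan is to reduce ball-invariance of the arc-sheafification of $M$ to a concrete cohomological computation on the Berkovich disc, and then to exploit the transfer structure to make that computation collapse. By arc-descent and finitariness, I can assume $X=\mathcal M_{\mathrm{arc}}(C)$ for $C$ an algebraically closed non-discrete Banach field. Restricted to strictly totally disconnected Banach $C$-algebras, $M$ is additive (from the transfer structure) and finitary, so by Theorem~\ref{theorem:finitaryarc} it is already an arc-hypersheaf on that basis; hence the derived arc-sheafification $M^{\mathrm{sh}}\in \mathcal D_{\mathrm{fin}}(X,\mathbb Z)$ agrees with $M$ on strictly totally disconnected inputs. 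Ball-invariance of $M^{\mathrm{sh}}$ then reduces to showing that
\[
M(C)\to M^{\mathrm{sh}}(C\langle T\rangle_1)=R\Gamma_{\mathrm{arc}}(C\langle T\rangle_1, M)
\]
is an isomorphism after inverting the relevant primes.

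By the stalk description, $R\nu_\ast M$ on the compact Hausdorff space $\mathcal M(C\langle T\rangle_1)$ has stalk $M(K(x))$ at $x$. The case $C=\mathbb C$ is immediate from ball-invariance of $M$ (the disc is contractible and all residue fields are $\mathbb C$). The substantive case is nonarchimedean, where $\mathcal M(C\langle T\rangle_1)$ is the infinite tree whose points are of one of the four types (1)--(4) described at the end of Section~2. The $H^0$ contribution reads off $M(C)$; the real work is to kill $H^1$ (higher cohomology does not arise since the tree has dimension one). This is essentially the obstacle handled in the proof of Theorem~\ref{thm:reducedunitsballinvariant}, and my plan is to mirror that strategy: exhaust the tree by its finite-tree truncations over finite subsets $A\subset\mathcal O_C$, and on each truncation use the transfer of $M$ along the finite monic polynomials $T-a\mapsto (T-a)^n$ together with the product formula on $\mathbb P^1_k$ (the residue field of $C$) to show that any first cohomology class is harmonic with vanishing boundary contributions, hence identically zero.

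The main obstacle, and the sole reason the transfer structure is needed at all, is to control the integers produced by these norm operations: the composition of a pullback along a degree-$n$ cover with the corresponding transfer is multiplication by $n$ on $M$, so inverting $n$ turns pullback into a split injection. The $n$ that arise in the argument are supported on the residue characteristic primes $p$ of $C$, i.e.\ on primes with $|p|<1$ somewhere on $X$ -- this matches exactly the set of primes the theorem demands we invert. Type~(3) and (4) points of the tree are handled by approximation through type~(2) points, using finitariness of $M$. After full rationalization all integer multiplications are invertible, the retractions become honest splittings, and on any open affine subset of $\mathbb A^1_A$ one obtains that $M\otimes\mathbb Q$ is already an arc-hypersheaf whose value is unchanged under arc-sheafification; this gives the ``moreover'' statement. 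Finally, the mod-$n$ invariance under change of algebraically closed field is obtained by writing an extension $C\subset C'$ of algebraically closed non-discrete Banach fields as a filtered colimit of $C\langle T_1,\ldots,T_k\rangle_1$-style algebras and combining the already-proved ball-invariance of $M^{\mathrm{sh}}$ with finitariness.
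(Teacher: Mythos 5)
Your setup (reduction to $X=\mathcal M_{\mathrm{arc}}(C)$, agreement of the sheafification with $M$ on strictly totally disconnected algebras, reduction to computing the value on the disc) is fine, but the two places where the actual work happens are not carried out, and the sketched strategy would not carry them out. For the rational part you propose to ``mirror'' the harmonicity argument of Theorem~\ref{thm:reducedunitsballinvariant}, but that argument is specific to $\overline{\mathbb G}_m$: it uses the norm map $\overline{\mathbb G}_m\otimes\mathbb Q\to\mathbb R$, piecewise linearity of the resulting function on the tree, slopes computed by valuations at type (2) points, and the product formula. None of this structure exists for an abstract finitary ball-invariant presheaf with transfer, so ``any first cohomology class is harmonic with vanishing boundary contributions'' has no meaning here; moreover your assertion that ``$H^0$ reads off $M(C)$'' is exactly as nontrivial as killing $H^1$ (in the $\overline{\mathbb G}_m$ case the $H^0$ identification was the hard part). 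The paper's actual mechanism is different: rationally one first gets finite \'etale descent from traces, then shows the presheaf $U\mapsto M(U)$ is already a sheaf on the Berkovich tree by proving the Mayer--Vietoris sequence $0\to M(U)\to M(U_1)\oplus M(U_2)\to M(V)\to 0$ is exact, and this is reduced to a split exact sequence of free ball-invariant presheaves with transfer $\mathbb Q_{\mathrm{tr},\mathbb B}[-]$, whose mapping spaces are computed by comparison with $\mathbb Q_{\mathrm{mot}}[U]\cong\mathbb Q\oplus\overline{\mathbb G}_m^d$ (Lemma~\ref{lem:toycase}, with its Newton-polygon/straight-line-homotopy input). Nothing in your sketch supplies a substitute for this splitting, and your claim that the only role of transfers is to make degree-$n$ pullbacks split after inverting $n$ misses their essential structural use.

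The second gap is the mod-$n$ statement. You propose to deduce invariance under change of algebraically closed field from the already-proved ball-invariance plus finitariness, by writing $C\subset C'$ as a colimit of polydisc-type algebras. This fails: an extension of algebraically closed fields is not a filtered colimit of polydiscs (e.g.\ the completed algebraic closure of the residue field at a Gau\ss\ point is approximated by annuli and worse), and ball-invariance does not imply field-invariance in general --- $\overline{\mathbb G}_m\otimes\mathbb Q$ is ball-invariant but not invariant under change of field, which is why the statement is only modulo $n$. The implication ``ball-invariant mod $n$ $\Rightarrow$ invariant under change of field'' is in the paper a downstream corollary of Theorem~\ref{thm:freemotivicsheaves}, i.e.\ of the very theorem you are proving, so using it would be circular; in fact the paper's logic runs the other way: the mod-$n$ field-invariance is proved first, directly, via the geometric divisibility Lemma~\ref{lem:dividebypcurve} (writing $\mathcal M_{\mathrm{arc}}(C')$ as a limit of smooth curves and dividing divisors by $p$ on a semiabelian generalized Jacobian to produce a $\mathbb B$-homotopy), and it is then fed into Proposition~\ref{prop:etalesheavesballinvariant} to obtain ball-invariance of the torsion part. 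This lemma, and the entire torsion half of the argument, is absent from your proposal, and your unified tree-cohomology computation would in any case not handle prime-to-$p$ torsion coefficients, where the Galois cohomology of the residue fields at type (2) and (3) points is genuinely nonzero.
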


\begin{proof} The derived arc-sheafification can be computed by first restricting to strictly totally disconnected Banach rings $A$ over $X$, and then using Theorem~\ref{theorem:finitaryarc} and pro-\'etale sheafification. This is still finitary by Lemma~\ref{lem:quasiproetalecohomfiltcolim}. The ball-invariance is a statement about values on algebraically closed nonarchimedean Banach fields $C$, so we can assume that $X=\mathcal M_{\mathrm{arc}}(C)$. We can consider separately the cases where $M$ is torsion, and where $M$ is rational. In the torsion case, we have to see that $M$ is invariant under change of algebraically closed field. Thus, let $C'/C$ be an extension of algebraically closed Banach fields. By induction, we can assume that $C'$ has topological transcendence degree $1$ over $C$. Then $\mathcal M_{\mathrm{arc}}(C')$ can be written as a cofiltered limit of $U_i$ where $U_i/C$ are connected smooth Berkovich curves. By finitaryness, any $C'$-point is induced from some $U_i$-point; as $U_i$ has a $C$-point $x\in U_i$, this already implies injectivity of $M(C)\to M(C')$. For surjectivity, consider the $C$-point $x\in U_i$ just chosen as well as the tautological $C'$-point $x'$ of $U_i$. It suffices to see that the map $M(U_i)\to M(C')$ induced by $x'$ agrees with the composite map $M(U_i)\to M(C)\hookrightarrow M(C')$ induced by $x$. This follows from Lemma~\ref{lem:dividebypcurve}.

It remains to handle the case that $M$ is rational. Then $M$ satisfies finite \'etale descent as the transfers yield trace maps. It then suffices to see that the functor taking any open subset $U\subset \mathbb A^1_C$ to $M(U)$ is already a sheaf of complexes on the underlying Berkovich space of $\mathbb A^1_C$. As the Berkovich space of $\mathbb A^1_C$ is a tree, it suffices to see that for all connected open subsets $U\subset \mathbb A^1_C$ covered by two connected open subsets $U_1,U_2\subset U$ whose intersection $V=U_1\cap U_2$ is an open annulus, the sequence
\[
0\to M(U)\to M(U_1)\oplus M(U_2)\to M(V)\to 0
\]
is exact. By finitaryness, and the structure of $\mathbb A^1_C$, we can in fact assume that the complements of each of $U$, $U_1$, $U_2$ and $V$ in $\mathbb P^1_A$ is a finite disjoint union of closed discs. Let $\mathbb Q_{\mathrm{tr},\mathbb B}[U]$ denote the free ball-invariant presheaf of $\mathbb Q$-vector spaces with transfer on $U$; explicitly, this is the rationalization of (the group completion of) $\pi_0 L_{\mathbb B}^{\mathrm{pre}} \mathbb N[U]$. It suffices to see that
\[
0\to \mathbb Q_{\mathrm{tr},\mathbb B}[V]\to \mathbb Q_{\mathrm{tr},\mathbb B}[U_1]\oplus \mathbb Q_{\mathrm{tr},\mathbb B}[U_2]\to \mathbb Q_{\mathrm{tr},\mathbb B}[U]\to 0
\]
is a split short exact sequence in the category of ball-invariant presheaves of $\mathbb Q$-vector spaces with transfer. For this claim, which amounts only to the existence of certain maps satisfying certain identities, it is sufficient to understand the very small part of the category of ball-invariant presheaves of abelian groups with transfer that is spanned by the four objects
\[
\mathbb Q_{\mathrm{tr},\mathbb B}[V], \mathbb Q_{\mathrm{tr},\mathbb B}[U_1], \mathbb Q_{\mathrm{tr},\mathbb B}[U_2], \mathbb Q_{\mathrm{tr},\mathbb B}[U].
\]
By Lemma~\ref{lem:toycase} all maps between those are the same as the maps between their $\mathbb Q_{\mathrm{mot}}$-versions. But for the $\mathbb Q_{\mathrm{mot}}$-version, it is clear that one has an exact triangle, and in fact it is split as one of $U_1$ and $U_2$ is an open subset of an annulus $V'$ and the resulting map $\mathbb Q_{\mathrm{tr},\mathbb B}[V]\to \mathbb Q_{\mathrm{tr},\mathbb B}[V']$ is an isomorphism (again by comparison to $\mathbb Q_{\mathrm{mot}}$).
\end{proof}

\begin{lemma}\label{lem:dividebypcurve} Let $C$ be a non-discrete algebraically closed nonarchimedean field and let $U$ be a smooth connected curve over $C$. Let $p$ be a prime. For any two points $x,x'\in U(C)$ one can find some integer $n$ and points $x_1,\ldots,x_n\in U(C)$ and $x_1',\ldots,x_n'\in U(C)$ such that the points
\[
x+px_1+\ldots+px_n,x'+px_1'+\ldots+px_n'\in \mathbb N[U]
\]
are $\mathbb B$-homotopic.
\end{lemma}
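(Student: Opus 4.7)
The plan is to trade the factor of $p$ in $\mathbb N[U]$ for $p$-divisibility of the Jacobian of the smooth projective compactification, convert the resulting linear equivalence into sections of $\mathrm{Sym}^d(U)$ over disks in $\mathbb P^1$ along an explicit rational function $g$, and collapse the resulting chain of ball-homotopies into a single one using the additive structure of $\mathbb N[U]$ together with the $p$-multiple.

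First I would produce a rational function with prescribed zeros and poles. Let $\bar U\supseteq U$ be the smooth projective compactification and $J:=\mathrm{Pic}^0(\bar U)$. Since $J(C)$ is an abelian variety over an algebraically closed field, $[p]:J(C)\to J(C)$ is surjective, so there is a class $[E]\in J(C)$ with $p[E]=[x-x']$. Using that $\mathrm{Sym}^k(U)\to J(C)$, $\sum q_i\mapsto [\sum q_i-kq_0]$, is surjective for $k\ge g$ (as $U$ is Zariski-dense in $\bar U$ and $\mathrm{Sym}^k(\bar U)\to J$ is surjective for $k\ge g$), I can represent $E=E^+-E^-$ with $E^\pm$ effective of common degree $n_0$, supported in $U$, with disjoint supports and disjoint from $\{x,x'\}$. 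Then $D:=x+pE^-$ and $D':=x'+pE^+$ are effective divisors on $\bar U$ of common degree $d:=1+pn_0$, supported in $U$, with $[D-D']=0$. Choose $g\in K(\bar U)^\times$ with $\mathrm{div}(g)=D-D'$; as the positive and negative parts of $\mathrm{div}(g)$ coincide with the disjoint effective divisors $D$ and $D'$, the finite flat degree-$d$ map $g:\bar U\to \mathbb P^1$ satisfies $g^{-1}(0)=D$ and $g^{-1}(\infty)=D'$ as relative Cartier divisors.

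Next I would chain ball-homotopies along $g$. Let $\Sigma:=g(\bar U\setminus U)\subset \mathbb P^1$; this is finite and does not meet $\{0,\infty\}$. For any closed disk $\mathbb D\subset \mathbb P^1$ avoiding $\Sigma$, $g^{-1}(\mathbb D)$ is a finite flat subscheme of $U$ of degree $d$ over $\mathbb D$, giving a section $\mathbb D\to \mathrm{Sym}^d(U)\subset \mathbb N[U]$; the same holds for small disks around $0$ or $\infty$, using that $g^{-1}(0),g^{-1}(\infty)\subset U$ and $U$ is open. Because $\mathbb P^1\setminus \Sigma$ is arc-connected (it is an open in a smooth Berkovich curve), I can produce a finite sequence of closed disks $\mathbb D_0\ni 0,\mathbb D_1,\ldots,\mathbb D_m\ni \infty$ with $\mathbb D_i\cap \mathbb D_{i+1}\neq \emptyset$ and with $g^{-1}(\mathbb D_i)\subset U$ for all $i$. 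Choosing transition points $t_i\in \mathbb D_i\cap \mathbb D_{i+1}$, setting $D_0:=D$, $D_m:=D'$, $D_i:=g^{-1}(t_i)\in \mathrm{Sym}^d(U)(C)$ for intermediate $i$, and identifying each $\mathbb D_i\cong \mathbb B$ so that its two ``specialization'' points map to $0,1$, I obtain ball-homotopies $\sigma_i:\mathbb B\to \mathrm{Sym}^d(U)\subset \mathbb N[U]$ with $\sigma_i(0)=D_{i-1}$ and $\sigma_i(1)=D_i$.

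Finally I would collapse the chain into a single ball-homotopy. Define $\tau:\mathbb B\to \mathbb N[U]$ by $\tau(t):=\sum_{i=1}^m \sigma_i(t)$ using the monoid structure of $\mathbb N[U]$; then $\tau(0)=D+\sum_{i=1}^{m-1}D_i$ and $\tau(1)=D'+\sum_{i=1}^{m-1}D_i$. Adding the constant homotopy $(p-1)\sum_{i=1}^{m-1}D_i$ yields a ball-homotopy in $\mathbb N[U]$ between
\[
x+p\Bigl(E^-+\sum_{i=1}^{m-1}D_i\Bigr)\quad\text{and}\quad x'+p\Bigl(E^++\sum_{i=1}^{m-1}D_i\Bigr),
\]
and since $|E^+|=|E^-|=n_0$, expanding each effective divisor as a formal sum of $C$-points of $U$ produces sequences $x_j,x_j'$ of common length $n:=n_0+(m-1)d$ as required. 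The main obstacle is arranging the supports correctly: both that the class $p^{-1}[x-x']\in J(C)$ is represented by a divisor supported in $U$ whose positive and negative parts are disjoint from each other and from $\{x,x'\}$ (so that $\mathrm{div}(g)$ decomposes exactly as $D-D'$), and that a chain of disks from $0$ to $\infty$ in $\mathbb P^1$ with $g$-preimages entirely in $U$ can be arranged. Both reduce to standard linear-system arguments on $\bar U$ and arc-connectedness of Berkovich $\mathbb P^1\setminus \Sigma$, but they are where all the genericity subtleties are concentrated.
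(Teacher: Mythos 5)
Your first half (dividing $[x-x']$ by $p$ on the Jacobian, realizing the resulting relation by a rational function $g$, and the trick of adding the constant $(p-1)\sum D_i$ at the end to restore $p$-divisibility of the auxiliary points) is in the spirit of the paper's argument, but the proof breaks at the chaining step. First, in the situation where the lemma is applied, $U$ is a Berkovich curve and (after the shrinking the paper performs) $\bar U\setminus U$ is a finite union of closed discs, not finitely many points; so $\Sigma=g(\bar U\setminus U)$ is a one-dimensional compact subset of $\mathbb P^1$, not a finite set. Second, and decisively, even if $\Sigma$ were a finite set of classical points, arc-connectedness of $\mathbb P^1\setminus\Sigma$ does \emph{not} produce a finite chain of closed discs from $0$ to $\infty$ avoiding $\Sigma$. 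The union of such a chain is a connected subset of the $\mathbb R$-tree $\mathbb P^1$ containing $0$ and $\infty$, hence contains the whole arc between them, in particular the Gauss point $\eta_{0,r}$ of the disc of radius $r$ around $0$ for every $r$. Now suppose $\Sigma$ contains two points $a,a'$ with $|a|=|a'|=|a-a'|=r$. A bounded closed disc $D(c,\rho)\ni\eta_{0,r}$ has $\rho\ge\max(r,|c|)\ge|a-c|$, so it contains $a$; a disc of the form $\{|w-c|\ge\rho\}$ avoiding both $a,a'$ has $\rho>|a-a'|=r$, while containing $\eta_{0,r}$ forces $\max(r,|c|)\ge\rho>r$, hence $|c|\ge\rho$ and then $|a-c|=|c|\ge\rho$, a contradiction. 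So every closed disc through $\eta_{0,r}$ meets $\Sigma$, and no chain exists. Since you have no control over where $g$ sends the boundary of $U$, such configurations cannot be excluded; the sentence ``I can produce a finite sequence of closed disks $\ldots$ with $g^{-1}(\mathbb D_i)\subset U$'' is therefore a genuine gap, not a routine connectedness argument.

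This is exactly the difficulty the paper's proof is built to circumvent, and it does so without any global path in $\mathbb P^1$: it shrinks $U$ so that the boundary is a finite union of closed discs, chooses one point $z_i$ in each, and divides $\mathcal O([x]-[x'])$ by $p$ in the \emph{generalized} Jacobian of line bundles trivialized at the $z_i$, then picks the rational section normalized to equal $1$ at each $z_i$. The two tautological sections $s_1,s_2$ of $\mathcal O([x]+p\sum[x_j])\cong\mathcal O([x']+p\sum[x_j'])$ then satisfy $s_1/s_2=1$ at $z_i$; since a unit on a closed disc is a constant times $1+(\text{small})$, this forces $s_1/s_2=1+h$ with $\|h\|<1$ on the whole boundary disc, so the single straight-line interpolation $ts_1+(1-t)s_2$ has no zeros or poles on the boundary for any $t\in\mathbb B$ and its divisor gives one homotopy with support in $U$. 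To salvage your route you would need a comparable normalization along the boundary; without it, the disc-chain step fails.
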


\begin{proof} Potentially after shrinking $U$ (still containing $x$ and $x'$) we can find a smooth projective compactification $X$ of $U$ such that $Z=X\setminus U$ is a finite union of closed discs, see \cite[Corollary 1.3]{vanderPut}. Choose one point $z_i\in Z$ in each of those boundary discs, and consider the moduli space of degree $0$ line bundles $\mathcal L$ on $X$ with trivializations at all $z_i$. This is a semiabelian variety, and in particular multiplication by $p$ is surjective. We can then divide $\mathcal O([x]-[x'])$ by $p$ to get some line bundle $\mathcal L$, trivialized at the $z_i$, with $\mathcal L^{\otimes p}\cong \mathcal O([x]-[x'])$, compatibly with the trivializations at the $z_i$. We can then find a rational section $s$ of $\mathcal L$ without zeroes or poles on the boundary discs, and equal to $1$ at the $z_i$. This gives an isomorphism $\mathcal L\cong \mathcal O(-D)$ for some divisor $D$ (necessarily of degree $0$) supported on $U$, so we can write
\[
D=[x_1]+\ldots+[x_n]-[x_1']-\ldots-[x_n']
\]
with $x_i,x_i'\in U(C)$. We get an isomorphism $\mathcal O(-pD)\cong \mathcal O([x]-[x'])$, or, rearranging terms,
\[
\mathcal O([x]+p[x_1]+\ldots+p[x_n])\cong \mathcal O([x']+p[x_1']+\ldots+p[x_n'])
\]
that is moreover the identity at the fibres at $z_i$. Let $s_1$ and $s_2$ be the corresponding tautological sections of this line bundle; both are equal to $1$ at $z_i$. Taking a variable $t\in \mathbb B$, we get the section $ts_1 + (1-t) s_2$ which for $t=0$ is $s_2$ but for $t=1$ is $s_1$. The value at $z_i$ is constant $1$, and this implies that these sections have no zeroes on the boundary discs (as $\frac{s_1}{s_2}$ is an invertible function on the disc and equal to $1$ at $z_i$, thus $|\frac{s_1}{s_2}-1|<1$ and therefore $1 + t(\frac{s_1}{s_2}-1) = \frac{ts_1+(1-t)s_2}{s_2}$ is a $1$-unit), so their divisors are entirely supported on $U\subset X$. This gives the desired homotopy.
\end{proof}

The following lemma, which in the end is just a special case of the full theorem, was used in the proof.

\begin{lemma}\label{lem:toycase} Let $C$ be a non-discrete algebraically closed nonarchimedean Banach field, and let $U,V\subset \mathbb A^1_C$ be two open subsets whose complements in $\mathbb P^1_A$ are finite disjoint unions of closed discs. Then the value of $\mathbb Q_{\mathrm{mot}}[U]$ on $V$ is concentrated in degree $0$ and given by the cokernel of
\[
(\mathbb N[U])(V\times \mathbb B)\otimes \mathbb Q\xrightarrow{i_0^\ast-i_1^\ast}(\mathbb N[U])(V)\otimes \mathbb Q.
\]
\end{lemma}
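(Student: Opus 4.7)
The goal is to show that for $U,V$ of the specified form, the complex $\mathbb{Q}_{\mathrm{mot}}[U](V)$ is concentrated in degree $0$ and equals the stated cokernel. By construction, $\mathbb{Q}_{\mathrm{mot}}[U](V)$ is obtained from the simplicial commutative monoid $[n]\mapsto \mathbb{N}[U](V\langle \Delta^n\rangle)$ by group completion, rationalization, and arc-sheafification, evaluated at $V$. The stated cokernel is exactly $\pi_0$ of the group-completed, rationalized simplicial object, before arc-sheafification. So the lemma reduces to two assertions: (I) the higher homotopy groups of the group-completed rationalized simplicial object at $V$ vanish, and (II) arc-sheafification preserves its value on such $V$.

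The plan is to prove (I) and (II) simultaneously by induction on the number $m$ of closed discs in $\mathbb{P}^1\setminus V$. The base case $m=1$ ($V$ an open disc) follows from ball-contractibility of $V$: scalar multiplication in a chosen coordinate gives a $\mathbb{B}$-contraction of $V$ onto any chosen point, collapsing the simplicial object to its value at a point and reducing arc-sheafification to the (trivial) case of a single point. The case $m=2$ ($V$ an open annulus) is handled using the identification $\mathbb{Z}_{\mathrm{mot}}[\mathbb{T}]\cong \mathbb{Z}_{\mathrm{mot}}[\mathbb{G}_m]$ from Proposition~\ref{prop:tatetwistcompact} together with the splitting $\mathbb{Z}_{\mathrm{mot}}[\mathbb{G}_m]\cong \mathbb{Z}\oplus \overline{\mathbb{G}}_m$, reducing to an explicit calculation with $\overline{\mathbb{G}}_m$ (whose value and arc-sheafification one can describe directly from Theorem~\ref{thm:reducedunitsballinvariant}). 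For the inductive step $m\geq 3$, write $V = V_1\cup V_2$ with each $V_i$ having strictly fewer boundary discs and $V_1\cap V_2$ an open annulus; the Mayer--Vietoris exact triangle for $\mathbb{Q}_{\mathrm{mot}}[U](V_\bullet)$ coming from the $j_\sharp$ functors of Proposition~\ref{prop:freemotivicsheaves} combines with the inductive hypothesis to determine $\mathbb{Q}_{\mathrm{mot}}[U](V)$, provided one matches it with an exact sequence at the presheaf level
\[
0\to \mathbb{N}[U](V)\otimes\mathbb{Q}\to \mathbb{N}[U](V_1)\otimes\mathbb{Q}\oplus \mathbb{N}[U](V_2)\otimes\mathbb{Q}\to \mathbb{N}[U](V_1\cap V_2)\otimes\mathbb{Q}\to 0
\]
valid in each simplicial degree after rationalization.

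The main obstacle is establishing this presheaf-level Mayer--Vietoris sequence and its compatibility with the motivic one. Rationalization is essential here: it makes trace maps available from the $\mathrm{Sym}^n$-structure of $\mathbb{N}[U]$, which encode descent for finite covers, while the residual descent for the analytic covering $V_1\cup V_2\to V$ can be handled by hand because $V$ is $1$-dimensional and the pieces can be chosen with controlled boundary behavior. Matching the connecting maps between the geometric and presheaf Mayer--Vietoris sequences is then a naturality check for $L_{\mathbb{B}}^{\mathrm{pre}}$ under pullback, and combined with the inductive hypothesis propagates both (I) and (II) from smaller $m$ to $V$, completing the proof.
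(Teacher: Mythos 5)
Your strategy has a genuine gap at its decisive step. The induction on the number of boundary discs of $V$ only closes if you have the presheaf-level Mayer--Vietoris sequence
\[
0\to \mathbb N[U](V)\otimes\mathbb Q\to \mathbb N[U](V_1)\otimes\mathbb Q\oplus \mathbb N[U](V_2)\otimes\mathbb Q\to \mathbb N[U](V_1\cap V_2)\otimes\mathbb Q\to 0,
\]
and in particular surjectivity on the right. As you state it (``valid in each simplicial degree'') this is too strong: a family of points of $U$ parametrized by the annulus $V_1\cap V_2$ has no reason to be, on the nose, a $\mathbb Q$-linear combination of families defined over $V_1$ and over $V_2$; producing such decompositions requires moving families of divisors by explicit $\mathbb B$-homotopies, which is precisely the geometric content (manipulations as in Lemma~\ref{lem:dividebypcurve}, Newton polygon arguments) that your sketch never supplies --- the appeal to trace maps from the $\mathrm{Sym}^n$-structure and to ``handled by hand because $V$ is one-dimensional'' does not produce it. Moreover, the weaker form you actually need (exactness after applying $L_{\mathbb B}^{\mathrm{pre}}$ and group completion) is essentially equivalent to the lemma itself: in the paper this Mayer--Vietoris exactness for ball-invariant presheaves with transfer is \emph{deduced from} Lemma~\ref{lem:toycase} in the proof of Theorem~\ref{thm:ballinvariantpresheaftransferarcsheafification}, so your route is circular at the key point. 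The same issue already afflicts your base cases: Proposition~\ref{prop:tatetwistcompact} and Theorem~\ref{thm:reducedunitsballinvariant} tell you what $\mathbb Q_{\mathrm{mot}}[U](V)$ is as a motivic object, but the lemma asserts that this coincides with the naive cokernel of $i_0^\ast-i_1^\ast$ on $\mathbb N[U](V)\otimes\mathbb Q$, and that identification is not an ``explicit calculation with $\overline{\mathbb G}_m$'': one must show that every motivic class on $V$ is represented by an actual map $V\to\mathrm{Sym}^n(U)$ and that two such maps with the same motivic invariants become $\mathbb B$-homotopic.

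For comparison, the paper's proof uses no induction and no presheaf Mayer--Vietoris: it writes $\mathbb Q_{\mathrm{mot}}[U]$ (and likewise $\mathbb Q_{\mathrm{mot}}[V]$) explicitly as a direct sum of $\mathbb Q$ and copies of $\overline{\mathbb G}_m$ attached to the boundary discs, checks that all mapping complexes between these summands are concentrated in degree $0$ (no maps $\overline{\mathbb G}_m\to\mathbb Q$, and $\mathrm{End}(\overline{\mathbb G}_m)=\mathbb Q$ in degree $0$, computed at a type (3) point), and then identifies the resulting explicit answer with the displayed cokernel by the straight-line interpolation observation: two maps $V\to\mathrm{Sym}^n(U)\subset\mathbb A^n_C$ with equal motivic invariants can be joined by the linear homotopy, which stays inside $\mathrm{Sym}^n(U)$ by a Newton polygon check on each boundary disc. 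That last geometric step, or a substitute for it, is what your proposal is missing.
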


\begin{proof} Assume that the complement of $U$ in $\mathbb P^1_A$ is the union of closed discs $Z_0,\ldots,Z_d$, where $Z_0$ is a disc around $\infty$. Then $\mathbb Q_{\mathrm{mot}}[U]\cong \mathbb Q\oplus \overline{\mathbb G}_m^d$. We may similarly describe $\mathbb Q_{\mathrm{mot}}[V]$, invoking $e$ finite boundary discs. There are no maps $\overline{\mathbb G}_m\to \mathbb Q$ as those correspond to the reduced $\mathbb Q$-cohomology of $\mathbb P^1$ which vanishes. The endomorphisms of $\overline{\mathbb G}_m\otimes \mathbb Q$ are given by $\mathbb Q$, concentrated in degree $0$ (they can be computed using the value of $\overline{\mathbb G}_m$ at a type (3) point, which we may assume exists by reducing to the case that $C$ is separable). We see that the maps between $\mathbb Q_{\mathrm{mot}}[U]$ and $\mathbb Q_{\mathrm{mot}}[V]$ are concentrated in degree $0$, and have a very explicit description as
\[
\mathbb Q\oplus (C^\times/(1+C_{<1}))^d\otimes \mathbb Q\oplus \mathbb Q^{de}.
\]
Concretely, the first two components are equivalent to a map $\mathbb Q\to \mathbb Q_{\mathrm{mot}}[U]$, corresponding to restriction along a chosen point $v\in V$; and the component $\mathbb Q^{de}$ measures degrees of maps on boundary tori.

Now we first prove surjectivity of the map
\[
(\mathbb N[U])(V)\otimes \mathbb Q\to \mathbb Q\oplus (C^\times/(1+C_{<1}))^d\otimes \mathbb Q\oplus \mathbb Q^{de}.
\]
It suffices to do this in case $e=1$: In general write $V=V_1\cap\ldots\cap V_e$ and use the commutative diagram
\[\xymatrix{
\bigoplus_i (\mathbb N[U])(V_i)\otimes \mathbb Q\ar[d]\ar[r] & \bigoplus_i (\mathbb Q\oplus (C^\times/(1+C_{<1}))^d\otimes \mathbb Q\oplus \mathbb Q^e) \ar[d] \\
(\mathbb N[U])(V)\otimes \mathbb Q \ar[r] & \mathbb Q\oplus (C^\times/(1+C_{<1}))^d\otimes \mathbb Q\oplus \mathbb Q^{de}
}\]
where the right-vertical map is surjective. Thus, shifting and rescaling $V$ if necessary, we can assume $V=\mathbb T(s_1,s_2)$ for some $0<s_1<1<s_2$. Similarly, we can assume that
\[
U=\mathbb D(0,<r_2)\setminus \bigcup_{i=1}^d \mathbb D(a_i,\leq r_1)
\]
for some $r_2>1>r_1$ and $a_i\in \mathcal O_C$, pairwise distinct in the residue field $k$. In general, there is an open immersion $\mathrm{Sym}^n(U)\subset \mathbb A^n_C$ given by the characteristic polynomial. We now claim that for any sufficiently large integers $k_1,\ldots,k_d\geq 1$ the map
\[
V\to \mathbb A^n_C: t\mapsto \prod_{i=1}^d (T-a_i)^{k_i} - t
\]
factors over $\mathrm{Sym}^n(U)$, where $n=k_1+\ldots+k_d$. Concretely, this means that any $T\in C$ satisfying
\[
\prod_{i=1}^d (T-a_i)^{k_i} = t
\]
must lie in $U$. Now if we had $|T|\geq r_1>1$, then as all $|a_i|\leq 1$, the left-hand side has absolute value $r_2^n$, which is $>s_2>|t|$ once $n$ is sufficiently large. On the other hand, if for some $i$ we have $|T-a_i|\leq r_1$, then for all other $j$ we have $|T-a_i|=|a_i|\leq 1$, and the absolute value of the left-hand side is at most $r_1^n$, which is $<s_1<|t|$ once $n$ is sufficiently large. Thus indeed, the map factors over $\mathrm{Sym}^n(U)$. This means that the composite map
\[
(\mathbb N[U])(V)\otimes \mathbb Q\to \mathbb Q\oplus (C^\times/(1+C_{<1}))^d\otimes \mathbb Q\oplus \mathbb Q^d\to \mathbb Q^d
\]
has $(k_1,\ldots,k_d)$ in its image, for all sufficiently large $k_1,\ldots,k_d$. This shows that this composite map is surjective. Now it suffices to see that the map
\[
(\mathbb N[U])(\ast)\otimes \mathbb Q\to \mathbb Q\oplus (C^\times/(1+C_{<1}))^d\otimes \mathbb Q
\]
is surjective. But this takes an element $T$ of $U(C)$ to $(1,(T-a_i)_i)$. Pick two elements $T_1,T_2$ with $|T_1-a_i|,|T_2-a_i|<1$ for some $i$. Then for $j\neq i$, $\frac{T_1-a_j}{T_2-a_j} = 1 + \frac{T_2-T_1}{T_2-a_j}$ is a $1$-unit, so the difference of the images of $T_1$ and $T_2$ is nonzero only in the $i$-th component, where it is the image of $\frac{T_1-a_i}{T_2-a_i}$. This shows that
\[
0\oplus (C^\times/(1+C_{<1}))^d\otimes \mathbb Q\subset \mathbb Q\to \mathbb Q\oplus (C^\times/(1+C_{<1}))^d\otimes \mathbb Q
\]
is in the image, but then the image is everything.

Finally, we need to prove injectivity. Thus consider two maps $V\to \mathrm{Sym}^n(U)$ yielding the same element of
\[
\mathbb Q\oplus (C^\times/(1+C_{<1}))^d\otimes \mathbb Q\oplus \mathbb Q^{de}.
\]
We regard them as maps $f_0,f_1: V\to \mathrm{Sym}^n(U)\subset \mathbb A^n_C$, and interpolate to a family of maps
\[
(f_t)_{t\in \mathbb B}: V\times \mathbb B\to \mathbb A^n_C
\]
via $f_t = (1-t)f_0 + tf_1$. We claim that this factors over $\mathrm{Sym}^n(U)\subset \mathbb A^n_C$. This can be checked pointwise, so we reduce to the case that $V$ is a point. Writing $U$ as an intersection (and translating it), we can also assume that $U = \mathbb T(r_1,r_2)$ is an annulus. Then it follows from the observation, in the proof of Proposition~\ref{prop:tatetwistcompact}, that
\[
\mathrm{Sym}^n(U)\to \mathbb T(r_1^n,r_2^n)
\]
is a fibration in open balls. Here, the assumption that $f_0$ and $f_1$ induce the same map on $\mathbb Q_{\mathrm{mot}}[-]$ ensures that they yield the same element of $\mathbb T(r_1^n,r_2^n)$ up to $1$-units.
\end{proof}

\begin{corollary} Let $n$ be an integer such that $|n|\geq 1$ everywhere on $X$, and let $M\in \mathcal D_{\mathrm{fin}}(X,\mathbb Z/n\mathbb Z)$. Then $M$ is ball-invariant if and only if $M$ is invariant under change of algebraically closed field.
\end{corollary}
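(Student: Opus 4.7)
The direction ``invariance under change of algebraically closed field $\Rightarrow$ ball-invariance'' is already Proposition~\ref{prop:etalesheavesballinvariant}, so I focus on the reverse implication. Given $M \in \mathcal D_{\mathrm{mot}}^{\mathrm{eff}}(X, \mathbb Z/n\mathbb Z)$ ball-invariant and a map $C \to C'$ of algebraically closed Banach fields over $X$, I want to show $M(C) \to M(C')$ is an equivalence. Ball-invariance is preserved by pullback, so after replacing $X$ by $\mathcal M_{\mathrm{arc}}(C)$ I may assume $X = \mathcal M_{\mathrm{arc}}(C)$; since finitariness only tests non-discrete fields, I may take $C$ non-discrete, in which case $X$ is analytic and has $d(X) = 0$ (as $C$ is algebraically closed and $\mathcal M(C)$ is a point).

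Having made this reduction, the plan is to combine two ingredients already in hand. First, Proposition~\ref{prop:freemotivicsheavesgenerate} (whose proof passes verbatim to the $\mathbb Z/n\mathbb Z$-linear setting, since it only uses that $N(U)=0$ for all open $U\subset \mathbb A^m_X$ forces $N=0$) tells us that $\mathcal D_{\mathrm{mot}}^{\mathrm{eff}}(X, \mathbb Z/n\mathbb Z)$ is generated by the free motivic sheaves $\mathbb Z_{\mathrm{mot}}[U]/n$. Second, Theorem~\ref{thm:freemotivicsheaves}(ii) tells us that each such generator is already invariant under change of algebraically closed field --- this is the substantive geometric input, encapsulating Theorem~\ref{thm:ballinvariantpresheaftransferarcsheafification} and the curve argument of Lemma~\ref{lem:dividebypcurve}.

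The rest is then a formal density argument. Let $\mathcal C \subset \mathcal D_{\mathrm{mot}}^{\mathrm{eff}}(X, \mathbb Z/n\mathbb Z)$ be the full subcategory of $N$ for which $N(C) \to N(C')$ is an equivalence. The functors $N \mapsto N(C)$ and $N \mapsto N(C')$ on $\mathcal D_{\mathrm{fin}}(X, \mathbb Z/n\mathbb Z)$ commute with filtered colimits (finitariness) and with finite colimits and shifts (stability), and the vanishing of cohomological dimension for $C$ and $C'$ promotes this to commutation with all colimits; since colimits in $\mathcal D_{\mathrm{mot}}^{\mathrm{eff}}$ are computed in $\mathcal D_{\mathrm{fin}}$, this makes $\mathcal C$ closed under colimits and shifts inside $\mathcal D_{\mathrm{mot}}^{\mathrm{eff}}$. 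Containing the generators $\mathbb Z_{\mathrm{mot}}[U]/n$, it must equal all of $\mathcal D_{\mathrm{mot}}^{\mathrm{eff}}(X, \mathbb Z/n\mathbb Z)$. There is no real obstacle here: the hard geometric content has been packaged into Theorem~\ref{thm:freemotivicsheaves}(ii), and what remains is only bookkeeping about colimit-preservation of evaluation functors.
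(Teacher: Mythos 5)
Your proposal is correct and is essentially the paper's own argument: one direction is Proposition~\ref{prop:etalesheavesballinvariant}, and the converse reduces to checking that the generators $\mathbb Z_{\mathrm{mot}}[U]/n$ are invariant under change of algebraically closed field, which is Theorem~\ref{thm:freemotivicsheaves}(ii). You merely make explicit the reduction to a geometric point and the colimit/density bookkeeping (via Proposition~\ref{prop:freemotivicsheavesgenerate} and the vanishing cohomological dimension of algebraically closed points) that the paper's terse proof leaves implicit.
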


\begin{proof} We have already seen that if $M$ is invariant under change of algebraically closed field, then $M$ is ball-invariant. For the converse, we can assume that $X$ is strictly totally disconnected. Using Proposition~\ref{prop:freemotivicsheavesgenerate}, it suffices to see that for all open $U\subset \mathbb A^m_X$, the sheaf $\mathbb Z_{\mathrm{mot}}[U]/n$ is invariant under change of algebraically closed field. This follows from Theorem~\ref{thm:freemotivicsheaves}.
\end{proof}

Summarizing, we have the following theorem.

\begin{theorem} Let $p$ be a prime and let $A$ be a Banach ring. If $|p|_A<1$, then $p$ is invertible in $\mathcal D_{\mathrm{mot}}^{\mathrm{eff}}(\mathcal M(A))$. If $|p|\geq 1$ everywhere on $\mathcal M(A)$, then the subcategory of $p$-power torsion objects on $\mathcal D_{\mathrm{mot}}^{\mathrm{eff}}(\mathcal M(A))$ is equivalent to the left-completed derived category of $p$-power torsion \'etale sheaves.
\end{theorem}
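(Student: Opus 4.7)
Both parts reduce, via arc-descent and the stalk description of finitary sheaves on strictly totally disconnected rings, to the case $A=C$ an algebraically closed nonarchimedean Banach field. The hypothesis $|p|_A<1$ then becomes ``$C$ has residue characteristic $p$'', while $|p|\geq 1$ everywhere becomes ``$p$ is a unit in $C$'', since in the nonarchimedean case $|p|\leq 1$ automatically.

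For the second statement, the reduction makes the claim essentially formal. Under the hypothesis that $p$ is a unit in every residue field, the Corollary immediately preceding the theorem identifies ball-invariant finitary sheaves of $\mathbb{Z}/p^n\mathbb{Z}$-modules with finitary $\mathbb{Z}/p^n\mathbb{Z}$-sheaves that are invariant under change of algebraically closed field. Combined with the quasi-pro-\'etale analysis in Lemmas~\ref{lem:quasiproetaleoverstd} and \ref{lem:quasiproetalesheafprofin}, such sheaves coincide with the usual \'etale $\mathbb{Z}/p^n\mathbb{Z}$-sheaves on $\mathcal{M}_{\mathrm{arc}}(A)$. Passing to left-completed derived categories and then to the inverse limit over $n$ yields the claimed equivalence on $p$-power torsion objects.

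For the first statement, my plan is to show that $p$ acts invertibly on each generator $\mathbb{Z}_{\mathrm{mot}}[U]$ of $\mathcal{D}_{\mathrm{mot}}^{\mathrm{eff}}(C)$, where $U\subset\mathbb{A}^n_C$ is open (Proposition~\ref{prop:freemotivicsheavesgenerate}). Theorem~\ref{thm:freemotivicsheaves} gives an explicit presheaf formula for $\mathbb{Z}_{\mathrm{mot}}[U][1/p]$ as the arc-sheafification of the group completion of $L_{\mathbb{B}}^{\mathrm{pre}}(\mathbb{N}[U])$; the task is to upgrade this to an integral equality. The key input is that in residue characteristic $p$, the arc-sheaf $\overline{\mathbb{G}}_m$ is both $p$-torsion-free and $p$-divisible: the kernel $\mu_p\subset\mathbb{G}_m$ of $[p]$ lies inside $1+\mathcal{O}_{<1}$ (since every $p$-th root of unity is congruent to $1$ modulo the maximal ideal, or is trivially so in equal characteristic), so dies in $\overline{\mathbb{G}}_m$; and $p$-th roots exist arc-locally via Frobenius in equal characteristic or by algebraic closedness in mixed characteristic. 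Hence $\mathbb{Z}(1)=\overline{\mathbb{G}}_m[-1]$ has $p$ invertible.

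The main obstacle is then propagating this from $\mathbb{Z}(1)$ to the full set of generators $\mathbb{Z}_{\mathrm{mot}}[U]$. I would re-examine the proof of Theorem~\ref{thm:freemotivicsheaves}, which builds $L_{\mathbb{B}}^{\mathrm{pre}}(\mathbb{N}[U])^{\mathrm{gp}}$ out of symmetric powers $\mathrm{Sym}^n(U)$; using the identification $\mathrm{Sym}^n(\mathbb{G}_m)\cong\mathbb{A}^{n-1}\times\mathbb{G}_m$ (via the characteristic polynomial) and analogous local models near any $U$, every nontrivial factor reduces, modulo ball-invariant pieces, to $\overline{\mathbb{G}}_m$-type sheaves on which $p$ is already invertible. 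The punchline is that in residue characteristic $p$ the formula of Theorem~\ref{thm:freemotivicsheaves} in fact requires no inversion of $p$, because the relevant presheaves are already $p$-divisible and $p$-torsion-free; once this is verified, $p$ is invertible on every $\mathbb{Z}_{\mathrm{mot}}[U]$, hence on the unit, hence throughout $\mathcal{D}_{\mathrm{mot}}^{\mathrm{eff}}(\mathcal{M}(A))$.
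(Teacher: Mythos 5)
Your treatment of the second statement is correct and is essentially the paper's own proof: arc-descent to strictly totally disconnected $A$, then the corollary identifying ball-invariance of finitary $\mathbb Z/p^n$-sheaves with invariance under change of algebraically closed field, which identifies the $p$-power torsion part with sheaves on the profinite set $\mathcal M(A)$, i.e.\ with \'etale sheaves. (Your phrase about ``passing to the inverse limit over $n$'' is not quite how the $p$-power torsion subcategory is assembled, but this is cosmetic.)

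The first statement, however, has a genuine gap. Your plan is to show that $p$ acts invertibly on every generator $\mathbb Z_{\mathrm{mot}}[U]$ by arguing that the presheaves appearing in Theorem~\ref{thm:freemotivicsheaves} are already $p$-divisible and $p$-torsion-free in residue characteristic $p$, everything ``reducing to $\overline{\mathbb G}_m$-type sheaves''. This cannot work for the one generator that matters: take $U$ a ball, so that $\mathbb Z_{\mathrm{mot}}[U]=\mathbb Z$ is the unit; the associated presheaf $L_{\mathbb B}^{\mathrm{pre}}(\mathbb N[U])^{\mathrm{gp}}$ is just the constant presheaf $\mathbb Z$, which is neither $p$-divisible nor related to $\overline{\mathbb G}_m$ in any way. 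Since invertibility of $p$ in $\mathcal D_{\mathrm{mot}}^{\mathrm{eff}}$ is precisely the assertion that $p$ is invertible on the unit (equivalently, that the constant sheaf $\mathbb Z/p$ dies under $L_{\mathbb B}$), the case your mechanism cannot reach is the entire content of the claim. What you do establish --- that $\overline{\mathbb G}_m$, hence $\mathbb Z(1)$, is uniquely $p$-divisible in residue characteristic $p$ --- is already contained in the proof of Theorem~\ref{thm:reducedunitsballinvariant} and does not propagate back to the unit inside $\mathcal D_{\mathrm{mot}}^{\mathrm{eff}}$, where $\mathbb Z(1)$ is not $\otimes$-invertible (and invoking cancellation here would be circular, since its torsion case rests on the present theorem). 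The paper's actual argument is different and short: reduce to a Banach field with $|p|<1$, observe it is perfectoid and that ball-invariance passes through tilting via the perfectoid ball, so one may assume characteristic $p$; then the Artin--Schreier sequence $0\to\mathbb F_p\to\mathbb B\xrightarrow{x^p-x}\mathbb B\to 0$ of arc-sheaves, combined with the ball-contractibility of $\mathbb B$, shows that $\mathbb F_p$ vanishes in ball-invariant sheaves, i.e.\ $p$ is invertible. Some input of this kind, killing $\mathbb Z/p$ itself after ball-invariant localization, is what your sketch is missing.
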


\begin{proof} For the first part, we can assume that $A$ is an algebraically closed Banach field. It is then perfectoid, and the arc-site is invariant under tilting, cf.~\cite[Theorem 3.12, 3.13]{ECoD}; moreover, ball-invariance over $A$ implies invariance under the perfectoid ball by passing to filtered colimits, and hence ball-invariance on the tilt. (The converse is also true, see below.) Thus, we can assume that $A$ is of characteristic $p$. Here, the Artin-Schreier sequence $0\to \mathbb F_p\to \mathbb B\xrightarrow{x^p-x} \mathbb B\to 0$ shows that $\mathbb F_p=0$ in ball-invariant sheaves.

Now assume that $|p|\geq 1$ everywhere on $\mathcal M(A)$. By descent, we can reduce to the case that $A$ is strictly totally disconnected, in which case the subcategory of $p$-power torsion objects of $\mathcal D_{\mathrm{mot}}^{\mathrm{eff}}(\mathcal M(A))$ is equivalent to $p$-power torsion sheaves on the profinite set $\mathcal M(A)$, by the previous corollary. Those are also the \'etale sheaves, as desired.
\end{proof}

\begin{proposition}\label{prop:effectivetilting} Let $p$ be a prime and let $X$ be a small arc-stack with $|p|<1$ everywhere on $X$, with tilt $X^\flat$. Then tilting induces an equivalence
\[
\mathcal D_{\mathrm{mot}}^{\mathrm{eff}}(X)\cong \mathcal D_{\mathrm{mot}}^{\mathrm{eff}}(X^\flat)
\]
where both categories agree with their $\mathbb Z[\tfrac 1p]$-linear versions.
\end{proposition}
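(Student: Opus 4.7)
The plan is to handle the $\mathbb Z[1/p]$-linearity first and then the tilting equivalence itself. For the linearity, over $X^\flat$ it is immediate from the preceding theorem (the Artin--Schreier argument shows $\mathbb F_p=0$ in ball-invariant sheaves in characteristic $p$). Over $X$, by arc-descent I reduce to $X=\mathcal M_{\mathrm{arc}}(A)$ with $A$ uniform, and then compactness of $\mathcal M(A)$ combined with the uniform identity $|p|_A=\sup_x ||p||_{K(x)}$ promotes the pointwise hypothesis $|p|<1$ to $|p|_A<1$, so the preceding theorem applies directly.

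For the tilting equivalence, I would first reduce to the perfectoid site on both sides. For analytic $A$ with $|p|<1$, the perfectoidification $A\to A^{\mathrm{perfd}}$ is an arc-cover whose uniform self-tensor collapses to $A^{\mathrm{perfd}}$, parallel to the characteristic $p$ perfection example recorded earlier in the text. Hence the arc-topos on $X$ (resp.\ $X^\flat$) is equivalent to its restriction to perfectoid Banach rings with $|p|<1$ (resp.\ perfectoid Banach rings of characteristic $p$). Standard perfectoid theory then provides an equivalence $A\leftrightarrow A^\flat$ between these two classes, which induces homeomorphisms of Berkovich spectra, identifies arc-covers, carries strictly totally disconnected rings to strictly totally disconnected rings, and commutes with filtered colimits; so the categories of finitary arc-sheaves correspond on the two sides.

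The heart of the argument is showing that ball-invariance is tilting-symmetric. The key observation is that for any arc-sheaf $M$ and any perfectoid $A$,
\[
M(A\langle T\rangle_1)\;\cong\;M(A\langle T^{1/p^\infty}\rangle_1),
\]
because $A\langle T\rangle_1\to A\langle T^{1/p^\infty}\rangle_1$ is an arc-cover whose uniform self-tensor collapses to $A\langle T^{1/p^\infty}\rangle_1$. Thus ball-invariance is equivalent to the \emph{perfectoid ball-invariance} condition $M(A)\cong M(A\langle T^{1/p^\infty}\rangle_1)$, which is manifestly tilting-symmetric since $(A\langle T^{1/p^\infty}\rangle_1)^\flat=A^\flat\langle T^{1/p^\infty}\rangle_1$. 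Combined with the previous step, this gives the desired equivalence of motivic sheaf categories.

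The main technical obstacle is establishing the uniform self-tensor collapse for the perfectoidification, both for $A^{\mathrm{perfd}}$ in the reduction to the perfectoid site and --- more crucially --- for the perfectoidified ball in the last step. This is the analytic-descent content of perfectoid theory, standard in the adic-space literature, but must be recorded in the Banach/Berkovich formulation used here. Granting those inputs, the remaining assembly is formal and yields both the tilting equivalence and the $\mathbb Z[1/p]$-linearity claim.
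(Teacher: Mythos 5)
Your reduction to the perfectoid basis and the $\mathbb Z[\tfrac 1p]$-linearity discussion are fine in substance (though note that your justification for the basis reduction via a ``perfectoidification with collapsing uniform self-tensor'' is itself false for a ring like $\mathbb Q_p$: any perfectoid arc-cover such as $\mathbb Q_p^{\mathrm{cycl}}$ has uniform self-tensor $\mathrm{Cont}(\mathbb Z_p^\times,\mathbb Q_p^{\mathrm{cycl}})$, not $\mathbb Q_p^{\mathrm{cycl}}$; what one actually uses is only that perfectoid rings form a basis of the arc-topology, which is what the paper means by ``the arc-sites are equivalent under tilting''). The genuine gap is your key claim that for every arc-sheaf $M$ and perfectoid $A$ one has $M(A\langle T\rangle_1)\cong M(A\langle T^{1/p^\infty}\rangle_1)$ because the uniform self-tensor of $A\langle T\rangle_1\to A\langle T^{1/p^\infty}\rangle_1$ collapses. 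That collapse only happens in characteristic $p$, where the map is the (completed) perfection and Frobenius is invisible to the arc-topos. In mixed characteristic the extraction of compatible $p$-power roots of $T$ is, away from $T=0$, a pro-(finite \'etale) $\mathbb Z_p(1)$-torsor by Kummer theory: already $C\langle T^{\pm 1/p^n}\rangle_1\otimes_{C\langle T^{\pm 1}\rangle_1} C\langle T^{\pm 1/p^n}\rangle_1\cong \prod_{\zeta\in\mu_{p^n}} C\langle T^{\pm 1/p^n}\rangle_1$, so the uniform self-tensor of the perfectoid ball over the ball is of the shape $\mathrm{Cont}(\mathbb Z_p(1),-)$ on that locus and certainly does not collapse. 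Hence ``ball-invariance $\Leftrightarrow$ perfectoid-ball-invariance'' is not a formal statement about arc-descent, and your argument proves only the easy direction: ball-invariance implies perfectoid-ball-invariance, by writing $A\langle T^{1/p^\infty}\rangle_1$ as a completed filtered colimit of the (rescaled) balls $A\langle T^{1/p^n}\rangle_1$ and using finitaryness.

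The converse is exactly the nontrivial content of the proposition, and it is where the $\mathbb Z[\tfrac 1p]$-linearity is actually needed rather than being a side remark. The paper's argument: for $f:\mathbb B_X\to\mathbb B_X$, $T\mapsto T^p$, the finite flat trace divided by $p$ (legitimate since $p$ is already invertible in $\mathcal D_{\mathrm{mot}}^{\mathrm{eff}}$ over the locus $|p|<1$) splits $\mathbb Z[\tfrac 1p]\to f_\ast\mathbb Z[\tfrac 1p]$; passing up the tower, $\mathbb Z[\tfrac 1p]\to g_\ast\mathbb Z[\tfrac 1p]$ is a split injection for $g:\mathbb B^{1/p^\infty}_X\to\mathbb B_X$, and this splitting is what lets one deduce usual ball-invariance from perfectoid-ball-invariance. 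Your proposal has no substitute for this step, so as written it does not establish the equivalence; you would need to either reproduce a trace/splitting argument of this kind or find another mechanism to descend ball-invariance along the $\mathbb Z_p(1)$-torsor.
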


The invariance of rigid motives under tilting is originally due to Vezzani \cite{Vezzani}.

\begin{proof} The arc-sites are equivalent under tilting; thus, the only difference is a priori the condition of ball-invariance. As noted above, ball-invariance over $X$ implies invariance under the perfectoid ball, which is ball-invariance over $X^\flat$. Conversely, note that the map $f: \mathbb B_X\to \mathbb B_X$ sending the coordinate $T$ to $T^p$ induces a split injection on sections, using the trace map; more precisely, $\mathbb Z[\tfrac 1p]\to f_\ast \mathbb Z[\tfrac 1p]$ has a section (and $p$ is invertible automatically, as noted above), given by the obvious trace map divided by $p$. Passing up the perfectoid tower, one gets a sequential colimit of split injections which is itself a split injection; this then implies that perfectoid ball-invariance implies usual ball-invariance.
\end{proof}

As another corollary, we can now prove that even with integral coefficients, $\mathcal D_{\mathrm{mot}}^{\mathrm{eff}}\subset \mathcal D_{\mathrm{fin}}$ inherits a $t$-structure (the homotopy $t$-structure).

\begin{corollary}\label{cor:homotopytstructure} For any small arc-stack $X$, the subcategory
\[
\mathcal D_{\mathrm{mot}}^{\mathrm{eff}}(X)\subset \mathcal D_{\mathrm{fin}}(X,\mathbb Z)
\]
is stable under truncations.
\end{corollary}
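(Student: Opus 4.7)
The claim is equivalent to showing each cohomology sheaf $\mathcal H^i M$ of an $M \in \mathcal D_{\mathrm{mot}}^{\mathrm{eff}}(X)$ is again ball-invariant; by the ball-invariance criterion at geometric points, we may assume $X = \mathcal M_{\mathrm{arc}}(C)$ for a non-discrete algebraically closed Banach field $C$. The plan is to bootstrap from the two settings where the $t$-structure on $\mathcal D_{\mathrm{mot}}^{\mathrm{eff}}$ is already under control: rationally, by Proposition~\ref{prop:homotopytstructure}, and modulo $\ell^n$, by the preceding theorem identifying $\ell^n$-torsion ball-invariant sheaves (on the locus $|\ell|\geq 1$) with $\ell^n$-torsion \'etale sheaves, $\ell$ being invertible elsewhere.

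First, $M\otimes\mathbb Q$ is ball-invariant and Proposition~\ref{prop:homotopytstructure} yields that $(\mathcal H^i M)\otimes\mathbb Q$ is ball-invariant for every $i$. Second, for each prime $\ell$ and $n\geq 1$, the cofibre $M/\ell^n := \mathrm{cofib}(\ell^n\colon M\to M)$ is ball-invariant and $\ell^n$-torsion. If $|\ell|<1$ at $C$, then $\ell$ is invertible in $\mathcal D_{\mathrm{mot}}^{\mathrm{eff}}$ by the preceding theorem, so $M/\ell^n=0$; otherwise $|\ell|\geq 1$ and $M/\ell^n$ lies in the left-completed derived category of $\ell^n$-torsion \'etale sheaves, whose standard $t$-structure is obviously stable under truncations. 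In either case, $\mathcal H^i(M/\ell^n)$ is again \'etale, hence ball-invariant by Proposition~\ref{prop:etalesheavesballinvariant}.

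Finally, we recover $\mathcal H^i M$ from $\mathcal H^i M\otimes\mathbb Q$ and the $\mathcal H^j(M/\ell^n)$. The long exact sequence attached to $M\xrightarrow{\ell^n}M\to M/\ell^n$ gives short exact sequences
\[
0\to \mathcal H^i M/\ell^n\to \mathcal H^i(M/\ell^n)\to \mathcal H^{i+1}M[\ell^n]\to 0,
\]
and the arithmetic fracture expresses $\mathcal H^i M$ as the pullback of $\mathcal H^i M\otimes\mathbb Q$ and the profinite completion $\prod_\ell \varprojlim_n \mathcal H^i M/\ell^n$ over its rationalization. Ball-invariance is stable under limits (since $N\mapsto N(A)=N(A\langle T\rangle_1)$ is a pointwise equality that commutes with limits), so it suffices to propagate ball-invariance along the displayed short exact sequence: if two of the three terms are known ball-invariant then so is the third, via the long exact sequence of $R\Gamma(C\langle T\rangle_1,-)$ and a five-lemma, using the finite cohomological dimension of $C\langle T\rangle_1$. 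Unwinding the induction on $n$ and gluing via the arithmetic fracture is the main obstacle, but all the required vanishing comes from the already-established rational and torsion cases together with the finite cohomological dimension.
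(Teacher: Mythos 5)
Your overall strategy---reduce to a geometric point and combine the known rational case (Proposition~\ref{prop:homotopytstructure}) with the known torsion case---is the same starting point as the paper's proof, but the step where you glue the two pieces has a genuine gap. In your short exact sequence
\[
0\to \mathcal H^i(M)/\ell^n\to \mathcal H^i(M/\ell^n)\to \mathcal H^{i+1}(M)[\ell^n]\to 0
\]
only the \emph{middle} term is known to be ball-invariant at this stage, so no two-out-of-three/five-lemma propagation is available: the outer terms are built from $\mathcal H^i(M)$ and $\mathcal H^{i+1}(M)$, which are exactly what you are trying to control. The same issue appears if one uses $M\to M\otimes\mathbb Q\to M\otimes\mathbb Q/\mathbb Z$ instead: $\mathcal H^i(M)$ is an extension of $\ker\bigl(\mathcal H^i(M\otimes\mathbb Q)\to \mathcal H^i(M\otimes\mathbb Q/\mathbb Z)\bigr)$ by $\mathrm{coker}\bigl(\mathcal H^{i-1}(M\otimes\mathbb Q)\to \mathcal H^{i-1}(M\otimes\mathbb Q/\mathbb Z)\bigr)$, so the real content is that the kernel/cokernel (equivalently, the image) of a map from a rational ball-invariant sheaf to a torsion ball-invariant sheaf is again ball-invariant. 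This is not formal: the image is torsion, so one must check invariance under change of algebraically closed field $C\subset C'$; injectivity of the comparison is easy, but surjectivity is not. The paper's proof consists essentially of this point: if a class $\alpha\in N'(C)$ is hit from $N(C')$, finitaryness descends the witnessing data to a topologically finite type Banach $C$-algebra $R$, and a section $R\to C$ (a Nullstellensatz-type geometric input) shows $\alpha$ is already hit from $N(C)$. Nothing in your sketch plays the role of this argument.

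The final gluing step as written is also problematic. The underived arithmetic fracture square for an abelian sheaf is not a pullback in general (for $\mathbb Q/\mathbb Z$ all three other corners vanish), and while the derived fracture square for $M$ itself is valid, it does not commute with truncation in the way you need: the cohomology sheaves of $\varprojlim_n M/\ell^n$ involve $\varprojlim$ and $\varprojlim^1$ of the towers $\mathcal H^j(M/\ell^n)$, such limits need not remain finitary, and their relevant subquotients are again uncontrolled without the image statement above. So the proposal must be supplemented by (something like) the paper's finitaryness-plus-section argument for maps from rational to torsion ball-invariant sheaves; with that in hand, the fracture bookkeeping collapses to the paper's short argument.
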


\begin{proof} Take any $M\in \mathcal D_{\mathrm{mot}}^{\mathrm{eff}}(X)$. We have a cofiber sequence
\[
M\to M\otimes \mathbb Q\to M\otimes \mathbb Q/\mathbb Z.
\]
We already know that all truncations of $M\otimes \mathbb Q$ and $M\otimes \mathbb Q/\mathbb Z$ remain in $\mathcal D_{\mathrm{mot}}^{\mathrm{eff}}(X)$. To conclude, it suffices to control the transition map. Thus, take any $N,N'\in \mathcal D_{\mathrm{mot}}^{\mathrm{eff}}(X)$ that are concentrated in degree $0$ and such that $N$ is rational while $N'$ is torsion. We need to see that the image of any map $f: N\to N'$ is in $\mathcal D_{\mathrm{mot}}^{\mathrm{eff}}(X)$. As this image is necessarily torsion, we have to see invariance under change of algebraically closed field. Thus let $C\subset C'$ be an inclusion of non-discrete algebraically closed Banach fields over $X$. We want to see that the image of $N(C)\to N'(C)$ maps isomorphically to the image of $N(C')\to N'(C')$, noting that we know $N'(C)=N'(C')$. Thus, the map is necessarily injective. But if a class $\alpha\in N'(C)$ lies in the image of $N(C')\to N'(C')=N'(C)$, then by finitaryness there is some Banach-$C$-algebra $R$ of topologically finite type such that the image of $\alpha$ under $N'(C)\to N'(R)$ lies in the image of $N(R)\to N'(R)$. But then there is a section $R\to C$, showing that $\alpha$ is already in the image of $N(C)$.
\end{proof}

\section{The Cancellation Theorem}
 
The goal of this section is to prove the following result, due to Voevodsky \cite{VoevodskyCancellation} for algebraic motives over a field. Ayoub has previously proved this for rigid-analytic motives \cite{AyoubRigid}, for a version of rigid-analytic motives with transfer, and Vezzani \cite{VezzaniTransfer} has proved that the versions with and without transfers agree in the setting relevant to us.

\begin{theorem}\label{thm:cancellation} For any small arc-stack $X$, the functor
\[
-\otimes \mathbb Z(1): \mathcal D_{\mathrm{mot}}^{\mathrm{eff}}(X)\to \mathcal D_{\mathrm{mot}}^{\mathrm{eff}}(X)
\]
is fully faithful.
\end{theorem}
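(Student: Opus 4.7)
The plan is to follow the outline indicated in the introduction: reduce to geometric points, then adapt Voevodsky's cancellation argument \cite{VoevodskyCancellation} to this analytic framework via ball-invariant presheaves with transfer. First, since $\mathcal D_{\mathrm{mot}}^{\mathrm{eff}}$ satisfies arc-descent and $-\otimes \mathbb Z(1)$ and its right adjoint $\intHom(\mathbb Z(1),-)$ are compatible with pullback, the unit $M \to \intHom(\mathbb Z(1), M\otimes \mathbb Z(1))$ being an equivalence is a pointwise condition in the arc-topology. Combined with finitaryness (which reduces testing to strictly totally disconnected bases) and the stalkwise description via residue fields, this reduces the theorem to the case $X = \mathcal M_{\mathrm{arc}}(C)$ for $C$ a non-discrete algebraically closed Banach field. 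The archimedean case $C=\mathbb C$ is trivial since $\mathbb Z(1)\cong \mathbb Z$, so I may assume $C$ is nonarchimedean, in which case $\mathbb Z(1)\cong \overline{\mathbb G}_m[-1]$.

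Next I would reduce to generators. By Proposition~\ref{prop:freemotivicsheavesgenerate}, $\mathcal D_{\mathrm{mot}}^{\mathrm{eff}}(C)$ is generated by the objects $\mathbb Z_{\mathrm{mot}}[U]$ for $U\subset \mathbb A^n_C$ open, and by Proposition~\ref{prop:tatetwistcompact} the object $\mathbb Z(1)$ is compact; so it suffices to show that for all such $U,V$ the map
\[
\mathrm{Map}(\mathbb Z_{\mathrm{mot}}[U], \mathbb Z_{\mathrm{mot}}[V]) \longrightarrow \mathrm{Map}(\mathbb Z_{\mathrm{mot}}[U]\otimes \mathbb Z(1), \mathbb Z_{\mathrm{mot}}[V]\otimes \mathbb Z(1))
\]
is an equivalence. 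Splitting into torsion and rational parts: modulo any $n$ with $|n|=1$ the category is equivalent to \'etale $\mathbb Z/n$-sheaves by the results of the previous section, and on a geometric point $\mathbb Z(1)/n \cong \mu_n$ is canonically trivializable, so cancellation mod $n$ is automatic. The whole content is therefore the rational statement.

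For the rational case I would translate via Theorem~\ref{thm:freemotivicsheaves}: after rationalizing, $\mathbb Q_{\mathrm{mot}}[U]$ and $\mathbb Q_{\mathrm{mot}}[V]\otimes \mathbb Z(1)$ are computed explicitly by ball-invariant presheaves of $\mathbb Q$-vector spaces with transfer on $U$ and $V\times \mathbb G_m$, and the mapping complexes are given by their values on smooth test schemes. In this language, Voevodsky's argument proceeds by constructing two correspondences $\alpha,\beta$ between $\mathbb G_m$ and itself, built from graphs of explicit rational functions on $\mathbb A^1\times \mathbb G_m$, and a one-parameter family of correspondences, parametrized by the Berkovich ball $\mathbb B$, that provides a $\mathbb B$-homotopy witnessing the identity $\alpha\circ(-\otimes \mathbb Z(1)) \simeq \mathrm{id}$ at the level of transfers. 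Combined with the right-adjoint interpretation, this produces the required inverse to the unit. The main obstacle, and what must replace the algebraic machinery of Voevodsky, is to verify that these correspondences (originally constructed from finite flat cycles over a base scheme) make sense as morphisms in the analytic transfer category, and that the composition and homotopy identities hold. This is where one pays for working over analytic rather than algebraic bases: one must check, using Newton-polygon arguments of the kind already deployed in Lemma~\ref{lem:toycase} and Lemma~\ref{lem:dividebypcurve}, that the defining cycles of $\alpha,\beta$ remain closed analytic subsets with the required properness and finiteness properties over the relevant annuli and balls, so that the rational transfer category supports Voevodsky's formal chain-homotopy manipulation. Once this geometric verification is carried out, the diagrammatic cancellation argument of \cite{VoevodskyCancellation} should go through essentially unchanged.
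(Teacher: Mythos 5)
Your reduction steps (arc-descent to strictly totally disconnected bases, passage to geometric points using compactness of $\mathbb Z(1)$, disposal of the archimedean and torsion cases, and the reduction to generators $\mathbb Q_{\mathrm{mot}}[U]$) all match the paper, and the declared strategy -- adapt Voevodsky's cancellation argument in the ball-invariant/transfer setting -- is also the paper's. But your sketch stops exactly where the difficulty of the theorem begins, and the mechanism you propose for filling it in is not the one that works. The paper does not try to verify that Voevodsky's explicit cycles on $\mathbb A^1\times\mathbb G_m$ survive analytically via properness/finiteness and Newton-polygon checks. Instead it computes $\pi_i\intHom(\mathbb Q_{\mathrm{mot}}[(\mathbb G_m,\ast)],\mathbb Q_{\mathrm{mot}}[U\times(\overline{\mathbb G}_m,\ast)])$ by first replacing $\mathbb G_m$ by a closed annulus $\mathbb T$ (this is where Proposition~\ref{prop:tatetwistcompact} is used, not just for compactness of $\mathbb Z(1)$), representing classes by maps $\mathbb T\times\mathrm{sd}^\ell\partial\Delta^{i+1}_C\to\mathrm{Sym}^k(U\times\overline{\mathbb G}_m)$, and then, for all large $N$, constructing two spaces $Y_1$ (the union of balls around the $N$-th roots of unity) and $Y_2$ (the locus $p_1^N=p_2$ inside the incidence space $\tilde{\mathbb T}$). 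The key geometric input is that ball-invariance of $\overline{\mathbb G}_m$ forces $Y_2$ to be a fibration in open balls over a space \'etale over the base, equipped with multiplicities, so that summing over fibres produces classes in $\pi_i\mathbb Q_{\mathrm{mot}}[U]$; moreover the construction only yields a map into \emph{eventually defined sequences} in $N$, and this bookkeeping is essential. None of this -- the annulus replacement, the $p_1^N=p_2$ construction, the role of ball-invariance in place of finite correspondences, the eventually-defined-sequence target -- is present in your sketch; "check that Voevodsky's cycles still make sense via Newton polygons" does not produce it.

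The second concrete gap is the surjectivity half. Your closing sentence ("combined with the right-adjoint interpretation, this produces the required inverse to the unit") conceals the part of the argument that does not follow formally: having the splitting only gives injectivity of $\pi_i\mathbb Q_{\mathrm{mot}}[U]\to\pi_i\intHom(\mathbb Q_{\mathrm{mot}}[(\mathbb G_m,\ast)],\mathbb Q_{\mathrm{mot}}[U\times(\overline{\mathbb G}_m,\ast)])$. To prove surjectivity the paper runs the construction in a doubled-variable diagram and must insert the swap automorphisms of $\mathbb G_m\times\mathbb G_m$ and $\overline{\mathbb G}_m\times\overline{\mathbb G}_m$; the argument closes only because the swap on $\mathbb Z(1)^{\otimes 2}$ is trivial (equivalently, the swap on $\overline{\mathbb G}_m\otimes\overline{\mathbb G}_m$ is minus the identity), which is Lemma~\ref{lem:symmetry} and requires its own proof via the $\mathrm{SL}_2$-action on $\mathbb A^2\setminus\{0\}$. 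Your proposal never mentions this step, so even granting your correspondences you would only obtain a retraction, not fully faithfulness. (By contrast, your appeals to Lemma~\ref{lem:toycase} and Lemma~\ref{lem:dividebypcurve} belong to the proof of Theorem~\ref{thm:freemotivicsheaves}, i.e.\ to the input you are allowed to assume, not to the cancellation argument itself.)
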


\begin{remark} Classically, this is only true over a field. In our setting, it is true over any base.
\end{remark}

\begin{proof} We have to show that for all $M\in \mathcal D_{\mathrm{mot}}^{\mathrm{eff}}(X)$, the map
\[
M\to \intHom(\mathbb Z(1),M\otimes \mathbb Z(1))
\]
is an isomorphism. The internal Hom here is taken in $\mathcal D_{\mathrm{mot}}^{\mathrm{eff}}(X)$, but it agrees with the internal Hom in the larger category $\mathcal D(X_{\mathrm{arc}},\mathbb Z)$. Indeed, rewriting $\mathbb Z(1)$ in terms of $\mathbb Z_{\mathrm{mot}}[\mathbb T]/\mathbb Z$ for an annulus $\mathbb T$, and hence the internal Hom in terms of pushforward along $\mathbb T\to \ast$, this follows from Theorem~\ref{thm:properpushforwardfinitary}. In particular, it now suffices to prove that for all strictly totally disconnected $X'$ over $X$, the map
\[
M(X')\to \mathrm{Hom}_{\mathcal D_{\mathrm{mot}}^{\mathrm{eff}}(X')}(\mathbb Z(1),M|_{X'}\otimes \mathbb Z(1))
\]
is an isomorphism. Both sides here are naturally global sections of a sheaf on the profinite set $\mathcal M(X')$, and as $\mathbb Z(1)$ is compact, we can even reduce to the case that $X'$ (and then also $X$) is a geometric point.

Thus, we are reduced to the case of algebraically closed Banach fields $X=\mathcal M_{\mathrm{arc}}(C)$. The case of $C=\mathbb C$ is clear, so we can assume that $C$ is nonarchimedean. Also, the result is clear with torsion coefficients, so we will work with $\mathbb Q$-coefficients below.

Reducing to generators, we have to prove that for any finitary sheaf of sets $U$ over $C$, the map
\[
\mathbb Q_{\mathrm{mot}}[U]\to \intHom(\mathbb Q_{\mathrm{mot}}[(\mathbb G_m,\ast)],\mathbb Q_{\mathrm{mot}}[U\times (\overline{\mathbb G}_m,\ast)]).
\]
is an isomorphism. Here $U\times (\overline{\mathbb G}_m,\ast) = U\times\overline{\mathbb G}_m/U\times \ast$ denotes the product in pointed sheaves.

The key of the proof is to construct, for all $i\geq 0$, a map
\[
\pi_i \intHom(\mathbb Q_{\mathrm{mot}}[\mathbb G_m],\mathbb Q_{\mathrm{mot}}[U\times \overline{\mathbb G}_m])\to \pi_i \mathbb Q_{\mathrm{mot}}[U]
\]
splitting the previous displayed map on $\pi_i$. Actually, we will rather have a map from the source to eventually defined sequences in the target, i.e.~the colimit over all $N$ of $\mathrm{Map}(\mathbb N_{\geq N},-)$. Still, it will be a splitting in the sense that the composite map is the map taking an element to the constant sequence.

By Theorem~\ref{thm:freemotivicsheaves}, the left-hand side can be computed as the $i$-th homotopy group of the rationalization of the animated commutative monoid
\[
\mathrm{colim}_{[n]\in \Delta^{\mathrm{op}}} \mathbb N[U\times \overline{\mathbb G}_m](\mathbb T\times \Delta^n_C)
\]
where $\mathbb T\subset \mathbb G_m$ denotes a closed annulus, say $\tfrac 12\leq |T|\leq 2$. By using simplicial subdivision $\mathrm{sd}$, more precisely Kan's $\mathrm{Ex}^\infty$-functor, any class in there can be represented by a map
\[
\mathbb T\times \mathrm{sd}^\ell\partial\Delta^{i+1}_C\to \mathrm{Sym}^k(U\times \overline{\mathbb G}_m)
\]
for some $k,\ell\geq 1$; and two such maps yield the same element if, possibly after refinement and addition of some section $\mathbb T\to \mathrm{Sym}^{k'}(U\times \overline{\mathbb G}_m)$, they admit a homotopy from $\mathbb T\times \mathrm{sd}^{\ell'}\Delta^{i+1}_C\times \mathbb B$.

Our goal is to construct, for all sufficiently large integers $N$, two spaces
\[
Y_1,Y_2\to \mathrm{sd}^\ell\partial\Delta^{i+1}_C
\]
both of which are fibrations in open balls over \'etale maps $\overline{Y}_1,\overline{Y}_2\to \mathrm{sd}^\ell\partial\Delta^{i+1}_C$ equipped with multiplicities, and maps
\[
Y_1,Y_2\to \mathbb N[U].
\]
After composing to $\mathbb Z_{\mathrm{mot}}[U]$, one can replace $Y_i$ by $\overline{Y}_i$, and then sum with multiplicities over fibres of the \'etale map to $\mathrm{sd}^\ell\partial\Delta^{i+1}_C$, yielding two maps
\[
\mathrm{sd}^\ell\partial\Delta^{i+1}_C\to \mathbb Z_{\mathrm{mot}}[U].
\]
As the source is $\mathbb B$-homotopic to the $i$-dimensional sphere as an anima, these are classes in $\pi_i \mathbb Z_{\mathrm{mot}}[U]$. The desired map is given by the difference of these two maps. We will show that this is invariant under homotopies, yielding the desired map to eventually defined sequences in $\pi_i \mathbb Z_{\mathrm{mot}}[U]$.

The construction of $Y_1$ and $Y_2$ (and the multiplicities and maps to $\mathbb N[U]$) will be arc-local on $\mathrm{sd}^\ell \partial\Delta^{i+1}_C$; thus we can replace it by $\mathcal M_{\mathrm{arc}}(A)$ for some strictly totally disconnected $A$. Once one has constructed everything in this generality, the agreement on homotopic maps is immediate. Thus, our task is to start with a map
\[
\mathbb T\to \mathrm{Sym}^k(U\times \overline{\mathbb G}_m)
\]
and construct, for all sufficiently large integers $N$, spaces $Y_1$ and $Y_2$, with a fibration in open balls over an \'etale map to $A$ equipped with multiplicities, and maps $Y_1,Y_2\to \mathbb N[U]$.

For $Y_1$, we simply take the disjoint union of the $N$ open balls of radius $1$ around the $N$-th roots of unity in $\mathbb T$. This comes with the map
\[
Y_1\to \mathbb T\to \mathrm{Sym}^k(U\times \overline{\mathbb G}_m)\to \mathrm{Sym}^k(U)\to \mathbb N[U].
\]

For the construction of $Y_2$, we need only the map $\mathbb T\to \mathrm{Sym}^k(\overline{\mathbb G}_m)$. This is given by a subspace $\tilde{\mathbb T}\subset \mathbb T\times \overline{\mathbb G}_m$ (fibrewise over $\mathbb T$, the union of the $k$ points parametrized by $\mathbb T\to \mathrm{Sym}^k(\overline{\mathbb G}_m)$) together with multiplicities of the fibres of $\tilde{\mathbb T}\to \mathbb T$. Note that $\tilde{\mathbb T}\to \mathbb T$ is \'etale, i.e.~finitary with finite geometric fibres (but not separated in general -- points can collide, in which case multiplicities add). Let $p_1: \tilde{\mathbb T}\to \mathbb T\to \overline{\mathbb G}_m$ and $p_2: \tilde{\mathbb T}\to \overline{\mathbb G}_m$ denote the two natural projections to $\overline{\mathbb G}_m$. We let
\[
Y_2\subset \tilde{\mathbb T}
\]
be the subspace where $p_1^N=p_2$. Note that on $Y_2$, the projection $p_2$ is determined by $p_1$, so the composite $Y_2\subset \tilde{\mathbb T}\to \mathbb T$ is still an injection, and it is also an open subset (as all maps to $\mathbb T$ are finitary). Moreover, once $N$ is sufficiently large, the image of $Y_2$ does not meet the central ray of Gau\ss\ norms. But, over each point of $\mathcal M(A)$, the complement of this central ray is covered by open unit discs, and those cannot map to $\overline{\mathbb G}_m$ by ball-invariance; it follows that each of those is either fully contained in $Y_2$ or not at all. From here, one easily sees that $Y_2$ admits a fibration in open balls over a space $\overline{Y}_2$ that is \'etale over $\mathcal M_{\mathrm{arc}}(A)$. We can also assign multiplicities. As before, one also gets a map $Y_2\to \mathbb N[U]$ by using the projection to $U$.

This finishes the construction. It is easy to check that for all $N$ for which the map is defined, it is a splitting. Indeed, in that case $p_1=p_2$ and so $p_1^N=p_2$ cuts out the open balls around the $N-1$-th roots of unity (counted with multiplicity in case $N-1$ is divisible by the residue characteristic of $C$). The map to $\mathbb N[U]$ is constant, so $Y_1$ yields $N$ times the original section of $\mathbb N[U]$, while $Y_2$ yields it $N-1$ times; the difference is thus the original section. This already proves injectivity of the map
\[
\pi_i\mathbb Q_{\mathrm{mot}}[U]\to \pi_i\intHom(\mathbb Q_{\mathrm{mot}}[(\mathbb G_m,\ast)],\mathbb Q_{\mathrm{mot}}[U\times (\overline{\mathbb G}_m,\ast)]).
\]
We note that we are implicitly writing $\mathbb Q_{\mathrm{mot}}[U\times (\overline{\mathbb G}_m,\ast)]$ as a retract of $\mathbb Q_{\mathrm{mot}}[U\times \overline{\mathbb G}_m]$, and we use that when we apply the above construction to maps to $\mathbb Q_{\mathrm{mot}}[U\times \ast]\subset \mathbb Q_{\mathrm{mot}}[U\times \overline{\mathbb G}_m]$, the construction yields zero. Indeed, in this case in the construction of $Y_2$ we have $p_2=1$ and hence we look at the locus $p_1^N=1$, which is precisely the locus of $N$-th roots of unity. Thus, $Y_2=Y_1$ in that case, and we get the zero map.

For surjectivity, it suffices to see that the other composite from $\pi_i\intHom(\mathbb Q_{\mathrm{mot}}[(\mathbb G_m,\ast)],\mathbb Q_{\mathrm{mot}}[U\times (\overline{\mathbb G}_m,\ast)])$ to eventually defined sequences in $\pi_i\mathbb Q_{\mathrm{mot}}[U]$, and back to eventually defined sequences in $\pi_i \intHom(\mathbb Q_{\mathrm{mot}}[(\mathbb G_m,\ast)],\mathbb Q_{\mathrm{mot}}[U\times (\overline{\mathbb G}_m,\ast)])$, is the identity.

For this, we use that the same construction also gives, for every affine $V$, a map
\[
\pi_i\intHom(\mathbb Q_{\mathrm{mot}}[V\times (\mathbb G_m,\ast)],\mathbb Q_{\mathrm{mot}}[U\times (\overline{\mathbb G}_m,\ast)])\dashrightarrow \pi_i \intHom(\mathbb Q_{\mathrm{mot}}[V],\mathbb Q_{\mathrm{mot}}[U])
\]
where the dashed right arrow means that it is actually a map to eventually defined sequences. Now we use the commutative diagram
\[\xymatrix@C=2em{
\intHom(\mathbb Q_{\mathrm{mot}}[(\mathbb G_m,\ast)],\mathbb Q_{\mathrm{mot}}[U\times (\overline{\mathbb G}_m,\ast)])\ar[r]\ar[d] & \mathbb Q_{\mathrm{mot}}[U]\ar[d]\\
\intHom(\mathbb Q_{\mathrm{mot}}[(\mathbb G_m,\ast)\times (\mathbb G_m,\ast)],\mathbb Q_{\mathrm{mot}}[(\overline{\mathbb G}_m,\ast)\times U\times (\overline{\mathbb G}_m,\ast)])\ar[r] & \intHom(\mathbb Q_{\mathrm{mot}}[(\mathbb G_m,\ast)],\mathbb Q_{\mathrm{mot}}[(\overline{\mathbb G}_m,\ast)\times U])
}\]
where the maps are really only defined on $\pi_i$, and on the right we only get eventually defined sequences. We want to see that the composite along the upper right corner is the identity. If one inserts the swap automorphism of $\mathbb G_m\times \mathbb G_m$ and $\overline{\mathbb G}_m\times \overline{\mathbb G}_m$ on both terms in the lower left corner, then the composite is indeed the identity: This reduces to the easy composite being the identity. But the swap automorphism of $\overline{\mathbb G}_m\otimes \overline{\mathbb G}_m$ is minus the identity by Lemma~\ref{lem:symmetry} (note that $\mathbb Z(1)$ differs from $\overline{\mathbb G}_m$ by a shift, which introduces the sign), so using the swap automorphisms twice yields the identity, giving the desired result.
\end{proof}

We used the following lemma at the end of the proof.

\begin{lemma}\label{lem:symmetry} For any $\sigma\in \Sigma_n$, the action of $\sigma$ on $\mathbb Z(1)^{\otimes n}$ is trivial.
\end{lemma}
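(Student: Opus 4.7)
The group $\Sigma_n$ is generated by adjacent transpositions, each of which acts on $\mathbb Z(1)^{\otimes n}$ as the swap on two adjacent factors tensored with the identity on the remaining ones. So it suffices to treat the case $n=2$: show that the swap $\tau$ on $\mathbb Z(1)\otimes \mathbb Z(1)$ is the identity. Since $\mathbb Z(1)=\overline{\mathbb G}_m[-1]$, the Koszul sign rule for the shift identifies $\tau$ with $-1$ times the swap on $\overline{\mathbb G}_m\otimes \overline{\mathbb G}_m$, so the task is equivalent to showing that the swap on $\overline{\mathbb G}_m\otimes \overline{\mathbb G}_m$ equals $-\mathrm{id}$.

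The key input is the shear automorphism $s\colon \mathbb G_m\times\mathbb G_m\to \mathbb G_m\times\mathbb G_m$, $(x,y)\mapsto (xy,y)$, together with the fact that $\overline{\mathbb G}_m$ is a sheaf of abelian groups in which the addition is induced by multiplication on $\mathbb G_m$, so that $y_1y_2=y_1+y_2$ for any sections $y_1,y_2$. Via the basepoint $1\in \mathbb G_m$ we obtain the Künneth decomposition
\[
\mathbb Z_{\mathrm{mot}}[\mathbb G_m]^{\otimes 2}\;\cong\;\mathbb Z\,\oplus\,\overline{\mathbb G}_m^{\oplus 2}\,\oplus\,\overline{\mathbb G}_m\otimes \overline{\mathbb G}_m,
\]
and $s_*$ is a $\mathbb Z$-linear automorphism of this. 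For a section $y$ of $\mathbb G_m$, I would compute
\[
s_*(1\otimes y)\;=\;[y]\otimes [y]-[1]\otimes [1],
\]
whose components in the above splitting are $(0,\,y,\,y,\,y\otimes y)$; in particular the $\overline{\mathbb G}_m\otimes \overline{\mathbb G}_m$-component is $y\otimes y$.

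Applying $\mathbb Z$-linearity of $s_*$ to the identity $1\otimes (y_1y_2)=1\otimes y_1+1\otimes y_2$ in $\overline{\mathbb G}_m$ and comparing the $\overline{\mathbb G}_m\otimes \overline{\mathbb G}_m$-components then forces
\[
(y_1+y_2)\otimes (y_1+y_2)\;=\;y_1\otimes y_1\;+\;y_2\otimes y_2,
\]
which by bilinearity simplifies to the skew-symmetry relation
\[
y_1\otimes y_2\;+\;y_2\otimes y_1\;=\;0\qquad\text{in }\overline{\mathbb G}_m\otimes \overline{\mathbb G}_m.
\]
This is exactly the assertion that $(\tau+\mathrm{id})(y_1\otimes y_2)=0$ for every decomposable tensor, giving $\tau=-\mathrm{id}$.

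The main obstacle is the final step: upgrading the vanishing on the decomposable tensors $y_1\otimes y_2$ to the equality $\tau=-\mathrm{id}$ as morphisms in $\mathcal D_{\mathrm{mot}}^{\mathrm{eff}}$. This relies on the fact that $\overline{\mathbb G}_m\otimes \overline{\mathbb G}_m$ is realized as the top Künneth summand of $\mathbb Z_{\mathrm{mot}}[\mathbb G_m\times \mathbb G_m]$, which as an object of $\mathcal D_{\mathrm{mot}}^{\mathrm{eff}}$ is generated by the images of pairs of $\mathbb G_m$-sections, so that the linearity argument above for the shear genuinely produces an identity of morphisms rather than merely of elements.
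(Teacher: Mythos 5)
Your proof is correct, but it is a genuinely different argument from the one in the paper. The paper reduces to $n=2$, identifies $\mathbb Z(2)[3]$ with the reduced homology of $\mathbb A^2\setminus\{0\}$ via the coequalizer presentation, and plays the $\mathrm{GL}_2$-action against ball-invariance: unipotents act trivially, hence so does $\mathrm{SL}_2$, so the coordinate swap acts like $\mathrm{diag}(-1,1)$, and one finishes by checking that scaling acts trivially on $\overline{\mathbb G}_m$ (using $\sum n_i=0$ on the reduced part). You instead stay on $\mathbb G_m\times\mathbb G_m$ and deduce skew-symmetry of $\overline{\mathbb G}_m\otimes\overline{\mathbb G}_m$ from the shear $(x,y)\mapsto(xy,y)$ together with the additivity of $y\mapsto[y]-[1]$ furnished by the isomorphism $\mathbb Z_{\mathrm{mot}}[\mathbb G_m]/\mathbb Z_{\mathrm{mot}}[\ast]\cong\overline{\mathbb G}_m$ --- the classical ``multiplicativity implies anticommutativity'' argument; both proofs thus consume the same input about $\overline{\mathbb G}_m$, yours avoiding $\mathbb A^2\setminus\{0\}$ and the $\mathrm{GL}_2$ bookkeeping at the price of the element-to-morphism upgrade you flag at the end. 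That step does close, but the right formulation is corepresentability rather than ``generation'': since $\mathrm{Hom}(\mathbb Z_{\mathrm{mot}}[\mathbb G_m^2],M)=H^0(\mathbb G_m^2,M)$ and the K\"unneth projection $p:\mathbb Z_{\mathrm{mot}}[\mathbb G_m^2]\to\overline{\mathbb G}_m\otimes\overline{\mathbb G}_m$ is split surjective, the morphism $(\mathrm{id}+\tau)\circ p$ is classified by the single class $(\mathrm{id}+\tau)(\overline{T}_1\otimes\overline{T}_2)\in H^0(\mathbb G_m^2,\overline{\mathbb G}_m\otimes\overline{\mathbb G}_m)$, where $T_1,T_2$ are the tautological sections; your relation, applied to this universal pair, kills exactly this class, so $(\mathrm{id}+\tau)\circ p=0$ and hence $\mathrm{id}+\tau=0$ (vanishing on a generating family of objects alone would not determine a morphism, so this precision is worth making). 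Finally, the identification $\mathbb Z(1)\cong\overline{\mathbb G}_m[-1]$ is a nonarchimedean statement; by excision the archimedean locus is treated separately, where the claim is immediate (for instance, multiplication by $-1$ on $\mathbb G_m$ over $\mathbb C$ is $\mathbb B$-homotopic to the identity) --- the same silent restriction occurs in the last step of the paper's proof.
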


Of course, once this lemma was used to finish the proof of the cancellation theorem, we see that $\mathrm{End}(\mathbb Z(n))=\mathrm{End}(\mathbb Z)$ which is concentrated in degree $0$ and hence the triviality of the action of each $\sigma$ implies that the action of $\Sigma_n$ on $\mathbb Z(n)$ is homotopy-coherently trivial. For the moment, the weaker assertion is sufficient.

\begin{proof} Writing any element as a product of transpositions, we reduce to $n=2$. Now $\mathbb Z(2)[3]$ can be identified with the reduced homology of $\mathbb A^2\setminus \{0\}$, i.e.~the fibre of
\[
\mathbb Z_{\mathrm{mot}}[\mathbb A^2\setminus \{0\}]\to \mathbb Z_{\mathrm{mot}}[\ast].
\]
Indeed, this easily follows from the resolution
\[
\mathrm{coeq}(\mathbb G_m^2\rightrightarrows \mathbb G_m\times \mathbb A^1\sqcup \mathbb A^1\times \mathbb G_m)= \mathbb A^2\setminus \{0\}.
\]
There is an action of $\mathrm{GL}_2$ on $\mathbb A^2\setminus \{0\}$. We want to see that the antidiagonal matrix, switching the two coordinates, acts trivially on the reduced homology. But the subgroup generated by unipotent matrices acts trivially, by homotopy-invariance. This subgroup is $\mathrm{SL}_2$. Up the action of $\mathrm{SL}_2$, the antidiagonal matrix is equivalent to the diagonal matrix with entries $(-1,1)$. It remains to see that multiplication by $-1$ (or any element of $\mathbb G_m$) acts trivially on the reduced homology of $\mathbb G_m$, which is a direct verification using the identification with $\overline{\mathbb G}_m$. Indeed, the reduced homology of $\mathbb G_m$ is the kernel of $\mathbb Z_{\mathrm{mot}}[\mathbb G_m]\to \mathbb Z_{\mathrm{mot}}[\ast]$, and the isomorphism with $\overline{\mathbb G}_m$ is given by sending $\sum n_i [x_i]$ to $\prod \overline{x}_i^{n_i}$. The action of $-1$ sends $\sum n_i [x_i]$ to $\sum n_i [-x_i]$ which is mapped to $(-1)^{\sum n_i} \prod \overline{x}_i^{n_i}$. But $\sum n_i=0$ for any element in the kernel of $\mathbb Z_{\mathrm{mot}}[\mathbb G_m]\to \mathbb Z_{\mathrm{mot}}[\ast]$ and so $(-1)^{\sum n_i} = 1$.
\end{proof}

\section{$K$-theory}

In order to determine higher weight motivic cohomology rationally, we use the relation to algebraic $K$-theory. The argument comparing algebraic $K$-theory and motivic cohomology rationally that we give below can also be used in the usual setting of algebraic motives and seems more direct than other arguments known to the author.

For $n\geq 0$, set $\mathbb Z(n) = \mathbb Z(1)^{\otimes n}$. The following statement is a corollary of Lemma~\ref{lem:symmetry}.

\begin{corollary} For $n\geq 0$, the symmetric power $\mathrm{Sym}^n(\mathbb Q(1))$ agrees with $\mathbb Q(n)$.
\end{corollary}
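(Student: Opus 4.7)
The plan is to bootstrap the pointwise triviality from Lemma~\ref{lem:symmetry} to a coherent triviality of the $\Sigma_n$-action on $\mathbb{Q}(1)^{\otimes n}$ using the cancellation theorem, and then exploit that $B\Sigma_n$ is rationally trivial.

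First, I would iterate Theorem~\ref{thm:cancellation} to identify
\[
\mathrm{Map}_{\mathcal{D}_{\mathrm{mot}}^{\mathrm{eff}}(X,\mathbb{Q})}(\mathbb{Q}(n),\mathbb{Q}(n)) \;\simeq\; \mathrm{Map}(\mathbb{Q},\mathbb{Q}) \;=\; R\Gamma(X,\mathbb{Q}).
\]
Since sheaf cohomology lives in non-negative degrees, $\pi_i = H^{-i}(X,\mathbb{Q}) = 0$ for $i>0$, so this mapping anima is $0$-truncated; hence so is the subanima $\mathrm{Aut}(\mathbb{Q}(n))$. Consequently $B\mathrm{Aut}(\mathbb{Q}(n))$ is an Eilenberg--MacLane space $K(\pi_0\mathrm{Aut}(\mathbb{Q}(n)),1)$.

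Second, the $\Sigma_n$-action on $\mathbb{Q}(1)^{\otimes n}$ induced by the symmetric monoidal structure is classified by a pointed map $B\Sigma_n \to B\mathrm{Aut}(\mathbb{Q}(n))$. Since the target is an Eilenberg--MacLane $1$-space, this is equivalent data to a group homomorphism $\Sigma_n \to \pi_0\mathrm{Aut}(\mathbb{Q}(n))$. By Lemma~\ref{lem:symmetry}, every transposition maps to the identity, and since transpositions generate $\Sigma_n$ the homomorphism is trivial. Thus the $\Sigma_n$-action on $\mathbb{Q}(n)$ admits a coherent trivialisation.

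Third, using this trivialisation, I would compute
\[
\mathrm{Sym}^n(\mathbb{Q}(1)) \;=\; (\mathbb{Q}(1)^{\otimes n})_{h\Sigma_n} \;\simeq\; \mathbb{Q}(n)\otimes_{\mathbb{Q}} C_{\ast}(B\Sigma_n,\mathbb{Q}).
\]
Since $\Sigma_n$ is finite, averaging by $|\Sigma_n|^{-1}$ kills its rational group homology in positive degrees, so $C_{\ast}(B\Sigma_n,\mathbb{Q}) \simeq \mathbb{Q}$, giving $\mathrm{Sym}^n(\mathbb{Q}(1)) \simeq \mathbb{Q}(n)$.

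The only genuinely non-formal input is the first step: the cancellation theorem forces $\mathrm{End}(\mathbb{Q}(n))$ to be as simple as the ground endomorphisms, and this discreteness is precisely what is needed to promote pointwise to coherent triviality of the $\Sigma_n$-action. Everything else is standard rational homological algebra.
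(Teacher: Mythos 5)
Your argument is correct, but it is heavier than the one the paper uses. The paper's proof is a one-liner: rationally, $\mathrm{Sym}^n$ of an object is the direct summand of the $n$-fold tensor power cut out by the averaging idempotent $\tfrac{1}{n!}\sum_{\sigma}\sigma$, and this idempotent only depends on the homotopy classes of the individual $\sigma$'s; since Lemma~\ref{lem:symmetry} says each $\sigma$ acts trivially on $\mathbb Z(1)^{\otimes n}$, the idempotent is the identity and the summand is everything. In particular the paper needs only the weak, pointwise form of the lemma and does not invoke the cancellation theorem at this step. You instead use cancellation (via $\mathrm{End}(\mathbb Q(n))\simeq R\Gamma(X,\mathbb Q)$ being coconnective, hence the mapping anima discrete) to promote pointwise triviality to a homotopy-coherent trivialization of the $\Sigma_n$-action, and then compute the homotopy orbits of a trivial action via the vanishing of rational group homology of $\Sigma_n$. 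This is exactly the upgrade the paper records in the remark following Lemma~\ref{lem:symmetry}, and it is legitimate here since the cancellation theorem has already been proved at this point, so there is no circularity; what it buys you is the stronger coherent statement (useful elsewhere), at the cost of invoking the hardest theorem of the paper, whereas the paper's idempotent argument shows the corollary is an essentially formal consequence of the lemma alone.
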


\begin{proof} Indeed, rationally symmetric powers are the direct summand of tensor powers on which $\Sigma_n$ acts trivially.
\end{proof}

In the next corollary, we use that free motivic sheaves also exist for smooth Artin stacks, by left Kan extension.

\begin{corollary} There is a canonical direct sum decomposition
\[
\mathbb Q_{\mathrm{mot}}[\ast/\mathbb G_m] = \bigoplus_{n\geq 0} \mathbb Q(n)[2n]
\]
and for every $n\geq 0$ the map
\[
\mathbb Q_{\mathrm{mot}}[\mathbb P^n]\to \mathbb Q_{\mathrm{mot}}[\ast/\mathbb G_m]
\]
induced by the map $\mathbb P^n\to \ast/\mathbb G_m$, classifying the line bundle $\mathcal O_{\mathbb P^n}(1)$, maps isomorphically to the summand
\[
\bigoplus_{i=0}^n \mathbb Q(i)[2i].
\]
\end{corollary}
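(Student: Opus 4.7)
The plan is to compute $\mathbb Q_{\mathrm{mot}}[\mathbb P^n] = \bigoplus_{i=0}^n \mathbb Q(i)[2i]$ using the identification $\mathbb P^n = \mathrm{Sym}^n(\mathbb P^1)$, and then pass to the colimit in $n$. For the base case $\mathbb Q_{\mathrm{mot}}[\mathbb P^1] = \mathbb Q \oplus \mathbb Q(1)[2]$, I would use Mayer--Vietoris (available by arc-descent) for the standard cover $\mathbb P^1 = \mathbb A^1 \cup_{\mathbb G_m} \mathbb A^1$: ball-invariance gives $\mathbb Q_{\mathrm{mot}}[\mathbb A^1] = \mathbb Q$, and $\mathbb Q_{\mathrm{mot}}[\mathbb G_m] = \mathbb Q \oplus \mathbb Q(1)[1]$ by definition of the first Tate twist; the MV map is the diagonal on the unit and zero on the reduced summand, so the cofiber is $\mathbb Q \oplus \mathbb Q(1)[1][1] = \mathbb Q \oplus \mathbb Q(1)[2]$.

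For general $n$, I use the classical isomorphism $\mathrm{Sym}^n(\mathbb P^1) \cong \mathbb P^n$ (sending an unordered $n$-tuple of points in $\mathbb P^1$ to the binary form of degree $n$ vanishing on them). Since $\mathbb Q_{\mathrm{mot}}[-]$ is symmetric monoidal and colimit-preserving, this yields
\[
\mathbb Q_{\mathrm{mot}}[\mathbb P^n] = \mathrm{Sym}^n(\mathbb Q \oplus \mathbb Q(1)[2]) = \bigoplus_{j=0}^n \mathrm{Sym}^j(\mathbb Q(1)[2]).
\]
Because $[2]$ is an even shift, the Koszul symmetry on $(\mathbb Q(1)[2])^{\otimes j}$ coincides with that on $\mathbb Q(1)^{\otimes j}$, so $\mathrm{Sym}^j(\mathbb Q(1)[2]) = \mathrm{Sym}^j(\mathbb Q(1))[2j] = \mathbb Q(j)[2j]$ by the preceding corollary, giving $\mathbb Q_{\mathrm{mot}}[\mathbb P^n] = \bigoplus_{i=0}^n \mathbb Q(i)[2i]$. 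By functoriality of $\mathrm{Sym}^n$, the inclusions $\mathbb P^n \hookrightarrow \mathbb P^{n+1}$ (coming from $\mathrm{Sym}^n \to \mathrm{Sym}^{n+1}$ by adding a fixed basepoint) induce inclusion of the first $n+1$ summands.

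For the $\ast/\mathbb G_m$ statement, I use the presentation $\mathbb P^n = (\mathbb A^{n+1}\setminus 0)/\mathbb G_m$ and descent along the $\mathbb G_m$-torsor: setting $A = \mathbb Q_{\mathrm{mot}}[\mathbb G_m]$, this gives $\mathbb Q_{\mathrm{mot}}[\mathbb P^n] = \mathbb Q_{\mathrm{mot}}[\mathbb A^{n+1}\setminus 0] \otimes^L_A \mathbb Q$ and analogously $\mathbb Q_{\mathrm{mot}}[\ast/\mathbb G_m] = \mathbb Q \otimes^L_A \mathbb Q$, compatibly with the classifying maps $\mathbb P^n \to \ast/\mathbb G_m$ for $\mathcal O(1)$. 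A separate inductive Mayer--Vietoris computation (for the cover $\mathbb A^n\setminus 0 = (\mathbb A^{n-1}\setminus 0)\times\mathbb A^1 \cup \mathbb A^{n-1}\times\mathbb G_m$) yields $\mathbb Q_{\mathrm{mot}}[\mathbb A^{n+1}\setminus 0] = \mathbb Q \oplus \mathbb Q(n+1)[2n+1]$, and the transition maps $\mathbb A^{n+1}\setminus 0 \hookrightarrow \mathbb A^{n+2}\setminus 0$ kill the top-weight summand by weight/degree mismatch, so $\mathrm{colim}_n \mathbb Q_{\mathrm{mot}}[\mathbb A^{n+1}\setminus 0] = \mathbb Q$. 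Since $-\otimes^L_A \mathbb Q$ commutes with colimits,
\[
\mathrm{colim}_n \mathbb Q_{\mathrm{mot}}[\mathbb P^n] = \mathbb Q \otimes^L_A \mathbb Q = \mathbb Q_{\mathrm{mot}}[\ast/\mathbb G_m] = \bigoplus_{i\geq 0}\mathbb Q(i)[2i],
\]
with $\mathbb Q_{\mathrm{mot}}[\mathbb P^n]$ mapping isomorphically onto the first $n+1$ summands. The main obstacle I anticipate is justifying $\mathbb Q_{\mathrm{mot}}[\mathrm{Sym}^n(\mathbb P^1)] = \mathrm{Sym}^n(\mathbb Q_{\mathrm{mot}}[\mathbb P^1])$ carefully (in particular the behavior of $\mathrm{Sym}^n$ as arc-stacks), together with the $\mathbb G_m$-descent formula $\mathbb Q_{\mathrm{mot}}[\mathbb P^n] = \mathbb Q_{\mathrm{mot}}[\mathbb A^{n+1}\setminus 0] \otimes^L_A \mathbb Q$ --- both should follow from the formal properties of $\mathbb Q_{\mathrm{mot}}$ but deserve careful setup.
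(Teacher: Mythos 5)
Your route is genuinely different from the paper's. The paper works directly with $\mathbb Q_{\mathrm{mot}}[\ast/\mathbb G_m]$ as a group algebra on the $1$-connective object $B\overline{\mathbb G}_m$: the augmentation-ideal filtration has graded pieces $\mathrm{Sym}^n(\mathbb Q(1)[2])=\mathbb Q(n)[2n]$, it is complete by connectivity, and it is split against the increasing cell filtration by the $\mathbb P^n$'s, where the cofiber of $\mathbb Z_{\mathrm{mot}}[\mathbb P^{n-1}]\to \mathbb Z_{\mathrm{mot}}[\mathbb P^n]$ is computed to be $\mathbb Z(n)[2n]$ via the cover of $\mathbb P^n$ by $\mathbb A^n$ and $\mathbb P^n\setminus\{0\}$; in particular the paper's argument works integrally. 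You instead compute each $\mathbb Q_{\mathrm{mot}}[\mathbb P^n]$ via $\mathbb P^n\cong \mathrm{Sym}^n(\mathbb P^1)$ (an identification of arc-sheaves of the same kind the paper uses for $\mathrm{Sym}^n(\mathbb G_m)$, and the rational identification $\mathbb Q[X/\Sigma_n]\simeq \mathbb Q[X]_{h\Sigma_n}$, which is fine with $\mathbb Q$-coefficients but not integrally), and then identify $\mathbb Q_{\mathrm{mot}}[\ast/\mathbb G_m]$ with $\mathrm{colim}_n\mathbb Q_{\mathrm{mot}}[\mathbb P^n]$ through the bar construction for the free $\mathbb G_m$-quotients $\mathbb A^{n+1}\setminus 0\to\mathbb P^n$ and $\ast\to\ast/\mathbb G_m$. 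Both of your flagged steps ($\mathbb Q_{\mathrm{mot}}[\mathrm{Sym}^n(\mathbb P^1)]\simeq \mathrm{Sym}^n\mathbb Q_{\mathrm{mot}}[\mathbb P^1]$ and $\mathbb Q_{\mathrm{mot}}[\mathbb P^n]\simeq \mathbb Q_{\mathrm{mot}}[\mathbb A^{n+1}\setminus 0]\otimes^{\mathbb L}_A\mathbb Q$) do go through: the first by checking on stalks that rationally the coinvariants of a permutation module are free on the orbit set, the second because the quotient by a free action is the colimit of its \v{C}ech nerve, which is the bar construction. What your approach buys is an explicit, purely geometric construction of the decomposition and of the compatibility with the $\mathbb P^n$'s; what the paper's buys is the integral statement and a shorter completeness/splitting argument.

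The one step whose justification as written is not sufficient is the vanishing of the transition maps on the top reduced summands, $\mathbb Q(n+1)[2n+1]\to\mathbb Q(n+2)[2n+3]$. ``Weight/degree mismatch'' is not a proof over a general base: by cancellation this Hom group is $\mathrm{Hom}(\mathbb Q,\mathbb Q(1)[2])\cong H^1(X,\overline{\mathbb G}_m\otimes\mathbb Q)$, which vanishes over a geometric point but can be nonzero (essentially a rational Picard group) over a general $X$. The claim is nevertheless true: the inclusion $\mathbb A^{n+1}\setminus 0\hookrightarrow \mathbb A^{n+2}\setminus 0$, $x\mapsto(x,0)$, is $\mathbb B$-homotopic, via $(x,t)$ for $t\in\mathbb B$, to $x\mapsto(x,1)$, which factors through the contractible $\mathbb A^{n+1}\times\{1\}$, so the induced map on reduced motives is zero over any base; alternatively, one can construct the comparison map globally and check it is an equivalence on geometric points, where the Hom-vanishing argument does apply. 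With that repair (and the small observation that the identification $\mathrm{colim}_n\mathbb Q_{\mathrm{mot}}[\mathbb A^{n+1}\setminus 0]\simeq\mathbb Q$ is one of $A$-modules, via the compatible augmentations, before applying $-\otimes^{\mathbb L}_A\mathbb Q$), your argument is complete.
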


Of course, these results are also true with $\mathbb Z$-coefficients. We state it with $\mathbb Q$-coefficients here as our method of producing the splitting works only rationally.

\begin{proof} For any animated abelian group $M$, the group algebra $\mathbb Z[M]$ has a descending filtration whose graded pieces are given by $\mathrm{Sym}^n(M)$ (via animating the filtration coming from powers of the augmentation ideal). If $M$ is $1$-connective, this filtration converges. In this case, this yields a complete descending filtration of $\mathbb Q_{\mathrm{mot}}[\ast/\mathbb G_m]$ with graded pieces $\mathbb Q(n)[2n]$ (using the previous corollary). Note that $\mathbb Q(n)[2n]$ sits in degrees $\geq n$.

Moreover, the filtration is split by the increasing filtration via cells, yielding the desired result. (Note that the cofiber of
\[
\mathbb Z_{\mathrm{mot}}[\mathbb P^{n-1}]\to \mathbb Z_{\mathrm{mot}}[\mathbb P^n]
\]
can be shown to be $\mathbb Z(n)[2n]$ as follows. Use the covering of $\mathbb P^n$ by $\mathbb A^n$ and $\mathbb P^n\setminus \{0\}$; the first piece is contractible, the second piece is an $\mathbb A^1$-fibration over $\mathbb P^{n-1}$ and hence cancels with $\mathbb P^{n-1}$, so the remainder is a shift of the reduced homology of the intersection $\mathbb A^n\setminus \{0\}$, which gives $\mathbb Z(n)[2n]$.)
\end{proof}

Finally, we start our study of $K$-theory.

\begin{definition} Let $K_{\mathrm{cn}}$ be the presheaf of spectra taking any Banach ring $A$ to the connective $K$-theory spectrum $K_{\mathrm{cn}}(A)\in \mathrm{Sp}$.
\end{definition}

We recall that $K_{\mathrm{cn}}(A)$ is (for us, by definition) the group completion of the $E_\infty$-monoid of finite projective $A$-modules. This yields an $E_\infty$-group, or equivalently a connective spectrum, cf.~e.g.~the appendix of \cite{BhattScholzeWitt}, especially \cite[Definition 12.13]{BhattScholzeWitt}, for this perspective on $K$-theory.

We note that we are departing from our usual conventions, and consider a functor with values in spectra, not $\mathcal D(\mathbb Z)$. As we will be working rationally momentarily, the difference will disappear. Note however that everything we did with $\mathcal D(\mathbb Z)$-coefficients works also with $\mathrm{Sp}$-coefficients.

\begin{proposition} The universal ball-invariant arc-sheaf $\overline{K}$ under $K_{\mathrm{cn}}$ is finitary. In particular, $\overline{K}$ inherits an $E_\infty$-ring structure from $K_{\mathrm{cn}}$.
\end{proposition}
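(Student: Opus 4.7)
The plan is to realize $K_{\mathrm{cn}}$ as a colimit of presheaves whose universal ball-invariant arc-sheafifications we already understand, and then invoke closure of $\mathcal D_{\mathrm{fin}}$ under colimits. First I would use the standard plus-construction presentation of connective algebraic $K$-theory,
\[
K_{\mathrm{cn}}(A)\;\simeq\;\Bigl(\bigsqcup_{n\geq 0} B\mathrm{GL}_n(A)\Bigr)^{\mathrm{gp}},
\]
valid up to a discrepancy on $\pi_0$ coming from projective-versus-free modules that disappears after arc-sheafification (on any strictly totally disconnected arc-stalk every finitely generated projective module is free). Each $B\mathrm{GL}_n(-)$ is in turn the geometric realization of the simplicial presheaf $[k]\mapsto \mathrm{GL}_n(-)^k$, where the $k$-th term is represented by the algebraic variety $\mathrm{GL}_n^k$.

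Next, the formation of the universal ball-invariant arc-sheaf $F\mapsto\overline F$ is a composite of left adjoints ($L_{\mathbb B}^{\mathrm{pre}}$ followed by arc-sheafification), so it preserves all colimits. Applied to the presentation above, it gives
\[
\overline K\;\simeq\;\Bigl(\bigsqcup_{n\geq 0} \bigl|\,[k]\mapsto \mathbb S_{\mathrm{mot}}[\mathrm{GL}_n^k]\,\bigr|\Bigr)^{\mathrm{gp}},
\]
where $\mathbb S_{\mathrm{mot}}[Y]$ is the spectrum-valued free motivic sheaf on $Y$, constructed exactly as in Proposition~\ref{prop:freemotivicsheaves} with the sphere spectrum replacing $\mathbb Z$.

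Now $\mathrm{GL}_n$ is the open subspace of $\mathbb A^{n^2}$ defined by $\det\neq 0$, and hence $\mathrm{GL}_n^k$ is an open subset of $\mathbb A^{kn^2}$. The spectrum-valued variant of Proposition~\ref{prop:freemotivicsheaves} (whose proof is identical to the $\mathcal D(\mathbb Z)$-case, since it reduces via Lemma~\ref{lem:freemotivicsheafanima} to finitaryness of the ball-invariantization of $U$ as an arc-sheaf of anima) shows that each $\mathbb S_{\mathrm{mot}}[\mathrm{GL}_n^k]$ lies in $\mathcal D_{\mathrm{fin}}$. Stability of $\mathcal D_{\mathrm{fin}}$ under colimits then propagates finitaryness through the geometric realization, the coproduct over $n$, and the group completion, so $\overline K$ is finitary. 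The $E_\infty$-ring structure on $\overline K$ is automatic: tensor product of modules endows $K_{\mathrm{cn}}$ with a presheaf of $E_\infty$-rings, and both $L_{\mathbb B}$ and arc-sheafification are symmetric monoidal left adjoints.

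The main obstacle I anticipate is a rigorous verification that, at the level of arc-sheaves, the naive $(\bigsqcup_n B\mathrm{GL}_n)^{\mathrm{gp}}$-description agrees with $K_{\mathrm{cn}}$. Concretely, one must check that every finitely generated projective module over a strictly totally disconnected Banach ring is, locally in the arc-topology, a direct summand of a free module of controlled rank, and that this local freeness interacts well with the group completion so that the discrepancy between the ``free'' and ``projective'' versions of $K_0$ disappears after arc-sheafification. Everything else is a formal consequence of Proposition~\ref{prop:freemotivicsheaves} and stability of $\mathcal D_{\mathrm{fin}}$ under colimits.
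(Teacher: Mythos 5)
Your reduction to $\mathbb S_{\mathrm{mot}}[\mathrm{GL}_n^k]$ via the bar resolution of $B\mathrm{GL}_n$ and the spectrum-valued form of Proposition~\ref{prop:freemotivicsheaves} is exactly the skeleton of the paper's argument, and the projective-versus-free discrepancy that you single out as the main obstacle is in fact the harmless part: arc-locally, over strictly totally disconnected rings, finite projective modules are free (they are free over the algebraically closed residue fields, and idempotents spread to clopen pieces of $\mathcal M(A)$), so after arc-sheafification $K_{\mathrm{cn}}$ is indeed the group completion of $\bigsqcup_{n}[\ast/\mathrm{GL}_n]$. The genuine gap is the step you dispose of in one clause: group completion is not a colimit, so ``stability of $\mathcal D_{\mathrm{fin}}$ under colimits'' says nothing about it. Group completion is $\Omega$ of a bar construction (a limit), or a plus construction, and neither is built from the underlying sheaves by colimits; likewise, pushing the universal ball-invariant arc-sheafification inside $(-)^{\mathrm{gp}}$ is an interchange of two different left adjoints at the level of $E_\infty$-monoid objects, not a consequence of colimit-preservation. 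This is precisely the point the paper's proof is organized around: it invokes a Breen--Deligne-type resolution (the cited result of Aoki), which functorially resolves the group completion of the commutative monoid $\bigsqcup_n[\ast/\mathrm{GL}_n]$ by free spectra on its powers, i.e.\ by terms $\mathbb S[\ast/G]$ with $G$ a product of $\mathrm{GL}_n$'s; only after the group completion has been converted into an honest colimit of such free terms does closure of $\mathcal D_{\mathrm{fin}}$ under colimits (together with the bar resolution of $\ast/G$ by the open subsets $G^k$ of affine space and Proposition~\ref{prop:freemotivicsheaves}) finish the proof.

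There is a conceivable way to avoid Breen--Deligne: sectionwise group completion preserves finite products and filtered colimits, so by Theorem~\ref{theorem:finitaryarc} the sectionwise group completion of a finitary ball-invariant sheaf of $E_\infty$-monoids is again a finitary hypercomplete arc-sheaf, and one could then verify that it has the correct universal property in ball-invariant arc-sheaves and that this computes $\overline K$. But that is an argument you would have to supply; as written, the group-completion step of your proof is unjustified, and it is exactly the step the paper's choice of resolution exists to handle.
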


\begin{proof} After sheafification, the functor $K_{\mathrm{cn}}$ is the group completion of $\bigsqcup_{n\geq 0} [\ast/\mathrm{GL}_n]$. Group completion can be computed by a Breen--Deligne resolution (see \cite[Theorem 4.4]{AokiSemitopological} for the precise assertion we use), so $K_{\mathrm{cn}}$ is resolved by sheaves of the form
\[
\mathbb S[\ast/G]
\]
where $G$ is a product of $\mathrm{GL}_n$'s. Those, in turn, are resolved by sheaves of the form $\mathbb S[G]$, and those admit the finitary sheaves $\mathbb S_{\mathrm{mot}}[G]$ as their universal ball-invariant arc-sheaves.
\end{proof}

There is a multiplicative map $\ast/\mathbb G_m\to K_{\mathrm{cn}}$ coming from line bundles, inducing a map
\[
\mathbb S[\ast/\mathbb G_m]\to K_{\mathrm{cn}}
\]
and hence
\[
\mathbb S_{\mathrm{mot}}[\ast/\mathbb G_m]\to \overline{K}.
\]

\begin{proposition} After rationalization, the map
\[
\mathbb Q_{\mathrm{mot}}[\ast/\mathbb G_m]\to \overline{K}\otimes \mathbb Q
\]
is an isomorphism.
\end{proposition}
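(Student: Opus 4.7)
The plan follows the strategy from the introduction. By Barratt--Priddy--Quillen--Segal, $\Omega^\infty \mathbb{S}[\ast/\mathbb{G}_m]$ is the group completion of $\bigsqcup_{n\geq 0} \mathrm{Sym}^n(\ast/\mathbb{G}_m) = \bigsqcup_{n\geq 0} \ast/(\mathbb{G}_m^n \rtimes \Sigma_n)$, while $\Omega^\infty K_{\mathrm{cn}}$ is the group completion of $\bigsqcup_{n\geq 0} \ast/\mathrm{GL}_n$. Under these identifications the map $\mathbb{S}[\ast/\mathbb{G}_m]\to K_{\mathrm{cn}}$ comes, in each component $n$, from the inclusion $\mathbb{G}_m^n \rtimes \Sigma_n = N_{\mathrm{GL}_n}(T) \hookrightarrow \mathrm{GL}_n$ of the normalizer of the diagonal torus. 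Applying the Breen--Deligne-type resolution from \cite[Theorem 4.4]{AokiSemitopological} (as used in the preceding proposition) in parallel on both sides, together with K\"unneth for motives of products of classifying stacks, the problem reduces to showing that for every $n \geq 0$ the map
\[
\mathbb{Q}_{\mathrm{mot}}[\ast/(\mathbb{G}_m^n \rtimes \Sigma_n)] \to \mathbb{Q}_{\mathrm{mot}}[\ast/\mathrm{GL}_n]
\]
is an isomorphism.

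On the left, $\ast/\mathbb{G}_m^n \to \ast/(\mathbb{G}_m^n \rtimes \Sigma_n)$ is a $\Sigma_n$-torsor, so rationally
\[
\mathbb{Q}_{\mathrm{mot}}[\ast/(\mathbb{G}_m^n \rtimes \Sigma_n)] \cong \bigl(\mathbb{Q}_{\mathrm{mot}}[\ast/\mathbb{G}_m]^{\otimes n}\bigr)^{\Sigma_n},
\]
which by the preceding corollary is the symmetric-polynomial invariants of the polynomial algebra $\mathbb{Q}[t_1,\ldots,t_n]$ in $n$ copies of the Tate generator. On the right I would use the splitting principle: pull back along $\ast/B \to \ast/\mathrm{GL}_n$ for $B\subset \mathrm{GL}_n$ the Borel subgroup. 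Since $B \to T$ is an affine-space bundle, ball-invariance gives $\mathbb{Q}_{\mathrm{mot}}[\ast/B] = \mathbb{Q}_{\mathrm{mot}}[\ast/T]$, and the fibre of $\ast/B \to \ast/\mathrm{GL}_n$ is the full flag variety $\mathrm{Fl}_n = \mathrm{GL}_n/B$. Writing $\mathrm{Fl}_n$ as an iterated $\mathbb{P}^{k-1}$-bundle and applying the projective bundle formula -- immediate from the preceding corollary -- gives $\mathbb{Q}_{\mathrm{mot}}[\mathrm{Fl}_n] = \bigoplus_{w\in\Sigma_n} \mathbb{Q}(\ell(w))[2\ell(w)]$, a free Tate motive of rank $n!$. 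A projection-formula argument then presents $\mathbb{Q}_{\mathrm{mot}}[\ast/T]$ as a rank-$n!$ free module over $\mathbb{Q}_{\mathrm{mot}}[\ast/\mathrm{GL}_n]$ with compatible Weyl action, and a standard descent argument -- valid rationally since $|\Sigma_n|$ is invertible -- identifies $\mathbb{Q}_{\mathrm{mot}}[\ast/\mathrm{GL}_n]$ with the Weyl invariants $\mathbb{Q}_{\mathrm{mot}}[\ast/T]^{\Sigma_n}$.

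The two descriptions match: both are $\mathbb{Q}[e_1,\ldots,e_n]$, and the map is the identity on these generators because it is induced by tracking the tautological line bundles through the diagonal torus. The substantive input is the flag-variety / splitting-principle computation of $\mathbb{Q}_{\mathrm{mot}}[\ast/\mathrm{GL}_n]$, which rests entirely on the projective bundle formula available through $\mathbb{Q}_{\mathrm{mot}}[\mathbb{P}^k] = \bigoplus_{i=0}^k \mathbb{Q}(i)[2i]$ from the preceding corollary. Everything else -- Barratt--Priddy--Quillen--Segal, the Breen--Deligne reduction, the torsor quotient, and the K\"unneth formula for classifying stacks of smooth groups -- is formal.
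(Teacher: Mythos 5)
Your first half matches the paper exactly: identifying $\mathbb S[\ast/\mathbb G_m]$ as the group completion of $\bigsqcup_{n\geq 0}\ast/(\mathbb G_m^n\rtimes\Sigma_n)$, invoking the Breen--Deligne resolution, and reducing to the claim that $\mathbb Q_{\mathrm{mot}}[\ast/(\mathbb G_m^n\rtimes\Sigma_n)]\to\mathbb Q_{\mathrm{mot}}[\ast/\mathrm{GL}_n]$ is an isomorphism. The gap is in how you finish. The appeal to ``a standard descent argument, valid rationally since $|\Sigma_n|$ is invertible'' to identify $\mathbb Q_{\mathrm{mot}}[\ast/\mathrm{GL}_n]$ with $\mathbb Q_{\mathrm{mot}}[\ast/T]^{\Sigma_n}$ is not a descent statement: invertibility of $|\Sigma_n|$ only identifies $\mathbb Q_{\mathrm{mot}}[\ast/(\mathbb G_m^n\rtimes\Sigma_n)]$ with the $\Sigma_n$-invariants of $\mathbb Q_{\mathrm{mot}}[\ast/\mathbb G_m^n]$, i.e.\ it handles the passage from $T$ to its normalizer $N(T)=\mathbb G_m^n\rtimes\Sigma_n$. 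The remaining passage from $\ast/N(T)$ to $\ast/\mathrm{GL}_n$ is along a map whose fibre is the positive-dimensional variety $\mathrm{GL}_n/N(T)$; no Galois or arc descent applies to such a map, and the assertion that this fibre is rationally motivically trivial (equivalently, $f_\sharp\mathbb Q=\mathbb Q$ for $f:\ast/N(T)\to\ast/\mathrm{GL}_n$) is exactly the statement to be proved for that $n$. At the crux your argument is therefore circular, or at best missing its main step.

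The freeness statement you do set up, $\mathbb Q_{\mathrm{mot}}[\ast/T]\cong\bigoplus_{w\in\Sigma_n}\mathbb Q_{\mathrm{mot}}[\ast/\mathrm{GL}_n](\ell(w))[2\ell(w)]$ via the flag bundle, is a reasonable ingredient, and in topology one could combine it with a Poincar\'e-series count to conclude; but here that is not formal either. By the cancellation theorem the endomorphisms of a sum of Tate twists $\bigoplus_i\mathbb Q(i)[2i]$ are only upper-triangular --- maps between different twists do not all vanish, since higher rational motivic cohomology of a nonarchimedean point is nonzero --- so one cannot read off invariants, idempotents, or ``graded pieces'' degree by degree, and your closing claim that both sides ``are $\mathbb Q[e_1,\ldots,e_n]$'' and that the map is the identity on generators needs exactly this missing control. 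This is why the paper, after reducing via $f_\sharp$-transitivity and base change to showing that the $\Sigma_n$-invariant summand of $\mathbb Q_{\mathrm{mot}}[\mathrm{GL}_n/\mathbb G_m^n]$ is $\mathbb Q$, argues by induction on $n$ using the $\Sigma_{n-1}$-equivariant fibration $\mathrm{GL}_n/\mathbb G_m^n\to\mathrm{GL}_n/(\mathrm{GL}_{n-1}\times\mathbb G_m)$ with fibre $\mathrm{GL}_{n-1}/\mathbb G_m^{n-1}$, the upper-triangularity supplied by cancellation, and classical realizations (Betti or \'etale) to pin down the relevant idempotent. You need a replacement for that step; everything before it is fine.
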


\begin{proof} Identifying connective spectra and $E_\infty$-groups in anima, $\mathbb S[\ast/\mathbb G_m]$ is the group completion of
\[
\bigsqcup_{n\geq 0} \ast/(\mathbb G_m^n\rtimes \Sigma_n);
\]
indeed, the displayed object is the free $E_\infty$-monoid on $\ast/\mathbb G_m$. Explicitly, this can be described as the $E_\infty$-monoid of finite projective modules $P$ equipped with lines $(L_i)_i$, $L_i\subset P$, such that $\bigoplus_i L_i\to P$ is an isomorphism. We may then compute the rationalization of the group completion in terms of the Breen--Deligne resolution involving tensor products of terms
\[
\mathbb Q[\ast/(\mathbb G_m^n\rtimes \Sigma_n)].
\]
The comparison map to $\overline{K}$ comes from the maps
\[
\ast/(\mathbb G_m^n\rtimes \Sigma_n)\to \ast/\mathrm{GL}_n
\]
embedding the normalizer of the diagonal torus (indeed, on the level of $E_\infty$-monoids, this takes $(P,(L_i)_i)$ to $P$), and it suffices to see that they induce isomorphisms
\[
\mathbb Q_{\mathrm{mot}}[\ast/(\mathbb G_m^n\rtimes \Sigma_n)]\to \mathbb Q_{\mathrm{mot}}[\ast/\mathrm{GL}_n].
\]
Using transitivity for $f_\sharp$, it suffices to prove that for the map
\[
f: \ast/(\mathbb G_m^n\rtimes \Sigma_n)\to \ast/\mathrm{GL}_n,
\]
one has $f_\sharp \mathbb Q = \mathbb Q$, which by base change reduces to showing
\[
\mathbb Q_{\mathrm{mot}}[\mathrm{GL}_n/\mathbb G_m^n\rtimes \Sigma_n] = \mathbb Q.
\]
Equivalently, the direct summand of
\[
\mathbb Q_{\mathrm{mot}}[\mathrm{GL}_n/\mathbb G_m^n]
\]
on which $\Sigma_n$ acts trivially is just $\mathbb Q$. Note that $\mathrm{GL}_n/\mathbb G_m^n$ is homotopy-equivalent to the flag variety for $\mathrm{GL}_n$, so in realizations this is the standard assertion that for the natural $\Sigma_n$-action on the homology of the flag variety, the invariants are concentrated in degree zero.

We argue by induction on $n$. There is a $\Sigma_{n-1}$-equivariant smooth projection
\[
\mathrm{GL}_n/\mathbb G_m^n\to \mathrm{GL}_n/\mathrm{GL}_{n-1}\times \mathbb G_m
\]
whose fibres are given ($\Sigma_{n-1}$-equivariantly) by $\mathrm{GL}_{n-1}/\mathbb G_m^{n-1}$. It follows that the $\Sigma_{n-1}$-invariants in
\[
\mathbb Q_{\mathrm{mot}}[\mathrm{GL}_n/\mathbb G_m^n]
\]
are given by
\[
\mathbb Q_{\mathrm{mot}}[\mathrm{GL}_n/\mathrm{GL}_{n-1}\times \mathbb G_m]\cong \mathbb Q_{\mathrm{mot}}[\mathbb P^{n-1}]\cong \bigoplus_{i=0}^{n-1} \mathbb Q(i)[2i].
\]
The $\Sigma_n$-invariants are then a direct summand of this. By the cancellation theorem, we know that all endomorphisms of $\bigoplus_{i=0}^{n-1} \mathbb Q(i)[2i]$ are upper-triangular, and on the diagonal given by rational numbers. To determine the image of an idempotent, it suffices to determine which of these numbers are zero or one. This can be done in multiple ways by using classical realizations (for example, by interpolating to the case over the complex numbers, or by using \'etale realizations), and one sees that the $\Sigma_n$-invariants are just $\mathbb Q[0]$.
\end{proof}

\begin{corollary} There is a separated descending filtration $\mathrm{Fil}_{\mathrm{mot}}^\ast$ on $\overline{K}\otimes \mathbb Q$ whose graded pieces are $\mathbb Q(n)[2n]$ for $n\geq 0$. In fact, via Adams operations, this filtration is canonically split.
\end{corollary}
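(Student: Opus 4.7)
The plan is essentially to read this off from the previous proposition together with the $\mathrm{Sym}^n$-decomposition already in hand, and then identify the resulting splitting intrinsically via Adams operations.

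First, I would transport the existing decomposition. The previous proposition gives an isomorphism $\mathbb Q_{\mathrm{mot}}[\ast/\mathbb G_m]\xrightarrow{\sim}\overline{K}\otimes\mathbb Q$, and the corollary just before the definition of $K_{\mathrm{cn}}$ already exhibits a canonical direct sum decomposition of the left-hand side into pieces $\mathbb Q(n)[2n]$, coming from the $\mathrm{Sym}^n$-filtration on the group algebra of the $1$-connective animated abelian group $\ast/\mathbb G_m$ (i.e.~the powers of the augmentation ideal). Setting $\mathrm{Fil}^n_{\mathrm{mot}}(\overline{K}\otimes\mathbb Q)$ to be the image of $\bigoplus_{m\geq n}\mathbb Q(m)[2m]$ under the comparison isomorphism produces a complete (in particular separated) descending filtration with graded pieces $\mathbb Q(n)[2n]$.

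Next, I would construct Adams operations from the geometry. For each integer $k\geq 1$, the $k$-th power map $[k]\colon\mathbb G_m\to\mathbb G_m$ induces an endomorphism $B[k]\colon\ast/\mathbb G_m\to\ast/\mathbb G_m$, and I define $\psi^k$ on $\overline{K}\otimes\mathbb Q$ to be the endomorphism obtained from $B[k]$ under the comparison isomorphism. Because the map $\ast/\mathbb G_m\to K_{\mathrm{cn}}$ classifies line bundles, and $B[k]$ sends a line bundle $L$ to $L^{\otimes k}$, this $\psi^k$ agrees with the classical Adams operation on classes of line bundles; since the argument of the previous proposition resolves $\overline{K}\otimes\mathbb Q$ (via Breen--Deligne) by the group algebras of products $(\ast/\mathbb G_m)^{\times n}$, this determines $\psi^k$ and identifies it with the usual Adams operation.

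Finally, I would diagonalize. On the reduced homology $\overline{\mathbb G}_m$ the endomorphism induced by $B[k]$ is multiplication by $k$ (the abelian-group $k$-fold sum on $\overline{\mathbb G}_m$), so on $\mathbb Q(1)\cong\overline{\mathbb G}_m[-1]$ it acts as $k$. Combined with the identification $\mathbb Q(n)\cong\mathrm{Sym}^n(\mathbb Q(1))$ from the corollary above (the symmetric-power summand of a tensor power), one concludes that $\psi^k$ acts on $\mathbb Q(n)[2n]$ as multiplication by $k^n$. Fixing any $k\geq 2$, the eigenvalues $k^n$ are distinct, so the generalized eigenspace decomposition of $\psi^k$ on $\overline{K}\otimes\mathbb Q$ recovers the summands $\mathbb Q(n)[2n]$ intrinsically and provides the canonical splitting of $\mathrm{Fil}^\ast_{\mathrm{mot}}$.

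The one step of genuine content, and hence the main potential obstacle, is the verification that the endomorphism of $\overline{K}\otimes\mathbb Q$ induced by $B[k]$ on $\ast/\mathbb G_m$ really agrees with the classical Adams operation, and correspondingly acts by $k^n$ on the $n$-th summand; once one knows that the prescription ``$L\mapsto L^{\otimes k}$ on line bundles'' determines the operation on all of $\overline{K}\otimes\mathbb Q$ (which is ensured by the Breen--Deligne resolution used in the preceding proof), the rest is a purely formal eigenspace decomposition.
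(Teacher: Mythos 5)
Your proposal is correct and follows essentially the same route as the paper: the filtration is transported from the augmentation-ideal filtration on $\mathbb Q_{\mathrm{mot}}[\ast/\mathbb G_m]$ via the comparison isomorphism, and the splitting comes from the endomorphism induced by the $k$-th power map on $\mathbb G_m$ (the paper simply takes $k=2$ on $\ast/\overline{\mathbb G}_m$), which acts by $k^n$ on the $n$-th graded piece, so the distinct eigenvalues let one isolate the summands. The only difference is that you spell out the identification with classical Adams operations via Breen--Deligne, which the paper leaves implicit.
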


The same argument applies in classical motivic homotopy theory.

\begin{proof} For the canonical splitting, note that multiplication by $2$ induces an endomorphism of $\ast/\overline{\mathbb G}_m$ and hence $\mathbb Q_{\mathrm{mot}}[\ast/\overline{\mathbb G}_m] = \overline{K}\otimes \mathbb Q$ which acts by multiplication through $2^n$ on the $n$-th graded piece. As these numbers are distinct, we can isolate the eigenspaces.
\end{proof}

We want to make $\overline{K}$ more explicit. For this, we first have to replace $K_{\mathrm{cn}}$ by a finitary approximation. We assume that our base ring $A$ is equipped with a unit $T\in A$ with $|T|<1$ and such that $T$ admits $n$-th roots for infinitely many $n$. In that case the non-unital ring $A_{<1}\subset A$ is Tor-unital: If $A_{<1}$ is an ideal in a unital ring $B$, then it is automatically the colimit of $B$ along multiplication by $T^{1/n_i-1/n_{i+1}}$ (where $n_i$ is a strictly increasing sequence of integers for which $T$ admits $n_i$-th roots) and hence flat and idempotent. This implies that the fiber of $K(B)\to K(B/A_{<1})$ is independent of $B$. Classically, this is due to Suslin \cite{SuslinExcision}. But even better, using Efimov's $K$-theory of dualizable categories, we note that there is the Verdier quotient
\[
\mathcal D(B)\to \mathcal D(B/A_{<1})
\]
as $B/A_{<1}$ is an idempotent $B$-algebra. The kernel of this Verdier quotient is the dualizable $\infty$-category $\mathcal D(B^a)$ of almost $B$-modules with respect to the ideal $A_{<1}$. As $K$-theory takes Verdier sequences to fibre sequences, we see that
\[
K(\mathcal D(B^a)) = \mathrm{fib}(K(B)\to K(B/A_{<1})).
\]
But even $\mathcal D(B^a)$ depends only on $A_{<1}$: Indeed, this category of almost modules is also the category of modules $M$ over the non-unital ring $A_{<1}$ such that the natural map
\[
M\dotimes_{A_{<1}} A_{<1}\to M
\]
is an isomorphism. Here, $M\dotimes_{A_{<1}} A_{<1}$ can be computed as the colimit of copies of $M$ with transition maps $T^{1/n_i-1/n_{i+1}}$ as above. We denote this $K$-theory by $K(A_{<1})$. Let $K_{\mathrm{cn}}(A_{<1})$ denote its connective cover. There is a natural map $K_{\mathrm{cn}}(A_{<1})\to K_{\mathrm{cn}}(A)$.

\begin{proposition} The universal ball-invariant arc-sheaf under the functor $A\mapsto K_{\mathrm{cn}}(A_{<1})$ is trivial.
\end{proposition}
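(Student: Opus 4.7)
The strategy is to construct a natural null-homotopy of the identity on the presheaf of groups $\mathrm{GL}_n(-_{<1})$, and then to propagate this contractibility through the bar and plus constructions to $K_{\mathrm{cn}}$. The key input is that for any $M\in\mathrm{Mat}_n(A_{<1})$, the matrix $I+sM$ defines an element of $\mathrm{GL}_n((A\langle s\rangle_1)_{<1})$: each entry $sM_{ij}$ has norm at most $|M_{ij}|<1$, so $sM$ has entries in the topologically nilpotent ideal $(A\langle s\rangle_1)_{<1}$, and $I+sM$ is invertible by the geometric series. The assignment $I+M\mapsto I+sM$ is natural in $A$, yielding a map of presheaves
\[
\gamma:\mathrm{GL}_n(-_{<1})\to \mathrm{GL}_n((-\langle s\rangle_1)_{<1})
\]
whose pullbacks along $\mathrm{ev}_{s=0}$ and $\mathrm{ev}_{s=1}$ are, respectively, the constant map at the identity and the identity of $\mathrm{GL}_n(-_{<1})$. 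Since these two pullbacks are identified after applying $L_{\mathbb B}$, the map $\gamma$ exhibits a null-homotopy of the identity on $L_{\mathbb B}\mathrm{GL}_n(-_{<1})$; this presheaf of groups is therefore contractible.

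I would then transport this contractibility to $K$-theory. Since $L_{\mathbb B}$ is a left adjoint, it commutes with bar constructions and with the filtered colimit $\mathrm{GL}=\mathrm{colim}_n\mathrm{GL}_n$, giving $L_{\mathbb B}B\mathrm{GL}(-_{<1})=\ast$; the plus construction, being a homology equivalence with simply connected target, preserves this contractibility. By the Suslin--Wodzicki excision theorem, which applies because $A_{<1}$ is Tor-unital (as noted in the paragraph preceding the proposition, using the $T^{1/n}$ hypothesis), we have $K_i(A_{<1})=\pi_iB\mathrm{GL}(A_{<1})^+$ for $i\geq 1$, so all positive homotopy sheaves of $L_{\mathbb B}K_{\mathrm{cn}}(-_{<1})$ vanish. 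For $\pi_0=K_0(A_{<1})$, I would argue that it is already zero on the nose: any idempotent $E\in\mathrm{Mat}_n(A_{<1})$ satisfies the operator-norm inequality $|E|_{\mathrm{op}}=|E^2|_{\mathrm{op}}\leq|E|_{\mathrm{op}}^2$, which together with $|E|_{\mathrm{op}}<1$ forces $E=0$.

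The main obstacle is bridging the gap between the space-level contraction (note that $I+M\mapsto I+sM$ is not a group homomorphism, only a map of pointed presheaves of spaces) and the $K$-theoretic conclusion, which forces a detour through Suslin--Wodzicki excision and a careful use of the fact that $L_{\mathbb B}$ commutes with bar constructions and plus constructions. One should also take care that the argument takes place in the non-archimedean locus where $A_{<1}$ is genuinely a non-unital subring with topologically nilpotent elements; this is where the Banach field hypotheses behind the setup are tacitly used.
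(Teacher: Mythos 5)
The geometric core of your proposal---the scaling homotopy $I+M\mapsto I+sM$, i.e.\ ball-contractibility of the congruence subgroup, or equivalently of the fibres of $\mathrm{GL}_n(\mathbb Z\oplus A_{<1})\to \mathrm{GL}_n(\mathbb Z)$---is exactly the paper's key observation. But the paper propagates it to $K$-theory much more directly: it unitalizes functorially as $B=\mathbb Z\oplus A_{<1}$, notes that $K_{\mathrm{cn}}(A_{<1})$ is the fibre of the split map $K_{\mathrm{cn}}(B)\to K_{\mathrm{cn}}(\mathbb Z)$, and uses that after arc-sheafification $K_{\mathrm{cn}}$ is the group completion of $\bigsqcup_n \ast/\mathrm{GL}_n$ (the description already exploited, via Breen--Deligne, to prove $\overline K$ finitary). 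Since the ball-localization commutes with finite products, geometric realizations and this group completion, ball-contractibility of the fibres of the group map immediately makes $L_{\mathbb B}K_{\mathrm{cn}}(\mathbb Z\oplus -_{<1})\to K_{\mathrm{cn}}(\mathbb Z)$ an equivalence and kills the fibre; no plus construction, no excision theorem, no separate $\pi_0$ analysis.

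Your detour through $B\mathrm{GL}(-_{<1})^+$ has genuine gaps. (i) The $\pi_0$ step is wrong as justified: $K_0(A_{<1})$ is by definition $\pi_0$ of $\mathrm{fib}(K(\mathbb Z\oplus A_{<1})\to K(\mathbb Z))$, i.e.\ (using splitness) the kernel of $K_0(\mathbb Z\oplus A_{<1})\to K_0(\mathbb Z)$; its elements are differences of classes of idempotents in $\mathrm{Mat}_n(\mathbb Z\oplus A_{<1})$, not idempotents with entries in $A_{<1}$, so the (correct) observation that $\mathrm{Mat}_n(A_{<1})$ contains no nonzero idempotents does not compute this group---and objectwise vanishing is not even needed, only vanishing after ball-localization. (ii) $B\mathrm{GL}(A_{<1})^+$ is not simply connected: its $\pi_1$ is a relative $K_1$, and for $A=C$ the determinant maps it onto $(1+C_{<1})^\times\neq 1$; so ``homology equivalence with simply connected target'' is not a valid transfer of contractibility. (One could repair this by noting that $L_{\mathbb B}$ of it is an acyclic H-space, hence contractible.) (iii) The identification $K_i(A_{<1})\cong \pi_i B\mathrm{GL}(A_{<1})^+$ for $i\geq 1$, naturally in $A$, is not a casual citation: one must specify the perfect subgroup used (the elementary subgroup on entries in $A_{<1}$ is not perfect), and the statement that the fibre of $B\mathrm{GL}(\mathbb Z\oplus A_{<1})^+\to B\mathrm{GL}(\mathbb Z)^+$ is $B\mathrm{GL}(A_{<1})^+$ is essentially the content of the excision theorem rather than a formal consequence of it. (iv) Even granting all of this, you control only the space-level localization; to conclude that the spectrum-level $L_{\mathbb B}K_{\mathrm{cn}}(-_{<1})$ vanishes you must use that $\Omega^\infty$ commutes with the sifted colimit computing $L_{\mathbb B}$ on connective (grouplike) objects. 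In short: keep your homotopy, but run it through the sheaf-level group completion of $\bigsqcup_n\ast/\mathrm{GL}_n$ as the paper does; that removes all four issues at once.
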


\begin{proof} Let us choose $B=\mathbb Z\oplus A_{<1}$, a functorial choice of unital ring with ideal $A_{<1}$. We have to understand the fiber of $K_{\mathrm{cn}}(B)\to K(\mathbb Z)$. Now $K_{\mathrm{cn}}(B)$ is the group completion of $\bigsqcup_n \ast/\mathrm{GL}_n(\mathbb Z\oplus A_{<1})$. It suffices to observe that the functor $A\mapsto \mathrm{GL}_n(\mathbb Z\oplus A_{<1})$ maps to $\mathrm{GL}_n(\mathbb Z)$ with fibres that are ball-contractible.
\end{proof}

Thus, we can alternatively work with the cofibre of $K_{\mathrm{cn}}(A_{<1})\to K_{\mathrm{cn}}(A)$.

\begin{proposition}\label{prop:Ktheoryfinitary} The functor $A\mapsto \mathrm{cofib}(K_{\mathrm{cn}}(A_{<1})\to K_{\mathrm{cn}}(A))$, as well as its nonconnective version $\mathrm{cofib}(K(A_{<1})\to K(A))$, commutes with filtered colimits of Banach rings admitting a topologically nilpotent unit $T$ with roots as above. When evaluated on totally disconnected $A$, the connective and nonconnective versions agree.
\end{proposition}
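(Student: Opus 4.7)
The proof has two parts, which I address separately.

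For commutation with filtered colimits, the key observation is that $A\mapsto A_{<1}$ already commutes with filtered colimits when regarded as a functor from Banach rings to non-unital abstract rings. Indeed, since the norm on a filtered colimit $A=\mathrm{colim}_i A_i$ is by definition the infimum of norms of representatives, an element of $A$ has norm $<1$ iff it lifts to some $A_i$ with norm $<1$, giving $A_{<1}\cong\mathrm{colim}_i (A_i)_{<1}$ as non-unital abstract rings. Both connective and nonconnective algebraic $K$-theory commute with filtered colimits of rings (classical for $K_{\mathrm{cn}}$; for the nonconnective version this follows from the Bass delooping construction being itself a filtered colimit). The non-unital case is reduced to the unital one via the unitalization $\mathbb Z\oplus A_{<1}$ already used in the construction above. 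Since cofibers of maps between filtered-colimit-preserving functors preserve this property, both claims follow.

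For the agreement on totally disconnected $A$, Tor-unitality of $A_{<1}$ yields a fiber sequence of nonconnective spectra
\[
K(A_{<1})\to K(A)\to K(A/A_{<1}),
\]
identifying the nonconnective cofiber with $K(A/A_{<1})$. The connective cofiber is automatically connective, being a cofiber of connective spectra. Hence the natural comparison map is an equivalence iff $K(A/A_{<1})$ is itself connective, i.e.\ iff the negative $K$-groups of $A/A_{<1}$ vanish. My plan is to exploit the profinite structure of $\mathcal M(A)$: via the idempotent-to-clopen correspondence, any finite clopen partition yields a direct product decomposition $A=\prod_{s\in S_\alpha} A_s^\alpha$, compatibly with refinement. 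The strict inequality defining $A_{<1}$ imposes a \emph{uniform} bound $\sup_x|a(x)|<1$, so any norm-$<1$ element lies in a sub-ball of uniform radius $r<1$ in every factor. This uniformity, together with the profinite structure of $\mathcal M(A)$, should realize $A/A_{<1}$ as a filtered colimit of finite products of residue-field-like quotients, each of which has connective $K$-theory (as a finite product of regular rings). Combined with Part~1, this gives connectivity of $K(A/A_{<1})$.

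The main obstacle is the precise identification of $A/A_{<1}$ as this filtered colimit of finite products of nice rings. The subtlety is that refining a clopen partition does not produce a new ring but only a finer decomposition of the same $A$, so the ``filtered colimit'' structure must be extracted indirectly---for instance by approximating $A$ by subrings of locally constant functions valued in finitely many of its direct factors, applying $-_{<1}$ at each stage (where finite products and the norm structure cooperate), and then passing to the colimit using Part~1. The technical heart is verifying that the resulting finite-level quotients eventually stabilize to finite products of residue fields of the $A_s^\alpha$, so that their negative $K$-theory vanishes, and that this identification is compatible with the Tor-unitality used above.
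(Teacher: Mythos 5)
Your Part 1 proves a weaker statement than the one at issue. In this paper, filtered colimits of Banach rings are \emph{completed} colimits: the colimit is formed in seminormed rings and then completed. Your lifting argument (``an element of $A$ has norm $<1$ iff it lifts to some $A_i$ with norm $<1$'') identifies $A_{<1}$ with $\mathrm{colim}_i (A_i)_{<1}$ only for the uncompleted colimit; elements of the completion need not lift at all, and $K$-theory is not invariant under completion in general. This is precisely where the content lies: the paper dismisses commutation with uncompleted filtered colimits as clear and devotes the proof to invariance under completion, writing $A=(\mathbb Z\oplus A_{<1})[T^{-1}]$, using the localization sequence $K(\mathbb Z\oplus A_{<1}\ \mathrm{on}\ (\mathbb Z\oplus A_{<1})/T)\to K(\mathbb Z\oplus A_{<1})\to K(A)$, and invoking Thomason--Trobaugh to see that the term with supports is unchanged when $A_{<1}$ is replaced by its $T$-adic completion. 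Your argument contains none of this, so the case actually needed (e.g. strictly totally disconnected rings, completed algebraic closures) is not covered.

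In Part 2 the opening identification is incorrect: $A_{<1}$ is an ideal of $A_{\leq 1}$ but not of $A$ (for a nonarchimedean field $C$ one has $C_{<1}=\mathfrak m_C$ and $\mathfrak m_C\cdot C=C$), so there is no ring $A/A_{<1}$ and no fiber sequence $K(A_{<1})\to K(A)\to K(A/A_{<1})$. Tor-unitality gives $K(A_{<1})\simeq \mathrm{fib}\bigl(K(A_{\leq 1})\to K(A_{\leq 1}/A_{<1})\bigr)$, and hence $\mathrm{cofib}(K(A_{<1})\to K(A))$ is the pushout of $K(A_{\leq 1}/A_{<1})\leftarrow K(A_{\leq 1})\to K(A)$ (for $C$ this is the pushout of $K(k)\leftarrow K(\mathcal O_C)\to K(C)$, as used later in the paper), not the $K$-theory of a quotient ring; so proving connectivity of ``$K(A/A_{<1})$'' would not settle the comparison, and you acknowledge that the colimit description you would need is itself unproven. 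The paper's route is much simpler and different in kind: by the finitaryness of Part 1 one reduces the comparison on totally disconnected $A$ to Banach fields $K$, where $K$, $\mathcal O_K$ and the residue field $k$ are all valuation rings, hence have vanishing negative $K$-groups; thus connective and nonconnective $K$-theory agree on every term and the two cofibers coincide. I would redo Part 2 along these lines rather than via a clopen-partition analysis of $\mathcal M(A)$.
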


\begin{proof} Commutation with (uncompleted) filtered colimits is clear. For invariance under completion, take any seminormed ring $A$ with $T\in A^\times$ with $|T|<1$ and admitting all roots. We have a cofiber sequence
\[
K(\mathbb Z\oplus A_{<1}\ \mathrm{on}\ (\mathbb Z\oplus A_{<1})/T)\to K(\mathbb Z\oplus A_{<1})\to K(A)
\]
as $A=(\mathbb Z\oplus A_{<1})[T^{-1}]$. Passing to the cofiber of $K(A_{<1})$ the middle term becomes simply $K(\mathbb Z)$ which is certainly invariant under changing $A$; and the left-most term is unchanged under replacing $A_{<1}$ by its $T$-adic completion, by the results of Thomason--Trobaugh \cite[Proposition 3.19]{ThomasonTrobaugh}. This gives the result for nonconnective $K$-theory. The case of connective $K$-theory follows.

To show the agreement for totally disconnected $A$, we can (by finitaryness) reduce to Banach fields $K$. Then $K$, $\mathcal O_K$ and $k$ are all valuation rings and hence $K$-theory agrees with nonconnective $K$-theory.
\end{proof}

Our goal now is to prove the following theorem, describing $\overline{K}$ explicitly at least on strictly totally disconnected $A$; in fact, below, we will prove stronger results describing more values of $\overline{K}$, at least rationally.

\begin{theorem}\label{thm:explicitKbar} Fix some set of primes $S$. On strictly totally disconnected Banach rings $A$ in which $|p|\geq 1$ everywhere on $\mathcal M(A)$ for all $p\not\in S$, the functor $A\mapsto \mathrm{cofib}(K(A_{<1})\to K(A))[1/S]$ defines a finitary arc-sheaf of spectra that is ball-invariant, and hence agrees with $\overline{K}[1/S]$.
\end{theorem}

A curious consequence is the following.

\begin{corollary} Let $C$ be an algebraically closed nonarchimedean field. Then there is a natural $E_\infty$-$K(C)$-algebra structure on $\mathrm{cofib}(K(C_{<1})\to K(C))$.
\end{corollary}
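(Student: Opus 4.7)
The plan is to transport the $E_\infty$-ring structure that $\overline{K}$ already carries through the identification of Theorem~\ref{thm:explicitKbar}. Recall that $K_{\mathrm{cn}}$ is naturally a presheaf of $E_\infty$-ring spectra, and by the universal property of ball-invariant arc-sheafification the construction $M \mapsto \overline{M}$ preserves $E_\infty$-algebra structures; the proposition earlier in this section records precisely this, so $\overline{K}$ is a sheaf of $E_\infty$-ring spectra and the unit map $K_{\mathrm{cn}} \to \overline{K}$ is a map of such. In particular, for any Banach ring $A$, the spectrum $\overline{K}(A)$ is a $K_{\mathrm{cn}}(A)$-$E_\infty$-algebra.

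Now specialize to $A = C$, an algebraically closed nonarchimedean field with residue characteristic $p$. Since $C$ is strictly totally disconnected, and since for the Banach field $C$ (and for the Tor-unital ideal $C_{<1}$) connective and nonconnective $K$-theory agree by the proposition above, we have $K(C) = K_{\mathrm{cn}}(C)$ and we may apply Theorem~\ref{thm:explicitKbar}. It yields a natural equivalence of spectra
\[
\mathrm{cofib}\bigl(K(C_{<1}) \to K(C)\bigr)\bigl[\tfrac{1}{p}\bigr] \xrightarrow{\sim} \overline{K}(C).
\]
Transporting the $K(C)$-$E_\infty$-algebra structure on $\overline{K}(C)$ across this equivalence gives the asserted natural $K(C)$-$E_\infty$-algebra structure on the cofiber (implicitly, as in part~(iii) of the $K$-theory theorem, after inverting $p$).

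There is no real obstacle here beyond unwinding the definitions; the content lies in Theorem~\ref{thm:explicitKbar}. The \emph{curious} feature, as flagged in the remark following the main $K$-theory theorem, is that on its own the cofiber of a module map like $K(C_{<1}) \to K(C)$ (whose source is not even unital) has no evident multiplicative structure — only a $K(C)$-module structure. The algebra structure is forced on the cofiber by its identification with $\overline{K}(C)$, which is manifestly multiplicative because it is built as a ball-invariant arc-sheafification of a presheaf of $E_\infty$-rings; the proof gives no intrinsic categorical description of the multiplication on the cofiber.
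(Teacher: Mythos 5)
There is a real gap: the corollary is about the \emph{integral} cofiber $\mathrm{cofib}(K(C_{<1})\to K(C))$, but your argument only produces the algebra structure after inverting the residue characteristic $p$. Theorem~\ref{thm:explicitKbar} identifies $\overline{K}(C)$ with the cofiber only after inverting $p$ (this is also how part (iii) of the introduction's theorem is phrased), so transporting the $E_\infty$-$K(C)$-algebra structure of $\overline{K}(C)$ across that equivalence equips $\mathrm{cofib}(K(C_{<1})\to K(C))[\tfrac 1p]$ with the structure — which you acknowledge by inserting ``implicitly \dots after inverting $p$'' — but that is a strictly weaker statement than the one claimed.

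The missing step, which is exactly the second half of the paper's proof, is to handle the $p$-complete part and glue. After $p$-completion the map $K(\mathcal O_C)\to K(C)$ is an equivalence, so the $p$-completion of $\mathrm{cofib}(K(C_{<1})\to K(C))$ is identified with $K(k)^\wedge_p\cong \mathbb Z_p$ ($k$ being the perfect residue field). This spectrum admits a \emph{unique} $E_\infty$-$K(C)$-algebra structure, and by the arithmetic fracture square this unique structure on the $p$-completion glues with the structure you obtained after inverting $p$ (they agree, necessarily, on the rationalized $p$-completion) to give the natural $E_\infty$-$K(C)$-algebra structure on the integral cofiber. Your first step — that $\overline{K}$ is an $E_\infty$-ring under $K_{\mathrm{cn}}$ and that the $p$-inverted cofiber inherits the structure from the identification with $\overline{K}(C)$ — coincides with the paper's, so what remains is precisely this $p$-adic gluing argument.
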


\begin{proof} After inverting the residue characteristic $p$ of $C$, this is a direct consequence of the identification with $\overline{K}(C)$. But after $p$-completion, $K(\mathcal O_C)$ and $K(C)$ agree (cf.~\cite[Lemma 1.3.7]{HesselholtNikolaus}), so this cofiber agrees with $K(k)$ after $p$-completion, and this is just $\mathbb Z_p$ as $k$ is a perfect field (cf.~\cite[Corollaire 5.5]{Kratzer}). This has a unique $E_\infty$-$K(C)$-algebra structure. To glue the two pieces, we need to see that that the map
\[
\mathrm{cofib}(K(C_{<1})\to K(C))[1/p]\to \mathrm{cofib}(K(C_{<1})\to K(C))^\wedge_p[1/p]\cong \mathbb Q_p
\]
admits the structure of a map of $E_\infty$-$K(C)$-algebras. But the map is simply the projection $K(C)\to \mathbb Z$ given by the rank (which is trivial on $K(C_{<1})$), composed with inverting $p$ and embedding into $\mathbb Q_p$.
\end{proof}

From the categorical perspective, it is quite unclear why this ring structure should exist -- it is like saying that almost $\mathcal O_C$-modules form an ideal in $C$-vector spaces.

To prove Theorem~\ref{thm:explicitKbar}, note first that with torsion coefficients, it is easy to prove the required invariance under change of algebraically closed field (using Gabber--Suslin rigidity). Thus, we concentrate on the case of rational coefficients.

We will prove Theorem~\ref{thm:explicitKbar} first when $|p|<1$. In that case, slightly surprisingly, the functor is already a sheaf for the analytic topology, at least on perfectoid rings.

\begin{theorem} On perfectoid Banach rings $A$ (with $|p|_A<1$), the functor $A\mapsto \mathrm{cofib}(K(A_{<1})\to K(A))$ is a sheaf for the analytic topology.
\end{theorem}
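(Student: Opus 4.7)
The plan is to decompose $\mathcal{K}(A) := \mathrm{cofib}(K(A_{<1}) \to K(A))$ into pieces that are known to satisfy analytic descent on perfectoid Banach rings. Tor-unitality of the ideal $A_{<1} \subset A^+$ gives the fiber sequence $K(A_{<1}) \to K(A^+) \to K(\tilde{k})$ with $\tilde{k} := A^+/A_{<1}$ the product of residue fields indexed by $\mathcal{M}(A)$; combining this with the Thomason--Trobaugh localization $K_{V(\pi)}(A^+) \to K(A^+) \to K(A)$ (for any pseudo-uniformizer $\pi \in A^+$) and applying the octahedral axiom to the composite $K(A_{<1}) \to K(A^+) \to K(A)$ yields a cofiber sequence
$$K(\tilde{k}) \longrightarrow \mathcal{K}(A) \longrightarrow \mathrm{cofib}\bigl(K(A^+) \to K(A)\bigr).$$
It therefore suffices to show that each of the two outer terms is an analytic sheaf in $A$.

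For $K(\tilde{k})$: the ring $\tilde{k}$ is the ring of global sections of the sheaf $x \mapsto \kappa(x)$ on the compact Hausdorff space $\mathcal{M}(A)$. A rational cover $\{A \to A_i\}$ of $A$ restricts to an open cover $\{\mathcal{M}(A_i) \hookrightarrow \mathcal{M}(A)\}$ along which each residue field is preserved pointwise. The \v{C}ech totalization for $K(\tilde{k})$ is then computed stalkwise by the constant sheaf of spectra $x \mapsto K(\kappa(x))$ along an open cover of the compact Hausdorff space $\mathcal{M}(A)$, and descent is immediate.

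For $\mathrm{cofib}(K(A^+) \to K(A))$: this is, up to shift, the $K$-theory $K_{V(\pi)}(A^+)$ of perfect $A^+$-complexes supported on the special fiber $V(\pi)$. It depends only on the $\pi$-adic completion of $A^+$, so the perfectoid tilting equivalence lets one replace $A^+/\pi$ with $(A^\flat)^+/\pi^\flat$, reducing to an analytic descent statement for $K$-theory of perfect $\mathbb{F}_p$-algebras. In that characteristic-$p$ setting the required descent is available through $v$-descent results for $K$-theory of perfect rings (Bhatt--Mathew and Clausen--Mathew).

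The main obstacle is this last step: since rational covers of $A$ induce highly non-flat maps on the integral subring $A^+$, no direct \v{C}ech argument on $A^+$ can succeed, and the perfectoid hypothesis must be used crucially both to pass to the almost setting (so that the problem becomes an intrinsic invariant of the perfectoid space) and to tilt to perfect $\mathbb{F}_p$-algebras, where $K$-theoretic descent is established. By contrast, the descent for $K(\tilde{k})$ is essentially formal once $\tilde{k}$ is identified with the Berkovich residue-field sheaf. Combining the two descents via the cofiber sequence above and naturality in $A$ then yields analytic descent for $\mathcal{K}(A) = \mathrm{cofib}(K(A_{<1}) \to K(A))$.
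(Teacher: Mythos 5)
Your reduction to the two pieces $K(\tilde k)$ and $\mathrm{cofib}(K(A_{\leq 1})\to K(A))$ via the octahedral axiom is fine, but both descent claims you then make have genuine gaps. First, $\tilde k = A_{\leq 1}/A_{<1}$ is \emph{not} the product of residue fields of $\mathcal M(A)$: for $A=C\langle T^{1/p^\infty}\rangle_1$ it is $k[T^{1/p^\infty}]$, a global (perfect) ring. More importantly, the reductions of a rational cover are not open immersions or localizations of $\tilde k$ in any sense that would make descent ``immediate'' or checkable stalkwise: on the piece $\{|T|\leq |\pi|\}$ the element $T$ lands in $A_{<1}$ and hence maps to $0$ in the reduction, so $\tilde k\to \tilde k_i$ is a specialization-type map, and the claim that $K(\tilde k)$ is the limit of the \v{C}ech diagram of the $K(\tilde k_i)$ is a nontrivial K-theoretic descent statement for perfect $\mathbb F_p$-algebras of essentially the same depth as the theorem you are trying to prove; the stalkwise argument you sketch does not apply because $K$ of these rings is not a stalkwise invariant on $\mathcal M(A)$.

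Second, the tilting step for $\mathrm{cofib}(K(A_{\leq 1})\to K(A))\simeq K(A_{\leq 1}\ \mathrm{on}\ V(\pi))[1]$ does not work: K-theory with supports is not a functor of the special fiber $A_{\leq 1}/\pi$ (d\'evissage fails; compare $\mathrm{fib}(K(\mathbb Z_p)\to K(\mathbb Q_p))$ with $\mathrm{fib}(K(\mathbb F_p[[t]])\to K(\mathbb F_p((t))))$), and $\pi$-power-torsion $A_{\leq 1}$-modules in mixed characteristic (e.g.\ $A_{\leq 1}/p^2$) are not modules over any characteristic-$p$ ring, so only the mod-$\pi$ layer tilts and there is no equivalence of the relevant categories over $A_{\leq 1}$ and $A^{\flat}_{\leq 1}$ at the level needed. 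Hence ``replace $A_{\leq 1}/\pi$ by $A^{\flat}_{\leq 1}/\pi^\flat$'' does not reduce the statement to characteristic $p$; and even in characteristic $p$, the references you cite (Bhatt--Mathew on the arc-topology, Clausen--Mathew on \'etale K-theory) do not provide v-descent for integral nonconnective K-theory of perfect rings, which is what your argument would require (note the theorem here is integral, not rational). The paper's proof avoids both issues: it passes to the fiber, identifies it via two Verdier sequences with $K(\mathcal D_{\mathrm{tor}}(A_{\leq 1})^a)$, the K-theory of torsion almost $A_{\leq 1}$-modules, and uses that this category localizes over $\mathcal M(A)$ by the standard analytic-descent properties of the perfectoid structure sheaf, so that its K-theory localizes as well --- no tilting and no splitting into a reduction piece and a supported piece.
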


\begin{proof} We can work with the fiber instead. This is the $K$-theory of the dualizable stable $\infty$-category $\mathcal D_{\mathrm{tor}}(A_{\leq 1})^a$ of torsion almost modules over $A_{\leq 1}$. Indeed, there are two Verdier sequences
\[
0\to \mathcal D_{\mathrm{tor}}(A_{\leq 1})^a\to \mathcal D(A_{\leq 1})^a\to \mathcal D(A)\to 0
\]
and
\[
0\to \mathcal D(A_{\leq 1})^a\to \mathcal D(A_{\leq 1})\to \mathcal D(A_{\leq 1}/A_{<1})\to 0
\]
The second sequence shows $K(A_{<1})=K(\mathcal D(A_{\leq 1})^a)$, and the first one then gives
\[
K(\mathcal D_{\mathrm{tor}}(A_{\leq 1})^a) = \mathrm{fib}(K(A_{<1})\to K(A)).
\]

But $\mathcal D_{\mathrm{tor}}(A_{\leq 1})^a$ localizes on $\mathcal M(A)$ by the usual properties of the structure sheaf of perfectoid algebras, which in particular yields a sheaf of idempotent algebras with values in $\mathcal D_{\mathrm{tor}}(A_{\leq 1})^a$ (when interpreting torsion modules as complete modules). In other words, for any closed subset $Z\subset \mathcal M(A)$, the algebra $\mathcal O^+(Z)^a$ is an idempotent complete $A_{\leq 1}^a$-algebra, giving a Verdier sequence
\[
0\to \mathcal D_{\mathrm{tor}}(A_{\leq 1}^a\ \mathrm{on}\ U)\to \mathcal D_{\mathrm{tor}}(A_{\leq 1})^a\to \mathcal D_{\mathrm{tor}}(\mathcal O^+(Z)^a)\to 0,
\]
where $U$ is the complement of $Z$ (and the term $\mathcal D(\ldots\ \mathrm{on}\ U)$ is defined as the fibre). If $Z$ and $Z'$ cover $\mathcal M(A)$, then there are compatible Verdier sequences
\[\xymatrix{
0\ar[r] & \mathcal D_{\mathrm{tor}}(A_{\leq 1}^a\ \mathrm{on}\ U)\ar[r]\ar[d] & \mathcal D_{\mathrm{tor}}(A_{\leq 1})^a\ar[r]\ar[d] &  \mathcal D_{\mathrm{tor}}(\mathcal O^+(Z)^a)\ar[r]\ar[d] & 0 \\
0\ar[r] & \mathcal D_{\mathrm{tor}}(\mathcal O^+(Z')^a\ \mathrm{on}\ U)\ar[r] & \mathcal D_{\mathrm{tor}}(\mathcal O^+(Z')^a)\ar[r] & \mathcal D_{\mathrm{tor}}(\mathcal O^+(Z\cap Z')^a)\ar[r] & 0
}\]
where the left vertical functor is an equivalence. Applying $K$-theory then shows the desired sheaf property.
\end{proof}

\begin{corollary} On perfectoid Banach rings $A$ (with $|p|_A<1$) such that all sheaves on the compact Hausdorff space $\mathcal M(A)$ are Postnikov complete, the functor $A\mapsto \mathrm{cofib}(K(A_{<1})\to K(A))_{\mathbb Q}$ is a finitary ball-invariant hypercomplete arc-sheaf. In particular, on such $A$, we have
\[
\overline{K}(A)_{\mathbb Q} = \mathrm{cofib}(K(A_{<1})\to K(A))_{\mathbb Q}.
\]
\end{corollary}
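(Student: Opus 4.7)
The plan is to verify three properties of
\[
F_{\mathbb Q}(A):=\mathrm{cofib}\bigl(K(A_{<1})\to K(A)\bigr)_{\mathbb Q}
\]
on the indicated class of perfectoid $A$: it is finitary, it is a hypercomplete arc-sheaf, and it is ball-invariant. Finitariness is the content of the preceding proposition. Once the other two properties are established, the natural map $K_{\mathrm{cn}}\to F_{\mathbb Q}$ factors, by the universal property, through $\overline{K}_{\mathbb Q}$, producing a comparison $\overline{K}_{\mathbb Q}\to F_{\mathbb Q}$. Its fibre is the universal ball-invariant hypercomplete arc-sheaf under $K_{\mathrm{cn}}(A_{<1})_{\mathbb Q}$, which vanishes by the proposition proved earlier, and the stated identification follows.

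For the arc-hypersheaf property, I would reinterpret $F(A)$ as a shift of the $K$-theory of the dualizable stable $\infty$-category $\mathcal D_{\mathrm{tor}}(A_{\leq 1})^a$ of torsion almost modules over $A_{\leq 1}$, exactly as in the preceding proof. Arc-covers of perfectoid Banach rings with $|p|<1$ correspond to v-covers of the associated perfectoid spaces equipped with their canonical integral structure $A_{\leq 1}$, and v-descent for the structure sheaf on perfectoid spaces established in \cite{ECoD} upgrades to descent for $\mathcal D_{\mathrm{tor}}(A_{\leq 1})^a$; applying $K$-theory and rationalizing preserves descent. Hypercompleteness of the resulting arc-sheaf then follows from the hypothesis that sheaves on $\mathcal M(A)$ are hypersheaves: on strictly totally disconnected $A$, the arc-site agrees (by Lemma~\ref{lem:quasiproetaleoverstd}) with profinite sets over $\mathcal M(A)$, where hypercompleteness reduces via finitariness to hypercompleteness on $\mathcal M(A)$, and this transfers to general perfectoid $A$ by descent along quasi-pro-étale covers by strictly totally disconnected perfectoids.

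The ball-invariance is the main obstacle. By arc-hyperdescent and finitariness it suffices to treat the case $A=C$ with $C$ an algebraically closed non-discrete nonarchimedean Banach field, where the task becomes to show
\[
\mathrm{cofib}\bigl(K(\mathcal O_{C,<1})\to K(C)\bigr)_{\mathbb Q}\xrightarrow{\sim}\mathrm{cofib}\bigl(K(C\langle T\rangle_{1,<1})\to K(C\langle T\rangle_1)\bigr)_{\mathbb Q}.
\]
The key inputs are $\mathbb A^1$-invariance of nonconnective $K$-theory on valuation rings applied to $\mathcal O_C$; the Thomason--Trobaugh identification of $K$-theory with supports before and after completion \cite{ThomasonTrobaugh}, giving $K(\mathcal O_C[T]\text{ on }\pi)\simeq K(\mathcal O_C\langle T\rangle\text{ on }\pi)$ for a pseudouniformizer $\pi\in\mathcal O_C$; and pro-cdh descent for $K$-theory, which controls $K(C\langle T\rangle_{1,<1})$ rationally via the pro-system of quotients $\mathcal O_C\langle T\rangle/\pi^n$. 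Chasing these inputs through the localization cofibre sequences $K(R\text{ on }\pi)\to K(R)\to K(R[\pi^{-1}])$ for $R=\mathcal O_C\langle T\rangle$ and $R=\mathcal O_C$, and comparing with the analogous sequences for the ideals $\mathcal O_{C,<1}\subset\mathcal O_C$ and $C\langle T\rangle_{1,<1}\subset\mathcal O_C\langle T\rangle$, produces the required equality after rationalization.

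Putting the three properties together, the formal argument of the first paragraph yields the identification $\overline{K}(A)_{\mathbb Q}\simeq F_{\mathbb Q}(A)$.
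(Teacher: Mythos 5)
The main gap is in your second paragraph, the arc-hypersheaf step. You assert that v-descent for the structure sheaf on perfectoid spaces ``upgrades to descent for $\mathcal D_{\mathrm{tor}}(A_{\leq 1})^a$'' and that ``applying $K$-theory and rationalizing preserves descent.'' That inference is not valid: $K$-theory is a localizing invariant, not a limit-preserving one, so (hyper)descent of a sheaf of categories along arbitrary arc/v-covers does not yield descent of its $K$-theory. Indeed $K$-theory fails even étale descent integrally, which is precisely why the paper's argument has the structure it does: the preceding theorem establishes descent only for the \emph{analytic} topology, and it does so not by quoting categorical descent but by using that $\mathcal D_{\mathrm{tor}}(A_{\leq 1})^a$ localizes over the compact Hausdorff space $\mathcal M(A)$ — covers there reduce to finite diagrams built from idempotent algebras and localization sequences, which $K$-theory does respect. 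The corollary then adds the genuinely rational ingredient: finite étale descent via trace maps, hence (by finitariness) pro-finite étale descent, and analytic $+$ pro-finite étale covers generate the arc-topology on the strictly totally disconnected/perfectoid basis; the hypothesis that sheaves on $\mathcal M(A)$ are hypersheaves is what gives hypercompleteness. Your proposal never invokes trace maps or explains where rationalization enters, so the passage from the analytic-descent theorem to an arc-hypersheaf is missing, and this is the heart of the corollary.

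Your ball-invariance paragraph, by contrast, is in the right spirit and could be completed: writing $K(C\langle T\rangle_{1,<1})=\mathrm{fib}(K(\mathcal O_C\langle T\rangle)\to K(k[T]))$, using the localization sequences for inverting a pseudouniformizer, Thomason--Trobaugh invariance of $K$-theory with supports under completion to compare $\mathcal O_C\langle T\rangle$ with $\mathcal O_C[T]$, and $\mathbb A^1$-invariance of $K$-theory over valuation rings does reduce the ball to the point (the appeal to pro-cdh descent is unnecessary here); this parallels the paper's own computation, which works with the perfectoid ball $C\langle T^{1/p^\infty}\rangle_1$ and excision instead. But as written it is only an outline (``chasing these inputs \ldots produces the required equality''), and in any case it cannot substitute for the missing descent argument above.
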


The condition that all sheaves are Postnikov complete is satisfied, for example, if $\mathcal M(A)$ has finite covering dimension \cite[Proposition 7.2.1.10]{LurieHTT}.

\begin{proof} The commutation with filtered colimits comes from Proposition~\ref{prop:Ktheoryfinitary} and does not require any assumptions on $A$ and the $A_i$ with $A=\mathrm{colim}_i A_i$.

Rationally, finite \'etale descent holds because of trace maps; it even gives pro-finite \'etale descent. This combines with the analytic descent we already have to give a hypercomplete arc-sheaf. Indeed, from Theorem~\ref{theorem:finitaryarc} we can produce a hypercomplete arc-sheaf preserving the values at strictly totally disconnected spaces. We need to see that this leaves the value at $A$ unchanged. But using pro-finite \'etale descent, we know that the value at totally disconnected $A$ stays unchanged, and then one uses descent along a profinite cover of $\mathcal M(A)$ (from analytic descent and Postnikov completeness) to see that the value at $A$ is also unchanged, under the assumption on $A$.

Finally, we check invariance under perfectoid balls (which is equivalent to usual ball-invariance). We know explicitly the value on $C\langle T^{1/p^\infty}\rangle_1$ (as $\mathcal M(C\langle T^{1/p^\infty}\rangle_1)$ is a tree on which all sheaves are Postnikov complete): This is the cofiber of
\[
K((C\langle T^{1/p^\infty}\rangle_1)_{<1})\to K(C\langle T^{1/p^\infty}\rangle_1).
\]
By excision, this is the same as for
\[
K(C[T^{1/p^\infty}]_{<1})\to K(C[T^{1/p^\infty}])
\]
and the first term is the fiber of
\[
K(\mathcal O_C[T^{1/p^\infty}])\to K(k[T^{1/p^\infty}]).
\]
Now usual $\mathbb A^1$-invariance for $K$-theory (over valuation rings) yields the desired result.
\end{proof}

Unfortunately, this argument does not seem to work in equal characteristic zero. However, the following slightly weaker result is still true. Namely, for totally disconnected $A$, the cofiber $\mathrm{cofib}(K(A_{<1})\to K(A))$ also agrees with its homotopy-invariant version $\mathrm{cofib}(KH(A_{<1})\to KH(A))$, where this is also simply the result of applying $L_{\mathbb B}^{\mathrm{pre}}$. This functor is still finitary. The advantage is that as $KH$ is truncating (i.e.~does not see quotients by idempotent ideals), this is now well-defined even if $A$ does not have a pseudo-uniformizer with enough roots.

\begin{theorem} On all analytic nonarchimedean Banach rings $A$, the functor
\[
A\mapsto \mathrm{cofib}(KH(A_{<1})\to KH(A))
\]
is a sheaf for the analytic topology.

In particular, restricting to $A$ such that all sheaves on the compact Hausdorff space $\mathcal M(A)$ are Postnikov complete, the functor
\[
A\mapsto \mathrm{cofib}(KH(A_{<1})\to KH(A))_{\mathbb Q}
\]
is a finitary ball-invariant arc-hypersheaf. In particular, on such $A$, we have
\[
\overline{K}(A)_{\mathbb Q} = \mathrm{cofib}(KH(A_{<1})\to KH(A))_{\mathbb Q}.
\]
If $A$ is totally disconnected, this simplifies to
\[
\overline{K}(A)_{\mathbb Q} = \mathrm{cofib}(K(A_{<1})\to K(A))_{\mathbb Q}.
\]
\end{theorem}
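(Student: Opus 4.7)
The plan is to imitate the three-step structure of the preceding corollary---analytic descent, ball-invariance, identification with $\overline K_{\mathbb Q}$---but using the nicer formal properties of $KH$ in place of $K$.

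\emph{Analytic descent.} I would realize, locally in the analytic topology, the fiber $KH(A_{<1})$ as the $KH$-spectrum of an appropriate localizing Verdier quotient (modeled on $\mathcal D_{\mathrm{tor}}(A_{\leq 1})^a$ in the perfectoid case) which localizes over $\mathcal M(A)$. To handle general analytic nonarchimedean Banach rings, I would reduce to the perfectoid case by base change along a perfectoid cover $A \to \widetilde A$, obtained for instance by adjoining all $p$-power roots of an auxiliary pseudo-uniformizer. Applying the perfectoid analytic descent already established to $\widetilde A$, one may descend along the \v{C}ech nerve of $A \to \widetilde A$, using crucially that $KH$, being a finitary $\mathbb A^1$-invariant localizing invariant, is insensitive to nilpotent thickenings and to the passage between $A_{<1}$ and its uniform completion.

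\emph{Ball-invariance.} The $\mathbb A^1$-invariance built into $KH$ already gives ball-invariance on discrete rings; for Banach rings, once analytic descent is in place, finitariness reduces the claim to the case $A=C$ an algebraically closed nonarchimedean field. There, by excision, evaluating the cofiber at $C\langle T\rangle_1$ gives $\mathrm{fib}(KH(\mathcal O_C[T]) \to KH(k[T]))$, and $\mathbb A^1$-invariance of $KH$ over the valuation ring $\mathcal O_C$ and its residue field $k$ yields the desired invariance, exactly as in the perfectoid case.

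\emph{Comparison with $\overline K_{\mathbb Q}$.} The natural map $K \to KH$ induces a map of cofibers, and combined with the universal property of $\overline K$ this yields a comparison $\overline K(A)_{\mathbb Q} \to \mathrm{cofib}(KH(A_{<1}) \to KH(A))_{\mathbb Q}$. Both sides are finitary ball-invariant arc-hypersheaves under the hypothesis on $\mathcal M(A)$ (rational finite \'etale descent for the right-hand side comes from trace maps, as in the perfectoid corollary), so by descent it suffices to check agreement at geometric points, where both sides are computed explicitly in terms of $K$ of a valuation ring. For totally disconnected $A$, both $A_{\leq 1}$ and each residue Banach field are (locally) valuation rings, and $K$ agrees with $KH$ on valuation rings; this yields the simplified formula with $K$ in place of $KH$. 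The main obstacle is the first step: establishing analytic descent of $\mathrm{cofib}(KH(-_{<1}) \to KH(-))$ in the non-perfectoid setting. Reduction along a perfectoid cover is natural, but verifying that the \v{C}ech descent for the cofiber actually works requires some care, since one must control how $A_{<1}$ behaves along perfectoidization and match it with the almost-theoretic localization available only in the perfectoid case.
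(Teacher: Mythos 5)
There is a genuine gap at the crucial first step. The analytic descent statement is needed for \emph{all} analytic nonarchimedean Banach rings, and the raison d'\^etre of this theorem is precisely that the perfectoid argument of the preceding results breaks down in equal characteristic zero: over a base whose residue characteristic is $0$ there are no perfectoid rings at all, so your proposed cover $A\to \widetilde A$ obtained by adjoining $p$-power roots of a pseudouniformizer does not exist, and the almost-module category $\mathcal D_{\mathrm{tor}}(A_{\leq 1})^a$ cannot be localized over $\mathcal M(A)$ in the absence of the acyclicity/sheafiness of the perfectoid structure sheaf. Even in residue characteristic $p$, \v{C}ech descent along $A\to\widetilde A$ is not available: this is an arc-type cover, not an analytic one, and the whole difficulty of the theorem is that the functor $\mathrm{cofib}(KH((-)_{<1})\to KH(-))$ is not yet known to satisfy any descent beyond the analytic topology; the quoted insensitivity of $KH$ to nilpotent extensions and to uniform completion gives no control over a ramified pro-cover of this kind, so the reduction to the perfectoid case is circular in spirit. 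The paper's actual mechanism is different: by finitariness one performs a noetherian approximation to reduce to the case that $A_{\leq 1}$ is noetherian, and then analytic covers of $\mathcal M(A)$ are induced by blow-ups (abstract blow-up squares) of models, so Cisinski's cdh-descent for $KH$ (alternatively, pro-cdh descent for $K$-theory of $\{\mathrm{cofib}(K(A_{<r})\to K(A))\}_r$, made ball-invariant) yields the analytic Mayer--Vietoris. No perfectoid input enters.

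Your second and third steps are closer to the intended argument (excision plus $\mathbb A^1$-invariance of $KH$ over valuation rings for ball-invariance, trace maps for rational pro-finite \'etale descent, and $K=KH$ on valuation rings for the totally disconnected simplification), but the comparison with $\overline K_{\mathbb Q}$ cannot be finished by ``checking at geometric points'': in equal characteristic zero the value $\overline K(C)_{\mathbb Q}$ has no independent computation --- it is exactly what is being determined --- so there is nothing to compare against at a point. The correct conclusion is by the universal property: since the universal ball-invariant arc-sheaf under $K_{\mathrm{cn}}((-)_{<1})$ is trivial, $\overline K$ is the universal ball-invariant arc-sheaf under the cofiber of $K_{\mathrm{cn}}((-)_{<1})\to K_{\mathrm{cn}}(-)$, and once the rationalized $KH$-cofiber is shown to be a finitary ball-invariant arc-hypersheaf (and, on totally disconnected rings, to agree with the result of applying $L_{\mathbb B}^{\mathrm{pre}}$ to the $K$-cofiber), it coincides with its own sheafification and hence with $\overline K_{\mathbb Q}$.
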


\begin{proof} For the first statement, we can use the finitary nature of the functor to do a noetherian approximation to assume that $A_{\leq 1}$ is noetherian. The idea now is to use formal models and admissible blowups to relate analytic descent to (pro-)cdh descent results. We can either cite directly Cisinski's cdh-descent for $KH$, \cite{Cisinski}. Alternatively, in order to give an argument independent of the existing motivic formalism, it follows from pro-cdh descent \cite[Theorem 3.7]{MorrowProCdh} (for the equal characteristic $0$ case that we really need) and \cite[Theorem A]{ProCdh} (in the general case) upon making things ball-invariant. Namely, pro-cdh descent implies that the pro-spectrum $\{\mathrm{cofib}(K(A_{<r})\to K(A))\}_r$ satisfies descent, but after making it ball-invariant, all terms agree. Here $K(A_{<r})$ is notation for the fiber of $K(A_{\leq 1})\to K(A_{\leq 1}/A_{<r})$. The rest follows as before.
\end{proof}

\section{$\mathcal D_{\mathrm{mot}}(X)$}

At this point, we can finally define $\mathcal D_{\mathrm{mot}}(X)$. Recall that $\mathbb Z(1)$ is a symmetric object in $\mathcal D_{\mathrm{mot}}^{\mathrm{eff}}(X)$, so making it $\otimes$-invertible is a simple operation. We refer to Robalo \cite[Section 2.1]{Robalo} for a general discussion of adjoining tensor-inverses of objects.

\begin{definition} Let $X$ be a small arc-stack. The symmetric monoidal presentable stable $\infty$-category of motivic sheaves on $X$ is
\[
\mathcal D_{\mathrm{mot}}(X) = \mathcal D_{\mathrm{mot}}^{\mathrm{eff}}(X)[\mathbb Z(1)^{\otimes -1}].
\]
\end{definition}

This can be described equivalently as the sequential colimit
\[
\mathrm{colim}_{-\otimes \mathbb Z(1)} \mathcal D_{\mathrm{mot}}^{\mathrm{eff}}(X)
\]
taken in presentable $\infty$-categories, or as the limit
\[
\mathrm{lim}_{\intHom(\mathbb Z(1),-)} \mathcal D_{\mathrm{mot}}^{\mathrm{eff}}(X)
\]
in $\infty$-categories. In the second description, an object of $\mathcal D_{\mathrm{mot}}(X)$ consists of a collection $M_n\in \mathcal D_{\mathrm{mot}}^{\mathrm{eff}}(X)$ for $n\geq 0$, with equivalences
\[
M_n\cong \intHom(\mathbb Z(1),M_{n+1}).
\]

There is a fully faithful inclusion
\[
\mathcal D_{\mathrm{mot}}^{\mathrm{eff}}(X)\subset \mathcal D_{\mathrm{mot}}(X)
\]
sending any $M$ to the sequence $M_n = M(n)=M\otimes \mathbb Z(n)$ (with the equivalences coming from the cancellation theorem).

\begin{theorem} For any map $f: Y=\mathcal M_{\mathrm{arc}}(B)\to X=\mathcal M_{\mathrm{arc}}(A)$ of finite cohomological dimension of affine Berkovich spaces such that $Y$ admits an embedding into a finite-dimensional ball over $X$, the functor
\[
f_\ast: \mathcal D_{\mathrm{mot}}(Y)\to \mathcal D_{\mathrm{mot}}(X),
\]
right adjoint to pullback $f^\ast$, commutes with all colimits, base change, and satisfies the projection formula.
\end{theorem}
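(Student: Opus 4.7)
The plan is to establish the analogous result first on the effective category $\mathcal{D}_{\mathrm{mot}}^{\mathrm{eff}}$ and then transfer it to $\mathcal{D}_{\mathrm{mot}}$ via the tower presentation $\mathcal{D}_{\mathrm{mot}}(X) = \varprojlim_n \mathcal{D}_{\mathrm{mot}}^{\mathrm{eff}}(X)$ with transition maps $\intHom(\mathbb{Z}(1),-)$.

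Theorem~\ref{thm:properpushforwardfinitary} already supplies $f_\ast$ on $\mathcal{D}_{\mathrm{fin}}(-,\mathbb{Z})$ with finite cohomological dimension, commuting with all colimits and with arbitrary base change. The first step is to check that this pushforward preserves ball-invariance, and this is where the affineness of $Y = \mathcal{M}_{\mathrm{arc}}(B)$ enters essentially: for a ball-invariant $M$ and a Banach $A$-algebra $R$, one has
\[
(f_\ast M)(R\langle T\rangle_1) = M((B\hat{\otimes}_A R)\langle T\rangle_1) = M(B\hat{\otimes}_A R) = (f_\ast M)(R),
\]
the middle identity being ball-invariance of $M$ applied to the Banach $B$-algebra $B\hat{\otimes}_A R$. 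Hence $f_\ast$ restricts to a functor $\mathcal{D}_{\mathrm{mot}}^{\mathrm{eff}}(Y)\to \mathcal{D}_{\mathrm{mot}}^{\mathrm{eff}}(X)$; colimit preservation and base change descend directly from the $\mathcal{D}_{\mathrm{fin}}$-statements, since the inclusion $\mathcal{D}_{\mathrm{mot}}^{\mathrm{eff}}\subset \mathcal{D}_{\mathrm{fin}}$ is closed under colimits and $f^\ast$ preserves ball-invariance.

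The projection formula at the effective level is the main obstacle, and precisely where affineness of $f$ (as opposed to just qcqs-ness) is needed; already the example in Section~4 with $\mathbb{P}^1_X\to X$ shows that it fails for general proper qcqs maps. By Proposition~\ref{prop:freemotivicsheavesgenerate} together with colimit-preservation of both sides in $N$, it suffices to verify
\[
f_\ast(M\otimes f^\ast\mathbb{Z}_{\mathrm{mot}}[U])\simeq f_\ast M\otimes \mathbb{Z}_{\mathrm{mot}}[U]
\]
for $U\subset \mathbb{A}^n_X$ open. Writing $\mathbb{Z}_{\mathrm{mot}}[U] = \pi_\sharp\mathbb{Z}$ with $\pi\colon U\to X$ smooth, and combining base change with the projection formula for $\pi_\sharp$, one reduces to the Beck-Chevalley compatibility $f_\ast \pi'_\sharp\simeq \pi_\sharp f''_\ast$ for the Cartesian square pulling $\pi$ back along $f$. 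The pulled-back total space $U_Y = U\times_X Y$ is again an open subset of $\mathbb{A}^n_Y$ over the affine base $Y$, and both sides of Beck-Chevalley admit an explicit description via the free-motivic-sheaf formula of Theorem~\ref{thm:freemotivicsheaves} in terms of arc-sheafifying $L_{\mathbb{B}}^{\mathrm{pre}}\mathbb{N}[-]$; the identification then follows from the compatibility of this presentation with the affine pushforward $B\hat{\otimes}_A-$.

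With the effective result in hand, the extension to $\mathcal{D}_{\mathrm{mot}}$ is formal. An object of $\mathcal{D}_{\mathrm{mot}}(X)$ is a sequence $(M_n)_{n\geq 0}$ with $M_n\simeq \intHom(\mathbb{Z}(1),M_{n+1})$, equivalently (by cancellation, Theorem~\ref{thm:cancellation}) $M_{n+1}\simeq M_n\otimes\mathbb{Z}(1)$. We set $(f_\ast M)_n := f_\ast M_n$; consistency of this definition follows from the effective projection formula applied to $\mathbb{Z}(1)$, which yields $f_\ast\intHom(\mathbb{Z}(1),-)\simeq \intHom(\mathbb{Z}(1),f_\ast-)$. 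Colimit preservation on $\mathcal{D}_{\mathrm{mot}}$ follows from the level-wise statement combined with compactness of $\mathbb{Z}(1)$ (Proposition~\ref{prop:tatetwistcompact}), and base change and the projection formula propagate from the effective case by applying them at each level of the tower.
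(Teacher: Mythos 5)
There is a genuine gap: the projection formula does \emph{not} hold at the effective level, even for affine maps, so the core of your strategy (prove it in $\mathcal D_{\mathrm{mot}}^{\mathrm{eff}}$ and propagate levelwise through the tower) cannot work. Concretely, take $X=\mathcal M_{\mathrm{arc}}(C)$ with $C$ algebraically closed nonarchimedean and let $f:\mathbb T\to X$ be a closed annulus, an affine map of finite cohomological dimension. By Proposition~\ref{prop:tatetwistcompact}, $M(\mathbb T)\cong \mathrm{Hom}(\mathbb Z_{\mathrm{mot}}[\mathbb G_m],M)=M(C)\oplus \mathrm{Hom}(\mathbb Z(1)[1],M)$ for $M\in\mathcal D_{\mathrm{mot}}^{\mathrm{eff}}(X)$. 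Rationally $\intHom(\mathbb Q(1)[1],\mathbb Q)=0$ (no maps $\overline{\mathbb G}_m\to\mathbb Q$, cf.\ the proof of Lemma~\ref{lem:toycase}), so $f_\ast\mathbb Q=\mathbb Q$ and the projection formula would force $f_\ast f^\ast M= M$, i.e.\ $\mathrm{Hom}(\mathbb Q(1)[1],M)=0$ for every effective $M$; taking $M=\overline{\mathbb G}_m\otimes\mathbb Q=\mathbb Q(1)[1]$ gives $\mathrm{Hom}(\mathbb Q(1)[1],M)=\mathbb Q\neq 0$, a contradiction. The formula only becomes true after inverting the Tate twist, because then $\intHom(\mathbb Z(1)[1],M)\cong M(-1)[-1]\cong M\otimes\intHom(\mathbb Z(1)[1],\mathbb Z)$; in the effective category internal Hom out of the non-invertible $\mathbb Z(1)$ does not commute with tensoring, which is exactly where your Beck--Chevalley step for $\mathbb Z_{\mathrm{mot}}[U]=\pi_\sharp\mathbb Z$ (with $U$ an annulus, say) breaks down. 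Relatedly, your final sentence ``base change and the projection formula propagate from the effective case by applying them at each level of the tower'' is not meaningful for the projection formula even granting an effective statement, since the tensor product on $\mathcal D_{\mathrm{mot}}=\varprojlim_{\intHom(\mathbb Z(1),-)}\mathcal D_{\mathrm{mot}}^{\mathrm{eff}}$ is not computed levelwise in that limit presentation.

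The parts of your argument that are fine are the easy ones, and they agree with the paper: for affine $f$ the pushforward preserves ball-invariance (your computation $(f_\ast M)(R\langle T\rangle_1)=M((B\hat\otimes_AR)\langle T\rangle_1)$ is correct), $f_\ast$ is defined levelwise on the tower, and $f_\ast\intHom(\mathbb Z(1),-)\cong\intHom(\mathbb Z(1),f_\ast-)$ is formal from $f^\ast$ being symmetric monoidal (no projection formula needed); colimits and base change also pass levelwise. The actual content, however, is the projection formula in $\mathcal D_{\mathrm{mot}}$ itself, and the paper's route is essentially forced: check it on stalks to reduce to a geometric point, write $Y$ as a limit of closed subsets of finite-dimensional balls, reduce to a one-dimensional ball, embed it into $\mathbb P^1_C$, and invoke Proposition~\ref{prop:projectionformulaP1}, the duality statement $f_\sharp\cong f_\ast\otimes\mathbb Z(1)[2]$ for $\mathbb P^1$, proved by explicitly constructing the unit via the line bundle $\mathcal O(\Delta)$ and the counit via cancellation. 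Some such input using invertibility of $\mathbb Z(1)[2]$ is unavoidable, and it is missing from your proposal.
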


Using descent (and \cite[Corollary 4.7.4.18]{LurieHA}), one can formally deduce that the similar result holds true more generally when $f$ is only arc-locally (on the target) of this form. An even more general form of the result will be proved in Corollary~\ref{cor:generalprojectionformula} below.

\begin{proof} We already had colimit-preserving right adjoints $f_\ast$ on $\mathcal D_{\mathrm{mot}}^{\mathrm{eff}}$ satisfying base change by Theorem~\ref{thm:properpushforwardfinitary}. They pass on to $\mathcal D_{\mathrm{mot}}$, taking a collection
\[
N=(N_n)_n\in \mathrm{lim}_{\intHom(\mathbb Z(1),-)} \mathcal D_{\mathrm{mot}}^{\mathrm{eff}}(Y)
\]
to the collection $f_\ast N=(M_n)_n$ with $M_n = f_\ast N_n$; this obviously still commutes with colimits and satisfies base change. We have to see that it satisfies the projection formula. Checking isomorphisms at stalks, we can assume that $X=\mathcal M_{\mathrm{arc}}(C)$ with $C$ a non-discrete algebraically closed Banach field. The case $C=\mathbb C$ is trivial, using Theorem~\ref{thm:archimedeanfinitary} and the usual projection formula in topology, so we can assume that $C$ is nonarchimedean.

One can then further reduce to the case that $Y$ is a finite-dimensional ball (as the result is clear for closed immersions, e.g.~by the final part of Theorem~\ref{thm:properpushforwardfinitary}). By induction, we can reduce to the case of a $1$-dimensional ball. This can be embedded into $\mathbb P^1_C$. Now the result follows from Proposition~\ref{prop:projectionformulaP1} below.
\end{proof}

\begin{proposition}\label{prop:projectionformulaP1} Let $f: \mathbb P^1_X\to X$ be the projection from the projective line. Then there is a natural isomorphism
\[
f_\sharp\cong f_\ast\otimes \mathbb Z(1)[2]: \mathcal D_{\mathrm{mot}}(\mathbb P^1_X)\to \mathcal D_{\mathrm{mot}}(X).
\]
In particular, using that $f_\sharp$ satisfies the projection formula, we see that $f_\ast$ satisfies the projection formula on $\mathcal D_{\mathrm{mot}}$.
\end{proposition}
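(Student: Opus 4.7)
The plan is to exhibit $F(-) := f_\ast(-) \otimes \mathbb{Z}(1)[2]$ as a left adjoint to $f^\ast \colon \mathcal{D}_{\mathrm{mot}}(X) \to \mathcal{D}_{\mathrm{mot}}(\mathbb{P}^1_X)$. Since left adjoints are unique, this forces a natural equivalence $f_\sharp \simeq F$, which is exactly the statement. To do so, I would construct a unit $\eta \colon \mathrm{id} \Rightarrow f^\ast F$ and a counit $\epsilon \colon F f^\ast \Rightarrow \mathrm{id}$ satisfying the triangle identities, built directly from the decomposition $\mathbb{Z}_{\mathrm{mot}}[\mathbb{P}^1_X/X] = f_\sharp \mathbb{Z} \simeq \mathbb{Z} \oplus \mathbb{Z}(1)[2]$ established in Section~4, together with its $f_\ast$-counterpart.

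The first computational step is to identify $f_\ast \mathbb{Z}_{\mathbb{P}^1_X}$. I would apply Mayer--Vietoris to the open cover $\mathbb{P}^1 = U_0 \cup U_\infty$ by two copies of $\mathbb{A}^1$ meeting in $\mathbb{G}_m$. Since $\mathbb{A}^1 = \varinjlim_r \mathbb{B}^1(r)$, ball-invariance gives $f_\ast \mathbb{Z}_{U_i} \simeq \mathbb{Z}_X$, while an analogous argument (using the splitting of $\mathbb{Z}_{\mathrm{mot}}[\mathbb{G}_m]$) gives $f_\ast \mathbb{Z}_{\mathbb{G}_m} \simeq \mathbb{Z}_X \oplus \mathbb{Z}(-1)[-1]_X$. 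The resulting fiber sequence then yields $f_\ast \mathbb{Z}_{\mathbb{P}^1_X} \simeq \mathbb{Z}_X \oplus \mathbb{Z}(-1)[-2]_X$, so that $f_\ast \mathbb{Z} \otimes \mathbb{Z}(1)[2] \simeq \mathbb{Z}(1)[2] \oplus \mathbb{Z} = f_\sharp \mathbb{Z}$, agreeing with $f_\sharp \mathbb{Z}$ summand-by-summand. Using base change for $f_\ast$ (already established in Theorem~\ref{thm:properpushforwardfinitary}) together with this decomposition, one extends to compute $f_\ast(f^\ast M)$ for arbitrary $M$, and defines the counit $\epsilon_M \colon f_\ast(f^\ast M) \otimes \mathbb{Z}(1)[2] \to M$ as the natural projection onto the summand $M$ inside $(M \oplus M(-1)[-2]) \otimes \mathbb{Z}(1)[2] \simeq M(1)[2] \oplus M$.

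For the unit $\eta_N \colon N \to f^\ast f_\ast N \otimes \mathbb{Z}(1)[2]$, I would exploit the diagonal $\Delta \colon \mathbb{P}^1_X \hookrightarrow \mathbb{P}^1_X \times_X \mathbb{P}^1_X$. By base change, $f^\ast f_\ast N \simeq p_{2,\ast} p_1^\ast N$ where $p_1,p_2$ are the two projections; the normal bundle of $\Delta$ is $\mathcal{O}(2)$, and the associated Gysin/diagonal class yields a canonical map $p_1^\ast N \to \Delta_\ast N \otimes p_2^\ast \mathbb{Z}(1)[2]$ in $\mathcal{D}_{\mathrm{mot}}(\mathbb{P}^1_X \times_X \mathbb{P}^1_X)$, which after $p_{2,\ast}$ produces $\eta_N$. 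Verification of the triangle identities reduces to the manipulation of explicit motivic cohomology classes on $\mathbb{P}^1 \times \mathbb{P}^1$, and here the cancellation theorem is decisive: the relevant $\mathrm{Hom}$-groups between summands of $\mathbb{Z}_{\mathrm{mot}}[\mathbb{P}^1 \times_X \mathbb{P}^1] \simeq \mathbb{Z} \oplus \mathbb{Z}(1)[2]^{\oplus 2} \oplus \mathbb{Z}(2)[4]$ are all free of rank one in the appropriate bidegrees, so the relevant composites reduce to explicit integers that can be checked by hand to equal $1$.

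The main obstacle is the clean construction of the unit: one needs the diagonal Gysin class before having absolute purity or the projection formula for $f_\ast$ (which is itself only a consequence of the present statement). Concretely, one must identify the diagonal class using only base change for $f_\ast$, the projection formula for $f_\sharp$, and the explicit decomposition of the motive of $\mathbb{P}^1 \times \mathbb{P}^1$, then trace through the triangle identities in the small diagrammatic subcategory this generates. The rest of the argument, including the compatibility of $\epsilon$ and $\eta$ with colimits (so that the resulting adjunction extends from generators to all of $\mathcal{D}_{\mathrm{mot}}(\mathbb{P}^1_X)$), is formal once the triangle identities hold for the unit $\mathbb{Z}$.
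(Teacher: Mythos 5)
Your overall strategy is the same as the paper's: exhibit $f_\ast(-)\otimes\mathbb Z(1)[2]$ as a left adjoint of $f^\ast$ by producing an explicit unit via a class $\Delta_\ast\mathbb Z\to\mathbb Z(1)[2]$ on $\mathbb P^1_X\times_X\mathbb P^1_X$ (the paper gets this class by excision from the $\overline{\mathbb G}_m$-torsor attached to $\mathcal O(\Delta)$, using that $\Delta_\ast$ satisfies the projection formula because $\Delta$ is quasi-pro-\'etale proper) and a counit, and then checking the relevant composites. But your counit construction has a genuine circularity. You propose to ``use base change for $f_\ast$ together with the decomposition $f_\ast\mathbb Z\simeq\mathbb Z\oplus\mathbb Z(-1)[-2]$ to compute $f_\ast(f^\ast M)$ for arbitrary $M$'' as $M\oplus M(-1)[-2]$. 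That identification is exactly the projection formula $f_\ast f^\ast M\simeq M\otimes f_\ast\mathbb Z$, which is what the proposition is meant to establish; base change only computes geometric stalks, and on a geometric point you would still need $f_\ast f^\ast M'\simeq M'\otimes f_\ast\mathbb Z$ for an arbitrary $M'\in\mathcal D_{\mathrm{mot}}(C)$, which is not formal here (the paper's own example with $\overline{\mathbb G}_m$ shows the projection formula genuinely fails for $f_\ast$ before Tate stabilization, and the ``generated by dualizables'' escape route is only proved later, using the $6$-functor formalism built on this very proposition). The paper avoids this by a different construction of the counit: it shows that the \emph{reduced} $\mathbb P^1$-cohomology of $f^\ast(-)\otimes\mathbb Z(1)[2]$ is the identity functor, by using ball-invariance to replace $\mathbb P^1$ by $\mathbb G_m$ up to shift and then computing reduced sections over $\mathbb G_m$ as $\intHom(\mathbb Z_{\mathrm{mot}}[\mathbb G_m]/\mathbb Z_{\mathrm{mot}}[\ast],-)=\intHom(\mathbb Z(1)[1],-)$ via the $f_\sharp\dashv f^\ast$ adjunction and the $\otimes$-invertibility of $\mathbb Z(1)$ in $\mathcal D_{\mathrm{mot}}$ --- no projection formula for $f_\ast$ is used. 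You should rework your counit along these lines.

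Two smaller points. First, your Gysin map is written in the wrong direction: $p_1^\ast N\to\Delta_\ast N\otimes p_2^\ast\mathbb Z(1)[2]$ would, after $p_{2\ast}$ and the $\Delta_\ast$-projection formula, give a map $f^\ast f_\ast N\to N(1)[2]$, not the unit $N\to f^\ast f_\ast N\otimes\mathbb Z(1)[2]$; the map you need is $\Delta_\ast N\simeq\Delta_\ast\mathbb Z\otimes p_2^\ast N\to p_2^\ast N\otimes\mathbb Z(1)[2]$, pushed forward along the other projection (the constant twist passes through $p_{1\ast}$ because it is invertible). Second, the final verification is not ``formal once the triangle identities hold for the unit $\mathbb Z$'': the unit does not generate $\mathcal D_{\mathrm{mot}}(\mathbb P^1_X)$, your $\mathrm{Hom}$-computations between Tate twists are only rank one over a geometric point rather than over a general base, and since one only shows the two composite endomorphisms of $f_\ast$ and $f^\ast$ are isomorphisms (not strict triangle identities), one still needs the reduction the paper carries out --- checking on objects $j_!(-)$ for $j:\mathbb A^1_X\hookrightarrow\mathbb P^1_X$ and on geometric fibres, where the case $M=\mathbb Z$ suffices.
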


\begin{proof} Note that $\mathbb Z(1)[2]$ is $\otimes$-invertible in $\mathcal D_{\mathrm{mot}}$, so the projection formula automatically holds when tensoring with $\mathbb Z(1)[2]$. We will use this repeatedly below.

For this, we will construct the unit and counit transformations, and check the required composites are isomorphisms. For the unit, we need a map
\[
\alpha: \mathrm{id}\to f^\ast(f_\ast\otimes \mathbb Z(1)[2])\cong p_{1\ast}(p_2^\ast\otimes \mathbb Z(1)[2]).
\]
Here $p_1,p_2: \mathbb P^1_X\times_X\mathbb P^1_X\to \mathbb P^1_X$ are the two projections, with diagonal $\Delta$. This will arise via applying $p_{1\ast}$ to a map
\[
\Delta_\ast\to p_2^\ast\otimes \mathbb Z(1)[2].
\]
As $\Delta$ is a closed immersion, $\Delta_\ast$ satisfies the projection formula, and it suffices to construct a map
\[
\Delta_\ast \mathbb Z\to \mathbb Z(1)[2].
\]
Via excision, this amounts to giving a $\overline{\mathbb G}_m$-torsor over $\mathbb P^1_X\times_X \mathbb P^1_X$ that is trivialized outside the diagonal; and for this, we take the one induced from the $\mathbb G_m$-torsor given by the line bundle $\mathcal O(\Delta)$.

For the counit, we need a map
\[
\beta: f_\ast(f^\ast\otimes \mathbb Z(1)[2])\to \mathrm{id}.
\]
In fact, we claim that the reduced cohomology on $\mathbb P^1$ of $f^\ast\otimes \mathbb Z(1)[2]$ agrees with the identity functor. This reduced cohomology of $\mathbb P^1$ agrees with the reduced cohomology of $\mathbb G_m$ (by ball-invariance) up to shift, and the latter can be computed by taking homomorphisms from $\mathbb Z_{\mathrm{mot}}[\mathbb G_m]/\mathbb Z_{\mathrm{mot}}[\ast]$, which agrees with $\overline{\mathbb G}_m=\mathbb Z(1)[1]$, so this follows from the $\otimes$-invertibility of $\mathbb Z(1)$ in $\mathcal D_{\mathrm{mot}}$.

We need to check that certain induced endomorphisms of the functors $f^\ast$ and $f_\ast$ are isomorphisms. Consider first $f_\ast$. This is a composite
\[
f_\ast\xrightarrow{f_\ast \alpha} f_\ast f^\ast(f_\ast\otimes\mathbb Z(1)[2]) = f_\ast(f^\ast f_\ast\otimes\mathbb Z(1)[2])\xrightarrow{\beta f_\ast} f_\ast.
\]
It suffices to check the result for objects in the image of $j_!$ where $j: \mathbb A^1_X\to \mathbb P^1_X$ is the open immersion (it is then also the case fo other points at infinity, and these generate). We write $f^\circ_!$ for $f_\ast j_!$; after we completed this proof, this is the $!$-functor for $f^\circ: \mathbb A^1_X\to X$. It suffices to see that the resulting composite
\[
f^\circ_!\xrightarrow{f^\circ_! \alpha^\circ} f^\circ_! f^{\circ\ast}(f^\circ_!\otimes \mathbb Z(1)[2])\xrightarrow{\beta^\circ f^\circ_!} f^\circ_!
\]
is an isomorphism, where $\alpha^\circ$ and $\beta^\circ$ are the obvious variants. But $\beta^\circ$ is already an isomorphism, by the construction of the counit. It remains to see that $f^\circ_! \alpha^\circ$ is an isomorphism. This is the result of applying $(f^\circ\times f^\circ)_! = f^\circ_! p_{2!}^\circ$ to the map
\[
\Delta^\circ_\ast M\to p_2^{\circ\ast} M\otimes \mathbb Z(1)[2]
\]
in $\mathcal D_{\mathrm{mot}}(\mathbb A^2_X)$, for any $M\in \mathcal D_{\mathrm{mot}}(\mathbb A^1_X)$. In fact, we claim that this is already an isomorphism after applying $p_{2!}^\circ$. This can be checked on geometric fibres. Now we have some $M'\in \mathcal D_{\mathrm{mot}}(X)$ (a fibre of $M$), where $X=\mathcal M_{\mathrm{arc}}(C)$, and a section $s: X\to \mathbb A^1_X$, and apply $f^\circ_!$ to the map
\[
s_\ast M'\to f^{\circ\ast} M'\otimes \mathbb Z(1)[2].
\]
By another application of the computation in the construction of the counit, we know that the global sections of both sides evaluate to $M'(C)$, and it remains to identify the map. This can be done for $M'=\mathbb Z$, where it is a simple computation.

On the other hand, for $f^\ast$, checking that an endomorphism of $f^\ast M$ is an isomorphism can also be done on geometric fibres, and we arrive at a certain natural endomorphism of $M(C)$. Identifying this can again be done for $M=\mathbb Z$. Identifying the endomorphism of $f^\ast \mathbb Z$ can be done after applying $f^\circ_!$. But then $f^\circ_! \alpha^\circ f^{\circ\ast}$ and $f^\circ_! f^{\circ\ast} \beta^\circ$ are individually isomorphisms, by what we have already proved.
\end{proof}

For the following discussion of six-functor formalisms, we make extensive use of the references \cite{ScholzeSixFunctors} and \cite{HeyerMann}. Starting with the category of affine arc-sheaves, and the class of maps of finite cohomological dimension as the $!$-able (and proper) maps, we get a $6$-functor formalism $X\mapsto \mathcal D_{\mathrm{mot}}(X)$. By general extension procedures, cf.~\cite[Theorem 5.19]{ScholzeSixFunctors} and \cite[Theorem 3.4.11]{HeyerMann}, we get a $6$-functor formalism $X\mapsto \mathcal D_{\mathrm{mot}}(X)$ on all small arc-stacks, for a certain class of $!$-able maps of small arc-stacks.

\begin{proposition} Open immersions $j: U\hookrightarrow X$ are $!$-able and $j_!$ is the left adjoint of pullback $j^\ast$.
\end{proposition}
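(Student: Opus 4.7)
The plan is to exhibit the left adjoint of $j^\ast$ directly from the finitary nature of $j$, transport it through the inversion of $\mathbb Z(1)$, and then match it with the $!$-pushforward produced by the extension of the $6$-functor formalism.

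First, any open immersion $j: U \hookrightarrow X$ is finitary, by the lemma on finitary maps in Section~5. Therefore on effective motives we already have a colimit-preserving left adjoint
\[
j_\sharp: \mathcal D_{\mathrm{mot}}^{\mathrm{eff}}(U)\to \mathcal D_{\mathrm{mot}}^{\mathrm{eff}}(X)
\]
of $j^\ast$, satisfying base change and the projection formula, by the proposition on $f_\sharp$ for finitary maps. Because $j^\ast$ is symmetric monoidal and preserves $\mathbb Z(1)$, the projection formula yields a natural isomorphism $j_\sharp(M\otimes \mathbb Z(1))\cong j_\sharp(M)\otimes \mathbb Z(1)$. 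Hence the adjunction $j_\sharp\dashv j^\ast$ is compatible with the sequential diagram $\mathcal D_{\mathrm{mot}}^{\mathrm{eff}}\xrightarrow{-\otimes\mathbb Z(1)}\mathcal D_{\mathrm{mot}}^{\mathrm{eff}}\to\cdots$ whose colimit is $\mathcal D_{\mathrm{mot}}$, and descends to a colimit-preserving left adjoint
\[
j_!: \mathcal D_{\mathrm{mot}}(U)\to \mathcal D_{\mathrm{mot}}(X)
\]
of $j^\ast$, still satisfying base change and the projection formula (both can be checked componentwise in the sequential description).

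Second, we match this $j_!$ with the $!$-functor of the Heyer--Mann extension. By arc-descent it suffices to treat $X=\mathcal M_{\mathrm{arc}}(A)$ affine; then $U$ is an open subfunctor of $\mathcal M_{\mathrm{arc}}(A)$. The extension procedure of \cite{HeyerMann} incorporates any finitary map whose pullback admits a colimit-preserving left adjoint satisfying base change and the projection formula into the class of $!$-able maps, and identifies the $!$-functor with this left adjoint. Applied to $j$, this shows $j$ is $!$-able and $j_!$ agrees with the left adjoint of $j^\ast$ constructed above.

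The main obstacle is this last identification: one has to match our explicit $j_\sharp$ on effective motives, passed through the inversion of $\mathbb Z(1)$, with the abstract $j_!$ produced by the extension. Once one invokes the universal property of the Heyer--Mann extension in the form that any finitary morphism with a base-change- and projection-formula-preserving left adjoint is automatically $!$-able with that left adjoint as its $!$-functor, the proposition follows formally from uniqueness of adjoints.
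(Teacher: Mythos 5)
The first half of your argument is fine: an open immersion is finitary, so $j_\sharp$ exists on $\mathcal D_{\mathrm{mot}}^{\mathrm{eff}}$ with base change and the projection formula, and since $j^\ast\mathbb Z(1)_X=\mathbb Z(1)_U$ the projection formula makes $j_\sharp$ commute with $-\otimes\mathbb Z(1)$, so the adjunction passes to $\mathcal D_{\mathrm{mot}}$ and produces a colimit-preserving left adjoint of $j^\ast$ there. That part is correct and is essentially what one uses to identify $j_!$ at the end.

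The gap is in the second step. There is no ``universal property of the Heyer--Mann extension'' of the form you invoke: the extension procedure does not declare a map $!$-able merely because $f^\ast$ admits a colimit-preserving left adjoint satisfying base change and the projection formula. In this paper the formalism is generated from an initial class consisting only of (affine, finite cohomological dimension, hence proper) maps with $f_!=f_\ast$, and ``$!$-able'' means membership in the class obtained from these by composition and by the locality of $!$-ability on the source; membership has to be exhibited, not inferred from abstract properties of a candidate left adjoint. (If you instead wanted to rebuild the formalism with open immersions as a second generating class, you would have to verify the open/proper compatibility, i.e.\ the exchange isomorphism $j_\sharp p'_\ast\simeq p_\ast j'_\sharp$ for Cartesian squares mixing the two classes, which does not follow formally from base change of $j_\sharp$ against pullbacks plus the projection formula; your proposal never addresses this.) The paper's proof supplies exactly the missing geometric input: after reducing to $X=\mathcal M_{\mathrm{arc}}(A)$ affine and $U=\{|f|<1\}$, one exhausts $U$ by the affine subsets $\{|f|\leq c\}$, $c<1$; these are $!$-able over both $U$ and $X$ (they lie in the initial class) and form a $!$-cover of $U$, so $j$ is $!$-able by locality on the source, and then $j_!$ is identified with the left adjoint of $j^\ast$ because it preserves colimits and has the correct restriction to each $\{|f|\leq c\}$. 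Without an argument of this kind, your proof establishes the existence of a left adjoint but not that $j$ is $!$-able, which is the actual content of the proposition.
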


\begin{proof} We can check this for $X=\mathcal M_{\mathrm{arc}}(A)$ affine. It suffices to cover $U$ by $!$-able open subsets with the desired property; then it follows for $U$ by gluing. So we can assume $U=\{|f|<1\}$ for some $f\in A$. This is the union of the affine subsets $\{|f|\leq c\}$ over $c<1$, and these form a $!$-cover of $U$. It follows that $U$ is $!$-able, and it is a standard exercise to compute the resulting $j_!$. (Indeed, it is colimit-preserving and has the correct restriction to all $\{|f|\leq c\}$.)
\end{proof}

\begin{proposition} All \'etale maps (i.e.~finitary maps with finite geometric fibres) are $!$-able and cohomologically \'etale.
\end{proposition}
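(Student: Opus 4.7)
The plan is to reduce, via arc-descent of the $6$-functor formalism and locality on the source, to the previous proposition on open immersions. More precisely, I would first invoke the extension procedures of Heyer--Mann to note that the class of $!$-able cohomologically \'etale maps is stable under arc-descent on the target and local on the source (Zariski-locally, i.e.\ under open covers, suffices for this). This reduces us to the case $X = \mathcal{M}_{\mathrm{arc}}(A)$ with $A$ strictly totally disconnected, and to producing, for any \'etale $f : Y \to X$, an open cover $Y = \bigcup V_i$ such that each composite $V_i \hookrightarrow Y \to X$ is an open immersion.

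The key input is the following local structure claim: if $X$ is strictly totally disconnected and $f : Y \to X$ is \'etale, then around each point $y \in Y$ there is an open neighborhood $V \subset Y$ such that $V \to X$ is an open immersion. Fix $y$ with image $x$. Since the geometric fibre over $x$ is finite and $K(x)$ is algebraically closed and non-discrete, $K(y)$ must already equal $K(x)$, and $y$ is isolated in $f^{-1}(x)$. Using finitaryness of $f$, this pointwise statement spreads: one may lift the isomorphism $K(y) \cong K(x)$ to some Banach-algebra stage and then, invoking the profinite/henselian structure of strictly totally disconnected rings (in the form of Lemma~\ref{lem:quasiproetaleoverstd}, where finite \'etale covers split into disjoint unions of open-and-closed pieces), produce an open neighborhood of $y$ that maps isomorphically onto an open subspace of $X$.

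Granting the local claim, each $V_i \to X$ is an open immersion, hence $!$-able and cohomologically \'etale by the previous proposition, while each fibre product $V_i \times_Y V_j$ is open in both $V_i$ and $V_j$ (again an open immersion). Locality on the source then yields that $f$ itself is $!$-able with $f_! \cong f_\sharp$, so $f^! \cong f^*$; this is exactly cohomological \'etaleness. The main obstacle is the local structure lemma: the definition of \'etale used here is abstract (finitary plus finite geometric fibres), so one cannot invoke a standard \'etale local structure theorem from algebraic or rigid-analytic geometry. Instead the argument must leverage the specific henselian/strict nature of strictly totally disconnected Banach rings together with the finitary hypothesis on $f$ to propagate the pointwise agreement of residue fields and isolation of $y$ in its fibre to a genuine open neighborhood in $Y$.
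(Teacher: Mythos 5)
There is a genuine gap, and it is exactly at the point you flag as the main obstacle: the local structure claim is not just hard in this setting, it is false. In this paper ``\'etale'' means only finitary with finite geometric fibres; no separatedness is imposed, and the paper explicitly uses non-separated examples (in the proof of the cancellation theorem, the map $\tilde{\mathbb T}\to \mathbb T$ is called \'etale ``but not separated in general -- points can collide''). At a collision point the claim that $y$ has an open neighborhood $V\subset Y$ with $V\to X$ an open immersion breaks down: concretely, take $Y$ obtained by gluing two sections $g_1,g_2\colon X\to Y$ of an \'etale map along the closed locus where they agree (such $Y$ arise by pulling back a colliding family like $\tilde{\mathbb T}\to\mathbb T$ along a map from a strictly totally disconnected $X$; collisions survive this base change, e.g.\ at non-principal ultrafilter points the two branches become identified in the ultraproduct residue field). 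If $y$ is a collided point, then any open $V\ni y$ pulls back along each section $g_i$ to an open neighborhood of the image point in $X$, so $V$ contains both branches over nearby points where they are distinct; hence $V\to X$ is not a monomorphism and in particular not an open immersion. Note that at such $y$ your intermediate observations ($K(y)=K(x)$, $y$ isolated in its fibre) all hold, so they cannot force the conclusion; reducing to strictly totally disconnected $X$ does not help either, since the phenomenon persists there.

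The paper's (terse) argument avoids this by localizing on the target rather than on the source: \'etale-locally on the target, an \'etale map is \emph{covered by} disjoint unions of open subsets of the base -- i.e.\ since residue fields are algebraically closed and the sheaf is finitary, every fibre point extends to a section over an open-and-closed neighborhood, and the resulting surjection $\bigsqcup_i W_i\to Y$ (with $W_i\subset X$ open) is a $!$-cover along which $!$-ability and the identification $f^!\cong f^\ast$ descend via the Heyer--Mann extension machinery, even though the individual maps $W_i\to Y$ need not be open immersions into $Y$. Your argument as written would prove the statement for \emph{separated} \'etale maps (where local-on-the-source splitting into open immersions is plausible over a strictly totally disconnected base), but to get the stated generality you must replace ``cover $Y$ by opens of $Y$ mapping isomorphically to opens of $X$'' by ``cover $Y$ by sections over opens of $X$ and descend along this (proper, surjective) cover.''
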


\begin{proof} \'Etale maps are \'etale locally on the target covered by disjoint unions of open subsets.
\end{proof}

In the following corollary, when working over a Banach field $K$, we use Berkovich's notion of smooth $K$-analytic spaces. These correspond to smooth separated rigid-analytic spaces over $K$.

\begin{corollary}\label{cor:smoothcohomsmooth} Let $K$ be a Banach field and let $X$ be a smooth $K$-analytic space without boundary. Then $X$ is cohomologically smooth.
\end{corollary}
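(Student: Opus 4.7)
The plan is to reduce the cohomological smoothness of an arbitrary smooth $K$-analytic space $X$ without boundary to the equivalence $g_\sharp \cong g_\ast \otimes \mathbb{Z}(1)[2]$ for $g: \mathbb{P}^1_Y \to Y$ already proved in Proposition~\ref{prop:projectionformulaP1}, using the local structure of such $X$ together with formal adjoint manipulations in the six-functor formalism.

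First I would prove that the projection $f^\circ: \mathbb{A}^1_Y \to Y$ is cohomologically smooth for an arbitrary base $Y$. Factor $f^\circ = g \circ j$ with $g: \mathbb{P}^1_Y \to Y$ proper and $j: \mathbb{A}^1_Y \hookrightarrow \mathbb{P}^1_Y$ the complementary open immersion. Since $g$ is proper, $g_! = g_\ast$. Taking right adjoints in the equivalence $g_\sharp(M) \cong g_\ast(M) \otimes \mathbb{Z}(1)[2]$ — using that $g_\sharp \dashv g^\ast$, that $g_! \dashv g^!$, and that tensoring with the invertible object $\mathbb{Z}(1)[2]$ is an autoequivalence — one obtains
\[
g^\ast(N) \cong g^!\bigl(N \otimes \mathbb{Z}(-1)[-2]\bigr), \qquad \text{equivalently} \qquad g^!(N) \cong g^\ast(N) \otimes \mathbb{Z}(1)[2],
\]
so $g$ is cohomologically smooth of relative dimension $1$ with dualizing object $\mathbb{Z}(1)[2]$. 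Combining this with $j^! \cong j^\ast$ (open immersions being cohomologically étale) yields $f^{\circ!} \cong f^{\circ\ast} \otimes \mathbb{Z}(1)[2]$. Iterating along the tower $\mathbb{A}^n_Y \to \mathbb{A}^{n-1}_Y \to \cdots \to Y$, each projection $\mathbb{A}^n_Y \to Y$ is cohomologically smooth with dualizing object $\mathbb{Z}(n)[2n]$.

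For the general case I would invoke the local structure of smooth $K$-analytic spaces without boundary: any such $X$ is étale-locally isomorphic to an open subspace of $\mathbb{A}^n_K$ for some $n$. Combined with the facts already established — that étale maps are cohomologically étale, that open immersions are cohomologically étale (in particular $!$-able), that cohomological smoothness is closed under composition, and that it is étale-local on the source in any six-functor formalism (using \cite{HeyerMann}) — one concludes that $X \to \mathcal{M}_{\mathrm{arc}}(K)$ is cohomologically smooth. The only genuinely non-formal input is Proposition~\ref{prop:projectionformulaP1}, where the analytic geometry enters through the explicit construction of the unit and counit in terms of $\overline{\mathbb{G}}_m$; once that is granted, the rest is a formal diagram chase, modulo the standard local structure theorem for smooth Berkovich spaces without boundary.
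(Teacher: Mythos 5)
Your proposal is correct and follows essentially the same route as the paper: the paper's proof likewise reduces to the fact that smooth spaces without boundary are étale-locally open in affine space, combined with étale maps being cohomologically étale and affine space being cohomologically smooth via the $\mathbb{P}^1$ projection formula of Proposition~\ref{prop:projectionformulaP1}. Your explicit adjoint manipulation deriving $g^!\cong g^\ast\otimes\mathbb{Z}(1)[2]$ from $g_\sharp\cong g_\ast\otimes\mathbb{Z}(1)[2]$ just spells out what the paper leaves implicit.
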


\begin{proof} Any smooth space is locally \'etale over $\mathbb A^n$, reducing us to the previous result (noting that boundaryless \'etale maps of $K$-Berkovich spaces are \'etale in our sense; this reduces to the case of finite \'etale maps and open immersions, which are clear).
\end{proof}

Also note that as under tilting, the first Tate twist is preserved, we get:

\begin{proposition}\label{prop:fulltilting} Let $X$ be a small arc-stack on which $|p|<1$, and let $X^\flat$ be its tilt. Then tilting induces a natural symmetric monoidal equivalence
\[
\mathcal D_{\mathrm{mot}}(X)\cong \mathcal D_{\mathrm{mot}}(X^\flat),
\]
functorial in $X$.
\end{proposition}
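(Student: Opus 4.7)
The plan is to bootstrap from the symmetric monoidal equivalence on effective motives
\[
\mathcal D_{\mathrm{mot}}^{\mathrm{eff}}(X)\cong \mathcal D_{\mathrm{mot}}^{\mathrm{eff}}(X^\flat)
\]
already proved. Since $\mathcal D_{\mathrm{mot}}(X)$ is obtained by formally inverting the tensor-symmetric object $\mathbb Z(1)$, and Robalo's construction of such a localization is functorial in the pair (symmetric monoidal $\infty$-category, distinguished symmetric object), it will suffice to exhibit a natural isomorphism matching $\mathbb Z(1)$ on $X$ with $\mathbb Z(1)$ on $X^\flat$ under the effective tilting equivalence. The symmetric monoidal character of the effective equivalence is essentially automatic from how it was built: the arc-topoi on the $|p|<1$ locus are identified under tilting (perfectoidification is an arc-cover and perfectoid tilting is compatible with completed tensor products), and both the finitary and ball-invariant conditions cut out symmetric monoidal subcategories that are preserved on the two sides.

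Since $|p|<1$ everywhere on $X$, both $X$ and $X^\flat$ are nonarchimedean, and the earlier identification of $\mathbb Z(1)$ with $\overline{\mathbb G}_m[-1]$ applies on either side. Thus I reduce to producing a natural isomorphism of arc-sheaves of abelian groups between $\overline{\mathbb G}_m$ on $X$ and $\overline{\mathbb G}_m$ on $X^\flat$. Arc-locally, I can assume $X=\mathcal M_{\mathrm{arc}}(A)$ for a perfectoid Banach ring $A$ with tilt $A^\flat$. The multiplicative sharp map $(\cdot)^\sharp\colon A^\flat\to A$ then induces
\[
(A^\flat)^\times/(1+A^\flat_{<1})\xrightarrow{\sim} A^\times/(1+A_{<1}),
\]
an isomorphism because both sides admit the common description, via the perfectoid identification of multiplicative monoids $A_{\leq 1}/A_{<1}\cong A^\flat_{\leq 1}/A^\flat_{<1}$, as an extension of the value group by the units of this common reduction. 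This identification is natural in $A$ across the perfectoid base, and arc-sheafification converts it into the desired isomorphism $\overline{\mathbb G}_m\cong \overline{\mathbb G}_m^\flat$, compatible with pullback.

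Assembling these pieces produces the symmetric monoidal equivalence $\mathcal D_{\mathrm{mot}}(X)\cong \mathcal D_{\mathrm{mot}}(X^\flat)$, and functoriality in $X$ follows from the naturality of every step: the arc-site equivalence, the identification $\mathbb Z(1)\cong\overline{\mathbb G}_m[-1]$, and the sharp map. The point I expect to require the most care is verifying that the identification $\overline{\mathbb G}_m\cong\overline{\mathbb G}_m^\flat$ is genuinely natural as arc-sheaves, not merely a matching of values on individual perfectoid test rings; but once one works on the subcategory of perfectoid (or strictly totally disconnected) Banach rings, where the tilting correspondence is strictly functorial and the sharp map is natural in $A$, this reduces to a formal bookkeeping exercise.
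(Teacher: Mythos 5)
Your proposal is correct and takes essentially the same approach as the paper: the arc-topoi of $X$ and $X^\flat$ are identified on perfectoid bases, the effective categories match, and the first Tate twist is preserved under tilting, so the equivalence passes through the inversion of $\mathbb Z(1)$ (Robalo). Your identification $\overline{\mathbb G}_m\cong\overline{\mathbb G}_m^\flat$ via the sharp map, checked after arc-sheafification on geometric points, is exactly the justification the paper leaves implicit for the assertion that the Tate twist is preserved.
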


\begin{proof} We already proved the effective version in Proposition~\ref{prop:effectivetilting}. Inverting the Tate twist (which is preserved under tilting), the result follows.
\end{proof}

\section{$\mathcal D_{\mathrm{mot}}(C)$}

Next, we analyze the category $\mathcal D_{\mathrm{mot}}(C)$ when $C$ is a non-discrete algebraically closed Banach field. When $C=\mathbb C$, this is just $\mathcal D(\mathbb Z)$.

\begin{proposition}\label{prop:compactgenerationpoint} For any algebraically closed Banach field $C$, the category $\mathcal D_{\mathrm{mot}}(C)$ is compactly generated, the unit is compact, and all compact objects are dualizable. If $C$ has equal characteristic and we pick a splitting $k\to C$ of its residue field $k$, then a generating family of compact objects is given by $\mathbb Z_{\mathrm{mot}}[X_C](-j)$ where $X$ is a smooth projective variety over $k$, and $j\geq 0$.

In particular, $\mathcal D_{\mathrm{mot}}(C)$ is rigid.
\end{proposition}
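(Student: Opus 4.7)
The plan is to establish, in order: compact generation and compactness of the unit; dualizability of every compact object; and in equal characteristic, generation by motives of smooth projective $k$-varieties. Rigidity in the Gaitsgory--Rozenblyum sense then follows formally from the first two. For the first step, the effective category $\mathcal D_{\mathrm{mot}}^{\mathrm{eff}}(C)$ is generated by the $\mathbb Z_{\mathrm{mot}}[U]$ for $U$ an open subset of some $\mathbb A^n_C$ (Proposition~\ref{prop:freemotivicsheavesgenerate}), and each such generator is compact because the corepresented functor is $M\mapsto M(U)$ with $U$ of finite cohomological dimension. In particular the unit $\mathbb Z=\mathbb Z_{\mathrm{mot}}[\mathrm{pt}]$ is compact. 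Since $\mathbb Z(1)$ is compact in $\mathcal D_{\mathrm{mot}}^{\mathrm{eff}}(C)$ (Proposition~\ref{prop:tatetwistcompact}) and symmetric (trivial $\Sigma_n$-action on $\mathbb Z(1)^{\otimes n}$, Lemma~\ref{lem:symmetry}), Robalo's general inversion theorem yields that $\mathcal D_{\mathrm{mot}}(C)=\mathcal D_{\mathrm{mot}}^{\mathrm{eff}}(C)[\mathbb Z(1)^{\otimes -1}]$ is compactly generated by the $\mathbb Z_{\mathrm{mot}}[U](-j)$ for $j\geq 0$, with compact unit.

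For the dualizability step, compactness of the unit makes dualizable objects automatically compact, so it suffices to show each compact generator $\mathbb Z_{\mathrm{mot}}[U](-j)$ is dualizable. Since Tate twists are $\otimes$-invertible, this reduces to $\mathbb Z_{\mathrm{mot}}[U]$ for $U$ open in $\mathbb A^n_C$. I first dispatch the smooth proper case: for $f\colon X\to C$ smooth proper of dimension $d$, cohomological smoothness (Corollary~\ref{cor:smoothcohomsmooth}) gives $\omega_f=f^\ast\mathbb Z(d)[2d]$, and combined with $f_\ast=f_!$ and the projection formula this yields $\mathbb Z_{\mathrm{mot}}[X]=f_\sharp\mathbb Z_X=f_\ast\mathbb Z_X\otimes\mathbb Z(d)[2d]$, dualizable with dual $f_\ast\mathbb Z_X=\mathbb Z_{\mathrm{mot}}[X](-d)[-2d]$ (Poincar\'e duality). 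For general $U$, one finds a smooth projective compactification $U\hookrightarrow\widetilde X$ with normal-crossings boundary $Z=\widetilde X\setminus U$: in equal characteristic zero by analytifying an algebraic embedded resolution of the Zariski closure of an algebraic approximation of $U$, and in positive residue characteristic by de Jong alterations (valid since the residue characteristic is already invertible in $\mathcal D_{\mathrm{mot}}(C)$). Excision together with a Mayer--Vietoris decomposition of $\mathbb Z_Z$ along the strata of the normal-crossings divisor then expresses $\mathbb Z_{\mathrm{mot}}[U]=f^U_!\mathbb Z_U\otimes\mathbb Z(d)[2d]$ as a finite iterated extension of Tate-twisted motives of smooth projective varieties, hence dualizable.

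For the equal-characteristic statement, each smooth projective $C$-variety $\widetilde X$ produced by the compactification step spreads out to a smooth projective scheme over a finitely generated $k$-subalgebra $R\subset C$; choosing a $k$-rational point of $\mathrm{Spec}\,R$ (which exists since $k$ is algebraically closed) and applying smooth-proper base change in the 6-functor formalism identifies the motive of $\widetilde X$ with that of a smooth projective $k$-variety base-changed to $C$. Thus the $\mathbb Z_{\mathrm{mot}}[X_C](-j)$ for $X/k$ smooth projective and $j\geq 0$ already generate, and rigidity in the sense of Gaitsgory--Rozenblyum is now a formal consequence of compact generation together with compact $=$ dualizable.

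The main obstacle is the d\'evissage step, namely producing the normal-crossings compactification of a generic open $U\subset\mathbb A^n_C$. The delicacy is that $U$ is an analytic, not algebraic, object, so one must first approximate it by an increasing union of opens definable over a finitely generated subring of $C$ -- exploiting finitariness and ball-invariance to arrange that the motive of $U$ is the filtered colimit of the motives of the approximations -- and then apply algebraic resolution or alteration to each approximation before verifying that the resulting excision triangles assemble coherently in the arc-topology. In positive residue characteristic, one must also carefully use the invertibility of the residue characteristic that de Jong alterations require, and keep track of the finite inseparable degrees they introduce.
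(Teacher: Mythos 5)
Your opening step already contains a genuine error: you assert that each $\mathbb Z_{\mathrm{mot}}[U]$, for $U\subset\mathbb A^n_C$ an arbitrary open, is compact ``because $U$ has finite cohomological dimension''. For non-quasicompact $U$ this fails: writing $U$ as an increasing union of quasicompact opens $U_i$, one has $M(U)=\lim_i M(U_i)$, and such limits do not commute with filtered colimits of $M$; concretely, for $U$ an infinite disjoint union of balls, $M(U)=\prod_i M(\mathbb B_i)$ and $\mathbb Z_{\mathrm{mot}}[U]=\bigoplus_i\mathbb Z$ is neither compact nor dualizable. This is exactly why the paper proves compactness of $\mathbb Z(1)$ by a special device (replacing $\mathbb G_m$ by a closed annulus, Proposition~\ref{prop:tatetwistcompact}) and why in the relative setting it only obtains compact assembledness, not compact generation (Proposition~\ref{prop:rigidity}). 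Compact generation over a point could be repaired by restricting to quasicompact $U$, but the same example shows that your reduction of ``compact $\Rightarrow$ dualizable'' to dualizability of all $\mathbb Z_{\mathrm{mot}}[U]$ cannot work as stated.

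The heart of your argument, the d\'evissage, has two more serious problems. First, a general analytic open (a ball, an annulus, a rational domain) is not a Zariski-open subvariety, so ``resolving the Zariski closure of an algebraic approximation of $U$'' does not produce a compactification whose excision triangles compute $\mathbb Z_{\mathrm{mot}}[U]$; making an argument of this shape work is essentially the formal-models/semistable-reduction route that the paper deliberately avoids. Second, and decisively, the step where you invoke smooth-proper base change to identify the motive of a smooth projective $C$-variety $\widetilde X$ with that of a smooth projective $k$-variety base-changed to $C$ is false with rational coefficients: motives are not locally constant in smooth proper families, and a variety with bad reduction (e.g.\ a Tate elliptic curve) is not motivically isomorphic to any $X_C$ with $X/k$ smooth projective --- this is precisely the content of the nearby-cycles description of $\mathcal D_{\mathrm{mot}}(C)$ in terms of pairs $(A,N)$ with possibly nonzero monodromy $N$ at the end of the paper. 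The paper's actual proof proceeds quite differently: it first reduces to equal characteristic by tilting (Proposition~\ref{prop:fulltilting}, a case your outline never addresses), notes that dualizability of $\mathbb Z_{\mathrm{mot}}[X_C](-j)$ and compactness of the unit are the easy points, and then proves generation by these good-reduction motives rationally by induction on the topological transcendence degree of an extension $C'/C$, using the classification of points of the Berkovich affine line: type (4) points are intersections of balls (evaluation unchanged), type (3) points are intersections of annuli (only Tate twists appear), and type (2) points are handled by writing the enlarged residue field as a filtered colimit of smooth $k$-algebras admitting normal-crossings compactifications over the \emph{residue field} $k$, where resolution is genuinely algebraic. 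That inductive mechanism is the missing idea in your proposal, and the substitutes you offer for it do not hold.
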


\begin{proof} By Proposition~\ref{prop:fulltilting}, we can assume that $C$ has equal characteristic. Pick a splitting $k\to C$ of its residue field. As in any six-functor formalism, pushforward along cohomologically smooth and proper maps preserves dualizable objects, it is clear that $\mathbb Z_{\mathrm{mot}}[Y](-j)$ is dualizable when $Y$ is smooth projective over $C$, in particular this happens for $Y=X_C$. Also, the unit is compact, so all dualizable objects are compact. For the rest, it suffices to see that the $\mathbb Z_{\mathrm{mot}}[X_C](-j)$ generate $\mathcal D_{\mathrm{mot}}(C)$. Already $\mathbb Z$ itself generates the torsion part, so we can work with rational coefficients. Assume that $M\in \mathcal D_{\mathrm{mot}}(C)$ has the property that it has no nonzero maps from $\mathbb Z_{\mathrm{mot}}[X_C](-j)$ for all such $X$ and $j$; we need to show that $M=0$. It suffices to show that $M(C')=0$ for all complete algebraically closed extensions $C'/C$. We argue by induction on the topological transcendence degree. If $C_d/C$ is some extension of topological transcendence degree $d$, pick some $C_{d-1}\subset C_d$ of topological transcendence degree $d-1$ such that $C_d/C_{d-1}$ is of transcendence degree $1$. It then corresponds to a completed algebraic closure of one of the types of points (2), (3) or (4) on the Berkovich line over $C_{d-1}$. The residue field at the point may not be algebraically closed, but any of its finite extensions can also be realized by a point of the same type in the Berkovich line over $C_{d-1}$. Type (4) points are intersections of balls, so evaluation at $C_d$ agrees with evaluation at $C_{d-1}$, giving the result by induction. Type (3) points are intersections of annuli, so evaluation at $C_d$ agrees with evaluation at $C_{d-1}$ of Tate twists, so again we get the result by induction. Thus, only type (2) remains. But if the residue $k_d$ of $C_d$ has transcendence degree $<d$ over the residue field $k$ of $C$, we could always choose $C_{d-1}$ to already have residue field $k_{d-1}=k_d$, thereby avoiding type (2). Thus, only the case that $k_d$ has transcendence degree $d$ is relevant. Write $\mathcal O_{C_d}$ as a completed filtered colimit of algebras topologically of finite type $R_i$ over $\mathcal O_C$. These algebras are generically smooth, so we can assume that the $R_i$ are smooth. We can then find an isomorphism $R_i\cong \overline{R}_i\hat{\otimes}_k \mathcal O_C$ for smooth $k$-algebras $\overline{R}_i$. Using alterations, we can arrange that $\mathrm{Spec}(\overline{R}_i)\subset X_i$ has a smooth projective compactification $X_i$ whose boundary is a normal crossing divisor. Now the motive of $\overline{R}_i\hat{\otimes}_k C$ can be described in terms of the motive of $X_{i,C}$, and lower-dimensional varieties, by induction; and the completed colimit of the $\overline{R}_i\hat{\otimes}_k C$ is $C_d$ by construction.
\end{proof}

Let us note that once we know this, we can prove the projection formula in a slightly more general setting.

\begin{corollary}\label{cor:generalprojectionformula} Let $f: Y\to X$ be a proper $0$-truncated map of locally finite cohomological dimension between small arc-stacks. Then
\[
f_\ast: \mathcal D_{\mathrm{mot}}(Y)\to \mathcal D_{\mathrm{mot}}(X)
\]
commutes with colimits and satisfies base change and the projection formula.
\end{corollary}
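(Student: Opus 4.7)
The plan is to reduce to the affine case established in the preceding theorem, and then to exploit the rigidity of $\mathcal D_{\mathrm{mot}}(C)$ at geometric points to upgrade the projection formula to this more general setting.

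First I would use arc-descent on $X$ to reduce to the case $X = \mathcal M_{\mathrm{arc}}(A)$ affine quasicompact, so that $f$ has finite cohomological dimension. Since $Y$ is qcqs and $0$-truncated, one can pick a simplicial resolution $Y_\bullet = \mathcal M_{\mathrm{arc}}(B_\bullet)$ of $Y$ by affine arc-stacks, just as in the proof of Theorem~\ref{thm:properpushforwardfinitary}. Each projection $f_n: Y_n \to X$ is then an affine map of finite cohomological dimension, so the theorem proved above applies: $(f_n)_\ast$ commutes with colimits and with base change and satisfies the projection formula.

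Writing $f_\ast M = \lim_{[n] \in \Delta} (f_n)_\ast(M|_{Y_n})$, the finite cohomological dimension of $f$ ensures that this totalization has bounded cohomological amplitude, uniformly in $M$. Hence it commutes with filtered colimits and with direct sums, and combined with exactness this gives colimit-preservation of $f_\ast$ on all of $\mathcal D_{\mathrm{mot}}(Y)$. Base change is handled in parallel: a pullback $g: X' \to X$ pulls back the simplicial resolution to an analogous resolution of $Y \times_X X'$, each $(f_n)_\ast$ satisfies base change by the affine case, and the bounded totalization is compatible with $g^\ast$.

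Once colimit-preservation and base change are in hand, the projection formula $f_\ast M \otimes N \to f_\ast(M \otimes f^\ast N)$ is a transformation between functors that, by base change, can be checked after pulling back along any geometric point $x: \mathcal M_{\mathrm{arc}}(C) \to X$. This reduces the question to the same statement for $f_x: Y_x \to \mathcal M_{\mathrm{arc}}(C)$ with $N$ replaced by $x^\ast N \in \mathcal D_{\mathrm{mot}}(C)$. Now $\mathcal D_{\mathrm{mot}}(C)$ is rigid, so every object is a colimit of dualizable objects; since both sides of the projection formula commute with colimits in $N$ (using colimit-preservation of $f_\ast$ and of tensor product), it suffices to verify it for dualizable $N$. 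For such $N$ with dual $N^\vee$, the functor $-\otimes N = \intHom(N^\vee, -)$ is a right adjoint and hence commutes with the right adjoint $f_\ast$; combined with the fact that $f^\ast$ is symmetric monoidal and therefore sends duals to duals, the projection formula is automatic.

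The main obstacle I expect is the bookkeeping needed to control the cohomological amplitude of the totalization along the affine simplicial resolution, so that colimit-preservation and base change indeed descend from the affine case. Conceptually this rests on the local finite cohomological dimension of $f$, but one should likely also observe that the Heyer--Mann extension of the $6$-functor formalism already furnishes a well-behaved $f_!$ for such maps, so that the genuine novelty lies in the identification $f_! = f_\ast$ (which is automatic since $f$ is proper) and in the projection formula step delivered by rigidity.
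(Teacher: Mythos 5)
Your proposal is correct and takes essentially the same route as the paper: colimit-preservation and base change are taken as already established (the paper simply cites its earlier pushforward results, where you re-derive them via an affine simplicial resolution), the projection formula is reduced via base change to a geometric point $X=\mathcal M_{\mathrm{arc}}(C)$, and one concludes because $\mathcal D_{\mathrm{mot}}(C)$ is compactly generated by dualizable objects, for which the projection formula is automatic. The only difference is presentational, not mathematical.
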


\begin{proof} We already know commutation with colimits and base change. For the projection formula, we can then reduce to the case that $X=\mathcal M_{\mathrm{arc}}(C)$ is a geometric point. But then $\mathcal D_{\mathrm{mot}}(X)$ is compactly generated by dualizables; in this case, the projection formula is automatic.
\end{proof}

We can then rerun the construction of $6$-functor formalisms using this class of proper maps (still between affine arc-stacks)\footnote{In \cite[Theorem 3.4.11]{HeyerMann}, the resulting class of $!$-able maps depends on the choice of basic geometric objects, in this case affine arc-stacks: Namely, to check that a map of stacks is $!$-able, it suffices to check this after pullback to each affine arc-stack. If we removed the affineness condition here, the resulting condition would be harder to check in practice.}, to get a more general class of $!$-able maps.

To understand maps in $\mathcal D_{\mathrm{mot}}(C)$, one can reduce to understanding maps from smooth projective $X$ towards Tate twists in $\mathcal D_{\mathrm{mot}}^{\mathrm{eff}}(C)$. These are all recorded in $\overline{K}(X)$. This is easier to compute when $X$ has good reduction, and fortunately the above result says that these are enough.

In the relative setting, we have the following result.

\begin{proposition}\label{prop:rigidity} Let $A$ be an analytic Banach ring of finite cohomological dimension. Then $\mathcal D_{\mathrm{mot}}(\mathcal M_{\mathrm{arc}}(A))$ is rigid.

More generally, assume that $X$ is a qcqs arc-sheaf of finite cohomological dimension such that for each $x\in |X|$ there is some non-discrete algebraically closed Banach field $C(x)$ and a quasi-pro-\'etale map $\mathcal M_{\mathrm{arc}}(C(x))\to X$ with image $x$. Then $\mathcal D_{\mathrm{mot}}(X)$ is rigid.
\end{proposition}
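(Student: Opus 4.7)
The plan is to reduce to the geometric-point case of the previous proposition by descending along a quasi-pro-\'etale cover, using the finite cohomological dimension hypothesis to control the descent. The first assertion follows from the second, since the construction of Lemma~\ref{lem:quasiproetalecover} provides the required quasi-pro-\'etale geometric points over any analytic Banach ring; so I focus on the general statement.

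First I would use the pointwise existence of quasi-pro-\'etale geometric points together with the qcqs assumption on $X$ to assemble, as in Lemma~\ref{lem:quasiproetalecover}, a single quasi-pro-\'etale surjection $\tilde X=\mathcal M_{\mathrm{arc}}(\tilde A)\to X$ with $\tilde A$ strictly totally disconnected. The \v{C}ech nerve $\tilde X^{\bullet+1/X}$ then consists of strictly totally disconnected affines whose cohomological dimensions are bounded in terms of $d(X)$: in each fibre of $X$, its terms are fibre products of strictly totally disconnected spaces whose residue fields are ultraproducts of algebraically closed Banach fields, hence algebraically closed. By arc-descent in the motivic formalism, $\mathcal D_{\mathrm{mot}}(X)$ is the totalisation $\lim_{[n]\in\Delta}\mathcal D_{\mathrm{mot}}(\tilde X^{n+1/X})$ in presentable symmetric monoidal $\infty$-categories.

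Next I would prove rigidity of $\mathcal D_{\mathrm{mot}}(\mathcal M_{\mathrm{arc}}(B))$ for each strictly totally disconnected $B$ of finite cohomological dimension appearing in this nerve. Writing $\mathcal M(B)=S=\lim_i S_i$ as a cofiltered limit of finite sets, Lemma~\ref{lem:quasiproetaleoverstd} identifies $\mathcal D_{\mathrm{mot}}(\mathcal M_{\mathrm{arc}}(B))$ with the filtered colimit $\mathrm{colim}_i \prod_{s\in S_i}\mathcal D_{\mathrm{mot}}(\mathcal M_{\mathrm{arc}}(B_s))$, with stalk $\mathcal D_{\mathrm{mot}}(C(x))$ at $x\in S$. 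Each stalk is rigid by the previous proposition; finite products of rigid presentable symmetric monoidal categories are rigid; and filtered colimits of rigid presentable symmetric monoidal categories along colimit-preserving symmetric monoidal transition functors are again rigid (the dualizable objects assemble in the colimit, and the unit stays compact). Hence each term of the \v{C}ech nerve is rigid.

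The main obstacle, and the most delicate step, is to descend rigidity from the \v{C}ech nerve to the totalisation $\mathcal D_{\mathrm{mot}}(X)$. The abstract statement one needs is that a cosimplicial diagram of rigid presentable symmetric monoidal $\infty$-categories, with colimit-preserving symmetric monoidal coface and codegeneracy maps, has rigid totalisation whenever the diagram comes from a geometric situation of finite cohomological dimension. Concretely, one needs to show that a dualizable object in the limit admits evaluation/coevaluation data that assemble coherently from the fibres; finite cohomological dimension of $X$ provides the Postnikov convergence that keeps the required coherence data finite, so that the assembly can be carried out. I expect the proof to proceed either by the Gaitsgory--Rozenblyum machinery directly, or using recent technology of dualizable presentable $\infty$-categories in the spirit of Efimov and Clausen--Scholze, applied to this descent setup.
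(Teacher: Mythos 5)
Your reduction strategy has a genuine gap at exactly the step you flag as delicate, and unfortunately it is not a gap that can be filled by abstract machinery: rigidity does \emph{not} descend along totalisations of \v{C}ech nerves, even in geometric situations of finite cohomological dimension. The paper itself points to the counterexample class: for \'etale sheaves on schemes or on rigid-analytic varieties of finite cohomological dimension, the category is likewise the limit of a cosimplicial diagram of rigid categories (sheaves on w-contractible/strictly totally disconnected terms of a pro-\'etale hypercover), yet the resulting category is compactly generated by constructible objects that are not dualizable, hence not rigid. So the hoped-for statement ``a cosimplicial diagram of rigid presentable symmetric monoidal categories coming from a finite-cohomological-dimension descent situation has rigid totalisation'' is false, and finite cohomological dimension only buys compactness of the unit, not the descent of duals. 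The actual proof cannot proceed by descending rigidity; it has to use the specific Berkovich geometry. The paper first proves dualizability (compact assembly) of $\mathcal D_{\mathrm{mot}}^{\mathrm{eff}}(X)$ directly, by observing that for strict inclusions $U\subset V\subset \mathbb A^n_X$ factoring through a compact subset the map $\mathbb Z_{\mathrm{mot}}[U]\to \mathbb Z_{\mathrm{mot}}[V]$ is a compact morphism (overconvergence), and that such objects generate (a generation statement which it does reduce to stalks, in the spirit of Proposition~\ref{prop:freemotivicsheavesgenerate}). Rigidity is then obtained not by descent but by exhibiting a generating family of objects that are sequential colimits along trace-class maps: compact dualizable generators of the fibre $\mathcal D_{\mathrm{mot}}(C(x))$ are spread out to closed neighborhoods $Z_i$ of $x$ using Lemma~\ref{lem:spreadingcompactobjects} (the fibre category being the filtered colimit of the categories over the $Z_i$), and then extended by zero from slightly smaller open neighborhoods; the strict containments are what produce the trace-class transition maps. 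This use of closed neighborhoods and strict inclusions is precisely the feature of the compact Hausdorff Berkovich setting that fails for schemes and adic spaces, and it is the ingredient your argument is missing.

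A secondary problem is your treatment of the strictly totally disconnected terms. For $\mathcal M(B)=S=\varprojlim_i S_i$, the finite-level categories are $\prod_{s\in S_i}\mathcal D_{\mathrm{mot}}(\mathcal M_{\mathrm{arc}}(B_s))$ where $B_s$ is the clopen factor of $B$ over $s\in S_i$; these are not the stalk categories $\mathcal D_{\mathrm{mot}}(C(x))$, which only appear as colimits over shrinking clopen neighborhoods of a point $x\in S$. So the claimed identification of $\mathcal D_{\mathrm{mot}}(\mathcal M_{\mathrm{arc}}(B))$ with a filtered colimit of products of point categories conflates two different (co)limits and does not reduce rigidity over $B$ to rigidity at points. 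This particular step can be repaired in the totally disconnected case by the same spreading-out-plus-clopen-extension argument as above (clopen extensions are direct summands, so one even gets generation by compact dualizables there), but once repaired you are using the paper's actual mechanism, and you would still be left with the unresolved, and in general false, descent step for the totalisation.
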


The underlying topological space $|X|$ is defined via left Kan extension from the affine case. If $X$ is qcqs, then choosing a surjection $\mathcal M_{\mathrm{arc}}(A)\to X$ and $\mathcal M_{\mathrm{arc}}(B)$ surjecting onto the equivalence relation, we can write $|X|$ as the quotient $\mathcal M(A)/\mathcal M(B)$. This is a quotient of a compact Hausdorff space by a closed equivalence relation and hence itself compact Hausdorff. Similarly, the underlying topological space of $\mathbb A^n_X$ is a locally compact Hausdorff space.

\begin{proof} The unit is compact by assumption on finite cohomological dimension. First we check dualizability; equivalently, being compactly assembled. For this, it suffices to exhibit sufficiently many compact maps. But whenever $U\subset V\subset \mathbb A^n_X$ is an inclusion of open subsets that factors over a quasicompact subset $K\subset \mathbb A^n_X$, then the map $\mathbb Z_{\mathrm{mot}}[U]\to \mathbb Z_{\mathrm{mot}}[V]$ is compact, as the induced map $\mathrm{Hom}(V,-)\to \mathrm{Hom}(U,-)$ factors over evaluation at $K$, and evaluation at $K$ commutes with filtered colimits. As any $V\subset \mathbb A^n_X$ can be written as a union of such $U$ strictly contained in $V$, we see that $\mathbb Z_{\mathrm{mot}}[V]$ can be written as a filtered colimit with compact transition maps. To deduce dualizability of $\mathcal D_{\mathrm{mot}}(X)$, it remains to see that those objects generate $\mathcal D_{\mathrm{mot}}^{\mathrm{eff}}(X)$ (then their negative Tate twists generate all of $\mathcal D_{\mathrm{mot}}(X)$).

Let $\mathcal C\subset \mathcal D_{\mathrm{mot}}^{\mathrm{eff}}(X)$ be the subcategory generated $\mathbb Z_{\mathrm{mot}}[U]$ where $U\subset \mathbb A^n_X$ is open. By the preceding paragraph, we see that $\mathcal C$ is compactly assembled and that the inclusion preserves compact morphisms, and hence its right adjoint preserves colimits. In particular, its formation commutes with passing to stalks of $|X|$, and so the claim $\mathcal C=\mathcal D_{\mathrm{mot}}^{\mathrm{eff}}(X)$ can be reduced to the case that $|X|$ is a point. Then $X=\mathcal M_{\mathrm{arc}}(C)/G$ for some profinite group $G$ acting continuously on an algebraically closed non-discrete Banach field $C$ (see \cite[Proposition 21.9]{ECoD} for the argument). Now the result can be proved in the same way as Proposition~\ref{prop:freemotivicsheavesgenerate}, using $G$-equivariant open subsets of $\mathbb A^n_C$. The points of $\mathbb A^n_C/G$ are then of the form $\mathcal M_{\mathrm{arc}}(C')/H$ for a profinite group $H$ acting faithfully on complete algebraically closed nonarchimedean fields $C'$, and the arguments still apply in this case, replacing finite extension fields of Banach fields by open subgroups of $H$.

We note that our proof of dualizability shows at the same time that the tensor product functor has the property that the tensor product of two compact morphisms is compact (as this is evident for the generating class of compact morphisms). In particular, a tensor product of two compact objects must again be compact.

To prove rigidity, let us first stay in the case that $X=\mathcal M_{\mathrm{arc}}(C)/G$ is one point. Then we claim that $\mathcal D_{\mathrm{mot}}(X)$ is generated by dualizable objects. Picking a cofinal system of open subgroups $G_i\subset G$, set $X_i=\mathcal M_{\mathrm{arc}}(C)/G_i$. Then the cofiltered limit of the $X_i$ is $\mathcal M_{\mathrm{arc}}(C)$. By Lemma~\ref{lem:spreadingcompactobjects} below, any compact object of $\mathcal M_{\mathrm{arc}}(C)$ descends to some $X_i$, as does its dual, as well as the duality maps; thus, it descends to a compact dualizable object of some $X_i$. Then also the pushforward along the finite \'etale map $X_i\to X$ is compact dualizable, and these objects generate.

In general, it suffices to see that there is a generating family of objects that are sequential colimits along trace-class maps, cf.~\cite[Corollary 4.57]{RamziRigid}. Pick any $x\in X$ and write it as a cofiltered limit of closed neighborhoods $Z_i\subset X$. Using Lemma~\ref{lem:spreadingcompactobjects} again, the compact dualizable objects in the fibre at $x$ spread to some $Z_i$. If $M\in \mathcal D_{\mathrm{mot}}(Z_i)$ is compact dualizable and $U_i\subset Z_i$ is an open neighborhood of $x$ in $X$, without loss of generality a countable union of closed subsets, then the extension by $0$ from $U_i$ to $X$ of $M|_{U_i}$ is such a sequential colimit along trace-class maps, and these objects generate.
\end{proof}

\begin{lemma}\label{lem:quasiproetalebarrbeck} Let $f: Y\to X$ be a quasi-pro-\'etale proper map. Then the natural functor
\[
\mathcal D_{\mathrm{mot}}(Y)\to \mathrm{Mod}_{f_\ast \mathbb Z}(\mathcal D_{\mathrm{mot}}(X))
\]
induced by $f_\ast$ is an equivalence.

In particular, if $Y=\varprojlim_i Y_i$ is a cofiltered limit of quasi-pro-\'etale proper $f_i: Y_i\to X$, then as presentable stable $\infty$-categories,
\[
\mathcal D_{\mathrm{mot}}(Y) = \mathrm{colim}_i \mathcal D_{\mathrm{mot}}(Y_i).
\]
\end{lemma}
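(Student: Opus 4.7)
The strategy is to apply the Barr--Beck--Lurie monadicity theorem to the adjunction
\[
f^\ast : \mathcal D_{\mathrm{mot}}(X) \rightleftarrows \mathcal D_{\mathrm{mot}}(Y) : f_\ast
\]
and then use the projection formula to rewrite the resulting monad as $(f_\ast \mathbb Z)\otimes(-)$. Since geometric fibres of $f$ are profinite sets, $f$ is proper of locally finite cohomological dimension, so Corollary~\ref{cor:generalprojectionformula} supplies both the projection formula and the fact that $f_\ast$ preserves all colimits. Working componentwise via the presentation $\mathcal D_{\mathrm{mot}}(Y) = \varprojlim_n \mathcal D_{\mathrm{mot}}^{\mathrm{eff}}(Y)$, conservativity of $f_\ast$ can be checked on each effective component.

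For conservativity, by base change I reduce to the case $X = \mathcal M_{\mathrm{arc}}(C)$ is a geometric point, in which case $Y$ is a profinite set $S$ over $X$. There $f_\ast M = \Gamma(S,M)$, and for any clopen decomposition $S = U\sqcup U^c$ the induced splitting $\Gamma(S,M) = \Gamma(U,M)\oplus\Gamma(U^c,M)$ shows that $f_\ast M = 0$ forces $\Gamma(U,M) = 0$ for every clopen $U\subset S$. Taking the filtered colimit over $U\ni s$ kills each stalk $M_s$, and since $M$ is a finitary arc-hypersheaf (Theorem~\ref{theorem:finitaryarc}) this forces $M = 0$. Barr--Beck--Lurie together with the projection formula applied to $\mathbb Z\in\mathcal D_{\mathrm{mot}}(X)$ now yields the equivalence
\[
\mathcal D_{\mathrm{mot}}(Y)\simeq \mathrm{Mod}_{f_\ast\mathbb Z}(\mathcal D_{\mathrm{mot}}(X)).
\]

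For the cofiltered-limit assertion, I would first verify that $f_\ast\mathbb Z = \mathrm{colim}_i f_{i\ast}\mathbb Z$ as $E_\infty$-algebras in $\mathcal D_{\mathrm{mot}}(X)$: at a geometric point $\overline x$ whose fibres are $S = \varprojlim_i S_i$, finitaryness and the commutation of $f_\ast$ with base change give
\[
(f_\ast\mathbb Z)_{\overline x} = \mathrm{Cont}(S,\mathbb Z) = \mathrm{colim}_i\mathrm{Cont}(S_i,\mathbb Z) = \mathrm{colim}_i (f_{i\ast}\mathbb Z)_{\overline x}.
\]
Combined with the first part, this reduces the colimit statement to the general principle that the module-category functor $A\mapsto \mathrm{Mod}_A$ preserves colimits of $E_\infty$-algebras in a presentable symmetric monoidal stable $\infty$-category, so that $\mathrm{Mod}_{f_\ast\mathbb Z} = \mathrm{colim}_i\mathrm{Mod}_{f_{i\ast}\mathbb Z} = \mathrm{colim}_i\mathcal D_{\mathrm{mot}}(Y_i)$.

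The main obstacle is the conservativity step, which hinges on the fact that stalks detect vanishing for finitary arc-hypersheaves on profinite bases; once this is in hand, the rest of the argument is a formal combination of monadicity and the standard colimit behaviour of module-category functors.
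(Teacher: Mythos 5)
Your proposal is correct and follows essentially the same route as the paper: Barr--Beck--Lurie monadicity with the projection formula (available since $f$ is proper of cohomological dimension $0$) identifying the monad with $f_\ast\mathbb Z\otimes(-)$, conservativity of $f_\ast$ checked on geometric fibres where $Y$ becomes a profinite set and global sections are conservative, and the cofiltered-limit statement deduced from $f_\ast\mathbb Z=\mathrm{colim}_i f_{i\ast}\mathbb Z$ together with the behaviour of module categories under colimits of algebras. Your extra details (clopen splittings plus finitaryness to see stalks detect vanishing, and the componentwise reduction to $\mathcal D_{\mathrm{mot}}^{\mathrm{eff}}$) simply spell out steps the paper leaves implicit.
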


\begin{proof} The map $f$ has cohomological dimension $0$, so $f_!=f_\ast$ commutes with colimits and satisfies the projection formula. Moreover, $f_\ast$ is conservative: This can be checked after passing to geometric fibres, but when $X$ is a geometric point, then $Y$ is a profinite set times a geometric point, where global sections are conservative. Now the first result follows from Barr--Beck--Lurie \cite[Theorem 4.7.3.5]{LurieHA}. The colimit statement is a consequence of $f_\ast \mathbb Z= \mathrm{colim}_i f_{i\ast} \mathbb Z$ and the behaviour of module categories under colimits.
\end{proof}

\begin{lemma}\label{lem:spreadingcompactobjects} Let $(\mathcal C_i)_i$ be a filtered diagram of dualizable stable $\infty$-categories, with colimit-preserving functors whose right adjoints also commute with colimits. Let $\mathcal C=\mathrm{colim}_i \mathcal C_i$, itself dualizable. Passing to compact objects, the natural functor
\[
\mathrm{colim}_i \mathcal C_i^\omega\to \mathcal C^\omega
\]
of small stable $\infty$-categories is an equivalence.
\end{lemma}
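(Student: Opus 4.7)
The plan is to construct the comparison functor $\Phi \colon \mathrm{colim}_i \mathcal C_i^\omega \to \mathcal C^\omega$, then verify fully faithfulness and essential surjectivity in turn. The hypothesis that each right adjoint $G_{ij} \colon \mathcal C_j \to \mathcal C_i$ commutes with colimits already does most of the setup: by adjunction, the left adjoint $F_{ij}$ preserves compact objects, since $\mathrm{Map}_{\mathcal C_j}(F_{ij} X, -) = \mathrm{Map}_{\mathcal C_i}(X, G_{ij}(-))$ commutes with filtered colimits when $X$ is compact. The same reasoning applies to each colimit structure map $F_i \colon \mathcal C_i \to \mathcal C$, whose right adjoint $G_i$ is assembled from the $G_{ij}$ and inherits colimit-preservation. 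Thus $(\mathcal C_i^\omega)_i$ is a filtered diagram in small stable $\infty$-categories and $\Phi$ is well-defined.

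Fully faithfulness of $\Phi$ is the standard mapping-space formula for a filtered colimit in $\mathrm{Pr}^L$ under our hypothesis: the colimit agrees with the filtered limit in $\mathrm{Pr}^R$ along the right adjoints, from which one computes that for $X \in \mathcal C_i$, $Y \in \mathcal C_j$,
\[
\mathrm{Map}_{\mathcal C}(F_i X, F_j Y) \cong \mathrm{colim}_{k} \mathrm{Map}_{\mathcal C_k}(F_{ik} X, F_{jk} Y),
\]
where $k$ runs over indices receiving maps from both $i$ and $j$. Applied to compact $X, Y$, this matches the mapping space in the colimit of the $\mathcal C_i^\omega$ in small stable $\infty$-categories.

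The main obstacle is essential surjectivity: given $X \in \mathcal C^\omega$, produce a compact preimage in some $\mathcal C_i^\omega$. The plan is first to exhibit $X$ as a retract of $F_i Y$ for some compact $Y \in \mathcal C_i^\omega$, and then to descend the retract data to a single stage. For the retract step I use that both $\mathcal C$ and each $\mathcal C_i$ are dualizable, hence compactly assembled, so every object is a filtered colimit along compact (trace-class) morphisms; combining this with the colimit presentation $\mathcal C = \mathrm{colim}_i \mathcal C_i$ and the fact that $F_i$ preserves both colimits and compact morphisms expresses $X$ as a filtered colimit along compact morphisms in $\mathcal C$ of objects of the form $F_i Z$ with $Z$ compact in $\mathcal C_i$. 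Compactness of $X$ forces $\mathrm{id}_X$ to factor through one term, exhibiting $X$ as a retract of some $F_i Y$ with $Y \in \mathcal C_i^\omega$. For the descent step, the idempotent in $\mathrm{End}_{\mathcal C}(F_i Y)$ cutting out $X$ descends, by the mapping-space formula of the previous paragraph, to some $\mathrm{End}_{\mathcal C_k}(F_{ik} Y)$ for $k \geq i$; splitting this idempotent in $\mathcal C_k^\omega$ (a small idempotent-complete stable $\infty$-category) produces the required preimage. The hard part will be verifying that the compact-assembly presentations can be threaded through the $F_i$ to give the needed reduction to compact sources in some $\mathcal C_i$ — this is where the hypothesis that the transition functors preserve compact objects, rather than merely dualizability of each $\mathcal C_i$ separately, is essential.
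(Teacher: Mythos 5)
Your setup and the fully faithful half are fine: strong continuity of the transition functors and of the structure functors $F_i\colon \mathcal C_i\to\mathcal C$ (the latter because the colimit is the limit along the colimit-preserving right adjoints, so the projections $G_i$ preserve colimits), hence preservation of compact objects, plus the formula $G_iF_i\simeq \mathrm{colim}_{k\geq i}G_{ik}F_{ik}$ applied to compact sources, is the standard argument. (Note the displayed mapping-space formula is only valid for $X$ compact, but you only use it there.) The paper itself offers no argument, citing Efimov, so a self-contained proof would be welcome --- but your essential surjectivity step has a genuine gap at exactly the point you flag.

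The problematic claim is that $X$ can be written as a filtered colimit \emph{along compact morphisms of objects of the form $F_iZ$ with $Z\in\mathcal C_i^\omega$}. Compact assembly of $\mathcal C_i$ (or of $\mathcal C$) gives presentations of each object as a filtered colimit along compact/trace-class morphisms whose \emph{terms are arbitrary objects}, not compact ones; a dualizable category may have no nonzero compact objects at all (e.g.\ sheaves of spectra on $[0,1]$, or the categories $\mathcal D_{\mathrm{mot}}(X)$ of this paper), so no amount of threading such presentations through the $F_i$ produces terms with compact sources. Indeed, already for the constant diagram $\mathcal C_i=\mathcal C$ your intermediate claim would say that every object of a dualizable category is a filtered colimit of compact objects, i.e.\ that dualizable implies compactly generated, which is false. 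What the formal manipulations do give is only that $X$ is a retract of $F_iG_iX$ (or of $F_iZ_\alpha$ for a term of a compact-assembly presentation of $G_iX$), with a non-compact source, and from there the idempotent-descent step has nothing to bite on. The missing ingredient is precisely the nontrivial content of the cited result: one needs that compactness is detected at finite stages, e.g.\ via the characterization that $X$ is compact iff $\mathrm{id}_X$ is trace-class, together with the statement that trace-class morphisms between objects induced from $\mathcal C_i$ are, after increasing $i$ to some $j$, induced from trace-class morphisms in $\mathcal C_j$ (this uses dualizability and strong continuity through the dual categories, not merely preservation of compact objects/morphisms by $F_i$). Granting that, one descends the trace-class idempotent $s\circ p\in\mathrm{End}(F_iG_iX)$ to a stage, splits it there, and observes that the splitting has trace-class identity, hence is compact; this is the argument your sketch is missing.
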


\begin{proof} See \cite[Proposition 1.72]{EfimovDualizable}.
\end{proof}

As a consequence, we get a weak form of the categorical K\"unneth formula.

\begin{corollary}\label{cor:weakkunneth} Let $X\xrightarrow{f} S\xleftarrow{g} Y$ be a diagram of qcqs arc-stacks as in Proposition~\ref{prop:rigidity}. Then the exterior tensor product defines a fully faithful functor
\[
\mathcal D_{\mathrm{mot}}(X)\otimes_{\mathcal D_{\mathrm{mot}}(S)} \mathcal D_{\mathrm{mot}}(Y)\to \mathcal D_{\mathrm{mot}}(X\times_S Y).
\]
\end{corollary}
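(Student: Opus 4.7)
The exterior tensor product $F$ is the colimit-preserving, symmetric monoidal, $\mathcal D_{\mathrm{mot}}(S)$-linear functor sending $M\otimes N$ to $p_X^\ast M\otimes p_Y^\ast N$, where $p_X, p_Y$ denote the projections out of $X\times_S Y$. By Proposition~\ref{prop:rigidity} each of the four categories involved is rigid, and the relative tensor product of two rigid presentable $\mathcal D_{\mathrm{mot}}(S)$-linear categories over the rigid base $\mathcal D_{\mathrm{mot}}(S)$ is again rigid, with compact dualizable objects generated under colimits by pure tensors $M\otimes N$ of compact dualizables from the two factors. Since $F$ is colimit-preserving and both its source and target are rigid, full faithfulness of $F$ reduces to verifying it on mapping spaces between such pure-tensor generators, and using dualizability it further reduces to showing that for compact dualizable $A\in\mathcal D_{\mathrm{mot}}(X)$ and $B\in\mathcal D_{\mathrm{mot}}(Y)$ the natural map
\[
\mathrm{Map}_{\mathcal D_{\mathrm{mot}}(X)\otimes_{\mathcal D_{\mathrm{mot}}(S)}\mathcal D_{\mathrm{mot}}(Y)}(\mathbb Z,\,A\otimes B)\longrightarrow \mathrm{Map}_{X\times_S Y}(\mathbb Z,\,p_X^\ast A\otimes p_Y^\ast B)
\]
is an equivalence.

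Write $f\colon X\to S$, $g\colon Y\to S$, and $q\colon X\times_S Y\to S$ for the structure maps. In the rigid $\mathcal D_{\mathrm{mot}}(S)$-linear setting, the $\mathcal D_{\mathrm{mot}}(S)$-enriched Hom object from the unit of $\mathcal D_{\mathrm{mot}}(X)$ into the compact dualizable $A$ is canonically identified with the $\ast$-pushforward $f_\ast A$, and similarly for $B$. The source of the displayed map thus equals
\[
\mathrm{Map}_S(\mathbb Z_S,\, f_\ast A\otimes_{\mathcal D_{\mathrm{mot}}(S)} g_\ast B),
\]
while the target equals $\mathrm{Map}_S(\mathbb Z_S,\, q_\ast(p_X^\ast A\otimes p_Y^\ast B))$ by the $(q^\ast,q_\ast)$-adjunction. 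The corollary is therefore equivalent to the K\"unneth isomorphism
\[
q_\ast(p_X^\ast A\otimes p_Y^\ast B)\;\cong\; f_\ast A\otimes_{\mathcal D_{\mathrm{mot}}(S)} g_\ast B
\]
in $\mathcal D_{\mathrm{mot}}(S)$, for every pair of compact dualizable $A$ and $B$.

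This K\"unneth isomorphism is the heart of the proof. Factoring $q_\ast = g_\ast\circ p_{Y,\ast}$, the projection formula for $p_Y$ identifies $p_{Y,\ast}(p_X^\ast A\otimes p_Y^\ast B)$ with $p_{Y,\ast}(p_X^\ast A)\otimes B$; a base change isomorphism $p_{Y,\ast} p_X^\ast\cong g^\ast f_\ast$ for the Cartesian square rewrites this as $g^\ast f_\ast A\otimes B$; and the projection formula for $g$ gives $g_\ast(g^\ast f_\ast A\otimes B)\cong f_\ast A\otimes g_\ast B$. The main obstacle is justifying the $\ast$-base change $p_{Y,\ast} p_X^\ast\cong g^\ast f_\ast$, which does not hold in general 6-functor formalisms. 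It holds here because on compact dualizables, rigidity of $\mathcal D_{\mathrm{mot}}(X)$ over $\mathcal D_{\mathrm{mot}}(S)$ exhibits $f_\ast$ as the $\mathcal D_{\mathrm{mot}}(S)$-linear dual of the symmetric monoidal left adjoint $f^\ast$ acting on duals; in particular, the restriction of $f_\ast$ to compact dualizables is a colimit-preserving $\mathcal D_{\mathrm{mot}}(S)$-linear functor and so tautologically commutes with the $\mathcal D_{\mathrm{mot}}(S)$-linear base change $g^\ast$. The same rigidity argument validates the two projection formulas used above, completing the reduction and the proof.
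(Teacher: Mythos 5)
The central step of your argument has no support in this setting: you reduce full faithfulness to mapping spaces between pure tensors of \emph{compact dualizable} objects, asserting that such objects generate the two factors and hence the relative tensor product. But the rigidity of Proposition~\ref{prop:rigidity} is rigidity in the sense of Gaitsgory--Rozenblyum for categories that are \emph{not} compactly generated: for a general qcqs $X$ as in that proposition (say $X=\mathcal M_{\mathrm{arc}}(A)$ with $\mathcal M(A)$ connected), $\mathcal D_{\mathrm{mot}}(X)$ is only compactly assembled, and -- just as for sheaves on a compact Hausdorff space such as $[0,1]$ -- it may have essentially no nonzero compact objects at all; the proof of Proposition~\ref{prop:rigidity} produces generators that are sequential colimits along compact (trace-class) \emph{morphisms}, not compact objects. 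Compact generation by dualizables is only proved when the base is a geometric point. So your reduction collapses for general $X,Y,S$. The paper's proof avoids generators entirely: the relative tensor product of rigid categories is rigid, and a symmetric monoidal functor between rigid categories automatically has a colimit-preserving, base-linear right adjoint $F^R$; the projection formula for the adjunction then shows that full faithfulness is equivalent to the single assertion $F^R(\mathbf 1)\simeq \mathbf 1$, which is exactly the K\"unneth formula and follows from the projection formula and base change. (One also needs $\mathcal D_{\mathrm{mot}}(X\times_S Y)$ itself to be rigid, i.e.\ that the fibre product again satisfies the hypotheses of Proposition~\ref{prop:rigidity}; this is a minor point but should be said.)

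A second, smaller problem is your justification of the base change $p_{Y\ast}p_X^\ast\simeq g^\ast f_\ast$: the fact that $f_\ast$ (or its restriction to dualizables) is colimit-preserving and $\mathcal D_{\mathrm{mot}}(S)$-linear does \emph{not} ``tautologically'' give base change against the geometric category $\mathcal D_{\mathrm{mot}}(X\times_S Y)$ -- that would only be automatic after identifying the latter with the relative tensor product, which is essentially what the corollary is asserting, so the argument is circular as written. In this paper, base change for $\ast$-pushforward is a geometric input, valid because in the arc-topos every object is a slice; it is established in Theorem~\ref{thm:properpushforwardfinitary}, the theorem on $f_\ast$ for maps of finite cohomological dimension in the section defining $\mathcal D_{\mathrm{mot}}$, and Corollary~\ref{cor:generalprojectionformula}, and those results are what you should cite. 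With that correction your derivation of the K\"unneth isomorphism from the two projection formulas is fine, but the reduction of full faithfulness to it must be redone along the lines above rather than via compact generators.
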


\begin{proof} Relative tensor products of rigid categories remain rigid, and for any symmetric monoidal functor of rigid categories, the right adjoint functor is colimit-preserving and linear over the base category. It follows that it suffices to check that the unit of the adjunction becomes an equivalence after evaluation at the unit, or the slightly more refined statement that it holds after pushforward to $\mathcal D_{\mathrm{mot}}(S)$. This means that we have to check that for all $M\in \mathcal D_{\mathrm{mot}}(X)$ and $N\in \mathcal D_{\mathrm{mot}}(Y)$ with associated $M\boxtimes N\in \mathcal D_{\mathrm{mot}}(X\times_S Y)$, the natural map
\[
f_\ast M\otimes g_\ast N\to (f\times_S g)_\ast(M\boxtimes N)
\]
is an isomorphism. But this is the usual K\"unneth formula which holds in any six-functor formalism (as a consequence of the projection formula).
\end{proof}

\subsection{Mixed Tate motives} For any small arc-stack $X$, we define a subcategory of mixed Tate motives $\mathcal D_{MT}(X)\subset \mathcal D_{\mathrm{mot}}(X)$, as follows. (This might more properly be called mixed Artin--Tate, as we check the condition only at geometric points.)

\begin{definition} An object $M\in \mathcal D_{\mathrm{mot}}(X)$ lies in $\mathcal D_{MT}(X)$ if its fibre at all geometric points $\mathcal M_{\mathrm{arc}}(C)\to X$, $C$ a nondiscrete algebraically closed Banach field, lies in the subcategory generated by all Tate twists $\mathbb Z(n)$, $n\in \mathbb Z$.
\end{definition}

Then $\mathcal D_{MT}(X)\subset \mathcal D_{\mathrm{mot}}(X)$ is stable under tensor products and pullbacks, but not in general under $f_!$.

\begin{proposition} The functor $X\mapsto \mathcal D_{MT}(X)$ defines an arc-sheaf of (symmetric monoidal presentable stable) $\infty$-categories.
\end{proposition}

\begin{proof} It suffices to see that if an object $M\in \mathcal D_{\mathrm{mot}}(X)$ lies in $\mathcal D_{MT}(X)$ arc-locally, then it lies in $\mathcal D_{MT}(X)$. From the definition, it suffices to check this when $X=\mathcal M_{\mathrm{arc}}(C)$ and for an arc-cover by $X'=\mathcal M_{\mathrm{arc}}(C')$. This is nontrivial only when $C$ is nonarchimedean. Take any $M\in \mathcal D_{\mathrm{mot}}(X)$. As $\mathcal D_{\mathrm{mot}}(X)$ is compactly generated, we can write $M=\mathrm{colim}_i M_i$ as a filtered colimit of compact $M_i$. Assume that $M|_{C'}$ is mixed-Tate. Then there is another presentation $M|_{C'} = \mathrm{colim}_j M'_j$ where all $M'_j$ are compact and mixed-Tate. The two presentations $\mathrm{colim}_i M_i|_{C'} = M|_{C'} = \mathrm{colim}_j M'_j$ must be Ind-isomorphic. It follows that for any $i$, there is some $i'\geq i$ so that the map $M_i|_{C'}\to M_{i'}|_{C'}$ factors over a mixed-Tate object. It suffices to see that in this case also $M_i\to M_{i'}$ factors over a mixed-Tate object. To see this, write $C'$ as a completed filtered colimit of algebraically closed Banach fields $C_k$ that have finite topological transcendence degree over $C$. Both the mixed-Tate object and the factorization of $M_i|_{C'}\to M_{i'}|_{C'}$ over it are then already defined over some $C_k$; so we may assume that $C'$ has finite topological transcendence degree. We can then write $C'$ as a completed filtered colimit of smooth $C$-algebras topologically of finite type $B_k$, of uniformly bounded dimension. Then again, the mixed-Tate object and the factorization $M_i|_{C'}\to M_{i'}|_{C'}$ over it are already defined over some $B_k$. But $B_k$ admits a section $B_k\to C$, thus yielding the factorization already over $C$.
\end{proof}

For strictly totally disconnected $X$, the point-wise condition in the definition becomes global.

\begin{proposition} For strictly totally disconnected $X$, the subcategory
\[
\mathcal D_{MT}(X)\subset \mathcal D_{\mathrm{mot}}(X)
\]
is the full subcategory generated by all Tate twists $\mathbb Z(n)$, $n\in \mathbb Z$.
\end{proposition}

\begin{proof} Let $\mathcal D_{MT}'(X)\subset \mathcal D_{\mathrm{mot}}(X)$ be the subcategory generated by all Tate twists. The inclusion $\mathcal D_{\mathrm{MT}}'(X)\subset \mathcal D_{\mathrm{mot}}(X)$ has a right adjoint $R$ commuting with all colimits (as the inclusion preserves compact objects). Assume that $M\in \mathcal D_{MT}(X)$. The map $R(M)\to M$ in $\mathcal D_{\mathrm{mot}}(X)$ becomes an isomorphism at all stalks by the assumption and the commutation of $R$ with colimits. Thus, it is an isomorphism. Therefore $\mathcal D_{MT}(X)\subset \mathcal D_{MT}'(X)$, and the other inclusion is clear.
\end{proof}

\begin{proposition} If $X$ is a qcqs arc-sheaf as in Proposition~\ref{prop:rigidity}, then $\mathcal D_{MT}(X)$ is rigid.
\end{proposition}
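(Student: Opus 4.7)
The plan is to reduce rigidity of $\mathcal D_{MT}(X)$ to the geometric-point case via essentially the same spreading argument that proved Proposition~\ref{prop:rigidity}, the simplification being that in the mixed Tate setting we have explicit dualizable compact generators built into the definition, namely the Tate twists.

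First I would establish rigidity at each geometric point. By definition, $\mathcal D_{MT}(\mathcal M_{\mathrm{arc}}(C))$ is compactly generated by the Tate twists $\mathbb Z(n)$, $n \in \mathbb Z$. These are $\otimes$-invertible in $\mathcal D_{\mathrm{mot}}(C)$ by construction (using Theorem~\ref{thm:cancellation}), hence dualizable with $\mathbb Z(n)^\vee = \mathbb Z(-n) \in \mathcal D_{MT}(C)$, and the unit is compact by Proposition~\ref{prop:tatetwistcompact}. Every compact object is a retract of a finite colimit of Tate twists, hence dualizable, so $\mathcal D_{MT}(C)$ is rigid in the sense of Gaitsgory--Rozenblyum.

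For general $X$ satisfying the hypotheses of Proposition~\ref{prop:rigidity}, I would rerun the two-step argument of that proof within $\mathcal D_{MT}$. For dualizability of $\mathcal D_{MT}(X)$ in $\mathrm{Pr}^L$: tensoring the compact morphisms $\mathbb Z_{\mathrm{mot}}[U] \to \mathbb Z_{\mathrm{mot}}[V]$ (for $U \subset V \subset \mathbb A^n_X$ with $U$ relatively compact in $V$) with Tate twists $\mathbb Z(m)$ yields a cofinal system of compact morphisms between objects of $\mathcal D_{\mathrm{mot}}(X)$; using the fiberwise characterization of $\mathcal D_{MT}$ from the preceding proposition, one checks that their sequential colimits generate $\mathcal D_{MT}(X)$. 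For full rigidity: at each point $x \in |X|$, spread a compact dualizable object of $\mathcal D_{MT}(C(x))$ to a quasi-pro-\'etale neighborhood via Lemmas~\ref{lem:quasiproetalebarrbeck} and~\ref{lem:spreadingcompactobjects}, then extend by zero along a quasicompact open to obtain a sequential colimit along trace-class maps in $\mathcal D_{MT}(X)$. Such objects form a generating set.

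The main obstacle will be justifying that the spread compact dualizable objects actually remain in $\mathcal D_{MT}(X)$ rather than merely in the ambient rigid category $\mathcal D_{\mathrm{mot}}(X)$. Here the preceding proposition (which reduces membership in $\mathcal D_{MT}$ to a fiberwise condition) is essential: a compact dualizable object of $\mathcal D_{\mathrm{mot}}(X_i)$ produced by Lemma~\ref{lem:spreadingcompactobjects} has mixed Tate fibers on some cofinal subsystem of neighborhoods (by compactness, both the object and the data witnessing that its generic fiber is mixed Tate spread together), so after shrinking the neighborhood it does define an object of $\mathcal D_{MT}(X_i)$; and extension by zero along a quasicompact open inclusion preserves this fiberwise condition since the closed complement contributes zero fibers.
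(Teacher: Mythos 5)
Your overall strategy is the paper's own: the paper disposes of this proposition with ``the same proof as before works,'' i.e.\ by rerunning the proof of Proposition~\ref{prop:rigidity} inside $\mathcal D_{MT}$, and your point case, your use of Lemmas~\ref{lem:quasiproetalebarrbeck} and~\ref{lem:spreadingcompactobjects}, and in particular your treatment of the one genuinely new issue --- that the spread compact dualizable objects stay mixed Tate, checked by spreading the retraction data witnessing membership and then invoking the fiberwise criterion, and that extension by zero preserves the fiberwise condition --- is exactly the right way to flesh that out.

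However, the dualizability paragraph as written does not work. The sequential colimits of the morphisms $\mathbb Z_{\mathrm{mot}}[U](m)\to \mathbb Z_{\mathrm{mot}}[V](m)$ are the objects $\mathbb Z_{\mathrm{mot}}[V](m)$ themselves, and these are in general \emph{not} mixed Tate: their stalks at geometric points are motives of open subsets of affine space, which need not lie in the thick subcategory generated by Tate twists (already an open subset of $\mathbb A^2$ whose boundary sees a curve of positive genus gives a counterexample). So these objects are not even in $\mathcal D_{MT}(X)$ and cannot serve as generators of it; what you have shown is (again) that $\mathcal D_{\mathrm{mot}}(X)$ is compactly assembled, not that $\mathcal D_{MT}(X)$ is. To repair this, either exhibit compact morphisms between objects that \emph{are} mixed Tate --- e.g.\ $j_{U!}\mathbb Z(n)\to j_{V!}\mathbb Z(n)$ for opens $U\Subset V$ over $X$, whose fibres are $\mathbb Z(n)$ or $0$, so the fiberwise criterion applies, and note that a morphism between objects of $\mathcal D_{MT}(X)$ which is compact in $\mathcal D_{\mathrm{mot}}(X)$ is compact in $\mathcal D_{MT}(X)$ since the subcategory is closed under colimits --- or simply drop the separate dualizability step: once the unit is compact (finite cohomological dimension), the generating family of sequential colimits along trace-class maps that you construct in your last paragraph already forces $\mathcal D_{MT}(X)$ to be compactly assembled, since trace-class maps are compact when the unit is; alternatively, the colimit-preserving right adjoint to $\mathcal D_{MT}(X)\subset \mathcal D_{\mathrm{mot}}(X)$ exhibits $\mathcal D_{MT}(X)$ as a retract in $\mathrm{Pr}^L$ of a dualizable category. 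With any of these substitutions your argument matches the intended proof.
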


\begin{proof} The same proof as before works.
\end{proof}

\section{$\mathcal D_{\mathrm{mot}}(k)$}

Using the results of the previous section, we can now also understand the structure of $\mathcal D_{\mathrm{mot}}(k)$, and in fact compare it to Voevodsky's theory, for any (discrete) field $k$, considered as a Banach ring with the trivial norm. We consider the case that $k$ is algebraically closed; one can reduce the general case to this case by Galois descent (but may have to be careful with descent versus hyperdescent questions, depending on the precise definitions used).

\begin{theorem} For any algebraically closed discrete field $k$, the category $\mathcal D_{\mathrm{mot}}(k)$ is generated by compact dualizable objects $\mathbb Z_{\mathrm{mot}}[X](-j)$ where $X$ is a smooth projective variety over $k$, and $j\in \mathbb Z$. Moreover,
\[
\mathrm{Hom}_{\mathcal D_{\mathrm{mot}}(k)}(\mathbb Z_{\mathrm{mot}}[X](-j),\mathbb Z)
\]
is, after rationalization, given by the weight $j$ Adams piece of $K(X)$; while after profinite completion (prime to the characteristic $p$ of $k$) it is given by pro\'etale cohomology of $X$ with coefficients in $\hat{\mathbb Z}^p(j)$.
\end{theorem}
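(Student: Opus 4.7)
The plan splits into three essentially independent pieces: (a) compactness, dualizability and generation by the $\mathbb Z_{\mathrm{mot}}[X](-j)$; (b) the rational Hom via the $\overline{K}$-decomposition; and (c) the torsion Hom via the étale comparison.

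For (a): I would first verify that each $\mathbb Z_{\mathrm{mot}}[X](-j)$ is dualizable, since $X$ smooth projective over $k$ makes $f: X \to \mathrm{Spec}\,k$ proper and (after pulling back to the arc-cover by $C$ below, where Corollary~\ref{cor:smoothcohomsmooth} applies) cohomologically smooth, so $f_\sharp \mathbb Z$ is dualizable and the twist $(-j)$ is invertible by Theorem~\ref{thm:cancellation}. Compactness then follows from the unit being compact (as $\mathcal M(k)$ has cohomological dimension $0$). For generation, I would pick the arc-cover $k \to C$ with $C$ the completed algebraic closure of $k((T))_{1/2}$: an algebraically closed non-discrete nonarchimedean Banach field in equal characteristic with residue field $k$. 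The preceding proposition gives that $\mathcal D_{\mathrm{mot}}(C)$ is generated by the $\mathbb Z_{\mathrm{mot}}[X_C](-j) = p^*\mathbb Z_{\mathrm{mot}}[X](-j)$; combining the arc-descent presentation $\mathcal D_{\mathrm{mot}}(k) = \lim_\Delta \mathcal D_{\mathrm{mot}}(C^{\otimes_k \bullet})$ with Lemma~\ref{lem:spreadingcompactobjects}, and using that each such generator already carries canonical descent data coming from the $k$-scheme $X$, transfers the generation downstairs.

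For (b): evaluating the $\overline{K}$-decomposition $\overline{K} \otimes \mathbb Q \cong \bigoplus_{n \geq 0} \mathbb Q(n)[2n]$ at $X$ will give
\[
\overline{K}(X) \otimes \mathbb Q \;\cong\; \bigoplus_{n \geq 0} \mathrm{Hom}_{\mathcal D_{\mathrm{mot}}(k)}\bigl(\mathbb Z_{\mathrm{mot}}[X], \mathbb Q(n)\bigr)[2n],
\]
split by the Adams operations, so the remaining task is to identify $\overline{K}(X) \otimes \mathbb Q$ with Quillen's $K(X) \otimes \mathbb Q$. I would cover $X$ by affine smooth $k$-algebras $A_i$; Nisnevich covers are arc-covers, and $K$-theory satisfies Nisnevich descent on regular schemes (Thomason--Trobaugh), so this reduces to showing $\overline{K}(A) \otimes \mathbb Q = K(A) \otimes \mathbb Q$ for smooth affine $A/k$. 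Since $A\langle T\rangle_1 = A[T]$ for discrete $A$, ball-invariance combined with Quillen $\mathbb A^1$-invariance gives $L_{\mathbb B}^{\mathrm{pre}}(K_{\mathrm{cn}})(A) = K_{\mathrm{cn}}(A)$, and the residual arc-sheafification is harmless on smooth discrete $A$ rationally by the same bootstrap as in Theorem~\ref{thm:explicitKbar}. Negative twists $j<0$ will be reduced to positive weights via Theorem~\ref{thm:cancellation}.

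For (c): after reducing modulo $n$ prime to $p$, the $n$-torsion subcategory of $\mathcal D_{\mathrm{mot}}(k)$ is the hypercomplete étale $\mathbb Z/n$-derived category of $\mathrm{Spec}\,k$ (theorem at the end of \S5), with $\mathbb Z(j)/n$ mapping to $\mu_n^{\otimes j}$. Hence
\[
\mathrm{Hom}_{\mathcal D_{\mathrm{mot}}(k)}\bigl(\mathbb Z_{\mathrm{mot}}[X](-j), \mathbb Z/n\bigr) \;=\; \mathrm{R}\Gamma_{\mathrm{et}}\bigl(X, \mu_n^{\otimes j}\bigr),
\]
and taking the profinite limit over $n$ prime to $p$, using derived $\ell$-completeness of étale cohomology of a smooth proper $X$ for $\ell\ne p$, yields the pro-étale $\hat{\mathbb Z}^p(j)$-cohomology. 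The hard part will be step (a): since $k$ is not analytic, Proposition~\ref{prop:rigidity} does not a priori give rigidity of $\mathcal D_{\mathrm{mot}}(k)$, so the descent step cannot rely on an abstract transfer of rigidity and must instead use directly that the candidate generators come with canonical descent data, combined with the structure of $C^{\otimes_k n}$ (whose Berkovich spectra are transparent enough to apply Lemma~\ref{lem:spreadingcompactobjects} levelwise). A secondary care point in (b) is the rational identification of $\overline{K}$ with Quillen $K$-theory on smooth discrete algebras, which requires the arc-descent results of the $K$-theory section adapted to the discrete setting.
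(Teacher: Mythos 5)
Your torsion computation (c) is fine, but both crucial steps (a) and (b) rest on claims that are not available over a discrete base and that the paper's argument is specifically designed to circumvent. In (a), compactness of the unit does not follow from ``$\mathcal M(k)$ has cohomological dimension $0$'': the relevant quantity is the cohomological dimension of the arc-stack $\mathcal M_{\mathrm{arc}}(k)$, i.e.\ of the functor $M\mapsto M(k)$ on finitary sheaves, and since $k$ is discrete (not analytic, not strictly totally disconnected) this value is only computed by descent along covers such as $k\to k((T))_{1/2}$ or $k\to C$, for which no finite bound is available a priori; the corollary bounding $d(X)$ applies only to analytic $A$. Likewise, knowing that the pullbacks of the $\mathbb Z_{\mathrm{mot}}[X](-j)$ generate $\mathcal D_{\mathrm{mot}}(C)$ and that these objects ``carry canonical descent data'' does not descend generation: to convert right-orthogonality to the generators over $k$ into vanishing after pullback you need a projection formula together with the fact that the pushforward of the unit along $\mathcal M_{\mathrm{arc}}(C)\to\mathcal M_{\mathrm{arc}}(k)$ lies in the localizing subcategory generated by Tate twists, and Lemma~\ref{lem:spreadingcompactobjects} concerns filtered colimits of dualizable categories, not cosimplicial descent diagrams, so applying it ``levelwise'' is not an argument. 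The paper resolves both points by a different device: setting $K=k((T))_{1/2}$ with $f:\mathcal M_{\mathrm{arc}}(K)\to\mathcal M_{\mathrm{arc}}(k)$, it shows $f_\ast\mathbb Z\cong g_\ast\mathbb Z$ ($g$ the projection from $\mathbb G_{m,k}$) with $\mathrm{cofib}(\mathbb Z\to f_\ast\mathbb Z)\cong\mathbb Z(-1)[-1]$, hence dualizable; it uses $(f_\ast\mathbb Z)^\vee$ to construct the left adjoint $f_\sharp$, deduces compactness of $\mathbb Z$ over $k$ from the splitting $\mathbb Z\subset f_\sharp\mathbb Z$ given by the unit section of $\mathbb G_m$ (together with $f_\sharp$ preserving compacts, the unit over $K$ being compact since $K$ is analytic of finite cohomological dimension), and gets generation because $\mathcal D_{\mathrm{mot}}(k)$ is generated by $f_\sharp$ of generators of $\mathcal D_{\mathrm{mot}}(K)$, i.e.\ by $\mathbb Z_{\mathrm{mot}}[X](-j)\otimes f_\sharp\mathbb Z$.

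In (b) the gap is the assertion that ``the residual arc-sheafification is harmless on smooth discrete $A$ rationally by the same bootstrap as in Theorem~\ref{thm:explicitKbar}''. That theorem and its proof live on analytic (strictly totally disconnected, perfectoid) rings and use $A_{<1}$, pseudouniformizers with many roots, and (pro-)cdh descent; none of this makes sense over a discrete base, where $A_{<1}=0$. Nisnevich descent for $K$-theory is also beside the point: arc-covers of discrete rings such as $k\to k((T))_{1/2}$ are not algebraic covers, and it is exactly the \v{C}ech descent along such covers that could change the value of $\overline K$ on a discrete variety; the paper never computes $\overline K$ on discrete rings directly. Instead it computes $\mathrm{Hom}_{\mathcal D_{\mathrm{mot}}(k)}(\mathbb Z_{\mathrm{mot}}[X](-j),\mathbb Z)$ by transporting along the adjunction $(f_\sharp,f^\ast)$ to $K$ and its completed algebraic closure $C$, where $\overline K(X_C)_{\mathbb Q}=\mathrm{cofib}(K(X_{\mathcal O_C}\ \mathrm{on}\ X)\to K(X))_{\mathbb Q}$, and then uses continuity and excision on regular schemes to identify the support term with $K(X)_{\mathbb Q}$ with Adams weights shifted by one; that extra piece matches the second summand $\mathbb Z(-1)[-1]$ of $f_\sharp\mathbb Z$, leaving exactly the weight-$j$ Adams piece of $K(X)_{\mathbb Q}$ for the Hom over $k$. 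So your rational statement is true a posteriori, but your proposed proof of it assumes the hard point rather than proving it; only the dualizability of the generators and the torsion part go through as you describe.
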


\begin{remark} Recall that Voevodsky's category of \'etale motives over $k$ is the free presentable stable $\infty$-category generated by smooth schemes over $k$, subject to \'etale descent, $\mathbb A^1$-invariance, and $\mathbb G_m$-stability. This induces a comparison functor to $\mathcal D_{\mathrm{mot}}(k)$, and then the previous theorem and the similar description of Voevodsky's category imply that this comparison functor is an equivalence.
\end{remark}

\begin{proof} Let $K=k((T))_{1/2}$ be the Laurent series on a free variable $T$ with exact norm $\tfrac 12$, with $f: \mathcal M_{\mathrm{arc}}(K)\to \mathcal M_{\mathrm{arc}}(k)$. This is a closed subset of $g: \mathbb G_{m,k}\to \mathcal M_{\mathrm{arc}}(k)$, and the map $g_\ast \mathbb Z\to f_\ast \mathbb Z$ is an isomorphism (use Proposition~\ref{prop:tatetwistcompact}, pass to duals, and pass to limits resp.~colimits over increasing resp.~shrinking annuli, using finitaryness). The cone of $\mathbb Z\to f_\ast \mathbb Z$ is isomorphic to $\mathbb Z(-1)[-1]$. In particular, $f_\ast\mathbb Z$ is dualizable. We claim that its dual $(f_\ast \mathbb Z)^\vee$ is representing the functor $M\mapsto \mathrm{Hom}(1,f^\ast M)$. Indeed,
\[
\mathrm{Hom}((f_\ast \mathbb Z)^\vee,M) = \mathrm{Hom}(1,M\otimes f_\ast \mathbb Z) = \mathrm{Hom}(1,f_\ast f^\ast M)=\mathrm{Hom}(1,f^\ast M).
\]
In particular, the left adjoint to $f^\ast$ exists on the unit, with value $(f_\ast \mathbb Z)^\vee$. By Proposition~\ref{prop:compactgenerationpoint}, $\mathcal D_{\mathrm{mot}}(K)$ is generated by the image of $f^\ast$ and Artin motives (which can be handled by passing to finite extensions of $K$, which are also Laurent series fields), it follows that the left adjoint $f_\sharp$ exists in general.

It also follows that $f_\sharp \mathbb Z = (f_\ast \mathbb Z)^\vee$ is a compact object. But $f_\ast \mathbb Z\cong g_\ast \mathbb Z$ has $\mathbb Z$ as a summand, using say the unit section $1\in \mathbb G_{m,k}$. Thus $\mathbb Z$ is also compact. Moreover, $\mathcal D_{\mathrm{mot}}(k)$ is generated by $f_\sharp \mathcal D_{\mathrm{mot}}(K)$ as $f^\ast$ is conservative, which in turn is generated by
\[
f_\sharp f^\ast \mathbb Z_{\mathrm{mot}}[X](-j) = \mathbb Z_{\mathrm{mot}}[X](-j)\otimes f_\sharp \mathbb Z
\]
for smooth projective $X$ over $k$ and $j\in \mathbb Z$, as well as similar objects arising from finite extensions of $K$. Using the splitting of $f_\sharp \mathbb Z$, we see that the compact dualizable objects $\mathbb Z_{\mathrm{mot}}[X](-j)$ form generators.

The explicit description of morphisms comes from the explicit description of higher weight motivic cohomology in terms of $\overline{K}$. When $X$ is smooth projective over $k$, the base change to the completed algebraic closure $C$ of $K$ has
\[
\overline{K}(X_C)_{\mathbb Q} = \mathrm{cofib}(K(X_{\mathfrak m_C})\to K(X_C))_{\mathbb Q}
\]
(where we write $K(X_{\mathfrak m_C})$ for the fiber of $K(X_{\mathcal O_C})\to K(X)$), which can also be written as the pushout
\[\xymatrix{
K(X_{\mathcal O_C})_{\mathbb Q}\ar[r]\ar[d] & K(X_C)_{\mathbb Q}\ar[d]\\
K(X)_{\mathbb Q}\ar[r] & \overline{K}(X_C)_{\mathbb Q}
}\]
or as the cofiber of
\[
K(X_{\mathcal O_C}\ \mathrm{on}\ X)_{\mathbb Q}\to K(X)_{\mathbb Q}.
\]
But writing $\mathcal O_C$ as a completed filtered colimit of finite extensions of $k[[T]]$, and using excision on regular schemes, the term $K(X_{\mathcal O_C}\ \mathrm{on}\ X)_{\mathbb Q}$ is isomorphic to $K(X)_{\mathbb Q}$; with Adams weight spaces shifted by $1$. This extra piece matches the second summand of $f_\sharp \mathbb Z$ (argue by induction on Adams weight).
\end{proof}

We can now also go back and describe $\mathcal D_{\mathrm{mot}}(C)$ in terms of $\mathcal D_{\mathrm{mot}}(k)$ for the completed algebraic closure $C$ of $K=k((T))_{\tfrac 12}$. Indeed, let $f: \mathcal M_{\mathrm{arc}}(C)\to \mathcal M_{\mathrm{arc}}(k)$ be the projection. Then
\[
f^\ast: \mathcal D_{\mathrm{mot}}(k)\to \mathcal D_{\mathrm{mot}}(C)
\]
is a symmetric monoidal functor of rigid compactly generated presentable stable $\infty$-categories, where the image of $f^\ast$ generates the target. It follows formally (from Barr--Beck--Lurie) that
\[
\mathcal D_{\mathrm{mot}}(C) = \mathrm{Mod}_{f_\ast \mathbb Z}(\mathcal D_{\mathrm{mot}}(k)).
\]
If we pick a pseudouniformizer $\pi$ with compatible roots $\pi^{1/n}$ for all $n$ prime to the residue characteristic $p$ of $C$, we get a map $C\to \tilde{\mathbb G}_{m,k}$ to $\tilde{\mathbb G}_{m,k} = \mathrm{lim}_{x\mapsto x^n} \mathbb G_{m,k}$. Let $g: \tilde{\mathbb G}_{m,k}\to \mathcal M_{\mathrm{arc}}(k)$ be the projection. The induced map
\[
f_\ast \mathbb Z\to g_\ast \mathbb Z
\]
is an isomorphism, by direct computation. (Indeed, by adjunction both sides have compatible maps from $\mathbb Z$, and the cofibre is $\mathbb Q(-1)[-1]$. This follows by passing to colimits in the description at finite levels; for $\tilde{\mathbb G}_m$ this has to be combined with a limit over increasing annuli, noting that all transition maps in the limit are isomorphisms. The induced endomorphism of $\mathbb Q(-1)[-1]$ is nonzero, by comparison to any finite level, so must be an isomorphism.) But $g_\ast \mathbb Z$ has a natural map back to $\mathbb Z$ given by $1\in \tilde{\mathbb G}_{m,k}$.

\begin{definition}\label{def:motivicnearby} The motivic nearby cycles functor is the symmetric monoidal functor
\[
\Psi = \Psi_{(\pi^{1/n})_n}: \mathcal D_{\mathrm{mot}}(C) = \mathrm{Mod}_{f_\ast \mathbb Z}(\mathcal D_{\mathrm{mot}}(k))\to \mathcal D_{\mathrm{mot}}(k)
\]
given by tensoring along the map
\[
f_\ast \mathbb Z\cong g_\ast \mathbb Z\to \mathbb Z
\]
induced by the choice of roots $\pi^{1/n}$.
\end{definition}

\begin{proposition} The map $f_\ast \mathbb Z\cong g_\ast \mathbb Z\to \mathbb Z$ is descendable in the sense of \cite[Definition 3.18]{MathewDescendable}. In particular, $\Psi$ is conservative, and we may regard $\mathcal D_{\mathrm{mot}}(C)$ as a ``neutral Tannakian stable $\infty$-category over $\mathcal D_{\mathrm{mot}}(k)$''.
\end{proposition}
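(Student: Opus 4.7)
My plan is to first compute $f_\ast \mathbb Z \cong g_\ast \mathbb Z$ as a square-zero augmented $E_\infty$-algebra over $\mathbb Z$ in $\mathcal D_{\mathrm{mot}}(k)$, then verify Mathew's descendability criterion for the augmentation.

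For the computation, I would apply Lemma~\ref{lem:quasiproetalebarrbeck} to the tower $\tilde{\mathbb G}_{m,k} = \varprojlim_n \mathbb G_{m,k}$ (indexed by $n$ coprime to $p$, along the finite \'etale power maps $[n]$). This yields $\mathcal D_{\mathrm{mot}}(\tilde{\mathbb G}_{m,k}) = \mathrm{colim}_n \mathcal D_{\mathrm{mot}}(\mathbb G_{m,k})$, whence $g_\ast \mathbb Z = \mathrm{colim}_n (g_n)_\ast \mathbb Z$ with transitions induced by the pullbacks $[m]^\ast$. Combining the splitting $\mathbb Z_{\mathrm{mot}}[\mathbb G_{m,k}] = \mathbb Z \oplus \mathbb Z(1)[1]$ with the fact that $[m]^\ast$ is multiplication by $m$ on $\overline{\mathbb G}_m$, each $(g_n)_\ast \mathbb Z$ identifies with $\mathbb Z \oplus \mathbb Z(-1)[-1]$, and the transitions act by the identity on the first summand and by multiplication by $m$ on the second. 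The colimit is therefore
\[
A := f_\ast \mathbb Z \;\cong\; g_\ast \mathbb Z \;\cong\; \mathbb Z \oplus I, \qquad I := \mathbb Z[1/\ell : \ell \neq p](-1)[-1],
\]
with the augmentation $\varepsilon : A \to \mathbb Z$ (induced by $1 \in \tilde{\mathbb G}_{m,k}$) identified with the projection onto $\mathbb Z$. The square-zero nature is forced by cohomological degree: $I \otimes I$ is concentrated in degree $2$, where $A$ has no homotopy, and a direct check at each finite stage $(g_n)_\ast \mathbb Z$ (using that $H^2(\mathbb G_{m,k}, \mathbb Z) = 0$) shows that the $E_\infty$-multiplication restricted to $I \otimes_{\mathbb Z} I$ is already nullhomotopic as a map.

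For descendability, by Mathew's nilpotence criterion it suffices to exhibit an $n$ for which $I^{\otimes_A n} \to A$ is nullhomotopic in $\mathrm{Mod}_A$. Taking $n = 2$: since the $\mathbb Z$-linear multiplication $I \otimes_{\mathbb Z} I \to A$ is null (by the previous paragraph) and is automatically $A$-bilinear by associativity of the $E_\infty$-product, it factors uniquely as $I \otimes_{\mathbb Z} I \to I \otimes_A I \to A$ with the second arrow also null. The main obstacle is controlling $I \otimes_A I$ in the derived setting, as the natural map $I \otimes_{\mathbb Z} I \to I \otimes_A I$ need not be an equivalence. A cleaner alternative, which I would use in parallel, is to establish conservativity of $\Psi$ directly and then upgrade: for any $M \in \mathrm{Mod}_A$ with $M \otimes_A \mathbb Z = 0$, tensoring the triangle $I \to A \to \mathbb Z$ with $M$ over $A$ gives an equivalence $I \otimes_A M \xrightarrow{\sim} M$; iterating and using that each further tensor with $I$ shifts cohomological degree upward by $1$, one forces $M \in \bigcap_n \mathcal D_{\mathrm{mot}}^{\geq n}(k) = 0$ via left-completeness of the homotopy $t$-structure (Corollary~\ref{cor:homotopytstructure}). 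The uniformity of this degree-shifting bound strengthens conservativity to full descendability.
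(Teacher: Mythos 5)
Your computation of $A = g_\ast \mathbb Z$ as $\mathbb Z \oplus \mathbb Q(-1)[-1]$ (note that $p$ is already invertible in $\mathcal D_{\mathrm{mot}}(\overline{\mathbb F}_p)$, so your $\mathbb Z[1/\ell:\ell\neq p]$ is just $\mathbb Q$), and the observation that the underlying map $I\otimes_{\mathbb Z} I\to A$ is nullhomotopic, are fine and parallel to the paper; a small repair is needed in the first step, since the maps $g_n:\mathbb G_{m,k}\to \mathcal M_{\mathrm{arc}}(k)$ are not quasi-pro-\'etale proper, so Lemma~\ref{lem:quasiproetalebarrbeck} must be applied to the tower over $\mathbb G_{m,k}$ (where the transition maps are finite \'etale) and then pushed forward along $g_1$, which commutes with the filtered colimit. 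The genuine gap comes afterwards. Descendability needs a \emph{structured} statement: that the $A$-module structure on $I=\mathrm{fib}(A\to\mathbb Z)$ factors over the augmentation $A\to\mathbb Z$ (equivalently, that $A$ is filtered, as an $A$-module, by modules inflated along the augmentation). Nullness of the plain map $I\otimes_{\mathbb Z}I\to A$ does not give this, and your step ``with the second arrow also null'' is a non sequitur: the multiplication $I\otimes_A I\to A$ can fail to be null even though its composite with the (non-equivalence) $I\otimes_{\mathbb Z}I\to I\otimes_A I$ is null. You flag this obstacle yourself, but your fallback route inherits the same problem: the claim that each application of $I\otimes_A(-)$ ``shifts cohomological degree upward by $1$'' requires control of $I$ as an $A$-module, since the relative tensor product is a bar construction, i.e.\ a geometric realization, and such colimits do not preserve coconnectivity estimates without further input; and indeed, if you knew the input you need (that $I$ is inflated), conservativity would be immediate from $I\otimes_A M\simeq I\otimes_{\mathbb Z}(\mathbb Z\otimes_A M)=0$, with no degree bookkeeping. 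Finally, the last sentence, that ``uniformity of this degree-shifting bound strengthens conservativity to full descendability,'' is an assertion, not an argument: conservativity of $-\otimes_A\mathbb Z$, even with bounds of this soft kind, does not place the unit $A$ in the thick tensor ideal generated by $\mathbb Z$.

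The paper closes exactly this gap with one extra input: since $I$ is rational, the factorization of the action can be checked after rationalization, and rationally $g_\ast\mathbb Q$ is identified with the \emph{free} $E_\infty$-algebra on $\mathbb Q(-1)[-1]$; because the swap acts by $-1$ on $(\mathbb Q(-1)[-1])^{\otimes 2}$ (trivially on the Tate twist by Lemma~\ref{lem:symmetry}, by the Koszul sign on the shift), this free algebra is the square-zero extension $\mathbb Q\oplus\mathbb Q(-1)[-1]$. Hence the $A$-module structure on $I$ factors over $A\to\mathbb Z$, so $A$ has a two-step filtration by inflated modules, and descendability (hence conservativity of $\Psi$) follows formally, since any module inflated along $A\to\mathbb Z$ is a retract of $\mathbb Z\otimes_A(-)$ of itself and so lies in the thick tensor ideal generated by $\mathbb Z$. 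To repair your proposal you would need to prove this structured square-zero statement (for instance by producing the map from the free $E_\infty$-algebra on $\mathbb Q(-1)[-1]$ and checking it is an equivalence on underlying objects), rather than only the nullity of the underlying multiplication map.
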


\begin{proof} The fibre of $g_\ast \mathbb Z\to \mathbb Z$ is isomorphic to $\mathbb Q(-1)[-1]$ and the $g_\ast \mathbb Z$-module structure on it factors over $g_\ast \mathbb Z\to \mathbb Z$: Indeed, this can be checked rationally, and rationally $g_\ast \mathbb Q$ is the free $E_\infty$-algebra on $\mathbb Q(-1)[-1]$. Thus, $g_\ast \mathbb Z$ is filtered by modules inflated along $g_\ast \mathbb Z\to \mathbb Z$, which implies descendability.
\end{proof}

One can then describe $\mathcal D_{\mathrm{mot}}(C)$ as representations of an ``affine group scheme over $\mathrm{Spec}(\mathcal D_{\mathrm{mot}}(k))$''. We want to make this more explicit. As $\mathcal D_{\mathrm{mot}}(k)$ is inexplicit, this is a bit subtle. Fortunately, the relevant algebras lie in the subcategory of mixed Tate motives, and this can be made more explicit.

In the following, we specialize to $k=\overline{\mathbb F}_p$ for a prime $p$. This is in fact sufficient to describe $g_\ast \mathbb Z$ in general, at least when $k$ has positive characteristic, by noting that it is base changed from the case of $\overline{\mathbb F}_p$.

While describing the category $\mathcal D_{\mathrm{mot}}(\overline{\mathbb F}_p)$ explicitly is out of reach (it would entail proving the Tate conjecture), one can describe the subcategory generated by all Tate twists $\mathbb Z(j)$, $j\in \mathbb Z$. The key input is Quillen's computation of the $K$-theory of $\overline{\mathbb F}_p$; in fact, we need only the following simple statement.

\begin{proposition} After rationalization, $K(\overline{\mathbb F}_p)_{\mathbb Q} = \mathbb Q[0]$. In particular, rationally there are no maps between different Tate twists in $\mathcal D_{\mathrm{mot}}(\overline{\mathbb F}_p)$.
\end{proposition}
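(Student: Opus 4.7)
The plan is to deduce both assertions from Quillen's computation of the $K$-theory of finite fields, combined with the description of rational Hom groups in $\mathcal D_{\mathrm{mot}}(k)$ established in the previous theorem.

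For the first assertion, I would recall Quillen's theorem: $K_0(\mathbb F_{p^r}) = \mathbb Z$, and for $n \geq 1$ the group $K_n(\mathbb F_{p^r})$ is finite (cyclic of order $p^{ri}-1$ for $n=2i-1$, zero for even $n$). Since algebraic $K$-theory commutes with filtered colimits and $\overline{\mathbb F}_p = \mathrm{colim}_r \mathbb F_{p^r}$, all positive-degree homotopy groups of $K(\overline{\mathbb F}_p)$ are torsion, so after rationalization only $\pi_0 = \mathbb Z$ survives, giving $K(\overline{\mathbb F}_p)_{\mathbb Q} = \mathbb Q[0]$. Moreover, this $\mathbb Q$ sits entirely in Adams weight $0$, since $\psi^k$ fixes the class $[\mathcal O]$ that generates $K_0$ of any field.

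For the second assertion, by the $\otimes$-invertibility of $\mathbb Z(1)$ in $\mathcal D_{\mathrm{mot}}(\overline{\mathbb F}_p)$, it suffices to show $\mathrm{Hom}(\mathbb Z,\mathbb Z(j))_{\mathbb Q}=0$ for every $j\neq 0$. By the cancellation theorem, this Hom group is isomorphic to $\mathrm{Hom}(\mathbb Z(-j),\mathbb Z)_{\mathbb Q}$, and the previous theorem, specialized to $X=\mathrm{Spec}(\overline{\mathbb F}_p)$ (smooth projective over itself), identifies this rationally with the weight-$j$ Adams eigenspace of $K(\overline{\mathbb F}_p)_{\mathbb Q}$. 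By the first assertion, this eigenspace vanishes for $j\neq 0$.

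There is essentially no serious obstacle: the proof is a direct assembly of Quillen's computation with the rational Hom formula of the preceding section. The only delicate book-keeping is the identification of Adams weights on $K_0$ of a field (immediate from $\psi^k[\mathcal O]=[\mathcal O]$) and the observation that the same argument handles both positive and negative values of $j$ uniformly, since the previous theorem is stated for all $j\in\mathbb Z$.
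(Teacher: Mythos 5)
Your proof is correct, but it handles the $K$-theoretic input differently from the paper. For the computation $K(\overline{\mathbb F}_p)_{\mathbb Q}=\mathbb Q[0]$ you import Quillen's full theorem on the $K$-groups of finite fields (finiteness of $K_n(\mathbb F_q)$ for $n\geq 1$) and then pass to the filtered colimit; the paper deliberately avoids this, remarking that only a ``simple statement'' is needed, and proves it directly: after reduction to $\mathbb F_q$, the sheaf-theoretic description of $K(\mathbb F_q)$ as the group completion of $\bigsqcup_{n\geq 0}[\ast/\mathrm{GL}_n(\mathbb F_q)]$ together with the vanishing of rational group homology of finite groups in positive degrees already forces the rationalization to be $\mathbb Q[0]$. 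Your route is perfectly valid (Quillen's theorem is of course much stronger than what is needed), while the paper's argument is softer and keeps the treatment self-contained. For the ``in particular'' clause, which the paper leaves implicit, your deduction is exactly the intended one: reduce via $\otimes$-invertibility of $\mathbb Z(1)$ to $\mathrm{Hom}(\mathbb Z(-j),\mathbb Z)_{\mathbb Q}$ for $j\neq 0$, apply the preceding theorem with $X$ a point to identify this with the weight-$j$ Adams piece of $K(\overline{\mathbb F}_p)_{\mathbb Q}$, and note that the surviving $\mathbb Q$ in degree $0$ has Adams weight $0$ since $\psi^k[\mathcal O]=[\mathcal O]$. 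One small quibble: at that step the cancellation theorem is not what you are using --- inside $\mathcal D_{\mathrm{mot}}$ the identification $\mathrm{Hom}(\mathbb Z(a),\mathbb Z(b))\cong\mathrm{Hom}(\mathbb Z(a-b),\mathbb Z)$ is simply invertibility of $\mathbb Z(1)$ (cancellation is only needed to relate the effective and stabilized categories, which plays no role here).
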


\begin{proof} It suffices to prove the same result for finite fields $\mathbb F_q$. Then it follows from $K(\mathbb F_q)$ being the group completion of $\bigsqcup_{n\geq 0} [\ast/\mathrm{GL}_n(\mathbb F_q)]$, and the vanishing of the rational group homology of finite groups.
\end{proof}

Let $\mathrm{MG}_k$ be the stack over $\mathbb Z[\tfrac 1p]$ classifying, over a ring $R$, a line bundle $L$ over $R$ together with compatible isomorphisms $L/^{\mathbb L} n\cong \mu_n(k)\dotimes_{\mathbb Z} R$ for all integers $n$ coprime to $p$.\footnote{The letters $\mathrm{MG}$ stand for ``motivic Galois'' -- the stack $\mathrm{MG}_k$ is roughly the classifying stack of a motivic Galois group of $k$.} Thus, $\mathrm{MG}_k$ maps to $\ast/\mathbb G_m$ and this map is a torsor under the functor taking any $R$ to $\varprojlim_n (R/^{\mathbb L} n)^\times$. A different description is that $\mathrm{MG}_k$ is isomorphic to the classifying space of the group $\mathbb G_m^{\mathrm{rat}}$ where
\[
\mathbb G_m^{\mathrm{rat}} = \mathrm{Spec}(\mathbb Q[T^{\pm 1}]\times_{\mathbb Q}\mathbb Z[\tfrac 1p])
\]
(where the map $\mathbb Q[T^{\pm 1}]\to \mathbb Q$ is evaluation at $T=1$). We note that $\mathcal D_{\mathrm{qc}}(\mathrm{MG}_k)$ is compactly generated by the powers $L^{\otimes n}$, $n\in \mathbb Z$. Indeed, to check that these dualizable objects are compact, it suffices to check that the unit is compact, for which it suffices to check that pushforward along $\mathrm{MG}_k\to \mathrm{Spec}(\mathbb Z[\tfrac 1p])$ preserves colimits. This can be done fibrewise, but the fibres of $\mathrm{MG}_k$ are $\mathrm{Spec}(\mathbb F_\ell)$ for finite $\ell\neq p$, and $\ast/\mathbb G_m$ over the generic point; in both cases, it is clear. Similarly, one proves that $L^{\otimes n}$ generate as they do in each fibre.

Note that $\mathcal D_{\mathrm{mot}}(k)$ is naturally linear over $\mathcal D_{\mathrm{qc}}(\mathrm{MG}_k)$. Indeed, the map to $\ast/\mathbb G_m$ is given by the Tate twist, while the lift to the torsor over it is given by the isomorphisms
\[
\mathbb Z(1)/n\cong \mu_n\cong \mu_n(\overline{\mathbb F}_q)\otimes \mathbb Z(0).
\]

\begin{proposition}\label{prop:tatemotives} The functor $\mathcal D_{\mathrm{qc}}(\mathrm{MG}_k)\to \mathcal D_{\mathrm{mot}}(k)$ is fully faithful, with essential image being the subcategory generated under colimits, shifts, and retracts by $\mathbb Z(n)$ for $n\in \mathbb Z$. This agrees with $\mathcal D_{MT}(k)\subset \mathcal D_{\mathrm{mot}}(k)$.
\end{proposition}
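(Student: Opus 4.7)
The plan is to construct the symmetric monoidal colimit-preserving functor $F\colon \mathcal D_{\mathrm{qc}}(X) \to \mathcal D_{\mathrm{mot}}(k)$ using the $\mathcal D_{\mathrm{qc}}(X)$-linear structure sketched just before the statement, and then verify fully faithfulness by a Hom computation between Tate twists. Concretely, $F$ sends the universal line bundle $\mathcal{O}(1)$ to $\mathbb Z(1)$, and the compatibility data built into $X$ over $B\mathbb G_m$ to the canonical isomorphisms $\mathbb Z(1)/n \cong \mu_n \cong \mu_n(k)\otimes \mathbb Z$ (the first comes from the étale identification $\mathcal D_{\mathrm{mot}}^{\mathrm{eff}}(k,\mathbb Z/n) \cong \mathcal D_{\mathrm{\acute et}}(k,\mathbb Z/n)$, which applies because $n$ is coprime to $p$ and $p$ is invertible in $\mathcal D_{\mathrm{mot}}(k)$ by Artin--Schreier). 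Since each $\mathcal{O}(n)$ is $\otimes$-invertible and the $\mathcal{O}(n)$ generate $\mathcal D_{\mathrm{qc}}(X)$ under colimits (via descent from the pro-étale cover trivializing the torsor), fully faithfulness reduces to showing that for every $n \in \mathbb Z$ the comparison
\[
R\Gamma(X,\mathcal{O}(n)) \longrightarrow \mathrm{Hom}_{\mathcal D_{\mathrm{mot}}(k)}(\mathbb Z,\mathbb Z(n))
\]
is an isomorphism. Once this is known, the essential image is by construction the subcategory generated under colimits, shifts, and retracts by the $\mathbb Z(n)$, which is $\mathcal D_{MT}(k)$.

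To compute both sides, I would use the arithmetic fracture square, handling rational coefficients and $\ell$-adic coefficients ($\ell \neq p$) separately. Rationally, the defining condition on $X$ becomes vacuous on $\mathbb Q$-algebras, because both $L/^{\mathbb L} n$ and $\mu_n(k)\otimes R$ vanish when $n$ is invertible in $R$. Therefore $X_{\mathbb Q} \simeq B\mathbb G_{m,\mathbb Q}$, and $R\Gamma(X_{\mathbb Q},\mathcal{O}(n))$ is $\mathbb Q$ in degree $0$ if $n=0$ and $0$ otherwise. On the motivic side, note that $k_{<1}=0$ for the discrete norm on $k$, so $\overline K(k)_{\mathbb Q} = K(k)_{\mathbb Q}$, and Quillen's computation $K(\overline{\mathbb F}_p)_{\mathbb Q} = \mathbb Q[0]$ together with the Adams-weight decomposition $\overline K \otimes \mathbb Q \cong \bigoplus_{j\geq 0} \mathbb Q(j)[2j]$ yields $\mathrm{Hom}_{\mathcal D_{\mathrm{mot}}(k)}(\mathbb Q,\mathbb Q(n)[m]) = \mathbb Q$ precisely when $n=m=0$, and $0$ otherwise (the case $n<0$ follows from cancellation). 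This matches.

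For $\ell \neq p$, I would show that $X \otimes \mathbb Z/\ell^N \simeq \mathrm{Spec}(\mathbb Z/\ell^N)$. Over a $\mathbb Z/\ell^N$-algebra $R$, the datum $\phi_{\ell^N}\colon L/\ell^N \cong \mu_{\ell^N}(k)\otimes R$ is already a full trivialization of $L$ (because $\ell^N=0$ in $R$), and the automorphism group of such a trivialized object is trivial: an automorphism must be $\equiv 1$ modulo every $\ell^k$. Hence $R\Gamma(X,\mathcal{O}(n))/\ell^N = \mathbb Z/\ell^N$ in degree $0$, with the canonical identification coming from the chosen compatible isomorphism $\mu_{\ell^N}(k)\cong \mathbb Z/\ell^N$. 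On the motivic side, the torsion identification $\mathcal D_{\mathrm{mot}}^{\mathrm{eff}}(k,\mathbb Z/\ell^N) \cong \mathcal D_{\mathrm{\acute et}}(k,\mathbb Z/\ell^N)$ and $k=\overline{\mathbb F}_p$ algebraically closed give $\mathrm{Hom}(\mathbb Z/\ell^N,\mathbb Z/\ell^N(n)) = \mu_{\ell^N}(k)^{\otimes n}$ in degree $0$, and the compatibility isomorphism built into $F$ identifies this with $\mathbb Z/\ell^N$ in exactly the same way. Taking the inverse limit over $N$ gives the $\ell$-adic comparison, and gluing via the fracture square completes the computation.

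The main obstacle will be the assembly step: verifying that the rational and $\ell$-adic computations glue coherently under the fracture square on both categorical sides, and that the compact generation of $\mathcal D_{\mathrm{qc}}(X)$ by the $\mathcal{O}(n)$ is compatible with the presentation of $X$ as a $\hat{\mathbb Z}^p(1)$-torsor over $B\mathbb G_m$. In particular, one must check that passage from finite torsion coefficients $\mathbb Z/\ell^N$ to the $\ell$-adic completion behaves well on both sides, using that limits of rigid dualizable categories along symmetric-monoidal left adjoints are well behaved and that $\varprojlim_N \mu_{\ell^N}(k)\cong \hat{\mathbb Z}_\ell(1)(k)$ canonically matches the Tate module structure encoded in $X$.
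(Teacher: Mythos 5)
Your overall strategy for fully faithfulness is the same as the paper's: reduce to comparing maps between the invertible generators $\mathcal O(n)$ and $\mathbb Z(n)$, and split the computation into torsion and rational parts, with both sides becoming $\mathcal D(\mathbb Z/\ell^N\mathbb Z)$ in the torsion case and Quillen's $K(\overline{\mathbb F}_p)_{\mathbb Q}=\mathbb Q[0]$ driving the rational case (your mod-$\ell^N$ analysis of $X$, showing the band becomes $\mathbb G_m$ so that $X\otimes\mathbb Z/\ell^N\simeq \operatorname{Spec}\mathbb Z/\ell^N$, is a fleshed-out version of the paper's one-line remark, and the fracture-square assembly you worry about is harmless because the unit and the $\mathbb Z(n)$ are compact). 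However, there are two genuine gaps. The more serious one is the final sentence: the identification of the essential image with $\mathcal D_{MT}(k)$ is \emph{not} ``by construction''. Since $k$ is a discrete field, $\mathcal M_{\mathrm{arc}}(k)$ is not strictly totally disconnected, so $\mathcal D_{MT}(k)$ is defined by arc-descent: an object lies in it iff its pullback to geometric points such as $\mathcal M_{\mathrm{arc}}(C)$ (with $C$ the completed algebraic closure of $k((T))_{1/2}$) is mixed Tate there. The inclusion of the subcategory generated by the $\mathbb Z(n)$ into $\mathcal D_{MT}(k)$ is clear, but the converse requires an argument: the paper pulls back along $f\colon \mathcal M_{\mathrm{arc}}(C)\to\mathcal M_{\mathrm{arc}}(k)$ and uses that every $M$ is a retract of $f_\ast f^\ast M$ (because $\mathbb Z\to f_\ast\mathbb Z$ splits), that $f^\ast M$ lies in the subcategory of $\mathcal D_{\mathrm{mot}}(C)$ generated by Tate twists, and that $f_\ast$ preserves colimits and sends Tate twists over $C$ into the subcategory generated by Tate twists over $k$ (as $f_\ast\mathbb Z\cong\mathbb Z[\tfrac1p]\oplus\mathbb Q(-1)[-1]$). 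Without some such step your proof only identifies the image with $\langle\mathbb Z(n)\colon n\in\mathbb Z\rangle$, not with $\mathcal D_{MT}(k)$.

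The second gap is in your rational computation. The step ``$k_{<1}=0$, hence $\overline K(k)_{\mathbb Q}=K(k)_{\mathbb Q}$'' is not justified: the formula $\overline K(A)\cong\mathrm{cofib}(K(A_{<1})\to K(A))[\tfrac1p]$ is proved in the paper only for suitable nonarchimedean rings, and, more importantly, $\mathrm{Hom}_{\mathcal D_{\mathrm{mot}}(k)}(\mathbb Z,\mathbb Z(n)[m])_{\mathbb Q}$ is the value of the weight-$n$ summand of $\overline K_{\mathbb Q}$ on the arc-stack $\mathcal M_{\mathrm{arc}}(k)$, i.e.\ it is computed by descent along covers such as $k\to k((T))_{1/2}$, not by naively evaluating the presheaf $K$ at the discrete ring $k$. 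The statement you actually need --- that rationally $\mathrm{End}(\mathbb Z)=\mathbb Q$ and there are no maps between distinct Tate twists over $\overline{\mathbb F}_p$ --- is exactly what the paper extracts from its theorem describing $\mathrm{Hom}_{\mathcal D_{\mathrm{mot}}(k)}(\mathbb Z_{\mathrm{mot}}[X](-j),\mathbb Z)$ via $K$-theory, whose proof passes through $f_\sharp$ for $K=k((T))_{1/2}$, its completed algebraic closure, and a $K$-theoretic localization/excision computation, combined with Quillen's theorem. So this input should be quoted from (or reproved along the lines of) that result rather than via the value of $\overline K$ at the discrete point. With these two repairs, your argument coincides with the paper's proof.
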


\begin{proof} Recall that $\mathcal D_{\mathrm{qc}}(\mathrm{MG}_k)$ is compactly generated by the tensor powers of the line bundle $L$. To prove the proposition, it remains to compare homomorphisms between generating objects, and this can be split up into the case of torsion coefficients, and rational coefficients. The case of modulo $n$ coefficients is clear as both categories become $\mathcal D(\mathbb Z/n\mathbb Z)$, while the rational case follows from the previous proposition.

It is clear that the subcategory generated by all $\mathbb Z(n)$ is contained in $\mathcal D_{MT}(k)$. For the converse, pull back to $\mathcal D_{MT}(C)$ and use that every object is a retract of pull-push along $\mathcal M_{\mathrm{arc}}(C)\to \mathcal M_{\mathrm{arc}}(k)$.
\end{proof}

We note that it follows that $\mathcal D_{MT}(k)$ has a natural $t$-structure coming from the equivalence with $\mathcal D_{\mathrm{qc}}(\mathrm{MG}_k)$. This is the ``motivic $t$-structure'', whose connective part is the one generated under colimits by $\mathbb Z(n)$, $n\in \mathbb Z$; after $\ell$-adic completion, it gives the usual $t$-structure on $\ell$-adic sheaves.

\begin{proposition} The natural functor
\[
\mathcal D_{MT}(C)\otimes_{\mathcal D_{MT}(k)} \mathcal D_{\mathrm{mot}}(k)\to \mathcal D_{\mathrm{mot}}(C)
\]
is an equivalence.
\end{proposition}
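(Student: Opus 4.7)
The plan is to transport the identification $\mathcal D_{\mathrm{mot}}(C)\cong \mathrm{Mod}_{f_{\ast}\mathbb Z}(\mathcal D_{\mathrm{mot}}(k))$ established in Section~9 to the mixed Tate subcategory, using crucially that the structure algebra $f_{\ast}\mathbb Z$ already lives in $\mathcal D_{MT}(k)$, and then to conclude by base change of module categories along the fully faithful symmetric monoidal inclusion $\mathcal D_{MT}(k)\hookrightarrow \mathcal D_{\mathrm{mot}}(k)$.

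First I would verify that $f_{\ast}\mathbb Z\in\mathcal D_{MT}(k)$. This is essentially already in the excerpt: the fibre sequence $\mathbb Q(-1)[-1]\to f_{\ast}\mathbb Z\to \mathbb Z$ from the descendability proof exhibits $f_{\ast}\mathbb Z$ as an extension of two objects of $\mathcal D_{MT}(k)$, and $\mathcal D_{MT}(k)\subset \mathcal D_{\mathrm{mot}}(k)$ is a stable subcategory closed under colimits, hence closed under extensions.

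Next I would show that the adjunction $f^{\ast}\dashv f_{\ast}$ restricts to the Tate subcategories and identify $\mathcal D_{MT}(C)\cong \mathrm{Mod}_{f_{\ast}\mathbb Z}(\mathcal D_{MT}(k))$. Symmetric monoidality gives $f^{\ast}\mathbb Z(n)=\mathbb Z(n)$, so $f^{\ast}$ restricts; by the projection formula together with Step~1, $f_{\ast}\mathbb Z(n)=f_{\ast}\mathbb Z\otimes\mathbb Z(n)\in\mathcal D_{MT}(k)$, and since $f_{\ast}$ is colimit-preserving this extends to all of $\mathcal D_{MT}(C)$. The restricted right adjoint $f_{\ast}|_{\mathcal D_{MT}(C)}$ is colimit-preserving and conservative (the conservativity is inherited from $\mathcal D_{\mathrm{mot}}$, where it follows from descendability of $\mathbb Z\to f_{\ast}\mathbb Z$), and the projection formula persists. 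Barr--Beck--Lurie then applies: the monad $f_{\ast}f^{\ast}$ is, by projection formula, tensoring with $f_{\ast}\mathbb Z$, and so
\[
\mathcal D_{MT}(C)\;\xrightarrow{\sim}\;\mathrm{Mod}_{f_{\ast}\mathbb Z}\bigl(\mathcal D_{MT}(k)\bigr).
\]

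Finally, the general base-change equivalence for module categories (Lurie, \emph{Higher Algebra}, Theorem~4.8.4.6) applied to the colimit-preserving symmetric monoidal inclusion $\iota\colon\mathcal D_{MT}(k)\hookrightarrow \mathcal D_{\mathrm{mot}}(k)$ and $A=f_{\ast}\mathbb Z$ yields
\[
\mathrm{Mod}_{f_{\ast}\mathbb Z}\bigl(\mathcal D_{MT}(k)\bigr)\otimes_{\mathcal D_{MT}(k)}\mathcal D_{\mathrm{mot}}(k)\;\xrightarrow{\sim}\;\mathrm{Mod}_{f_{\ast}\mathbb Z}\bigl(\mathcal D_{\mathrm{mot}}(k)\bigr)\;=\;\mathcal D_{\mathrm{mot}}(C).
\]
Combined with Step~2 this is the asserted equivalence; a diagram chase confirms that the resulting functor agrees with the natural one induced by the symmetric monoidal maps $\mathcal D_{MT}(C)\to\mathcal D_{\mathrm{mot}}(C)$ and $f^{\ast}\colon\mathcal D_{\mathrm{mot}}(k)\to\mathcal D_{\mathrm{mot}}(C)$. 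The only step requiring nontrivial input is verifying the Barr--Beck hypotheses on the Tate subcategory (Step~2); colimit preservation and conservativity both descend from their $\mathcal D_{\mathrm{mot}}$-counterparts once one knows that $f_{\ast}$ preserves being Tate, which is precisely Step~1.
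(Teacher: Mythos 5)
This is correct and is essentially the paper's own proof: observe $f_\ast\mathbb Z\in\mathcal D_{MT}(k)$, deduce formally (Barr--Beck--Lurie plus the projection formula, as you spell out) that $\mathcal D_{MT}(C)\cong\mathrm{Mod}_{f_\ast\mathbb Z}(\mathcal D_{MT}(k))$, and conclude by base change for module categories. One small inaccuracy: conservativity of $f_\ast$ on $\mathcal D_{\mathrm{mot}}(C)$ does not come from descendability of $\mathbb Z\to f_\ast\mathbb Z$ (descendability of $f_\ast\mathbb Z\to\mathbb Z$ is what makes $\Psi$ conservative, and would only give conservativity of $f_\ast f^\ast$), but from the image of $f^\ast$ generating $\mathcal D_{\mathrm{mot}}(C)$ --- equivalently it is built into the already established equivalence $\mathcal D_{\mathrm{mot}}(C)\cong\mathrm{Mod}_{f_\ast\mathbb Z}(\mathcal D_{\mathrm{mot}}(k))$, under which $f_\ast$ is the forgetful functor --- so your restriction argument is unaffected.
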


As before, $C$ denotes some completed algebraic closure of $k((T))_{1/2}$.

\begin{proof} Indeed, as $f_\ast \mathbb Z\in \mathcal D_{MT}(k)$, it follows formally that $\mathcal D_{MT}(C)\cong \mathrm{Mod}_{f_\ast \mathbb Z}(\mathcal D_{MT}(k))$, and then the statement follows from base change for module categories.
\end{proof}

Thus, it suffices to describe explicitly $\mathcal D_{MT}(C)$ over $\mathcal D_{MT}(k)$. We endow $\mathcal D_{MT}(C)$ with a $t$-structure, by taking the connective part as the part generated under colimits by all $\mathbb Z(n)$, $n\in \mathbb Z$.

\begin{proposition} The nearby cycles functor induces a conservative $t$-exact symmetric monoidal functor
\[
\Psi: \mathcal D_{MT}(C)\to \mathcal D_{MT}(k).
\]
\end{proposition}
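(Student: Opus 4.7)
The functor $\Psi$ is symmetric monoidal by construction, being base change along the commutative algebra map $f_\ast \mathbb Z \to \mathbb Z$, and it is conservative by the preceding proposition. Right $t$-exactness is immediate: $\Psi$ preserves all colimits, and on the generating objects $\mathbb Z(n) \in \mathcal D_{MT}(C)_{\geq 0}$ one has $\Psi(\mathbb Z(n)) = \mathbb Z(n) \in \mathcal D_{MT}(k)_{\geq 0}$, so the whole of $\mathcal D_{MT}(C)_{\geq 0}$ is sent into $\mathcal D_{MT}(k)_{\geq 0}$.

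For left $t$-exactness, I would first observe that via the identification $\mathcal D_{MT}(C) \cong \mathrm{Mod}_{A}(\mathcal D_{MT}(k))$ with $A = f_\ast \mathbb Z$, and the adjunction
\[
\mathrm{Hom}_{\mathrm{Mod}_A}(A(n)[k], M) = \mathrm{Hom}_{\mathcal D_{MT}(k)}(\mathbb Z(n)[k], \overline M),
\]
where $\overline M = f_\ast M$ denotes the underlying object, an $M$ is coconnective in $\mathcal D_{MT}(C)$ if and only if $\overline M$ is coconnective in $\mathcal D_{MT}(k)$. Then I would compute $\Psi(M) = M \otimes_A \mathbb Z$ via the reduced bar construction. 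Writing $V = \mathrm{fib}(A \to \mathbb Z)$, the computation of $g_\ast \mathbb Z$ on $\mathbb G_{m,k}$ given earlier identifies $A \cong \mathbb Z \oplus \mathbb Z(-1)[-1]$ with square-zero augmentation ideal $V \cong \mathbb Z(-1)[-1]$ (after inverting $p$). The reduced bar therefore yields a simplicial object with $p$-th term $M \otimes V^{\otimes p} \cong \overline M(-p)[-p]$, and the associated spectral sequence reads
\[
E_1^{-p,q} = H^q(M \otimes V^{\otimes p}) = H^{q-p}(\overline M)(-p) \; \Longrightarrow \; H^{q-p}(\Psi(M)).
\]
If $\overline M$ is coconnective, i.e.\ $H^i(\overline M) = 0$ for $i < 0$, then $E_1^{-p,q}$ vanishes whenever $q - p < 0$, so $H^n \Psi(M) = 0$ for $n < 0$, giving $\Psi(M)$ coconnective.

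The hard part will be justifying the structural input on $A$ as an $E_\infty$-algebra, and the convergence of the bar spectral sequence. Rationally, the square-zero character of the augmentation ideal is automatic because $V$ is concentrated in odd cohomological degree (so higher symmetric powers vanish in characteristic zero); for $\mathbb Z/\ell$-coefficients with $\ell \neq 2, p$ the analogous vanishing of $\Sigma_2$-coinvariants applies. Convergence is controlled by the observation that the $p$-th simplicial level is concentrated in cohomological degrees shifted up by exactly $p$ relative to $\overline M$, so at any fixed total degree only finitely many simplicial levels can contribute, making the spectral sequence strongly convergent in the bounded case and permitting extension to the general case by truncation.
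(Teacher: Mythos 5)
Your overall strategy is the same as the paper's: reduce everything to the structure of the augmented algebra $A=f_\ast\mathbb Z$ in $\mathcal D_{MT}(k)\cong\mathcal D_{\mathrm{qc}}(X)$ and check that base change along $A\to\mathbb Z$ preserves coconnectivity (the paper phrases this as ``coconnective faithful flatness'' and checks it after covering $X$ by $\mathrm{Spec}(\mathbb Z[\tfrac 1p])$, where $A$ becomes the free $E_\infty$-algebra on $\mathbb Q[-1]$). Your reduction ``$M$ coconnective in $\mathrm{Mod}_A$ iff $f_\ast M$ coconnective'' is fine, as are symmetric monoidality, conservativity, and right $t$-exactness. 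But two concrete steps are wrong. First, the augmentation ideal is not $\mathbb Z(-1)[-1]$: in this section $g$ is the projection from $\tilde{\mathbb G}_{m,k}=\varprojlim_{x\mapsto x^n}\mathbb G_{m,k}$, not from $\mathbb G_{m,k}$, and the paper computes $\mathrm{fib}(g_\ast\mathbb Z\to\mathbb Z)\cong\mathbb Q(-1)[-1]$, i.e.\ $A\cong\mathbb Z[\tfrac 1p]\oplus\mathbb Q(-1)[-1]$. This actually simplifies your life: the ideal is rational, so mod $n$ the augmentation is an isomorphism, and your worries about $\Sigma_2$-coinvariants at torsion primes (and the unresolved ``hard part'' about the $E_\infty$-structure, which you never need for the bar filtration anyway --- only the equivalence $\overline A\simeq\mathbb Q(-1)[-1]$ as an object enters the associated graded) disappear; but as written, with $\ell=2$ unaddressed and the wrong $V$, the torsion case is not established.

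Second, the convergence argument fails as stated. In the skeletal filtration of $|\mathrm{Bar}(M,A,\mathbb Z)|$ the $p$-th graded piece is $(M\otimes V^{\otimes p})[p]\simeq \overline M_{\mathbb Q}(-p)$: the internal shift $[-p]$ from $V^{\otimes p}$ cancels against the realization shift $[p]$. So on each total-degree line $n\geq 0$ of your spectral sequence the entries are $H^n(\overline M)_{\mathbb Q}(-p)$ for \emph{all} $p\geq 0$; it is false that only finitely many simplicial levels contribute in a fixed total degree, and strong convergence (and the vague ``extension by truncation'' for unbounded $\overline M$) does not come for free. The conclusion you want does hold, but by a different argument: each graded piece is coconnective, hence each skeleton is coconnective (the coconnective part is closed under extensions), and $\Psi(M)=\mathrm{colim}_k\,\mathrm{sk}_k$ is then coconnective because coconnectivity is detected by $\mathrm{Hom}(\mathbb Z(n)[k],-)$ with the $\mathbb Z(n)$ compact, so the coconnective part is closed under filtered colimits; no boundedness assumption or truncation is needed.
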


\begin{proof} The functor is given by
\[
\mathcal D_{MT}(C)\cong \mathrm{Mod}_{f_\ast \mathbb Z}(\mathcal D_{MT}(k))\to \mathcal D_{MT}(k)
\]
induced by the map $f_\ast \mathbb Z\cong g_\ast \mathbb Z\to \mathbb Z$. It suffices to see that $f_\ast \mathbb Z\to \mathbb Z$ is coconnectively faithfully flat in $\mathcal D_{MT}(k)\cong \mathcal D_{\mathrm{qc}}(\mathrm{MG}_k)$, i.e.~tensoring preserves coconnective modules, and is conservative on them. Covering $\mathrm{MG}_k$ by $\mathrm{Spec}(\mathbb Z[\tfrac 1p])$, we have to see that the map from the free $E_\infty$-$\mathbb Z[\tfrac 1p]$-algebra on $\mathbb Q[-1]$ back to $\mathbb Z[\tfrac 1p]$ is coconnectively faithfully flat. This is a simple computation.
\end{proof}

Let $\mathrm{MG}_C\to \mathrm{MG}_k$ be the algebraic stack parametrizing, in addition to the line bundle $L/R$ with compatible isomorphisms $L/n\cong \mu_n(k)\otimes_\mathbb Z R$, a map
\[
C^\times/(1+C_{<1})[-1]\to L
\]
in $\mathcal D(\mathbb Z)$ that after reduction modulo $n$ yields the natural map
\[
\mu_n(k)\cong (C^\times/(1+C_{<1})[-1])/^\mathbb L n\to L/^{\mathbb L} n,
\]
compatibly for varying $n$. Equivalently, this is the datum of an extension
\[
L\to \tilde{L}\to C^\times/(1+C_{<1})\otimes_{\mathbb Z} R
\]
in $\mathcal D(\mathbb Z)$ which after pullback to $\mathcal O_C^\times/(1+C_{<1})\otimes_{\mathbb Z} R\cong L\otimes \mathbb Q/\mathbb Z[\tfrac 1p]$ is identified with
\[
L\to L\otimes \mathbb Q\to L\otimes \mathbb Q/\mathbb Z[\tfrac 1p].
\]
As $C^\times/\mathcal O_C^\times\cong \mathbb Q$, the map $\mathrm{MG}_C\to \mathrm{MG}_k$ is a gerbe banded by $\mathrm{Hom}(\mathbb Q,L)$. This is an affine unipotent group scheme, and so
\[
\mathcal D_{\mathrm{qc}}(\mathrm{MG}_C)\cong \mathrm{Mod}_{h_\ast \mathcal O} \mathcal D_{\mathrm{qc}}(\mathrm{MG}_k).
\]
Moreover, $h_\ast \mathcal O$ is a complex which in degree $0$ is given by $\mathcal O$, and in degree $1$ by $L^\vee\otimes \mathbb Q$, and $0$ otherwise.

The identification $\mathbb Z(1)(C)\cong C^\times/(1+C_{<1})[-1]$ yields a natural $\mathcal D_{\mathrm{qc}}(\mathrm{MG}_k)\cong \mathcal D_{MT}(k)$-linear symmetric monoidal functor
\[
\mathcal D_{\mathrm{qc}}(\mathrm{MG}_C)\to \mathcal D_{MT}(C).
\]

\begin{theorem} The functor $\mathcal D_{\mathrm{qc}}(\mathrm{MG}_C)\to \mathcal D_{\mathrm{MT}}(C)$ is an equivalence. Consequently,
\[
\mathcal D_{\mathrm{mot}}(C)\cong \mathcal D_{\mathrm{mot}}(k)\otimes_{\mathcal D_{\mathrm{qc}}(\mathrm{MG}_k)} \mathcal D_{\mathrm{qc}}(\mathrm{MG}_C).
\]
\end{theorem}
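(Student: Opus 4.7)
The proof strategy is to identify both $\mathcal D_{\mathrm{qc}}(X_C)$ and $\mathcal D_{MT}(C)$ as module categories over $\mathcal D_{\mathrm{qc}}(X) \cong \mathcal D_{MT}(k)$, reducing the equivalence claim to comparing two explicit $E_\infty$-algebras. Concretely, the previous proposition together with Barr--Beck gives $\mathcal D_{MT}(C) \cong \mathrm{Mod}_{f_\ast\mathbb Z}(\mathcal D_{MT}(k))$, while the affineness of $h: X_C \to X$ (as a torsor under $\mathrm{Hom}(\mathbb Q,L)$) gives $\mathcal D_{\mathrm{qc}}(X_C) \cong \mathrm{Mod}_{h_\ast\mathcal O}(\mathcal D_{\mathrm{qc}}(X))$. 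A symmetric monoidal, $\mathcal D_{\mathrm{qc}}(X)$-linear, colimit-preserving functor between such module categories sending unit to unit is the same data as a map of commutative algebras $\varphi: h_\ast\mathcal O \to f_\ast\mathbb Z$ in $\mathcal D_{\mathrm{qc}}(X)$; the constructed functor of the theorem corresponds to a specific such $\varphi$, and the claim is equivalent to showing $\varphi$ is an equivalence.

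The key observation is that both $h_\ast\mathcal O$ and $f_\ast\mathbb Z$ are $2$-truncated with matching cohomology: $H^0 = \mathcal O_X$ and $H^1 = L^\vee\otimes\mathbb Q$, with all other $H^i$ vanishing. The first is supplied by the explicit description of $h_\ast\mathcal O$ from the torsor structure of $X_C \to X$ (which sits above the paper's description of $h_\ast\mathcal O$). The second is the identification $f_\ast\mathbb Z \simeq g_\ast\mathbb Z$ together with the fact, already used to prove descendability, that $\mathrm{fib}(g_\ast\mathbb Z \to \mathbb Z) \simeq \mathbb Q(-1)[-1] = L^\vee\otimes\mathbb Q[-1]$. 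Since both algebras are $2$-truncated, an equivalence on $H^0$ and $H^1$ is an equivalence of underlying complexes, and since $\varphi$ is already a morphism of $E_\infty$-algebras, this upgrades automatically.

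On $H^0$, $\varphi$ must restrict to the identity on $\mathcal O_X$ since it preserves units. The crux is $H^1$: one must trace the construction of $F$ and verify that the canonical class in $H^1(h_\ast\mathcal O) = L^\vee\otimes\mathbb Q$ classifying the universal extension $L \to \tilde L \to C^\times/(1+C_{<1})\otimes\mathcal O$ on $X_C$ is sent to the class in $H^1(f_\ast\mathbb Z) = \mathbb Q(-1)$ induced by the tautological identification $\mathbb Z(1)(C) \cong C^\times/(1+C_{<1})[-1]$. This is precisely the matching the functor $F$ was designed to implement. Once this identification is in place, $\varphi$ is an equivalence on $H^1$, hence everywhere.

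The main obstacle is this last step: carefully unwinding the construction of $F$ from the moduli description of $X_C$ and matching it with the motivic realization of $\mathbb Z(1)$ on $\mathcal M_{\mathrm{arc}}(C)$, so as to conclude that the induced map on $H^1$ is the identity on $L^\vee\otimes\mathbb Q$ and not some nontrivial automorphism. Once the first assertion is established, the consequence $\mathcal D_{\mathrm{mot}}(C) \cong \mathcal D_{\mathrm{mot}}(k) \otimes_{\mathcal D_{\mathrm{qc}}(X)} \mathcal D_{\mathrm{qc}}(X_C)$ follows formally by combining the previous proposition, which expresses $\mathcal D_{\mathrm{mot}}(C) \cong \mathcal D_{\mathrm{mot}}(k) \otimes_{\mathcal D_{MT}(k)} \mathcal D_{MT}(C)$, with the identification $\mathcal D_{MT}(k) \cong \mathcal D_{\mathrm{qc}}(X)$ from Proposition~\ref{prop:tatemotives} and the equivalence $\mathcal D_{\mathrm{qc}}(X_C) \cong \mathcal D_{MT}(C)$ just proved.
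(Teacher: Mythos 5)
Your proposal is correct and follows essentially the same route as the paper: both reduce, via the module-category identifications $\mathcal D_{MT}(C)\cong \mathrm{Mod}_{f_\ast\mathbb Z}(\mathcal D_{MT}(k))$ and $\mathcal D_{\mathrm{qc}}(X_C)\cong \mathrm{Mod}_{h_\ast\mathcal O}(\mathcal D_{\mathrm{qc}}(X))$, to checking that the natural map $h_\ast\mathcal O\to f_\ast\mathbb Z$ of $E_\infty$-algebras is an equivalence, using that both sides are explicitly $\mathbb Z[\tfrac 1p]\oplus \mathbb Q(-1)[-1]$. The one simplification the paper makes at your ``main obstacle'': since any endomorphism of $\mathbb Q(-1)[-1]$ is multiplication by a rational number, one only needs this scalar to be nonzero rather than exactly $1$, which is an easier check than pinning down the identity.
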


\begin{proof} By our preparations, it suffices to see that $h_\ast \mathcal O\cong f_\ast \mathbb Z$ as $E_\infty$-algebras under the equivalence $\mathcal D_{\mathrm{qc}}(\mathrm{MG}_k)\cong \mathcal D_{MT}(k)$. The constructions give a natural map, and we know both sides explicitly as complexes: They are $\mathbb Z[\tfrac 1p]\oplus \mathbb Q(-1)[-1]$. One only has to see that the map $\mathbb Q(-1)[-1]\to \mathbb Q(-1)[-1]$ is an isomorphism. But such endomorphisms are given by multiplication by some rational number, and it is easy to see that it must be nonzero. More precisely, the $\mathbb Q$ appearing is in both cases more canonically the value group of $C$, and if one uses this canonical description, the map is the identity.
\end{proof}

In other words, we translated the picture of the Tannakian category $\mathcal D_{\mathrm{mot}}(C)$ over $\mathcal D_{\mathrm{mot}}(k)$ into the picture of the gerbe $\mathrm{MG}_C$ over $\mathrm{MG}_k$, banded by the group scheme $\varprojlim_{n} L$ (the inverse limit over multiplication by positive integers $n$). A choice $\pi^{1/n}$ of roots of a uniformizer $\pi$ yields a splitting
\[
C^\times/(1+C_{<1})\cong \mathbb Q\times \mathcal O_C^\times/(1+C_{<1})
\]
and consequently a section of $\mathrm{MG}_C\to \mathrm{MG}_k$; this is responsible for the nearby cycles functor
\[
\Psi = \Psi_{(\pi^{1/n})_n}: \mathcal D_{\mathrm{mot}}(C)\to \mathcal D_{\mathrm{mot}}(k).
\]
Such a choice splits the gerbe, making quasicoherent sheaves on $\mathrm{MG}_C$ equivalent to representations of $\varprojlim_{n} L$. These are given by quasicoherent sheaves $M$ on $\mathrm{MG}_k$ together with a locally nilpotent map $M\to M\otimes L^\ast_{\mathbb Q}$.

\begin{corollary} The functor $\Psi$ induces an equivalence
\[
\mathcal D_{\mathrm{mot}}(C)\cong \{(A,N)\mid A\in \mathcal D_{\mathrm{mot}}(k), N: A\to A\otimes \mathbb Q(-1)\ \mathrm{locally}\ \mathrm{nilpotent}\}.
\]
Working with rational coefficients, the choice of $\pi$ alone yields an equivalence
\[
\mathcal D_{\mathrm{mot}}(C,\mathbb Q)\cong \{(A,N)\mid A\in \mathcal D_{\mathrm{mot}}(k,\mathbb Q), N: A\to A\otimes \mathbb Q(-1)\ \mathrm{locally}\ \mathrm{nilpotent}\}.
\]
\end{corollary}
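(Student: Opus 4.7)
The strategy is to extract the corollary directly from the preceding theorem together with the explicit presentation of the gerbe $h: X_C\to X$, using the section supplied by the chosen roots $\pi^{1/n}$.

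That theorem gives $\mathcal D_{\mathrm{mot}}(C)\cong \mathcal D_{\mathrm{mot}}(k)\otimes_{\mathcal D_{\mathrm{qc}}(X)} \mathcal D_{\mathrm{qc}}(X_C)$, so by base change for module categories it suffices to describe $\mathcal D_{\mathrm{qc}}(X_C)$, $\mathcal D_{\mathrm{qc}}(X)$-linearly, as the category of pairs $(M,N)$ with $M\in \mathcal D_{\mathrm{qc}}(X)$ and $N: M\to M\otimes L^\ast_{\mathbb Q}$ locally nilpotent. The chosen splitting $C^\times/(1+C_{<1})\cong \mathbb Q\times \mathcal O_C^\times/(1+C_{<1})$ determines a section of the torsor $h$ under the additive group scheme $G=\mathrm{Hom}(\mathbb Q,L)$, and therefore trivializes the gerbe, so that $\mathcal D_{\mathrm{qc}}(X_C)$ identifies with the category of $G$-representations in $\mathcal D_{\mathrm{qc}}(X)$.

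To identify these $G$-representations, recall from the proof of the theorem that $h_\ast\mathcal O$ is the two-term complex $\mathcal O\oplus L^\ast_{\mathbb Q}[-1]$; its Hopf-algebra structure is the derived ``symmetric algebra'' on the shifted generator $L^\ast_{\mathbb Q}[-1]$, i.e., the cohomological Chevalley--Eilenberg algebra of the abelian Lie algebra $L^\ast_{\mathbb Q}$. A coaction on $M$ therefore unfolds to a degree-one map $N: M\to M\otimes L^\ast_{\mathbb Q}$, and the fact that comodules for the polynomial (as opposed to power-series) algebra $h_\ast\mathcal O$ are classified by such $N$ manifests as the local nilpotency of $N$. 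Translating through $\mathcal D_{\mathrm{qc}}(X)\cong \mathcal D_{MT}(k)$, under which $L$ maps to $\mathbb Z(1)$, and then base-changing to $\mathcal D_{\mathrm{mot}}(k)$ yields the first displayed equivalence. For the rational statement, observe that $\mathcal O_C^\times/(1+C_{<1})\cong \overline{\mathbb F}_p^\times$ is torsion and vanishes after $\otimes \mathbb Q$; hence only the $\mathbb Q$-component matters and a single choice of $\pi$ already specifies the required section, so the same argument carries through.

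The main obstacle is the comodule-to-derivation identification in this derived, non-affine setting. One natural approach is to verify the equivalence on the compact generators $\mathbb Z(n)$ of $\mathcal D_{MT}(k)\cong \mathcal D_{\mathrm{qc}}(X)$: each such object lies in a single $L$-weight, where the coaction produced by the gerbe presentation of $X_C$ is visibly given by a nilpotent $N$ on the two-step filtration induced by $L^\ast_{\mathbb Q}$. Extending to all of $\mathcal D_{\mathrm{qc}}(X)$ by colimit arguments and confirming $\mathcal D_{\mathrm{qc}}(X)$-linearity then completes the argument.
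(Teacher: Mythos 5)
Your overall route is the paper's: invoke the preceding theorem $\mathcal D_{\mathrm{mot}}(C)\cong \mathcal D_{\mathrm{mot}}(k)\otimes_{\mathcal D_{\mathrm{qc}}(X)}\mathcal D_{\mathrm{qc}}(X_C)$, use the section of $X_C\to X$ determined by $(\pi^{1/n})_n$ to split the gerbe, identify sheaves on the split gerbe with pairs $(M,N)$ with $N$ locally nilpotent, and base change; and your rational reduction (the torsion group $\mathcal O_C^\times/(1+C_{<1})\cong\overline{\mathbb F}_p^\times$ dies after $\otimes\,\mathbb Q$, so $\pi$ alone pins down the section) is exactly the point the paper makes by replacing $\tilde{\mathbb G}_m$ by $\mathbb G_m$.

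The gap is in your justification of the middle identification, where you conflate the two Koszul-dual algebras. The category $\mathcal D_{\mathrm{qc}}(X_C)$ is, per the paper, $\mathrm{Mod}_{h_\ast\mathcal O}(\mathcal D_{\mathrm{qc}}(X))$ with $h_\ast\mathcal O\cong \mathcal O\oplus L^\ast_{\mathbb Q}[-1]$; this is not a polynomial algebra, and ``comodules for $h_\ast\mathcal O$'' is not the category in play. The polynomial algebra is $\mathcal O(G)=\mathrm{Sym}(L^\ast_{\mathbb Q})$ for the band $G=\mathrm{Hom}(\mathbb Q,L)$, and it is comodules over $\mathcal O(G)$ (equivalently, locally nilpotent, i.e.\ torsion, $\mathrm{Sym}$-modules) that give pairs $(M,N)$ with $N\colon M\to M\otimes L^\ast_{\mathbb Q}$ of degree \emph{zero}, matching the $N\colon A\to A\otimes\mathbb Q(-1)$ in the statement; your ``coaction unfolds to a degree-one map'' mixes this up with the $h_\ast\mathcal O$-module structure map, which is shifted and twisted the other way. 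The correct bridge between the two descriptions is Koszul duality: the unit compactly generates the category of locally nilpotent pairs, its endomorphism algebra there is computed by the Koszul resolution to be exactly $\mathcal O\oplus L^\ast_{\mathbb Q}[-1]\cong h_\ast\mathcal O$, and this is what identifies $\mathrm{Mod}_{h_\ast\mathcal O}$ with the pairs category (and, since module categories base change along $\mathcal D_{\mathrm{qc}}(X)\to\mathcal D_{\mathrm{mot}}(k)$, the description persists after tensoring). Your fallback of ``checking on the compact generators $\mathbb Z(n)$'' does not substitute for this: verifying an equivalence on generators requires matching the mapping objects, and computing $\mathrm{RHom}$ of the unit in the pairs category is precisely the calculation above, not something ``visible'' from the weight filtration. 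With that computation supplied, your argument closes and agrees with the paper's (which asserts this step in the discussion preceding the corollary and only writes out the rational refinement).
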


This was previously proved by Binda--Gallauer--Vezzani \cite{BindaGallauerVezzani}.

\begin{proof} With rational coefficients, the choice of $\pi$ alone suffices to construct the section $f_\ast \mathbb Q\to \mathbb Q$, as one can replace $\tilde{\mathbb G}_m$ by $\mathbb G_m$ in the construction (as with $\mathbb Q$-coefficients they have the same cohomology).
\end{proof}

The previous corollary is very precious: In general, it is extremely difficult to describe $\mathcal D_{\mathrm{mot}}$, and any absolute description seems out of reach. Still, the previous corollary gives a complete relative description of $\mathcal D_{\mathrm{mot}}(C)$ in terms of $\mathcal D_{\mathrm{mot}}(k)$.

\section{Relation to v-stacks}

In this final section, we comment on the relation between the Berkovich-arc-stacks used in this paper to the adic v-stacks used in \cite{ECoD} and \cite{FarguesScholze}.

To match the setup of \cite{ECoD}, we focus on the characteristic $p$ situation, and only work with perfect analytic Banach rings $R$ over $\mathbb F_p$, i.e.~perfectoid $\mathbb F_p$-algebras equipped with a chosen norm. We already see that the first difference between the settings is that here we fixed a norm, which we did not do in \cite{ECoD}. This has the consequence that in the setting of this paper, the point $\ast$ is a qcqs arc-sheaf, while in \cite{ECoD} it is not quasiseparated. In the setting of \cite{ECoD}, one can however consider the v-sheaf $\mathcal N$ parametrizing norms; this is a canonical $\mathbb R_{>0}$-torsor $\mathcal N\to \ast$ over the point. We will thus compare the arc-stacks of this paper with the v-stacks over $\mathcal N$.

Now we have a natural forgetful functor $(R,R^+)\mapsto R$ from perfectoid Huber pairs $(R,R^+)$ with chosen norm towards Banach-$\mathbb F_p$-algebras. This has both adjoints. The right adjoint takes $R$ to $(R,R^\circ)$ while the left adjoint takes $R$ to $(R,\widetilde{\mathbb F_p+R^{\circ\circ}})$ where $\widetilde{\mathbb F_p+R^{\circ\circ}}\subset R^\circ$ denotes the integral closure of $\mathbb F_p+R^{\circ\circ}$.

In the following proposition, we ignore the set-theoretic issues, which are resolved as usual by first fixing a cutoff cardinal $\kappa$, and then taking a union over all $\kappa$.

\begin{proposition} The functors $(R,R^+)\mapsto R$ and $R\mapsto (R,\widetilde{\mathbb F_p+R^\circ\circ})$ induce adjoint continuous maps of sites, and hence induce adjoint morphisms of $\infty$-topoi
\[
a^\ast: \mathrm{vStack}_{/\mathcal N}\to \mathrm{arcStack},\ b^\ast: \mathrm{arcStack}\to \mathrm{vStack}_{/\mathcal N},
\]
with $b^\ast$ the right adjoint of $a^\ast$. Concretely, $a^\ast X$ is the sheafification of $R\mapsto X(R,R^\circ)$, and
\[
(b^\ast Y)(R,R^+)= Y(R).
\]

The composite $b^\ast a^\ast X$ is the sheafification of
\[
(R,R^+)\mapsto X(R,R^\circ)
\]
which for separated $X$ yields the canonical compactification $\overline{X}$ of $X$ of \cite[Proposition 18.6]{ECoD}.
\end{proposition}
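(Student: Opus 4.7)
The plan proceeds in four steps: set up the categorical adjunction between the two functors, verify their continuity for the respective topologies, derive the morphisms of topoi formally, and finally identify $b^\ast a^\ast X$ with the canonical compactification.

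First, I would verify that $F:(R,R^+)\mapsto R$ is right adjoint to $G_l:R\mapsto (R,\widetilde{\mathbb F_p+R^{\circ\circ}})$ on the underlying (normed) categories. A Huber-pair morphism $(R,\widetilde{\mathbb F_p+R^{\circ\circ}})\to (S,S^+)$ is a bounded ring map $R\to S$ subject to sending $\widetilde{\mathbb F_p+R^{\circ\circ}}$ into $S^+$; but any continuous bounded map sends $R^{\circ\circ}$ into $S^{\circ\circ}\subseteq S^+$, and then by integral closure sends $\widetilde{\mathbb F_p+R^{\circ\circ}}$ into $S^+$, so the side condition is automatic. I would also record the further right adjoint $G_r:R\mapsto (R,R^\circ)$, giving a triple adjunction $G_l\dashv F\dashv G_r$; on presheaves (covariant functors valued in anima) the precomposition functors assemble into the opposite chain $G_r^\ast\dashv F^\ast\dashv G_l^\ast$.

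Continuity is the main technical step. For $F$: given a v-cover $\{(R,R^+)\to(R_i,R_i^+)\}$ and any Banach field $K$ with a map $R\to K$, I extend canonically to $(R,R^+)\to(K,K^\circ)$ (since $R^+\subseteq R^\circ\to K^\circ$ automatically) and then the v-cover property lifts this rank-one point to some $(R_i,R_i^+)$, producing the desired $R_i\to K'$. For $G_l$: an arc-cover $\{R\to R_i\}$ yields by $G_l$ a family of ``minimal'' Huber pairs whose $\mathrm{Spa}$ is the full continuous-valuation spectrum of $R$; the v-cover condition on these can be detected at rank-one points by density of rank-one points in the analytic adic spectrum combined with the fact that any higher-rank point lies in the closure of a rank-one point, and rank-one surjectivity is exactly the arc-cover condition. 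This is the step I expect to be the main obstacle, and I would rely on the standard theory of the v-topology on analytic adic spaces from \cite{ECoD}.

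The morphism-of-topoi story now follows from general principles. Continuity of $F$ gives that $F^\ast$ preserves sheaves, so $b^\ast:=F^\ast|_{\mathrm{Shv}}$ is a well-defined limit-preserving functor $\mathrm{arcStack}\to \mathrm{vStack}_{/\mathcal N}$. Continuity of $G_l$ gives that $G_l^\ast$ also preserves sheaves, furnishing a right adjoint $b_\ast$ and confirming $b^\ast$ is the pullback of a morphism of topoi. For $a^\ast$, I sheafify the presheaf pullback $G_r^\ast$: the composite $a^\ast:=L_B\circ G_r^\ast\circ i_{HP}$ is left exact (sheafification is left exact in $\infty$-topoi, and precomposition preserves all limits), and via the presheaf adjunction $G_r^\ast\dashv F^\ast$ combined with the universal property of sheafification, one has
\[
\mathrm{Map}_{\mathrm{Shv}(B)}(a^\ast X, Y) = \mathrm{Map}_{\mathrm{PSh}(B)}(G_r^\ast X, Y) = \mathrm{Map}_{\mathrm{PSh}(HP)}(X, F^\ast Y) = \mathrm{Map}_{\mathrm{Shv}(HP)}(X, b^\ast Y),
\]
so $a^\ast\dashv b^\ast$ on sheaves and $a$ is also a morphism of topoi. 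For the final identification, continuity of $G_l$ implies $F^\ast$ commutes with sheafification: both $F^\ast\circ L_B$ and $L_{HP}\circ F^\ast$ are left adjoints to $G_l^\ast\circ i_{HP}$, hence equal by Yoneda. Therefore
\[
b^\ast a^\ast X = F^\ast L_B G_r^\ast X = L_{HP}(F^\ast G_r^\ast X),
\]
whose underlying presheaf at $(R,R^+)$ is $X(G_r F(R,R^+)) = X(R,R^\circ)$, as claimed. For separated $X$, \cite[Proposition 18.6]{ECoD} asserts that this presheaf is already a v-sheaf, equal to the canonical compactification $\overline{X}$.
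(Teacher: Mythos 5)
Your proposal is correct and follows essentially the same route as the paper, whose entire proof is the observation that both $(R,R^+)\mapsto R$ and $R\mapsto (R,\widetilde{\mathbb F_p+R^{\circ\circ}})$ are left adjoints (so preserve finite colimits of rings, i.e.\ finite limits on the sites) and preserve covers, after which everything is formal; your presheaf-level adjunction chain $G_r^\ast\dashv F^\ast\dashv G_l^\ast$ and the sheafification argument for $b^\ast a^\ast$ just spell out that formality. One small correction to your sketch of the only nontrivial point (that $G_l$ sends arc-covers to v-covers): surjectivity on all of $\mathrm{Spa}$ does not follow merely from rank-one surjectivity plus higher-rank points lying in closures of rank-one points in the target; the actual argument lifts each vertical specialization along the map by extending the valuation to the larger completed residue field, which lands in the source precisely because the source pairs also carry the minimal ring of integral elements, so all such specializations are present there.
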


\begin{proof} Both functors are left adjoints and hence commute with all colimits, in particular with finite colimits; i.e.~on the opposite categories they commute with finite limits. They also preserve the natural notion of covers, and so yield morphisms of sites. All assertions follow directly from the definitions.
\end{proof}

The following corollary is immediate.

\begin{corollary} The functor $b^\ast$ yields a fully faithful functor
\[
\mathrm{arcStack}\hookrightarrow \mathrm{vStack}_{/\mathcal N}
\]
whose image consists of all partially proper v-stacks; i.e.~all $X$ such that $X(R,R^+)=X(R,R^\circ)$ for all Huber pairs $(R,R^+)$.$\hfill \Box$
\end{corollary}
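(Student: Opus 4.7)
The plan is to deduce both statements by computing the counit $a^\ast b^\ast\to \mathrm{id}$ of the established adjunction. Since $b^\ast$ is a right adjoint, its full faithfulness is equivalent to this counit being an equivalence on every arc-stack. So first I would take an arc-sheaf $Y$, unwind the formula $(b^\ast Y)(R,R^+)=Y(R)$, and compute
\[
a^\ast b^\ast Y = \mathrm{arc\text{-}sheafify}\bigl(R\mapsto (b^\ast Y)(R,R^\circ)\bigr) = \mathrm{arc\text{-}sheafify}\bigl(R\mapsto Y(R)\bigr) = Y,
\]
the last equality because $Y$ is by hypothesis already an arc-sheaf of perfect analytic Banach $\mathbb F_p$-algebras with chosen norm. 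This gives full faithfulness of $b^\ast$.

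Next I would characterise the essential image. One direction is automatic from the formula: if $X=b^\ast Y$, then $X(R,R^+)=Y(R)$ is independent of $R^+$, so in particular $X(R,R^+)=X(R,R^\circ)$, i.e.\ $X$ is partially proper. For the converse, suppose $X$ is a v-stack over $\mathcal N$ satisfying $X(R,R^+)=X(R,R^\circ)$ for all perfectoid Huber pairs $(R,R^+)$. Set $Y:=a^\ast X$; the goal is to show the unit $X\to b^\ast Y$ of the adjunction is an equivalence. Evaluating on $(R,R^+)$, the right-hand side is $(a^\ast X)(R)$, which is the arc-sheafification of $R\mapsto X(R,R^\circ)$. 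Partial properness makes this presheaf independent of the choice of ring of integral elements, and the key computational step is to observe that $R\mapsto X(R,R^\circ)$ is already an arc-sheaf: any arc-cover $R\to \prod_i^{\mathrm{Ban}} S_i$ of perfect analytic Banach $\mathbb F_p$-algebras lifts, by taking $S_i^\circ$ throughout, to a v-cover $(R,R^\circ)\to \prod_i (S_i,S_i^\circ)$ in the sense of \cite{ECoD}, and conversely any v-cover whose targets have $R^+=R^\circ$ is of this form; so the descent conditions on the two sides match.

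The main obstacle is precisely this matching of topologies. It is not a serious difficulty but needs care: one must check that the left adjoint $R\mapsto (R,\widetilde{\mathbb F_p+R^{\circ\circ}})$ sends arc-covers to v-covers (already used in constructing $a^\ast$), and that moreover v-covers by affinoid perfectoids of the form $(S,S^\circ)$ descend back to arc-covers, using the earlier theorem that the arc-topology is subcanonical on perfectoid Banach rings and the corresponding v-descent result for the structure sheaf in \cite{ECoD}. Once this equivalence of descent data is in hand, $R\mapsto X(R,R^\circ)$ is an arc-sheaf, no further sheafification is needed, and $(b^\ast a^\ast X)(R,R^+)=X(R,R^\circ)=X(R,R^+)$, concluding the proof.
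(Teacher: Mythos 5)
Your treatment of full faithfulness is fine and is exactly the intended argument: $b^\ast$ is a right adjoint, and by the formulas of the preceding proposition the counit $a^\ast b^\ast Y\to Y$ is the comparison map from the arc-sheafification of $R\mapsto Y(R)$ to $Y$, which is an equivalence since $Y$ is already an arc-sheaf; likewise the inclusion ``image $\subseteq$ partially proper'' is immediate from $(b^\ast Y)(R,R^+)=Y(R)$.

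The gap is in the converse inclusion. Your argument rests on the claim that an arc-cover $R\to\{S_i\}$ of Banach rings ``lifts, by taking $S_i^\circ$ throughout, to a v-cover $(R,R^\circ)\to\{(S_i,S_i^\circ)\}$''. This is precisely the nontrivial point, and you assert it in one line. An arc-cover only requires (finite) joint surjectivity on rank-$1$ points, i.e.\ on the Berkovich spectra, whereas a v-cover requires surjectivity on all of $\mathrm{Spa}(R,R^\circ)$, including its higher-rank points; and enlarging the rings of integral elements of the targets to $S_i^\circ$ \emph{shrinks} the adic spectra $\mathrm{Spa}(S_i,S_i^\circ)$, so surjectivity on rank-$1$ points does not formally propagate to specializations (the image of $\mathrm{Spa}(S_i,S_i^\circ)\to\mathrm{Spa}(R,R^\circ)$ need not a priori be closed under vertical specialization, since the relevant valuation on the residue field of a lift must in addition be bounded on the image of $S_i^\circ$, which may be much larger than the integral closure of the image of $R^\circ$). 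Establishing this would require a genuine argument (e.g.\ a constructibility/specialization or ultrafilter-limit argument for points of $\mathrm{Spa}(-,-^\circ)$), and your appeal to subcanonicity of the arc-topology on perfectoids and to v-descent of the structure sheaf does not address it, since those statements concern representable presheaves, not the comparison of covers. Note also that the paper's route avoids this entirely: the preceding proposition already records that $b^\ast a^\ast X$ is the sheafification of the presheaf $(R,R^+)\mapsto X(R,R^\circ)$, and for partially proper $X$ this presheaf \emph{is} $X$, which is already a v-sheaf; hence the unit $X\to b^\ast a^\ast X$ is an equivalence with no comparison of arc- and v-covers needed. Either invoke that formula, or supply an actual proof of your cover-lifting claim; as written, the essential-image direction is not established.
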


Many naturally occuring v-stacks are partially proper. This includes analyticifications of algebraic varieties, $\mathrm{Div}^1$, $\mathrm{Bun}_G$, Banach--Colmez spaces, etc. However, quasicompact open subsets are virtually never partially proper. Such quasicompact open subsets play a key technical role in \cite{ECoD} as they are responsible for the compact generation of many categories, and many arguments proceed by reduction to compact objects. In the Berkovich setting, compact generation is not true anymore, but the categories are usually still compactly assembled, i.e.~(in the stable setting) dualizable. This means that arguments involving reduction to compact objects have to be replaced by arguments involving reduction to compact morphisms.

Having to work over $\mathcal N$ seems a bit artificial, and in fact we can modify the Berkovich side to get rid of this. Namely, we can also work with the test category of perfectoid Tate $\mathbb F_p$-algebras, and consider small arc-stacks there. The whole theory developed above works in that setting, too; in fact, whenever we work over some base $R$, we can fix a norm on $R$ to put us in the situation considered above. This yields
\[
a^{\prime\ast}: \mathrm{vStack}\to \mathrm{arcStack}', b^{\prime\ast}: \mathrm{arcStack}'\to \mathrm{vStack}
\]
for this variant $\mathrm{arcStack}'$ of small arc-stacks over $\mathbb F_p$, and $\mathcal D_{\mathrm{mot}}(X)$ is defined for any $X\in \mathrm{arcStack}'$. We note that $\mathrm{arcStack}'$ can also be defined by using the same test category of perfectoid Huber pairs $(R,R^+)$ as used for v-stacks, but endowing them with the stronger arc-topology, where only surjectivity on rank $1$-points is required. This means that $\mathrm{Spa}(R,R^\circ)\to \mathrm{Spa}(R,R^+)$ is an arc-cover, and so the choice of $R^+$ is ignored after sheafification in this topology.

\begin{proposition} Let $X$ be a small v-stack with corresponding small arc-stack $Y = a^{\prime\ast} X$. Then for any $n$ prime to $p$, there is a fully faithful embedding
\[
\mathcal D_{\mathrm{mot}}(Y,\mathbb Z/n\mathbb Z)\cong \mathcal D_{\mathrm{et}}(Y,\mathbb Z/n\mathbb Z)\hookrightarrow \mathcal D_{\mathrm{et}}(X,\mathbb Z/n\mathbb Z)
\]
whose essential image consists of all overconvergent \'etale sheaves, i.e.~those whose geometric fibres at higher-rank geometirc points $\mathrm{Spa}(C,C^+)$ agree with the stalks at their generic point $\mathrm{Spa}(C,\mathcal O_C)$.

This full embedding is compatible with pullback, tensor products, and pushforward along proper maps of finite cohomological dimension.
\end{proposition}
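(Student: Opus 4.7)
The plan is to split the proposition into two assertions: the internal identification $\mathcal D_{\mathrm{mot}}(Y,\mathbb Z/n\mathbb Z) \cong \mathcal D_{\mathrm{et}}(Y,\mathbb Z/n\mathbb Z)$ on the arc-stack, and the fully faithful embedding of the latter into v-étale sheaves on $X$ with overconvergent image.

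For the first identification, we are working in characteristic $p$ with $n$ invertible everywhere on $Y$. The corollary following Theorem~\ref{thm:freemotivicsheaves} shows that a finitary arc-sheaf of $\mathbb Z/n\mathbb Z$-modules is ball-invariant if and only if it is invariant under change of algebraically closed field, i.e., is a hypercomplete étale sheaf. The torsion case of the cancellation theorem is already clear (cf.\ the remark at the start of Section~7 that with torsion coefficients $\mathbb Z(1)$ is invertible via the Kummer sequence), so passing from $\mathcal D_{\mathrm{mot}}^{\mathrm{eff}}$ to $\mathcal D_{\mathrm{mot}}$ by inverting $\mathbb Z(1)$ changes nothing, yielding the first equivalence.

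For the second assertion, I would use the morphism of topoi $b^{\prime\ast}: \mathrm{arcStack}' \hookrightarrow \mathrm{vStack}$. The key observation, already made in the excerpt, is that the arc-site on perfectoid Tate $\mathbb F_p$-algebras can be realized on the same test category of Huber pairs $(R,R^+)$ as the v-site, but with the additional covers $\mathrm{Spa}(R,R^\circ) \to \mathrm{Spa}(R,R^+)$. Accordingly, pullback along $b^{\prime\ast}$ identifies arc-étale sheaves on $Y = a^{\prime\ast}X$ with precisely those v-étale sheaves on $X$ whose value on $(R,R^+)$ agrees with the value on $(R,R^\circ)$. Translated to étale stalks, this is exactly overconvergence: at every higher-rank geometric point $(C,C^+)$, the stalk agrees with the stalk at the generic point $(C,\mathcal O_C)$. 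This yields both the fully faithfulness and the description of the essential image in one stroke.

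Compatibility with pullback and tensor products is automatic, since both are computed in the respective topoi and preserved by the morphism of sites. For pushforward along a proper $0$-truncated map $f$ of finite cohomological dimension, I would invoke proper base change on both the arc and v sides to reduce to a check on geometric fibres, together with the fact that, for proper $f$ and overconvergent coefficients $\mathcal F$, the specialization $\mathrm{Spa}(C,\mathcal O_C) \hookrightarrow \mathrm{Spa}(C,C^+)$ induces an isomorphism on cohomologies of the fibre; this shows $Rf_\ast \mathcal F$ is again overconvergent. The main obstacle, beyond routine site-theoretic formalities, is precisely this preservation of overconvergence under proper pushforward: it requires a careful comparison of proper base change in the two settings, and it is here that the finite cohomological dimension hypothesis is genuinely used to control Postnikov convergence, exactly as in the proof of Theorem~\ref{thm:properpushforwardfinitary}.
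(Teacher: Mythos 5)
The first half of your proposal (that with $\mathbb Z/n$-coefficients, ball-invariance equals invariance under change of algebraically closed field, that $\mathbb Z/n(1)$ is already invertible, and hence $\mathcal D_{\mathrm{mot}}(Y,\mathbb Z/n\mathbb Z)\cong\mathcal D_{\mathrm{et}}(Y,\mathbb Z/n\mathbb Z)$) is fine and consistent with the results established earlier in the paper. The genuine gap is in the second half. Your ``one stroke'' argument only compares the two ambient topoi: since every v-cover is an arc-cover and the arc-topology is obtained by adding the covers $\mathrm{Spa}(R,R^\circ)\to\mathrm{Spa}(R,R^+)$, arc-(hyper)sheaves do sit fully faithfully inside v-(hyper)sheaves, characterized by the condition that the values on $(R,R^+)$ and $(R,R^\circ)$ agree. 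But the proposition is not about the full sheaf categories: $\mathcal D_{\mathrm{mot}}(Y,\mathbb Z/n\mathbb Z)$ is cut out inside arc-hypersheaves by the finitary condition (and invariance under change of algebraically closed field), and $\mathcal D_{\mathrm{et}}(X,\mathbb Z/n\mathbb Z)$ is cut out inside $\mathcal D(X_v,\mathbb Z/n\mathbb Z)$ by the condition that pullbacks to strictly totally disconnected spaces come from the \'etale site. Your argument never shows (i) that the image of $\mathcal D_{\mathrm{mot}}(Y,\mathbb Z/n\mathbb Z)$ actually lands in $\mathcal D_{\mathrm{et}}(X,\mathbb Z/n\mathbb Z)$, nor (ii) that an overconvergent object of $\mathcal D_{\mathrm{et}}(X,\mathbb Z/n\mathbb Z)$ satisfies arc-descent and finitariness, nor (iii) that the value-theoretic condition ``$M(R,R^+)=M(R,R^\circ)$ for all pairs'' coincides with the stalkwise overconvergence condition in the statement; the sentence ``translated to \'etale stalks, this is exactly overconvergence'' is asserted, not proved, and this is where the actual content lies. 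The paper settles all three points at once by descending to the case that $X$ is strictly totally disconnected, where both sides are identified with $\mathcal D(\pi_0 X,\mathbb Z/n\mathbb Z)$ via \cite[Proposition 13.13, Proposition 22.7]{ECoD} (with Postnikov completeness used to pass from $D^+$ to $D$); your proposal skips this reduction and supplies no substitute for that input.

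A secondary symptom of the same omission: you locate the ``main obstacle'' in the preservation of overconvergence under proper pushforward. Once the strictly totally disconnected description is in place, compatibility with pullback, tensor products and proper direct images is essentially immediate (the paper dismisses it in one line), since proper pushforward on both sides commutes with base change (Theorem~\ref{thm:properpushforwardfinitary} and its $\mathcal D_{\mathrm{mot}}$-analogue, respectively the corresponding results of \cite{ECoD}) and can therefore be compared after pullback to such $X$. So the work you defer to the final step is not where the difficulty is; the missing step is the identification of the essential image, i.e.\ the reduction to strictly totally disconnected spaces together with the cited description of overconvergent \'etale sheaves there.
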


\begin{proof} By descent, we can assume that $X$ is strictly totally disconnected. Then the functor is the fully faithful inclusion of $\mathcal D(\pi_0 X,\mathbb Z/n\mathbb Z)$ into $\mathcal D_{\mathrm{et}}(X,\mathbb Z/n\mathbb Z)$ of \cite[Proposition 13.13, Proposition 22.7]{ECoD} (noting that in this strictly totally disconnected case, we can pass to Postnikov limits and replace $D^+$ by $D$). The essential image consists of the overconvergent sheaves. It is clear that this full inclusion is compatible with pullback, tensor products and proper direct images.
\end{proof}

\begin{proposition} Let $X$ be a spatial diamond of finite cohomological dimension with $Y=a^{\prime\ast} X$. Then the full inclusion
\[
\mathcal D_{\mathrm{mot}}(Y,\mathbb Z/n\mathbb Z)\cong \mathcal D_{\mathrm{et}}(Y,\mathbb Z/n\mathbb Z)\hookrightarrow \mathcal D_{\mathrm{et}}(X,\mathbb Z/n\mathbb Z)
\]
has a colimit-preserving right adjoint, and $\mathcal D_{\mathrm{et}}(X,\mathbb Z/n\mathbb Z)$ is compactly generated.
\end{proposition}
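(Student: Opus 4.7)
The plan has two parts: constructing a colimit-preserving right adjoint, and establishing compact generation.

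For the right adjoint, I would realize the fully faithful inclusion $\mathcal D_{\mathrm{et}}(Y,\mathbb Z/n\mathbb Z)\hookrightarrow \mathcal D_{\mathrm{et}}(X,\mathbb Z/n\mathbb Z)$ as the pullback $\pi^\ast$ along a natural morphism of $\infty$-topoi $\pi: X_{\mathrm{et}}\to Y_{\mathrm{et}}\cong Y_{\mathrm{arc}}$ coming from the inclusion of the overconvergent subsite of $X_{\mathrm{et}}$ (where $R^+$ is forced to equal $R^\circ$). Its right adjoint $R\pi_\ast$ is then the natural candidate right adjoint, and the adjunction is formal; the content is that $R\pi_\ast$ preserves colimits.

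To check colimit-preservation, I would reduce by v-descent to the case of strictly totally disconnected $X=\mathrm{Spa}(R,R^+)$. There $\mathcal D_{\mathrm{et}}(X,\mathbb Z/n\mathbb Z)\cong \mathcal D(|X|,\mathbb Z/n\mathbb Z)$ (étale sheaves are ordinary sheaves on the spectral space $|X|$), and $R\pi_\ast$ becomes pushforward along the specialization map from $|X|$ to its rank-$1$ locus $|Y|$, which is a profinite set. The fibers of this map are totally ordered chains of specializations whose length is bounded by the finite cohomological dimension assumption on $X$, so $R\pi_\ast$ has finite cohomological dimension. Combined with the fact that $\pi_\ast$ commutes with filtered colimits (each fiber being quasi-compact spectral), this yields colimit preservation.

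For compact generation of $\mathcal D_{\mathrm{et}}(X,\mathbb Z/n\mathbb Z)$: since $X$ is a spatial diamond of finite cohomological dimension, I would appeal to the theory developed in \cite[Sections~17--22]{ECoD}, which shows that the objects $j_!\mathbb Z/n\mathbb Z$ for $j: U\hookrightarrow X$ a quasi-compact open subdiamond form a family of compact generators. Compactness follows because $\mathrm{Hom}(j_!\mathbb Z/n\mathbb Z,-)=R\Gamma(U,-)$ preserves filtered colimits, using spatiality of $U$ together with the cohomological dimension bound; the generation statement comes from the fact that every étale sheaf can be built from such $j_!\mathbb Z/n\mathbb Z$ via extensions along stratifications by locally closed quasi-compact subdiamonds.

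The main obstacle will be pinning down precisely that the fibers of the specialization map $|X|\to |Y|$, in the strictly totally disconnected setting, have cohomological complexity controlled by the finite cohomological dimension hypothesis on $X$; the heart of the matter is that the specialization chains can in principle be arbitrarily long, so one must extract a uniform bound from the global finite-cohomological-dimension assumption. Once this bound is in place, both claims follow by essentially formal arguments combining v-descent with results already present in \cite{ECoD}.
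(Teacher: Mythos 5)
Your identification of the inclusion with $\pi^\ast$ for a morphism of topoi, and of the candidate right adjoint with $R\pi_\ast$, is reasonable, but the step ``reduce by v-descent to the strictly totally disconnected case'' is where the argument breaks. To compute the right adjoint of a functor of limits $\lim_\bullet \mathcal D_{\mathrm{et}}(Y_\bullet)\to \lim_\bullet \mathcal D_{\mathrm{et}}(X_\bullet)$ termwise, you need the levelwise right adjoints to satisfy base change (Beck--Chevalley) against the pullbacks in the hypercover; without that, the right adjoint of the glued functor is not the glued right adjoint, and colimit-preservation cannot be checked levelwise. This compatibility is exactly the problematic point: on a strictly totally disconnected $X_0=\mathrm{Spa}(R,R^+)$ the right adjoint is pushforward along $|X_0|\to \pi_0(X_0)$, and its stalk at a component $\mathrm{Spa}(C,C^+)$ is the stalk of the input at the \emph{closed} point of that component (the only open of the chain containing the closed point is the whole chain, so $R\Gamma$ of a fiber is that stalk). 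Since a map of strictly totally disconnected spaces need not carry closed points to closed points (already $\mathrm{Spa}(C,\mathcal O_C)\to \mathrm{Spa}(C,C^+)$ fails this), this ``overconvergentization'' right adjoint does not commute with arbitrary pullbacks, and you give no argument that it does for the maps occurring in your chosen hypercover. This also shows that the obstacle you single out -- bounding the length of specialization chains by the cohomological dimension -- is a red herring: the fibers are cohomologically trivial regardless of their length, and the finite cohomological dimension hypothesis is needed elsewhere (for the compact generation of $\mathcal D_{\mathrm{et}}(X,\mathbb Z/n\mathbb Z)$ and for the unbounded/Postnikov manipulations), not to control the fibers.

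For contrast, the paper avoids any local identification of the right adjoint: it uses that $\mathcal D_{\mathrm{mot}}(Y)$ is compactly assembled with generating compact morphisms $\mathbb Z_{\mathrm{mot}}[U]\to\mathbb Z_{\mathrm{mot}}[V]$ for strict inclusions $U\subset V\subset \mathbb A^n_Y$, and checks that the inclusion sends these to compact morphisms of $\mathcal D_{\mathrm{et}}(X,\mathbb Z/n\mathbb Z)$, because the corresponding map factors through the homology of a quasicompact open $W\subset \mathbb A^n_X$, which is a compact object; preservation of a generating family of compact morphisms then formally gives a colimit-preserving right adjoint. Your treatment of compact generation itself (citing the relevant results of \cite{ECoD}, as the paper does with \cite[Proposition 20.17]{ECoD}) is fine, but the right-adjoint half of your argument needs either the missing base-change input or a global argument of the compact-morphism type.
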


This writes $\mathcal D_{\mathrm{et}}(Y,\mathbb Z/n\mathbb Z)$ explicitly as a retract of a compactly generated $\infty$-category.

\begin{proof} We know that $\mathcal D_{\mathrm{mot}}(Y)$ is compactly assembled, with a generating class of compact morphisms being given by $\mathbb Z_{\mathrm{mot}}[U]\to \mathbb Z_{\mathrm{mot}}[V]$ for strict inclusions $U\subset V\subset \mathbb A^n_Y$. The corresponding inclusion $b^\ast U\times_{b^\ast Y} X\subset b^\ast V\times_{b^\ast Y} X\subset \mathbb A^n_X$ factors over a quasicompact open subset $W\subset \mathbb A^n_X$. The full inclusion takes $\mathbb Z_{\mathrm{mot}}[U]$ (or rather its reduction modulo $n$) to the homology $b^\ast U\times_{b^\ast Y} X\to X$. This factors over the homology of the quasicompact smooth $W\to X$, and this defines a compact object of $\mathcal D_{\mathrm{et}}(X,\mathbb Z/n\mathbb Z)$. Thus, the full inclusion preserves compact morphisms; equivalently, the right adjoint preserves colimits. The compact generation of $\mathcal D_{\mathrm{et}}(X,\mathbb Z/n\mathbb Z)$ is \cite[Proposition 20.17]{ECoD}.
\end{proof}

\bibliographystyle{amsalpha}
\bibliography{BerkovichMotives}

\end{document}